\tikzset{>=latex}
\DeclareFontFamily{U}{mathx}{\hyphenchar\font45}
\DeclareFontShape{U}{mathx}{m}{n}{
      <5> <6> <7> <8> <9> <10>
      <10.95> <12> <14.4> <17.28> <20.74> <24.88>
      mathx10
      }{}
\DeclareSymbolFont{mathx}{U}{mathx}{m}{n}
\DeclareMathAccent{\widecheck}{0}{mathx}{"71}
\newcounter{savesection}
\newcounter{apdxsection}
\renewcommand\appendix{\par
  \setcounter{savesection}{\value{section}}%
  \setcounter{section}{\value{apdxsection}}%
  \setcounter{subsection}{0}%
  \gdef\thesection{\@Alph\c@section}}
\newcommand\unappendix{\par
  \setcounter{apdxsection}{\value{section}}%
  \setcounter{section}{\value{savesection}}%
  \setcounter{subsection}{0}%
  \gdef\thesection{\@arabic\c@section}}
\newtheorem{theorem}[equation]{Theorem}
\newtheorem{lemma}[equation]{Lemma}
\newtheorem{prop}[equation]{Proposition}
\newtheorem{cor}[equation]{Corollary}
\newtheorem{corollary}[equation]{Corollary}
\newtheorem{definition}[equation]{Definition}
\newtheorem{thmx}{Theorem}
\theoremstyle{remark}
\newtheorem{remark}[equation]{Remark}
\newtheorem{notation}[equation]{Notation}
\newtheorem{example}[equation]{Example}
\newtheorem{convention}[equation]{Convention}
\newtheorem{assumption}[equation]{Assumption}
\numberwithin{equation}{section}
\newcommand{\dd}{{\boldsymbol{\partial}\mspace{.8mu}\!\!\!\!\boldsymbol{/}}}
\newcommand{\kvec}{\vec{v}}
\newcommand{\wvec}{\vec{w}}
\newcommand{\evec}{\vec{e}}
\newcommand{\ddiv}{\mathrm{div}}
\newcommand{\Thetacyl}{Y_{\cyl}}
\newcommand{\tb}{{\underline{\tau}}}
\newcommand{\vbreve}{\breve{v}}
\newcommand{\uhat}{\widehat{u}}
\newcommand{\Ehat}{\widehat{E}}
\newcommand{\Etilde}{\widetilde{E}}
\newcommand{\Etildehigh}{\widetilde{E}_{\mathrm{high}}}
\newcommand{\tauo}{\tau_{1}}
\newcommand{\tautilde}{\underline{a}}
\newcommand{\alphatilde}{\widetilde{\alpha}}
\newcommand{\alphahat}{\widehat{\alpha}}
\newcommand{\nablahat}{\widehat{\nabla}}
\newcommand{\betashat}{\beta^{\widehat{\sharp}}}
\newcommand{\Hcal}{{\mathcal{H}}}
\newcommand{\gammahat}{{\widehat{\gamma}}} 
\newcommand{\betahat}{{\widehat{\beta}}} 
\newcommand{\phicat}{\varphi_{\mathrm{cat}}}
\newcommand{\phicatp}{\varphi^+_{\mathrm{cat}}}
\newcommand{\phicatm}{\varphi^-_{\mathrm{cat}}}
\newcommand{\phicatpm}{\varphi^{\pm}_{\mathrm{cat}}}
\newcommand{\varphihat}{\widehat{\varphi}}
\newcommand{\varphiover}{\overline{\varphi}}
\newcommand{\varphigl}{\varphi^{gl}}
\newcommand{\vunder}{\underline{v}}
\newcommand{\kcir}{k_{\mathrm{\circ}}}
\newcommand{\kcirmin}{k_{\circ}^{\mathrm{min}}}
\newcommand{\kminev}{k_{\mathrm{min}}^{\mathrm{ev}}}
\newcommand{\kminodd}{k_{\mathrm{min}}^{\mathrm{odd}}}
\newcommand{\Acirc}{\mathring{A}}
\newcommand{\inj}{{\mathrm{inj}}}
\newcommand{\deltaunder}{{\underline{\delta}}}
\newcommand{\deltaunderp}{{\underline{\delta}'}}
\newcommand{\Mcore}{M_{\mathrm{core}}}
\newcommand{\Mend}{M_{\mathrm{end}}}
\newcommand{\varphiend}{\varphi_{\mathrm{end}}}
\newcommand{\catcore}{\Sigma_{\mathrm{core}}}
\newcommand{\catend}{\Sigma_{\mathrm{end}}}
\newcommand{\phie}{{\phi_{\mathrm{even}}}}
\newcommand{\phio}{{\phi_{\mathrm{odd}}}}
\newcommand{\phiend}{{\phi_{\mathrm{end}}}}
\newcommand{\phicrit}{{\phi_{\mathrm{crit}}}}
\newcommand{\gammagl}{{\alpha}}
\newcommand{\osc}{{{\mathrm{osc}}}}
\newcommand{\ave}{{{\mathrm{avg}}}}
\newcommand{\asym}{{{\mathrm{asym}}}}
\newcommand{\Xtilde}{{\widetilde{X}}}
\newcommand{\tildeE}{{\widetilde{E}}}
\newcommand{\gtilde}{{\widetilde{g}}}
\newcommand{\uutilde}{{\widetilde{u}}}
\newcommand{\uhatlow}{\widehat{u}_{\mathrm{low}}}
\newcommand{\uhatann}{\widehat{u}_{\mathrm{ann}}}
\newcommand{\uhathigh}{\widehat{u}_{\mathrm{high}}}
\newcommand{\Gtilde}{{\widetilde{G}}}
\newcommand{\chitilde}{{\widetilde{\chi}}}
\newcommand{\stilde}{{\widetilde{\sss}}}
\newcommand{\sroot}{{\sss_{\mathrm{root}}}}
\newcommand{\sbar}{\underline{\sss}}
\newcommand{\thetatilde}{{\widetilde{\theta}}}
\newcommand{\Btilde}{{{B}}}
\newcommand{\rhat}{{\hat{r}}}
\newcommand{\mutilde}{{\widetilde{\mu}}}
\newcommand{\muhat}{\widehat{\mu}}
\newcommand{\RRR}{\mathsf{R}}
\newcommand{\DDD}{\mathsf{D}}
\newcommand{\SSS}{\mathsf{S}}
\newcommand{\Rcap}{\mathsf{R}}
\newcommand{\ptilde}{\tilde{p}}
\newcommand{\gcyl}{\mathscr{G}}
\newcommand{\grot}{\mathscr{H}}
\newcommand{\grouprotcyl}{O(2)\times \Z_2}  
\newcommand{\groupmcyl}{\mathscr{G}_{m}}
\newcommand{\Omegatilde}{\widetilde{\Omega}}  
\newcommand{\varphitilde}{\widetilde{\varphi}}  
\newcommand{\phicheck}{{\widecheck{\phi}}}
\newcommand{\upphihat}{{\breve{\upphi}}}
\newcommand{\skernel}{\mathscr{K}}
\newcommand{\skernelv}{\widehat{\mathscr{K}}}
\newcommand{\val}{\mathscr{V}}
\newcommand{\valtop}{\mathscr{V}^\top}
\newcommand{\valperp}{\mathscr{V}^\perp}
\newcommand{\vspan}{\operatorname{span}}
\newcommand{\C}{\mathbb{C}}
\newcommand{\R}{\mathbb{R}}
\newcommand{\Z}{\mathbb{Z}}
\newcommand{\N}{\mathbb{N}}
\newcommand{\Sph}{\mathbb{S}}
\newcommand{\B}{\mathbb{B}}
\newcommand{\Spheq}{\mathbb{S}^2_{\mathrm{eq}}}
\newcommand{\Sphshr}{\mathbb{S}^2_{\mathrm{shr}}}
\newcommand{\Sphtwo}{\mathbb{S}^2}
\newcommand{\cat}{{\widecheck{K}}}
\newcommand{\Kcore}{{\underline{K}}}
\newcommand{\ccat}{\mathbb{K}_{\partial}}
\newcommand{\tildecat}{{\mathbb{K}}}
\newcommand{\KKcore}{{\underline{\mathbb{K}}}}
\newcommand{\KK}{{\mathbb{K}}} 
\newcommand{\PiSig}{\Pi_{\Sigma}}
\newcommand{\KM}{\tildecat_M}
\newcommand{\PiY}{\Pi_{\tildecat}}
\newcommand{\Ric}{\operatorname{Ric}}
\newcommand{\Rm}{\operatorname{Rm}}
\newcommand{\Rend}{\operatorname{R}}
\newcommand{\sech}{\operatorname{sech}}
\newcommand{\OZ}{{O(2)\times\Z_2}}  
\newcommand{\Ltilde}{\widetilde{L}}
\newcommand{\cunder}{\underline{c}\,}
\newcommand{\Eunder}{\underline{E}\,}
\newcommand{\kappaunder}{{\underline{\kappa}}}
\newcommand{\kappaunderbold}{{\boldsymbol{\underline{\kappa}}}}
\newcommand{\kappaperp}{\kappa^{\perp}}
\newcommand{\kappatilde}{{{\kappa}}}
\newcommand{\intprod}{\mathbin{\raisebox{\depth}{\scalebox{1}[-1]{$\lnot$}}}}
\newcommand{\Stilde}{\widetilde{S}}
\newcommand{\Stildep}{\widetilde{S}'}
\newcommand{\Shat}{\widecheck{K}}
\newcommand{\phiunder}{{\underline{\phi}}}
\newcommand{\junder}{{\underline{j}}}
\newcommand{\varphiunder}{{\underline{\varphi}}}
\newcommand{\epsilonunder}{\underline{\epsilon}}
\newcommand{\Lcal}{{\mathcal{L}}}
\newcommand{\Pcal}{{\mathcal{P}}}
\newcommand{\Zcal}{\mathcal{Z}}
\newcommand{\Ccal}{{\mathcal{C}}}
\newcommand{\Lie}{{\mathscr{L}}}
\newcommand{\Lcaltilde}{{\widetilde{\mathcal{L}}}}
\newcommand{\Lcalchi}{{\mathcal{L}_{\chi}}}
\newcommand{\Acal}{{\mathcal{A}}}
\newcommand{\Mcal}{{\mathcal{M}}}
\newcommand{\Rcal}{{\mathcal{R}}}
\newcommand{\Jcal}{{\mathcal{J}}}
\newcommand{\Ecal}{{\mathcal{E}}}
\newcommand{\Ecalunder}{\underline{{\mathcal{E}}}}
\newcommand{\Lchi}{{\mathcal{L}_\chi}}
\newcommand{\Lchitilde}{\mathcal{L}_{\widetilde{\chi}}}
\newcommand{\dbold}{{\mathbf{d}}}
\newcommand{\lambdabold}{\boldsymbol{\lambda}}
\newcommand{\zetabold}{{\boldsymbol{\zeta}}}
\newcommand{\domzb}{{B}_{\Pcal}}
\newcommand{\domku}{{B}_{\val\llbracket \zerobold\rrbracket}}
\newcommand{\zetaboldunder}{\underline{\zetabold}}
\newcommand{\zetaboldhat}{{\breve{\boldsymbol{\zeta}}}}
\newcommand{\zetaboldhatunder}{{\breve{\zetaboldunder}}}
\newcommand{\taubold}{{\boldsymbol{\tau}}}
\newcommand{\mubold}{{\boldsymbol{\mu}}}
\newcommand{\muboldtilde}{{\widetilde{\mubold}}}
\newcommand{\zerobold}{{\boldsymbol{0}}}
\newcommand{\zeroboldunder}{\underline{\zerobold}}
\newcommand{\xibold}{{\boldsymbol{\xi}}}
\newcommand{\xitilde}{{\widetilde{\xi}}}
\newcommand{\xitildecir}{{\widetilde{\xi}^{\circ}}}
\newcommand{\sigmatilde}{{\widetilde{\sigma}}}
\newcommand{\psihat}{\widecheck{\psi}}
\newcommand{\avg}{\operatornamewithlimits{avg}}
\newcommand{\sss}{{{\ensuremath{\mathrm{s}}}}}
\newcommand{\sbold}{\boldsymbol{\sss}}
\newcommand{\sym}{\mathrm{sym}}
\newcommand{\xx}{{{\ensuremath{\mathrm{x}}}}}
\newcommand{\ssshat}{{{\widehat{\sss}}}}
\newcommand{\shat}{{{\widehat{\sss}}}}
\newcommand{\shatbar}{{{\widehat{\sbar}}}}
\newcommand{\sssunder}{{\underline{\sss}}}
\newcommand{\yy}{\ensuremath{\mathrm{y}}}
\newcommand{\zz}{\ensuremath{\mathrm{z}}}
\newcommand{\rr}{\ensuremath{\mathrm{r}}}
\newcommand{\xxtilde}{{\ensuremath{\widetilde{\mathrm{x}}}}}
\newcommand{\yytilde}{{\ensuremath{\widetilde{\mathrm{y}}}}}
\newcommand{\zztilde}{{\ensuremath{\widetilde{\mathrm{z}}}}}
\newcommand{\Acalsssi}{{\Acal_{\sss_i}\!}}
\newcommand{\Rcalsssi}{\mathcal{R}_{\sss_i}\!}
\newcommand{\cyl}{\ensuremath{\mathrm{Cyl}}}
\newcommand{\partialx}{\partial}
\newcommand{\Pp}{{\widehat{G}}}
\newcommand{\Ghat}{{\widehat{G}}}
\newcommand{\Phat}{{\widehat{\Phi}}}
\newcommand{\phat}{{\widehat{\phi}}}
\newcommand{\Phip}{\Phi'}
\newcommand{\group}{{\mathscr{G}}}  
\newcommand{\groupcyl}{{\groupmcyl}}  
\newcommand{\conf}{\omega}
\newcommand{\T}{\mathbb{T}}
\newcommand{\Dhat}{\widehat{D}}
\newcommand{\Mhat}{\breve{M}}
\newcommand{\Omegahat}{\breve{\Sigma}}
\newcommand{\Dbreve}{\breve{D}}
\newcommand{\Sp}{S'}
\newcommand{\ghat}{\widehat{g}}
\newcommand{\gpar}{g^{\Sigma,{\zz}}}
\newcommand{\gpeuc}{\mathring{g}}
\newcommand{\gcir}{\mathring{g}}
\newcommand{\Hcirc}{\mathring{H}}
\newcommand{\nuhat}{\widehat{\nu}}
\newcommand{\Fcal}{{\mathcal{F}}}
\newcommand{\sssX}{\widehat{\mathsf{X}}}
\newcommand{\YYY}{\widehat{\mathsf{Y}}}
\newcommand{\bsigma}{\boldsymbol{\sigma}}
\newcommand{\bsigmaunder}{\underline{\bsigma}}
\newcommand{\bsigmaslash}{\boldsymbol{\sigma}\mspace{.1mu}\!\!\!\! /\,}
\newcommand{\bsigmaunderslash}{\bsigmaunder\mspace{.8mu}\!\!\!\! /}
\newcommand{\sigmaslash}{\sigma\mspace{.8mu}\!\!\!\! /}
\newcommand{\abold}{\boldsymbol{a}}
\newcommand{\Fbold}{\boldsymbol{F}}
\newcommand{\Fboldunder}{\underline{\Fbold}}
\newcommand{\xbar}{\underline{\sssX}}
\newcommand{\ybar}{\underline{\YYY}}
\newcommand{\xbartilde}{{\underline{\mathsf{X}}}}
\newcommand{\ybartilde}{{\underline{\mathsf{Y}}}}
\newcommand{\zbartilde}{{\underline{\mathsf{Z}}}}
\newcommand{\thetabar}{{\underline{\mathsf{\Theta}}}}
\newcommand{\thetasf}{{\mathsf{\Theta}}}
\newcommand{\Sbar}{{\underline{\mathsf{S}}}}
\newcommand{\vecu}{{\vec{u}}}
\newcommand{\tr}{\operatorname{tr}}
\newcommand{\arccosh}{\operatorname{arccosh}}
\newcommand{\Sim}{\displaystyle\operatornamewithlimits{{\scalebox{1.296}{{$\sim$}}}}}  
\newcommand{\Times}{\displaystyle\operatornamewithlimits{{\scalebox{1.636}{{$\times$}}}}}
\newcommand{\rtop}{{\overline{r}}}
\newcommand{\disjun}{\textstyle\bigsqcup}
\newcommand{\lem}{\:\le\:}
\newcommand{\Lmer}{{L_{\mathrm{mer}}}}
\newcommand{\Lpar}{{L_{\mathrm{par}}}}
\newcommand{\psicut}{{\psi_{\mathrm{cut}}}}
\newcommand{\Psibold}{{\boldsymbol{\Psi}}}
\newcommand{\pointp}{{(F_1, \bsigmaunder)}}
\newcommand{\Tor}{\mathbb{T}}
\newcommand{\munder}{\underline{m}}
\newcommand{\mbold}{\boldsymbol{m}}
\newcommand{\Isom}{{\mathrm{Isom}}}
\newcommand{\mmax}{m_{\mathrm{max}}}
\newcommand{\Glap}{G_\infty}
\newcommand{\const}{c'}
\newcommand{\Fmax}{F^{\phie}_{\max}}
\newcommand{\graph}{\operatorname{Graph}}
\newcommand{\tilt}{\operatorname{Tilt}}
\newcommand{\pert}{P}
\newcommand{\Vtilde}{\widetilde{V}}
\newcommand{\dom}{\mathrm{dom}}
\newcommand{\Omegain}{\Omega}
\newcommand{\Ethree}{E^3}
\newcommand{\Etwo}{E^2}
\newcommand{\ubreve}{\breve{u}}
\newcommand{\catbreve}{\breve{K}}
\newcommand{\RMa}{\Rcal_M^{appr}}
\begin{document}

\title[Generalizing Linearized Doubling]{Generalizing the Linearized Doubling approach, I: \\ General theory and new minimal surfaces and self-shrinkers}

\author[N.~Kapouleas]{Nikolaos~Kapouleas}
\author[P.~McGrath]{Peter~McGrath}

\address{Department of Mathematics, Brown University, Providence, RI 02912}  
\email{nicolaos\_kapouleas@brown.edu}

\address{Department of Mathematics, North Carolina State University, Raleigh, NC 27695}
\email{pjmcgrat@ncsu.edu}

\date{\today}

\keywords{Differential Geometry, minimal surfaces, partial differential equations, perturbation methods}

\begin{abstract}
In Part I of this article we generalize the Linearized Doubling (LD)
approach, introduced in earlier work by NK, by proving a general theorem
stating that if $\Sigma$ is a closed minimal surface embedded in a Riemannian
three-manifold $(N,g)$ and its Jacobi operator has trivial kernel, then given a
suitable family of LD solutions on $\Sigma$, a minimal surface $\breve{M}$
resembling two copies of $\Sigma$ joined by many small catenoidal bridges can
be constructed by PDE gluing methods. (An LD solution $\varphi$ on $\Sigma$ is
a singular solution of the Jacobi equation with logarithmic singularities which
in the construction are replaced by catenoidal bridges.) We also determine the
first nontrivial term in the expansion for the area $|\breve{M}|$ of
$\breve{M}$ in terms of the sizes of its catenoidal bridges and confirm that it
is negative; $|\breve{M}| < 2 | \Sigma|$ follows.

We demonstrate the applicability of the theorem by first constructing new
doublings of the Clifford torus. We then construct in Part II families of LD
solutions for general $(O(2)\times \mathbb{Z}_2)$-symmetric backgrounds
$(\Sigma, N,g)$. Combining with the theorem in Part I this implies the
construction of new minimal doublings for such backgrounds. (Constructions for
general backgrounds remain open.) This generalizes our earlier work for
$\Sigma=\mathbb{S}^2 \subset N=\mathbb{S}^3$ providing new constructions even
in that case.

In Part III, applying the results of Parts I and II---appropriately modified
for the catenoid and the critical catenoid---we construct new self-shrinkers
of the mean curvature flow via doubling the spherical self-shrinker or the
Angenent torus, new complete embedded minimal surfaces of finite total
curvature in the Euclidean three-space via doubling the catenoid, and new free
boundary minimal surfaces in the unit ball via doubling the critical catenoid.
\end{abstract}
\maketitle

%%  The body

%%%%%%%%%%%%%%%%%%%%%%%%%%%%%%%%%%%%%%%%%%%%%%%
%% 
%% SECTION: INTRODUCTION 
%%
%%%%%%%%%%%%%%%%%%%%%%%%%%%%%%%%%%%%%%%%%%%%%%%
\section{Introduction}
\label{S:intro}
\nopagebreak

\subsection*{The general framework}
\nopagebreak

Existence results for minimal surfaces have played a fundamental role in the development of the theory of minimal surfaces and more generally of Differential Geometry. 
Particularly important are the cases of embedded minimal (hyper)surfaces in Euclidean spaces or their quotients, 
embedded closed minimal (hyper)surfaces in the round spheres, 
properly embedded compact free boundary minimal (hyper)surfaces in Euclidean balls,  
closed embedded self-shrinkers for the mean curvature flow, 
and general closed embedded minimal (hyper)surfaces in closed Riemannian manifolds. 
Geometers have worked intensely on these directions 
and it is worth mentioning indicatively a sample of non-gluing results: 
by Scherk \cite{S}, 
by Lawson \cite{L2}, 
by Hsiang \cite{hsiang1}, 
by Kar\-cher-Pinkall-Sterling \cite{KPS},
by Hoffman-Meeks \cite{HM3},  
by Fraser-Schoen \cite{fraser-schoen:2},  
by Hoffman-Traizet-White \cite{white:helicoid},   
by Marques-Neves \cite{neves:yau}, 
by Song \cite{song},  
and by Chodosh-Mantoulidis \cite{mantoulidis:annals}. 

Gluing constructions by Partial Differential Equations (PDE gluing) methods have been very successful as well and hold further great promise. 
They are of two kinds: 
\emph{desingularization constructions} \cite{kapouleas:finite,kapouleas:compact,nguyenIII,kapshrinker,kapouleas:wiygul:toridesingularization,kapli} 
where the new surfaces resemble the union of given minimal surfaces intersecting along curves except in the vicinity 
of the intersection curves where they resemble singly periodic Scherk surfaces, 
and \emph{doubling constructions} \cite{kapouleas:clifford,Wiygul,FPZ,kapwiygul,kap,kapmcg} 
where the new surfaces resemble two (or more) copies of a given minimal surface joined by small catenoidal bridges; 
see also the survey articles \cite{kapouleas:survey,alm20}.  

We enumerate now some of the advantages of these gluing constructions. 
First, they provide new minimal surfaces which are almost explicit with well understood topology and geometry. 
In particular they are well suited for establishing the existence of infinitely many topological types of minimal surfaces in various situations. 
Second, the minimal surfaces constructed have low area, close to the total area of the ingredients, 
and so are important in classifications by increasing area. 
Third, the constructions are flexible, so they can be adjusted to apply to various different settings. 
Finally, doubling constructions hold great promise in high dimensions 
(for example \cite{kapouleas:Sn}) 
where very few existence results are currently known: 
even in Euclidean spaces the only complete embedded minimal hypersurfaces of finite geometry are the classical ones 
(hyperplane and high-dimensional catenoid). 
Note that new minimal hypersurfaces obtained via doubling are smooth in all dimensions by construction, 
%similarly to the Constant Mean Curvature hypersurfaces in \cite{breiner:kapouleas:high}.   
similarly to the CMC hypersurfaces constructed in \cite{breiner:kapouleas:high}.  

Historically, PDE gluing methods have been applied extensively
and with great success in Gauge Theories by Donaldson \cite{donaldson1986}, Taubes \cite{taubes1982,taubes1984,taubes1988}, and others.
The particular kind of methods discussed here originate from Schoen's 
\cite{schoen}
and NK's \cite{kapouleas:annals},  
especially as they evolved and were systematized in
\cite{kapouleas:wente:announce,kapouleas:wente,kapouleas:imc}.
In the first doubling constructions \cite{kapouleas:clifford} the catenoidal bridges were attached to parallel copies of the given minimal surface 
to construct the initial surfaces, 
one of which was perturbed then to minimality. 
This approach turned out to be sufficient in some highly symmetric cases \cite{kapouleas:clifford,Wiygul,kapwiygul}  
where the symmetry does not allow horizontal forces and the surface modulo the symmetry is simple enough---although 
the constructions were still highly nontrivial.  

In most cases however this approach is not sufficient and for this reason NK introduced a powerful new approach called Linearized Doubling (LD) \cite{kap}. 
The LD approach was originally applied to construct doublings of a great two-sphere $\Sph^2$ in the round three-sphere $\Sph^3$ but was described 
for any given minimal surface $\Sigma$ \cite[Remark 3.21]{kap} embedded in a Riemannian three-manifold $N$ 
with an isometry of $N$ fixing $\Sigma$ pointwise and exchanging its sides. 

Given now such a $\Sigma$ let $\Lcal_\Sigma$ be its Jacobi operator (see \ref{NT}\ref{N:A}). 
The first step in the LD approach is to construct on $\Sigma$ a suitable family of Linearized Doubling (LD) solutions:  
an LD solution $\varphi$ is a singular solution of $\Lcal_\Sigma \varphi=0$ with logarithmic singularities; 
equivalently $\varphi$ can be considered as a Green's function for $\Lcal_\Sigma$ 
with multiple singularities of various strengths.  
In the second step the LD solutions are converted to approximately minimal ``initial surfaces''   
with the aid of chosen finite dimensional obstruction spaces $\skernelv[L]\subset C^\infty(\Sigma)$.  
The initial surface $M$ corresponding to an LD solution $\varphi$ 
consists of catenoidal bridges 
smoothly joined to the graphs of $\varphi+\vunder$ and $-\varphi-\vunder$ for some $\vunder\in \skernelv[L]$ 
chosen to optimize the matching of the bridges with the graphs.  
Each bridge is located in the vicinity of a singular point of $\varphi$ and its size is given by the strength of the logarithmic singularity of $\varphi$ at the point.  
In the final step one of the initial surfaces is perturbed to exact minimality providing the desired new minimal surface. 

The LD approach effectively reduces doubling constructions to constructions of suitable families of LD solutions. 
This is similar in spirit to the reduction of constructions of CMC (hyper)surfaces \cite{schoen,kapouleas:annals,kapouleas:1991,breiner:kapouleas:low,breiner:kapouleas:high} 
to constructions of suitable families of approximately balanced graphs, the LD solutions playing the role of the graphs. 
The LD solutions used are also approximately balanced in the sense that they approximately satisfy a finite number of ``matching conditions'',  
some nonlinear \cite[Definitions 3.3 and 3.4]{kap}. 
Not surprisingly, because of the PDE's involved, 
the construction of approximately balanced LD solutions is much harder than the construction of balanced graphs. 

In the original article \cite{kap} the construction was carried out only in two cases: 
when the singularities lie on two parallel circles of $\Sph^2$, and when they lie on the equatorial circle and the poles. 
Subsequently in \cite{kapmcg} this was extended to an arbitrary number of circles, optionally including the poles. 
In both cases the constructions of the LD solutions make heavy use of the 
$\OZ$ symmetry of the background. 
Actually in \cite{kap,kapmcg} the construction of LD solutions is reduced to the construction of what we called 
\emph{rotationally invariant linearized doubling (RLD) solutions}  
\cite[Definition 3.5]{kapmcg}, 
which being $O(2)$-invariant, 
satisfy an ODE instead of a PDE and can be understood by using appropriate flux quantities.

\subsection*{Brief discussion of the results}
\nopagebreak

In Part I of this article we generalize the LD approach to apply to general situations by proving Theorem \ref{Ttheory}, 
which we proceed to describe informally after stating a helpful general definition. 

\begin{definition}[Surface doublings] 
\label{Ddoubling} 
Given a Riemannian three-manifold $(N,g)$ and a two-sided 
surface $\Sigma$ in $N$, 
we define a \emph{(surface) doubling $\Mhat$ over $\Sigma$ in $N$} (equivalently we say \emph{$\Mhat$ doubles $\Sigma$ in $N$}) 
to be a smooth surface $\Mhat$ in $N$ satisfying the following.  
\begin{enumerate}[label=\emph{(\roman*)}]
\item 
\label{dprojec} 
The nearest point projection $\Pi_\Sigma$ to $\Sigma$ in $N$ is well defined on $\Mhat$. 
\item 
$\Omegahat:=\Pi_\Sigma(\Mhat) \subset \Sigma $ is closed with smooth boundary $\partial\Omegahat$. 
\item 
\label{dgraphs} 
$\Mhat$ is the union of the graphs of $\ubreve^+$ and $-\ubreve^- \in C^0(\Omegahat) \cap C^\infty( \Omegahat \setminus\partial \Omegahat)$.  
\item 
\label{dbgraphs} 
$\ubreve^+ + \ubreve^- = 0$ on $\partial \Omegahat$, where the two graphs join smoothly with vertical tangent planes, and $\ubreve^+ + \ubreve^- > 0$ close to $\partial \Omegahat$ in $\Omegahat$. 
\item 
By 
the above 
$\left.\Pi_\Sigma\right|_{\Mhat}$ covers $\Omegahat \setminus \partial \Omegahat$ twice, 
$\partial\Omegahat$ once, 
and misses $\Sigma\setminus\Omegahat $. 
\end{enumerate} 
We call $(\Sigma,N,g)$ \emph{the background of the doubling $\Mhat$}, $\Sigma$ its \emph{base surface}, 
and each connected component of $\Sigma\setminus\Omegahat$ a \emph{doubling hole of $\Mhat$ over $\Sigma$}. 
Finally if $\Sigma$ and $\Mhat$ are minimal we call the doubling $\Mhat$ \emph{minimal}. 
In this article, unless stated otherwise, $\Sigma$ and $\Mhat$ are assumed embedded and connected, and so $\ubreve^+ + \ubreve^- > 0$ 
on $\Omegahat$ ($\ubreve^+=\ubreve^->0$ in the special case of symmetric sides).   
\end{definition}

\begin{thmx}[Theorem \ref{Ttheory}]  
\label{TA}
We assume given a \emph{background} $(\Sigma,N,g)$ 
with the \emph{base surface} $\Sigma$ a closed minimal two-sided surface embedded in the Riemannian three-manifold $(N,g)$   
with Jacobi operator $\Lcal_\Sigma$ (see \ref{NT}\ref{N:A}) of trivial kernel on $\Sigma$ (see \ref{Ddoubling}, \ref{background} and \ref{cLker}).  
We assume given also a family of LD solutions on $\Sigma$ with 
appropriately uniform features, sufficiently small singularity strengths, and prescribable---when small---``unbalancing content'' (see \ref{A:FLD} for precise statements). 
There is then a \emph{smooth closed embedded minimal surface $\Mhat$ doubling $\Sigma$ in $N$} as in \ref{Ddoubling}  
satisfying the following.  
\begin{enumerate}[label=\emph{(\roman*)}]
\item 
There is an LD solution $\varphi$ in the given family 
with finite singular set $L\subset\Sigma$, 
such that $\forall p\in L$ 
and $\tau_p>0$ the strength of the logarithmic singularity of $\varphi$ at $p$, 
there is a catenoidal bridge $\catbreve_p \subset \Mhat $ in the vicinity of $p$ in $N$,  
with $\catbreve_p$ a small perturbation of the image by the Fermi exponential map $\exp^{\Sigma, N, g}_p$ (see \ref{dexp}) 
of a truncated catenoid in $T_pN$ of size (waist radius) $\tau_p$.   
\item 
\label{ld} 
$\Omegahat = \Sigma \setminus \textstyle{ \bigsqcup_{p \in L} \Dbreve_{p} } $, 
where each \emph{doubling hole} $\Dbreve_p\subset\Sigma$ 
is a small smooth perturbation of a geodesic disc in $\Sigma$ of center $p$ and radius $\tau_p$. 
\item 
The complement of the catenoidal bridges in $\Mhat$ is described graphically by small perturbations of $\pm\varphi$,  
or more precisely of $\pm( \varphi+\vunder_{\pm} )$, with $\vunder_\pm \in \skernelv[L]$ chosen in \ref{Dinit} to optimize the matching of the catenoidal bridges with the $\varphi$-graphical part 
and $\skernelv[L]\subset C^\infty(\Sigma)$ a chosen (as in \ref{aK}) finite dimensional obstruction space. 
\item 
The genus of $\Mhat$ is $2g_\Sigma-1+|L|$ where $g_\Sigma$ is the genus of $\Sigma$.    
\item 
$
| \Mhat | = 2 |\Sigma| - \pi \sum_{p \in L } \tau^2_p \left( 1+ O(\, \tau^{1/2}_p | \log \tau_p| \, ) \right), 
$ 
which implies also $|\Mhat| < 2 | \Sigma|$,  
where $ | \Mhat |$ and $|\Sigma|$ denote the areas of $\Mhat$ and $\Sigma$. 
\end{enumerate} 
\end{thmx}

$\Mhat$ is constructed in the proof of Theorem \ref{TA} as a small perturbation of one of the \emph{initial surfaces} $M[\varphi, \kappaunderbold]$ defined in \ref{Dinit} and 
parametrized by the given LD solutions $\varphi$ and parameters $\kappaunderbold$ satisfying \eqref{dalpha}. 
The construction of the initial surfaces is similar but more involved than in \cite{kap,kapmcg} where no $\kappaunderbold$ parameters are needed. 
The main new features are that each catenoidal bridge can be elevated and tilted relative to $\Sigma$ as prescribed by $\kappaunderbold$, 
and that $\vunder_+\ne\vunder_-$ when $\kappaunderbold\ne \zerobold$.  
In \cite{kap,kapmcg} $\kappaunderbold\ne\zerobold$ would violate the symmetry exchanging the two sides of the base surface;   
here however it introduces dislocations which (consistently with the \emph{geometric principle} \cite{kapouleas:survey,alm20})  
allow us to deal with the antisymmetric (with respect to approximate exchange of the sides of $\Sigma$) component of the obstructions involved.

Surprisingly the asymmetry of the sides of $\Sigma$ does not affect the nature or study of the families of LD solutions required, 
or the definition of the mismatch operator in \ref{Dmismatch}. 
The construction and study of the initial surfaces however presents new challenges related to the introduction of new parameters $\kappaunderbold$,  
and the estimation of mean curvature induced by a general Riemannian metric. 

Theorem \ref{TA} (or \ref{Ttheory}) not only generalizes the LD approach to the general case, 
but also makes the reduction to LD solutions explicit and systematic, 
unlike in \cite{kap,kapmcg}, 
where the reduction was described case by case. 
It is therefore a very powerful tool 
reducing doubling constructions to constructions of appropriate families of LD solutions, 
a much easier---but still very hard and open in general---problem. 

Note that although in Theorem \ref{TA} (or \ref{Ttheory}) $\Sigma$ is assumed to be a closed surface,  
the theorem can be modified to apply to other situations, 
as for example those in sections \ref{S:Cat} or \ref{S:ccat}.   
Moreover in Theorem \ref{TA}(v) we determine in full generality   
the first nontrivial term in the expansion of the area $|\Mhat|$ of $\Mhat$ 
in terms of the sizes of its catenoidal bridges, a new result even for the earlier doubling constructions. 
Finally we expect that Theorem \ref{TA} (or \ref{Ttheory}) will be an important step in proving a ``general'' doubling theorem 
asserting without any symmetry assumptions that any base surface $\Sigma$ with $|A|^2+\Ric(\nu,\nu)>0$ has infinitely many minimal doublings.  

As an example we next apply Theorem \ref{TA} to construct doublings of the Clifford torus $\T^2$ in Section \ref{S:clifford}. 
Recovering the doublings already known \cite{kapouleas:clifford,Wiygul} is fairly straightforward (see Remarks \ref{R:oldT} and \ref{R:uniqueness}). 
The catenoidal bridges in these doublings are located at the points of a $k\times m$ rectangular lattice $L$ with $k,m$ large ($m/k$ a priori bounded).  
We construct new doublings by allowing any $k\ge3$ 
(see Theorem \ref{Tcmain1} and for $k=1,2$ see Remark \ref{R:k12}), 
or by arranging for three bridges per fundamental domain when $k,m$ large (see Theorem \ref{Tcmain2}).  
Further results not discussed in this article are possible \cite{douT}, with more bridges per fundamental domain and any $k\ge3$, 
and also different symmetry groups, including constructions generalizing \cite[Example 13]{PRu} (related to torus knots).  
Note that the case of the Clifford torus is unusual because the background has $O(2)\times O(2)$ symmetry; 
the $O(2)\times\Z_2$-symmetric backgrounds on which we concentrate in Part II are less symmetric but more common.  

In Part II we construct families of LD solutions for $O(2)\times\Z_2$-symmetric backgrounds $(\Sigma,N,g)$,  
which are then used to construct minimal doublings via Theorem \ref{TA}. 
This generalizes our earlier work in \cite{kap,kapmcg} where families of LD solutions are constructed in the case $\Sigma=\Sph^2 \subset N=\Sph^3$ and used 
to construct minimal doublings of $\Sph^2$. 
The assumptions on the background we choose in \ref{Aimm} 
are general enough to allow many interesting applications.  
They imply that the base surface $\Sigma$ is diffeomorphic to a sphere or torus 
and the nontrivial orbits of the action of $O(2)$ on $\Sigma$ are circles (see Lemma \ref{LAconf});  
we call these circles \emph{parallel}. 
Calling $\Sbar$ the generator of the $\Z_2$ factor, it follows that $\Sbar$ fixes exactly one parallel circle when $\Sigma$ is a sphere and exactly two when $\Sigma$ is a torus; 
we call these circles \emph{equatorial}.

All constructions in \cite{kap,kapmcg} and in Parts II and III of this article are symmetric under a subgroup $\gcyl_m < O(2)\times\Z_2$ of order $4m$; 
more precisely $\gcyl_{ m     } = D_{2 m     } \times \Z_2$ with $D_{2 m     }<O(2)$ a dihedral subgroup of order $2 m     $ (see \ref{dHcyl}).  
The singularities of the LD solutions in these constructions concentrate on a prescribed number $\kcir$ of parallel circles 
and we assume $m$ large in terms of $\kcir$; 
we expect that other constructions are possible (beyond the scope of this article)  where $\kcir$ is large and $m$ is small or comparable to $\kcir$. 

Unlike in \cite{kap,kapmcg} we do allow different numbers of singularities in the $\kcir$ parallel circles but in a limited way: 
we allow $m$ or $2m$ singularities on the various circles (see \ref{Ambold}). 
Note that we use $\mbold=(m_i)_{i=1}^{ \lceil\kcir/2\rceil } $ to prescribe the numbers $|m_i|$ of singularities for the various circles,  
with the sign of $m_i$ choosing one of the two possible alignments with respect to $\gcyl_m$ (see \ref{dL} and \ref{RLcard}). 
Although not presented in this article, this can be further generalized to allowing the numbers $m_i$ to be multiples of $m$ by any small factors. 

\begin{thmx}[Theorem \ref{Trldldgen}] 
\label{TB}
Given a background $(\Sigma,N,g)$ satisfying Assumption \ref{Aimm} there is a minimum $\kcirmin\in\N$ (see \ref{Dkmin}) such that for each 
$\kcir \in\N$ with $\kcir \geq \kcirmin$ and any $\mbold \in \{m, -m, -2m\}^{ \lceil\kcir/2\rceil } $, 
with $m$ large enough in terms of $\kcir$, 
there is a family of LD solutions satisfying the required assumptions (see \ref{A:FLD}) in Theorem \ref{TA}, 
with the singularities concentrating along $\kcir$ parallel circles and 
the alignment and number of singularities at each circle prescribed by the entries of $\mbold$. 
\end{thmx}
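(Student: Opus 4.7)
The plan is to follow the three-stage reduction outlined in the introduction: first solve a rotationally averaged ODE to produce RLD solutions; second, promote these to maximally $\gcyl_m$-symmetric LD solutions on $\Sigma$; and third, introduce perturbation parameters to accommodate mixed multiplicities and to make the unbalancing content prescribable as required by Theorem \ref{Ttheory}. Throughout, the imposed symmetry is the order-$4m$ subgroup $\gcyl_m \subset O(2)\times \Z_2$ of \ref{dHcyl}, so all constructions take place equivariantly on a single $\gcyl_m$-fundamental sector.

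Stage 1. Averaging the Jacobi equation over the $O(2)$-action reduces it to a singular ODE on a transverse interval, whose logarithmic singularities are located at the $\kcir$ positions of the desired parallel circles. The Green's function decomposition of this ODE is explicit, so an RLD solution is determined by the circle positions (collected into $\zetabold^\top$) together with an overall scale. I would impose the balancing conditions of \ref{dLbalanced} weighted by the $m_i$'s, giving a $\kcir$-dimensional nonlinear system. Starting from a leading-order guess modeled on the spherical case of \cite{kap,kapmcg} and inverting the linearization, the system is solvable provided $\kcir \ge \kcirmin$ with $\kcirmin$ as in \ref{Dkmin}. Because the background here has only a $\Z_2$ in place of the second $O(2)$-factor of the spherical case, the ODE coefficients vary along the transverse interval, but for $m$ large the analysis of \cite{kapmcg} adapts by quantitative perturbation.

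Stage 2. From an RLD solution I build a maximally symmetric LD solution on $\Sigma$ by superposing logarithmic Green's functions of $\Lcal_\Sigma$---which exist since $\ker \Lcal_\Sigma = 0$ by \ref{cLker}---over the $\gcyl_m$-orbit of a single starting point on each circle when $|m_i| = m$, or of two starting points when $|m_i| = 2m$, with signs chosen according to $\operatorname{sgn}(m_i)$ to realize the prescribed alignment. The difference between this LD solution and its $O(2)$-average extends to a bounded solution of $\Lcal_\Sigma \phi = 0$ away from the singular set; Schauder estimates together with trivial kernel yield quantitative $C^{2,\alpha}$-smallness and exponential decay of this difference away from the singular circles as $m \to \infty$. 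Varying $\zetabold^\top$ and the overall scale parametrizes the family of \ref{dtau1}. When all $|m_i| = m$ this family already meets the hypotheses of Theorem \ref{Ttheory}.

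Stage 3 and verification. For mixed $|m_i|$'s the maximally symmetric construction has residual unbalancing, and in any case the prescribability clause of \ref{A:FLD} demands that small unbalancing content be realizable in a neighborhood. I would introduce parameters $\zetabold^\perp$ (as in \ref{dParam} and \ref{dtau2}) controlling small $\gcyl_m$-equivariant perturbations of the singularity configuration and of the Green's function coefficients, and invoke the inverse function theorem on the map from $\zetabold^\perp$ to the unbalancing content. The main obstacle is verifying uniform-in-$m$ invertibility of the linearization of this map: its principal block is a direct sum over circles of the local perturbation responses, while the off-diagonal coupling must be dominated via the exponential decay from Stage 2 and the non-degeneracy established in Stage 1. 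Once these estimates are assembled, each item of \ref{A:FLD}---uniform geometry, $O(1/m)$-type smallness of singularity strengths, and prescribability of small unbalancing content---follows directly, and the family $\varphi(\zetabold)$ with $\zetabold = (\zetabold^\top,\zetabold^\perp)$ satisfies the hypotheses of Theorem \ref{Ttheory}.
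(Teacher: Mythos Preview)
Your three-stage architecture---RLD solutions, then maximally symmetric LD solutions, then $\zetabold^\perp$-perturbations---matches the paper's organization of Part~II. But the analytic content you propose at each stage differs from what the paper actually does, and in places this creates gaps.

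\textbf{Stage 1.} You propose to solve a $\kcir$-dimensional nonlinear balancing system by perturbing from the spherical case and inverting the linearization, with $m$ large. This is not what happens and would not work as stated. The RLD construction is entirely $m$-independent: an RLD solution is an $O(2)$-invariant piecewise solution of a second-order ODE $\phi'' + V\phi = 0$ on the cylinder, and $m$ enters only later when one builds LD solutions. Moreover, a general background's potential $V$ is not a perturbation of the spherical $2\sech^2\sss$ (for a torus background $V$ even lives on a bounded interval), so there is no base point to perturb from. The paper instead parametrizes RLD solutions not by circle positions but by \emph{flux ratios} $\bsigmaunder=(\bsigma,\xibold)$ and an initial flux $F_1$ (\ref{RLquant}, \ref{Pexist}, \ref{Pexist2}), and constructs them directly by shooting: the flux monotonicity Lemma~\ref{LFmono} forces $F^\phi_-$ to be strictly increasing between jumps, so the jump latitudes $\sss_i$ are determined recursively and the smooth-at-the-ends condition fixes $F_1$ uniquely (\ref{Nphik}). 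No inverse function theorem is used; $\kcirmin$ arises intrinsically from this shooting (Lemma~\ref{Rk1}).

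\textbf{Stage 3 and the prescribed-unbalancing clause.} You want to invoke the inverse function theorem on the map $\zetabold \mapsto (\text{unbalancing content})$, and identify uniform invertibility of the linearization as the main obstacle. The paper neither proves nor needs this. Assumption~\ref{Azetabold}\ref{AZ} asks only that $\zetabold - Z_\zetabold(\Mcal_L\varphi) \in \tfrac12 \domzb$ for a \emph{fixed} linear isomorphism $Z_\zetabold$; this is a near-identity estimate, not surjectivity, and is exactly what feeds into the Schauder fixed-point argument of Theorem~\ref{Ttheory}. The paper obtains it (Proposition~\ref{PZ}) by explicit computation: Lemma~\ref{Lmatching} expands the mismatch in terms of $\zetabold^\top$ and the error $\Phi'$, and for the $\zetabold^\perp$-direction the key input is an explicit calculation of the limiting flat-cylinder Green's function $G_\infty(\thetatilde,\shat)=\tfrac12\log(\sin^2\tfrac{\thetatilde}{2}+\sinh^2\tfrac{\shat}{2})$ (Lemma~\ref{LGconv}), whose second derivative at $(\pi,0)$ gives the leading $-\tfrac14\xitildecir$ term in Corollary~\ref{Cphim}. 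This is softer and more explicit than your IFT route.

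Two smaller points: the sign of $m_i$ encodes which of the two $\gcyl_m$-invariant $|m_i|$-point orbits on $\Lpar[\sss_i]$ is used (see \ref{dL}), not a sign in the superposition of Green's functions; and the singularity strengths $\tau_p$ are not $O(1/m)$ but exponentially small in $m$, since $\tau_1 \sim e^{-|m_1|/(2F^\phi_1)}$ by \eqref{Etau1}.
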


Combining this with Theorem \ref{TA} (or \ref{Ttheory}) we obtain 

\begin{thmx}[Theorem \ref{Tconstruct}] 
\label{TC}
Given 
$(\Sigma,N,g)$,  
$\kcir\in\N$, and any $\mbold$ as in Theorem \ref{TB}, 
there is a minimal doubling containing one catenoidal bridge close to each singularity of one of the LD solutions in Theorem \ref{TB}
and satisfying (i)-(v) in Theorem \ref{TA}.
Moreover as $m\to\infty$ with fixed $\kcir$ the corresponding minimal doublings converge in the appropriate sense to $\Sigma$ covered twice. 
\end{thmx}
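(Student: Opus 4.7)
The plan is to combine Theorems \ref{TA} and \ref{TB} directly: Theorem \ref{TB} supplies a family of LD solutions satisfying the hypotheses \ref{A:FLD} needed by Theorem \ref{TA}, and Theorem \ref{TA} converts one of these LD solutions into an honest embedded minimal doubling of $\Sigma$. The first step is organizational: given $(\Sigma,N,g)$, $\kcir \geq \kcirmin$, and $\mbold$, invoke Theorem \ref{TB} to produce the family parametrized by $\zetabold=(\zetabold^\top,\zetabold^\perp)$, whose singularities concentrate on the prescribed $\kcir$ parallel circles with the alignments and cardinalities $|m_i|$ encoded in $\mbold$.

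The second step is to feed this family into Theorem \ref{TA}. Since Theorem \ref{TB} already certifies the ``uniform features, sufficiently small singularity strengths, and prescribable unbalancing content'' required by \ref{A:FLD}, no further verification is needed; Theorem \ref{TA} then produces an initial surface $M = M[\varphi,\kappaunderbold]$ consisting of catenoidal bridges smoothly joined to the graphs of $\varphi+\vunder_+$ and $-\varphi-\vunder_-$, and perturbs $M$ to an exact embedded minimal doubling $\Mcirc$ of $\Sigma$. The bridges are in bijection with the singular set of $\varphi$, each of size proportional to the strength of its logarithmic singularity, so the claim that there is one catenoidal bridge close to each singularity of $\varphi$ is immediate.

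For the convergence statement, I would extract it from the quantitative content of the two theorems rather than reproving anything: as $m\to\infty$ with $\kcir$ fixed, the number of singularities on each of the $\kcir$ parallel circles grows like $|m_i|\in\{m,2m\}$, which forces each individual singularity strength---hence the size of each catenoidal bridge---to decay to zero. The graph corrections $\vunder_\pm\in\skernelv[L]$ optimizing the bridge matching, together with the final perturbation carrying $M$ to $\Mcirc$, are both quantitatively controlled by the same small scale, so the two sheets of $\Mcirc$ converge smoothly on compact subsets of $\Sigma$ away from the bridge loci to two copies of $\Sigma$. Combined with the fact that the total area contained in shrinking neighborhoods of the bridges tends to zero, this delivers convergence of $\Mcirc$ to $\Sigma$ counted with multiplicity two in the appropriate sense.

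The main obstacle I anticipate is not the assembly itself but the bookkeeping needed to confirm that the mode of convergence implicit in the statement matches the decay actually produced by Theorems \ref{TA} and \ref{TB}: one wants smooth convergence on compacta away from the $\kcir$ parallel circles, together with a Hausdorff- or varifold-type bound on the excised neighborhoods, with constants uniform in $\mbold$. Both ingredient theorems are already stated with uniform constants in the parameters that govern these scales, so this should reduce to a direct quantitative tracking rather than a new estimate.
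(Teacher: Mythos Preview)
Your proposal is correct and follows essentially the same approach as the paper: the paper's proof of Theorem \ref{Tconstruct} simply notes that since $\Sigma$ is closed and embedded and Assumption \ref{Azetabold} holds by Theorem \ref{Trldldgen}, Theorem \ref{Ttheory} applies directly, with the genus (and implicitly the convergence) following from the construction. Your discussion of the convergence mechanism is more explicit than the paper's, but the underlying argument is the same.
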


In Part III of this article we apply Theorem \ref{TC} (that is \ref{Tconstruct}) 
to construct new self-shrinkers of the mean curvature flow via doubling the spherical self-shrinker in Theorem \ref{Tmainsph} or 
via doubling the Angenent torus \cite{angenent} in Theorem \ref{Tmaintor}.   
By adjusting the results and proofs in Parts II and III,  
we also construct new complete embedded minimal surfaces of finite total curvature with four catenoidal ends in the Euclidean three-space 
via doubling the catenoid in Theorem \ref{Tmaincat},  
and new free boundary embedded minimal surfaces in the unit ball via doubling the critical catenoid in Theorem \ref{Tmainccat}.  

\subsection*{Outline of strategy and main ideas}
\nopagebreak

In this article we define in \ref{DbM} catenoidal bridges $\cat[p, \tau_p, \kappaunder_p] \subset M[\varphi, \kappaunderbold]$ 
as catenoids in cylindrical Fermi coordinates at a singular point $p\in \Sigma$ of the corresponding LD solution $\varphi$, 
truncated at scale $\sim\tau_p^\alpha$ with fixed small $\alpha$ as in \ref{con:alpha}. 
The strength $\tau_p>0$ of the logarithmic singularity of $\varphi$ at $p$ 
determines the size of the catenoid and $\kappaunder_p = \kappaperp_p + \kappatilde_p$   
its elevation in the normal direction and the tilt of its axis relative to the normal (see \ref{Dinit}). 
The construction of the bridges is simpler than in \cite{kap},  
at the expense that now the bridges are only approximately minimal and their mean curvature has to be estimated and corrected. 
The catenoidal bridges are then smoothly attached to the graphs of $\pm( \varphi+\vunder_{\pm} )$ at scale $\sim\tau_p^\alpha$ to form $M[\varphi, \kappaunderbold]$.  

The estimation of the mean curvature on the bridges is done in two steps. 
First, we decompose the metric of $N$ in the vicinity of $p$ as $g = \gpeuc+ h$, 
where $\gpeuc$ is a Euclidean metric induced by Fermi coordinates and $\left. h \right|_p =0 $ (see \ref{dgeopolar}). 
$\cat[p, \tau_p, \kappaunder_p]$ is exactly minimal with respect to $\gpeuc$ and the mean curvature induced by $g$ 
can be expressed in terms of tensor fields induced by $h$.   
Second, using properties of cylindrical Fermi coordinates, 
we estimate these tensors on $\cat[p, \tau_p, \kappaunder_p]$ in terms of the background geometry near $p$. 

An important feature is that the dominant term in the mean curvature of $\cat[p, \tau_p, \kappaunder_p]$ is driven by the second fundamental form 
$\left. A^\Sigma \right|_p $,  
and without 
the observation that the projection of the mean curvature to the first harmonics satisfies better estimates 
(see \ref{LH} and \ref{LH2}),  
this term would be too large for our purposes when $\left. A^\Sigma \right|_p \ne 0 $.  
(Note that in \cite{kapouleas:clifford,Wiygul} there are no first harmonics because of the symmetries.) 
In the definition of the global H\"older norms (see \ref{D:norm}) we use a stronger weight on the graphical regions and 
for the first harmonics on the catenoidal regions.  
On the graphical regions this parallels \cite[4.12]{kap} and leads to stronger final estimates (see \ref{LglobalH}) than those in \cite{kapouleas:clifford}.     

The proof of the area expansion in Theorem \ref{TA}(v) requires a detailed understanding of the interplay between the geometries of the catenoidal bridges and graphical regions.  
In particular each summand $-\pi \tau^2_p$ in the dominant term in the expansion for $|\Mhat|-2|\Sigma|$ is smaller in magnitude than a term of order $\tau^2_p |\log \tau_p|$ 
appearing in the expansion of the area of the corresponding bridge (see the catenoid estimate in \cite{Ketover}), 
and it is necessary to observe a subtle cancellation (see \ref{LcatMarea} and \ref{Lextarea}) between these terms and opposing terms 
arising from the exterior graphical region in order to complete the expansion.

We now discuss the proof of Theorem \ref{TB} in Part II of this article. 
We assume given an $O(2)\times\Z_2$-symmetric background $(\Sigma,N,g)$, $\kcir$, and $\mbold$ as in Theorem \ref{TB},  
and we proceed to construct a family of LD solutions with parameters (see \ref{dParam}) $\zetabold^\top  = (\zeta_1, \bsigmaunder )$,  
and (when not all $|m_i|$'s are equal) more parameters $\zetabold^\perp$.  
The LD solutions with vanishing $\zetabold^\perp$ are \emph{maximally symmetric} (see \ref{dtau1}) 
with their logarithmic singularities equidistributed on $\kcir$ parallel circles we call \emph{singular}. 
The parameters $\zetabold^\perp$ are used to dislocate the maximally symmetric LD solutions in accordance with the geometric principle; 
in the cases we examine in this article there is exactly one $\zetabold^\perp$ parameter for each $m_i=-2m$ (see \ref{Rlp}). 

Our maximally symmetric LD solutions 
$\varphi = \varphi \llbracket \zetabold^\top; \kcir, \mbold \rrbracket := \, \tauo \Phi \llceil  \bsigmaunder: \kcir, \mbold \rrfloor$ 
are constructed in \ref{dtau1} and \ref{Lphiavg}  
so that their overall scale $\tauo$ is controlled by $\zeta_1$ and each $\Phi \llceil  \bsigmaunder: \kcir, \mbold \rrfloor$ is constructed from 
 $\phi\llceil  \bsigmaunder: \kcir, \mbold \rrfloor$,  
a \emph{rotationally invariant (averaged) linearized doubling (RLD) solution} which can be recovered 
from $\Phi \llceil  \bsigmaunder: \kcir, \mbold \rrfloor$ by averaging on parallel circles.

RLD solutions (defined in \ref{RL}) are easier to understand than LD solutions because the Jacobi equation reduces to an ODE. 
They have derivative jumps instead of logarithmic singularities at the singular circles.  
We construct them first 
and use the information they provide, for example the position of the singular circles, 
to construct the maximally symmetric LD solutions. 
Our constructions are facilitated by the observation that the classes of LD and RLD solutions are invariant under conformal changes of the intrinsic metric, 
allowing us to work on the flat cylinder instead of $\Sigma$.  

The main tools in studying existence and uniqueness for the RLD solutions is a scale invariant flux $F_{\pm}^\phi$ (see \ref{dF}), 
which amounts to the logarithmic derivative of the RLD solution $\phi$ 
on the cylinder, 
with its monotonicity properties stated in \ref{LFmono}.  
Balancing for an RLD solution $\phi$ amounts to \emph{horizontal balancing}, requiring equality of the two one-sided fluxes at a singular circle,  
and \emph{vertical balancing}, requiring that the ratio of the fluxes at adjacent singular circles equals the ratio $|m_j/m_{j+1}|$ of the corresponding prescribed numbers of singularities 
(see \ref{dLbalanced}).  

The parameters $\bsigmaunder=(\bsigma, \xibold ) $ prescribe the RLD unbalancing with $\bsigma$ for vertical and $ \xibold $ for horizontal (see \ref{RLquant}, \ref{Pexist}, and \ref{Pexist2}). 
The effect of the parameters $\zetabold^\top  = (\zeta_1, \bsigma, \xibold)$ on the mismatch of the LD solutions is confirmed 
in \ref{LmatchingE} by using the equations in \ref{Lmatching} and the estimates in \ref{LPhip} for the LD solutions constructed.  
Note that $\bsigma$ prescribes (approximately) only differences of vertical mismatch, 
with (one) vertical mismatch prescribed (with less precision) by $\zeta_1$. 
Finally the estimates for the LD solutions in \ref{LPhip} are based on carefully decomposing each $\Phi = \Phi\llceil\bsigmaunder : \kcir, \mbold\rrfloor$ as 
$\Phi = \Ghat+ \Phat+\Phip$ (see \ref{ddecomp}), 
where $\Ghat$ captures the singular part, $\Phat$ is rotationally invariant, and $\Phip$ is the part we estimate 
($\Ghat$ and $\Phat$ being explicit).

In Part III the applications of the earlier results are fairly straightforward. 
For the catenoid and the critical catenoid we need some modifications to account for the noncompactness of the catenoid and the boundary of the critical catenoid. 
For the latter we follow more closely the methodology of \cite{kapwiygul} (see also \cite{kapli}) and we study the modified RLD's with an imposed Robin condition. 
Finally we remark that since the catenoid is conformally isomorphic to $\Spheq$, 
some of the families of RLD and LD solutions we use for the catenoid doubling were constructed and estimated already in \cite{kapmcg}.

%%%%%%%%%%%%%%%%%%%%%%%%%%%%%%%%%%%%%%%%%%%%%%%
%% 
%% SECTION: NOTATION 
%%
%%%%%%%%%%%%%%%%%%%%%%%%%%%%%%%%%%%%%%%%%%%%%%%

\subsection*{General notation and conventions}
\label{sub:not}
\nopagebreak

\begin{notation}
\label{NT}
For $(N, g)$ a Riemannian manifold, $S\subset N$ a two-sided hypersurface equipped with a (smooth) unit normal $\nu$, and $\Omega\subset S$, 
we introduce the following notation where any of $N$, $g$, $S$ or $\Omega$ may be omitted when clear from context. 
\begin{enumerate}[label={(\roman*)}]
\item 
\label{isom}
We denote by $\Isom(N,g)$ the group of isometries of $(N,g)$. 
\item 
For $A\subset N$ we write $\dbold^{N, g}_A$ for the distance function from $A$ with respect to $g$ 
and we define the \emph{tubular neighborhood of $A$ of radius $\delta>0$} by
$  D^{N, g}_A(\delta):=\left \{p\in N:\dbold^{N, g}_A(p)<\delta\right\}. $  
If $A$ is finite we may just enumerate its points in both cases, for example if $A=\{q\}$ we write $\dbold_q(p)$. 
\item  We denote by $\exp^{N, g}$ the exponential map, 
by $\dom(\exp^{N,g}) \subset TN$ its maximal domain, and by $\inj^{N, g}$ the injectivity radius of $(N,g)$. 
Similarly by $\exp_p^{N, g}$, $\dom(\exp_p^{N,g})$ and $\inj_p^{N, g}$ the same at $p\in N$. 
\item 
If $h$ and $k$ are symmetric covariant two-tensors on $N$, 
we define a two-tensor $h *_{g,N} k$  by requesting  that in any local coordinates $(h *_{g,N} k)_{ij} = h_{ik} g^{kl} k_{lj}$.  
\item 
We denote the curvature endomorphism by $\Rend^{N,g}$,  
the curvature tensor by $\Rm^{N,g}$,  
and the Ricci tensor by $\Ric^{N,g}$, 
and we follow the convention 
$\Rend^{N,g} (X, Y)Z := [\nabla_X, \nabla_Y] Z {- \nabla_{[X, Y]} Z}$ for 
$X, Y,Z \in C^\infty(T S)$.   
We also define an endomorphism field $\Rend^{N,g}_Y : = \Rend(Y, \cdot)Y$ and a tensor $\Rm^{N,g}_Y: = \langle R(Y, \cdot)Y, \cdot\rangle$;  
note that then $\Ric(Y,Y) = - \tr_{g, N} \Rm_Y$. 
\item 
\label{N:A} 
We let $A^S$ and $B^S$ denote respectively the scalar-valued second fundamental form and Weingarten map of $S$, 
and $\Lcal_S$ the second variation of area or Jacobi operator 
(well known also to provide the linearization of the mean curvature change as in \ref{Lquad}),  
defined by 
($\forall X, Y\in C^\infty(T S)$)  
\begin{equation}
\label{E:Lcal}
\begin{gathered} 
A^S(X, Y) :=\langle \nabla_X  Y, \nu\rangle =  \langle B^S(X), Y\rangle, \quad
B^S(X) := -\nabla_X \nu,
\\ 
\Lcal_S:=\Delta_S + |A^S|^2+ \Ric(\nu, \nu). 
\end{gathered} 
\end{equation}

\item 
\label{Dpertimm}
Given also a map $X: \Sigma \rightarrow N$ and a vector field $V$ defined along $X$ satisfying $V_{X(p)} \in \dom(\exp^{N,g})$ for each $p \in \Sigma$, 
we define 
\begin{align*}
\pert_V X = 
\pert^{N, g}_V X : \Sigma \rightarrow N 
\quad \text{ by } \quad 
\pert_V X = 
\pert^{N, g}_V X := \exp^{N, g} \circ V \circ X.
\end{align*}

\item 
\label{dgraph}
Given also a function $f:S\to\R$ 
satisfying $|f|(p) < \inj_p^{N, g}$ $\forall p\in\Omega$, 
we use the notation 
$$ 
X^{N, g}_{\Omega,f} := \pert^{N, g}_{f\nu} I_\Omega^N,  
\qquad \quad 
\graph^{N,g}_\Omega (f) := X^{N, g}_{\Omega,f} (\Omega),  
$$ 
where $I_\Omega^N$ denotes the inclusion map of $\Omega$ in $N$. 
\hfill $\square$ 
\end{enumerate}
\end{notation}

\begin{notation} 
\label{NEuc} 
We denote by $g_{Euc}$ the standard Euclidean metric on $\R^n$ and by $g_{\Sph}$ the induced standard metric on $\Sph^n:=\{v\in\R^n: |v|=1\}$. 
By standard notation $O(n):= \Isom(\Sph^{n-1},g_\Sph)$ (recall \ref{NT}\ref{isom} ). 
\qed 
\end{notation} 

Our arguments require extensive use of cut-off functions and the following will 
be helpful. 
\begin{definition}
\label{DPsi} 
We fix a smooth function $\Psi:\R\to[0,1]$ with the following properties:
\begin{enumerate}[label=\emph{(\roman*)}]
\item $\Psi$ is nondecreasing.

\item $\Psi\equiv1$ on $[1,\infty)$ and $\Psi\equiv0$ on $(-\infty,-1]$.

\item $\Psi-\frac12$ is an odd function.
\end{enumerate}
\end{definition}

Given $a,b\in \R$ with $a\ne b$,
we define smooth functions
$\psicut[a,b]:\R\to[0,1]$
by
\begin{equation}
\label{Epsiab}
\psicut[a,b]:=\Psi\circ L_{a,b},
\end{equation}
where $L_{a,b}:\R\to\R$ is the linear function defined by the requirements $L_{a,b}(a)=-3$ and $L_{a,b}(b)=3$.

Clearly then $\psicut[a,b]$ has the following properties:
\begin{enumerate}[label={(\roman*)}]
\item $\psicut[a,b]$ is weakly monotone.

\item 
$\psicut[a,b]=1$ on a neighborhood of $b$ and 
$\psicut[a,b]=0$ on a neighborhood of $a$.

\item $\psicut[a,b]+\psicut[b,a]=1$ on $\R$.
\end{enumerate}

Suppose now we have two sections $f_0,f_1$ of some vector bundle over some domain $\Omega$.
(A special case is when the vector bundle is trivial and $f_0,f_1$ real-valued functions).
Suppose we also have some real-valued function $d$ defined on $\Omega$.
We define a new section 
\begin{equation}
\label{EPsibold}
\Psibold\left [a,b;d \, \right](f_0,f_1):=
\psicut[a,b\, ]\circ d \, f_1
+
\psicut[b,a]\circ  d \, f_0.
\end{equation}
Note that
$\Psibold[a,b;d\, ](f_0,f_1)$
is then a section which depends linearly on the pair $(f_0,f_1)$
and transits from $f_0$
on $\Omega_a$ to $f_1$ on $\Omega_b$,
where $\Omega_a$ and $\Omega_b$ are subsets of $\Omega$ which contain
$d^{-1}(a)$ and $d^{-1}(b)$ respectively,
and are defined by
$$
\Omega_a=d^{-1}\left((-\infty,a+\frac13(b-a))\right),
\qquad
\Omega_b=d^{-1}\left((b-\frac13(b-a),\infty)\right),
$$
when $a<b$, and 
$$
\Omega_a=d^{-1}\left((a-\frac13(a-b),\infty)\right),
\qquad
\Omega_b=d^{-1}\left((-\infty,b+\frac13(a-b))\right),
$$
when $b<a$.
Clearly if $f_0,f_1,$ and $d$ are smooth then
$\Psibold[a,b;d\, ](f_0,f_1)$
is also smooth.

In comparing equivalent norms or other quantities we will find the following notation useful. 
\begin{definition}
\label{Dsimc}
We write $a\Sim_c b$ to mean that 
$a,b\in\R\setminus\{0\}$, $c\in(1,\infty)$, and $\frac1c\le \frac ab \le c$. 
\end{definition}

We use the standard notation $\left\|u: C^{k,\beta}(\,\Omega,g\,)\,\right\|$ 
to denote the standard $C^{k,\beta}$-norm of a function or more generally
tensor field $u$ on a domain $\Omega$ equipped with a Riemannian metric $g$.
Actually the definition is completely standard only when $\beta=0$
because then we just use the covariant derivatives and take a supremum
norm when they are measured by $g$.
When $\beta\ne0$ we have to use parallel transport along geodesic segments 
connecting any two points of small enough distance
and this may be a complication if small enough geodesic balls are not convex.
In this article we take care to avoid situations where such a complication
may arise and so we will not discuss this issue further.

We adopt the following notation from \cite{kap} for weighted H\"{o}lder norms.  
\begin{definition}
\label{dwHolder}
Assuming that $\Omega$ is a domain inside a manifold,
$g$ is a Riemannian metric on the manifold, 
$k\in \N_0$, 
$\beta\in[0,1)$, $u\in C^{k,\beta}_{\mathrm{loc}}(\Omega)$ 
or more generally $u$ is a $C^{k,\beta}_{\mathrm{loc}}$ tensor field 
(section of a vector bundle) on $\Omega$, 
$\rho,f:\Omega\to(0,\infty)$ are given functions, 
and that the injectivity radius in the manifold around each point $x$ in the metric $\rho^{-2}(x)\,g$
is at least $1/10$,
we define
$$
\left\|u: C^{k,\beta} ( \Omega,\rho,g,f)\right\|:=
\sup_{x\in\Omega}\frac{\,\left\|u:C^{k,\beta}(\Omega\cap B_x, \rho^{-2}(x)\,g)\right\|\,}{f(x) },
$$
where $B_x$ is a geodesic ball centered at $x$ and of radius $1/100$ in the metric $\rho^{-2}(x)\,g$.
For simplicity we may omit any of $\beta$, $\rho$, or $f$, 
when $\beta=0$, $\rho\equiv1$, or $f\equiv1$, respectively.
\end{definition}

$f$ can be thought of as a ``weight'' function because $f(x)$ controls the size of $u$ in the vicinity of
the point $x$.
$\rho$ can be thought of as a function which determines the ``natural scale'' $\rho(x)$
at the vicinity of each point $x$.
Note that if $u$ scales nontrivially we can modify appropriately $f$ by multiplying by the appropriate 
power of $\rho$.  Observe from the definition the following multiplicative property: 
\begin{equation}
\label{E:norm:mult}
\left\| \, u_1 u_2 \, : C^{k,\beta}(\Omega,\rho,g,\, f_1 f_2 \, )\right\|
\le
%\\ 
C(k)\, 
\left\| \, u_1 \, : C^{k,\beta}(\Omega,\rho,g,\, f_1 \, )\right\|
\,\,
\left\| \, u_2 \, : C^{k,\beta}(\Omega,\rho,g,\, f_2 \, )\right\|.
\end{equation}

\begin{definition}[Tilting rotations $\RRR_\kappa$]
\label{dRk}
\label{dTiltop}
Let $\kappa : \Etwo \rightarrow (\Etwo)^\perp$ be a linear map, where $\Etwo$ is a two-di\-men\-sion\-al subspace of a three-dimensional Euclidean vector space $\Ethree$ 
and $(\Etwo)^\perp$ denotes the orthogonal complement of $\Etwo$ in $\Ethree$.  
By choosing a unit normal vector to $\Etwo$, we can identify $\kappa$ with an element of $(\Etwo)^*$. 
We define $\RRR_\kappa$ to be 
the rotation of $\Ethree$ characterized by 
$\RRR_\kappa (P) = \graph_P\kappa$ for $P\subset\Etwo$ a half-plane with $\partial P=\ker \kappa$ when $\kappa \neq 0$,  
or the identity $\text{\emph{Id}}_{\Ethree}$ when $\kappa = 0$.  

Given also a function $u:\Omega\to\R$ 
on $\Omega\subset \Etwo$ such that 
$\RRR_\kappa ( \graph^{\Ethree}_\Omega \!\! u)$ is graphical over $\Etwo$, 
we define $\tilt_\kappa( u) : \Omega' \to \R$, 
with $\Omega' \subset \Etwo$ a ``shift'' of $\Omega$, 
by requesting 
$ 
\RRR_\kappa ( \graph^{\Ethree}_\Omega \!\! u) = \graph^{\Ethree}_{\Omega'} \tilt_\kappa( u). 
$ 
\end{definition}

\section*{Part I: Generalizing the Linearized Doubling Approach}

%%%%%%%%%%%%%%%%%%%%%%%%%%%%%%%%%%%%%%%%%%%%%%%
%% 
%% TILTED CATENOIDS 
%%
%%%%%%%%%%%%%%%%%%%%%%%%%%%%%%%%%%%%%%%%%%%%%%%
\section[Tilted catenoids]{Tilted catenoids}
\label{S:cat}

\subsection*{Untilted catenoids in $T_p N$.}
\nopagebreak

\begin{convention}
\label{background}
In Parts I and II of this article we assume given a surface $\Sigma$ 
smoothly immersed in a Riemannian three-manifold $(N,g)$. 
To facilitate the discussion we will assume, 
unless stated otherwise,  
that $\Sigma$ is connected embedded minimal and two-sided with a chosen smooth unit normal $\nu_\Sigma$.  
Note however that most results can be modified to apply to situations where some or all of these assumptions do not apply. 
As in \ref{Ddoubling} we will call $\Sigma$ the \emph{base surface} and the data $(\Sigma,N,g)$ the \emph{background}, 
and we will not mention the dependence of constants on it. 
\end{convention}

\begin{definition}[Fermi coordinates about $\Sigma$]
\label{dgeopolar}
Given $p\in \Sigma$ 
we choose for $(T_pN, \left. g\right|_p)$ Cartesian coordinates $(\xxtilde,\yytilde,\zztilde) : T_pN\to \R^3$ 
satisfying $(\xxtilde,\yytilde,\zztilde)\circ \nu_\Sigma(p) =(0,0,1)$; 
clearly then $\left. g\right|_p  = d\xxtilde^2 + d\yytilde^2 + d\zztilde^2 $ on $T_pN$ 
and moreover $(\xxtilde,\yytilde)$ restricted to $T_p\Sigma$ are Cartesian coordinates on $T_p\Sigma\subset T_pN$.

Following \ref{dexp}, 
we define $U:= D^{\Sigma, N, g}_p ( \, \inj^{\Sigma, N, g}_p /2  \, ) \subset N $ and $U^\Sigma:= D^{\Sigma, g}_p ( \, \inj^{\Sigma, N, g}_p /2 \, ) = \Sigma \cap U $ 
to simplify the notation,    
and then we extend $\zz$ to a coordinate system $(\xx,\yy,\zz)$ on  
$U$ by requesting 
$(\xx,\yy,\zz ) = (\xxtilde,\yytilde,\zztilde) \circ ( \exp^{\Sigma, N, g}_p  )^{-1} $ on $U$.    
We define also a Riemannian metric $\gpeuc$ on $ U $ and symmetric two-tensor fields $h$ on $U$ 
and $h^\Sigma $ on $U^\Sigma $ by  
\[ \gpeuc:= ( \exp^{\Sigma, N, g}_p )_* \left. g\right|_p = d\xx^2 + d\yy^2 +d\zz^2 , \qquad 
 h: = g - \gpeuc, \qquad  
h^\Sigma := \left. h\right|_{U^\Sigma}.
 \]

Finally we define \emph{Fermi cylindrical coordinates} $(\rr,\theta,\zz)$ on 
$\breve{U}:=U\setminus \{ \xx = \yy =0 \}$ by requesting 
$\xx = \rr \cos \theta$ and $\yy  = \rr \sin \theta$; 
we have then 
$\gpeuc= d\rr^2 + \rr^2 d\theta^2 + d\zz^2$ on 
$\breve{U}$ 
and that 
$\evec_\rr:= \partial_\rr$, $\evec_\theta := \partial_\theta / \, |\partial_\theta |_{\gpeuc } $,  and $\evec_\zz:= \partial_\zz$ 
define an orthonormal frame 
$\{ \evec_\rr, \evec_\theta, \evec_\zz \}$ on 
$( \breve{U} , \gpeuc)$.  
\end{definition}

\begin{notation}
\label{Ecyl}
Let 
$\cyl : = \Sph^1\times \R \subset \R^2\times\R$ be the standard cylinder 
and $\chi$ the standard product metric on $\cyl$; 
we have then   
$\Isom(\cyl,\chi) = O(2)\times\Isom(\R,g_{Euc})$ (recall \ref{NEuc}). 
Let $(\vartheta,\sss)$ be the standard coordinates on $\cyl$ 
defined by considering the covering $\Thetacyl:\R^2\to\cyl$ given by 
$
\Thetacyl(\vartheta,\sss) := (\cos\vartheta,\sin\vartheta, \sss) 
$
so that 
$\chi = d\vartheta^2 + d\sss^2$.  
Finally, for $\sbar \in \R$, we define a \emph{parallel circle} $\cyl_{\sbar} :=  \{ \Thetacyl(\vartheta, \sbar) : \vartheta\in\R \}\subset \cyl$ 
and for $I\subset \R$, we define $\cyl_I := \cup_{\sbar\in I} \cyl_{\sbar}$. 
\hfill $\square$ 
\end{notation}

Given $p\in N$ and $\tau\in\R_+$, 
we define a catenoid $\tildecat[p, \tau] \subset T_pN\simeq \R^3 $ of size $\tau$ and its parametrization 
$X_{\tildecat} = X_{\tildecat}[p,\tau] : \cyl \rightarrow \tildecat[p, \tau]$ 
by taking 
(recall \ref{dgeopolar})   
\begin{equation}
\label{Ecatenoid}
\begin{gathered}
\rho(\sss):=\tau\cosh \sss, \qquad \zz(\sss):=\tau\, \sss, \quad \text{and} \\ 
(\xxtilde,\yytilde,\zztilde)\circ X_{\tildecat}\circ \Thetacyl(\vartheta, \sss) = (\rho(\sss) \cos \vartheta , \rho(\sss) \sin \vartheta, \zz(\sss)).  
\end{gathered}
\end{equation}
From now on we will use $X_{\tildecat}$ to identify $\tildecat[p, \tau]$ with $\cyl$; $\vartheta$ and $\sss$ can then be considered as coordinates on $\tildecat[p,\tau]$  
and by \eqref{Ecatenoid} and \ref{Ecyl} we clearly have 
\begin{align}
\label{Ecatmetric}
g_\tildecat := X_{\tildecat}^*  ( \left. g\right|_p) = \rho^2(\sss)  \left( d \vartheta^2 + d\sss^2 \right) = \rho^2\,  \chi.
\end{align}
Alternatively 
$(\xxtilde,\yytilde,\zztilde)^{-1} \{(\rr\cos\vartheta,\rr\sin\vartheta, \phicat(\rr) \,) \, : \, (\rr,\vartheta) \in [\tau,\infty) \times \R\,\} \subset T_pN $
is the part above the waist of $\tildecat[p, \tau] $, where 
the function $\phicat = \phicat[\tau]:[\tau,\infty)\to\R$ is defined by

\begin{equation}
\begin{aligned}
\label{Evarphicat}
\phicat[\tau](\rr):=
\tau\arccosh \frac \rr \tau &=
\tau\left(\log \rr-\log \tau+\log\left(1+\sqrt{1-{\tau^2}{\rr^{-2}}\,}\right)\right)
 \\
&=
\tau\left(\log \frac { 2 \rr } {\tau} 
+
\log\left(\frac12+\frac12\sqrt{1-\frac{\tau^2}{\rr^{2}}\,}\right)\right).
\end{aligned}
\end{equation}

By direct calculation or balancing considerations we have for future reference that
\begin{equation}
\label{Ecatder}
\frac{\partial\phicat}{\partial\rr_{\phantom{cat}}}(\rr) 
=
\frac\tau{\sqrt{\rr^2-\tau^2\,}}.
\end{equation}

\begin{lemma}[Area on untilted catenoidal bridges]
\label{Lcata0}
For any $\tau>0$ and any $r\geq \tau$, the area $|\tildecat(r)|$ of $\tildecat(r) : = \tildecat[p, \tau]\cap \Pi^{-1}_{T_p \Sigma} D^{T_p \Sigma}_{0}(r)$ satisfies 
\begin{align*} 
|\tildecat(r)| 
= 
\sqrt{1 - \frac{\tau^2}{r^2}} \left( 2|D^{T_p \Sigma}_{0}(r)| + \int_{\partial D^{T_p \Sigma}_{0}(r)} \phicat \frac{\partial \phicat}{\partial \eta}dl \right).
\end{align*}
\end{lemma}
\begin{proof}
Direct calculation using \eqref{Ecatder}.
\end{proof}

\subsection*{Tilted catenoids in $T_p N$.}
\nopagebreak

\begin{definition}[Spaces of affine functions] 
\label{DVcal}
Given $p\in\Sigma$ let $\val[p]\subset C^\infty(T_p\Sigma) $ be the space of \emph{affine functions on $T_p\Sigma$}.  
Given a function $v$ which is defined on a neighborhood of $p$ in $\Sigma$ and is differentiable at $p$ we define $\Ecalunder_p v:= v(p)+ d_pv\in\val[p]$. 
$\forall\kappaunder\in\val[p]$ let $\kappaunder=\kappaperp+\kappatilde$ be the unique decomposition with $\kappaperp\in\R$ and $\kappatilde\in T^*_p\Sigma$  
and let $|\kappaunder| := |\kappaperp| + |\kappatilde|$.  
We define for later use $\val[L] := \bigoplus_{p\in L} \val[p]$ for any finite $L\subset\Sigma$.  
\end{definition}

\begin{convention}
\label{con:alpha}
\label{Akappa} 
We fix now some $\alpha >0$ which we will assume as small in absolute terms as needed.  
In the rest of this section we assume that $\tau\in\R_+$ is as small as needed in terms of $\alpha$ only and that 
$\kappaunder\in\val[p]$ satisfies $|\kappaunder| < \tau^{1+ \alpha/6}$.
\end{convention}

\begin{definition}[Tilted catenoidal bridges]
\label{dphicattilt}
Given 
$\kappaunder\in \val[p]$ we  
define $\phicatpm[\tau, \kappaunder]: T_p \Sigma \setminus D_0^{T_p\Sigma} (9\tau) \rightarrow \R$ by 
$\phicatpm[\tau, \kappaunder] : = \text{\emph{Tilt}}_{\pm \kappatilde} (  \phicat[\tau] \circ \dbold_0^{T_p\Sigma} ) \pm  \kappaperp$  
in the notation of Definitions \ref{dTiltop} and \ref{DVcal}, where in \ref{dTiltop} we take $\Etwo = T_p \Sigma$, $\Ethree = T_p N$, and the normal vector to $\Etwo$ to be $\nu$.
\end{definition}

\begin{lemma}[Tilted catenoid asymptotics]
\label{Ltcest}
For $k\in\N$, $\tau\in\R_+$ and $\kappaunder\in\val[p]$ as in \ref{con:alpha} we have 
\begin{equation*} 
\left\| \phicatp[ \tau, \kappaunder] - \tau \log ( { 2 \rr } / {\tau} ) - \kappaunder : 
C^{k}\left( D_0^{T_p\Sigma} (8\tau^\alpha) \setminus D_0^{T_p\Sigma} (9 \tau), \rr, g, \rr^{-2}\right)\right\|  
%\\ 
\le C(k)(|\kappatilde|+\tau)^3. 
\end{equation*} 
\end{lemma}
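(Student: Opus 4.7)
The plan is to derive an explicit formula for $\phicatp[\tau, \kappaunder]$ in adapted coordinates, decompose it against the target quantity, and estimate each piece, ultimately passing to $C^k$ by scale invariance of the weighted norm.

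First I fix orthonormal linear coordinates $(X, Y)$ on $T_p\Sigma$ so that $\kappatilde(X, Y) = cX$ with $c = |\kappatilde|$, and extend to $(X, Y, Z)$ on $T_pN$ with $Z$ along $\nu_\Sigma(p)$. In these coordinates the tilting rotation $\RRR_{\kappatilde}$ from Definition \ref{dRk} has the explicit form
\[
\RRR_{\kappatilde}(X, Y, Z) = \bigl((X - cZ)/\sqrt{1+c^2},\, Y,\, (cX + Z)/\sqrt{1+c^2}\bigr),
\]
as one verifies by checking that the plane $\{Z = cX\}$ is the image of $\{Z = 0\}$. Writing $u_0 := \phicat[\tau] \circ \dbold_0^{T_p\Sigma}$, applying $\RRR_{\kappatilde}$ to the graph of $u_0$ and translating vertically by $\kappaperp$ yields after a short algebraic simplification the clean identity
\[
\phicatp[\tau, \kappaunder](\xunder') = \kappaunder(\xunder') + \sqrt{1+c^2}\, \phicat[\tau](|\xunder|),
\]
where $\xunder = (X, Y)$ is determined implicitly from $\xunder' = (X', Y')$ by $X = X'\sqrt{1+c^2} + c\, u_0(\xunder)$ and $Y = Y'$.

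Next, using \eqref{Evarphicat}, decompose $\phicat[\tau](\rr_0) = \tau \log(2\rr_0/\tau) + R(\rr_0)$ with $R(\rr_0) := \tau \log\bigl(\tfrac12 + \tfrac12\sqrt{1 - \tau^2/\rr_0^2}\bigr)$. On the domain the variable $t := \tau/\rr_0$ lies in $(0, 1/9]$, so the Taylor expansion $R = -\tau^3/(4\rr_0^2) + O(\tau^5/\rr_0^4)$ and the smoothness of $R$ as a function of $t^2$ yield, via scale-invariance of $\rr^{-2}\gpeuc$, the estimate $\|R \circ \dbold_0 : C^k(\cdot,\, \rr,\, \gpeuc,\, \tau^3 \rr^{-2})\| \leq C(k)$. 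Setting $\rr := |\xunder'|$ and $\rr_0 := |\xunder|$, the identity above rearranges to
\[
\phicatp - \kappaunder - \tau \log(2\rr/\tau) = \tau(\sqrt{1+c^2} - 1) \log(2\rr_0/\tau) + \tau \log(\rr_0/\rr) + \sqrt{1+c^2}\, R(\rr_0).
\]
Squaring the implicit equation gives $\rr_0^2 - \rr^2 = c^2 X'^2 + 2cX'\sqrt{1+c^2}\, u_0 + c^2 u_0^2$, so $|\log(\rr_0/\rr)| \lesssim cu_0/\rr + c^2$. Combined with $u_0 \lesssim \tau |\log(2\rr/\tau)|$, the bound $c \leq \tau^{1+\alpha/6}$, and a direct check at the two endpoints $\rr \sim \tau$ and $\rr \sim \tau^\alpha$, each of the three terms is pointwise $\leq C\tau^3/\rr^2$ in the region, giving the $C^0$ part of the bound $C(|\kappatilde|+\tau)^3 \rr^{-2}$.

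The $C^k$ extension follows by analogous computation on scaled derivatives, using that the weighted norm is invariant under global rescaling of $T_p\Sigma$; thus derivative estimates reduce to uniform bounds on balls of fixed size in the rescaled metric $\rr^{-2}\gpeuc$. On each such ball, the change of variables $\xunder' \mapsto \xunder$ differs from the identity by a term of rescaled size $O(c) = O(\tau^{1+\alpha/6})$ (controlled since $|\nabla u_0|_{\gpeuc} \lesssim \tau/\rr_0$), and both $\log(2\rr_0/\tau)$ and $R(\rr_0)$ have bounded scaled $C^k$ seminorms. Differentiating the decomposition via the chain rule and tracking the $C^k$-bounds then propagates the pointwise estimates above. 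The main technical obstacle is the regime $\rr \sim \tau$, where $u_0/\rr$ does not decay and the tilt corrections are borderline; the estimates close only because each term carries a sufficient power of $c$, which is precisely what the constraint $|\kappaunder| < \tau^{1+\alpha/6}$ of Convention \ref{con:alpha} ensures.
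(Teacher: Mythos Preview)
Your proof is correct and takes a genuinely different route from the paper's. The paper does not work with the explicit rotation formula or the implicit change of variables at all; instead it invokes the abstract tilted-graph estimate of Lemma~\ref{Ltiltgap} (Appendix~\ref{A:tilt}), which bounds $\|u_\kappa - u - \kappa : C^k\|$ by $C(k)(1+R_\Omega)(\|u:C^{k+1}\|+|\kappa|)^3$ for any Lipschitz graph $u$. Applying this with $u=\phicat[\tau]\circ\dbold_0$, whose scaled $C^{k+1}$ norm on the annulus is $\lesssim\tau|\log\tau|$, and combining with the untilted estimate~\eqref{Lcatenoid} plus the trivial bound $\rr^2\le 64\tau^{2\alpha}$ yields the result in three lines. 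Your approach, by contrast, derives the closed-form identity $\phicatp-\kappaunder=\sqrt{1+c^2}\,\phicat[\tau](|\xunder|)$ and then decomposes and estimates term by term; this is more self-contained (no appendix lemma needed) and gives somewhat finer control over individual error sources, at the cost of longer bookkeeping---especially in the $C^k$ extension, where you must track the implicit map $\xunder'\mapsto\xunder$ through the chain rule rather than having it absorbed into a single black-box inequality. One small inaccuracy: the rescaled size of $\xunder-\xunder'$ is $O(c|\log\tau|)$ rather than $O(c)$ (from the $cu_0/\rr$ contribution near $\rr\sim\tau$), though this is harmless since $c\le\tau^{1+\alpha/6}$.
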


\begin{proof}
If $\kappaunder$ vanishes it is enough to prove 
the following, which is true $\forall \tau\in\R_+$ 
by \eqref{Evarphicat} and \eqref{Ecatder} {\cite[Lemma 2.25]{kap}}. 
\begin{equation} 
\label{Lcatenoid}
\|\, \phicat[\tau] -
\tau \log ( { 2 \rr } / {\tau} )
\,
: C^{k}(\,
(9 \tau,\infty)\,,\,
\rr, d\rr^2,\rr^{-2}\,)\,\|
\le 
\, C(k) \, \tau^3. 
\end{equation} 

Clearly 
$
\|\, 
\tau \log ( { 2 \rr } / {\tau} )
\,
: C^{k}(\,
(9 \tau,8\tau^\alpha) \,,\,
\rr, d\rr^2\,)\,\|
\le 
\, C(k) \, \tau \, |\log\tau| . 
$
Combining with \eqref{Lcatenoid}, 
using \ref{dwHolder}, scaling, applying \ref{Ltiltgap}, and taking in this proof 
$\Omega:= D_0^{T_p\Sigma} (8\tau^\alpha)\setminus D_0^{T_p\Sigma} (9 \tau)$,  
we conclude  
\begin{align*}
\| \phicatp[ \tau, \kappatilde] - \phicat[\tau] - \kappatilde : C^{k}\left( \Omega, \rr, g\right)\| 
\le C(k) (\tau |\log \tau| + |\kappatilde|)^3. 
\end{align*}
Using that $\rr^2\le 8^2 \tau^{2\alpha}$ on $\Omega$, 
combining with \eqref{Lcatenoid}, 
and observing that $\kappaperp$ cancels out, we conclude 
\begin{equation*} 
\left\| \phicatp[ \tau, \kappaunder] - \tau \log ( { 2 \rr } / {\tau} ) - \kappaunder : 
C^{k}\left( \Omega, \rr, g, \rr^{-2}\right)\right\|  \le 
%\\ 
C(k) \, \left(\tau^3 + \tau^{2\alpha} (\tau |\log \tau| + |\kappatilde|)^3 \right) ,
\end{equation*} 
which implies the result by assuming $\tau$ small enough as in \ref{Akappa}. 
\end{proof}

\begin{definition}[Tilted catenoids in $T_p N$ and catenoidal bridges in $N$]
\label{DbM}
Given $p\in \Sigma$, $x\in[0,4]$ (where $x$ may be omitted when $x = 0$), $\tau>0$, and $\kappaunder = \kappaperp + \kappatilde   \in \val[p] $, we define 
an elevated and tilted by $\kappaunder$ \emph{model catenoid} in $T_pN$ of size $\tau$,  
a corresponding \emph{catenoidal bridge} in $N$ (slightly reduced if $x>0$), 
and its \emph{core} (slightly expanded if $x>0$), 
as follows (recall \ref{Ecyl} and \ref{dRk}),   
where $b$ is a large constant to be chosen later independently of the $\tau$ and $\kappaunder$ parameters.  
\begin{align*} 
\tildecat[p, \tau, \kappaunder] \, &:= \, X_{\tildecat}[p, \tau, \kappaunder] \, ( \cyl) \subset T_p N,  
\\ 
\cat_x[p, \tau, \kappaunder] \, &:= \, X_{\cat}[p, \tau, \kappaunder] 
\left(\cyl\left[ \tau, 2  \tau^{\alpha} / (1+x) \right] \right)  
\subset N,   
\\
\text{and}  \quad 
\Kcore_x[p, \tau, \kappaunder] \, &:= \, X_{\cat}[p, \tau, \kappaunder] 
\left(\cyl\left[ \tau, b(1+x) \tau \right] \right)  
\subset \cat_x[p, \tau, \kappaunder] 
\subset N,   
\\
\text{where} \quad 
X_{\tildecat}[p, \tau, \kappaunder] &:= \RRR_\kappa \circ X_{\tildecat}[p, \tau]+ \kappaperp \nu_\Sigma(p) : \cyl \rightarrow T_p N ,
\\
X_{\cat}[p, \tau, \kappaunder] &:= \exp^{\Sigma, N, g}_p \circ X_{\tildecat}[p, \tau, \kappaunder] : \cyl \rightarrow N ,   
\\
\text{and} \qquad 
\cyl[\tau,r] &:= \Thetacyl \left( \{( \vartheta , \sss ) \in \R^2 : \tau \cosh \sss < r \} \right) 
\qquad \forall r\in\R_+. 
\end{align*} 
Finally using the above maps we take the coordinates $(\vartheta, \sss)$ on the cylinder as in \ref{Ecyl} to be coordinates on $\tildecat[p, \tau, \kappaunder]$ and $\cat[p, \tau, \kappaunder]$ also,
where we also define $\rho(\sss) := \tau \cosh \sss$ as in \ref{Ecatenoid}.
\end{definition}

\begin{remark}
\label{R:comp} 
Note that Definitions \ref{dphicattilt} and \ref{DbM} are compatible in the sense that (recall also \ref{NT}\ref{dgraph})
$$ 
\graph_\Omega^{N}\left( \phicatp[\tau, \kappaunder;\Omega]\right) \cup 
\graph_\Omega^{N}\left(- \phicatm[\tau, \kappaunder;\Omega] \right)
\cup \cat[ p, \tau, \kappaunder] \subset N,  
$$ 
is a connected smooth surface with boundary; 
where (only) here we use 
$\Omega := D^\Sigma_p(8\tau^\alpha)\setminus D^\Sigma_p(9\tau)$  
and  
$
\phicatpm[\tau, \kappaunder;\Omega] := \phicatpm[\tau, \kappaunder] \circ (\exp^\Sigma_p)^{-1} :\Omega\to\R.
$
\qed
\end{remark}

\begin{lemma}[Area on tilted catenoids in $T_pN$]
\label{Lcata1}
Fix $\tau>0$ and $\rtop = \tau^{3/4}$.  Then the area $|\tildecat(\rtop)|$ of $\tildecat(\rtop) : = \tildecat[p, \tau, \kappaunder] \cap  \Pi^{-1}_{T_p \Sigma}D^{T_p \Sigma}_{0}(\rtop)$, satisfies 
\begin{equation*}
|\tildecat(\rtop) | 
= 
2|D^{T_p \Sigma}_{0}(\rtop)| - \pi \tau^2 + \frac{1}{2}\int_{\partial D^{T_p \Sigma}_{0}(\rtop)}\left( \phicat^+\frac{\partial\phicat^+}{\partial \eta}+ \phicat^- \frac{\partial \phicat^-}{\partial \eta}\right) dl 
%\\ 
+ 
O(\tau^{5/2}|\log \tau|),
\end{equation*}
where $\phicat^\pm = \phicat^\pm[\tau, \kappaunder]$ is as in \ref{dphicattilt}. 
\end{lemma}

\begin{proof}
In this proof, denote by $\tildecat[p, \tau](\rtop)$ for $\tildecat(\rtop)$ as in \ref{Lcata0} and by $\tildecat[p, \tau, \kappaunder](\rtop)$ for $\tildecat(\rtop)$ as in \ref{Lcata1}.  
We first compare the areas $|\tildecat[p, \tau](\rtop)|$ and $|\tildecat[p, \tau, \kappaunder](\rtop)|$.  
Using \ref{DbM}, we estimate the distance between any point on $\partial D^{T_p \Sigma}_0(\rtop)$ and its nearest point on 
$\Pi_{T_p \Sigma} ( \RRR_{-\kappa}(\partial \tildecat[p, \tau, \kappaunder](\rtop)))$ is bounded by 
$C( \rtop |\kappaunder|^2+ |\kappaunder| \tau |\log \tau|)$.  It is not difficult to see from this and the bound $|\kappaunder| < \tau^{1+ \alpha/6}$ from \ref{Akappa} that 
\begin{align}
\label{Etildecatcomp}
|\tildecat[p, \tau](\rtop)| =  |\tildecat[p, \tau, \kappaunder](\rtop)| + O(\tau^{11/4} | \log \tau| ).
\end{align}
Next, using \ref{Ltcest} and \ref{Lcatenoid} to expand $\phicat^\pm$ and \ref{Akappa} to estimate $|\kappaunder|$, it follows that
\begin{equation}
\label{Esflux}
\int_{\partial D^{T_p \Sigma}_0(\rtop)} \phicat \frac{\partial \phicat}{\partial \eta} dl =  
\frac{1}{2}\int_{\partial D^{T_p \Sigma}_0(\rtop)}\left( \phicat^+ \frac{\partial \phicat^+}{\partial \eta} + \phicat^-\frac{\partial \phicat^-}{\partial \eta}\right)dl
%\\ 
+ O(\tau^{2+3/4}).
\end{equation}
Finally, we have that $|D_0^{T_p \Sigma}| = 2\pi \rtop^2$ and $\sqrt{1- \tau^2/\rtop^2} = 1 - \frac{1}{2} \frac{\tau^2}{\rtop^2} + O(\frac{\tau^4}{\rtop^4})$.  The conclusion follows from combining this with \ref{Lcata0}, \eqref{Etildecatcomp} and \eqref{Esflux}. 
\end{proof}

\subsection*{Mean curvature on tilted catenoidal bridges in $N$}
\nopagebreak

The final goal of this section is to estimate the mean curvature of a tilted bridge $\cat[p, \tau, \kappaunder] \subset N$.  
We first introduce some convenient notation. 
\begin{notation} 
\label{N:H} 
We denote by $\Hcirc$ and $H$ the mean curvature of $\cat[p, \tau, \kappaunder] \subset N$ with respect to $\gpeuc$ and $g$ respectively. 
\qed 
\end{notation} 

Appendix \ref{S:A1} allows us to express $H$ in terms of $\Hcirc$ and  certain tensors defined on $\cat$;  
because the metric $\gpeuc$ is Euclidean, $\Hcirc= 0$ and the task is reduced to estimating the tensors defined on $\cat$.  
To motivate the discussion, we first consider the simplest situation in two model cases.

\begin{example}[$H$ on catenoidal bridges over $\Spheq \subset \Sph^3$] 
\label{Ex:sph} 
Let $\Spheq$ be the equatorial two-sphere in the round three-sphere $\Sph^3 \subset \R^4$.  
Given $p=(0,0, 1, 0)$, let $\cat: = \cat[p, \tau,0]$.  
From \ref{exSph} and by calculation we find that the metric $g^\cat$ and unit normal $\nu_\cat$  induced by $g$ on $\cat$ 
are given by 
\begin{gather*}
g^\cat = \rr^2( 1- \tanh^2 \sss \sin^2 \zz) d\sss^2 + \cos^2 \zz \sin^2 \rr \, d\theta^2,
\\ 
\nu_\cat  =( \tanh \sss\,  \partial_{\zz} - \sec^2\zz \sech \sss \, \partial_{\rr})/\sqrt{1+\tan^2 \zz \sech^2 \sss}, 
\end{gather*}
where $\zz = \tau \sss$ on $\cat$. 

We use the formula 
$A^\cat = \left( X^{k}_{, \alpha \beta} + \Gamma_{lm}^{k} X^l_{, \alpha} X^{m}_{, \beta}\right) g_{kn}\nu^n dx^\alpha dx^\beta$, 
where $X = X_\cat[p, \tau, 0]$ is as in \ref{DbM}, we have renamed the cylinder coordinates $(x^1, x^2): = (\sss, \theta)$, 
and Greek indices take the values $1$ and $2$ while Latin indices take the values $1,2,3$, corresponding to the coordinates $\rr, \theta, \zz$;  
and the Christoffel symbols in \ref{exSph}; to find
\begin{multline*}
\sqrt{ 1+ \tan^2 \zz \sech^2 \sss\, }  \, A^\cat = 
\left[ \tau^2 \tanh \sss \left( \tan \zz + \frac{1}{2}\sinh^2 \sss \sin 2\zz \right) - \tau\right] d\sss^2 
\\ +\frac{1}{2}(\sin 2\rr \sech \sss + \sin^2 \rr \sin 2\zz \tanh \sss) \, d\theta^2.   
\end{multline*}
Using that $ \sqrt{ 1+ \tan^2 \zz \sech^2 \sss} = 1+ O(\zz^2)$ and $\frac{1}{2}\sin 2\rr \sech \sss = \tau + O(\rr^3)$ we conclude  
$$ 
A^\cat = (1+ O(\zz^2)) \left( \tau( - d\sss^2 + d\theta^2) +  O(\rr^2 \zz)d\sss^2 + O(\rr^3+\rr^2 \zz) d\theta^2\right).
$$ 
Finally, using that $\rr^2 g^{\sss \sss} = 1+ O(\zz^2)$ and $\rr^2 g^{\theta \theta} = 1+O(\zz^2 +\rr^2)$ we estimate
\begin{align*}
\rr^2 H = O\left(\tau \zz^2 +\rr^2 |\zz| +  \tau \rr^2 \right).
\end{align*}
\end{example}

\begin{example}[$H$ on catenoidal bridges over $\T\subset\Sph^3$]
\label{Ex:tor} 
Let $\T$ be the Clifford torus in $\Sph^3 \subset\R^4 \simeq \C^2$.  
Given $p = (1/\sqrt{2}, 1/\sqrt{2}) \in \T$, let $\cat: = \cat[p, \tau, 0]$.  From \ref{exClifford}, the metric induced by $g$ on $\cat$ is
\begin{equation*}
 g^\cat = \rr^2 ( d\sss^2 + d\theta^2) 
%\\ 
+ \rr^2 \sin2 \zz \left( \tanh^2 \sss \cos 2\theta d\sss^2 - 2 \tanh \sss \sin 2\theta d\sss d\theta - \cos 2\theta d\theta^2\right),
\end{equation*}
where $\zz = \tau \sss$ and $\rr = \tau \cosh \sss$ on $\cat$.  
As in \cite[Lemma 3.18]{kapouleas:clifford} or \cite[Proposition 4.28]{Wiygul}, it follows that
\begin{align*}
\left\| \rr^2  H : C^{k}\left( \cat, \chi, \tau |\zz| + \rr^2 |\zz| + \tau^2 \right) \right\| \leq C.
\end{align*}
\end{example}

The preceding examples show that the mean curvature on a catenoidal bridge over $\Spheq \subset \Sph^3$ 
satisfies better estimates than the mean curvature on a bridge over $\T$.  
This is due to the fact that $\Spheq$ is totally geodesic while $\T$ is not.  
We will see more generally (see \ref{Ltensest}(iii) and (v) and \ref{LH}(i) below) that dominant terms in the mean curvature of a  
bridge $\cat[p, \tau, \kappaunder]$ are driven by the second fundamental form of $\Sigma$ when $\left. A^\Sigma\right|_p$ does not vanish.  
Unfortunately, the resulting estimates on $H$ will not be by themselves sufficient for our applications, 
and it will be essential to observe later in \ref{LH}(ii) and \ref{LH2}(ii) that the projection onto the first harmonics $\Hcal_1 H$ of the mean curvature $H$ of such a bridge 
(to be defined in \ref{dlowharm}) satisfies a better estimate.

For the rest of the section, fix $p\in \Sigma$ and let $(\rr, \theta, \zz)$ be cylindrical Fermi coordinates about $\Sigma$ centered at $p$ as in \ref{dgeopolar}.

\begin{lemma}
\label{Lh}
\begin{enumerate}[label=\emph{(\roman*)}]
\item $h= \Pi^*_\Sigma h^\Sigma  - 2 \zz \, \Pi^*_\Sigma A^\Sigma + \zz^2 \, \Pi^*_\Sigma(A^\Sigma*A^\Sigma + \Rm^\Sigma_{\nu}) + \zz^3 \, h^{\mathrm{err}}$, 
where $h^{\mathrm{err}}$ is a smooth symmetric two-tensor field on $U$.
\item  $\tr_{N, \gpeuc} h = \tr_{\Sigma, \gpeuc} h^\Sigma + \zz^2 \, \Pi^*_\Sigma \left(|A^\Sigma|^2- \Ric(\nu_{\Sigma}, \nu_\Sigma) \right) 
+ \zz^3 \, \tr_{N, \gpeuc} h^{\mathrm{err}}.$
\item $\| \nabla h : C^k(U, \gpeuc)\| \le C(k)$.
\item $\| h^\Sigma : C^k( U^\Sigma,\rr,  \gpeuc, \rr^2)\| \le C(k).$
\end{enumerate}
\end{lemma}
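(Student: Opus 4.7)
The plan is to work entirely in the Fermi coordinates $(\xx,\yy,\zz)$ of \ref{dgeopolar}, in which $\gpeuc = d\xx^2+d\yy^2+d\zz^2$ and the $\zz$-coordinate lines are unit-speed $g$-geodesics leaving $\Sigma$ orthogonally. Parts (iii) and (iv) come essentially for free. For (iii), both $g$ and $\gpeuc$ are smooth Riemannian metrics on the compact closure of $U$, so all covariant $\gpeuc$-derivatives of $h=g-\gpeuc$ are uniformly bounded. For (iv), the restriction of $(\xx,\yy)$ to $\Sigma$ gives Riemannian normal coordinates on $\Sigma$ at $p$ (they arise from composing with the inverse of the intrinsic exponential at $p$), whence $g^\Sigma_{\alpha\beta}(\xx,\yy)=\delta_{\alpha\beta}+O(\rr^2)$ with all derivative bounds inherited from smoothness; this yields the weighted estimate on $h^\Sigma=g^\Sigma-\gpeuc|_\Sigma$ directly.

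For (i), I would first invoke Gauss's lemma for the Fermi foliation: because the $\zz$-lines are unit-speed normal geodesics, $g(\partial_\zz,\partial_\zz)\equiv 1$ and $g(\partial_\zz,\partial_\alpha)\equiv 0$ on $U$, so $h$ has vanishing $\zz$-components and it suffices to Taylor expand each tangential component $g_{\alpha\beta}(\xx,\yy,\zz)$ in $\zz$ about $\zz=0$. The zeroth-order coefficient is $h^\Sigma_{\alpha\beta}(\xx,\yy)$, giving the $\Pi^*_\Sigma h^\Sigma$ piece. The first-order coefficient comes from $\partial_\zz\langle\partial_\alpha,\partial_\beta\rangle_g$ using $[\partial_\zz,\partial_\alpha]=0$ and $\partial_\zz|_\Sigma=\nu_\Sigma$; the paper's conventions $B^\Sigma X=-\nabla_X\nu_\Sigma$ with $A^\Sigma(X,Y)=\langle B^\Sigma X,Y\rangle$ yield $\partial_\zz g_{\alpha\beta}|_{\zz=0}=-2A^\Sigma_{\alpha\beta}$. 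The second-order coefficient is a standard Riccati-type identity: differentiating once more and using $\nabla_{\partial_\zz}\partial_\zz=0$ together with $[\nabla_{\partial_\zz},\nabla_{\partial_\alpha}]\partial_\zz=\Rend(\partial_\zz,\partial_\alpha)\partial_\zz$ produces $\tfrac12\partial_\zz^2 g_{\alpha\beta}|_{\zz=0}=(A^\Sigma\ast A^\Sigma+\Rm^\Sigma_\nu)_{\alpha\beta}$ with signs matching the paper's conventions. Taylor's theorem supplies the smooth remainder $\zz^3 h^{\mathrm{err}}$.

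For (ii), I would trace (i) against $\gpeuc$, observing that only the tangential block of $h$ is nonzero and $(\gpeuc)^{ij}=\delta^{ij}$, so $\tr_{N,\gpeuc}h=h_{\xx\xx}+h_{\yy\yy}$. The constant term gives $\tr_{\Sigma,\gpeuc}h^\Sigma$ by definition, and the quadratic coefficient gives $|A^\Sigma|^2-\Ric(\nu_\Sigma,\nu_\Sigma)$ using $\tr(A^\Sigma\ast A^\Sigma)=|A^\Sigma|^2$ together with $\tr\Rm^\Sigma_\nu=-\Ric(\nu_\Sigma,\nu_\Sigma)$ from the convention recorded in \ref{NT}(iv). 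The main subtlety I expect is the apparent linear term $-2\zz\tr_{\gpeuc|_\Sigma}A^\Sigma$: minimality of $\Sigma$ in $g$ gives $\tr_{g^\Sigma}A^\Sigma\equiv 0$ rather than $\tr_{\gpeuc|_\Sigma}A^\Sigma\equiv 0$, but expanding $(g^\Sigma)^{-1}-(\gpeuc|_\Sigma)^{-1}=-(g^\Sigma)^{-1}h^\Sigma(\gpeuc|_\Sigma)^{-1}$ and invoking (iv) shows $\tr_{\gpeuc|_\Sigma}A^\Sigma=O(\rr^2)$, so this contribution is of mixed order $O(\zz\rr^2)$ and can be absorbed into an enlarged $\zz^3\tr_{N,\gpeuc}h^{\mathrm{err}}$ that remains controlled in the weighted norms used throughout the paper.
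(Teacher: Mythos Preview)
Your treatment of (i), (iii), and (iv) is correct and essentially matches the paper. For (i) you reproduce inline the content of Lemma~\ref{Lgauss}, which the paper simply cites; for (iv) the paper works in geodesic polar coordinates via the Gauss--Jacobi equation and obtains the sharper form $h^\Sigma = f(\rr,\theta)\,d\theta^2$ with $f = -\tfrac{1}{3}K_\Sigma|_p\,\rr^4 + O(\rr^6)$, but your Cartesian-normal-coordinate argument already yields the weighted bound as stated.

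The gap is in your handling of (ii). You are right that tracing (i) against $\gpeuc$ leaves the linear term $-2\zz\,\Pi^*_\Sigma(\tr_{\gpeuc|_\Sigma}A^\Sigma)$, and that minimality of $\Sigma$ in $(N,g)$ gives only $\tr_{g^\Sigma}A^\Sigma=0$, so this term is $O(\zz\rr^2)$ rather than identically zero. But your proposed resolution---absorbing it into an enlarged $\zz^3\tr_{N,\gpeuc}h^{\mathrm{err}}$---does not work: since $\zz$ and $\rr$ are independent coordinates on $U$, a function of the form $\zz\,F(\xx,\yy)$ with $F$ smooth and not identically zero cannot be written as $\zz^3$ times a smooth function on $U$, because $F/\zz^2$ blows up as $\zz\to 0$ at points with $\rr>0$. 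So this contribution cannot be hidden in the $\zz^3$ remainder without destroying the smoothness of $h^{\mathrm{err}}$ asserted in (i). The paper's one-line proof of (ii) is equally silent on this point. The honest fix is to carry the term $-2\zz\,\Pi^*_\Sigma(\tr_{\gpeuc|_\Sigma}A^\Sigma)$ explicitly in the statement of (ii) and check separately, in each downstream use, that this $O(\zz\rr^2)$ contribution is harmless; it is not of order $\zz^3$.
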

\begin{proof}
(i) follows from Lemma \ref{Lgauss}.  (ii) follows from taking the trace of (i), and (iii) follows from (i), using that $\gpeuc$ is Euclidean.  
For (iv), recall that $g^\Sigma = d\rr^2 + u(\rr, \theta)^2 d\theta^2$
where $u(\rr,\theta)$ solves the Gauss-Jacobi initial value problem
\begin{align*}
u_{\rr\rr} + K_\Sigma u = 0, \quad \lim_{\rr \searrow 0} u(\rr, \theta) = 0, \quad \lim_{\rr \searrow 0} u_\rr(\rr, \theta) = 1.
\end{align*}
It follows that $u(\rr, \theta) = \rr  - \frac{\left.K_\Sigma\right|_p}{3!} \rr^3 + O(\rr^5)$ and consequently
\begin{align}
h^\Sigma  = f(\rr, \theta) d\theta^2, \quad \text{where} \quad f(\rr,\theta) := - (\left. K_\Sigma\right|_p/3) \rr^4 +O(\rr^6). 
\end{align}
This completes the proof of (iv).
\end{proof}

For the remainder of this section we use notation from Appendices \ref{A:tilt} and \ref{S:A1} 
and we abbreviate by writing $\cat$ for $\cat[p, \tau, \kappaunder]$.   

\begin{lemma} For $X_{\tildecat} = X_{\tildecat}[p, \tau, \kappaunder]$ and $\tildecat = \tildecat[p, \tau, \kappaunder] \subset T_p N$, the following hold with 
\label{Lcatrimm}
\begin{equation} 
\label{Nw}
\wvec :=  \cos \vartheta \, \kvec + \sin \vartheta \cos \theta_\kappa \kvec^\perp, \qquad
\wvec' := - \sin \vartheta \kvec + \cos \vartheta \cos \theta_\kappa \kvec^\perp. 
\end{equation} 
\begin{enumerate}[label=\emph{(\roman*)}]
\item $X_{\tildecat} = \rho \cos \vartheta \, \kvec + ( \rho \sin \vartheta - \tau \sss \sin \theta_\kappa) \kvec^\perp +(\tau \sss \cos \theta_\kappa + \rho \sin \theta_\kappa) \partial_{\zz}$.
\item $\partial_\sss X_{\tildecat} = \tau \sinh  \sss \left( \wvec + \sin \vartheta \sin\theta_\kappa \partial_\zz\right) - \tau \sin \theta_\kappa \kvec + \tau \cos \theta_\kappa \partial_\zz$.
\item $\partial_\vartheta X_\tildecat = \rho \left( \wvec' + \cos \vartheta \sin \theta_{\kappa}\partial_\zz\right)$. 
\item $ {X_{\tildecat}}\!\!\!{}_{\phantom{K}}^{{ *_{{\phantom{p}_{\phantom{q}}}}  }} \:\!\!\!\!\!g = \rho^2\chi$.
 \item $\nu_{\tildecat } = \nu^{\parallel}+ \nu^\perp$, where $\nu^\perp : = (\tanh \sss \cos \theta_\kappa - \frac{\tau}{\rho} \sin \vartheta \sin \theta_\kappa ) \partial_{\zz}$  
and  $\nu^{\parallel} : = - \frac{\tau}{\rho} \wvec - \tanh \sss \sin \theta_\kappa \, \kvec^\perp$.
\end{enumerate}
\end{lemma}

\begin{proof}
Straightforward computation using \eqref{Ecatenoid}, \ref{Nw}, and  \ref{Lrot}(iii) implies
$\quad \RRR_\kappa (\kvec) = \kvec$, 
$\quad \RRR_\kappa (\kvec^\perp) = \cos \theta_\kappa \kvec^\perp + \sin \theta_\kappa \partial_\zz, \quad$ 
and 
$\quad \RRR_\kappa (\partial_\zz) = \cos \theta_\kappa \partial_\zz - \sin \theta_\kappa \kvec^\perp.$ 
\end{proof}

\begin{lemma}[cf. {\cite[Lemma 3.18]{kapouleas:clifford}}]
\label{Lzinit}
The following hold.
\begin{enumerate}[label=\emph{(\roman*)}]
\item $\| \rho^{\pm 1}: C^k(\cat, \chi, \rho^{\pm 1}) \| \le C(k)$.
\item $\| \zz : C^k(\cat, \chi,  |\zz| + \tau ) \| \le C(k) $.
\item \begin{enumerate}[label=\emph{(\alph*)}]
	\item $\| \nabla^{N,\gpeuc}_{\partial_\sss} h : C^k(\cat, \chi, \rho ( |\zz| + \rho) + \tau) \| \le C(k)$. 
	\item $\| \nabla^{N,\gpeuc}_{\partial_\vartheta} h : C^k(\cat, \chi, \rho|\zz| + \rho^2 ) \| \le C(k)$. 
	\item $\| \nabla^{N, \gpeuc}_{\nu} h: C^k(\cat, \chi  ) \| \le C(k)$.
	\end{enumerate}
\item $\| h\intprod \wvec: C^k(\cat, \chi, |\zz| + \tau  + \rho^2)\| \le C(k)$. 
\end{enumerate}
\end{lemma}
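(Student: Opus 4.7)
The plan is to prove each part by combining the explicit parametrization of $\cat$ from Lemma \ref{Lcatrimm} with the Taylor expansion of $h$ in Lemma \ref{Lh}, then tracking how each contribution scales in the weighted H\"older norms of Definition \ref{dwHolder}. At the outset we record two scaling observations used throughout: a ball $B_x$ of radius $1/100$ in $\rho^{-2}(x)\chi$ corresponds to a Euclidean ball of radius $\rho(x)/100$ in $\chi$, on which $\rho$ varies by a bounded factor; and $\rho \le 2\tau^\alpha \le 1$ on $\cat$, so the rescaling factors $\rho(x)^k$ arising in passing from $\chi$ to $\rho^{-2}(x)\chi$ are harmless for any fixed $k$. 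Parts (i) and (ii) are then direct computations: since $\rho = \tau \cosh \sss$ depends only on $\sss$ and its $\sss$-derivatives cycle through $\tau\sinh \sss$ and $\tau\cosh \sss$, both pointwise $\le \rho$, after rescaling one obtains the stated control of $\rho^{\pm 1}$. For (ii), Lemma \ref{Lcatrimm}(i) gives $\zz = \tau\sss\cos\theta_\kappa + \tau\cosh\sss\sin\theta_\kappa$ along $\cat$; the higher $\sss$-derivatives are bounded by $\tau + \rho|\sin\theta_\kappa|$, and Convention \ref{Akappa} forces $|\theta_\kappa| \le |\kappaunder| < \tau^{1+\alpha/6}$, which makes the $\sin\theta_\kappa$ contributions lower order and leaves a dominant $O(\tau)$ term accommodated by the weight $|\zz|+\tau$.

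Part (iii) is the main computation. From Lemma \ref{Lh}(i) we have
\[ h = \Pi^*_\Sigma h^\Sigma - 2\zz\, \Pi^*_\Sigma A^\Sigma + \zz^2\, \Pi^*_\Sigma(A^\Sigma\ast A^\Sigma + \Rm^\Sigma_\nu) + \zz^3 h^{\mathrm{err}}, \]
so for $V\in\{\partial_\sss,\partial_\vartheta,\nu\}$, $\nabla^{N,\gpeuc}_V h$ splits into a piece $V(\zz)$ times the $\zz$-derivatives of the expansion, plus a piece obtained by applying the $T_p\Sigma$-tangent component of $V$ to $\Pi^*_\Sigma h^\Sigma$, $\Pi^*_\Sigma A^\Sigma$, etc. By Lemma \ref{Lcatrimm}(ii), $\partial_\sss X_{\tildecat}$ decomposes as a $T_p\Sigma$-tangent part of size $O(\rho + \tau|\theta_\kappa|)$ plus a $\partial_\zz$-part whose pure $\tau\cos\theta_\kappa$ piece hits $-2\zz A^\Sigma$ and contributes $O(\tau)$; the $T_p\Sigma$-tangent piece applied to $\Pi^*_\Sigma h^\Sigma$ produces $O(\rho\cdot\rho)=O(\rho^2)$ using Lemma \ref{Lh}(iv), and applied to $\zz\, A^\Sigma$ produces $O(\rho|\zz|)$. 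Summing gives the weight $\rho(|\zz|+\rho)+\tau$ for (iii)(a). For (iii)(b), Lemma \ref{Lcatrimm}(iii) shows that $\partial_\vartheta X_{\tildecat}$ carries no pure $\tau$-component in the $\partial_\zz$ direction (only an $O(\rho|\theta_\kappa|)$ normal term, which is lower order by Convention \ref{Akappa}), so the $\tau$-term drops out and one obtains the sharper weight $\rho|\zz|+\rho^2$. For (iii)(c), Lemma \ref{Lcatrimm}(v) gives $\nu$ as a unit vector field in $\gpeuc$ along $\cat$, so $\nabla_\nu h = O(1)$ follows immediately from the smoothness of $h$ on $U$. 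Higher $C^k$-bounds are handled iteratively using the scaling observations above.

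For part (iv), we contract the expansion of $h$ against $\wvec$, which by Notation \ref{Nw} lies in $T_p\Sigma$: the first summand gives $h^\Sigma(\wvec,\cdot) = O(\rr^2) = O(\rho^2)$ by Lemma \ref{Lh}(iv); the second gives $-2\zz\, A^\Sigma(\wvec,\cdot) = O(|\zz|)$; the remaining $O(\zz^2+\zz^3)$ contributions are absorbed into the weight. $C^k$-control then follows from smoothness of $h^\Sigma$, $A^\Sigma$, $h^{\mathrm{err}}$ combined with the computations in parts (i) and (iii). The main obstacle throughout is the careful bookkeeping in (iii), where the tilt $\theta_\kappa$, elevation $\kappaperp$, and the Taylor expansion of $h$ all interact; it is precisely the hypothesis $|\kappaunder|<\tau^{1+\alpha/6}$ of Convention \ref{Akappa} that converts each tilt-induced contribution into a strict lower-order perturbation of the dominant $\tau$ and $\rho^2$ terms, which is what makes the weights stated in (iii) and (iv) sharp.
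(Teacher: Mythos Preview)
Your proposal is correct and follows essentially the same approach as the paper: both combine the explicit parametrization from Lemma~\ref{Lcatrimm} with the Taylor expansion of $h$ from Lemma~\ref{Lh}, decompose $\partial_\sss$, $\partial_\vartheta$, $\nu$ into their $T_p\Sigma$-tangent and $\partial_\zz$ components, and track the resulting orders. The paper is marginally more explicit---it records the intermediate estimates $\|\nabla_{\partial_\zz}^{N,\gpeuc} h\|\le C$ and $\|\nabla_u^{N,\gpeuc} h\|\le C(|\zz|+\rho)$ for $u\in\{\kvec,\kvec^\perp\}$ before combining, and for part (iv) it invokes part (ii) (not (iii), as you wrote) to pick up the $\tau$ in the weight via $\partial_\sss\zz$---but these are presentation differences, not substantive ones.
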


\begin{proof}
The estimate in (i) with $\rho$ is obvious, 
and the estimate in (i) with $\rho^{-1} = \tau^{-1} \sech \sss$ follows after observing that for each $k\geq 1$, 
$\partial^k_{\sss}( \sech \sss)$ is a polynomial expression in $\sech \sss$ and $\tanh \sss$, 
each term of which contains a factor of $\sech \sss$.  
From Lemma \ref{Lcatrimm} we have $\zz = \tau ( \cos \theta_\kappa\,  \sss + \sinh \sss \sin \theta_\kappa \sin \vartheta)$, which implies (ii). 
 Using Lemma \ref{Lh}, 
 \begin{align*}
 \nabla_{\partial_\zz} h &= - 2 \Pi^*_\Sigma A^\Sigma  + 2\zz \Pi^*_\Sigma(A^\Sigma*A^\Sigma + \Rm^\Sigma_{\nu}) + 3\zz^2 h^{\mathrm{err}} + \zz^3 \nabla_{\partial_\zz} h^{\mathrm{err}}, 
\\
 \nabla_{\vecu} h &= \nabla_\vecu h^\Sigma  - 2\zz \, \nabla_\vecu  \Pi^*_\Sigma A^\Sigma + \zz^2 \nabla_\vecu \Pi^*_\Sigma(A^\Sigma*A^\Sigma + \Rm^\Sigma_{\nu})+ \zz^3 \, \nabla_\vecu h^{\mathrm{err}},
 \end{align*}
 where $\vecu$ is either $\kvec$ or $\kvec^\perp$.  Using this in conjunction with (ii) and Lemma \ref{Lh}(iv), we conclude 
 \begin{equation}
 \label{Edh}
 \begin{aligned}
 \| \nabla_{\partial_\zz}^{N, \gpeuc} h : C^k(\cat, \chi ) \| &\le C(k),  \\ 
 \| \nabla_{\vecu}^{N,\gpeuc} h : C^k(\cat, \chi, |\zz| + \rho) \| &\le C(k).
 \end{aligned}
 \end{equation}
Recalling from \ref{Lcatrimm} that
\begin{align*}
 \partial_\sss &= ( \tau \sinh \sss \sin \vartheta \sin \theta_\kappa + \tau \cos \theta_\kappa) \partial_\zz 
\\ & \quad \qquad \qquad \qquad \qquad \qquad 
+ \tau \sinh\sss ( \cos \vartheta \, \kvec + \sin \vartheta \cos \theta_\kappa \kvec^\perp) - \tau \sin \theta_\kappa \kvec^\perp, \\
 \partial_\vartheta &=  \rho \left(  - \sin \vartheta\, \kvec + \cos \vartheta \cos \theta_\kappa \,  \kvec^\perp +  \cos \vartheta \sin \theta_\kappa\, \partial_\zz\right), \\
 \nu &= - {\tau}{\rho}^{-1} ( \cos \vartheta \, \kvec + \sin \vartheta \cos \theta_\kappa \kvec^\perp ) - \tanh \sss \sin \theta_\kappa \, \kvec^\perp 
\\ & \quad \qquad \qquad \qquad \qquad \qquad \qquad \qquad 
+ (\tanh \sss \cos \theta_{\kappa} - {\tau}{\rho}^{-1} \sin \vartheta \sin \theta_\kappa ) \partial_{\zz},
 \end{align*}
 we see that (iii) follows from the preceding.
 Finally, for (iv), we compute that
\begin{align*}
 h(\wvec, \kvec) &= \cos \vartheta\,  h(\kvec, \kvec) +  \sin \vartheta \cos \theta_\kappa h(\kvec, \kvec^\perp),\\ 
 h(\wvec, \kvec^\perp) &= \cos \vartheta\,  h(\kvec, \kvec^\perp) + \sin \vartheta \cos \theta_\kappa h(\kvec^\perp, \kvec^\perp).
\end{align*}
The estimate follows from this by combining the results of (ii) and (iii) above with Lemma \ref{Lh}(i).
\end{proof}

\begin{lemma}
\label{Ltensest}
The following hold.
\begin{enumerate}[label=\emph{(\roman*)}]
\item $\| \alpha : C^k(\cat, \chi, \rho^2 ( \rho^2 + |\zz| + \tau ))\| \le C(k)$. 
	\item $\| \alphatilde : C^k(\cat, \chi, \rho^2)\| \le C(k)$.
	\item $\| \tr_{\cat, \chi} \alphatilde - 2\tau^2 \tanh \sss \, (\Pi^*_\Sigma A^\Sigma) (\evec_\rr, \evec_\rr): C^k(\cat,  \chi,  \rho^2( |\zz| + \tau) ) \| \le C(k)$.
	\item $ \| \beta: C^k ( \cat, \chi, \tau( |\zz| + \rho^2 + \tau)  ) \| \le C(k)$.
	\item $\|  \ddiv_{\cat, \chi} \beta + 2\tau(2\zz - \zz \sech^2 \sss - \tau \tanh \sss) (\Pi^*_\Sigma A^\Sigma) (\evec_\rr, \evec_\rr) :$ 
$ C^k(\cat, \chi, \rho^2( \tau + |\zz|)) \| \le C(k)$.
\item $\| \sigma : C^k( \cat, \chi, |\zz| + \tau) \| \le C(k) $.
\end{enumerate}
\end{lemma}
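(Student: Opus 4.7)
\medskip

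\noindent\textbf{Proof proposal.} The plan is to unwind the definitions of $\alpha$, $\alphatilde$, $\beta$, and $\sigma$ from Appendix \ref{S:A1}, substitute the expansion of $h = g - \gpeuc$ from Lemma \ref{Lh}, and evaluate on $\cat = \cat[p,\tau,\kappaunder]$ using the explicit parametrization from Lemma \ref{Lcatrimm}. The estimates in Lemma \ref{Lzinit} then package together the size of each constituent term. Because $\alpha$, $\beta$ involve $h$ paired with $\nu_\cat$, and $\alphatilde$ is the pullback to $\cat$ of $h$ restricted to $T\cat$, the dichotomy controlled by Lemma \ref{Lzinit}(iv) (namely that $h\intprod \wvec$ enjoys the improved bound $|\zz|+\tau+\rho^2$ rather than merely $\rho^2$) will drive the sharper estimates in (ii)(a) and (iii)(a).

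For (ii)(a) and (iv), I will write $\alphatilde$ and $\sigma$ as linear combinations of components of $h$ in the frame $\{\partial_\sss X, \partial_\vartheta X, \nu_\cat\}$; by Lemma \ref{Lcatrimm}(ii)(iii)(v) these components lie essentially in the plane spanned by $\wvec$, $\wvec'$, $\partial_\zz$ with coefficients scaling as $\rho$ or $\tau$, and Lemma \ref{Lh}(i)(iv) together with Lemma \ref{Lzinit}(ii)(iv) then yields the stated weights. For (i), the bound will follow similarly by reading $\alpha$ as a quadratic expression in $h$ and its first derivatives (via \ref{Lzinit}(iii)) tangent to $\cat$, with a product bound from \eqref{E:norm:mult}; the extra factor of $\rho^2$ compared with (iv) reflects that $\alpha$ carries two tangent slots.

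The substantive work is in (ii)(b) and (iii)(b), where I must pin down the leading behavior rather than just bound it. The strategy is to substitute Lemma \ref{Lh}(ii) into the definition of $\tr_{\cat,\chi}\alphatilde$. The $h^\Sigma$ term contributes $O(\rho^2)$ by Lemma \ref{Lh}(iv), absorbed into the error; the $\zz^2$ and $\zz^3 h^{\mathrm{err}}$ terms contribute $O(|\zz|^2)$, again absorbed; what remains as the leading contribution is the trace of $-2\zz\,\Pi^*_\Sigma A^\Sigma$ pulled back to $\cat$. Using Lemma \ref{Lcatrimm} to decompose $\partial_\sss X$ and $\partial_\vartheta X$ in the $\{\wvec,\wvec',\partial_\zz\}$ frame and expanding, the $A^\Sigma$ contribution on $\partial_\sss X$ gives a $\tau^2\sinh^2\sss\,A^\Sigma_p(\wvec,\wvec)$ piece while the contribution on $\partial_\vartheta X$ gives $\rho^2 A^\Sigma_p(\wvec',\wvec')$. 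Splitting $\rho^2 = \tau^2 + \tau^2\sinh^2\sss$ and invoking minimality of $\Sigma$ (so $A^\Sigma_p(\wvec,\wvec)+A^\Sigma_p(\wvec',\wvec')=0$) collapses the $\sinh^2\sss$ terms; the surviving $-2\zz\tau^2 A^\Sigma_p(\wvec',\wvec')$ then matches $2\tau^2\tanh\sss\, A^\Sigma_p(\evec_\rr,\evec_\rr)$ modulo errors of order $\tau(|\zz|+\tau)$ coming from $\sin\theta_\kappa$ (controlled by \ref{Akappa}) and from $\zz - \tau\sss$. Statement (iii)(b) will proceed by the parallel calculation for $\beta$, where the relevant piece of $\nabla h$ from Lemma \ref{Lzinit}(iii) is combined with the same minimality cancellation; the explicit $\sech^2\sss$ term reflects that the divergence brings in the logarithmic derivative of $\rho$ through $\chi$-covariant differentiation.

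The main obstacle is the bookkeeping in (ii)(b) and (iii)(b): several terms of size $\tau|\zz|$ must cancel exactly against each other via minimality, and any tilt-induced cross terms proportional to $\sin\theta_\kappa$ must be controlled carefully via \ref{Akappa} and the quantitative form of \ref{Ltcest}. Getting the precise coefficient $2\tau^2\tanh\sss$ (respectively $-2\tau(2\zz-\zz\sech^2\sss-\tau\tanh\sss)$) rather than a weaker bound is precisely what will be needed in the sequel (Lemmas \ref{LH} and \ref{LH2}) to extract the improved decay of the first-harmonic projection of $H$; everything else in the statement is a direct product-rule application of Lemma \ref{Lh} and Lemma \ref{Lzinit}.
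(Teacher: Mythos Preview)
You have swapped the definitions of $\alpha$ and $\alphatilde$. In the notation of Lemma \ref{LApert}, $\alpha(X,Y)=h(X,Y)$ is the restriction of $h$ to $T\cat$, while $\alphatilde(X,Y)=(\nabla_\nu h)(X,Y)$ is the restriction of the \emph{normal derivative} of $h$; neither $\alpha$ nor $\alphatilde$ pairs $h$ with $\nu_\cat$. This affects several of your explanations (e.g.\ (i) is not ``quadratic in $h$ and its first derivatives,'' and (ii)(a) is just $\|\nabla_\nu h\|\le C$ from Lemma \ref{Lzinit}(iii)(c) times two tangent slots of size $\rho$, not an application of Lemma \ref{Lzinit}(iv)), but it breaks (ii)(b) outright.

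Your (ii)(b) plan pulls back the $-2\zz\,\Pi^*_\Sigma A^\Sigma$ piece of $h$ and runs the minimality cancellation; that computes $\tr_{\cat,\chi}\alpha$, not $\tr_{\cat,\chi}\alphatilde$, and yields the leading term $2\zz\tau^2 A^\Sigma_p(\evec_\rr,\evec_\rr)$ rather than $2\tau^2\tanh\sss\,A^\Sigma_p(\evec_\rr,\evec_\rr)$. The gap $2\tau^2(\zz-\tanh\sss)A^\Sigma_p(\evec_\rr,\evec_\rr)$ is not absorbable into $\rho^2(|\zz|+\tau)$: for $\sss$ of order $1$ one has $\zz\sim\tau$ but $\tanh\sss\sim 1$, so the discrepancy is $\sim\tau^2$ while $\rho^2(|\zz|+\tau)\sim\tau^3$. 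The paper's route is instead to observe, by differentiating Lemma \ref{Lh}(i) and using $\nu^\perp\approx\tanh\sss\,\partial_\zz$ from Lemma \ref{Lcatrimm}(v), that $\nabla_\nu h=-2\tanh\sss\,A^\Sigma_p+O(|\zz|+\tau+\rho^2)$; one then writes $\tr_{\cat,\chi}\alphatilde=\rho^2\,\nu(\tr_{N,\gpeuc}h)-\tau^2(\nabla_\nu h)(\wvec,\wvec)+\text{(tilt terms)}$, notes that $\nu(\tr_{N,\gpeuc}h)=O(|\zz|+\tau)$ by Lemma \ref{Lh}(ii) (the $A^\Sigma$ contribution there is traceless by minimality), and reads off the surviving $2\tau^2\tanh\sss\,A^\Sigma_p(\evec_\rr,\evec_\rr)$ from the $\tau^2$ term. (As a minor aside, the $\sech^2\sss$ in (iii)(b) comes from differentiating $\tanh\sss$ in the expression for $\beta_\sss$, not from Christoffel symbols of $\chi$, which is flat.)
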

\begin{proof}

 Using Lemma \ref{Lcatrimm} and Remark \ref{Nw} we compute
 \begin{equation*}
 \begin{gathered}
 \alpha_{\sss\sss} = (\rho^2 - \tau^2) h(\wvec , \wvec ) + \tau^2 \sin^2 \theta_\kappa h( \kvec^\perp, \kvec^\perp) 
 -2 \tau \sin \theta_\kappa \sqrt{\rho^2 - \tau^2} h(\wvec , \kvec^\perp), \\
 \alpha_{\vartheta \vartheta} = \rho^2 h( \wvec', \wvec') , \\
 \alpha_{\sss \vartheta} = - \tau \rho \sin \theta_\kappa h(\wvec' , \kvec^\perp)+ \rho \sqrt{\rho^2 - \tau^2} h( \wvec' ,\wvec).
 \end{gathered}
 \end{equation*}
(i) follows then from Lemma \ref{Lzinit}.
The proof of (ii) is straightforward so we proceed to (iii). 
Using \ref{Lcatrimm} we compute 
\begin{align*}
\alphatilde_{\sss \sss} &= 
\begin{aligned}[t]  
(\rho^2 - \tau^2) (\nabla_\nu h)(\wvec, \wvec)  
- 2\tau \sqrt{\rho^2 - \tau^2} & \sin \theta_\kappa (\nabla_\nu h)( \wvec, \kvec^\perp) 
\\ 
& 
+ \tau^2 \sin^2 \theta_\kappa (\nabla_\nu h)(\kvec^\perp, \kvec^\perp), 
\end{aligned} 
\\ 
\alphatilde_{\vartheta \vartheta} &= \rho^2(\nabla_\nu h)(\wvec', \wvec').
\end{align*}

 Because $\tr_{\cat, \chi} \alphatilde = \alphatilde_{\sss \sss} + \alphatilde_{\vartheta \vartheta}$, we have via \ref{Nw}  
\begin{multline}
\label{Etrat}
\tr_{\cat, \chi} \alphatilde = 
\rho^2 (\nabla_\nu h)(\kvec, \kvec) + \rho^2 (\nabla_\nu h)(\kvec^\perp, \kvec^\perp)
- \tau^2 (\nabla_\nu h)(\wvec, \wvec) 
\\ 
+ (\tau^2 - \rho^2) \sin^2 \theta_\kappa (\nabla_\nu h)(\kvec^\perp, \kvec^\perp)  
- 2 \tau \sqrt{ \rho^2 - \tau^2} \sin \theta_\kappa (\nabla_\nu h )(\wvec, \kvec^\perp).
\end{multline}

Noting that $(\nabla_\nu h)(\kvec, \kvec) + (\nabla_\nu h)(\kvec^\perp, \kvec^\perp ) = \tr_{N, \gpeuc} (\nabla_\nu h) = \nabla_\nu (\tr_{N, \gpeuc} h) = \nu (\tr_{N, \gpeuc} h)$, 
we have by  \ref{Lh} and \ref{Lcatrimm} that
\begin{align*}
\nu^\perp ( & \tr_{N, \gpeuc} h) = 
\begin{aligned}[t] 
2(\tanh \sss \cos \theta_\kappa - \sech \sss \sin \vartheta \sin\theta_\kappa) &  \left( |A^\Sigma|^2 - \Ric(\nu_\Sigma, \nu_\Sigma) \right) \zz  
\\ 
& \quad \qquad \qquad \qquad  + O(\zz^2),  
\end{aligned} 
\\
\nu^\parallel ( & \tr_{N, \gpeuc} h) =-  {\tau}{\rho}^{-1} \wvec \left( \tr_{\Sigma, \gpeuc} h^\Sigma + \left( |A^\Sigma|^2 - \Ric(\nu_\Sigma, \nu_\Sigma)\right) \zz^2 \right) 
\\ 
&\phantom{aaa} - \left(\tanh \sss \sin \theta_\kappa \kvec^\perp\right) \left( \tr_{\Sigma, \gpeuc} h^\Sigma + \left( |A^\Sigma|^2 - \Ric(\nu_\Sigma, \nu_\Sigma)\right) \zz^2\right) + O(\zz^3).
\end{align*}
This and \ref{Lzinit} imply that $\| \rho^2 \nu( \tr_{N,\gpeuc} h) : C^k(\cat, \chi, \rho^2( |\zz| + \tau  ) ) \| \le C(k)$.

We now estimate the remaining terms.  
Using \ref{Lh}(i) and \ref{Lcatrimm}(v) we have 
\[ 
\| \nabla_\nu h + 2 \tanh \sss\,  \Pi^*_\Sigma A^\Sigma : C^k(\cat, \chi, |\zz| + \tau + \rho^2)\| \le C(k).
\]

Estimating terms with $\sin\theta_\kappa$ in \eqref{Etrat} by Lemma \ref{Lzinit} we obtain 
\[ 
\| \tr_{\cat, \chi} \alphatilde - \rho^2 \nu(\tr_{N, \gpeuc} h) - 2\tau^2 \tanh \sss \, (\Pi^*_\Sigma A^\Sigma) (\evec_\rr, \evec_\rr) : C^k( \cat, \chi, \rho^2 \tau) \| 
\le C(k),
\] 
and (iii) with the estimate on $\tr_{\cat, \chi} \alphatilde$ follows.
To prove (iv) we use 
\begin{equation}
\label{Ebeta}
\beta(X) = h(X, \nu_\cat) = h( X, \nu^{\parallel}) 
= - {\tau}{\rho}^{-1} h(X, \wvec ) - \tanh \sss \sin \theta_\kappa \, h(X, \kvec^\perp),
\end{equation}
to compute using Lemma \ref{Lcatrimm} 
\begin{align*}
-\tau^{-1} \beta_\sss &=   
\begin{aligned}[t]  
\tanh \sss \: &\big[  h( \wvec,  \wvec) -  \sin^2 \theta_\kappa h(\kvec^\perp, \kvec^\perp)\big] 
\\
& \qquad \qquad \qquad + \sin \theta_\kappa h(\wvec, \kvec^\perp)( \sinh \sss \tanh \sss - { \tau}{ \rho}^{-1} ), 
\end{aligned} 
\\
-\beta_\vartheta &=  \tau h(\wvec', \wvec)  + \rho \tanh \sss \sin \theta_\kappa h( \wvec', \kvec^\perp).
\end{align*}
The estimate on $\beta$ in (iv) follows from Lemma \ref{Lzinit}.
We next compute 
$\ddiv_{\cat, \chi} \beta = \beta_{\sss, \sss} + \beta_{\vartheta, \vartheta}$. 
We have 
\begin{multline*}
-\tau^{-1} \beta_{\sss, \sss} = 
\sech^2 \sss \: \big[  h( \wvec,  \wvec) -  \sin^2 \theta_\kappa h(\kvec^\perp, \kvec^\perp)\big]  
\\
+ \sin \theta_\kappa h(\wvec, \kvec^\perp) \, ( \sinh \sss + 2 \sech \sss \tanh \sss) \qquad \qquad 
\\ 
+ \tanh \sss \big[ (\nabla_{\partial_\sss} h)(\wvec, \wvec) - \sin^2 \theta_\kappa (\nabla_{\partial_\sss} h )(\kvec^\perp, \kvec^\perp)\big] 
\\ 
+ \sin \theta_\kappa (\nabla_{\partial_\sss} h )(\wvec, \kvec^\perp)  ( \sinh \sss \tanh \sss - {\tau}{\rho}^{-1} ).
\end{multline*}
Using this with \ref{Lh}(i), \ref{Lcatrimm}(ii), \ref{Lzinit}, and \ref{Nw}, we estimate
\begin{equation*}
\big\| \, \beta_{\sss, \sss} - 2 \tau \zz \sech^2 \sss\, (\Pi^*_\Sigma A^\Sigma) (\evec_\rr, \evec_\rr) - 2\tau^2 \tanh\sss\, (\Pi^*_\Sigma A^\Sigma) (\evec_\rr, \evec_\rr) 
%\\ 
: C^k(\cat, \chi, \rho^2(|\zz| + \tau)) \, \big\| \lem C(k ). 
\end{equation*}

Next we compute 
\begin{equation*}
- \beta_{\vartheta, \vartheta} = - \tau h( \wvec, \wvec) + \tau h( \wvec', \wvec')  
%\\ 
+\rho \tanh \sss \sin \theta_\kappa \frac{\partial}{\partial \vartheta}\left(  h(  \wvec', \kvec^\perp)\right) + \tau (\nabla_{\partial_\vartheta} h)(\wvec', \wvec).
\end{equation*}
Using the minimality of $\Sigma$ and \ref{Lh}(i), we have
\begin{align*}
\| (\Pi^*_\Sigma A^\Sigma) (\evec_\rr, \evec_\rr)+ (\Pi^*_\Sigma A^\Sigma) (\evec_\theta, \evec_\theta) : 
C^k( \cat, \chi, \rho^2 + |\zz|) \| \leq C(k).
\end{align*}
Combining this with \ref{Lh}(i), \ref{Lcatrimm}(iii), \ref{Lzinit}, \ref{Nw} 
and the above we obtain 
\begin{align*}
\| \beta_{\vartheta, \vartheta} + 4 \tau \zz (\Pi^*_\Sigma A^\Sigma) (\evec_\rr, \evec_\rr): C^k(\cat, \chi, \rho^2(\tau +|\zz|) )\| \le C(k).
\end{align*}
Combining the preceding completes the estimate in (v) on $\ddiv_{\cat, \chi} \beta$.
Next 
\begin{equation*}
\sigma = h(\nu, \nu) = h(\nu^\parallel, \nu^\parallel)
%\\ 
= \frac{\tau^2}{\rho^2} h( \wvec, \wvec) - 2 \frac{\tau}{\rho} \tanh \sss \sin \theta_\kappa h(\wvec, \kvec) + \tanh^2 \sss \sin^2 \theta_\kappa h(\kvec, \kvec)
\end{equation*}
and the estimate on $\sigma$ follows from \ref{Lzinit}.
\end{proof}

\begin{definition}
\label{dlowharm}
Given a function $u$ defined on $\cat$, we define the \emph{projection $\Hcal_1( u) $ of $u$ onto first harmonics} by 
\begin{align*} 
\Hcal_1( u) = \frac{1}{\pi} \left( \int_0^{2\pi} u(\sss, \vartheta) \cos \vartheta d\vartheta\right) \cos \vartheta +  
\frac{1}{\pi} \left( \int_0^{2\pi} u(\sss, \vartheta) \sin \vartheta d\vartheta\right) \sin \vartheta.
\end{align*}
\end{definition}

\begin{lemma}
\label{LH}
The following hold.
\begin{enumerate}[label=\emph{(\roman*)}]
\item  $\| \rho^2 H : C^k(\cat, \chi,  (\tau+ \rho^2)(|\zz|+ \tau)  ) \| \le C(k)$. 
\item $\| \Hcal_1(\rho^2 H) :C^k\big(\cat, \chi, \rho^2(|\zz| + \tau) \big) \| \le C(k)$. 
\end{enumerate}
\end{lemma}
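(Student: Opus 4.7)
My plan is to apply the formula from Appendix \ref{S:A1} expressing the mean curvature $H$ of $\cat$ in $(N,g)$ in terms of the ambient Euclidean mean curvature $\Hcirc$ and the four tensors $\alpha, \alphatilde, \beta, \sigma$. Because $\cat$ is by construction a catenoid with respect to $\gpeuc$, we have $\Hcirc \equiv 0$, so only the $h$-driven terms survive and the task reduces to bookkeeping with the estimates in Lemma \ref{Ltensest}.

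For (i), I multiply the resulting expansion by $\rho^2$ and track each contribution. The two dominant terms are (up to signs and $\rho^{-2}$ factors coming from inverting $g_\cat \approx \rho^2 \chi + \alpha$) the trace $\tr_{\cat,\chi} \alphatilde$ and the divergence $\ddiv_{\cat,\chi} \beta$. By \ref{Ltensest}(ii)(b) and \ref{Ltensest}(iii)(b), the leading parts of these are of sizes $\tau^2$ and $\tau(|\zz|+\tau)$ respectively, and both are dominated on $\cat$ by the target weight $(\tau+\rho^2)(|\zz|+\tau)$; the explicit error terms, of size $\rho^2(|\zz|+\tau)$, are even smaller than the target. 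The remaining contributions are algebraic in $\alpha, \beta, \sigma$ (and inverse metric corrections), and their bounds follow by the multiplicative property \eqref{E:norm:mult} combined with parts (i), (iii)(a) and (iv) of \ref{Ltensest}.

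The main obstacle is part (ii), where the target weight $\rho^2(|\zz|+\tau)$ is strictly smaller than the weight in (i) in the catenoidal region where $\tau \gg \rho^2$. The key observation is that every leading term identified in (i) is of the form $\tau\,f(\sss)\,A^\Sigma_p(\evec_\rr,\evec_\rr)$ for some bounded function $f$ of $\sss$. Writing $A^\Sigma_p$ in the orthonormal basis $\{\kvec,\kvec^\perp\}$ of $T_p\Sigma$ as $(A_{ij})$ and using minimality of $\Sigma$ (so that $A_{11} + A_{22} = 0$), we obtain in the untilted case $\kappaunder=0$ (where $\theta=\vartheta$ on $\cat$)
\[
A^\Sigma_p(\evec_\rr,\evec_\rr) = \tfrac{A_{11}-A_{22}}{2}\cos 2\vartheta + A_{12}\sin 2\vartheta,
\]
which is a pure second harmonic in $\vartheta$ and therefore lies in the kernel of $\Hcal_1$ by \ref{dlowharm}. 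Since $\Hcal_1$ only integrates in $\vartheta$, the $f(\sss)$ prefactors are irrelevant for this cancellation and the $C^k$-estimates in $\sss$ are preserved. For $\kappaunder\neq 0$, the tilt contributes corrections of relative size $O(|\theta_\kappa|) = O(|\kappatilde|) \leq \tau^{1+\alpha/6}$ by \ref{con:alpha}, which are easily absorbed into the target weight $\rho^2(|\zz|+\tau)$ using $\rho \geq \tau$. Therefore $\Hcal_1$ annihilates the leading part of $\rho^2 H$ up to such negligible corrections, and only the error terms from \ref{Ltensest} (already of size $\rho^2(|\zz|+\tau)$) remain, yielding (ii).
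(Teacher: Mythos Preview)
Your approach is essentially the paper's: invoke the mean curvature expansion from Appendix \ref{S:A1} (the paper packages this as Corollary \ref{LHpertsmall} for (i) and Corollary \ref{Lhasmp} for (ii)), use $\Hcirc=0$, and then feed in the estimates from Lemma \ref{Ltensest}. Your explicit computation showing that $A^\Sigma_p(\evec_\rr,\evec_\rr)$ is a pure second harmonic (via minimality of $\Sigma$) is exactly the mechanism the paper has in mind when it says ``$A^\Sigma_p(\evec_\rr,\evec_\rr)$ is (up to higher order terms involving $|\kappa|$) orthogonal to the first harmonics on $\cat$.''

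There is one place where your bookkeeping is looser than the paper's and it matters for (ii). After isolating the two leading contributions $\ddiv_{\cat,\chi}\beta$ and $\tr_{\cat,\chi}\alphatilde$, you write that ``only the error terms from \ref{Ltensest} (already of size $\rho^2(|\zz|+\tau)$) remain.'' That is not quite what you established in (i): there you only claimed the remaining contributions are bounded by the larger weight $(\tau+\rho^2)(|\zz|+\tau)$, which in the catenoidal neck (where $\tau\gg\rho^2$) does \emph{not} automatically drop to $\rho^2(|\zz|+\tau)$, and $\Hcal_1$ has no reason to help with a generic higher-order product. The paper is more careful here: from Corollary \ref{Lhasmp} it singles out a \emph{third} term, $\tfrac12\rho^2\langle\alphatilde,\alphahat\rangle_{\gpeuc}$, as something that must be estimated separately (using \ref{Ltensest}(i) for $\alpha$ and \ref{Ltensest}(ii)(a) for $\alphatilde$), and only then asserts that the residual expression $\rho^2 H - \ddiv_{\cat,\chi}\beta + \tfrac12\tr_{\cat,\chi}\alphatilde + \tfrac12\rho^2\langle\alphatilde,\alphahat\rangle_{\gpeuc}$ already lies in the target space for (ii). Your sentence ``algebraic in $\alpha,\beta,\sigma$'' also misses that $\alphatilde$ enters this term (it is $\nabla_\nu h$, not built from $\alpha,\beta,\sigma$). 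The fix is short---just treat $\rho^2\langle\alphatilde,\alphahat\rangle_{\gpeuc}$ on the same footing as the two leading terms you already named---but as written your argument for (ii) has a gap at precisely this point.
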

\begin{proof}
(i) follows by combining the estimates in \ref{Ltensest} with \ref{LHpertsmall}, 
where we note in particular that $\Hcirc =0$ because $\gpeuc$ is Euclidean 
and that $\ddiv_{\cat, \chi}\beta =  \rho^2 \ddiv_{\cat, \gpeuc} \beta$ and $\tr_{\cat, \chi} \alphatilde = \rho^2 \tr_{\cat, \gpeuc} \alphatilde$.   

To prove the estimate in (ii) we will need a more refined expansion for $\rho^2 H$: from \ref{Lhasmp}, and the estimates in \ref{Ltensest}, note first that
\begin{equation*}
\| \Hcal_1( \rho^2 H - \ddiv_{\cat, \chi} \beta + \textstyle{\frac{1}{2}} \tr_{\cat, \chi} \alphatilde +\rho^2 \textstyle{\frac{1}{2}} 
\langle \alphatilde, \alphahat\rangle_{\gpeuc} ) 
%\\ 
: C^k\big( \cat , \chi, \rho^2( |\zz| + \tau) \big) \| \le C(k), 
\end{equation*}
so it suffices to show that the estimate in (ii) holds when $\rho^2 H$ is replaced by $\ddiv_{\cat, \chi} \beta$, $\tr_{\cat, \chi} \alphatilde$, 
or $\rho^2\langle \alphatilde, \alphahat\rangle_{\gpeuc}$.  
The estimate for $\rho^2 \langle \alphatilde, \alphahat\rangle_{\gpeuc}$ follows by combining the estimates on $\alpha$ and $\alphatilde$ in \ref{Ltensest}(i) and (ii).  
The estimates on $\tr_{\cat, \chi} \alphatilde$ and $\ddiv_{\cat, \chi} \beta$ follow from \ref{Ltensest}(iii) and (v) 
using that $\Pi^*_\Sigma A^\Sigma(\evec_\rr, \evec_\rr)$ is orthogonal to first harmonics up to higher order terms involving $|\kappa|$ and $\rho$. 
\end{proof}

The following lemma relates estimates on  $H$, which will be crucial for our main applications, to estimates on $ \rho^2 H$, which are easy to compute due to the geometry of $\cat$. 
\begin{lemma}
\label{Erhow}
Given $f\in C^k(\cat)$ and $n\in \mathbb{Z}$, we have
%\\ 
$\phantom{k}$ \hfill $\| \rho^n f : C^k(\cat, \chi)\| \Sim_{C(k, n)} \| f: C^k(\cat, \chi, \rho^{-n})\|.$ \hfill $\phantom{k}$ 
\end{lemma}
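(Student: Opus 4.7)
The plan is to prove the equivalence by unwinding the definition of the weighted H\"older norm from \ref{dwHolder} and combining the multiplicative property \eqref{E:norm:mult} with the pointwise bounds on powers of $\rho$ provided by \ref{Lzinit}(i). Since the scale function is taken to be trivial ($\rho\equiv 1$ in the sense of \ref{dwHolder}) on both sides of the claimed equivalence, the geodesic balls $B_x$ of radius $1/100$ in the metric $\chi$ are the same on both sides, and the only difference is the presence of the weight $\rho^{-n}(x)$ in the denominator versus the multiplication by $\rho^n$ inside the norm.

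First, I would note that \ref{Lzinit}(i) unfolds (using $\rho\equiv 1$ and weight $\rho^{\pm 1}$) to
\[
\|\rho^{\pm 1}:C^k(\cat\cap B_x,\chi)\|\le C(k)\,\rho^{\pm 1}(x)\qquad\text{for all }x\in\cat.
\]
Iterating this bound $|n|$ times via the multiplicative property \eqref{E:norm:mult} yields
\[
\|\rho^n:C^k(\cat\cap B_x,\chi)\|\le C(k,n)\,\rho^n(x)\qquad\text{for all }n\in\mathbb Z.
\]

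Next, for the forward direction, I apply \eqref{E:norm:mult} to the product $\rho^n\cdot f$ on each local ball $B_x$:
\[
\|\rho^n f:C^k(\cat\cap B_x,\chi)\|\le C(k)\,\|\rho^n:C^k(\cat\cap B_x,\chi)\|\,\|f:C^k(\cat\cap B_x,\chi)\|\le C(k,n)\,\rho^n(x)\,\|f:C^k(\cat\cap B_x,\chi)\|.
\]
Taking the supremum over $x\in\cat$ and recognizing the right-hand side as $\|f:C^k(\cat,\chi,\rho^{-n})\|$ (by the definition in \ref{dwHolder}) gives one inequality. For the reverse direction I write $f=\rho^{-n}(\rho^n f)$ and apply the same argument with $n$ replaced by $-n$, multiplying through by $\rho^n(x)$ and taking the supremum, which produces the matching bound.

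The argument is purely formal manipulation of the norm, so there is no real obstacle; the only point to be careful about is that the local ball $B_x$ in the definition of both norms is the same (since both have trivial scale function), so that \ref{Lzinit}(i) applied pointwise at $x$ controls $\|\rho^n\|$ on the same $B_x$ appearing in the definition of $\|\rho^n f\|$ and $\|f\|$. This makes the equivalence a two-line consequence of \eqref{E:norm:mult} and \ref{Lzinit}(i).
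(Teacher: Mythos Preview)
Your proof is correct and is essentially the same as the paper's: both arguments reduce to the multiplicative property \eqref{E:norm:mult} together with the bound $\|\rho^{\pm 1}:C^k(\cat,\chi,\rho^{\pm 1})\|\le C(k)$ from \ref{Lzinit}(i), iterated $|n|$ times. The only cosmetic difference is that you unwind to local balls $B_x$ and take suprema, whereas the paper applies \eqref{E:norm:mult} directly at the level of the global weighted norms; the content is identical.
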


\begin{proof}
Using \eqref{E:norm:mult} with $u_1 = \rho^n, u_2 = f, f_1 = \rho^n$, and $f_2 = \rho^{-n}$, we estimate 
\begin{align*} 
\| \rho^n f :C^k(\cat, \chi)\|& \le C(k)\| \rho^{n} : C^k (\cat, \chi, \rho^{n})\|  \| f: C^k( \cat, \chi, \rho^{-n}) \| \\
&\le C(k, n) \| \rho^{\frac{n}{|n|}} : C^k(\cat, \chi, \rho^{\frac{n}{|n|}})\|^{|n|}   \| f: C^k( \cat, \chi, \rho^{-n}) \| \\
&\le C(k, n)  \| f: C^k( \cat, \chi, \rho^{-n}) \| ,
\end{align*}
where in the second inequality we have used \eqref{E:norm:mult} iteratively and in the third we have used \ref{Lzinit}(i). 

Using \eqref{E:norm:mult} with $u_1 = \rho^{-n}, u_2 = \rho^n f, f_1= \rho^{-n}$, and $f_2 = 1$, we estimate in an analogous way
\begin{align*}
\| f: C^k (\cat, \chi, \rho^{-n})\| &\le C(k ) \| \rho^{-n}: C^k (\cat, \chi, \rho^{-n})\| \| \rho^n f: C^k (\tildecat, \chi)\| \\ 
&\le C(k, n)  \| \rho^n f: C^k (\cat, \chi)\| .
\end{align*}
Combining these estimates completes the proof.
\end{proof}

\begin{cor}
\label{LH2}
\begin{enumerate}[label=\emph{(\roman*)}]
\item $\| H : C^k(\cat, \chi ,\tau \rho^{-2} +1 )\| \le C(k)  \tau | \log \tau |$. 
\item $\| \Hcal_1 H : C^k(\cat, \chi)\| \le C(k) \tau | \log \tau |$. 
\end{enumerate}
\end{cor}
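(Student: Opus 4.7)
\textbf{Proof plan for Corollary \ref{LH2}.}

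The plan is to deduce both estimates from Lemma \ref{LH} by absorbing a factor of $\rho^{-2}$ (via the multiplicative property \eqref{E:norm:mult} and the bounds on $\rho^{\pm 1}$ from Lemma \ref{Lzinit}(i)) and a factor of $|\zz|+\tau$ (using the truncation defining $\cat$).

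\emph{Key preliminary bound.} First I would show the pointwise estimate
\begin{equation*}
|\zz| + \tau \le C\,\tau\,|\log\tau| \qquad \text{on } \cat.
\end{equation*}
By the formula $\zz = \tau(\cos\theta_\kappa\,\sss + \sin\theta_\kappa\sin\vartheta\,\sinh\sss)$ from Lemma \ref{Lcatrimm}(i) (cf.\ \ref{Lzinit}(ii)) and the truncation $\tau\cosh\sss<2\tau^{\alpha}$ in Definition \ref{dtiltedbM}, we have $|\sss|\le C|\log\tau|$ and $\sinh|\sss|\le 2\tau^{\alpha-1}$. Combined with $|\sin\theta_\kappa|\le|\kappatilde|\le\tau^{1+\alpha/6}$ from Convention \ref{Akappa}, the second term is $\le C\tau^{1+\alpha/6+\alpha}\ll\tau$, which gives the claim.

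\emph{Proof of (i).} Lemma \ref{LH}(i) together with the pointwise bound above yields
\begin{equation*}
\|\rho^{2}H:C^{k}(\cat,\chi,\,\tau+\rho^{2})\|\le C(k)\,\tau|\log\tau|.
\end{equation*}
Now write $H = \rho^{-2}\cdot(\rho^{2}H)$ and apply the multiplicative property \eqref{E:norm:mult} with $f_{1}=\rho^{-2}$ and $f_{2}=\tau+\rho^{2}$. Since $\|\rho^{-2}:C^{k}(\cat,\chi,\rho^{-2})\|\le C(k)$ by \ref{Lzinit}(i) (applied iteratively as in the proof of \ref{Erhow}), and since $\rho^{-2}(\tau+\rho^{2})=\tau\rho^{-2}+1$, this gives exactly the estimate in (i).

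\emph{Proof of (ii).} The crucial observation is that $\rho$ depends only on $\sss$, so $\rho^{2}$ commutes with the angular projection $\Hcal_{1}$ in \ref{dlowharm}, i.e.\
\begin{equation*}
\Hcal_{1}(\rho^{2}H)=\rho^{2}\,\Hcal_{1}(H).
\end{equation*}
Lemma \ref{LH}(ii) with the pointwise bound $|\zz|+\tau\le C\tau|\log\tau|$ gives
\begin{equation*}
\|\Hcal_{1}(\rho^{2}H):C^{k}(\cat,\chi,\rho^{2})\|\le C(k)\,\tau|\log\tau|.
\end{equation*}
Applying Lemma \ref{Erhow} with $n=-2$ and $f=\Hcal_{1}(\rho^{2}H)$ converts this weighted estimate into the unweighted bound for $\rho^{-2}\Hcal_{1}(\rho^{2}H)=\Hcal_{1}H$, which completes the proof.

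\emph{Where the real work lies.} The calculations here are essentially bookkeeping with the weighted H\"older norms; all analytic content has already been done in Lemmas \ref{LH} and \ref{Ltensest}. The only genuinely nontrivial step is verifying that the elevation/tilt parameter $\kappaunder$ does not spoil the bound $|\zz|+\tau\lesssim\tau|\log\tau|$, which is where Convention \ref{Akappa}'s smallness assumption $|\kappaunder|<\tau^{1+\alpha/6}$ is used. Beyond that, the identity $\Hcal_{1}(\rho^{2}H)=\rho^{2}\Hcal_{1}H$ (valid because $\rho$ is $\vartheta$-independent) is the one observation that makes part (ii) follow cleanly from \ref{LH}(ii).
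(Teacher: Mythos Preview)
Your proof is correct and follows the same approach as the paper's: the paper's proof simply says to combine Lemma~\ref{LH} with Lemma~\ref{Erhow} and the bound $|\zz|\le C\tau|\log\tau|$ on $\cat$, and you have accurately unpacked this, including making explicit the identity $\Hcal_{1}(\rho^{2}H)=\rho^{2}\Hcal_{1}H$ (since $\rho$ is $\vartheta$-independent) and the role of Convention~\ref{Akappa} in controlling the tilt contribution to $\zz$.
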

\begin{proof}
This follows from combining \ref{LH} with \ref{Erhow}  and using that $|\zz| \le C \tau | \log \tau |$ on $\cat$.
\end{proof}

\subsection*{Area of catenoidal bridges in $N$} 
\nopagebreak

\begin{lemma}[Area of truncated {{$\cat[p, \tau, \kappaunder] \subset N$}}]  
\label{Lcatarea}
Let $\cat = \cat[p, \tau, \kappaunder]$ be as in \ref{DbM}.  
Fix $\rtop = \tau^{3/4}$. The area
 $|\cat(\rtop)|$  of $\cat(\rtop): =  \cat \cap \Pi^{-1}_\Sigma (D^\Sigma_p(\rtop))$ 
 satisfies the following, where
 $\phicat^\pm = \phicat^\pm[\tau, \kappaunder]$ is as in \ref{dphicattilt}.
\begin{equation*}
|\cat(\rtop)| = 2 |D^\Sigma_p(\rtop)|- \pi \tau^2  
%\\ 
+ \frac{1}{2}\int_{\partial D^\Sigma_p(\rtop)}\left( \phicat^+ \frac{\partial \phicat^+}{\partial \eta} 
+ \phicat^-\frac{\partial \phicat^-}{\partial \eta}\right)dl
+
O( \tau^{5/2} | \log \tau|).
\end{equation*}
\end{lemma}

\begin{proof}
Since $\cat$ is $2$-dimensional, the determinant of the induced metric $g = \gcir +\alpha$ satisfies
\begin{align*}
\det g = \det \gcir ( 1 + \tr_{\gcir} \alpha+ \det \alpha^\sharp). 
\end{align*} 
Using that $\det \gcir = r(s)^4$ in the coordinates of \eqref{Ecatenoid}, that $\rtop = \tau^{3/4}$, and \ref{Ltensest} to estimate $\det \alpha^\sharp$ and $ \tr_{\gcir} \alpha$, it follows that
$\sqrt{\det g} = \sqrt{\det \gcir}(1+ O(\tau | \log \tau|))$ on $\cat(\rtop)$ and consequently that
\begin{align*}
|\cat(\rtop) |_g = |\cat(\rtop)|_{\gcir} +O(\tau^{5/2} |\log \tau|),
\end{align*}
where we have used (recall \ref{Lcata1}) that $|\cat(\rtop)|_{\gcir} = O(\rtop^2)$ to estimate the error term.

As a consequence of \ref{Lh}(iv), we have that $|D^\Sigma_p(\rtop)| = |D_0^{T_p \Sigma}(\rtop)| + O(\rtop^4)$, and that the length elements $dl_{g}$ and $dl_{\gcir}$ on $\partial D^{\Sigma}_p( \rtop)$ with respect to $g$ and $\gcir$ satisfy $dl_{g} = (1+ O(\rtop^3))dl_{\gcir}$.
The conclusion follows by combining \ref{Lcata1} with the preceding estimates.
\end{proof}

%%%%%%%%%%%%%%%%%%%%%%%%%%%%%%%%%%%%%%%%%%%%%%%%%%%%%%%%%%%%%%%%%%%%%%
%%%%%  
%%%%% SECTION:  LD  SOLUTIONS
%%%%%
%%%%%%%%%%%%%%%%%%%%%%%%%%%%%%%%%%%%%%%%%%%%%%%%%%%%%%%%%%%%%%%%%%%%%

\section{LD solutions and initial surfaces}
\label{S:LD}

\subsection*{Green's functions and LD solutions}
\nopagebreak

\begin{definition}[Green's functions] 
\label{dggen}
Given a Riemannian surface $(\Sigma, g)$, $V\in C^\infty(\Sigma)$, and $p\in\Sigma$,  
we call 
$G_p$ 
a \emph{Green's function for $\Delta_g + V$ on $\Omega$ with singularity at $p$} 
if it satisfies the following.
\begin{enumerate}[label=\emph{(\roman*)}]
\item 
$G_p \in C^\infty \left( \Omega \setminus \{p\} \right)$   
and $\left( \Delta_g + V\right) G_p= 0$ on $\Omega\setminus\{p\}$.  
\item  
$G_p-\log \dbold^g_p$ is bounded on 
some deleted neighborhood of $p$ in $\Omega$. 
\end{enumerate}
\end{definition}

Clearly if $G_p$ is as in \ref{dggen} and $\Omega''\subset \Omega$ is also a neighborhood of $p$, then 
$\left. \phantom{^I} \!\!\!   \! G_p \right|_{\Omega''}$ is 
also a Green's function for $\Delta_g + V$ on $\Omega''$ with singularity at $p$. 

\begin{lemma}
\label{Lsing}
If 
$G_p \in C^\infty \left( \Omega \setminus \{p\} \right)$ and 
$\Gtilde_p \in C^\infty \left( \Omega \setminus \{p\} \right)$ 
are both Green's functions for $\Delta_g + V$ on $\Omega$ with singularity at $p$ as in Definition \ref{dggen},  
then $G_p - \Gtilde_p$ has a unique extension in $C^\infty(\Omega)$.
\end{lemma}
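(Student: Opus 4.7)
Let $u := G_p - \widetilde{G}_p$, regarded as a function in $C^\infty(\Omega \setminus \{p\})$. The plan is to show first that $u$ extends to a bounded function on $\Omega$, second that this extension is a distributional solution of $(\Delta_g + V)u = 0$ on all of $\Omega$, and finally to invoke elliptic regularity to conclude the extension is smooth. Uniqueness of the extension is immediate because $\Omega\setminus\{p\}$ is dense in $\Omega$, so any two continuous extensions agree.

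For the first step, the logarithmic singularities of $G_p$ and $\widetilde{G}_p$ at $p$ cancel by property (iii) of Definition \ref{dggen}. Indeed, $G_p - \log \dbold^g_p$ and $\widetilde{G}_p - \log \dbold^g_p$ are each bounded on a deleted neighborhood $U' \setminus\{p\}$ of $p$, so $u = (G_p - \log \dbold^g_p) - (\widetilde{G}_p - \log \dbold^g_p)$ is bounded on $U'\setminus\{p\}$. Thus $u$ extends (in any fashion, for now, say by setting $u(p) := 0$) to an $L^\infty_{\mathrm{loc}}$ function on $\Omega$, which I still denote by $u$.

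For the second step, I would verify that $(\Delta_g + V)u = 0$ holds in the sense of distributions on $\Omega$. Fix a test function $\eta \in C^\infty_c(\Omega)$ and let $\psi_\varepsilon := \Psi\bigl( \tfrac{1}{\log(1/\varepsilon)}\log \tfrac{\dbold^g_p}{\varepsilon^{1/2}}\bigr)$ be a logarithmic cutoff (in the notation of \ref{DPsi}), equal to $0$ on $D^{\Sigma,g}_p(\varepsilon)$ and $1$ outside $D^{\Sigma,g}_p(\sqrt{\varepsilon})$; one checks that $\|\psi_\varepsilon\|_{L^\infty} \le 1$ while $\int_\Sigma |\nabla \psi_\varepsilon|_g^2\,dA_g \to 0$ as $\varepsilon \to 0$ (this is the standard capacity-zero computation for points in dimension two, where the critical regularity is $W^{1,2}$). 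Applying Green's identity on $\Omega\setminus D^{\Sigma,g}_p(\varepsilon)$ to $u$ and $\psi_\varepsilon \eta$, and using that $(\Delta_g + V)u = 0$ there, yields
\begin{equation*}
\int_\Omega u\, (\Delta_g + V)(\psi_\varepsilon \eta)\, dA_g = 0.
\end{equation*}
Expanding the product rule, the term $\int u\,\psi_\varepsilon (\Delta_g+V)\eta\,dA_g$ converges to $\int u (\Delta_g+V)\eta\,dA_g$ by dominated convergence, while the remaining terms, involving $\nabla \psi_\varepsilon$ and $\Delta_g \psi_\varepsilon$, are bounded in absolute value by the $L^\infty$ norms of $u$ and $\eta$ times quantities that tend to zero (using Cauchy--Schwarz for the gradient term, and the vanishing capacity estimate for the Laplacian term). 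Therefore $\int_\Omega u\,(\Delta_g + V)\eta\,dA_g = 0$ for all $\eta \in C^\infty_c(\Omega)$, which is precisely $(\Delta_g + V)u = 0$ in $\mathscr{D}'(\Omega)$.

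For the final step, since $V \in C^\infty(\Sigma)$ and $\Delta_g + V$ is a uniformly elliptic operator with smooth coefficients in any local chart about $p$, interior elliptic regularity (e.g., Weyl's lemma, upgraded by a bootstrap) promotes the distributional solution $u \in L^\infty_{\mathrm{loc}}(\Omega)$ to $u \in C^\infty(\Omega)$. I expect the main (only) obstacle to be the cutoff estimate in the second step: in dimension two, points have zero $W^{1,2}$-capacity, but the construction of a cutoff witnessing this requires the logarithmic scale above rather than a naive linear cutoff, and one needs to verify both $\int |\nabla \psi_\varepsilon|^2 \to 0$ and the weaker statement that $\int |\Delta_g \psi_\varepsilon|\,|u|\,|\eta| \to 0$, which I would handle by integrating by parts once more to move a derivative from $\psi_\varepsilon$ onto $u\eta$ on the annular support of $\nabla \psi_\varepsilon$, exploiting that $u$ is smooth on this annulus.
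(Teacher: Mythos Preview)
Your approach is the standard removable-singularity argument and is exactly what the paper's one-line proof (``by standard regularity theory'' with a citation) is implicitly invoking, so in spirit you are on the same track. However, your handling of the term $\int u\eta\,\Delta_g\psi_\varepsilon$ has a gap. After integrating by parts once you obtain $-\int\nabla(u\eta)\cdot\nabla\psi_\varepsilon$, and you propose to control the piece $\int\eta\,\nabla u\cdot\nabla\psi_\varepsilon$ by ``exploiting that $u$ is smooth on this annulus.'' But smoothness on each fixed annulus gives no \emph{uniform} control as the annulus shrinks to $p$: the interior gradient estimate for bounded solutions only yields $|\nabla u|\le C\,(\dbold^g_p)^{-1}$, and combining this with $|\nabla\psi_\varepsilon|\sim (\dbold^g_p\,|\log\varepsilon|)^{-1}$ produces an $O(1)$ bound on that integral, not $o(1)$.

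The missing ingredient is that $\nabla u\in L^2_{\mathrm{loc}}(\Omega)$, which follows from a Caccioppoli estimate: multiply $(\Delta_g+V)u=0$ by $(\psi_\varepsilon\zeta)^2 u$ for a fixed cutoff $\zeta\in C^\infty_c(\Omega)$ with $\zeta\equiv 1$ near $p$, integrate by parts, and use Young's inequality together with the boundedness of $\int|\nabla\psi_\varepsilon|_g^2$ to get $\int_{\{\zeta=1\}}|\nabla u|_g^2\le C$ uniformly in $\varepsilon$. Once $\nabla u\in L^2_{\mathrm{loc}}$ is in hand, Cauchy--Schwarz gives $\bigl|\int\eta\,\nabla u\cdot\nabla\psi_\varepsilon\bigr|\le\|\eta\|_\infty\|\nabla u\|_{L^2(A_\varepsilon)}\|\nabla\psi_\varepsilon\|_{L^2}$; now the first $L^2$ factor tends to zero because the annulus $A_\varepsilon$ shrinks to a point, while the second stays bounded. (A minor aside: your explicit formula for $\psi_\varepsilon$ does not actually produce the stated support; this is easily fixed by rescaling the argument of the logarithm.)
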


\begin{proof}
Clearly $G_p - \Gtilde_p$ is a smooth and bounded 
solution of the Partial Differential Equation 
on $\Omega\setminus \{p\} $ by the definitions. 
By standard regularity theory then the lemma follows \cite{bers}. 
\end{proof}

\begin{lemma}
\label{Lgreenlog}
\label{Ldsolve}
Given $(\Sigma, g)$, $V$, and $p\in\Sigma$ as in \ref{dggen}  
there exists $\deltaunder>0$ 
such that 
$\Delta_g +V$ on $D^\Sigma_p(\deltaunderp)$ satisfies the following $\forall\deltaunderp\in(0,\deltaunder]$  
where $ r: = \dbold^\Sigma_p$.
\begin{enumerate}[label=\emph{(\roman*)}]
\item 
There is a Green's function $G_p$ for $\Delta_g+V$ on $D^\Sigma_p(\deltaunderp)$ with singularity at $p$ satisfying 
\begin{align}
\label{Egreenlog} 
\big\| G_p - \log r : C^k\big( D^\Sigma_p (\deltaunderp) \setminus \{ p \}  , r, g, r^2| \log r| \big)\big\| \le C(k). 
\end{align}
\item 
For any given $u_\partial \in C^{2,\beta }( \partial D^\Sigma_p(\deltaunderp) \, )$ 
there is a unique solution $u\in C^{2,\beta }( D^\Sigma_p(\deltaunderp) \, )$ 
to the Dirichlet problem 
\\
$\phantom{a}$ 
\hfill 
$ 
(\Delta_g +V)u=0 \text{ on } D^\Sigma_p(\deltaunderp), 
$ 
\hfill 
$
\qquad 
u=u_\partial \text{ on } \partial D^\Sigma_p(\deltaunderp). 
$  
\hfill 
$\phantom{a}$ 
\end{enumerate} 
\end{lemma}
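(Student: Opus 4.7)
The plan is to prove (ii) first and then use it to construct the Green's function in (i).

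For part (ii), I would verify coercivity of the bilinear form $B(u,v) := \int_{D^\Sigma_p(\deltaunderp)}(\langle \nabla u,\nabla v\rangle_g - Vuv)\,dA_g$ on $H^1_0(D^\Sigma_p(\deltaunderp))$. The Poincaré constant of a geodesic disk of radius $\deltaunderp$ scales like $(\deltaunderp)^2$, so choosing $\deltaunder$ small enough that $C\deltaunder^2 \|V\|_{L^\infty(D^\Sigma_p(\deltaunder))} < 1/2$ ensures $B(u,u) \ge \tfrac{1}{2}\|\nabla u\|_{L^2}^2$. Reducing to homogeneous boundary data by subtracting a $C^{2,\beta}$ extension of $u_\partial$, Lax--Milgram then yields a weak solution and interior and boundary Schauder regularity upgrades it to $C^{2,\beta}(\overline{D^\Sigma_p(\deltaunderp)})$; uniqueness follows from coercivity.

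For part (i), I would use a parametrix-plus-correction construction. In geodesic polar coordinates $(r,\theta)$ centered at $p$, the area density has the form $r\, w(r,\theta)$ with $w$ smooth, positive, and $w = 1 - K(p)r^2/6 + O(r^4)$, which gives $\Delta_g \log r = w_r/(rw) = -K(p)/3 + O(r^2)$ and hence $(\Delta_g + V)\log r = V(p)\log r + O(1) + O(r|\log r|)$. Using the model formulas $(\Delta_g + V)(r^2\log r) = 4\log r + 4 + O(r^2|\log r|)$ and $(\Delta_g + V)(r^2) = 4 + O(r^2)$, I would inductively build a parametrix
\begin{equation*}
v_\star = \log r + \sum_{j\geq 1,\; 0\leq k\leq j} a_{j,k}(\theta)\, r^{2j}(\log r)^k
\end{equation*}
whose coefficients $a_{j,k}(\theta)$ are chosen order by order to cancel successive terms in the asymptotic expansion of $(\Delta_g + V)v_\star$, arriving at $\|(\Delta_g + V)v_\star : C^{k,\beta}(D^\Sigma_p(\deltaunderp)\setminus\{p\}, r, g, r^N)\| \le C(k,\beta,N)$ for any prescribed $N$. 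By direct inspection each term of $v_\star - \log r$ satisfies the weighted estimate in \eqref{Egreenlog}.

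Next I would apply (ii) to solve $(\Delta_g + V)\tilde u = -(\Delta_g + V) v_\star$, whose right-hand side extends to an arbitrarily regular function on $D^\Sigma_p(\deltaunderp)$, with Dirichlet data $u_\partial$ chosen so that $\tilde u(p) = 0$ and $\nabla\tilde u(p) = 0$. The three-dimensional freedom in $u_\partial$ suffices because the affine map $u_\partial \mapsto (\tilde u(p), \nabla\tilde u(p)) \in \R^3$ is surjective: three bump functions concentrated at well-separated points of $\partial D^\Sigma_p(\deltaunderp)$ produce solutions of the homogeneous equation with generically linearly independent $1$-jets at $p$. Setting $G_p := v_\star + \tilde u$ then gives a Green's function; with the $1$-jet of $\tilde u$ vanishing at $p$, Taylor expansion in normal coordinates yields $|\tilde u(x)| \le C\, r(x)^2$ with analogous derivative bounds, and at scale $r_0 := \dbold^\Sigma_p(x_0)$ rescaling by $r_0^{-2}$ on the metric ball of radius $r_0/100$ around $x_0$ combined with Schauder estimates converts these pointwise bounds into the required weighted $C^k$ estimate. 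The hard part is the interplay between the solvability theory of (ii) and the weighted norm: without the $1$-jet normalization of $\tilde u$, the global $C^0$ bound from Lax--Milgram is only $O(1)$ and dominates the rescaled local estimates as $r_0 \to 0$, making the weighted estimate fail. Exploiting the freedom in Dirichlet data to force $\tilde u(p) = 0$ and $\nabla \tilde u(p) = 0$ is therefore essential, and verifying the surjectivity of the $1$-jet evaluation map is the delicate step.
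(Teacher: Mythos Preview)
Your proof is correct and considerably more detailed than the paper's, which simply cites \cite{bers} for (i) and dispatches (ii) in one sentence (``scaling to unit size and treating the operator as a small perturbation of the flat Laplacian''). Your coercivity argument for (ii) is exactly that perturbation argument unwound, and your parametrix-plus-correction construction for (i) is the standard route such a citation points to.

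Two minor points. First, you invoke (ii) to solve the \emph{inhomogeneous} problem $(\Delta_g+V)\tilde u = -(\Delta_g+V)v_\star$, whereas (ii) as stated covers only the homogeneous Dirichlet problem; this is harmless since your coercivity argument yields solvability of both, but it is worth saying so explicitly. Second, the surjectivity of the $1$-jet evaluation map $u_\partial \mapsto (\tilde u(p),\nabla\tilde u(p))$ is easier to verify by taking boundary data $1,\cos\theta,\sin\theta$ rather than bump functions: for the flat Laplacian on a disk these produce the harmonic functions $1, r\cos\theta, r\sin\theta$ with manifestly independent $1$-jets at the center, and for $\Delta_g+V$ on a small disk the solutions are $C^2$-close to these by the same perturbation reasoning you use in (ii). The paper itself uses exactly this choice of boundary data later (Remark~\ref{R:p}) when building the obstruction spaces.
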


\begin{proof}
(i) is standard, see for example \cite{bers}. 
(ii) follows easily by scaling to unit size and treating $\Delta_g +V$ as a small perturbation of the flat Laplacian. 
\end{proof}

\begin{corollary}
\label{cor:green}
If $G_p$ and $\Gtilde_p$ are both Green's functions as in \ref{Lgreenlog}(i)  
satisfying \ref{Egreenlog} for some $\delta>0$, 
then the unique extension $G\in C^\infty(D^\Sigma_p (\delta) \, )$ of 
$G_p- \Gtilde_p$ (recall \ref{Lsing})  
satisfies $G(p) = 0$ and $d_pG=0$. 
\end{corollary}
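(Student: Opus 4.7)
The plan is to use the asymptotic bound \eqref{Egreenlog} satisfied by both $G_p$ and $\Gtilde_p$, noting that the singular part $\log r$ cancels when we form the difference $G_p - \Gtilde_p$. The smoothness of $G$ on the full disk $D^\Sigma_p(\delta)$ is already provided by Lemma \ref{Lsing}, so only the values $G(p)$ and $d_p G$ need to be controlled, and continuity of $G$ and $dG$ at $p$ reduces the task to showing these quantities tend to zero as one approaches $p$ along the punctured disk.

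First I would unpack the weighted H\"older norm in \ref{dwHolder} applied to \eqref{Egreenlog}. With $\rho=r$ and $f = r^2 |\log r|$, the bound with $k=1$ says that on a geodesic ball around each $x \in D^\Sigma_p(\delta)\setminus\{p\}$ of radius $r(x)/100$ in the metric $g$, the function $G_p - \log r$ and its first derivative, measured in the rescaled metric $r(x)^{-2} g$, have norm at most $C r(x)^2 |\log r(x)|$. Rescaling back to $g$ gives the pointwise bounds
\begin{equation*}
|G_p(x) - \log r(x)| \le C\, r(x)^2 |\log r(x)|, \qquad |\nabla_g (G_p - \log r)|(x) \le C\, r(x) |\log r(x)|.
\end{equation*}
The same bounds hold with $G_p$ replaced by $\Gtilde_p$.

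Subtracting these two estimates, the $\log r$ term and its gradient cancel, so the difference $G = G_p - \Gtilde_p$ satisfies
\begin{equation*}
|G(x)| \le C\, r(x)^2 |\log r(x)|, \qquad |\nabla_g G|(x) \le C\, r(x)\, |\log r(x)|,
\end{equation*}
on $D^\Sigma_p(\delta) \setminus \{p\}$. Letting $x \to p$, both right-hand sides tend to $0$. Since $G$ extends smoothly to $p$ by Lemma \ref{Lsing}, continuity of $G$ and $dG$ at $p$ forces $G(p) = 0$ and $d_p G = 0$, completing the proof.

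There is no real obstacle here; the only point to be careful about is the interpretation of \ref{dwHolder}, namely that the rescaled metric $\rho^{-2}(x) g$ absorbs powers of $r(x)$ into the derivatives so that each derivative costs a factor of $\rho(x) = r(x)$, yielding the correct rate $r(x)^2 |\log r(x)|$ for the value and $r(x)|\log r(x)|$ for the gradient of the difference.
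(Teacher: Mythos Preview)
Your argument is correct and follows the same approach as the paper. The paper's version is more terse: it subtracts the two instances of \eqref{Egreenlog} to get $|G_p-\Gtilde_p|\le C r^2|\log r|$ and observes that, together with the smoothness from Lemma \ref{Lsing}, this already forces both $G(p)=0$ and $d_pG=0$ (a smooth function vanishing to order $o(r)$ at $p$ must have vanishing value and differential there), so your separate gradient estimate, while correct, is not strictly needed.
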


\begin{proof} 
By subtracting the two versions of \eqref{Egreenlog} we conclude that 
$|G_p- \Gtilde_p|\le C r^2| \log r| $,  
which implies the result by \ref{Lsing}.  
\end{proof} 

\begin{definition}[LD solutions]
\label{dLD}
We call $\varphi$ a \emph{linearized doubling (LD) solution on $\Sigma$}     
when there exists 
a finite set $L\subset \Sigma$, 
called the \emph{singular set of $\varphi$}, 
and a function 
$\taubold: L \rightarrow \R\setminus\{0\}$,  
called the \emph{configuration of $\varphi$}, 
satisfying the following,   
where $\tau_p$ denotes the value of $\taubold$ at $p\in L$.  
\begin{enumerate}[label=\emph{(\roman*)}]
\item 
$
\varphi \in C^\infty (\, \Sigma \setminus L \, ) 
$ 
and       
$\Lcal_\Sigma\varphi=0$ on $\Sigma\setminus L$ (recall \ref{NT}\ref{N:A}).  
\item 
$\forall p \in L$ 
the function   
$\varphi - \tau_p \log \dbold^g_p$ is bounded on some deleted neighborhood of $p$ in $\Sigma$. 
\end{enumerate}
\end{definition}

In other words LD solutions are Green's functions for $\Lcal_\Sigma$ (recall \ref{dggen}) with multiple singularities of various strengths;  
we call them solutions because they satisfy the linearized equation as in \ref{dLD}(i). 

\begin{remark} 
\label{R:LD}
In some constructions we will need to modify the definition of LD solutions in \ref{dLD} either by imposing boundary or decay conditions or by relaxing the requirement 
$\Lcal_\Sigma\varphi=0$ on $\Sigma\setminus L$. 
Note that although we usually require $\forall p\in L$ $\tau_p>0$,  
in the definition we allow any $\tau_p \in \R\setminus\{0\}$ to ensure (by \ref{Lsing}) that the LD solutions form a vector space, 
and those with singular set a subset of a given finite set $L'\subset\Sigma$, a subspace. 
\qed
\end{remark} 

\subsection*{Mismatch and obstruction spaces} 
\nopagebreak

\begin{convention}[The constants $\delta_p$]
\label{con:L}
Given $L$ as in \ref{dLD} 
we assume that for each  $p\in L$ a constant $\delta_p>0$ has been chosen so that the following are satisfied. 
\begin{enumerate}[label={(\roman*)}]
\item 
$\forall p,p'\in L$ with $p\ne p'$ we have 
$D^\Sigma_p(9\delta_p) \cap D^\Sigma_{p'}(9\delta_{p'}) = \emptyset $. 
\item 
$\forall p\in L$ and $\forall\deltaunderp\in(0, 3 \delta_p]$, $\Lcal_\Sigma$ on $D^\Sigma_p(\deltaunderp)$ satisfies \ref{Lgreenlog}(i)-(ii). 
\item 
$\forall p\in L$, $\delta_p < \inj^{\Sigma,N,g}_p$ (recall \ref{dexp}). 
\end{enumerate} 
\end{convention}

\begin{lemma} 
\label{Rmismatch}
Given $\varphi$, $L$, and $\taubold$ as in \ref{dLD} and assuming \ref{con:L}, 
$\forall p\in L$ 
there exist $\varphihat_p\in C^\infty \! \left( D^\Sigma_p( 2 \delta_p) \right)$ and 
a Green's function $G_p$ for $\Lcal_\Sigma$ on $D^\Sigma_p( 2 \delta_p)$ with singularity at $p$ 
satisfying \ref{Egreenlog} with $ 2 \delta_p$ instead of $\deltaunderp$,  
such that the following hold (recall \ref{DVcal}). 
\begin{enumerate}[label=\emph{(\roman*)}]
\item 
$\varphi=\varphihat_p + \tau_p \, G_p \, $ on 
$D^\Sigma_p( 2 \delta_p) \setminus \{ p \} $.  
\item 
$\Ecalunder_p \varphihat_p : T_p\Sigma\to\R$ is independent of the choices of $\delta_p$ and $G_p$ and depends only on $\varphi$. 
\item 
$\varphi \circ \exp^\Sigma_p(v)= \tau_p \log |v| + \Ecalunder_p \varphihat_p (v) + O(|v|^2\log|v|)$ for small $v\in T_p\Sigma$.  
\end{enumerate} 
\end{lemma}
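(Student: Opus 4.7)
The plan is to set $\varphihat_p := \varphi - \tau_p G_p$ with $G_p$ supplied by Lemma~\ref{Lgreenlog}(i) on $D^\Sigma_p(2\delta_p)$ (which is legitimate by Convention~\ref{con:L}(ii)), and then verify (i)--(iii) in turn. All three conclusions will follow from elementary combinations of Lemma~\ref{Lgreenlog}(i), Lemma~\ref{Lsing}, Corollary~\ref{cor:green}, and Taylor's theorem; there is no serious obstacle, only some bookkeeping.

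To establish~(i) I would verify that $\varphihat_p$ extends smoothly across $p$. Since $\Lcal_\Sigma \varphi = 0$ on $\Sigma \setminus L$ by Definition~\ref{dLD}(i) and $\Lcal_\Sigma G_p = 0$ on $D^\Sigma_p(2\delta_p) \setminus \{p\}$ by Definition~\ref{dggen}(ii), $\varphihat_p$ is annihilated by $\Lcal_\Sigma$ on the punctured disk. Moreover both $\varphi - \tau_p \log \dbold^g_p$ (by Definition~\ref{dLD}(ii)) and $G_p - \log \dbold^g_p$ (by~\eqref{Egreenlog}) are bounded on some deleted neighborhood of $p$, hence so is their difference $\varphihat_p = (\varphi - \tau_p \log \dbold^g_p) - \tau_p (G_p - \log \dbold^g_p)$. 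The standard elliptic regularity used in the proof of Lemma~\ref{Lsing} then furnishes a unique extension $\varphihat_p \in C^\infty\!\left(D^\Sigma_p(2\delta_p)\right)$, and (i) holds by construction.

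For~(ii), suppose $\widetilde{G}_p$ is another Green's function for $\Lcal_\Sigma$ on $D^\Sigma_p(2\delta_p)$ satisfying~\eqref{Egreenlog}, and let $\widetilde{\varphihat}_p := \varphi - \tau_p \widetilde{G}_p$; by the argument above $\widetilde{\varphihat}_p$ also extends smoothly. Then $\widetilde{\varphihat}_p - \varphihat_p = \tau_p(G_p - \widetilde{G}_p)$, and Corollary~\ref{cor:green} guarantees this difference vanishes to first order at $p$, so $\widetilde{\varphihat}_p(p) = \varphihat_p(p)$ and $d_p\widetilde{\varphihat}_p = d_p\varphihat_p$. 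Independence from the choice of $\delta_p$ is then automatic: restricting $G_p$ from $D^\Sigma_p(2\delta_p)$ to any $D^\Sigma_p(2\delta'_p)$ with $\delta'_p \in (0,\delta_p]$ still produces a Green's function of the required type, and the resulting $\varphihat_p$ simply restricts as well.

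For~(iii), I would work in normal coordinates via $\exp^\Sigma_p$, so that $\dbold^g_p \circ \exp^\Sigma_p(v) = |v|$ for $v \in T_p\Sigma$ sufficiently small. The estimate~\eqref{Egreenlog} specialized to the supremum norm (the $k=0$ case) gives
\[
(G_p - \log \dbold^g_p)\circ \exp^\Sigma_p(v) = O\!\left(|v|^2 \log|v|\right),
\]
while Taylor's theorem applied to the smooth function $\varphihat_p$ yields $\varphihat_p \circ \exp^\Sigma_p(v) = \varphihat_p(p) + d_p\varphihat_p(v) + O(|v|^2)$. Adding $\tau_p$ times the first to the second and absorbing $O(|v|^2)$ into $O(|v|^2 \log|v|)$ yields exactly the claimed expansion in~(iii). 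The only step that requires genuine input beyond definitions is the removability argument in~(i), which is precisely the content invoked in Lemma~\ref{Lsing}.
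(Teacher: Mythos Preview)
Your proposal is correct and follows essentially the same approach as the paper: define $\varphihat_p := \varphi - \tau_p G_p$ with $G_p$ furnished by Convention~\ref{con:L}(ii), invoke the removable-singularity argument behind Lemma~\ref{Lsing} for~(i), Corollary~\ref{cor:green} for~(ii), and the combination of Taylor's theorem with~\eqref{Egreenlog} for~(iii). The paper's own proof is just a one-sentence sketch of exactly these steps; you have simply spelled out the details (particularly the boundedness-plus-regularity justification for the smooth extension) that the paper leaves implicit.
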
 

\begin{proof}
The existence of $G_p$ follows from 
\ref{con:L}(ii) and (i) serves then as the definition of $\varphihat_p$. 
(ii) follows then from \ref{cor:green} and (iii) from a Taylor expansion of $\varphihat_p$ combined with \eqref{Egreenlog}. 
\end{proof} 

\begin{definition}[Mismatch of LD solutions {\cite[Definition 3.3]{kap}}] 
\label{Dmismatch}
Given $\varphi$, $L$, and $\taubold$ as in \ref{dLD} with $\tau_p>0$ $\forall p\in L$, 
we define the \emph{mismatch of $\varphi$}, $\Mcal_L \varphi \in \val[L]$ (recall \ref{DVcal}), 
by
$\Mcal_L \varphi : = \bigoplus_{p\in L} \Mcal_p \varphi$,  where 
$\Mcal_p \varphi \in \val[p]$ is defined (recall \ref{Rmismatch} and \ref{DVcal}) 
by requesting that for small $v\in T_p\Sigma$  
\begin{equation*} 
\begin{gathered}
\varphi \circ \exp^\Sigma_p(v) = \,\, \tau_p \log ( 2 |v| / \tau_p )  + \left( \Mcal_p \varphi \right) (v) + O(|v|^2\log|v|),  
\\
\text{or equivalently by \ref{Rmismatch}(iii)} \qquad  
\Mcal_p \varphi : = \,\, \Ecalunder_p \varphihat_p + \tau_p \log (\tau_p/2) . 
\end{gathered}
\end{equation*} 
\end{definition} 

\begin{assumption}[Obstruction spaces] 
\label{aK}
Given $L$ as in \ref{dLD} we assume we have chosen a subspace $\skernelv[L] =  \bigoplus_{p\in L}\skernelv[p] \subset C^\infty(\Sigma)$ satisfying the following, 
where the map 
$\Ecal_L : \skernelv[L] \rightarrow \val[L]$ (recall \ref{DVcal}) is defined by  
$\Ecal_L(v) := \bigoplus_{p\in L} \Ecalunder_p v$. 
\begin{enumerate}[label = {(\roman*)}]
\item The functions in $\skernelv[p]$ are supported on $D^\Sigma_p(4 \delta_p)$.
\item The functions in $\skernel[p]$, where $\skernel[p]: = \Lcal_\Sigma \skernelv[p]$, are supported on $D^\Sigma_p(4 \delta_p) \setminus D^{\Sigma}_p(\delta_p/4)$. 
\item 
$\Ecal_L : \skernelv[L] \rightarrow \val[L]$ is a linear isomorphism.  
\item  $\left\| \Ecal^{-1}_L \right\| \le C \delta_{\min}^{-2-\beta}$, 
where 
$\delta_{\min}:=\min_{p\in L}\delta_p$ 
and 
$\left\| \Ecal^{-1}_L \right\|$ is the operator norm of $\Ecal^{-1}_L  : \val[L] \rightarrow  \skernelv[L]$ 
with respect to the $C^{2, \beta}\left( \Sigma, g\right)$ norm  on the target and the maximum  norm on the domain subject to the metric $g$ on $\Sigma$. 
\item 
$\forall \kappaunderbold = (\kappaunder_p)_{p\in L} \in \val[L]$ we have for each $p\in L$
\[ 
\| \kappaunder_p \circ (\exp^\Sigma_p)^{-1} - \Ecal^{-1}_L\kappaunder_p : C^k( D^\Sigma_p(\delta_p), \dbold^\Sigma_p, g, (\dbold^\Sigma_p)^2)\| \le 
C(k) \, |\kappaunder_p| . 
\] 
\end{enumerate}
\end{assumption}

\begin{remark}
\label{R:p} 
Given $L$ as in \ref{dLD} and constants $\delta_p$ as in \ref{con:L}, 
a possible definition of 
spaces $\skernelv[p]$ satisfying \ref{aK} is by 
$$
\skernelv[p] := \operatorname{span}\left( \left\{ \Psibold[\delta_p, 2\delta_p; \dbold^\Sigma_p]( u_i, 0)\right\}_{i=1}^3 \right), 
$$
where $u_i, i=1, 2,3$ are solutions of the Dirichlet problem $\Lcal_\Sigma u_i = 0$ on $D^\Sigma_p (3\delta_p)$,  
with corresponding boundary data $u_1 = \sin \theta, u_2 = \cos \theta, u_3 = 1$ on $\partial D^\Sigma_p(3\delta_p)$, 
where $\theta$ is a local angular coordinate in geodesic polar coordinates for $D^\Sigma_p(\delta_p)$.  
In the constructions in this paper, we will use choices (see \ref{dkernelsym}) of $\skernelv[L]$ and $\skernel[L]$ adapted to symmetries of the problems.
\qed
\end{remark}

\subsection*{Mismatch and conformal change of metric}  
\nopagebreak

We prove two lemmas now which will be useful in Part II. 

\begin{lemma}[Distance expansion under conformal change of metric] 
\label{LGdiff}
Consider a metric $\ghat = e^{-2\conf}g$ on $\Sigma$, where $\conf \in C^\infty(\Sigma)$.  
For each $p\in \Sigma$ and 
 $q$ in some open neighborhood of $p$ in $\Sigma$, 
\begin{align*}
\left|\log \dbold^{\ghat}_p (q) - \log \dbold^{g}_p (q) + \conf(p)+ \frac{1}{2}d_p\conf(\, (\exp_p^g)^{-1} (q) \,)  \right| \leq 
C \, (\dbold^{g}_p(q))^2.
\end{align*}
\end{lemma}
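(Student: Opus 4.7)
The plan is to work in $g$-normal coordinates at $p$ via $\exp^g_p$ and estimate $\dbold^{\ghat}_p(q)$ both from above, by using the radial $g$-geodesic from $p$ to $q$ as a competitor curve for the $\ghat$-distance, and from below, via the symmetric observation that $g = e^{-2(-\conf)} \ghat$ so the upper-bound argument applied with the roles of $g$ and $\ghat$ interchanged gives a matching lower bound.

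For the upper bound, set $v := (\exp^g_p)^{-1}(q)$ and $r := |v|_g = \dbold^g_p(q)$, and consider the $g$-arclength parametrized radial geodesic $\gamma(t) := \exp^g_p(tv/r)$ for $t \in [0, r]$. Because $|\dot\gamma|_{\ghat} = e^{-\conf \circ \gamma}$ and Taylor's theorem yields $\conf(\gamma(t)) = \conf(p) + (t/r)\, d_p\conf(v) + O(t^2)$, expanding the exponential and integrating gives
\[
L_{\ghat}(\gamma) = r\, e^{-\conf(p)}\left( 1 - \tfrac{1}{2} d_p\conf(v) + O(r^2) \right).
\]
Using that $d_p\conf(v) = O(r)$, taking logarithms, and invoking $\dbold^{\ghat}_p(q) \leq L_{\ghat}(\gamma)$ yields the desired upper bound on $\log \dbold^{\ghat}_p(q) - \log \dbold^{g}_p(q) + \conf(p) + \tfrac{1}{2} d_p\conf(v)$.

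For the lower bound, the same argument applied to the pair $(\ghat, g)$ with $\conf$ replaced by $-\conf$ produces
\[
\log \dbold^{g}_p(q) - \log \dbold^{\ghat}_p(q) - \conf(p) - \tfrac{1}{2} d_p\conf(\hat v) \leq C\, \bigl(\dbold^{\ghat}_p(q)\bigr)^2,
\]
where $\hat v := (\exp^{\ghat}_p)^{-1}(q)$. To match this with the statement, I would use two easy facts valid on a small neighborhood of $p$: first, the metrics $g$ and $\ghat$ are bi-Lipschitz equivalent, so $\dbold^{\ghat}_p(q) \leq C\, \dbold^{g}_p(q)$; and second, the composition $(\exp^{\ghat}_p)^{-1} \circ \exp^{g}_p \colon T_p\Sigma \to T_p\Sigma$ fixes the origin with identity linearization there (both exponential maps differentiate to the identity at $0$), hence $\hat v = v + O(|v|^2)$, so $d_p\conf(\hat v) = d_p\conf(v) + O(r^2)$. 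Combining with the upper bound completes the proof. The only nontrivial step, and hence the main obstacle, is this last comparison between $v$ and $\hat v$; everything else is a one-variable Taylor expansion along a geodesic.
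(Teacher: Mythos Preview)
Your proof is correct and follows essentially the same approach as the paper: both bound $\dbold^{\ghat}_p(q)$ from above by the $\ghat$-length of the $g$-geodesic, bound $\dbold^{g}_p(q)$ from above by the $g$-length of the $\ghat$-geodesic, Taylor-expand $\conf$ along each, and then reconcile the two bounds by showing $\hat v = v + O(|v|^2)$. Your justification of this last step via the identity linearization of $(\exp^{\ghat}_p)^{-1}\circ \exp^g_p$ is in fact more explicit than the paper's, which simply asserts $|r\gamma'(0) - \rhat\,\gammahat'(0)| < Cr^2$ after first extracting $\rhat/r = e^{-\conf(p)} + O(r)$ from the two length inequalities.
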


\begin{proof}
In this proof, denote $r = \dbold^g_p(q)$ and $\rhat = \dbold^{\ghat}_p(q)$, where $q\in \Sigma$ is close to $p$.  
Let $\gamma$ and $\gammahat$ be respectively the $g$- and $\ghat$-geodesics joining $p$ to $q$.  
We have
\begin{equation*}
\begin{gathered}
\rhat \leq \int_{0}^{r} e^{-\conf(\gamma(t))}dt 
= e^{-\conf(p)}r \left(1-\frac{1}{2}d_p\conf(\gamma'(0))r + O(r^2)\right),  
\\ 
r \leq \int_{0}^{\rhat} e^{\conf(\gammahat(t))}dt
= e^{\conf(p)}\rhat\left(1+ \frac{1}{2} d_p \conf(\gammahat'(0))\rhat + O(\rhat^2)\right).
\end{gathered}
\end{equation*}
This implies that $\rhat/r = e^{-\conf(p)}+ O(r)$ and consequently that $|r \gamma'(0)-\rhat \, \gammahat'(0)|< C r^2$. 
We complete the proof by taking logarithms of both inequalities above and expanding. 
\end{proof}

\begin{lemma}[Mismatch expansion in a conformal metric]
\label{Lmm}
For given $\conf \in C^\infty(\Sigma)$ and $\varphi$ as in \ref{Dmismatch} we have for $\ghat := e^{-2\conf} g$ and $\forall p\in L$ and small $w\in T_p\Sigma$  
\begin{equation*}
\varphi \circ \exp^{\Sigma, \ghat}_p(w) = 
\tau_p \log (2 |w|_{\ghat}/\tau_p) + (\Mcal_p \varphi)(w) 
%\\ 
+ \tau_p \conf(p) + \tau_p d_p \conf (w)/2 + O(|w|^2_{\ghat} \log |w|_{\ghat}). 
\end{equation*}
\end{lemma}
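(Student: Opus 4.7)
\textbf{Proof plan for Lemma \ref{Lmm}.} The strategy is to pull back the expansion of $\varphi$ from the $g$-normal coordinates (where Definition \ref{Dmismatch} gives it directly) to the $\ghat$-normal coordinates, using Lemma \ref{LGdiff} to convert the logarithmic distance terms and a routine comparison of exponential maps to convert the affine/error terms.

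Fix $p\in L$ and set $q:=\exp^{\Sigma,\ghat}_p(w)$ and $v:=(\exp^{\Sigma,g}_p)^{-1}(q)\in T_p\Sigma$, so that $|w|_{\ghat}=\dbold^{\ghat}_p(q)$ and $|v|=\dbold^g_p(q)$. First I would apply Definition \ref{Dmismatch} to write
\[
\varphi(q)=\tau_p\log(2|v|/\tau_p)+(\Mcal_p\varphi)(v)+O(|v|^2\log|v|).
\]
Next, \ref{LGdiff} (applied at the point $q$) gives
\[
\log|v|=\log|w|_{\ghat}+\conf(p)+\tfrac12 d_p\conf(v)+O(|v|^2),
\]
so $\tau_p\log(2|v|/\tau_p)=\tau_p\log(2|w|_{\ghat}/\tau_p)+\tau_p\conf(p)+\tfrac{\tau_p}{2}d_p\conf(v)+O(|v|^2)$.

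The remaining task is to replace $v$ by $w$ in the affine terms $(\Mcal_p\varphi)(v)$ and $d_p\conf(v)$, and to rewrite the error term in the $\ghat$-metric. Inspecting the proof of \ref{LGdiff} (specifically the estimate $|r\gamma'(0)-\rhat\,\gammahat'(0)|\le Cr^2$), one sees that $|v-w|_g=O(|w|^2_{\ghat})$: identifying both vectors in $T_p\Sigma$, the $g$-initial velocity $\gamma'(0)$ of the $g$-geodesic to $q$ and the $\ghat$-initial velocity $\gammahat'(0)$ of the $\ghat$-geodesic to $q$ differ (after scaling by the respective distances) by $O(r^2)$. Since $\Mcal_p\varphi$ and $d_p\conf$ are affine (respectively linear) on $T_p\Sigma$, we get
\[
(\Mcal_p\varphi)(v)=(\Mcal_p\varphi)(w)+O(|w|^2_{\ghat}),\qquad d_p\conf(v)=d_p\conf(w)+O(|w|^2_{\ghat}).
\]
Moreover $|v|\Sim_{C} |w|_{\ghat}$ (with $C$ depending on $\conf(p)$), so $|v|^2\log|v|=O(|w|^2_{\ghat}\log|w|_{\ghat})$. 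Substituting all of these back into the displayed expansion for $\varphi(q)$ yields the claim.

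The only part that is more than bookkeeping is the comparison $|v-w|_g=O(|w|^2_{\ghat})$ between the two exponential maps; everything else is a direct substitution using the affine/linear character of the relevant objects. Since this comparison is essentially established along the way in \ref{LGdiff}, the proof reduces to combining \ref{Dmismatch} and \ref{LGdiff} and tracking the error terms, which all collapse into the advertised $O(|w|^2_{\ghat}\log|w|_{\ghat})$.
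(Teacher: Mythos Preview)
Your proof is correct and follows essentially the same approach as the paper: define $v=(\exp^{\Sigma,g}_p)^{-1}\circ\exp^{\Sigma,\ghat}_p(w)$, expand $\varphi$ via \ref{Dmismatch} in terms of $v$, then use \ref{LGdiff} together with $v=w+O(|w|^2_{\ghat})$ to pass from $v$ to $w$. The paper's write-up is terser but the logic is identical.
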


\begin{proof}
By \ref{Dmismatch} we have for small $w\in T_p \Sigma$ that 
\begin{equation*}
\varphi \circ \exp^{\Sigma, \ghat}_p(w) 
= \tau_p \log (2 |w|_{\ghat}/ \tau_p) +\tau_p \log(|v|_g / |w|_{\ghat}) 
%\\ 
+ (\Mcal_p \varphi)(v) + O(|v|^2_g \log |v|_g),
\end{equation*}
where 
$v\in T_p\Sigma$ denotes the unique small vector satisfying 
$\exp^{\Sigma, g}_p(v) = \exp^{\Sigma, \ghat}_p(w)$, or equivalently 
$v = (\exp^{\Sigma, g}_p)^{-1} \circ \exp^{\Sigma, \ghat}_p(w)$.  
The proof is completed then by using \ref{LGdiff} and that $v = w+ O(|w|^2_{\ghat})$.
\end{proof}

\subsection*{The initial surfaces and their regions}
\nopagebreak

Each initial surface we construct depends not only on an LD solution $\varphi$ as in \cite{kap}, 
but also on additional parameters $\kappaunderbold \in \val[L]$ controlling the elevation and tilt of the catenoidal bridges in the vicinity of 
$\varphi$'s singular set $L$.
We list now the conditions imposed on these data. 

\begin{convention}[Uniformity of LD solutions] 
\label{con:one}
We assume given $\varphi$, $L$, and $\taubold$ as in \ref{dLD} with $\tau_p>0$ $\forall p\in L$, 
and $\delta_p$'s as in \ref{con:L}, 
satisfying the following with $\alpha$ as in \ref{con:alpha} and 
\begin{equation} 
\label{Ddeltaprime}
\begin{gathered} 
\tau_{\min}:=\min_{p\in L}\tau_p, 
\\ 
\delta_p':=\tau_p^\gammagl \quad (\forall p\in L),  
\end{gathered} 
\qquad  \qquad  
\begin{gathered} 
\tau_{\max}:=\max_{p\in L}\tau_p, 
\\ 
\delta_{\min}':=\min_{p\in L}\delta_p'=\tau_{\min}^\gammagl. 
\end{gathered} 
\end{equation} 
\begin{enumerate}[label=(\roman*)]
\item 
\label{con:one:i} 
\ref{con:L} holds and---in accordance with \ref{con:alpha}---$\tau_{\max}$ is as small as needed in terms of $\alpha$ only.  
\item
$\forall p\in L$ we have $9 \delta_p' = 9 \tau_p^\gammagl < \tau_p^{\alpha/100} < \delta_p \, $. 
\item $\tau_{\max}\le \tau_{\min}^{1-\gammagl/100}$.  
\item $\forall p\in L$ we have $(\delta_p)^{-2} \| \, \varphi : C^{2,\beta}(\, \partial D^\Sigma_p(\delta_p)    ,\, g\,)\,\| \le\tau_p^{1-\gammagl/9}$. 
\item $\| \varphi:C^{3,\beta} ( \, \Sigma  \setminus\disjun_{q\in L}D^\Sigma_q(\delta_q')    \, , \, g \, ) \, \|
\le
\tau_{\min}^{8/9} \, $.
\item
On $\Sigma\setminus\disjun_{q\in L}D^\Sigma_q(\delta_q') $ we have $\tau_{\max}^{1+\alpha/5} \le \varphi$.  
\end{enumerate}
\end{convention}

\begin{definition}[Initial surfaces] 
\label{Dinit}
Given 
$\varphi$, $L$, $\taubold$ and $\delta_p$'s as in \ref{con:one},  
and 
$\kappaunderbold = (\kappaunder_p)_{p\in L} \in \val[L]$ satisfying 
(in accordance with \ref{Akappa})   
\begin{equation} 
\label{dalpha} 
\forall p\in L 
\qquad  
|\kappaunder_p| < \tau_p^{1+ \alpha/6},   
\end{equation} 
we define the smooth initial surface 
(recall \ref{NT}\ref{dgraph}) 
$$
M = M[\varphi, \kappaunderbold]:= 
\graph^N_{\Omega}\big( \varphigl_+\, \big) \bigcup \graph^N_{\Omega}\big(-\varphigl_-\, \big) \bigcup 
\bigsqcup_{p\in L} \cat[p, \tau_p, \kappaunder_p ], 
$$ 
where 
$\Omega : =  \Sigma \setminus \disjun_{p\in L} D^\Sigma_p( 9 \tau_p)$  
and the functions 
$\varphigl_{\pm} = \varphigl_{\pm} [ \varphi, \kappaunderbold] : \Omega \rightarrow \R$ 
are defined as follows. 
\begin{enumerate}[label = \emph{(\roman*)}]
\item $\forall p\in L$ we have 
$\varphigl_{\pm} : = \Psibold [ 2 \delta'_p, 3 \delta'_p ; \dbold^\Sigma_p]\left( \phicatpm[\tau_p, \kappaunder_p] \circ 
( \, \exp^\Sigma_p\right)^{-1} , \, \varphi 
+\vunder_\pm  
)$ 
on $D^\Sigma_p(3 \delta'_p) \setminus D^\Sigma_p(9 \tau_p)$, 
where $\vunder_\pm := - \Ecal_L^{-1} \Mcal_L \varphi \pm \Ecal^{-1}_L \kappaunderbold \in \skernelv[L]$.  
\item On $\Sigma \setminus  \bigsqcup_{p\in L} D^\Sigma_p(3 \delta'_p)$ we have 
$\varphigl_{\pm} : =   \varphi +\vunder_\pm  $.
\end{enumerate}
\end{definition}

\begin{lemma}[The gluing region]
\label{Lgluingreg}
For $M = M[\varphi, \kappaunderbold]$ as in \ref{Dinit} and $\forall p \in L$ the following hold. 
\begin{enumerate}[label = \emph{(\roman*)}]
\item 
$\left\| \varphi^{gl}_{\pm} - \tau_p \log  \dbold^\Sigma_p  :  
C^{3, \beta}\left( D^\Sigma_p(4 \delta'_p)\setminus D^\Sigma_p (\delta'_p), (\delta'_p)^{-2} g\right) \right\| \le  \tau_p^{1+ \frac{15}{8}\alpha}$.
\item 
$\left\|  \varphi^{gl}_{\pm}  :  C^{3, \beta}\left( D^\Sigma_p(4 \delta'_p)\setminus D^\Sigma_p (\delta'_p), (\delta'_p)^{-2} g\right) \right\| 
\le C \tau_p  |\log \tau_p|$.
\item 
\label{itemH} 
$\left\| (\delta'_p)^2 H'_{\pm}: C^{0, \beta}\left( D_p^\Sigma(3\delta'_p) \setminus D_p^{\Sigma}(2 \delta'_p), (\delta'_p)^{-2} g\right) \right\| 
\le \tau_p^{1+\frac{15}{8}\alpha}$, where $H'_{\pm}$ denotes the pushforward of the mean curvature of  the graph of $\pm \varphigl_\pm$ to $\Sigma$ by $\PiSig$.  
\end{enumerate}
\end{lemma}

\begin{proof}
We have for each $p\in L$ 
on $\Omega_p := D_p^\Sigma(4 \delta'_p) \setminus D^\Sigma_p(\delta'_p)$, 
(recall \ref{Dinit})
\begin{equation} 
\label{ephiuu}
\begin{aligned}
\varphigl_{\pm} &= \tau_p G_p  - \tau_p \log \frac{\tau_p}{2} \Ecal^{-1}_L \delta^L_p \pm \Ecal^{-1}_L \kappaunderbold 
+ \Psibold [ 2 \delta'_p, 3 \delta'_p; \dbold^\Sigma_p ] ( \varphiunder_{\pm}, \varphiover_{\pm}), 
\\ 
\text{where} \quad 
\varphiunder_{\pm} &:= \phicatpm[\tau_p, \kappaunder_p]\circ \left( \exp^\Sigma_p\right)^{-1}- \tau_p G_p +   
\tau_p  \log \frac{\tau_p}{2}\Ecal^{-1}_L \delta^L_p \mp \Ecal^{-1}_L \kappaunderbold,   
\\
\varphiover_{\pm} &:= \varphi - \tau_p G_p+   \tau_p  \log \frac{\tau_p}{2}\Ecal^{-1}_L \delta^L_p  - \Ecal^{-1}_L \Mcal_L \varphi , 
\end{aligned}
\end{equation} 
where $\delta^L_p \in \val[L]$ is defined by $\delta^L_p := ( \delta_{pq} )_{q\in L}$ with $\delta_{pq}$ the Kronecker delta. 
By scaling the ambient metric to $\gtilde' : = (\delta'_p)^{-2} g$ and expanding in linear and higher order terms we have
\begin{align*}
(\delta'_p)^2 H'_\pm = (\delta'_p)^2 \Lcal_\Sigma \varphigl_\pm + \delta'_p \widetilde{Q}_{(\delta'_p)^{-1} \varphigl_\pm}.
\end{align*}
Note that on $\Omega_p$ we have
\begin{align*}
\varphigl_\pm - \phicatpm[\tau_p, \kappaunder_p ] &= \Psibold \left[ 2 \delta'_p, 3\delta'_p; \dbold^\Sigma_p\right]\big( 0, \varphiover_\pm - \varphiunder_\pm\big), 
\\ 
\Lcal_\Sigma \varphigl_\pm &= \Lcal_{\Sigma} \Psibold \left[ 2 \delta'_p, 3\delta'_p; \dbold^\Sigma_p\right]\big( \varphiunder_\pm, \varphiover_\pm\big).
\end{align*}
Using these, we have
\begin{equation*}
\begin{aligned}
\| \varphigl_{\pm}\| &\le  C\left( \tau_p | \log \tau_p | + \| \varphiunder_{\pm} \| + \| \varphiover_{\pm} \| \right),\\
\| \varphigl_{\pm} - \tau_p  \log  \dbold^\Sigma_p  \| &\le C\left( \| \varphiunder_\pm\| +  \|\varphiover_\pm \| \right),\\
\left\| (\delta'_p)^2 \Lcal_\Sigma \varphigl_\pm : C^{0, \beta}\left(\Omega_p, (\delta'_p)^{-2} g\right) \right\| &\le C \left( \| \varphiunder_\pm\| + \| \varphiover_\pm\|\right),\\
\left\| \delta'_p \widetilde{Q}_{ (\delta'_p )^{-1} \varphigl_\pm} : C^{0, \beta}\left( \Omega_p, (\delta'_p)^{-2} g\right) \right\|
&\le (\delta'_p)^{-1} \| \varphigl_\pm \|^2,
\end{aligned}
\end{equation*}
where in this proof we mean the $C^{3, \beta}\left( \Omega_p, (\delta'_p)^{-2}g \right)$ norm unless specified otherwise. 
We conclude that if $\|\varphigl_\pm\|\le\delta_p'$ (to control the quadratic terms), 
then we have
$$
\left\|
(\delta'_p)^{2}\, H'_\pm
:C^{0,\beta}(\, \Omega_p ,\, (\delta'_p)^{-2} g\,)\,\right\| 
\le 
\,C\, 
( \,
(\delta_p')^{-1}\tau_p^2|\log\tau_p|^2
+
\|\varphiunder_\pm\,\| 
+
\|\varphiover_\pm\,\| 
).
$$
Adding and subtracting $(\kappaunder_p+ \tau_p \log \frac{2\rr}{\tau_p})\circ (\exp^{\Sigma}_p)^{-1}$ 
in \eqref{ephiuu} we have $\varphiunder_\pm = (I)+(II)+(III)+(IV)$, where
\begin{equation*}
\begin{gathered}
(I)\circ \exp^\Sigma_p = \phicatpm[\tau_p, \kappaunder_p] - \tau_p \log  \frac{2 \rr}{\tau_p} \mp \kappaunder_p, \quad
(II) =  \tau_p( \log  \dbold^\Sigma_p - G_p) , \\
(III) =  - \tau_p \log \frac{\tau_p}{2} (1- \Ecal^{-1}_L \delta^L_p ), \quad
(IV) = \pm ( (\kappaunder_p)\circ (\exp^\Sigma_p)^{-1}- \Ecal^{-1}_L \kappaunderbold).
\end{gathered}
\end{equation*}
Using the triangle inequality and estimating (I)-(IV) using \ref{Ltcest}, \ref{Lgreenlog}(i), and \ref{aK}, we have
\begin{align*}
\| \varphiunder_\pm  \| &\le C(|\kappa_p|+ \tau_p)^3\tau_p^{-2\alpha} + C\tau_p^{1+2\alpha} |\log \tau_p|.
\end{align*}

Because $\Lcal_\Sigma \varphiover_{\pm}=0$ on $\Omega_p$ and $\varphiover_{\pm}$ has vanishing value and differential at $p$ 
(recall  \ref{Dmismatch}, \ref{aK} and \ref{ephiuu}), 
it follows from standard linear theory that
\begin{align*}
\left\|\varphiover_{\pm}  \right\|
&\le C( \delta'_p/\delta_p)^2   \left \|  \varphiover_{\pm}   : C^{2,\beta}\left( \partial D^\Sigma_p(\delta_p ), (\delta_p)^{-2} g\right)\right\|.
 \end{align*}
 Using \ref{Ddeltaprime}, \ref{con:one}(ii)  and \ref{con:one}(iv), \ref{con:alpha}, and \ref{aK} to estimate the right hand side, we conclude that
 \begin{align*} 
 \left\| \varphiover_\pm \right\| \le C  \left( \delta'_p\right)^2 \tau_p^{1-\frac{1}{9}\alpha}+ C\tau^{1+2\alpha}_p |\log \tau_p|  \le
 C\tau_p^{1+\frac{17}{9} \alpha}.
 \end{align*}
 Combining with the above we complete the proof.
\end{proof}

\begin{remark}[Smallness of mean curvature] 
\label{R:small} 
Note that the exponent in the right-hand side of \ref{Lgluingreg}\ref{itemH} is close to $1+2\alpha$ and hence $>1+\alpha$ as needed to ensure 
that the correction of the initial surface will be small compared to the size of the LD solution. 
\qed
\end{remark} 

\begin{lemma}
\label{LMemb}
$M$ defined in \ref{Dinit} (assuming \ref{con:one}) is embedded and moreover 
the following hold.
\begin{enumerate}[label = \emph{(\roman*)}]
\item On $\Sigma \setminus \disjun_{p\in L} D^\Sigma_p(\delta'_p )$ we have $\frac{8}{9} \tau_{\max}^{1+ \alpha/5} \leq \varphi_\pm^{gl}$.
\item $\left\| \varphi_{\pm}^{gl} : C^{3, \beta}\left( \Sigma \setminus \disjun_{p\in L} D^\Sigma_p(\delta'_p), g\right)\right\| \le \frac{9}{8} \tau_{\min}^{8/9}$.
\end{enumerate}
\end{lemma}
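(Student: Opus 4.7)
The proof will split $\Sigma \setminus \disjun_{p\in L} D^\Sigma_p(\delta'_p)$ into two kinds of regions: the ``outer'' region $\Sigma_{\mathrm{out}} := \Sigma \setminus \disjun_{p\in L} D^\Sigma_p(3\delta'_p)$, where by \ref{Dinit}(ii) one has the clean expression $\varphigl_\pm = \varphi + \vunder_\pm$, and the annular ``gluing'' regions $D^\Sigma_p(3\delta'_p) \setminus D^\Sigma_p(\delta'_p)$ (one for each $p \in L$), where the estimates of \ref{Lgluingreg} do the work. The embeddedness claim will then follow formally from (i) together with the structure of the graphs, catenoidal bridges, and cutoffs in \ref{Dinit}.

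The main preliminary step is to control $\vunder_\pm = -\Ecal_L^{-1}\Mcal_L\varphi \pm \Ecal_L^{-1}\kappaunderbold$ in a suitable norm. First I would bound $|\Mcal_L \varphi|$: using the splitting $\varphi = \varphihat_p + \tau_p G_p$ from \ref{Rmismatch}, the fact that $\Lcal_\Sigma \varphihat_p = 0$ on $D^\Sigma_p(2\delta_p)$, the boundary control on $\varphi$ from \ref{con:one}(iv), and the Green's function asymptotics \eqref{Egreenlog}, standard elliptic regularity yields pointwise bounds on $\varphihat_p(p)$ and $d_p\varphihat_p$. Combined with the explicit $\tau_p\log(\tau_p/2)$ term in $\Mcal_p\varphi$ (recall \ref{Dmismatch}), this produces a bound $|\Mcal_L \varphi| \leq C\tau_{\max}|\log\tau_{\max}|$. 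Since $|\kappaunderbold| \leq \tau_{\max}^{1+\alpha/6}$ by \eqref{dalpha} is of lower order, \ref{aK}(iv) then gives $\|\vunder_\pm : C^{2,\beta}(\Sigma, g)\| \leq C\delta_{\min}^{-2-\beta}\tau_{\max}|\log\tau_{\max}|$, which for $\tau_{\max}$ sufficiently small is small compared to both $\tau_{\min}^{8/9}$ and the key lower bound $\tau_{\max}^{1+\alpha/5}$ once one exploits that on the intermediate region the support structure of $\skernelv[L]$ from \ref{aK}(i) localizes $\vunder_\pm$ where $\varphi$ is large. A $C^{3,\beta}$ bound follows because $\skernelv[L]$ consists of solutions of an elliptic equation on a fixed domain.

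For (ii) on $\Sigma_{\mathrm{out}}$, I simply combine $\|\varphi : C^{3,\beta}\|\leq \tau_{\min}^{8/9}$ from \ref{con:one}(v) with the $C^{3,\beta}$ bound on $\vunder_\pm$; under \ref{con:one}(iii), the latter is absorbed into a factor $\tfrac{1}{8}\tau_{\min}^{8/9}$ for small enough $\tau_{\min}$. On the annular regions I invoke \ref{Lgluingreg}(ii) and unpack the rescaled norm using $\delta'_p = \tau_p^{\gammagl}$ with $\gammagl$ chosen small relative to $\alpha$, so that the $(\delta'_p)^{-j}$ factors picked up by derivatives do not spoil the bound. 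For (i) on $\Sigma_{\mathrm{out}}$, \ref{con:one}(vi) gives $\varphi \geq \tau_{\max}^{1+\alpha/5}$ and the smaller bound on $\vunder_\pm$ established above yields $\varphigl_\pm \geq \tfrac{8}{9}\tau_{\max}^{1+\alpha/5}$. On each annular region, \ref{Lgluingreg}(i) shows $\varphigl_\pm$ is approximated by $\tau_p\log\dbold^\Sigma_p$ plus the constant contribution hidden in the expansion of \ref{Dinit} (after absorbing the $\tau_p\log(\tau_p/2)\Ecal_L^{-1}\delta_p^L$ term); since $\dbold^\Sigma_p \geq \delta'_p = \tau_p^\gammagl$ with $\gammagl < 1$, this combination is of order $(1-\gammagl)\tau_p|\log\tau_p|$, positive and much larger than both the error $\tau_p^{1+\frac{15}{8}\alpha}$ and the threshold $\tau_{\max}^{1+\alpha/5}$.

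Finally, for embeddedness: each bridge $\cat[p,\tau_p,\kappaunder_p]$ is an embedded tilted catenoid by \ref{dtiltedbM}, and different bridges lie in the pairwise disjoint balls $D^\Sigma_p(9\delta_p)$ by \ref{con:L}(i); by \ref{R:comp} and the definition of $\varphigl_\pm$ via $\Psibold$, the bridges join the graphs smoothly. The two graphs $\text{Graph}_\Omega^N(\varphigl_+)$ and $\text{Graph}_\Omega^N(-\varphigl_-)$ are disjoint provided $\varphigl_\pm > 0$ on $\Omega$: on $\Omega \setminus \disjun_p D^\Sigma_p(\delta'_p)$ this is given by (i), while on each $D^\Sigma_p(\delta'_p) \setminus D^\Sigma_p(9\tau_p)$ the cutoff structure of \ref{Dinit} forces $\varphigl_\pm = \phicatpm\circ(\exp^\Sigma_p)^{-1}$, which is positive by \eqref{Evarphicat} and \ref{dphicattilt} since $|\kappaunder_p| < \tau_p^{1+\alpha/6}$ is far too small to reverse the sign of the tilted catenoidal graph. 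The principal technical obstacle throughout is bookkeeping: reconciling scale-invariant norms (where the $\tau_p\log(2/\tau_p)$-sized terms stay hidden) with the pointwise lower bound in (i), which requires keeping careful track of the constant and linear pieces of $\Mcal_L\varphi$, $\kappaunderbold$, and $\Ecal_L^{-1}\delta_p^L$ that individually are of order $\tau_p|\log\tau_p|$ but combine in \ref{Dinit} to produce the benign $\tau_p\log(2r/\tau_p)$ asymptotics of the catenoid.
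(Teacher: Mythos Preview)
Your overall strategy matches the paper's exactly: split into the outer region $\Sigma\setminus\disjun_p D^\Sigma_p(3\delta'_p)$ and the gluing annuli, invoke \ref{con:one}(v)--(vi) on the former and \ref{Lgluingreg}(i)--(ii) on the latter, then deduce embeddedness from the positivity in (i). The treatment of (ii) and of embeddedness is fine.

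There is, however, a genuine gap in your handling of (i) on the outer region. You assert that the bound $\|\vunder_\pm:C^{2,\beta}\|\le C\delta_{\min}^{-2-\beta}\tau_{\max}|\log\tau_{\max}|$ is ``small compared to \dots\ the key lower bound $\tau_{\max}^{1+\alpha/5}$.'' This is false: already the $C^0$ piece is of order $\tau_{\max}|\log\tau_{\max}|$, which for small $\tau_{\max}$ is \emph{much larger} than $\tau_{\max}^{1+\alpha/5}$. The reason is exactly what you identify in your final paragraph --- the constant part $\tau_p\log(\tau_p/2)$ inside $\Mcal_p\varphi$ forces $|\Mcal_L\varphi|\sim\tau_{\max}|\log\tau_{\max}|$ --- but you cannot simultaneously claim this and claim $\vunder_\pm$ is dominated by $\tau_{\max}^{1+\alpha/5}$. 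Your attempted rescue via the support of $\skernelv[L]$ (``localizes $\vunder_\pm$ where $\varphi$ is large'') does not work either: \ref{con:one}(vi) only gives $\varphi\ge\tau_{\max}^{1+\alpha/5}$ there, not $\varphi\ge C\tau_p|\log\tau_p|$, so a large negative $\vunder_\pm$ could in principle drive the sum below zero.

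What actually makes (i) true on $D^\Sigma_p(4\delta_p)\setminus D^\Sigma_p(3\delta'_p)$ is the \emph{cancellation} you mention only at the end: writing $\varphi=\tau_pG_p+\varphihat_p$ and using \ref{aK}(v), one has on $D^\Sigma_p(\delta_p)$
\[
\varphi-\Ecal_L^{-1}\Mcal_p\varphi \;\approx\; \tau_p\log\dbold^\Sigma_p + \bigl(\varphihat_p-\varphihat_p(p)-d_p\varphihat_p\bigr) - \tau_p\log(\tau_p/2) \;\approx\; \tau_p\log\frac{2\dbold^\Sigma_p}{\tau_p},
\]
and the right-hand side is positive and of size $(1-\alpha)\tau_p|\log\tau_p|\gg\tau_{\max}^{1+\alpha/5}$ on this region. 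The individually large pieces $\varphihat_p(p)$ and $\tau_p\log(\tau_p/2)$ cancel between $\varphi$ and $\vunder_\pm$; you cannot bound them separately. (The paper's own proof is extremely terse at this step and arguably also leaves this to the reader, but your explicit claim that $\vunder_\pm$ alone is small enough is the incorrect statement.) Outside the support of $\vunder_\pm$ one has $\varphigl_\pm=\varphi$ and \ref{con:one}(vi) applies directly.
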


\begin{proof}
We first prove the estimates (i-ii): 
(i) on $\Sigma \setminus \disjun_{p\in L} D^\Sigma_p(3 \delta'_p)$ follows from \ref{con:one}(vi) and \ref{Dinit}, 
and on $D^\Sigma_p(4\delta'_p) \setminus D^\Sigma_p(\delta'_p)$ for $p\in L$ from \ref{Lgluingreg}(i) and \ref{con:one}(iii).  
(ii) on $\Sigma \setminus \disjun_{p\in L} D^\Sigma_p ( 3 \delta'_p)$ follows from \ref{con:one}(v) and \ref{Dinit}(i), 
and on $D^\Sigma_p(4\delta'_p) \setminus D^\Sigma_p(\delta'_p)$ for $p\in L$ from \ref{Lgluingreg}(ii) and \ref{con:one}(iii).  
Finally, the embeddedness of $M$ follows from (i) and by comparing the rest of $M$ with standard catenoids.
\end{proof}

\begin{definition}[Regions on the initial surfaces] 
\label{D:regions}
\label{DtauK}
We define the following for $L$ and $M$ as in \ref{Dinit} 
and $x\in[0,4]$, where $x$ may be omitted when $x = 0$ (recall \ref{DbM}). 
\begin{subequations}
\label{E:regions}
\begin{align} 
\label{EStildep}
\Stildep_x &:= \Sigma\setminus \disjun_{p\in L} D^\Sigma_p( b \tau_p (1+x) )\subset \Sigma, 
\\ 
\label{ShatL} 
\Shat_x[M] &:= \disjun_{p\in L}\Shat_x[p, \tau_p , \kappaunder_p ] \subset M, 
\\ 
\Kcore_x[M] &:= \disjun_{p\in L} \Kcore_x[p, \tau_p , \kappaunder_p ] \subset \Shat[M] \subset M. 
\end{align}
\end{subequations}
We also define 
$\tau_L   : \Shat [ M]   \to \R$ and $\PiY: \Shat[M] \rightarrow \KM         : = \bigsqcup_{p\in L} \tildecat[p, \tau_p, \kappaunder_p]$ 
by taking 
$\tau_L   :=\tau_p$ and $\PiY = ( \exp^{\Sigma, N, g}_p)^{-1}$ on each $\cat[p, \tau_p, \kappaunder_p]$.  
\end{definition}

Note that $M$ determines $L$ and so the above notation is legitimate. 
Moreover $\forall p\in L$ 
with $\kappa_p=0$ we have 
$\Shat_x[p] = M \cap \PiSig^{-1}( \, {D^\Sigma_p(2\delta'_p /(1+x) \, )} \,)$;  
when $\kappa_p\ne 0$ the two sides differ very little by the smallness of the tilt.

\begin{notation}
\label{Npm}
If $f^+$ and $f^-$ are functions supported on $\Stildep$ (recall \eqref{EStildep}), 
we define $J_M(f^+, f^-)$ to be the function on $M$ supported on $\left( \left. \Pi_\Sigma \right|_M\right)^{-1} \Stildep$ 
defined by $f^+\circ \Pi_\Sigma$ on the graph of $\varphi^{gl}_{+}$ and by $f^- \circ \Pi_\Sigma$ on the graph of $- \varphi^{gl}_-$.
\qed 
\end{notation}

%%%%%%%%%%%%%%%%%%%%%%%%%%%%%%%%%%%%%%%%%%%%%%%
%% 
%% The Linearized Equation
%%
%%%%%%%%%%%%%%%%%%%%%%%%%%%%%%%%%%%%%%%%%%%%%%%
\section[The linearized equation on the initial surfaces]{The linearized equation on the initial surfaces}
\label{S:linearized}

\subsection*{Global norms and the mean curvature on the initial surfaces}
\nopagebreak

In this section we state and prove Proposition \ref{Plinear} where 
we solve with estimates the linearized equation on an initial surface 
$M$ defined as in \ref{Dinit}.
We also provide in \ref{LglobalH} an estimate for the mean curvature in appropriate norm. 
In this subsection we discuss the global norms we use but first 
we introduce Assumption \ref{cLker} which simplifies the analysis and implies also Lemma \ref{Lldexistence}. 

\begin{assumption} 
\label{cLker}
In the rest of Part I of this article we assume \ref{background} holds and furthermore 
the base surface $\Sigma$ (recall \ref{background}) is closed and the kernel of $\Lcal_\Sigma$ is trivial. 
\end{assumption} 

\begin{definition}
\label{D:norm}
For $k\in\N$, $\betahat\in(0,1)$, 
$\gammahat\in\R   $,
and $\Omega$ a domain in $\Sigma$, 
$M$, or $\KM        $, we define
\[ \|u\|_{k,\betahat,\gammahat;\Omega}
:=
\|u:C^{k,\betahat}(\Omega ,\rr,g,\rr^\gammahat)\|,\]
where $\rr: = \dbold^\Sigma_L$ and $g$ is the standard metric on $\Sigma$ when $\Omega \subset \Sigma$, 
$\rr:= \dbold^\Sigma_L  \circ \Pi_\Sigma$  and $g$ is the  metric induced on $M$ by the standard metric on $N$ when $\Omega \subset M$,  
and $\rr  = \rho(\sss):=\tau_L\cosh \sss$ 
(recall \ref{Ecatenoid} and \ref{DtauK}) 
and $g$ is the metric induced by each Euclidean metric $\left. g\right|_p$ on $T_p N$ $\forall p\in L$ when $\Omega \subset \KM        $.
Given also $\gammahat' \in \R$ with $\gammahat - \gammahat' \in [1, 2)$ we define
$f_{\gammahat, \gammahat'} \in C^0( M )$ 
by $f_{\gammahat, \gammahat'}  : = \max( \rr^{\gammahat}, \tau_L^{(1-\alpha)/2} \rr^{\gammahat'}) = \rr^{\gammahat'} \max( \rr^{\gammahat-\gammahat'}, \tau_L^{(1-\alpha)/2} )$ 
(note that  $f_{\gammahat, \gammahat'}= \rr^{\gammahat}$ when $\rr^{\gammahat-\gammahat'} \ge \tau_L^{(1-\alpha)/2}$), 
and for $\Omega\subset M$ 
(recall \ref{dlowharm})  
$$
\| u\|_{k, \betahat, \gammahat, \gammahat'; \Omega} : =
\|u:C^{k,\betahat}(\Omega ,\rr,g, f_{\gammahat, \gammahat'} )\, \| + \| \Hcal_1 u\|_{k, \betahat, \gammahat; \Omega \cap \Shat[M]}.    
$$ 
\end{definition}

\begin{lemma}
\label{L:norms}
\begin{enumerate}[label=\emph{(\roman*)}]
\item If $\tau_{\max}$ is small enough in terms of given $\epsilon>0$, 
$\Omegatilde$ is a domain in $\PiY(\Shat[M])$, 
$\Omega:=\PiY^{-1}(\Omegatilde)\subset \Shat[M] \subset M$, 
$k=0,2$, $\gammahat\in\R$,
and $f\in C^{k,\beta}(\Omegatilde)$, 
then we have 
(recall \ref{Dsimc}):
\begin{align*}
\| \, f\circ \PiY \, \|_{k,\beta,\gammahat;\Omega}
\, \Sim_{1+\epsilon}
\|  f  \|_{k,\beta,\gammahat;\Omegatilde} \,.
\end{align*}

\item If $b$ is large enough in terms of given $\epsilon>0$, 
$\tau_{\max}$ is small enough in terms of $\epsilon$ and $b$,
$\Omega'$ is a domain in $\Stildep = \Sigma \setminus \disjun_{p\in L} D^\Sigma_p( b \tau_p)$ (recall \eqref{EStildep}), 
$\Omega:=\PiSig^{-1}(\Omega')\cap M$, 
$k=0,2$, $\gammahat\in\R$,
and $f\in C^{k,\beta}(\Omega')$, 
then 
\begin{align*}
\| \, f\circ\PiSig \, \|_{k,\beta,\gammahat;\Omega}
\, \Sim_{1+\epsilon} \,
\|f\|_{k,\beta,\gammahat;\Omega'} \,.  
\end{align*}
\end{enumerate}
\end{lemma}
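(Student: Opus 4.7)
Both parts follow the same template: exhibit the map in question ($Y$ in (i), $\left.\PiSig\right|_M$ in (ii)) as a $(1+\epsilon)$-quasi-isometry between source and target when each is equipped with the scaled metric $\rho^{-2}(x)g$ of Definition \ref{dwHolder}, and check that the two weight functions entering the weighted norms are pointwise $(1+\epsilon)$-equivalent. Once this is done, the scaled geodesic balls $B_x$ of radius $1/100$ entering \ref{dwHolder} correspond under the map with $(1+\epsilon)$-distortion, and the $C^{k,\beta}$ seminorms on the corresponding balls transfer with $(1+\epsilon)$ loss.

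\textbf{For part (i).} Fix $p\in L$; by \ref{DtauK}, $\left.Y\right|_{\Shat[p]} = (\exp^{\Sigma,N,g}_p)^{-1}$, so the inverse of $Y$ pulls the ambient metric $g$ back to $\gpeuc+h$ on $T_pN$ in the Fermi coordinates of \ref{dgeopolar}. The metric induced on $\tildecat[p,\tau_p,\kappa_p]$ by $\gpeuc$ alone is $\rho^2\chi$ (Lemma \ref{Lcatrimm}(iv)), so it suffices to show that the restriction of $\rho^{-2}(x) h$ to $\tildecat[p,\tau_p,\kappa_p]$ has $C^{k,\beta}$ norm $o(1)$ on every scaled ball $B_x$, uniformly as $\tau_{\max}\to 0$. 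This is immediate from Lemmas \ref{Lh} and \ref{Lzinit}: on $\Shat[p]$ one has $\rho\leq C\tau_p^{\gammagl}$ and $|\zz|\leq C\tau_p|\log\tau_p|$, so by \ref{Lh}(i),(iv) and \ref{Lzinit}, $|h|_g\leq C(\rho^2+|\zz|)\to 0$ uniformly, together with analogous bounds on the scaled derivatives. The weight functions match up to a $(1+\epsilon)$ factor: for $q\in\Shat[p]$, $\rho(Y(q))$ is the Fermi cylindrical radius while $\dbold^\Sigma_L(\PiSig q)=\dbold^\Sigma_p(\PiSig q)$ (the latter equality holding for $b$ large), and these two are comparable because the tilted catenoidal graph is $C^1$-close to $T_p\Sigma$ in Fermi coordinates by \eqref{dalpha} and Lemma \ref{Ltcest}.

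\textbf{For part (ii).} $\PiSig^{-1}(\Omega')\cap M$ is the union of the two graphs of $\pm\varphi^{gl}_\pm$ over $\Omega'\subset\Sigma\setminus\disjun_{p\in L}D^\Sigma_p(b\tau_p)$ (recall \ref{Dinit}(ii) and \eqref{EStildep}). On $\Omega'$ the weight $\rho=\dbold^\Sigma_L$ satisfies $\rho\geq b\tau_{\min}$, while by Lemma \ref{LMemb}(ii), $\|\varphi^{gl}_\pm\|_{C^{3,\beta}(\Omega',g)}\leq\tfrac{9}{8}\tau_{\min}^{8/9}$. Hence in any ball $B_x\subset\Omega'$ of $\rho^{-2}(x)$-radius $1/100$, the scaled graph function $\rho^{-1}(x)\varphi^{gl}_\pm$ and its derivatives up to order $3$ in the scaled metric are bounded by $C\tau_{\min}^{8/9}/(b\tau_{\min})=C(b\,\tau_{\min}^{1/9})^{-1}<\epsilon$ once $b$ is large and $\tau_{\max}$ is small. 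Thus the graph of $\pm\varphi^{gl}_\pm$ over $B_x$ is an $\epsilon$-$C^{3,\beta}$ perturbation of $B_x$ itself, and $\left.\PiSig\right|_M$ is the corresponding $(1+\epsilon)$-quasi-isometry between the scaled metrics; the weight functions $\rr$ agree on the two sides by the very definition in \ref{D:norm}.

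\textbf{Main technical point.} The only non-routine item is the uniform control of the perturbations in the rescaled metrics: in (i) this reduces, via Lemma \ref{Lh} and the upper bound $\rho\leq C\tau_p^{\gammagl}$ on $\Shat[p]$ (which in turn uses $\gammagl>0$ as built into \ref{con:one}\emph{(ii)}), to the decay of $h$ at $p$; in (ii) it reduces to a direct comparison of the $C^{3,\beta}$ bound on $\varphi^{gl}_\pm$ from \ref{LMemb}(ii) against the lower bound $\rho\ge b\tau_{\min}$. Both are handled by taking $b$ large and $\tau_{\max}$ small, as allowed by the hypotheses.
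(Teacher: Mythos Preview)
Your treatment of part (i) is essentially the paper's argument: the induced metric on $\cat[p,\tau_p,\kappaunder_p]$ is $\gpeuc+\alpha$, and the estimate on $\alpha$ from \ref{Ltensest}(i) (which packages the Lemmas \ref{Lh} and \ref{Lzinit} you cite) together with Lemma \ref{Lgcomp} gives the metric comparison. Your explicit check that the two weight functions $\rho$ and $\dbold^\Sigma_L\circ\PiSig$ are $(1+\epsilon)$-equivalent on $\Shat[p]$ is a detail the paper suppresses.

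Part (ii) has a genuine gap. Your key estimate
\[
\rho^{-1}(x)\,\|\varphi^{gl}_\pm\|_{C^{3,\beta}}\;\le\;\frac{C\,\tau_{\min}^{8/9}}{b\,\tau_{\min}}\;=\;\frac{C}{b\,\tau_{\min}^{1/9}}
\]
\emph{diverges} as $\tau_{\min}\to 0$, so it cannot be made $<\epsilon$ by choosing $b$ large and $\tau_{\max}$ small with $b$ independent of $\tau$. Moreover, Lemma \ref{LMemb}(ii) only controls $\varphi^{gl}_\pm$ on $\Sigma\setminus\disjun_p D^\Sigma_p(\delta'_p)$, not on the larger set $\Stildep=\Sigma\setminus\disjun_p D^\Sigma_p(b\tau_p)$; the annuli $b\tau_p\le\dbold^\Sigma_p<\delta'_p$ are not covered by your citation at all.

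The point you are missing is that on those inner annuli $\varphi^{gl}_\pm$ is not merely $O(\tau_{\min}^{8/9})$ but has the much sharper profile $\tau_p\log(2\dbold^\Sigma_p/\tau_p)\pm\kappaunder_p$ up to errors controlled by \ref{Ltcest} and \ref{Lgluingreg}. Setting $x=\dbold^\Sigma_p(q)/\tau_p\ge b$ one gets
\[
\|\varphi^{\pm}_{:q}:C^{3,\beta}(B'_q,\gtilde_q)\|\;\le\;C\,\frac{\log x}{x}\;+\;C\,x^{-3}\;\le\;C\,b^{-1}\log b,
\]
which \emph{is} small for $b$ large, independently of $\tau$. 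Only in the outer region $\dbold^\Sigma_L(q)>4\delta'_{\min}$ does the cruder bound from \ref{con:one}(v) suffice. The paper's proof splits into exactly these two regimes; without the logarithmic asymptotics near $L$ the argument cannot close.
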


\begin{proof}
To prove (i) it suffices to prove for each $p\in L$ and each $\cat = \cat[p, \tau_p, \kappaunder_p]$ that
\begin{align*}
\| f\circ \PiY  : C^{k, \beta}( \Omega \cap \cat , \rho, \gpeuc) \| \Sim_{1+\epsilon} 
\| f \circ \PiY : C^{k, \beta}(\Omega\cap \cat , \rho, g) \|,
\end{align*}
The induced metric from $g$ on $\cat$  is $g= \gpeuc+\alpha$, and so (i) follows from \ref{Lgcomp} and the estimate on $\alpha$ in \ref{Ltensest}(i) by taking $\tau_{\max}$ small enough. 
To prove (ii) let $q\in \Stildep$ and consider the metric 
$\gtilde_q:= \, (\,  \dbold^\Sigma_L(q)\, )^{-2} \, g $ on $N$, 
where $g$ is the standard metric on $N$.  
In this metric $M$ is locally the union of the graphs of $\pm \varphi^{\pm}_{:q}$ 
where $\varphi^{\pm}_{:q}:= \, (\,  \dbold^\Sigma_L(q)\, )^{-1} \, \varphigl_{\pm}$. 
First suppose that $\dbold^{\Sigma}_p (  q) \le 4 \delta'_p$ for some $p \in L$.  Note that 
\begin{align*}
\left \| \frac{  \log( 2\dbold^\Sigma_p(q)/\tau_p)}{\dbold^\Sigma_p(q)/\tau_p}- \frac{\kappaunder_p}{\dbold^\Sigma_p (q)} : C^{3, \beta}( B'_q, \gtilde_q) \right\| \le  C b^{-1} \log b ,
\end{align*}
where $B'_q = D^{\Sigma, \gtilde_q}_q(1/10)$.  
It follows by combining this with \ref{Ltcest} and \ref{Lgluingreg}, 
and assuming $b$ large enough, 
that
\begin{equation}
\label{Efw}
\|\,\varphi^{\pm}_{:q} \, : \, 
C^{3,\beta}(B'_q, \gtilde_q)\,\|  \:    \le  \:    C  \tau^3_p (\dbold^\Sigma_p(q))^{-3} + C b^{-1} \log b 
%\\
\le \:    C b^{-3} + C b^{-1} \log b 
\:    \le \:    C b^{-1} \log b. 
\end{equation} 

On the other hand, if $\dbold^\Sigma_L( q) > 4 \delta'_{\min} $, then by \ref{con:one}(v) 
we have
\begin{equation*}
\|\,\varphi^{\pm}_{:q} \, : \, 
C^{3,\beta}(B'_q, \gtilde_q )\,\|  \le C \tau_{\min}^{8/9}. 
\end{equation*}
By comparing the metrics and appealing to the definitions we complete the proof. 
\end{proof}

\begin{convention}
\label{con:b}
From now on we assume that $b$ (recall \ref{D:regions}) 
is as large as needed in absolute terms.
We also fix some $\beta\in(0,1)$,  $\gamma = \frac{3}{2}$, and $\gamma' = \gamma-1 = \frac{1}{2}$.  
Note that  $1-\frac\gamma2>2\alpha$ and $(1-\alpha)\,(\gamma-1)>2\alpha$.  
We will
suppress the dependence of various constants on $\beta$. 
\qed
\end{convention}

We estimate now the mean curvature in terms of the global norm defined in \ref{D:norm} and discussed in the introduction, 
by using the earlier estimates in \ref{Lgluingreg} and \ref{LH2}.  

\begin{lemma}
\label{LglobalH}
$\| H - J_M(w^+, w^-)\|_{0, \beta, \gamma-2, \gamma'-2; M} \le \tau_{\max}^{1+\alpha/3}$, 
%\\ 
%$\phantom{kk}$\hfill 
where 
$w^\pm := \Lcal_\Sigma\Ecal^{-1}_L ( \, - \Mcal_L \varphi \pm  \kappaunderbold \,)$. 
\end{lemma}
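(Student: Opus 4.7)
The plan is to partition $M$ into three disjoint pieces and estimate $H - J_M(w^+,w^-)$ separately on each piece in the norm of \ref{D:norm}: (a) the catenoidal cores $\Shat[L] \subset \cat_L$; (b) the gluing annuli inside $\Pi_\Sigma^{-1}\bigl(\disjun_{p\in L}\bigl(D^\Sigma_p(3\delta'_p)\setminus D^\Sigma_p(2\delta'_p)\bigr)\bigr)$; and (c) the purely graphical complement over $\Sigma\setminus\disjun_{p\in L} D^\Sigma_p(2\delta'_p)$. The crucial preliminary observation is that on the union of (a) and (b) the correction $J_M(w^+,w^-)$ vanishes identically, because $w^\pm\in\skernel[L]$ is supported inside $\disjun_{p\in L}\bigl(D^\Sigma_p(4\delta_p)\setminus D^\Sigma_p(\delta_p/4)\bigr)$ by \ref{aK}(ii), while $\delta'_p=\tau_p^\gammagl\ll\delta_p$ by \ref{con:one}(ii), so the support of $J_M(w^+,w^-)$ is strictly disjoint from the catenoidal and gluing parts of $M$.

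On the graphical region (c), the two sheets of $M$ are the graphs of $\pm\varphigl_\pm=\pm(\varphi+\vunder_\pm)$ over the corresponding domain in $\Sigma$. Expanding the mean curvature as $H=\Lcal_\Sigma\varphigl_\pm+Q_{\varphigl_\pm}$ and using $\Lcal_\Sigma\varphi=0$ on $\Sigma\setminus L$ together with $\Lcal_\Sigma\vunder_\pm=w^\pm$ (valid by \ref{aK} and \ref{Dinit}), the linear contribution coincides with $J_M(w^+,w^-)$, so the problem reduces to estimating the quadratic remainder $Q_{\varphigl_\pm}$. The bound $\|\varphigl_\pm\|_{3,\beta,0}\le\tfrac{9}{8}\tau_{\min}^{8/9}$ from \ref{LMemb}(ii), combined with \ref{L:norms}(ii) to transfer from $\Sigma$ to $M$, yields a bound $\le C\tau_{\min}^{16/9}$, far better than required. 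On the gluing annuli (b), Lemma \ref{Lgluingreg}(iii) directly provides $\|(\delta'_p)^2 H\|\le\tau_p^{1+\frac{15}{8}\alpha}$ at the scaled metric $(\delta'_p)^{-2}g$, and the annulus-wise scaling $\rho\Sim\delta'_p=\tau_p^\gammagl$ with $\gamma<2$ (see \ref{con:b}) shows that, after transferring to the global norm via \ref{L:norms}(ii), the resulting power of $\tau_p$ exceeds $1+\alpha/3$.

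The main obstacle is region (a), where the estimate is nearly sharp and in fact dictates the two-weight shape of $\|\cdot\|_{k,\betahat,\gammahat,\gammahat';\cdot}$ in \ref{D:norm}. The general bound \ref{LH2}(i) gives only $\|H\|\le C\tau|\log\tau|(\tau\rho^{-2}+1)$; the $\tau\rho^{-2}$ factor is precisely what the subsidiary weight $\tau_L^{(1-\alpha)/2}\rho^{\gamma'-2}=\tau_L^{(1-\alpha)/2}\rho^{-3/2}$ in $f_{\gamma-2,\gamma'-2}$ is engineered to absorb, since $\tau\rho^{-2}\cdot\bigl(\tau_L^{(1-\alpha)/2}\rho^{\gamma'-2}\bigr)^{-1}=\tau^{(1+\alpha)/2}\rho^{1/2}$, which on $\cat[p,\tau_p,\kappaunder_p]$ (where $\rho\le 2\tau_p^\alpha$) is bounded by $\tau_p^{(1+3\alpha)/2}$. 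For the non-absorbed portion, one invokes the refined first-harmonic estimate \ref{LH2}(ii), which removes the $\rho^{-2}$ factor and is thus compatible with the main weight $\rho^{\gamma-2}=\rho^{-1/2}$; this is indispensable because $\left.A^\Sigma\right|_p$ drives the leading term, which is transverse to $\Hcal_1$ only up to higher order (cf.\ \ref{LH}(ii)). Combining the two bounds via \ref{Erhow} and \ref{L:norms}(i) exactly fills the norm on $\Shat[L]$, yielding a bound $\le C\tau_{\max}^{1+c\alpha}|\log\tau_{\max}|$ with $c>\tfrac{1}{3}$, which absorbs the logarithm by \ref{con:one}\ref{con:one:i} and delivers the claimed $\tau_{\max}^{1+\alpha/3}$.
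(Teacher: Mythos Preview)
Your approach is correct and essentially identical to the paper's: partition into catenoidal cores, gluing annuli, and outer graphical region; invoke \ref{LH2}(i)--(ii) on the cores to handle the two components of the norm on $\Shat[L]$; invoke \ref{Lgluingreg}(iii) on the annuli; and expand $H'_\pm = \Lcal_\Sigma\varphigl_\pm + Q$ on the graphical part, where the linear term is exactly $w^\pm$ and the quadratic term is controlled by \ref{con:one}(v). The paper's proof follows the same decomposition (though it phrases the outer estimate via pointwise rescaling rather than citing \ref{LMemb}(ii) directly).

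Two arithmetic/exposition slips to fix. First, your displayed computation on region (a) has a sign error: $(\tau\rho^{-2})\cdot(\tau_L^{(1-\alpha)/2}\rho^{\gamma'-2})^{-1}=\tau^{(1+\alpha)/2}\rho^{-1/2}$, not $\rho^{+1/2}$; and you dropped the overall factor $\tau|\log\tau|$ from \ref{LH2}(i). With both corrected the quantity becomes $C\tau^{(3+\alpha)/2}|\log\tau|\,\rho^{-1/2}$, and since $\rho\ge\tau$ on $\cat[p,\tau_p,\kappaunder_p]$ this is $\le C\tau^{1+\alpha/2}|\log\tau|\le\tau^{1+\alpha/3}$, matching the paper. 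Second, your region (c) should be over $\Sigma\setminus\disjun_{p}D^\Sigma_p(3\delta'_p)$ (not $2\delta'_p$), since $\varphigl_\pm=\varphi+\vunder_\pm$ only holds there by \ref{Dinit}(ii); as written, (b) and (c) overlap and your linear identification $\Lcal_\Sigma\varphigl_\pm=w^\pm$ would fail on the overlap.
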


\begin{proof}
Note that $J_M(w^+, w^-)=0$ on $\Shat[M]$ and by \ref{D:norm} we have 
\begin{multline*}
\| H - J_M(w^+, w^-)\|_{0, \beta, \gamma-2, \gamma'-2; M} = \| \Hcal_1 H \|_{0, \beta, \gamma-2; \Shat[M]}   
\\ 
+ \| H-J_M(w^+, w^-) : C^{0, \beta}( M , \rr, g ,f_{\gamma-2, \gamma'-2})\|. 
\end{multline*} 
By \ref{L:norms}(i) and \ref{LH2} we have
\begin{equation*}
\| \Hcal_1 H \|_{0, \beta, \gamma-2; \Shat[M]} 
\: \le \: C \max_{p\in L} \| \Hcal_1H: C^{0, \beta}( \cat_p, \chi, \rr^{\gamma-2})\| 
%\\ 
\le \:    C \max_{p\in L} \tau^{(2-\gamma)\alpha}_p \tau_p |\log \tau_p| 
\:    \le \:    \tau^{1+\alpha/3}_{\max},
\end{equation*} 
where here $\cat_p = \cat[p, \tau_p, \kappaunder_p]$ and we have used \ref{con:one}(iii). 
To estimate the weighted norm of $ H$, we use \ref{LH2}(i) in conjunction with the piecewise formula for $f_{\gamma-2, \gamma'-2}$ to see 
\begin{align*}
 \| H : C^{0, \beta}(\Shat[M], \rr, g, f_{\gamma-2, \gamma'-2})\| 
\:    \le \:    C\max_{p\in L} \tau^{1+\alpha/2}_p |\log \tau_p|  
\:    \le \:    \tau_{\max}^{1+\alpha/3}.
\end{align*}

Finally, we consider the estimate on the exterior of the gluing region.  
Let $q' \in M\cap \Pi^{-1}_\Sigma(\disjun_{p\in L} D^\Sigma_p(3 \delta'_p))$, define $q: = \Pi_\Sigma q' \in \Sigma\setminus \disjun_{p\in L} D^\Sigma(3\delta'_p)$ 
and consider the metric $\gtilde_q := (\dbold^\Sigma_L(q))^{-2} g$.  
In this metric $M$ is locally the union of the graphs of $\pm \varphi^{\pm}_{:	q}$, where $\varphi^{\pm}_{:q} = (\dbold^\Sigma_L(q))^{-1} \varphi^{gl}_{\pm}$.  
By expanding $H'_+$ and $H'_-$ in linear and higher order terms, we find (recall \ref{Dinit})
\begin{align*}
(\dbold^\Sigma_L(q))^2 H'_{\pm} 
&= (\dbold^\Sigma_L(q))^2 w^\pm + (\dbold^\Sigma_L(q)) \widetilde{Q}_{\varphi^{\pm}_{:q}}.
\end{align*}
We estimate then 
\begin{multline*}
\| (\dbold^\Sigma_L(q))^2 (H'_{\pm} - w^\pm) : C^{0, \beta}(B'_q,\gtilde_q, (\dbold^\Sigma_L(q))^\gamma) \| 
\\ 
\: \le \: \| (\dbold^\Sigma_L(q))  \widetilde{Q}_{\varphi^{\pm}_{:q}}: C^{2, \beta}(B'_q,\gtilde_q, (\dbold^\Sigma_L(q))^\gamma)\| 
\\ 
\: \le \: C \frac{1}{(\dbold^\Sigma_L(q))^{\gamma+1}} \| \varphigl_\pm : C^{3, \beta}(B'_q,\gtilde_q )\|^2 
\: \le \: \tau^{3/2}_{\max} ,
\end{multline*}
where $B'_q: = D^{\gtilde_q}_{q}(1/10)$, 
and we have used \ref{con:one}(v) and \ref{con:alpha}. 
Combining this estimate with  \ref{Lgluingreg}(iii), \ref{LH}, \ref{D:norm}, and \ref{L:norms}(ii) we complete the proof. 
\end{proof}

\begin{lemma}
\label{L:appr}
\begin{enumerate}[label=\emph{(\roman*)}]
\item
If $\gammahat\in\R$, $\tau_{\max}$ is small enough, and $u\in C^{2,\beta}( \, \PiY(\Shat[M]) \, )$, 
then we have 
\begin{equation*} 
\!\!\!\!\!\!\! \! \! \! \! \! \! 
\begin{aligned}
\| \,
\Lcal_M  
\, (\, u \,  \circ
\PiY \, )\, 
-
\, (\, \Lcal_\tildecat  
u \, )  
\circ \PiY
\, \|_{0,\beta,\gammahat-2; \, \Shat[M] } 
\, &\le \,
C\, \tau_{\max}^{2\alpha}\,
\|\,u 
\, \|_{2,\beta,\gammahat; \, \PiY( \Shat[M] ) } 
\, , \\
\| \,
\Lcal_M  
\, (\, u \,  \circ
\PiY \, )\, 
-
\, (\, \Lcal_\tildecat  
u \, )  
\circ \PiY
\, \|_{0,\beta,\gammahat-2; \, \Shat[M] } 
\, &\le \,
C\,
\|\,u 
\, \|_{2,\beta,\gammahat-1; \, \PiY( \Shat[M] ) } .
\,
\end{aligned}
\end{equation*} 

\item 
If $\gammahat\in\R$, $\tau_{\max}$ is small enough, and $u\in C^{2,\beta}(\Stildep\,)$,
then for $\epsilon_1\in[0,1/2]$
we have 
\begin{equation*}
\| \,
\Lcal_M
\,\{ \, u  \circ \PiSig \,\}\,
-
\{\, \Lcal_\Sigma \, u \,\}\, \circ \PiSig 
\, \|_{0,\beta,\gammahat-2 ; \, \Pi_\Sigma^{-1}(\Stildep\,)  } 
%\\ 
\lem 
C \,  
b^{\epsilon_1-1}\, \log b \,\, \tau_{\max}^{\epsilon_1} \, 
\|\, u 
\, \|_{ 2 , \beta , \gammahat + \epsilon_1 ; \,\Stildep  } 
\, . 
\end{equation*} 
\end{enumerate}
\end{lemma}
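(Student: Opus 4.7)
The plan is to express each of the two operator differences as a second-order linear perturbation with small coefficients, estimate those coefficients using results already established, and then track the weight bookkeeping carefully.

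For part (i), I would use the identification $Y: \Shat[L] \to \tildecat_L$ to view the induced metric on $\cat = \cat[p,\tau_p,\kappaunder_p]$ as the perturbation $g_\tildecat + \alpha$ of the model metric on $\tildecat$, the normal $\nu_\cat$ as a perturbation of $\nu_\tildecat$ governed by $\beta$ and $\sigma$, and the ambient metric near $p$ as $\gpeuc+h$ in Fermi coordinates.  Expanding $\Lcal_M = \Delta_{g_\cat}+|A^\cat|^2_g+\Ric^{N,g}(\nu_\cat,\nu_\cat)$ term by term then presents $\Lcal_M - \Lcal_\tildecat$ as a second-order linear operator whose coefficients in the natural cylindrical frame on $\tildecat$ are built from $\alpha,\alphatilde,\beta,\sigma,h$ and first covariant derivatives of $h$.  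By \ref{Ltensest}, \ref{Lzinit}, and \ref{Lh}, after dividing by the appropriate power of $\rho$ forced by the index structure, each such coefficient is controlled pointwise by $C(\rho^2+|\zz|+\tau_p)$.  On $\Shat[L]$ we have $\rho\le 2\tau_p^\alpha$ and $|\zz|\le C\tau_p|\log\tau_p|$, so this bound is $\le C\tau_{\max}^{2\alpha}$, which combined with the $\rho^{-2}$ weight shift carried by second-order operators yields the first inequality of (i).  For the second inequality, the input weight on $f$ is $\rho^{\gammahat-1}$ rather than $\rho^{\gammahat}$, so the requisite bound on the perturbation coefficient becomes $(\rho^2+|\zz|+\tau_p)/\rho \le C$, which holds because $\rho\ge\tau_p$, $|\zz|/\rho = |\sss|/\cosh\sss \le 1$, and $\rho$ is bounded on $\cat$.

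For part (ii), I would linearize the graphical mean curvature.  The expansion $H(u) = \Lcal_\Sigma u + Q(u)$, with $Q$ at least quadratic in $(u,\nabla u, \nabla^2 u)$, gives $\Lcal_M - \Lcal_\Sigma = (DQ)_{\varphigl_\pm}$ on each sheet of the graphical part of $M$; this is a second-order linear operator whose every coefficient contains a factor of $\varphigl_\pm$ or of one of its derivatives of order $\le 2$.  At each $q\in\Stildep$, working in the scaled metric $\gtilde_q = r^{-2}g$ with $r:=\dbold^\Sigma_L(q)$, the coefficients of this perturbation in $\gtilde_q$ are controlled by $C\|\varphi^\pm_{:q}\|_{C^{2,\beta}(B'_q,\gtilde_q)}$, where $\varphi^\pm_{:q} := r^{-1}\varphigl_\pm$ as in the proof of \ref{L:norms}.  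Tracking the conversion between the scaled norm and the weighted norms in \ref{D:norm} produces an extra factor of $r^{\epsilon_1}$ (accounting for the shift from input weight $\gammahat+\epsilon_1$ to output weight $\gammahat-2$), so it suffices to prove
\[
\sup_{q\in\Stildep} r^{\epsilon_1}\,\|\varphi^\pm_{:q}\|_{C^{2,\beta}(B'_q,\gtilde_q)} \le C\,b^{\epsilon_1-1}\log b\cdot \tau_{\max}^{\epsilon_1}.
\]
The hard part will be that the uniform bound $Cb^{-1}\log b$ stated in \eqref{Efw} is not sharp enough to produce the $\tau_{\max}^{\epsilon_1}$ factor; on the region where $r = \dbold^\Sigma_p(q)\in[b\tau_p,4\delta'_p]$ it must be refined to the $r$-dependent estimate $\|\varphi^\pm_{:q}\|_{C^{2,\beta}(B'_q,\gtilde_q)} \le C(\tau_p/r)(1+\log(r/\tau_p))$, obtained by retaining the $r$-dependence in the catenoid-log plus kernel-correction decomposition used to derive \eqref{Efw}.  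Then $r^{\epsilon_1}(\tau_p/r)(1+\log(r/\tau_p)) = (\tau_p/r)^{1-\epsilon_1}\tau_p^{\epsilon_1}(1+\log(r/\tau_p))$ is decreasing in $r$ for $\epsilon_1\in[0,1/2]$, so its supremum over $r\ge b\tau_p$ is attained at $r=b\tau_p$ and equals $Cb^{\epsilon_1-1}\log b\cdot \tau_p^{\epsilon_1}$.  On the complementary region $r\ge \delta'_{\min}$, assumption \ref{con:one}(v) yields $\|\varphi^\pm_{:q}\|_{C^{2,\beta}(B'_q,\gtilde_q)}\le C\tau_{\min}^{8/9}$, a bound much stronger than needed.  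Combining the two regimes completes the proof.
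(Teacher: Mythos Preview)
Your proposal is correct and follows essentially the same route as the paper's proof. For part (i) the paper likewise writes $\Lcal_M-\Lcal_\tildecat$ as the sum of a Laplacian difference (handled by \ref{Llaplace} and the $\alpha$-estimate in \ref{Ltensest}), an $|A|^2$-difference (handled by \ref{Ljacdif}), and the Ricci term, obtaining the same $\tau_{\max}^{2\alpha}$ factor; for part (ii) the paper introduces exactly your pointwise refinement of \eqref{Efw}, namely $f_{\mathrm{weight}}(q)=\frac{\log(\dbold^\Sigma_p(q)/\tau_p)}{\dbold^\Sigma_p(q)/\tau_p}$ on $D^\Sigma_p(3\delta'_p)$ and $\tau_{\min}^{8/9}$ elsewhere, and then observes that $x^{\epsilon_1-1}\log x$ is decreasing for $x\ge b$, which is your monotonicity argument.
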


\begin{proof}
We first prove the first estimate of (i).  By \ref{D:norm}, \ref{L:norms}(i), and the definitions it suffices to prove that 
\begin{align*}
\|\rho^2( \Lcal_M- \Delta_{\gpeuc} -|\Acirc|^2_{\gpeuc}) u\circ \PiY\|_{0, \beta, \gammahat; \Shat[M]}
\le C \tau^{2\alpha}_{\max} \| u \|_{2, \beta, \gammahat; \Omegatilde },
\end{align*}
where $\rho$ is as in \eqref{Ecatenoid} on each $\cat = \cat[p, \tau_p, \kappaunder_p]$, $\Omegatilde =  \PiY ( \Shat[M])$, 
$\Lcal_M : = \Delta_{g} +|A|^2_{g} + \Ric(\nu_\cat, \nu_\cat) $,   
$A$ and $\Ric$ are the second fundamental form on $\cat$ and the Ricci tensor induced by $g$, 
and $\Acirc$ is the second fundamental form on $\cat$ induced by $\gpeuc$.  Recall from \eqref{Ecatmetric} that $\rho^{-2} \gpeuc$ is isometric to the flat metric $\chi$ on $\cyl$ from \ref{Ecyl}, and also that $\rho^2 \Delta_{\gpeuc}$ is the Laplacian with respect to the $\chi$ metric. 

Estimating the difference in the Laplacians using \ref{Llaplace}, we find
\begin{multline*}
\| \rho^2 ( \Delta_{g} - \Delta_{\gpeuc}) u\circ \PiY: C^{0, \beta}(\cat\cap \Shat[M], \chi, \rho^{\gammahat})\| 
\\ 
\lem 
C \| \rho^{-2} \alpha : C^{1, \beta}(\cat, \chi)\| \| u\circ \PiY : C^{2, \beta}(\cat\cap \Shat[M], \chi, \rho^{\gammahat})\| 
\\ 
\lem C \tau^{2\alpha}_{\max} \| u\|_{2, \beta, \gammahat; \Omegatilde},
\end{multline*}
where we have used \ref{Ltensest} to estimate $\alpha$.  
Next observe that 
\begin{multline*}
\| \rho^2 ( | A|^2_{g} - |\Acirc|^2_{\gpeuc} ) u\circ \PiY : C^{0, \beta}(\cat\cap \Shat[M], \chi, \rho^{\gammahat})\| 
\\
\lem \| \rho^2( | A|^2_{g} - |\Acirc|^2_{\gpeuc}) : C^{0, \beta}(\cat, \chi)\| \| u \|_{0, \beta, \gammahat;\Omegatilde} 
\\
\lem C\tau^{2\alpha}_{\max}  \| u \|_{0, \beta,\gammahat; \Omegatilde},
\end{multline*}
where we have estimated $ \rho^2( | A|^2_{g} - |\Acirc|^2_{\gpeuc})$ using \ref{Ljacdif}, 
estimated the tensors using \ref{Ltensest}, and used that  $\Acirc  = \tau_p ( - d\sss^2 + d \theta^2) $ and that $\rho^2|\Acirc|^2_{\gpeuc} = 2\sech^2 \sss$.

Finally, we have the trivial estimate $\| \rho^2 {\Ric}(\nu_\cat, \nu_\cat) u\circ \PiY \|_{0, \beta, \gammahat; \Shat[M]} \le C \tau^{2\alpha}_{\max} \| u \|_{2, \beta, \gammahat; \Omegatilde}$; combined with the preceding and the definitions, this concludes the proof of the first estimate in (i).  The proof of the second estimate is similar, so we omit it. 

We now prove (ii).  In this case we apply the notation and observations in the proof of \ref{L:norms}(ii) 
including \eqref{Efw}. 
We have then using scaling for the left hand side that for $q\in \Stildep$ 
\begin{multline*}
(\,  \dbold^{\Sigma}_L(q)\, )^{2} \, 
\| \,
\Lcal_M
\,\{ \, u  \circ \PiSig \,\}\,
-
\{\, \Lcal_\Sigma \, u \,\}\, \circ \PiSig 
\, : C^{0,\beta} ( \Pi^{-1}_{\Sigma} (B'_q), \, \gtilde_q \, ) \, \| 
\\ 
\, \le 
\, 
C \,  
f_{\mathrm{weight}}(q) \, 
\|\, u 
\, : C^{2,\beta} ( B'_q, \, \gtilde_q \, ) \, \| 
\, ,
\end{multline*}
where here $f_{\mathrm{weight}}(q) = \frac{\log (\dbold^\Sigma_p(q)/ \tau_p)}{\dbold^\Sigma_p(q)/ \tau_p}$ 
if $q \in D^\Sigma_p(3 \delta'_p)$ for some $p\in L$ and $f_{\mathrm{weight}}(q) = \tau^{8/9}_{\min}$ otherwise.  
By the definitions it is enough then to check that 
$\forall q\in \Stildep$ 
we have 
$$
f_{\mathrm{weight}}(q) \, 
(\,  \dbold^\Sigma_L(q)\, )^{\epsilon_1} \, 
\le \, 
C \, b^{\epsilon_1-1}\, \log b \,\, \tau_{\max}^{\epsilon_1}. 
$$
This follows from the definition of $f_{\mathrm{weight}}$ and the observation that $x^{\epsilon_1 -1} \log x$ is decreasing in $x$ for $x\geq b$.  This completes the proof. 
\end{proof}

\subsection*{The definition of $\RMa$}
\nopagebreak

We consider now  the linearized equation modulo $\skernel[L]$ (recall \ref{NT}\ref{N:A}, \ref{aK}, and \ref{Npm}),
\begin{equation}
\label{ELcal}
\Lcal_M u=E+ J_M(w^+_E, w^-_E), 
\end{equation}
with $E\in C^{0,\beta}(M)$ given and $u\in C^{2,\beta}(M)$ and $w^\pm_E \in \skernel[L]$ the unknowns. 
We will construct a linear map
\begin{equation}
\label{ERcalMappr}
\RMa           : 
C^{0,\beta}(M) 
\to 
C^{2,\beta}(M) 
\oplus
\skernel[L] 
\oplus
\skernel[L]
\oplus
C^{0,\beta}(M), 
\end{equation}
where using the notation 
\begin{equation}
\label{EEone}
E_1 := \Lcal_M u_1 - E - J_M(w^{+}_{E, 1}, w^-_{E, 1}), 
\quad %\\ 
\text{and}\quad 
\RMa           E = (u_1,w^+_{E,1},w^-_{E,1},E_1), 
\quad \forall E\in C^{0,\beta}(M),  
\end{equation}
we will have that 
$(u_1,w^+_{E,1},w^-_{E,1})$ is an approximate solution of \eqref{ELcal} with approximation error $E_1$ in the sense that the norm of $E_1$ is small 
compared to the norm of $E$. 
The approximate solution will be constructed by combining semi-local approximate solutions.
Before we proceed with the construction we define some cut-off functions we will need.

\begin{definition}
\label{DpsiM}
We define 
$\psi'\in C^{\infty}(\Sigma)$ 
and
$ \psihat \in C^{\infty}(M)$ 
by requesting the following.
\begin{enumerate}[label = \emph{(\roman*)}]
\item
$\psihat$ is supported on
$\Shat[M] \subset M$
and $\psi'$ on 
$\Stildep \subset \Sigma$ (recall \ref{D:regions}).

\item $\psi'=1$ on $\Stildep_1$
and for each $p\in L$ we have 
\begin{equation*}
\begin{aligned}
\psi'=&\, 
\Psibold\left[b \tau_p  , 2 b \tau_p ; \dbold^\Sigma_p  \right]
(0,1)
\quad\text{on}\quad
D^\Sigma_p(\,2b\,\tau_p\,),
\\
\psihat=&\, 
\Psibold\left[2\delta'_p , \delta'_p ; \, \dbold^\Sigma_p \circ \PiSig \,   \right]
(0,1)
\quad\text{on}\quad
\Shat[p].
\end{aligned}
\end{equation*}
\end{enumerate}
\end{definition}

Given 
$E\in C^{0,\beta}(M)$, 
we define 
$E'_{\pm}\in C^{0,\beta}(\Sigma)$ 
by requiring that they are supported on $\Stildep$
and that 
\begin{equation}
\label{Edecom}
J_M(E'_+ , E'_-) = (\psi' \circ \Pi_\Sigma)\,  E .
\end{equation}
Because of \ref{cLker} and \ref{aK}, there are unique $u'_{\pm} \in C^{2, \beta}(\Sigma)$ and $w^{\pm}_{E, 1} \in \skernel[L]$ such that 
\begin{align}
\label{Eup}
\Lcal_\Sigma u'_{\pm} = E'_{\pm} + w^{\pm}_{E,1} \quad \text{on} \quad \Sigma
\quad
\text{and} 
\quad
\forall p\in L \quad \Ecalunder_p u'_\pm =0.
\end{align}
Note that $\Lcal_\Sigma \left( (1- \psi') u'_\pm\right) = [\psi', \Lcal_\Sigma] u'_\pm + (1-\psi')E'_\pm$ is supported on 
$\Kcore_1[M] \setminus \Kcore[M] \subset \Shat[M] \subset M $.  
We define now $\widetilde{E}\in C^{0,\beta}(\KM        )$, 
by requesting that it is supported on 
$\PiY(\Kcore_1[M] ) $ 
and that on $\Kcore_1[M]$ we have 
\begin{equation}
\label{EEpp}
\widetilde{E} \circ \PiY = 
(1- \psi' \circ \Pi_\Sigma)E+
J_M\left( \Lcal_\Sigma \left( (1-\psi') u'_{+}\right) , \Lcal_\Sigma \left( (1-\psi') u'_{-}\right)\right).  
\end{equation}

For $k\in \{0, 2\}$, we introduce a decomposition 
$C^{k, \beta}(\KM        ) = C^{k,\beta}_{\mathrm{low}}(\KM        ) \oplus 
C^{k,\beta}_{\mathrm{high}}(\KM        )$
into subspaces of functions which satisfy the condition that their
restrictions to a parallel circle of a $\tildecat[p, \tau_p, \kappaunder_p]$  
belong or are orthogonal respectively 
to the the span of the constants and the first harmonics on the circle.
We then have
\begin{equation}
\label{EEtildedecom}
\widetilde{E}
=
\widetilde{E}_{\mathrm{low}}
+
\widetilde{E}_{\mathrm{high}},
\end{equation}
with 
$ \widetilde{E}_{\mathrm{low}} \in C^{0,\beta}_{\mathrm{low}}(\KM        )$ 
and
$ \widetilde{E}_{\mathrm{high}} \in C^{0,\beta}_{\mathrm{high}} (\KM        )$ 
supported on $\PiY( \Kcore_1[M] ) \subset \KM         $. 

Let 
$\Lcal_\tildecat $ denote the linearized operator on $\KM        $
(defined in \ref{NT}\ref{N:A}),   
and let 
$ \widetilde{u}_{\mathrm{low}} \in C^{2,\beta}_{\mathrm{low}}(\KM        )$ 
and
$ \widetilde{u}_{\mathrm{high}} \in C^{2,\beta}_{\mathrm{high}} (\KM        )$ 
be solutions of (recall \ref{DtauK})
\begin{equation}
\label{Etildeueq}
\Lcal_\tildecat  \,  \widetilde{u}_{\mathrm{low}}  =  \widetilde{E}_{\mathrm{low}},
\qquad 
\Lcal_\tildecat  \,  \widetilde{u}_{\mathrm{high}}  = \widetilde{E}_{\mathrm{high}},
\end{equation}
determined uniquely as follows.
By separating variables the first equation amounts to 
uncoupled ODE equations which are solved uniquely
by assuming vanishing initial data on the waist of the catenoids.  For the second equation we can as usual change the metric conformally
to $h=\frac12|A|^2g=\nu^*g_{\Sph^2}$, 
and then we can solve uniquely because the inhomogeneous term is clearly
orthogonal to the kernel.  We conclude now the definition of $\RMa          $:

\begin{definition}
\label{DRMappr}
We define $\RMa          $
as in \ref{ERcalMappr} and \ref{EEone} 
by taking 
$\RMa           E =(u_1,w^+_{E, 1},  w^-_{E,1},E_1)$,
where $w^{\pm}_{E,1}$ were defined in \eqref{Eup},
$E_1$ was defined in \eqref{EEone}, 
and
\begin{align*}
u_1
:=
\psihat\, \widetilde{u} \circ
\PiY
+ 
J_M(\psi' u'_+, \psi' u'_-) ,
\quad
\text{where}
\quad
\widetilde{u}  :=  \widetilde{u}_{\mathrm{low}} + \widetilde{u}_{\mathrm{high}}  
\in C^{2,\beta}(\KM        ).
\end{align*}
\end{definition}

\subsection*{The main Proposition}
\nopagebreak

\begin{prop}[cf. {\cite[Proposition 4.17]{kap}}] 
\label{Plinear}
Recall that $M$ is as in \ref{Dinit} and we assume that \ref{aK}, \ref{con:one}, \ref{cLker}, and \ref{con:b} hold. 
A linear map 
$$
\Rcal_M: C^{0,\beta}(M) \to C^{2,\beta}(M) \times \skernel[L] \times \skernel[L] 
$$ 
can be defined then by 
$$
\Rcal_M E
:=
(u,w^+_E,  w^-_E) 
:=
\sum_{n=1}^\infty(u_n, w^+_{E,n}, w^-_{E,n})
\in C^{2,\beta}(M) \times \skernel[L]\times\skernel[L]
$$
for 
$E\in C^{0,\beta}(M)$, 
where the sequence 
$\{(u_n,w^+_{E,n}, w^-_{E,n},E_n)\}_{n\in \N}$
is defined inductively for $n\in \N$ by 
$$
(u_n,w^+_{E,n}, w^-_{E,n},E_n) := - \RMa           E_{n-1}, 
\qquad\quad
E_0:=-E.
$$
Moreover the following hold.
\begin{enumerate}[label=\emph{(\roman*)}]
\item $\Lcal_M u = E + J_M(w^+_E, w^-_E)$.
\item $ \| u \|_{2, \beta, \gamma; M} \le C(b) \delta^{-4-2\beta}_{\min}  \| E\|_{0, \beta, \gamma-2; M}$.
\item $\| w^{\pm}_E : C^{0, \beta}(\Sigma, g)\| \le C \delta^{\gamma-4-2\beta}_{\min} \| E\|_{0, \beta, \gamma-2; M}$.
\end{enumerate}
\end{prop}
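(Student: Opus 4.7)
\textbf{Proof plan for Proposition \ref{Plinear}.} The strategy is the standard iterative ``semi-local'' scheme: show that the approximate inverse $\Rcal_{M,appr}$ defined in \ref{DRMappr} is bounded in the appropriate weighted norms and produces a residual error $E_1$ which is a strict contraction of $E$, so that the series $\sum_n (u_n, w^{\pm}_{E,n})$ converges geometrically to an exact solution. The convergence is governed by the two small parameters $\tau_{\max}$ and $1/b$ appearing in Lemma~\ref{L:appr}.

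First I would establish the norm bounds on each of the three pieces comprising $\Rcal_{M,appr} E$. For the $\Sigma$-solution $u'_\pm$ produced by \eqref{Eup}: since $\Lcal_\Sigma$ has trivial kernel by \ref{cLker}, $\Lcal_\Sigma : C^{2,\beta}(\Sigma)\to C^{0,\beta}(\Sigma)$ is an isomorphism, and I would upgrade this to a weighted estimate $\|u'_\pm\|_{2,\beta,\gamma;\Sigma} \le C\|E'_\pm\|_{0,\beta,\gamma-2;\Sigma}$ by a standard scaling argument near each $p\in L$, using the vanishing jet condition $\Ecalunder_p u'_\pm = 0$ together with the fact that $\Lcal_\Sigma$ is a small perturbation of the flat Laplacian at small scales. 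The correction $w^\pm_{E,1}\in\skernel[L]$ is controlled by $\|\Ecal_L^{-1}\|\le C\delta_{\min}^{-2-\beta}$ from \ref{aK}(iv), giving $\|w^\pm_{E,1}\|\le C\delta_{\min}^{\gamma-4-2\beta}\|E\|_{0,\beta,\gamma-2;M}$ after transporting norms via \ref{L:norms}(ii). For the catenoidal piece $\widetilde{u}$: the low-mode ODE with vanishing initial data on the waist gives $\|\widetilde{u}_{\mathrm{low}}\|_{2,\beta,\gamma;\tildecat_L} \le C\|\widetilde{E}_{\mathrm{low}}\|_{0,\beta,\gamma-2;\tildecat_L}$, and the conformal change to $h=\tfrac{1}{2}|A|^2 g = \nu^*g_{\Sph^2}$ makes $\Lcal_\tildecat$ into $\Delta_h+2$ whose kernel is precisely the span of the first harmonics, so orthogonality of $\widetilde{E}_{\mathrm{high}}$ to this kernel gives the bound on $\widetilde{u}_{\mathrm{high}}$. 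Combining yields the approximate-inverse estimate $\|u_1\|_{2,\beta,\gamma;M} + \|w^\pm_{E,1}\|\cdot\delta_{\min}^{4+2\beta-\gamma} \le C(b)\delta_{\min}^{-2-\beta}\|E\|_{0,\beta,\gamma-2;M}$.

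Next I would compute the residual $E_1=\Lcal_M u_1 - E - J_M(w^+_{E,1}, w^-_{E,1})$ region by region. On the deep catenoidal region $\Shat[L]$ where $\psihat = 1$, the error comes solely from the discrepancy $\Lcal_M(\widetilde u\circ Y) - (\Lcal_\tildecat \widetilde u)\circ Y$, bounded by Lemma~\ref{L:appr}(i) by a factor of $C\tau_{\max}^{2\alpha}$ times $\|u_1\|_{2,\beta,\gamma;M}$. On the graphical region where $\psi'=1$, the $\Sigma$-equation \eqref{Eup} kills the leading term and one is left with $\Lcal_M(J_M(u'_+,u'_-)\circ\PiSig) - J_M(\Lcal_\Sigma u'_+, \Lcal_\Sigma u'_-)\circ\PiSig$, bounded by Lemma~\ref{L:appr}(ii) (taking $\epsilon_1\in(0,1/2]$ so that $\gamma+\epsilon_1\le 2$ is consistent with elliptic regularity) by a factor $Cb^{\epsilon_1-1}\log b\cdot \tau_{\max}^{\epsilon_1}$ times $\|u_1\|_{2,\beta,\gamma;M}$. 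On the transition annuli where either $\psihat$ or $\psi'$ is nontrivial, the commutators $[\psihat,\Lcal_M]\widetilde u\circ Y$ and the terms $\Lcal_\Sigma((1-\psi')u'_\pm)$ already absorbed into $\widetilde E$ (see \eqref{EEpp}) contribute factors controlled by the same small parameters, since the cutoff supports lie in the appropriate narrow strips.

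Combining these bounds, for $\tau_{\max}$ small (depending on $b$ and $\alpha$) and $b$ large (in absolute terms as in \ref{con:b}), I obtain $\|E_1\|_{0,\beta,\gamma-2;M}\le \tfrac{1}{2}\|E\|_{0,\beta,\gamma-2;M}$. Iterating gives $\|E_n\|\le 2^{-n}\|E\|$, so the series $u=\sum u_n$ and $w^\pm_E = \sum w^\pm_{E,n}$ converge in the respective Banach spaces. Part (i) follows from \eqref{EEone} and passage to the limit, and parts (ii) and (iii) follow by summing the geometric series with the semi-local estimates above. The main technical obstacle I anticipate is the careful weighted estimate for $u'_\pm$ on $\Sigma$: one must track how the conditions $\Ecalunder_p u'_\pm = 0$ combine with the weight $r^\gamma$ (with $\gamma=3/2$, strictly between the natural orders $1$ and $2$ of the local expansion near each $p\in L$) to yield an estimate linear in the right-hand side weighted norm, with the constant $C(b)$ in (ii) arising from the transition between the $\Sigma$-scale and the catenoidal scale at distance $\sim b\tau_p$.
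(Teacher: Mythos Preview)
Your overall iterative strategy is correct, and the breakdown of $E_1$ into catenoidal, graphical, and commutator contributions matches the paper's. However, there is a genuine gap in your treatment of $u'_\pm$. The uniform weighted estimate $\|u'_\pm\|_{2,\beta,\gamma;\Sigma} \le C\|E'_\pm\|_{0,\beta,\gamma-2;\Sigma}$ you assert is \emph{false}: the constant must carry a factor $\delta_{\min}^{-4-2\beta}$ (note that the target bound~(ii) already has this, not your $\delta_{\min}^{-2-\beta}$). The obstruction is that enforcing $\Ecalunder_p u'_\pm = 0$ at every $p\in L$ requires correcting by some $v_\pm\in\skernelv[L]$, and the bound $\|\Ecal_L^{-1}\|\le C\delta_{\min}^{-2-\beta}$ from \ref{aK}(iv) feeds directly into $\|v_\pm:C^{2,\beta}(\Sigma,g)\|$; converting this back to the weighted $\gamma$-norm near each $p$ costs a further $\delta_{\min}^{-\gamma}$.

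This $\delta_{\min}$ loss then breaks your contraction on the graphical region. If you take $\epsilon_1=0$ in Lemma~\ref{L:appr}(ii), the graphical residual is bounded by $Cb^{-1}\log b\cdot\|u'_\pm\|_{2,\beta,\gamma;\Sigma}\le Cb^{-1}\log b\cdot\delta_{\min}^{-4-2\beta}\|E\|$, which is \emph{not} small since $b$ is a fixed constant while $\delta_{\min}\to 0$. If instead you take $\epsilon_1>0$ as you propose, you need $\|u'_\pm\|_{2,\beta,\gamma+\epsilon_1;\Sigma}$; but the local piece of $u'_\pm$ solving $\Lcal_\Sigma u = E'_\pm$ near each $p$ inherits only $r^\gamma$ decay from the hypothesis $|E'_\pm|\lesssim r^{\gamma-2}$ (this is exactly what Lemma~\ref{Lwe} gives and no more), so its $(\gamma+\epsilon_1)$-norm is uncontrolled. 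The paper resolves this by splitting $u'_+ = u''_+ + (u'''_+ + v_+)$: the first summand $u''_+$, assembled from the local solutions of \ref{Lwe}, has a \emph{uniform} $\gamma$-weighted bound and is paired with $\epsilon_1=0$ (yielding the pure $Cb^{-1}\log b\,\|E\|$ term, made small by choosing $b$ large once and for all); the second summand $u'''_++v_+$ carries the bad $\delta_{\min}^{\gamma-4-2\beta}$ factor but is $\Lcal_\Sigma$-harmonic on each $D^\Sigma_p(\delta_p/4)$ with vanishing $1$-jet, hence enjoys the stronger $\widetilde\gamma=(\gamma+2)/2$ decay there and is paired with $\epsilon_1=\widetilde\gamma-\gamma>0$ (yielding a factor $\tau_{\max}^{\widetilde\gamma-\gamma}$ which, via \ref{con:one}(ii), dominates any fixed power of $\delta_{\min}^{-1}$). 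Without this splitting the iteration does not close.
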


\begin{proof}
The proof is similar to the proof of \cite[Proposition 4.17]{kap} but we include it to keep the article self-contained. 
We subdivide the proof into five steps:\newline
\textit{Step 1: Estimates on $u'_\pm$ and $w^{\pm}_{E,1}$:} We start by decomposing $E'_+$  and $u'_+$ (defined as in \eqref{Edecom} and \eqref{Eup}) 
into various parts which will be estimated separately; $E'_-$  and $u'_-$ are decomposed in analogous fashion.
We clearly have by the definitions and the equivalence of the norms 
as in \ref{L:norms} that 
$$
\|E'_+\|_{0,\beta,\gamma-2;\Sigma}
\le \, C \,
\|E\|_{0,\beta,\gamma-2;M}.
$$
For each $p\in L$, we use \ref{Lwe} to define $u'_{p, +}\in C^{2,\beta}( D^\Sigma_p(2\delta_p ))$ satisfying $\Ecalunder_p u'_{p,+}=0$ and 
\begin{align*}
\|u'_{p,+}\|_{2,\beta,\gamma;D^\Sigma_p(2\delta_p)}
\le \, C \,
\|E'_+\|_{0,\beta,\gamma-2;\Sigma}.
\end{align*}

We define now 
$u''_+\in C^{2,\beta}(\Sigma)$ 
supported on $\disjun_{p\in L} D^\Sigma_p( 2\delta_p )$
by requesting for each $p\in L$ that 
$$
u''_+=\,
\Psibold\left[2 \delta_p , \delta_p ; \dbold^\Sigma_p  \right]
(0,u'_{p,+})
\quad\text{on}\quad
D^\Sigma_p( 2\delta_p).
$$
We clearly have
\begin{align*}
\| u''_+ \| _{2, \beta, \gamma; \Sigma} \le C\| E \|_{0, \beta, \gamma-2; M}.
\end{align*}

Now
$E'_+-\Lcal_\Sigma u''_+$ 
vanishes on 
$\disjun_{p\in L} D^\Sigma_p( \delta_p )$
and therefore it is supported on 
$\Sp_1 := \Sigma\setminus \disjun_{p\in L} D^\Sigma_p( \delta_p ) $.
Moreover it satisfies 
$$
\|E'_+-\Lcal_\Sigma u''_+\|_{0,\beta,\gamma-2;\Sigma}
\le \, C \,
\|E\|_{0,\beta,\gamma-2;M}.
$$
Using the definition of the norms and 
the restricted support $\Sp_1$ 
we conclude that 
$$
\|E'_+-\Lcal_\Sigma u''_+: C^{0,\beta}(\Sigma,g)\|
\le \, C \, 
\delta_{\min}^{\gamma-2-\beta} \,
\|E'_+-\Lcal_\Sigma u''_+\|_{0,\beta,\gamma-2;\Sigma}.
$$
The last two estimates and standard linear theory (recall \ref{cLker}) imply that 
the unique solution 
$u'''_+\in C^{2,\beta}(\Sigma)$ to $\Lcal_\Sigma u'''_+=E'_+-\Lcal_\Sigma u''_+$ 
satisfies
$$
\|u'''_+ : C^{2,\beta}(\Sigma,g)\|
\le \, C \, 
\delta_{\min}^{\gamma-2-\beta} \,
\|E\|_{0,\beta,\gamma-2;M}. 
$$
By \ref{aK} there is a unique 
$v_+\in \skernelv[L]$ 
(recall \ref{aK})
such that $\Ecalunder_p(u'''_++v_+)=0$ for each $p\in L$. 
Moreover by the last estimate and \ref{aK},
$v_+$ satisfies the estimate 
\begin{equation*}
\|v_+ : C^{2,\beta}(\Sigma,g)\|
+
\|\Lcal_\Sigma v_+  : C^{0,\beta}(\Sigma,g)\|
\le  \, C \, 
\delta_{\min}^{\gamma-4-2\beta}  \,
\|E\|_{0,\beta,\gamma-2;M}. 
\end{equation*}
Combining now the definitions of $u'''_+$ and $v_+$ 
we conclude that $\Lcal_\Sigma(u''_++u'''_++v_+) = E'_++ \Lcal_\Sigma v_+$.
By the definitions of $u''_+$ and $v_+$ we clearly have 
that $\Ecalunder_p(u''_++u'''_++v_+)=0$ $\forall p\in L$ 
and hence 
$$
u'_+=u''_++u'''_++v_+ \qquad \text{and}\qquad w^+_{E,1} = \Lcal_\Sigma v_+.
$$

Note now that 
$\Lcal_\Sigma u'''_+=E'-\Lcal_\Sigma u''_+$ 
vanishes on 
$\disjun_{p\in L} D^\Sigma_p( \delta_p/4 )$
and 
by \ref{aK} 
so does $\Lcal_\Sigma v_+\in\skernel[L]$. 
We conclude that for each $p\in L$ 
we have $\Lcal_\Sigma (u'''_++v_+)=0$ on $D^\Sigma_p( \delta_p /4)$, 
and since we know already that $\Ecalunder_p (u'''_++v_+)=0$, 
it follows that
\begin{align*}
\| u'''_++v_+ \|_{2, \beta, \widetilde{\gamma}; \Sigma} \le C \delta^{-\widetilde{\gamma}}_{\min}
 \| u'''_++v_+ : C^{2, \beta}( \Sigma, g)\|, 
\end{align*}
where $\widetilde{\gamma}:=\frac{\gamma+2}2\in(\gamma,2)$.  Combining with the earlier estimates for $u'''_+$ and $v_+$ 
we conclude that
$$
\|u'''_++v_+ \|_{2,\beta,\widetilde{\gamma};\Sigma}
\le  \, C \, 
\delta_{\min}^{\gamma-\widetilde{\gamma}-4-2\beta} \,
\| \, E \, \|_{0,\beta,\gamma-2;M}, 
$$
We need the stronger decay for estimating $E_1$ later.
A similar estimate holds with $\gamma$ instead of $\widetilde{\gamma}$. 
Combining with the earlier estimate for $u''_+$ and arguing similarly for $u'_-$ we finally conclude that 
$$
\|u'_{\pm}\|_{2,\beta,\gamma;\Sigma}
\le  \, C \, 
\delta_{\min}^{-4-2\beta} \,
\| \, E \, \|_{0,\beta,\gamma-2;M}. 
$$

\textit{Step 2: Estimates on $\widetilde{u}$:}
By the definitions and \ref{L:norms} (with $\epsilon=1$) we have that
\begin{equation*}
\| \,  \widetilde{E} \, \|_{0,\beta,\gamma-2;\KM        }
\, \le \,
C\, 
(\, \|\, {E} \, \|_{0,\beta,\gamma-2;M}
\, + \,
\|\, u'_+ \, \|_{2,\beta,\gamma;\Sigma}
+ \,
\|\, u'_- \, \|_{2,\beta,\gamma;\Sigma}
\,)
%\\ 
\le  \, C \, 
\delta_{\min}^{-4-2\beta} \,
\| \, E \, \|_{0,\beta,\gamma-2;M}, 
\end{equation*}
where the second inequality follows from the previous estimate. 
By scaling \ref{Etildeueq}, the definitions, and standard theory, we conclude that $\forall p\in L$ 
\begin{equation*}
\|  \, \widetilde{u}_{\mathrm{low}} \,: \, C^{2,\beta} (\KKcore_1[p], \tau_p^{-2} \left. g\right|_p ) \,  \|
\, \le \,
C(b) \, 
\|  \, \tau_p^2 \widetilde{E} \,: \, C^{0,\beta} (\KKcore_1[p], \tau_p^{-2} \left. g\right|_p ) \,  \| 
%\\ 
\, \le \,
C(b) \, \tau_p^{\gamma} \, 
\|  \, \widetilde{E} \, \|_{0,\beta,\gamma-2;\KM        }, 
\end{equation*}
where $\KKcore_1[p]:= \PiY( \Kcore_1[p] ) \subset \tildecat[p, \tau_p, \kappaunder_p]$.  
Using the fact that the ODE solutions of the Jacobi equation corresponding to constants grow at most logarithmically in $\rho$, 
and the ones corresponding to first harmonics at most linearly in $\rho$,  
and that $\widetilde{E}$ is supported on $\bigsqcup_{p\in L} \KKcore_1[p]$, we conclude by comparing weights 
and using that $\rho\ge\tau_L$ on $\KM        $ and $\gamma>1$,  
that   
\begin{align*}
\|  \, \widetilde{u}_{\mathrm{low}} \, \|_{2,\beta, \gamma ;\KM        }
\, \le \,
\|  \, \tau_{L}^{1-\gamma}\widetilde{u}_{\mathrm{low}} \, \|_{2,\beta,1;\KM        }
\, \le \,
C(b) \, 
\|  \, \widetilde{E} \, \|_{0,\beta,\gamma-2;\KM        }.
\end{align*}
(Actually $\widetilde{u}_{\mathrm{low}}$ can be expressed explicitly in terms of $\widetilde{E}_{\mathrm{low}} $ 
by using variation of parameters and the ODE solutions corresponding to constants $\phie : = 1- \sss \tanh \sss$ and $\phio : = \tanh \sss$, 
and the ODE solutions corresponding to first harmonics $\widetilde{u}_{e} : = \sech \sss$ and $\widetilde{u}_o: = \sinh \sss + \sss \sech \sss$.)     

By scaling \ref{Etildeueq}, standard linear theory, 
that $\widetilde{E}$ is supported on $\bigsqcup_{p\in L} \KKcore_1[p]$, 
and that on each $\KKcore_1[p]$ the conformal metrics $\tau_L^{-2} \left. g\right|_p $ and $h=\frac12|A|^2g=\nu^*g_{\Sph^2}$ are uniformly equivalent, 
we conclude that $\forall p\in L$ 
\begin{equation*}
\| \, \widetilde{u}_{\mathrm{high}} \, :C^0(\tildecat [p, \tau_p, \kappaunder_p]  ) \,\| 
\, \le \,
C(b) \, 
\| \, \tau_p^{2} \, \widetilde{E}_{\mathrm{high}} \, :C^0( \KKcore_1[p] ) \,\| 
%\\ 
\, \le \,
C(b) \, \tau_p^{\gamma} \, 
\|  \, \widetilde{E} \, \|_{0,\beta,\gamma-2;\KM        }. 
\end{equation*}
Similarly we obtain the second inequality below; the first inequality follows by comparing weights as done earlier. 
\begin{align*}
\| \, \widetilde{u}_{\mathrm{high}} \, \|_{2,\beta,\gamma ;\KM        }
\, \le \,
\| \, \tau_L^{-\gamma} \, \widetilde{u}_{\mathrm{high}} \, \|_{2,\beta,0;\KM        }
\, \le \,
C(b) \, 
\|  \, \widetilde{E} \, \|_{0,\beta,\gamma-2;\KM        }.
\end{align*}
Combining the above we obtain 
\begin{align*}
\|  \, \widetilde{u} \, \|_{2,\beta,\gamma;\KM        }
\, \le \,
C(b) \, 
\delta_{\min}^{-4-2\beta} \| \, E \, \|_{0,\beta,\gamma-2;M}. 
\end{align*}

\textit{Step 3: A decomposition of $E_1$:} 
Using \eqref{EEone} and \ref{DRMappr}, \eqref{Edecom}, \eqref{EEpp}, \eqref{EEtildedecom}, and \eqref{Etildeueq}, 
we obtain 
\begin{equation}
\label{EEoneF}
E_1=E_{1,I}+E_{1,II}+E_{1,III},
\end{equation}
where 
$E_{1,I} ,E_{1, II}, E_{1,III}\in 
C^{0,\beta}(M)$ 
are defined by 
(recall \ref{DpsiM}),
\begin{equation}
\label{EEoneI}
\begin{aligned}
E_{1,I} \, :=& \,
[\Lcal_M,\psihat] 
\,( \, \widetilde{u} \circ \PiY \,) \, ,
\\
E_{1,II}\, : = & 
\begin{aligned}[t]  
\, 
\psihat\, 
&
\left(
\Lcal_M  
\, (\, \widetilde{u} \,  \circ
\PiY \, )\, 
-
\, (\, \Lcal_\tildecat  
\widetilde{u} \, )  
\circ \PiY 
\right) \, 
\\ 
& \qquad \qquad \qquad = \: \widehat{\psi} \Lcal_M \, (\, \widetilde{u}\, \circ \PiY\, )\, - \widetilde{E}\circ \PiY ,  
\end{aligned} 
\\
E_{1,III} \, :=& \,
\Lcal_M
\,\{ \, J_M(\psi' u'_+, \psi'u'_-)\}\,
-
J_M \left( \Lcal_\Sigma (\psi' u'_+), \Lcal_\Sigma (\psi' u'_-)\right), 
\end{aligned}
\end{equation}
on 
$\Shat[M]\setminus\Shat_1[M]$,
$\Shat[M]$, and 
$\Stildep$ respectively, and to vanish elsewhere,  
and we have used that 
$ \Lcal_\Sigma u'_{\pm}= E'_{\pm}+w^{\pm}_{1,E} $ which follows from \eqref{Eup}.

\textit{Step 4: Estimates on $u_1$ and $E_1$:}
Using the definitions, \ref{L:norms} with $\epsilon=1$, 
and the estimates for $u'_{\pm}$ and $\widetilde{u}$ above 
we conclude that 
\begin{align*}
\| \, u_1 \, \|_{2,\beta,\gamma;M} 
\, \le \,
C(b) \, 
\delta_{\min}^{-4-2\beta} \,  
\| \, E \, \|_{0,\beta,\gamma-2;M}. 
\end{align*}

By \ref{L:norms} we have 
$\|\, 
\,  \widetilde{u} \, \circ \PiY \, \|_{2,\beta,2;\,
\Shat[M]\setminus\Shat_1[M]}
\Sim_2
\|  \, \widetilde{u} \, \|_{2,\beta,2;\, \PiY \,(\,
\Shat[M]\setminus\Shat_1[M]\,)}
$.
Using definitions \ref{D:norm} and \ref{D:regions} we conclude that 
\begin{align*}
\|\, 
\, \widetilde{u} \circ \PiY \, \|_{2,\beta,\gamma;\,
\Shat[M]\setminus\Shat_1[M]}
\, \le \,
C\, 
\tau_{\max}^{\alpha(1-\gamma)} 
\, \,
\|  \, \widetilde{u} \, \|_{2,\beta,1;\KM        }, 
\end{align*}
and therefore we have by the definition of $E_{1,I}$ and the preceding estimates
\begin{multline*}
\| \, E_{1,I} \, \|_{0,\beta,\gamma-2;\,M}
\lem 
C\, 
\tau_{\max}^{\alpha(1-\gamma)} 
\, \,
\|  \, \widetilde{u}_{\mathrm{low}} \, \|_{2,\beta,1;\KM        } + \|\, \widetilde{u}_{\mathrm{high}}\, \|_{2, \beta, \gamma; \PiY(\Shat[M]\setminus\Shat_1[M])} 
\\ 
\lem C(b) \tau_{\max}^{(1-\alpha)(\gamma-1)} \| \, \widetilde{E}\, \|_{0, \beta, \gamma-2; \KM        }.
\end{multline*}
Applying now \ref{L:appr}(i) with $f=\widetilde{u}$ and $\gammahat=\gamma$ 
and using the definition of $\psihat$ we conclude that  
\begin{align*}
\| \, E_{1,II} \, \|_{0,\beta,\gamma-2;\,M}
\, \le \,
C\, \tau_{\max}^{2\alpha}\,
\, \,
\|  \, \widetilde{u} \, \|_{2,\beta,\gamma;\KM        }. 
\end{align*}

We decompose now $E_{1,III}= E''_{1,III}+E'''_{1,III}$ 
where $E''_{1,III}$ and $E'''_{1,III}$ are defined 
the same way as $E_{1,III}$ but with $u'_{\pm}$ replaced by 
$u''_{\pm}$ and $u'''_{\pm}+v_{\pm}$ respectively. 
Applying \ref{L:appr}(ii) with $\epsilon_1=0$, $f=u''_{\pm}$, and $\gammahat=\gamma$, 
we conclude that 
$$
\| \, E''_{1,III} \, \|_{0,\beta,\gamma-2;\,M}
\, \le \,
C\, b^{-1}\, \log b \,\, \left( 
\|u''_+\|_{2,\beta,\gamma;\Sigma}+\|u''_-\|_{2,\beta,\gamma;\Sigma}\right).
$$
Applying \ref{L:appr}(ii) with $\epsilon_1=\widetilde{\gamma}-\gamma$, $f=u'''_\pm + v_\pm$, and $\gammahat=\gamma$ we obtain 
\begin{equation*}
\| \, E'''_{1,III} \, \|_{0,\beta,\gamma-2;\,M}
%\\ 
\, \le \,
C\, b^{\widetilde{\gamma}-\gamma-1}\, \log b \,\, \tau_{\max}^{\widetilde{\gamma}-\gamma} \,
\left(\|u'''_{+}+v_{+} \|_{2,\beta,\widetilde{\gamma};\Sigma}+ 
\|u'''_{-}+v_{-} \|_{2,\beta,\widetilde{\gamma};\Sigma}\right).
\end{equation*} 
Combining the above with the earlier estimates
and using \ref{con:one}(ii)  and \ref{con:b} we conclude 
that 
\begin{equation*}
\| \, E_{1} \, \|_{0,\beta,\gamma-2;\,M}
%\\ 
\, \le \,
(\, C(b)\, \tau_{\max}^{\alpha/2} 
\, + \,
C\, b^{-1}\, \log b \,\, 
\, + \,
C\, b^{-1/2}\, \log b \,\, \tau_{\max}^{\widetilde{\gamma}-\gamma-\alpha} \,
\,)\,
\| \, E \, \|_{0,\beta,\gamma-2;\,M}.
\end{equation*} 

\textit{Step 5: The final iteration:}
By assuming $b$ large enough 
and $\tau_{\max}$ small enough in terms of $b$ we conclude using $\widetilde{\gamma}-\gamma-\alpha>0$ and induction 
that
\begin{align*}
\| \, E_{n} \, \|_{0,\beta,\gamma-2;\,M}
\, \le \,
2^{-n}
\, \| \, E \, \|_{0,\beta,\gamma-2;\,M}.
\end{align*}
The proof is then completed by using the earlier estimates.
\end{proof}

\begin{cor}
\label{Plinear2}
Recall that we assume that 
 \ref{aK}, \ref{con:one}, \ref{cLker}, and \ref{con:b} hold. 
A linear map 
$
\Rcal'_M: C^{0,\beta}(M) \to C^{2,\beta}(M) \times \skernel[L] \times \skernel[L] 
$ 
can be defined such that given $E\in C^{0, \beta}(M)$, the following hold.
\begin{enumerate}[label=\emph{(\roman*)}]
\item $\Lcal_M u = E + J_M(w^+_E, w^-_E)$ where $\Rcal'_M (E)=(u,w^+_E, w^-_E)$. 
\item $ \| u \|_{2, \beta, \gamma, \gamma'; M} + \| w^{\pm}_E : C^{0, \beta}(\Sigma, g)\| 
\le C(b) \delta^{-4-2\beta}_{\min} \| E\|_{0, \beta, \gamma-2, \gamma'-2; M}$.
\end{enumerate}
\end{cor}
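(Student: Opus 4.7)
I would follow the Neumann-iteration scheme of Proposition \ref{Plinear} verbatim, defining $\Rcal'_M := \sum_{n\geq 1} \Rcal_{M,\mathrm{appr}} E_{n-1}$ with the same approximate inverse $\Rcal_{M,\mathrm{appr}}$, but carefully track every intermediate estimate in the refined norms $\|\cdot\|_{k,\beta,\gamma,\gamma';\Omega}$ of Definition \ref{D:norm} instead of $\|\cdot\|_{k,\beta,\gamma;\Omega}$. The three pieces of the refined norm (exterior, $\Hcal_1$-projection on $\Shat[L]$, and residual on $\Shat[L]$ with the enhanced weight $f_{\gamma-2,\gamma'-2}$) align naturally with the decomposition already used in \ref{DRMappr}: the interior pieces $u'_{\pm}, w^{\pm}_{E,1}$ on $\Sigma$ are unaffected since $\psi'$ is supported away from the bridges, and on each bridge the catenoidal correction $\widetilde u$ splits into a first-harmonic part, a zero-th-harmonic part, and higher modes, each analyzed separately.

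For the first-harmonic component of $\widetilde E$ (which sits inside $\widetilde E_{\mathrm{low}}$), the refined input norm restricts $\Hcal_1 E$ exactly as the standard $\rho^{\gamma-2}$-weighted norm does, so the variation-of-parameters argument of Step~2 in the proof of \ref{Plinear} applies unchanged and produces $\Hcal_1 u_1$ bounded in the $\|\cdot\|_{2,\beta,\gamma;\Shat[L]}$ piece. For the complementary part of $\widetilde E$, only the weaker weight $f_{\gamma-2,\gamma'-2}$ is controlled, but the corresponding solutions enjoy better decay away from the waist: $\widetilde u_{\mathrm{high}}$ is controlled by standard Dirichlet theory in the conformally changed metric $\tfrac12|A|^2 g$, in which $\tildecat$ becomes a twice-punctured round sphere, while the zero-th-harmonic piece of $\widetilde u_{\mathrm{low}}$ is obtained by explicit variation of parameters on the 1D ODE with vanishing Cauchy data at the waist. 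In both cases the output is controlled in the $f_{\gamma,\gamma'}$-weighted norm, and the only factor of $|\log\tau_{\min}|$ is picked up in the zero-th-harmonic ODE, where the variation-of-parameters kernel integrated against the piecewise right-hand side over $\sss \in [-C|\log\tau_L|, C|\log\tau_L|]$ produces precisely such a factor.

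Once these semi-local estimates are in place, Lemma \ref{L:appr} transfers to the refined norms with only cosmetic changes: the cutoff commutator $E_{1,I}$ is supported on the annulus $\Shat[L]\setminus\Shat_1[L]$ on which $\rho$ is of order $\delta'_p$ and the two weights $f_{\gamma-2,\gamma'-2}$ and $\rho^{\gamma-2}$ are comparable up to a universal constant, so the subsequent steps of the proof of \ref{Plinear} yield a factor $\tfrac12$ in the refined norm after absorbing the extra $|\log\tau_{\min}|$ by slightly shrinking $\tau_{\max}$ and enlarging $b$. The Neumann iteration then converges geometrically and summing delivers (i)-(iii).

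The main obstacle is the bespoke variation-of-parameters estimate for the zero-th-harmonic piece on the bridges, which is ``low'' in the sense of \ref{Plinear} but ``non-first-harmonic'' in the sense of the refined norm. Producing clean bounds against the piecewise weight $f_{\gamma-2,\gamma'-2}$ and identifying exactly where the logarithmic loss enters is what underlies the $|\log\tau_{\min}|$ factor in the statement; everything else amounts to bookkeeping once one checks that the standard--versus--refined norm exchange is trivial on the exterior and on the first-harmonic subspace.
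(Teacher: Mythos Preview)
Your proposal is viable in principle but takes a genuinely different route from the paper, and the paper's route is considerably shorter.

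The paper does \emph{not} rerun the Neumann iteration in the refined norms. Instead it performs a single pre-processing step on the bridges and then applies the already-proven $\Rcal_M$ of Proposition \ref{Plinear} as a black box. Concretely, it sets $\Ehat:=\psihat E-\Hcal_1(\psihat E)$ (so $\Hcal_1\Ehat=0$), solves $\Lcal_{\tildecat}\uhat=\Ehat\circ Y$ once on $\tildecat_L$ by separation of variables, picking up the single factor of $|\log\tau_{\min}|$ there, and then defines
\[
\Ehat':=E-\psihat\Ehat-[\Lcal_M,\psihat](\uhat\circ Y)-\psihat\bigl(\Lcal_M(\uhat\circ Y)-(\Lcal_{\tildecat}\uhat)\circ Y\bigr).
\]
The point is that $\Ehat'$ is now controlled in the \emph{ordinary} norm $\|\cdot\|_{0,\beta,\gamma-2;M}$: the piece $E-\psihat\Ehat=(1-\psihat)E+\Hcal_1(\psihat E)$ sees only the exterior and first-harmonic parts of the refined norm, both of which carry the full $\rho^{\gamma-2}$ weight, while the commutator and operator-difference pieces are handled exactly as in your discussion of $E_{1,I},E_{1,II}$ using the second estimate in Lemma \ref{L:appr}(i) with $\gammahat-1=\gamma'=\gamma-1$. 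One then sets $(\uhat',w^+_E,w^-_E):=\Rcal_M\Ehat'$ and $\Rcal'_ME:=(\psihat\,\uhat\circ Y+\uhat',\,w^+_E,\,w^-_E)$.

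What this buys: the convergence of the iteration, the treatment of $u'_\pm$ on $\Sigma$, and all the contraction estimates are inherited directly from Proposition \ref{Plinear} and need not be redone. In your scheme, by contrast, the $|\log\tau_{\min}|$ factor appears in the bound on $\tilde u$ at every step of the iteration, so to keep a contraction you must verify that each of the small prefactors $\tau_{\max}^{2\alpha}$, $\tau_{\max}^{(1-\alpha)(\gamma-1)}$, $b^{-1}\log b$ absorbs an extra $|\log\tau_{\min}|$; this works, but it is an additional layer of bookkeeping. You also have to be careful with $E_{1,II}$ in refined norms, since the operator difference $\Lcal_M-\Lcal_{\tildecat}$ does not preserve Fourier modes, so $\Hcal_1 E_{1,II}$ receives contributions from the non-first-harmonic part of $\tilde u$, which you only control with the weaker weight; the second estimate in Lemma \ref{L:appr}(i) is exactly what rescues this, but it is worth flagging.
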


\begin{proof}
We first define $\Ehat\in C^{0, \beta}(M)$, supported on $\Shat[M]$, by
\begin{align}
\label{Eehat}
\widehat{E} := \psihat E - \Hcal_1(\psihat E). 
\end{align}
We then solve $\Lcal_{\tildecat} \uhat = \Etilde := \Ehat\circ \PiY^{-1}$ on $\KM        $ and then estimate $\uhat$ by modifying Step 2 in the proof of Proposition \ref{Plinear}. 
The modifications are necessary because the support of the inhomogeneous term is much larger and the decay is different. 
We decompose as in \ref{EEtildedecom} and \ref{Etildeueq}. 
To estimate $\uhatlow$ we argue as before, 
utilizing the fact that there are no first Fourier modes, 
and so the growth is only logarithmic; 
this is slower than the $\gamma'$ growth rate allowed. 
To estimate $\uhathigh$ we first solve 
with Dirichlet boundary conditions 
the equation $\Lcal_{\tildecat} \uhatann  = \Etildehigh$ 
on the annuli (conformal to punctured discs) of 
$\KM \setminus \PiY( \Kcore[M] ) $  
(cf. \ref{D:regions}).  
By arguments which are standard by now we estimate then $\uhatann$, and by cutting it off and subtracting from $\uhathigh$, 
we reduce without loss of generality to estimating $\uhathigh$ in the case $\Etildehigh$ is supported on $\bigsqcup_{p\in L} \KKcore_1[p]$ as in Step 2 of \ref{Plinear}; 
this can be handled then as in \ref{Plinear}. 
We conclude that 
\begin{align}
\label{Euhatest}
\| \uhat \circ \PiY \|_{2, \beta, \gamma, \gamma'; \cat_L} \le C \| \Ehat \|_{0, \beta, \gamma-2, \gamma'-2; M}.
\end{align}

We then define $\Ehat'\in C^{0, \beta}(M)$ by
\begin{align}
\label{Eehat'} 
\Ehat': = 
E - \psihat \Ehat - [\Lcal_M, \psihat] ( \uhat\circ \PiY) - \psihat \left(\Lcal_M( \uhat \circ \PiY) - (\Lcal_\tildecat \uhat ) \circ \PiY\right),
\end{align}
$(\uhat', w^+_{E}, w^-_{E}): = \Rcal_M \Ehat' \in  C^{2,\beta}(M) \times \skernel[L] \times \skernel[L] $,  
and 
$
\Rcal'_M E := (\psihat\,  \uhat\circ \PiY + \uhat', w^+_E, w^-_E). 
$
It is straightforward to check that (i) holds by using \ref{Eehat}, \ref{Eehat'} and \ref{Plinear}(i).  

Using \eqref{Eehat'} for the first inequality, 
\ref{L:appr}(i) and the definitions for the second, and  
\eqref{Euhatest} and Definition \ref{D:norm} for the third, 
we have 
\begin{multline*}
\| \Ehat'\|_{0, \beta, \gamma-2; M} \lem 
C { \Big( }\|E- \psihat \Ehat \|_{0, \beta, \gamma-2; M} \: + 
\\ 
 \| [\Lcal_M, \psihat] ( \uhat\circ \PiY) \|_{0, \beta, \gamma-2 ; \Shat[M]\setminus \Shat_1[M] } 
+ \| \Lcal_M( \uhat \circ \PiY) - (\Lcal_\tildecat \uhat ) \circ \PiY \|_{0, \beta, \gamma-2 ; \cat_L} \Big)
\\ 
\lem C \big( \|E\|_{0, \beta, \gamma-2, \gamma'-2; M} 
+ \| \uhat \circ \PiY \|_{2, \beta, \gamma ; \Shat[M]\setminus \Shat_1[M]} 
+ \| \uhat \circ \PiY \|_{2, \beta, \gamma-1; \cat_L} \big) 
\\ 
\lem  C \| E\|_{0, \beta, \gamma-2, \gamma'-2; M}.
\end{multline*}
Combining with \ref{Plinear}, \eqref{Euhatest}, and the definition of $\Rcal'_M E$, we complete the proof. 
\end{proof}

\section[Constructing minimal doublings from families of LD solutions]{Constructing minimal doublings from families of LD solutions}
\label{S:mainI}

\subsection*{The nonlinear terms} 
\nopagebreak

In this section we state and prove Theorem \ref{Ttheory} which ``automates'' the construction of a minimal doubling given a suitable family of LD solutions. 
Continuing the discussion of the initial surfaces from the previous sections, 
we first state and prove Lemma \ref{Lquad}, 
where we discuss their perturbations and the nonlinear terms in the resulting mean curvature.  

\begin{lemma}
\label{Lquad}
If $M$ is an initial surface as in \ref{Plinear} and 
$\phi\in C^{2,\beta}(M)$ 
satisfies $\|\phi\|_{2,\beta,\gamma, \gamma';M} \, \le \, \tau_{\max}^{1+\alpha/4} $ (recall \ref{D:norm}),  
then (recalling \ref{NT}\ref{dgraph})  
$M_\phi: = \graph^{N, g}_M ( \phi)$ is a well-defined embedded surface. 
Moreover if $H_\phi$ denotes the mean curvature of $M_\phi$ pulled back to $M$ by $ X^{N, g}_{M,\phi} :M \to M_\phi$ 
and $H$ the mean curvature of $M$, then we have 
$$
\|\, H_\phi-\, H - \Lcal_M \phi \, \|_{0,\beta,\gamma-2, \gamma'-2;M}
\, \le \, C \, \tau_{\min}^{-\alpha/2}
\|\, \phi\, \|_{2,\beta,\gamma, \gamma';M}^2.
$$
\end{lemma}

\begin{proof}
Following the notation in the proof of \ref{L:norms} and by \ref{Efw} we have that for $q\in \Stilde'$, 
the graph $B''_q$ of $\varphi_{:q}^+$ (or $-\varphi_{:q}^-$)            
over $B_q'$ in $(\Sigma, \gtilde_q)$ can be described by an immersion $X_{:q}: B'_q \rightarrow B''_q = X_{:q}(B'_q)$, 
such that there are coordinates on $B'_q$ and a neighborhood in $N$ of $B''_q$, 
which has uniformly bounded $C^{3, \beta}$ norms, 
the standard Euclidean metric on the domain is bounded by $CX^*_{:q}\gtilde_q$, 
and the coefficients of $\gtilde_q$ in the target coordinates have uniformly bounded $C^{3, \beta}$ norms.  
By the definition of the norm and since $\| \phi \|_{2, \beta, \gamma,\gamma'; M}\le \tau^{1+\alpha/4}_{\max}$, 
we have that the restriction of $\phi$ on $B''_{q}$ satisfies
\begin{equation*}
\| (\dbold_L(q))^{-1} \phi :C^{2, \beta}(B''_{q}, \gtilde_q)\|  
%\\ 
\lem C (\dbold_L(q))^{-1} \max \big( (\dbold(q))^{\gamma}, \tau_{\max}^{(1-\alpha)/2} (\dbold_L(q))^{\gamma'}\big) \, \|\phi \|_{2, \beta, \gamma, \gamma'; M}. 
\end{equation*}

Since the right hand side is small in absolute terms we can conclude that $\graph^{M, g}_{B'_q} \phi$ is well defined and embedded. 
Using scaling for the left hand side we further conclude that 
\begin{multline*}
\| \dbold_L(q)( H_\phi - H - \Lcal_M \phi) : C^{0, \beta}(B''_q, \gtilde_q)\| \lem 
\\
 C  (\dbold_L(q))^{-2}  \max \big( (\dbold_L(q))^{\gamma}, \tau_{\max}^{(1-\alpha)/2} (\dbold_L(q))^{\gamma'}\big)^2  \, \| \phi \|^2_{2, \beta, \gamma, \gamma'; M}.
\end{multline*}
Rearranging this, we conclude that (recalling from \ref{con:b} that $\gamma' = \gamma-1$)
\begin{multline*}
\| H_\phi - H - \Lcal_M  : C^{0, \beta}(B''_q, \gtilde_q)\| 
\\ 
\lem C (\dbold_L(q))^{\gamma-1} \max \big( (\dbold_L(q))^{\gamma-2}, \tau_{\max}^{1-\alpha} (\dbold_L(q))^{\gamma'-3}\big)\, \| \phi \|^2_{2, \beta, \gamma, \gamma'; M} 
\\ 
\lem C \tau_{\min}^{-\alpha/2} \max \big( (\dbold_L(q))^{\gamma-2}, \tau_{\max}^{(1-\alpha)/2} (\tau/\dbold_L(q))^{1/2} (\dbold_L(q))^{\gamma'-2} \big) \, \| \phi \|^2_{2, \beta, \gamma, \gamma'; M}.
\end{multline*} 
where we have used that $\gamma' = 1/2$.  
Finally, note that that the components of $\Kcore[M]$ appropriately scaled are small perturbations of a fixed compact region of the standard catenoid, 
which allows us to repeat the arguments above in this case.  
By combining with the earlier estimates and using the definitions, we conclude the estimate in the statement of the lemma. 
\end{proof}

\subsection*{The fixed point theorem}
\nopagebreak

\begin{assumption}[Families of LD solutions] 
\label{A:FLD} 
\label{Azetabold}
\label{Adiffeo}
We assume \ref{cLker} holds and that we are given continuous families of the following parametrized by $\zetabold\in \domzb \subset \Pcal$, 
where $\Pcal$ is a finite dimensional vector space and $\domzb \subset \Pcal$ a convex compact subset containing the origin $\zerobold$. 
\begin{enumerate}[label=(\roman*)]
\item 
\label{Idiffeo} 
Diffeomorphisms $\Fcal^\Sigma_\zetabold : \Sigma \rightarrow \Sigma$ with $\Fcal^\Sigma_\zerobold$ the identity on $\Sigma$.  
\item 
Finite sets $L=L\llbracket \zetabold\rrbracket=\Fcal^\Sigma_\zetabold L\llbracket\zerobold\rrbracket \subset\Sigma$ of cardinality    
$|L|=|L\llbracket \zetabold\rrbracket|= |L\llbracket\zerobold\rrbracket|$.     
\item 
Configurations $\taubold=\taubold\llbracket\zetabold\rrbracket: L\llbracket\zetabold\rrbracket\to \R_+$.  
\item 
LD solutions $\varphi =\varphi \llbracket \zetabold \rrbracket$ as in \ref{dLD}  
of singular set 
$L=L\llbracket \zetabold\rrbracket$     
and configuration 
$\taubold=\taubold\llbracket\zetabold\rrbracket$.   
\item 
For each $L=L\llbracket \zetabold\rrbracket$  
constants $\delta_p=\delta_p\llbracket\zetabold\rrbracket$ as in \ref{con:L}.  
\item 
For each $L=L\llbracket \zetabold\rrbracket$  
a space $\skernelv \llbracket \zetabold \rrbracket = \skernelv[ \, L\llbracket \zetabold \rrbracket \, ]$ as in \ref{aK}. 
\item 
Linear isomorphisms $Z_{\zetabold}: \val\llbracket \zetabold\rrbracket \rightarrow \Pcal$ where 
$\val\llbracket \zetabold\rrbracket := \val [\, L \llbracket \zetabold \rrbracket \, ]$ (recall \ref{DVcal}).  
\end{enumerate} 
Moreover we assume the following are satisfied   
$\forall \zetabold\in \domzb$.      
\begin{enumerate}[label=\emph{(\alph*)}]
\item 
\label{Aa} 
$\| \Fcal^\Sigma_\zetabold : C^4 \| \le C$ where the norm is defined with respect to some atlas of $\Sigma$ and the constant $C$ depends only on the background $(\Sigma,N,g)$. 
\item 
\label{Ab} 
$\forall p\in L\llbracket\zerobold\rrbracket$ we have $\Fcal^\Sigma_\zetabold( D^\Sigma_p(3\delta_p)) = D^\Sigma_{q}(3\delta_q)$ with $q:=\Fcal^\Sigma_\zetabold(p)$. 
\item 
\label{Ac} 
$\varphi =\varphi \llbracket \zetabold \rrbracket$,  
$L=L\llbracket \zetabold\rrbracket$, and 
$\taubold=\taubold\llbracket\zetabold\rrbracket$ 
satisfy \ref{con:one}, including the smallness of $\tau_{\max}$ in \ref{con:one}\ref{con:one:i}
which is now in terms of the constant $C$ in \ref{Aa} as well.   
\item 
\label{Atau}
$\forall p\in L\llbracket\zerobold\rrbracket$ we have the uniformity condition $\tau^2_q \leq \tau_p \leq \tau_q^{1/2}$, 
where here $\tau_p$ denotes the value of $\taubold\llbracket\zerobold\rrbracket$ at $p$ 
and $\tau_q$ the value of $\taubold\llbracket \zetabold\rrbracket$ at $q = \Fcal^\Sigma_\zetabold(p)\in L\llbracket \zetabold \rrbracket$. 
\item 
\label{AZ}
$\zetabold - Z_{\zetabold} ( \Mcal_{L\llbracket\zetabold\rrbracket} \varphi\llbracket \zetabold\rrbracket) \in \frac{1}{2} \domzb$ (prescribed unbalancing). 
\end{enumerate} 
\end{assumption}

\begin{definition}
\label{dfpf}
Assuming \ref{Azetabold} we define a scaled push-forward map 
$\Fcal^\val_{\zetabold} : \val\llbracket \zerobold\rrbracket \rightarrow \val\llbracket \zetabold \rrbracket$ by 
\begin{align*}
\Fcal^\val_{\zetabold} (\widetilde{\kappa}^\perp_p+ \widetilde{\kappa}_p)_{p\in L\llbracket\zerobold\rrbracket} 
= \left( \tau_{q}^{1+\alpha/5} \left( \widetilde{\kappa}^\perp_{(\Fcal^\Sigma_\zetabold)^{-1}(q)}+
(\Fcal^\Sigma_\zetabold)_* \widetilde{\kappa}_{(\Fcal^\Sigma_\zetabold)^{-1}(q)} \right) \right)_{q\in L\llbracket\zetabold\rrbracket}, 
\end{align*}
where 
$(\Fcal^\Sigma_\zetabold)_* \widetilde{\kappa}_{(\Fcal^\Sigma_\zetabold)^{-1}(q)} = 
\widetilde{\kappa}_{(\Fcal^\Sigma_\zetabold)^{-1}(q)} \circ (\, d_{(\Fcal^\Sigma_\zetabold)^{-1}(q)} \Fcal^\Sigma_\zetabold \,)^{-1} \, \in \, T^*_q \Sigma$ 
and $\tau_q$ denotes the value of $\taubold\llbracket\zetabold\rrbracket$ at $q\in L\llbracket\zetabold\rrbracket$. 
\end{definition}

\begin{definition}[Families of initial surfaces]
\label{dkappatilde}
Assuming \ref{Azetabold} and following \ref{Dinit} 
we write 
\begin{equation*} 
\begin{gathered} 
M\llbracket \zetaboldunder \rrbracket : = M[ \, \varphi\llbracket \zetabold \rrbracket \, , \, \Fcal^\val_{\zetabold} \kappaunderbold \, ]  
\quad \text{ for } \quad 
\zetaboldunder = (\zetabold, \kappaunderbold) \in \domzb \times \domku  \quad \text{ where}
\\ 
\domku : = \left\{ \, (\widetilde{\kappa}^\perp_p+ \widetilde{\kappa}_p)_{p\in L} \, : \, 
\forall p\in L \, , \, \widetilde{\kappa}^\perp_p \in [-1, 1]  \, , \, |\widetilde{\kappa}_p |_g \le 1 \right\} 
\, \subset \, \val \llbracket \zerobold\rrbracket . 
\end{gathered} 
\end{equation*} 
\end{definition}

\begin{lemma}[Diffeomorphisms $\Fcal_{\zetaboldunder}$]
\label{Lrefdiff}
Assuming \ref{Azetabold}, there exists a family of diffeomorphisms 
$\Fcal_{\zetaboldunder} : M\llbracket\zerobold \rrbracket \rightarrow M\llbracket\zetaboldunder\rrbracket$
satisfying the following, 
where here $\zerobold$ denotes the zero element of $\Pcal \times \val\llbracket \zerobold\rrbracket$
and $\zetaboldunder \in \domzb \times \domku$ is as in \ref{dkappatilde}.
\begin{enumerate}[label=\emph{(\roman*)}]
\item $\Fcal_{\zetaboldunder}$ depends continuously on $\zetaboldunder$. 

\item  For any $u\in C^{2, \beta}(M\llbracket\zetaboldunder\rrbracket)$ and $E\in C^{0, \beta}(M\llbracket\zetaboldunder\rrbracket)$ 
we have the following equivalence of norms: 
\begin{align*}
\| \, u\circ \Fcal_{\zetaboldunder} \, \|_{2,\beta,\gamma, \gamma';M\llbracket\zerobold\rrbracket }
\, & \Sim_4 \, 
\| \, u\, \|_{2,\beta,\gamma, \gamma';M\llbracket\zetaboldunder\rrbracket } ,
\\ 
\| \, E\circ \Fcal_{\zetaboldunder} \, \|_{0,\beta,\gamma-2, \gamma'-2;M\llbracket\zerobold\rrbracket }
\, & \Sim_4 \, 
\| \, E\, \|_{0,\beta,\gamma-2, \gamma'-2;M\llbracket\zetaboldunder\rrbracket} .
\end{align*}
\end{enumerate}
\end{lemma}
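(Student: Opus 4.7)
My plan is to define $\Fcal_{\zetaboldunder}$ piecewise on three overlapping regions of $M\llbracket\zerobold\rrbracket$ — a ``deeply graphical'' exterior, the catenoidal cores, and the annular gluing regions between them — and then verify the continuity statement (i) and the two norm equivalences in (ii). Throughout, I write $p_0\in L\llbracket\zerobold\rrbracket$ for a singular point of $\varphi\llbracket\zerobold\rrbracket$ and $q_0 := \Fcal^\Sigma_\zetabold(p_0)\in L\llbracket\zetabold\rrbracket$, so $\tau_{q_0}=\taubold\llbracket\zetabold\rrbracket(q_0)$ and $\kappaunder_{q_0} := (\Fcal^\val_\zetabold\kappaunderbold)_{q_0}$ (cf. \ref{dfpf}).

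On the graphical exterior $\PiSig^{-1}\!\bigl(\Sigma\setminus\disjun_p D^\Sigma_p(2\delta'_p)\bigr)\cap M\llbracket\zerobold\rrbracket$, the surface is a pair of normal graphs, and I would send a point lying over $x\in\Sigma$ on the $\pm$-sheet of $M\llbracket\zerobold\rrbracket$ to the point lying over $\Fcal^\Sigma_\zetabold(x)$ on the corresponding $\pm$-sheet of $M\llbracket\zetaboldunder\rrbracket$. On the catenoidal core $X_{\cat}[p_0,\tau_{p_0},0](\cyl_{[-\sss_\star,\sss_\star]})$ with $\sss_\star := \tfrac12\arccosh(2\tau_{p_0}^{\alpha-1})$, I use the cylindrical parametrizations of \ref{dtiltedbM} and send $X_{\cat}[p_0,\tau_{p_0},0](\vartheta,\sss)$ to $X_{\cat}[q_0,\tau_{q_0},\kappaunder_{q_0}]\bigl(\vartheta,\lambda_{p_0,\zetabold}\sss\bigr)$, where $\lambda_{p_0,\zetabold}>0$ is a scalar chosen so that the endpoint $\sss=\pm\sss_\star$ lands at the boundary value of the cylindrical parameter on $\cat[q_0,\tau_{q_0},\kappaunder_{q_0}]$ corresponding to $\dbold^\Sigma_{q_0}=2\delta'_{q_0}$; by \ref{Azetabold}\ref{Atau} and \eqref{Ddeltaprime} this gives $\lambda_{p_0,\zetabold}\Sim_C 1$.

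In the remaining annular gluing regions, both the exterior graphical prescription and the catenoidal prescription can be read off as graphs over $\Sigma$, via $\PiSig$ in one case and via the asymptotic expansion \ref{Ltcest} composed with $\exp^\Sigma_{p_0}$ in the other. I would interpolate between the two using the cutoff $\Psibold[\delta'_p,2\delta'_p;\dbold^\Sigma_{p_0}]$ applied in Fermi normal coordinates at $p_0$, with the interpolation defined on the level of the graph functions. Since \ref{Dinit} and \ref{Ltcest} give agreement of the two prescriptions up to errors controlled by $\tau_p^{1+\alpha/6}$ times the relevant scale, the interpolation is a small perturbation of either endpoint, and the composed map is a smooth diffeomorphism from $M\llbracket\zerobold\rrbracket$ onto $M\llbracket\zetaboldunder\rrbracket$. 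Continuous dependence on $\zetaboldunder$ follows from the continuity assumed in \ref{Azetabold} together with the fact that each piece of the construction is given by an explicit continuous formula in the parameters, establishing (i).

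For (ii), the norm equivalence is local and piecewise: on the graphical exterior, $\Fcal^\Sigma_\zetabold$ has $C^4$-norm uniformly bounded by \ref{Azetabold}\ref{Aa}\ref{Ab}, and the weight $r=\dbold^\Sigma_L\circ\PiSig$ of \ref{D:norm} pulls back to a function $\Sim_2 r$ because $\Fcal^\Sigma_\zetabold(L\llbracket\zerobold\rrbracket)=L\llbracket\zetabold\rrbracket$ and $\Fcal^\Sigma_\zetabold$ is quasi-isometric; on the catenoidal core, $\rho=\tau\cosh\sss$ pulls back to $\tau_{q_0}\cosh(\lambda_{p_0,\zetabold}\sss)$ which is $\Sim_C \rho$ by \ref{Azetabold}\ref{Atau}, and the pullback of the $g$-metric on a catenoid is within a bounded multiplicative factor of the source by \ref{Lcatrimm} and \ref{Ltensest}(i). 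The piecewise weight $f_{\gammahat,\gammahat'}$ behaves correctly since $\tau_L$ also pulls back to a comparable quantity by \ref{Azetabold}\ref{Atau}. Assembling these estimates across the three regions and using the gluing cutoff yields constants that can be made as close to $1$ as desired by taking $\tau_{\max}$ small; in particular they can be absorbed into the factor $4$ stated in the lemma. \textbf{The main obstacle} is the gluing annulus, where one must ensure that the interpolation between the two natural choices above does not destroy the norm equivalence: this is handled by observing that on this annulus both choices differ by $O(\tau_{\max}^{1+\alpha/6})$ in $C^4$ (using \ref{Ltcest} and \ref{Lgluingreg}), so their convex combination remains a diffeomorphism satisfying the same comparability bounds.
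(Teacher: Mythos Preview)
Your construction is the same as the paper's in outline: both define $\Fcal_{\zetaboldunder}$ by a linear rescaling $\sss\mapsto\lambda\,\sss$ of the cylinder coordinate on each catenoidal core, by lifting $\Fcal^\Sigma_\zetabold$ through $\Pi_\Sigma$ on the graphical exterior, and by a cutoff interpolation on the intermediate annulus; continuity in $\zetaboldunder$ follows in both cases from the continuity built into \ref{Azetabold}.

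There is, however, a real gap in your verification of (ii). You write that on the catenoidal core ``$\rho=\tau\cosh\sss$ pulls back to $\tau_{q_0}\cosh(\lambda_{p_0,\zetabold}\sss)$ which is $\Sim_C\rho$ by \ref{Azetabold}\ref{Atau}.'' This is false: at the waist $\sss=0$ the ratio of the two weights is exactly $\tau_{p_0}/\tau_{q_0}$, and \ref{Azetabold}\ref{Atau} only gives $\tau_{q_0}^{2}\le\tau_{p_0}\le\tau_{q_0}^{1/2}$, which does not bound that ratio. Your earlier claim $\lambda_{p_0,\zetabold}\Sim_C 1$ is correct under \ref{Atau}, but merely having $\lambda$ bounded does not control $\cosh(\lambda\sss)/\cosh\sss$ uniformly over $|\sss|\lesssim|\log\tau|$, nor does it touch the $\tau_{p_0}/\tau_{q_0}$ factor. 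Consequently neither $\rho^{\gamma}$ nor $f_{\gamma,\gamma'}$ pulls back to a comparable weight, and the $\Sim_4$ equivalence cannot be concluded this way.

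The paper's proof differs precisely here: it asserts the much sharper $\sss_{\zerobold,p}\Sim_{1+C/|\log\tau_p|}\sss_{\zetabold,q}$, i.e.\ $\lambda$ is not just bounded but satisfies $|\lambda-1|\le C/|\log\tau_p|$. That estimate is exactly equivalent to $|\log\tau_{p_0}-\log\tau_{q_0}|\le C$, i.e.\ $\tau_{p_0}\Sim_C\tau_{q_0}$, and under it one does get $\cosh(\lambda\sss)\Sim_C\cosh\sss$ on the whole core and hence $\rho\Sim_C\rho\circ\Fcal_{\zetaboldunder}$. So the key step you are missing is to upgrade $\lambda\Sim_C 1$ to $\lambda=1+O(1/|\log\tau_p|)$; without it the weight comparison collapses. (One may note that \ref{Azetabold}\ref{Atau} as stated does not literally yield this sharper bound either---in the paper's applications the much stronger $\tau_{p_0}\Sim_{C}\tau_{q_0}$ is in fact available, cf.\ \ref{LLD}(ii) and \ref{Ltauratio}---but in any case it is this sharper control, not mere boundedness of $\lambda$, that drives (ii).)
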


\begin{proof}
$\forall p\in L \llbracket \zerobold \rrbracket$, 
we first define $\widehat{\Fcal}^p_{\zetabold} : \cyl_{[-\sss_{\zerobold, p}, \sss_{\zerobold, p}]} \rightarrow \cyl_{[-\sss_{\zetabold, q}, \sss_{\zetabold, q}]}$ by
 $\widehat{\Fcal}^p_{\zetabold} \circ \Thetacyl(\theta, \sss) = \Thetacyl(\theta, \frac{ \sss_{\zetabold, q} }{\sss_{\zerobold, p}}\sss)$, 
where $q: = \Fcal^\Sigma_{\zetabold}(p)$ and $\sss_{\zerobold, p}$ and $\sss_{\zetabold, q}$ are defined by the equations 
$\tau_p \cosh \sss_{\zerobold, p} = \tau_p^{\alpha}/2$ and 
$\tau_q \cosh \sss_{\zetabold, q} = \tau^\alpha_q/2$.
  
We then define $\Fcal_{\zetaboldunder}$ to map each $\Lambda^p_{\zerobold}$ onto $\Lambda^q_{\zetabold}$, where
\begin{align*}
\Lambda^p_\zerobold: =& X_\cat[ p, \tau_p, \kappaunder_p] ( \cyl_{[-\sss_{\zerobold, p}, \sss_{\zerobold, p}]})\subset M\llbracket \zerobold\rrbracket, \quad
\\
\Lambda^q_\zetabold: =&  X_\cat[ q, \tau_q, \kappaunder_q](\cyl_{[-\sss_{\zetabold, q}, \sss_{\zetabold, q}]}) \subset M\llbracket \zetaboldunder \rrbracket,
\end{align*}
by requesting that
\begin{align}
\Fcal_{\zetaboldunder} \circ X_\cat[p, \tau_p, \kappaunder_p] = X_\cat[q, \tau_q, \kappaunder_q] \circ \widehat{\Fcal}^p_{\zetabold}.
\end{align}

We next define the restriction of $\Fcal_{\zetaboldunder}$ on $M\llbracket \zerobold \rrbracket \setminus \Shat[M\llbracket \zerobold\rrbracket]$ 
to be a map onto $M\llbracket \zetaboldunder\rrbracket \setminus \Shat[M\llbracket \zetabold\rrbracket]$ which preserves signs of the $\zz$ coordinate (recall \ref{dgeopolar}) and satisfies
\begin{align*}
\Pi_\Sigma \circ \Fcal_{\zetaboldunder} = \Fcal^\Sigma_{\zetabold} \circ \Pi_\Sigma.
\end{align*}

On the region $\Shat[M\llbracket \zerobold\rrbracket] \setminus \cup_{p\in L\llbracket \zerobold \rrbracket} \Lambda^p_{\zerobold}$ we apply the same definition as in the above paragraph 
but with $\Fcal^\Sigma_{\zetabold}$ appropriately modified by using cut-off functions so that the final definition provides an interpolation between the two definitions above and satisfies (i).

Using \ref{Azetabold}\ref{Atau} and \ref{Ecatenoid}, it is not difficult to check that for each $p\in L\llbracket\zerobold\rrbracket$, 
\begin{align*}
\sss_{\zerobold, p} \,\, \Sim_{1+ C/|\log \tau_p|} \,\, \sss_{\zetabold, q}.
\end{align*}
Using this and arguing as in the proof of \ref{L:norms}, we conclude (ii). 
\end{proof}

\begin{theorem}[Theorem \ref{TA}] 
\label{Ttheory}
Assuming that 
\ref{Azetabold} holds,  
there exist $\zetaboldhatunder = (\zetaboldhat, \breve{\kappaunderbold}) \in \domzb\times \domku$ 
and $\upphihat \in C^\infty(M\llbracket \zetaboldhatunder \rrbracket)$ (recall \ref{dkappatilde}) 
satisfying
(recall \ref{D:norm}) 
$$ 
\| \upphihat\|_{2, \beta, \gamma, \gamma'; M\llbracket \zetaboldhatunder \rrbracket } \le 
\tau_{\max}^{1+\alpha/4},  
$$
such that the normal graph $\Mhat:= (M\llbracket \zetaboldhatunder\rrbracket)_\upphihat$ is an embedded smooth closed minimal surface doubling $\Sigma$ in $N$ 
as in \ref{Ddoubling}, satisfying   
\begin{equation} 
\begin{gathered} 
\label{EMguO}
\Mhat = \graph^{N, g}_{\Omegahat}( \ubreve^+) \cup \graph^{N, g}_{\Omegahat} (-\ubreve^-),  
\\ 
\quad \text{where} \quad 
\Omegahat = \Pi_\Sigma(\Mhat) = \Sigma \setminus \textstyle{ \bigsqcup_{p \in L} \Dbreve_{p} }, 
\quad 
L = L\llbracket \zetaboldhat\rrbracket, 
\\
\text{ and } \quad 
D^\Sigma_p(\tau_p(1-\tau_p^{8/9})) \subset \Dbreve_p \subset D^\Sigma_p(\tau_p(1+\tau^{8/9}_p)) \quad \forall p\in L. 
\end{gathered} 
\end{equation} 
Moreover 
$\Mhat $ has genus $2g_\Sigma-1+|L|$ (where $g_\Sigma$ is the genus of $\Sigma$) and its area $|\Mhat|$ satisfies 
\begin{align}
\label{E:Marea} 
| \Mhat | = 2 |\Sigma| - \pi \sum_{p \in L } \tau^2_p\left(1+ O(\, \tau_p^{1/2}|\log \tau_p|\, )\right). 
\end{align}
\end{theorem}

\begin{proof}
We first define 
$B \subset C^{2,\beta}(M\llbracket\zerobold\rrbracket)\times \Pcal \times \val\llbracket \zerobold\rrbracket$ 
by 
\begin{align}
\label{E:proofA} 
B := 
\big\{ 
\, v\in C^{2,\beta}(M\llbracket \zerobold \rrbracket):\|v\|_{2,\beta,\gamma, \gamma';M\llbracket \zerobold \rrbracket} \, \le \, \tau_{\max}\llbracket\zerobold\rrbracket^{1+\alpha}\, 
\big\} 
\times \domzb \times \domku.
\end{align}
We next define a map 
$\Jcal : B \rightarrow C^{2, \beta}(M\llbracket\zerobold\rrbracket)\times \Pcal \times \val\llbracket \zerobold\rrbracket$ 
as follows; note that the proof is based on finding a fixed point for $\Jcal$. 
Suppose $(v, \zetaboldunder) \in B$.  
Use \ref{Plinear2} to define $(u, w^+_H, w^-_H):= - \Rcal'_{M\llbracket \zetaboldunder \rrbracket}\left( H - J_M(w^+, w^-)\right)$, 
where 
$w^\pm := \Lcal_\Sigma\Ecal^{-1}_L ( \, - \Mcal_L \varphi \pm  \kappaunderbold \,)$ as in \ref{LglobalH}. 
Define also $\phi \in C^{2, \beta}\left(M\llbracket \zetaboldunder \rrbracket\right)$ by $\phi : = v\circ \Fcal^{-1}_{\zetaboldunder} + u$.  
We then have: 
\begin{enumerate}
\item $\Lcal_M u + H = J_M(w^++w^+_H, w^-+w^-_H).$
\item By \ref{LglobalH}, \ref{Plinear2}, and the size of $v$ in \eqref{E:proofA},  
	\begin{align*}
	\left\| w^\pm_H: C^{0, \beta}( \Sigma, g)\right\| +\left \| \phi \right \|_{2, \beta, \gamma, \gamma'; M\llbracket \zetaboldunder \rrbracket} \leq \tau_{\max}^{1+\alpha/4}.
	\end{align*}
\item 
Using \ref{Plinear2} again we define $(u_Q, w^+_Q, w^-_Q): = - \Rcal'_{M\llbracket \zetaboldunder \rrbracket}(H_\phi- H - \Lcal_M \phi)$ and we have  
$\Lcal_M u_Q+H_\phi = H + \Lcal_M \phi +J_M(w^+_Q, w^-_Q)$.   
\item 
Moreover by \ref{Lquad}, 
$\left\| w^\pm_Q : C^{0, \beta}( \Sigma, g)\right\| + \left\| u_Q\right\|_{2, \beta, \gamma, \gamma'; M\llbracket \zetaboldunder \rrbracket} 
\leq \tau_{\max}^{2-\alpha/4}$. 
\item 
Combining the above we conclude 
$$ 
\Lcal_M( u_Q - v\circ \Fcal^{-1}_{\zetaboldunder}) + H_\phi = J_M(\Lcal_\Sigma \Ecal^{-1}_L \mubold^+, \Lcal_\Sigma \Ecal^{-1}_L \mubold^- ), 
$$ 
where $\mubold^\pm : = - \Mcal_L \varphi \pm \kappaunderbold + \mubold^{\pm}_{H, Q}$ and $\mubold^{\pm}_{H,Q}$ 
are defined by requesting that $\Lcal_{\Sigma}\Ecal^{-1}_L \mubold^\pm_{H, Q} = w^\pm_H + w^\pm_Q$. 
\end{enumerate}
We then define $\mubold_{\sym}$, $\mubold_{\asym}$, $\mubold^{\sym}_{H, Q}$, $\mubold^{\asym}_{H, Q}$, and $\Jcal$ by  
\begin{equation} 
\label{Tfp}
\begin{gathered}
2 \mubold_{\sym} := \mubold^+ +\mubold^-, \qquad
2\mubold_{\asym} := \mubold^+ - \mubold^-, \\
2\mubold^{\sym}_{H, Q}: = \mubold^+_{H, Q} + \mubold^{-}_{H, Q},  \qquad
2\mubold^{\asym}_{H, Q}: = \mubold^+_{H, Q} - \mubold^{-}_{H, Q}, 
\\
\Jcal(v, \zetaboldunder) := \big( u_Q \circ \Fcal_{\zetaboldunder} \, , \, \zetabold +  Z_{\zetabold}(\mubold_{\sym}) \, , \, 
(\Fcal^\val_{\zetabold} )^{-1}( \kappaunderbold - \mubold_{\asym})\big)   \\
= \big( u_Q \circ \Fcal_{\zetaboldunder} \, ,\, \zetabold - Z_{\zetabold}( \Mcal_L \varphi )\, , \, 0\big)+ 
\big( 0\, , \, Z_{\zetabold}( \mubold^{\sym}_{H, Q})\, ,\, - (\Fcal^\val_{\zetabold} )^{-1}( \mubold^{\asym}_{H, Q}) \big).
\end{gathered} 
\end{equation} 

We are now ready for the fixed-point argument.  
Clearly $B$ is convex.  Let $\beta' \in (0, \beta)$.  
The inclusion $B\hookrightarrow C^{2, \beta'}( M\llbracket \zeroboldunder\rrbracket)\times \domzb\times \domku$ is compact by the Ascoli-Arzela theorem. 
By inspecting the proofs of \ref{Plinear} and \ref{Plinear2}, 
it is easy to see that $\Rcal'_{M\llbracket \zetaboldunder \rrbracket}$ depends continuously on $\zetabold$, 
and from this, \ref{Azetabold}\ref{AZ} and \ref{Lrefdiff}(i), that $\Jcal$ is continuous in the induced topology.  

We next check that $\Jcal(B) \subset B$ by analyzing the action of $\Jcal$ on each factor of $B$.  By  (4) above and \ref{Lrefdiff} it follows that $\Jcal$ maps the first factor of $B$ to itself.  We see that $\Jcal$ maps the second and third factors of $B$ into themselves using Schauder estimates, \ref{Azetabold}\ref{AZ}, \eqref{Tfp}, (2) and (4) above, and by \ref{con:one}.

The Schauder fixed point theorem  \cite[Theorem 11.1]{gilbarg} now implies there is a fixed point $(\breve{v}, \zetaboldhatunder)$ of $\Jcal$.  
Using \eqref{Tfp} and the fixed point property in conjunction with \ref{Azetabold}\ref{AZ}, \ref{aK}, and \ref{cLker}, 
we see that $u_Q = \vbreve \circ \Fcal^{-1}_{\zetaboldhatunder}$ and $\breve{\mubold}^{\pm} =  0$, 
where we use ``$\breve{\phantom{a}}$'' to denote the various quantities for  $(v, \zetaboldunder) = (\vbreve, \breve{\zetaboldunder})$.  
By (5), we conclude the minimality of $\Mhat$.
The smoothness follows from standard regularity theory, and the embeddedness follows from \ref{Lquad}, (2), and (4).  

We now prove the existence of $\ubreve^\pm$ and $\Omegahat = \Sigma \setminus \bigsqcup_{p\in L}\Dbreve_p$ as in \eqref{EMguO} which satisfy the claimed properties.  
Consider the smooth function $f: \Mhat \rightarrow \R$ defined by $f = \langle \nu_{\Mhat} , \partial_{z}\rangle$, 
where $\partial_z$ is the gradient of the signed distance to $\Sigma$ as in \ref{dexp}.  
The set of points $p$ where $\Mhat$ fails to be graphical over $\Sigma$ in a neighborhood of $p$ is the level set $f^{-1}(0)$.  
From the smallness of $\upphihat$ and the geometry of $M\llbracket \zetaboldhatunder \rrbracket$, it is clear that $f^{-1}(0) \subset \bigsqcup_{p\in L} D^N_L(2\tau_p)$.  
Moreover, because the second fundamental form $A^{\Mhat}$ is nondegenerate in this neighborhood, 
it follows from the implicit function theorem that $f^{-1}(0)$ is a union of smooth curves whose projections under $\Pi_\Sigma$ bound smooth discs $\Dbreve_p \subset \Sigma$, 
$p\in L$ which are perturbations of $D^{\Sigma}_{p}(\tau_p)$.  
This discussion implies the existence of $\ubreve^\pm : \Omegahat \rightarrow \R$ satisfying \eqref{EMguO}.  
The claimed smoothness of $\ubreve^\pm$ follows from standard regularity theory, since $\Mhat$ is a minimal surface, 
and it follows from the embeddedness of $\Mhat$ that $\ubreve^+ + \ubreve^->0$ on $\Omegahat\setminus \partial \Omegahat$.   
Finally, the containment $D^\Sigma_p(\tau_p(1-\tau_p^{8/9})) \subset \Dbreve_p \subset D^\Sigma_p(\tau_p(1+\tau^{8/9}_p))$  follows from the estimate 
$\| \upphihat\|_{2, \beta, \gamma, \gamma'; M\llbracket \zetaboldhatunder \rrbracket } \le \tau_{\max}^{1+\alpha/4}$ along with \ref{D:norm}.  

It only remains to prove the area bound \eqref{E:Marea}; its proof will be broken up into several steps estimating the areas of different portions of $\Mhat$.  
We first estimate the area of a truncated catenoidal region.

\begin{lemma}
\label{LcatMarea}
For any $p \in  L$ and $\rtop : = \tau^{3/4}_p$, we have 
\begin{equation*}
|\Mhat \cap \Pi^{-1}_\Sigma ( D^\Sigma_p(\rtop))| = 
2 |D^\Sigma_p(\rtop)|- \pi \tau_p^2 
%\\ 
+ \frac{1}{2}\int_{\partial D^\Sigma_p(\rtop)}\left( \phicat^+ \frac{\partial \phicat^+}{\partial \eta} + \phicat^-\frac{\partial \phicat^-}{\partial \eta}\right)dl
+O(\tau^{5/2}_{p}|\log \tau_{p}|).
\end{equation*}
\end{lemma}

\begin{proof}
There is a domain $\cat_{\rtop} \subset \cat =  \cat[p, \tau_p, \kappaunder_p] \subset M\llbracket \zetaboldhatunder\rrbracket$ defined by requesting that 
$\Mhat \cap \Pi^{-1}_\Sigma ( D^\Sigma_p(\rtop)) = \graph^{N,g}_{\cat_{\rtop}}(\upphihat)$.
Using \ref{Lda}, it follows that
\begin{equation*}
|\Mhat \cap \Pi^{-1}_\Sigma ( D^\Sigma_p(\rtop))| = |\cat_{\rtop}| 
%\\ 
+ \frac{1}{2} \int_{\cat_{\rtop}} \left( |\nabla \upphihat |^2 - 2 \upphihat H - \upphihat^2 ( | A|^2 + \Ric (\nu, \nu)- H^2) \right) d\sigma +
 O(\tau^{3(1+\frac{\alpha}{4})}_{\max}),
\end{equation*}
where $A$ and $H$ are the second fundamental form and mean curvature of $\cat$ and we have used that $\| \upphihat\|_{2, \beta, \gamma, \gamma'; M\llbracket \zetaboldhatunder \rrbracket } \le 
\tau_{\max}^{1+\alpha/4}$ to estimate the error term.  From this last estimate, the estimate for $H$ in \ref{LglobalH}, and the definition of the global weighted norms in \ref{D:norm}, it follows that
\begin{equation*}
\begin{gathered}
\int_{\cat_{\rtop}} \frac{1}{2} |\nabla \upphihat |^2 d\sigma = O(\rtop \tau^{3-\frac{\alpha}{2}}_{\max}),
\quad
\int_{\cat_{\rtop}}  \upphihat H d\sigma = O(\rtop\tau^{3 - \frac{5}{12}\alpha}_{\max}), \\
 \int_{\cat_{\rtop}} \upphihat^2 ( | A^\cat|^2 + \Ric (\nu, \nu)- H^2) d\sigma 
 = O(\rtop^3\tau_{\max}^{3- \frac{\alpha}{2}}).
 \end{gathered}
\end{equation*}
The conclusion now follows from combining these estimates with the estimate on the area of $\cat(\rtop) = \cat \cap \Pi^{-1}_\Sigma(D^\Sigma_p(\rtop))$ from \ref{Lcatarea}, using the closeness of $\cat(\rtop)$ to $\cat_{\rtop}$, and using \ref{con:one}(iii). 
\end{proof}

\begin{lemma}
\label{Lgruest}
The functions $\ubreve^\pm$ satisfy the following. 
\begin{enumerate}[label=\emph{(\roman*)}]
\item $\big\| \ubreve^\pm : C^{2, \beta}(\Sigma \setminus \bigsqcup_{p \in L} D^{\Sigma}_p (\delta'_p))\big\|$ 
%\\ $\phantom1$ \hfill 
$\lem C \tau_{\max}|\log \tau_{\max}| + 
 C\| \varphi : C^{3, \beta}(\Sigma \setminus  \bigsqcup_{p \in L} D^{\Sigma}_p (\delta'_p))\|
 \leq C\tau^{8/9}_{\max}$.
\item  For each $p \in L$, $\| \ubreve^\pm - \phicat^\pm[\tau_p, \kappaunder_p]\|_{2, \beta, \gamma, \gamma'; D^\Sigma_p(\delta'_p)\setminus D^\Sigma_p(\tau^{7/8}_p)}\lem \tau_{\max}^{1+\alpha/4}$.
\end{enumerate}
\end{lemma}

\begin{proof}
Recall that 
$\Mhat = \graph^{N,g}_{M\llbracket \zetaboldhatunder\rrbracket }(\upphihat)$, that 
$\| \upphihat\|_{2, \beta, \gamma, \gamma'; M\llbracket \zetaboldhatunder \rrbracket } \lem \tau_{\max}^{1+\alpha/4} $, 
and with 
$\Omega = \Sigma \setminus \bigsqcup_{p\in L} D^\Sigma_p(\tau^{1/3}_p)$
we have from \ref{Dinit} that 
\begin{align*}
M\llbracket \zetaboldhatunder\rrbracket \cap \Pi^{-1}_\Sigma(\Omega)
=\graph^{N,g}_{\Omega}(\varphigl_+)
\cup
\graph^{N,g}_{\Omega}(-\varphigl_-). 
\end{align*}
Recall also from Lemma \ref{LMemb}(ii) and its proof that the estimate 
 \begin{equation*}
 \| \varphi_{\pm}^{gl} : C^{3, \beta}(\Sigma \setminus \bigsqcup_{p\in L} D^\Sigma_p(\delta'_p) , g)\| 
%\\ 
 \lem 
 C \tau_{\max}|\log \tau_{\max}| + 
 C\| \varphi : C^{3, \beta}(\Sigma \setminus  \bigsqcup_{p \in L} D^{\Sigma}_p (\delta'_p))\|
  \lem C \tau_{\max}^{8/9}
 \end{equation*}
  holds.  Using these estimates and the smallness assumptions on $\tau_{\max}$, we can apply \ref{Cwgraph} to conclude (i).  
Items (ii) also follows from \ref{Cwgraph}, combined with the definition of the norms in \ref{D:norm}, 
the fact that $\varphigl_\pm = \phicat^\pm[\tau_p, \kappaunder_p]$ on $D^\Sigma_p(\delta'_p)\setminus D^\Sigma_p(\tau^{7/8}_p)$, and \ref{Ltcest}.
\end{proof}

\begin{lemma}
\label{Lextarea}
The following estimate holds. 
\begin{multline*}
| \Mhat \cap \Pi^{-1}_\Sigma(\Sigma \setminus \bigsqcup_{p\in L} D^\Sigma_{p}(\tau^{3/4}_p))| 
= 2 |\Sigma|
 - 2\sum_{p\in L} | D_p^\Sigma( \tau_p^{3/4})| 
 \\
 -\frac{1}{2}\sum_{p\in  L}\int_{\partial D^\Sigma_p(\tau^{3/4}_p)}\left( \phicat^+ \frac{\partial \phicat^+}{\partial \eta} 
 + \phicat^-\frac{\partial \phicat^-}{\partial \eta}\right)ds 
 +\sum_{p\in L} O( \tau_p^{5/2}|\log \tau_p|). 
\end{multline*}
\end{lemma}
\begin{proof}
By applying \ref{LAomega} with $u = \ubreve^+$ on $\Omega_1 :  = \Sigma \setminus \bigsqcup_{p\in L} D^{\Sigma}_p(\delta'_p) $, we have 
\begin{align}
\label{Egraphuu1}
| \graph^{N, g}_{\Omega_1}( \ubreve^+)| = |\Omega_1 |
- \frac{1}{2} \int_{\Omega_1} \ubreve^+ \Lcal_\Sigma u^+ d \sigma
+ \int_{\partial \Omega_1} \ubreve^+ \frac{\partial u^+}{\partial \eta}ds
+O (\tau_{\max}^{8/3}),
\end{align}
where we have used the minimality of $\Sigma$ and \ref{Lgruest}(i) to estimate the error terms.  
By applying \ref{LAomega} with $u= \ubreve^+$ on $\Omega_2 : = \bigsqcup_{p\in L} ( D^\Sigma_p(\delta'_p)\setminus D^\Sigma_p(\tau^{3/4}_p))$, we have
\begin{multline}
\label{Egraphuu2}
| \graph^{N, g}_{\Omega_2}( \ubreve^+)| 
= |\Omega_2 |
- \frac{1}{2} \int_{\Omega_2} \ubreve^+ \Lcal_\Sigma \ubreve^+ d \sigma 
\\
+ \frac{1}{2} \sum_{p\in L} \int_{\partial D^{\Sigma}_p(\tau^{3/4}_p)} \ubreve^+ \frac{\partial \ubreve^+}{\partial \eta}ds+ 
- \int_{\partial \Omega_1} \ubreve^+ \frac{\partial u^+}{\partial \eta}ds+ 
 O(|L| \tau^3_{\max}|\log \tau_{\max}|^2), 
\end{multline}
where we have used \ref{Lgruest}(ii) to estimate the error term.

We now estimate the integrals of $\ubreve^+ \Lcal_\Sigma \ubreve^+$. 
From the minimality of $\Sigma$ and $\Mhat$, it follows that
on $\Sigma \setminus \bigsqcup_{p \in L} D^\Sigma_p(\tau^{3/4}_p)$
\begin{align}
\label{ELuquad}
|\Lcal_\Sigma \ubreve^\pm | \lem C|\ubreve^\pm|^2 + C|\nabla \ubreve^\pm|^2+ C|\nabla^2 \ubreve^\pm| \, \big( |\ubreve^\pm| + |\nabla \ubreve^\pm|^2\big)
\end{align}
(notice there are no $|\nabla^2 \ubreve^\pm|^2$ or $|\nabla^2 \ubreve^\pm| |\nabla \ubreve^\pm|$ terms; see e.g. \cite[Lemma C.2]{kapouleas:annals} or \cite[Appendix A]{mantoulidis:annals}).
Working this into \eqref{Egraphuu1} and estimating using \ref{Lgruest}(i) reveals that
\begin{align}
\label{Egraphuu3}
| \graph^{N, g}_{\Omega_1}( \ubreve^+)| = |\Omega_1 |
+ \int_{\partial \Omega_1} \ubreve^+ \frac{\partial \ubreve^+}{\partial \eta}ds
+O (\tau_{\max}^{8/3}).
\end{align}
A similar estimate of $\int_{\Omega_2} \ubreve^+ \Lcal_\Sigma \ubreve^+ d\sigma$, using \ref{Lgruest}(ii) to estimate, reveals that
\begin{equation}
\label{Egraphuu4}
| \graph^{N, g}_{\Omega_2}( \ubreve^+)| 
= |\Omega_2 |
+ \frac{1}{2} \sum_{p\in L} \int_{\partial D^{\Sigma}_p(\tau^{3/4}_p)} \ubreve^+ \frac{\partial \ubreve^+}{\partial \eta}ds 
%\\ 
- \int_{\partial \Omega_1} \ubreve^+ \frac{\partial \ubreve^+}{\partial \eta}ds+ 
\sum_{p\in L} O(\tau^{5/2}_p |\log \tau_p|). 
\end{equation}
Since $\Omega: = \Sigma \setminus \bigsqcup_{p\in L} D^\Sigma_p(\tau^{3/4}_p)$ is the disjoint union of $\Omega_1$ and $\Omega_2$, adding the estimates \eqref{Egraphuu3} and \eqref{Egraphuu4} implies that
\begin{equation}
\label{Euhal}
\big| \graph^{N, g}_{\Omega}( \ubreve^+)\big| 
=
|\Sigma| 
- \sum_{p\in L} \big| D_p^\Sigma( \tau_p^{3/4})\big| 
%\\ 
-\frac{1}{2} \sum_{p\in L} \int_{\partial D^\Sigma_p(\tau^{3/4}_p)} \ubreve^+ \frac{\partial \ubreve^+}{\partial \eta} ds 
+\sum_{p\in L} O\big(\tau^{5/2}_p | \log \tau_p|\big).
\end{equation}
Next, using \ref{Lgruest}(ii) and \ref{Ltcest}, it follows for any $p \in L$ that
\begin{align}
\label{Ebuph}
\int_{\partial D^{\Sigma}_p(\tau^{3/4}_p)} \ubreve^+ \frac{\partial \ubreve^+}{\partial \eta} ds 
-\int_{\partial D^{\Sigma}_p(\tau^{3/4}_p)} \phicat^+ \frac{\partial \phicat^+}{\partial \eta} ds 
=O(\tau^{2+7/8-\alpha/4}_{\max}|\log \tau_{\max}|). 
\end{align}
The conclusion follows by combining \eqref{Ebuph} with \eqref{Euhal} and the completely analogous estimates for $| \graph^{N, g}_{\Omega}( -\ubreve^-)|$, and using \ref{con:one}(iii) to estimate the error terms involving $\tau_{\max}$ in terms of $\tau_p$.
\end{proof}

The proof of \eqref{E:Marea} follows now by adding the estimates provided by \ref{LcatMarea} for each $p\in L$ on 
$|\Mhat \cap \Pi_\Sigma^{-1}(D^\Sigma_p(\tau^{3/4}_p))|$,  
to the estimate in \ref{Lextarea}, and noting in particular that the boundary terms cancel.  This completes the proof of Theorem \ref{Ttheory}.
\end{proof}

The following observation which follows from \ref{cLker} will be useful in constructing and studying LD solutions. 

\begin{lemma}[Existence and uniqueness for LD solutions {\cite[Lemma 3.10]{kap}}] 
\label{Lldexistence}
Given finite 
${L\subset \Sigma}$ and a function $\taubold : L\rightarrow \R$, 
there exists a unique LD solution $\varphi = \varphi[\taubold]$ of singular set 
$L':=\{p\in L: \tau_p\ne0\}$ and configuration $\left. \taubold\right|_{L'}$.  Moreover, $\varphi$ depends linearly on $\taubold$. 
\end{lemma}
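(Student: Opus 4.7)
The plan is a standard existence/uniqueness argument for singular solutions of an elliptic operator with trivial kernel, combining the local Green's function theory of Lemma \ref{Lgreenlog} with the global solvability on the closed surface $\Sigma$ afforded by Assumption \ref{cLker}.

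For existence, I would first construct a rough parametrix with the correct prescribed singularities. For each $p\in L'$ choose $\deltaunderp_p\in(0,\deltaunder_p]$ as in \ref{Lgreenlog}, small enough that the disks $D^\Sigma_p(2\deltaunderp_p)$ are pairwise disjoint, and let $G_p$ be the Green's function for $\Lcal_\Sigma$ on $D^\Sigma_p(\deltaunderp_p)$ with singularity at $p$ provided by \ref{Lgreenlog}(i). Using the cut-off machinery of \ref{DPsi}–\eqref{EPsibold}, define a global function
\begin{equation*}
\widetilde{\varphi} := \sum_{p\in L'} \tau_p\,\Psibold\bigl[\deltaunderp_p,\tfrac{1}{2}\deltaunderp_p;\dbold^\Sigma_p\bigr](0,G_p) \;\in\; C^\infty\!\left(\Sigma\setminus L'\right),
\end{equation*}
which agrees with $\tau_p G_p$ in a smaller neighborhood of each $p$ and vanishes away from $\bigcup_p D^\Sigma_p(\deltaunderp_p)$. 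By construction $E:=\Lcal_\Sigma \widetilde{\varphi}$ is supported in the annular transition regions where the cut-offs vary, so $E\in C^\infty(\Sigma)$ is globally smooth on the closed surface.

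By Assumption \ref{cLker}, the operator $\Lcal_\Sigma:C^{2,\beta}(\Sigma)\to C^{0,\beta}(\Sigma)$ is an isomorphism (Fredholm of index zero with trivial kernel, hence trivial cokernel by self-adjointness of the Jacobi operator), so there is a unique $u\in C^\infty(\Sigma)$ with $\Lcal_\Sigma u = -E$; smoothness follows from standard elliptic regularity. Setting $\varphi:=\widetilde{\varphi}+u$ gives a function in $C^\infty(\Sigma\setminus L')$ with $\Lcal_\Sigma\varphi=0$ off $L'$, and near each $p\in L'$ we have $\varphi-\tau_p\log\dbold^g_p = \tau_p(G_p - \log\dbold^g_p) + u$, which is bounded by \eqref{Egreenlog}. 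Thus $\varphi$ is an LD solution with singular set $L'$ and configuration $\taubold|_{L'}$.

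For uniqueness, suppose $\varphi_1,\varphi_2$ are two such LD solutions. Near each $p\in L'$, both $\varphi_i$ differ from $\tau_p\log\dbold^g_p$ by a bounded function, so their difference is bounded on a deleted neighborhood of $p$ and solves $\Lcal_\Sigma(\varphi_1-\varphi_2)=0$ there; by \ref{Lsing} (applied with the common Green's function being the trivial one, or directly by removable singularity for bounded solutions of elliptic equations) $\varphi_1-\varphi_2$ extends across each $p$ to yield an element of $C^\infty(\Sigma)$ in the kernel of $\Lcal_\Sigma$. Assumption \ref{cLker} forces $\varphi_1=\varphi_2$. Points of $L\setminus L'$ require no attention since there the singularity strength is zero and the condition in \ref{dLD}(ii) is vacuous. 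Finally, linearity of $\varphi[\taubold]$ in $\taubold$ is immediate from uniqueness applied to $\varphi[\taubold+\taubold']$ and $\varphi[\taubold]+\varphi[\taubold']$, since both are LD solutions with the same singular data. No step here is a substantive obstacle: the only delicate point is the removable-singularity argument in uniqueness, which is already packaged in \ref{Lsing}, so the proof is essentially bookkeeping of references.
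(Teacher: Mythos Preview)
Your proof is correct and follows essentially the same approach as the paper: build a cut-off sum of local Green's functions as a parametrix, then use the trivial kernel hypothesis \ref{cLker} to solve away the smooth error $\Lcal_\Sigma$ produces in the transition annuli. The paper's proof is terser---it omits the explicit uniqueness and linearity arguments you spell out---but the content is identical.
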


\begin{proof}
We define $\varphi_1 \in C^\infty(\Sigma \setminus L')$ by requesting that it is supported on $\bigsqcup_{p\in L'} (D^\Sigma_p(2\delta_p))$ 
and $\varphi_1 = \Psibold[ \delta_p, 2\delta_p; \dbold^\Sigma_p]( \tau_p G_p, 0)$ on $D^\Sigma_p(2\delta_p)$ for each $p\in L'$.  
Note that $\Lcal_\Sigma \varphi_1 \in C^\infty(\Sigma)$ (by assigning $0$ values on $L'$) and it is supported on 
$\bigsqcup_{p\in L'} (D^\Sigma_p(2\delta_p)\setminus D^\Sigma_p(\delta_p))$.  
Using \ref{cLker}, there is a unique $\varphi_2 \in C^\infty(\Sigma)$ such that $\Lcal_\Sigma \varphi_2 = - \Lcal_\Sigma \varphi_1$.  
We then define $\varphi = \varphi_1+\varphi_2$ and the conclusion follows. 
\end{proof}

\begin{remark}[Index, nullity, and characterizations of minimal doublings] 
\label{R:uniqueness} 
It is interesting that currently no characterizations of doublings are known, even under strong assumptions, 
for example given bridge positions and any further information. 
This means that even a small modification in the construction process would lead in principle to different minimal surfaces, 
even though the new ones would strongly resemble the previous ones. 
The only known such characterizations for minimal surfaces in the round three-sphere are for Lawson surfaces \cite{KW:Luniqueness}. 

Since it seems very likely that surfaces strongly resembling each other by their constructions are actually congruent, 
it is customary in the literature to discuss them as if they were known to be. 
In this article we also adhere to this and we consider the doublings in Remark \ref{R:oldT} the same with 
the ones in \cite{Wiygul} and also (for square lattices) with the ones constructed in \cite{kapouleas:clifford} or \cite{Ketover};  
similarly surfaces constructed as in Remark \ref{RSph2} in the case all $m_i=m$ with surfaces constructed in \cite{kapmcg}.
Proving however that such surfaces are congruent remains at the moment an interesting open problem.  
We hope that eventually index, nullity and characterization results will be provided for the surfaces constructed in Theorem \ref{Ttheory}, 
similarly to the results in \cite{k8,KW:Luniqueness}, 
and with the same generality as in the area estimate \eqref{E:Marea}. 
\hfill $\square$ 
\end{remark}

\section[New minimal surfaces via doubling the Clifford Torus]{New minimal surfaces via doubling the Clifford Torus} 
\label{S:clifford}
\nopagebreak

\subsection*{Symmetries and LD solutions}
\nopagebreak

Let $\T := \left\{ (z_1, z_2) \in \C^2: |z_1| = |z_2| = 1/\sqrt{2}\right\} \subset \Sph^3 \subset \C^2$ be the Clifford torus in the unit three-sphere $(\Sph^3,g)$.  
We recall that doublings of the Clifford torus with catenoidal bridges centered at the points of a square $m\times m$ (large $m\in\N$) lattice $L\subset \T$ were 
first constructed in \cite{kapouleas:clifford};  
this was extended in \cite{Wiygul} to rectangular lattices $k\times m$ (large $k,m\in \N$ and a priori bounded $m/k$).  
These results can easily be reproduced by constructing the required LD solutions and applying Theorem \ref{Ttheory} (see Remark \ref{R:oldT}).  
Our main focus in this section however is to construct new doublings in the following cases: 
first, when the necks are centered at the points of a lattice with $m/k$ \emph{not} constrained 
(see \ref{Amkcliff}) 
and second, less symmetric doublings where there are three different bridges up to symmetries.
These new constructions are only indicative of the possibilities and many more are carried out in \cite{douT} with other symmetry groups or more necks per fundamental domain. 

We briefly recall now some notation from \cite{kapouleas:clifford, Wiygul}.    
Given an oriented circle $C$ in $\Sph^3$, write $\Rcap^\theta_C$ for the rotation by $\theta$ about $C$.  
Define the circles $C: = \{z_2 = 0\}$ and $C^\perp: = \{ z_1 = 0\}$.  
We have
\begin{align*}
\Rcap^\theta_{C^\perp}(z_1, z_2) = (e^{i\theta} z_1, z_2), \quad \text{and} \quad
\Rcap^\theta_{C}(z_1, z_2) = (z_1, e^{i\theta} z_2).
\end{align*}
Define the following symmetries of $\C^2$ and the domain of the coordinates $(\xx, \yy, \zz)$ defined in  \ref{exClifford}:
\begin{equation}
\begin{gathered} 
\xbartilde(z_1, z_2) = (\overline{z_1}, z_2), \quad
\ybartilde(z_1, z_2) = (z_1, \overline{z_2}), \quad
\zbartilde(z_1, z_2) = (z_2, z_1),\\
\xbar^h(\xx, \yy, \zz)=(\xx+h , \yy, \zz), \quad
\ybar^h(\xx, \yy, \zz) = (\xx, \yy+h, \zz) \quad \forall h \in \R, \\
\widehat{\xbartilde}(\xx, \yy, \zz) = (-\xx, \yy, \zz), \quad
\widehat{\ybartilde}(\xx, \yy, \zz) = (\xx, -\yy, \zz), \quad 
\widehat{\zbartilde}(\xx, \yy, \zz) = (\yy, \xx, -\zz). 
\end{gathered}
\end{equation}
With $\tildeE$ the parametrization map in \ref{exClifford},  
these satisfy the relations 
\begin{equation}
\begin{gathered} 
\tildeE \circ \widehat{\xbartilde} = \xbartilde \circ \tildeE , \quad
\tildeE \circ \widehat{\ybartilde} = \ybartilde \circ \tildeE, \quad
\tildeE \circ \widehat{\zbartilde} = \zbartilde \circ \tildeE, \\
\tildeE \circ \xbar^h= \Rcap^{\sqrt{2} h}_{C^\perp} \circ \tildeE 
\quad \text{and} \quad
\tildeE \circ  \ybar^h= \Rcap^{\sqrt{2} h}_{C} \circ \tildeE  
\quad \forall h \in \R.
\end{gathered}
\end{equation}

\begin{assumption}
\label{Amkcliff}
We fix $k, m\in \N$ with $k\geq 3$, $m\geq k$, and  assume $m$ is as large as needed in absolute terms.
\end{assumption}

We define the symmetry group $\group$, a point $p_0\in \Tor$, a lattice $L$, and set of parallel circles $\Lpar$ by 
\begin{equation}
\label{Egsymcl}
\begin{gathered}
\group = \group[k, m] := \langle \Rcap^\frac{2\pi}{k}_{C^\perp}, \Rcap^{\frac{2\pi}{m}}_{C}, \xbartilde, \ybartilde \rangle , \quad
p_0 : = \frac{1}{\sqrt{2}}(1, 1) = \tildeE(0, 0, 0), \\ 
L = L[k, m]: =  \group p_0,
 \quad \text{and} \quad \Lpar : = \{ \Rcap^{i\frac{2\pi}{k}}_{C^\perp} \Rcap^\theta_C p_0: \theta\in \R, i \in \mathbb{Z} \}.
 \end{gathered}
\end{equation}
If $X$ is a function space consisting of functions defined on a domain
$\Omega\subset \Tor$ and $\Omega$ is invariant under the action of $\group$,
we use a subscript ``$\sym$'' to denote the subspace $X_\sym\subset X$
consisting of those functions  $f\in X$ which are invariant under the action of $\group$.

The linearized operator is $\Lcal_{\T} = \Delta_{\T} + 4$, and it is easy to see that $(\ker \Lcal_{\T})_\sym$ is trivial.  
By Lemma \ref{Lldexistence} there is therefore a unique $\group$-symmetric LD solution $\Phi = \Phi[k,m]$ with singular set $L$ and satisfying $\tau_p = 1$ $\forall p \in L$. 
For convenience, we define the scaled metric,  scaled linear operator, and scaled coordinates $(\xxtilde,\yytilde)$ on $\T$ by 
\begin{align}
\label{Egtildecliff}
\gtilde := m^2 g, \qquad
\Lcaltilde_{\T} := \frac{1}{m^2} \Lcal_{\T} = \Delta_{\gtilde}+ \frac{4}{m^2} , \qquad
(\xxtilde, \yytilde) := m(\xx, \yy).
\end{align}
We define $\delta = 1/(100m )$ and for $p\in L$ define $\delta_p = \delta$.  

\begin{remark}[Applying the LD approach in the cases of \cite{kapouleas:clifford, Wiygul}]
\label{R:oldT} 
We first sketch the construction of the required LD solutions $\Phi$. 
Integrating $\Lcal_{\T} \Phi = 0$ over $\T$ and integrating by parts, we find $\frac{km}{4\pi}= \frac{1}{|\T|} \int_\T \Phi$. 
Define $\Ghat \in C^\infty_\sym(\T\setminus L)$ by requesting that $\Ghat$ is supported on $D^\T_L(3\delta)$ and  satisfies there 
$\Ghat = \Psibold[2 \delta, 3\delta; \dbold^\T_p]( G_p - \log \delta, 0)$ (where $G_p$ is a Green's function for $\Lcal_{\T}$ as in \ref{dggen}) and define $\Phip \in C^\infty_\sym(\T)$ 
by requesting  $\Phi = \Ghat + \frac{km}{4\pi} + \Phip$.  
From this decomposition, estimates on the average and oscillatory parts of $\Ghat$, and the uniform boundedness of 
$m/k$, we conclude that $\Lcaltilde_{\T}$ has no small eigenvalues when restricted to functions that have average zero, hence $\| \Phip : C^j_\sym(\T, \gtilde) \| \le C(j)$.

For some $\cunder$ fixed independently of $m$, define now $\domzb : = [ -\cunder, \cunder] \subset \Pcal : = \R$, 
LD solutions $\varphi = \varphi\llbracket \zeta\rrbracket : = \tau \Phi : =  \frac{1}{m} e^\zeta e^{- \frac{km}{4\pi}} \Phi$ for $\zeta \in \domzb$, 
and $\val_\sym[L]$ the subspace of $\val[L]$ consisting of the $\group$-invariant elements.  
Clearly $\val_\sym[L]$ is one dimensional and may be identified with $\R$.  
Using the definition of $\tau$ and the estimate on $\Phip$, 
it follows that the map $Z_\zeta : \val_\sym[L] \rightarrow \Pcal$ defined by $Z_\zeta(\mu) = \frac{1}{\tau}\mu $ satisfies
$|\zeta - Z_\zeta(\Mcal_L\varphi)| \leq C$ for a constant $C$ independent of $\cunder$.  
After restricting to spaces adapted to the symmetries and choosing $\cunder$ to be large enough in terms of $C$, we can then apply Theorem \ref{Ttheory} because the remaining assumptions are easy to check. 
\qed
\end{remark}

\begin{definition}
\label{davgcliff}
Given a function $\varphi$ on some domain $\Omega\subset \T$, we define
a rotationally invariant function $\varphi_\ave$ on 
the union $\Omega'$ of the orbit circles of $\{ \Rcap^\theta_C: \theta \in \R\}$ on which $\varphi$ is integrable
(whether contained in $\Omega$ or not),
by requesting that on each such circle $C'$, 
\[\left. \varphi_\ave \right|_{C'}
:=\avg_{C'}\varphi.\]
We also define $\varphi_\osc$ on $\Omega\cap\Omega'$ by $\varphi_\osc:=\varphi-\varphi_\ave$.
\end{definition}

\begin{lemma}
\label{LVBcliff}
$\Phi_\ave = \frac{m}{2 \sqrt{2} \sin(\sqrt{2}\frac{\pi}{k})} \cos\big( \sqrt{2} \frac{\pi}{k} - 2\dbold^{\T}_{\Lpar}\big)$ and $\left. \Phi_\ave\right|_{\Lpar} = \frac{m}{2F}$, where 
$F: = \sqrt{2} \tan \left( \sqrt{2} \frac{\pi}{k}\right)$.
\end{lemma}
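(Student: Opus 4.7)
The strategy is to turn the PDE $\Lcal_{\T}\Phi=2\pi\sum_{p\in L}\delta_p$ (which holds distributionally since $\Delta\log r=2\pi\delta$ and $\tau_p=1$) into an ODE for $\Phi_\ave$ by integrating out the $\yy$-variable in the flat coordinates $(\xx,\yy)$ of \ref{exClifford}. The $\Rcap^\theta_C$-orbits are the $\yy$-circles (via $\tildeE\circ\ybar^h=\Rcap^{\sqrt 2 h}_C\circ\tildeE$), so $\Phi_\ave=\Phi_\ave(\xx)$. The Clifford torus is flat with $|A^{\T}|^2+\Ric(\nu,\nu)=2+2=4$, hence $\Lcal_{\T}=\partial_\xx^2+\partial_\yy^2+4$. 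Each of the $k$ parallel circles in $\Lpar$ sits at $\xx_i=i\sqrt 2\,\pi/k$ and carries $m$ equally spaced singularities of $\Phi$; averaging over the $\yy$-period $\sqrt 2\,\pi$ collapses those point masses into $\tfrac{m}{\sqrt 2\,\pi}\delta_{\xx_i}$, and multiplying by $2\pi$ yields
\[
\Phi_\ave''+4\,\Phi_\ave=\sqrt 2\,m\sum_{i=0}^{k-1}\delta_{\xx_i}.
\]

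Away from $\Lpar$ the general solution is $\Phi_\ave=A\cos(2\xx)+B\sin(2\xx)$ on each interval between singular circles. Two symmetry inputs will then pin its shape down: invariance under $\xbartilde$ (equivalently $\widehat{\xbartilde}\colon\xx\mapsto-\xx$) forces $\Phi_\ave$ to be even about $\xx=0$, and invariance under $\Rcap^{2\pi/k}_{C^\perp}$ (equivalently $\xbar^{\sqrt 2\pi/k}$) forces periodicity with period $\sqrt 2\,\pi/k$. Imposing continuity of $\Phi_\ave$ at $\xx=\sqrt 2\,\pi/k$, where by periodicity the limiting value must equal $\Phi_\ave(0^+)=A$, gives $B=A\tan(\sqrt 2\,\pi/k)$. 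A single trigonometric identity and the fact that on $(0,\sqrt 2\,\pi/k)$ one has $\dbold^{\T}_{\Lpar}(\xx)=\min(\xx,\sqrt 2\,\pi/k-\xx)$ (together with the evenness of cosine) will then rewrite the solution as $\Phi_\ave=\tilde A\cos(\sqrt 2\,\pi/k-2\,\dbold^{\T}_{\Lpar})$, where $\tilde A=A/\cos(\sqrt 2\,\pi/k)$.

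To fix $\tilde A$ I would invoke the jump condition at $\xx=0$. Evenness about $\xx=0$ gives $\Phi_\ave'(0^-)=-\Phi_\ave'(0^+)=-2B$, so the distributional source of strength $\sqrt 2\,m$ forces $4B=\sqrt 2\,m$, i.e., $B=m/(2\sqrt 2)$; therefore $A=m/(2\sqrt 2\tan(\sqrt 2\,\pi/k))=m/(2F)$ and
\[
\tilde A=\frac{A}{\cos(\sqrt 2\,\pi/k)}=\frac{m}{2\sqrt 2\,\sin(\sqrt 2\,\pi/k)},
\]
which is the claimed amplitude. Evaluating at $\dbold^{\T}_{\Lpar}=0$ gives $\left.\Phi_\ave\right|_{\Lpar}=\tilde A\cos(\sqrt 2\,\pi/k)=m/(2F)$, completing the proof. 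The assumption $k\ge 3$ from \ref{Amkcliff} ensures $\sqrt 2\,\pi/k<\pi/2$, so the sine and tangent are nonzero and $\left.\Phi_\ave\right|_{\Lpar}$ is positive as expected from a Green-type solution. The argument is essentially linear ODE theory together with group-theoretic bookkeeping; the only genuine care is in the constants—reading off the $\yy$-period as $\sqrt 2\,\pi$ (not $2\pi$) and tallying exactly $m$ Dirac masses per parallel circle, which together produce the factor $\sqrt 2\,m$ on the right-hand side.
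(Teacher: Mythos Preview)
Your proof is correct and follows essentially the same approach as the paper: both recognize that $\Phi_\ave$ solves $\Phi_\ave''+4\Phi_\ave=0$ away from $\Lpar$, use the $\group$-symmetries to determine its form up to a constant, and then fix that constant via the derivative jump at $\Lpar$. The only difference is cosmetic---you phrase the jump condition through the distributional ODE with Dirac masses, whereas the paper obtains it by integrating $\Lcal_{\T}\Phi=0$ over $D^{\T}_{\Lpar}(\epsilon_2)\setminus D^{\T}_{L}(\epsilon_1)$, integrating by parts, and taking $\epsilon_1\searrow 0$ then $\epsilon_2\searrow 0$.
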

\begin{proof}
Since $\Lcal_{\T} \Phi_\ave = 0$ on $\T\setminus \Lpar$ and the distance between neighboring circles of $\Lpar$ is $\sqrt{2}\pi/k$, the symmetries imply that $\Phi_\ave = C  \cos\big( \sqrt{2} \frac{\pi}{k} - 2\dbold^{\T}_{\Lpar}\big)$ for some $C\neq 0$.  By integrating $\Lcal_{\T} \Phi =0$ on $\Omega_{\epsilon_1, \epsilon_2} := D^{\T}_{\Lpar}(\epsilon_2) \setminus D^{\T}_{L}(\epsilon_1)$, where $0< \epsilon_1<< \epsilon_2$ and integrating by parts, we obtain
\begin{align*}
\int_{\partial \Omega_{\epsilon_1, \epsilon_2}} \frac{\partial}{\partial \eta} \Phi + \int_{\Omega_{\epsilon_1, \epsilon_2}} 4\Phi  = 0,
\end{align*}
where $\eta$ is the unit outward conormal field along $\partial \Omega_{\epsilon_1, \epsilon_2}$.  By taking the limit as $\epsilon_1 \searrow 0$ first and then as $\epsilon_2\searrow 0$, we obtain by  using the logarithmic behavior near $L$ and the preceding that
\begin{align*}
2\pi m =
4 \sqrt{2}\pi C \sin\left(\sqrt{2}\frac{\pi}{k}\right),
\end{align*}
which implies the conclusion.
\end{proof}

We introduce now the following decomposition.
Note that we could assume (but is not necessary) that $G_p$ is rotationally invariant, 
in which case it is uniquely determined and can be expressed in terms of Bessel functions. 

\begin{definition}
\label{Dcliffdecomp2}
Define $\Ghat \in C^\infty_\sym(\T \setminus L)$ and $\Phat, \Phip, E' \in C^\infty_\sym(\T)$ by requesting that 
\begin{equation*}
\begin{gathered}
\Ghat = \Psibold[2 \delta, 3\delta; \dbold^{\T}_p]( G_p - \log \delta \cos (2 \dbold^{\T}_{\Lpar}), 0) \quad \text{on} \quad D^{\T}_L(3 \delta), \\
\Phat =\Phi_\ave -  \Psibold \bigg[ \frac{2}{m}, \frac{3}{m}; \dbold^{\T}_{\Lpar} \bigg]\left(\frac{m}{2\sqrt{2}}\sin (2 \dbold^\T_{\Lpar}) , 0\right) \quad \text{on} \quad D^{\T}_{\Lpar}(3/m),
\end{gathered}
\end{equation*}
 that $\Ghat = 0$ on $\T \setminus D^{\T}_L(3\delta)$,  $\Phat = \Phi_\ave$ on $\T \setminus D^{\T}_{\Lpar}(3/m)$, and
 \begin{align}
\label{EPhdcliff2}
\Phi = \Ghat + \Phat + \Phip, \quad
E' = -\Lcaltilde_{\T}( \Ghat + \Phat).
\end{align}
\end{definition}

\begin{remark}
Note that from Lemma \ref{LVBcliff} and the fact that
\[
\cos \left( \sqrt{2} \frac{\pi}{k} - 2 \dbold^\T_\Lpar\right) = \cos \left( \sqrt{2} \frac{\pi}{k}\right) \cos \left( 2 \dbold^\T_{\Lpar}\right) + \sin \left( \sqrt{2} \frac{\pi}{k}\right) \sin \left( 2 \dbold^\T_\Lpar\right)
\] 
that $\Phat$ as defined in \ref{Dcliffdecomp2} is indeed smooth across $\Lpar$.
\qed 
\end{remark}

We estimate the average and oscillatory parts of $\Phi$ separately.

\begin{lemma}
\label{LPhipcliff2}
$E'$ vanishes on $D^\T_{L}(2\delta)$ and $E'_{\osc}$ is supported on $D^\T_{\Lpar}(3\delta)$.  Moreover: 
\begin{enumerate}[label=\emph{(\roman*)}]
\item $\| \Ghat : C^j_\sym(\T \setminus D^\T_L(\delta), \gtilde)\| \leq C(j)$. 
\item $\| E' : C^j_\sym(\T , \gtilde ) \| \le C(j)$. 
\item $\| \Phip : C^j_\sym(\T, \gtilde) \| \le C(j)$. 
\end{enumerate}
In (ii), the same estimate holds if $E'$ is replaced with either $E'_{\ave}$ or $E'_{\osc}$. 
\end{lemma}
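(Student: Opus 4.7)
The plan proceeds by unpacking Definition \ref{Dcliffdecomp2} and exploiting the cancellations built into the decomposition. For $E'$ vanishing on $D^\T_L(2\delta)$: on this region, since $\delta = 1/(100m)$, both cutoffs take their full inner value (the one in $\Ghat$ because $\dbold^\T_p\le 2\delta$, the one in $\Phat$ because $\dbold^\T_\Lpar\le 2\delta \ll 2/m$), so $\Ghat + \Phat = G_p - \log\delta\,\cos(2\dbold^\T_\Lpar) + \Phi_\ave - \frac{m}{2\sqrt 2}\sin(2\dbold^\T_\Lpar)$. Each summand is annihilated by $\Lcal_\T$ where it is smooth: $G_p$ away from $p$ as a Green's function, and the other three by direct computation on the flat Clifford torus. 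The cosine is automatically smooth across $\Lpar$, and by Lemma \ref{LVBcliff} the coefficient $\frac{m}{2\sqrt 2}$ is exactly such that the $|y|$-kinks of $\Phi_\ave$ and of $\sin(2|y|)$ cancel across $\Lpar$. Hence $E' = 0$ on $D^\T_L(2\delta)\setminus L$, and extends smoothly through $L$ by the declared smoothness of $\Phip$. The support of $E'_\osc$ is then immediate: $\Phat$ is rotationally invariant, so $\Lcaltilde_\T\Phat$ contributes nothing, and $\Ghat$ is supported in $\overline{D^\T_L(3\delta)}\subset \overline{D^\T_\Lpar(3\delta)}$.

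The estimates (i) and (ii) are scaling computations in the normalized metric $\gtilde = m^2 g$, under which $\delta$ has unit scale $1/100$ and $1/m$ has unit scale $1$. For (i), Lemma \ref{Lgreenlog} gives $G_p = \log\dbold^\T_p + O(1)$, and on $D^\T_L(3\delta)\setminus D^\T_L(\delta)$ we write
\[
G_p - \log\delta\,\cos(2\dbold^\T_\Lpar) = \log(\dbold^\T_p/\delta) - \log\delta\,(1-\cos(2\dbold^\T_\Lpar)) + O(1),
\]
where the first summand is $O(1)$ in $\gtilde$-norm, the second is $O(m^{-2}\log m)$ (using $1-\cos(2\dbold^\T_\Lpar) = O(m^{-2})$ on the annulus), and the $\Psibold$-cutoff transitions on $\gtilde$-scale $\sim 1$. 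For (ii), $E'$ is supported on the two transition annuli $A_1 := D^\T_L(3\delta)\setminus D^\T_L(2\delta)$ and $A_2 := D^\T_\Lpar(3/m)\setminus D^\T_\Lpar(2/m)$, and reduces there to commutator terms $[\Lcaltilde_\T, \Psibold]$ applied to inner functions of bounded $\gtilde$-norm --- on $A_1$ by (i), on $A_2$ because $\frac{m}{2\sqrt 2}\sin(2\dbold^\T_\Lpar) = O(1)$ in $\gtilde$-norm. The same argument applies verbatim to $E'_\ave$ and $E'_\osc$.

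For (iii), decompose $\Phip = \Phip_\ave + \Phip_\osc$. The average admits the explicit formula
\[
\Phip_\ave = \Phi_\ave - \Phat - \Ghat_\ave = \psicut[3/m, 2/m](\dbold^\T_\Lpar)\cdot \textstyle\frac{m}{2\sqrt 2}\sin(2\dbold^\T_\Lpar) - \Ghat_\ave,
\]
whose first term is $C^j(\gtilde)$-bounded by the $A_2$ argument above and whose second is bounded because averaging (i) over parallel circles of $\gtilde$-length $\sqrt 2\pi m$, each containing $m$ $\gtilde$-unit-scale disks where $\Ghat$ is supported, gives a net $O(1)$ contribution. For the oscillatory part, $\Lcaltilde_\T\Phip_\osc = E'_\osc$, and the key point is that $\group$-symmetric oscillatory functions have nontrivial Fourier modes only in directions with $|k_2|\geq m$ (from the imposed $\Rcap^{2\pi/m}_C$-invariance), at which the eigenvalues $(k_1^2+k_2^2-4)/m^2$ of $-\Lcaltilde_\T$ are uniformly bounded below by $(m^2-4)/m^2$. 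Hence $\Lcaltilde_\T$ has a uniformly bounded inverse on that subspace, and standard interior Schauder estimates on $\gtilde$-unit balls upgrade this to the claimed $C^j(\gtilde)$-control of $\Phip_\osc$ from the estimate of (ii) on $E'_\osc$.

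The main obstacle is the uniform-in-$m$ control in (iii): naive elliptic inversion would lose a factor of $m^2$ against the small eigenvalues of $\Lcaltilde_\T$ (eigenvalue $4/m^2$ on constants, and further small eigenvalues from $-\Delta_\T$-eigenmodes with $k_1^2+k_2^2=4$), so the argument depends crucially both on the explicit structural decomposition of $\Phip_\ave$ (which bypasses the near-kernel rather than inverts through it) and on the symmetry-enforced spectral gap for $\Phip_\osc$.
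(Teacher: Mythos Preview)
Your treatment of the support claims, (i), (ii), and the oscillatory part of (iii) is correct and matches the paper's approach (including the Fourier/spectral-gap argument for $\Phip_\osc$).

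There is, however, a genuine gap in your estimate of $\Phip_\ave$ for $j\ge 1$. Your explicit formula
\[
\Phip_\ave=\psicut[3/m,2/m](\dbold^\T_\Lpar)\cdot\tfrac{m}{2\sqrt{2}}\sin(2\dbold^\T_\Lpar)-\Ghat_\ave
\]
is correct, but the two summands are \emph{not} individually $C^1$ across $\Lpar$: on $D^\T_\Lpar(2/m)$ the first term equals $\tfrac{m}{2\sqrt{2}}\sin(2\dbold^\T_\Lpar)$, which is $|y|$-like and has a derivative jump at $\Lpar$; and averaging $\Ghat$ over circles passing near the logarithmic singularities (where (i) does not apply) produces a matching $|y|$-type kink in $\Ghat_\ave$. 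These kinks cancel in the sum (this is precisely the flux/balancing identity behind Lemma~\ref{LVBcliff}), so $\Phip_\ave$ is smooth---but you cannot get $C^j$ bounds on it by bounding the pieces separately.

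The paper closes this gap with an ODE argument: since $\Phip_\ave$ satisfies $\partial_{\tilde\xx}^2\Phip_\ave+\tfrac{4}{m^2}\Phip_\ave=E'_\ave$ with $E'_\ave$ bounded by (ii), and since on a neighborhood of $\partial D^\T_\Lpar(2/m)$ one has $\Ghat_\ave=0$ so the initial data $|\Phip_\ave|,|\partial_{\tilde\xx}\Phip_\ave|\le C$ there are read off directly from the smooth term $\tfrac{m}{2\sqrt{2}}\sin(2\dbold^\T_\Lpar)$, one integrates the ODE inward to get $C^2$ bounds, then differentiates the ODE for higher $j$. This is the missing step in your argument.
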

\begin{proof}
Because $\Ghat$ is supported on $D^\T_{L}(3\delta)\setminus L$, (i) follows using \eqref{Egreenlog}
and Definition \ref{Dcliffdecomp2}.
The statements on the support of $E'$ and $E'_\osc$ follow from Definition \ref{Dcliffdecomp2}, from which we also see that 
$E' = \Lcaltilde_{\T} \Psibold[2, 3; \dbold^{\T, \gtilde}_{\Lpar}]( \frac{m}{2\sqrt{2}} \sin(2 \dbold^\T_{\Lpar}), 0)$
on $D^\T_{\Lpar}(3/m)\setminus D^\T_{\Lpar}(2/m)$.  
Thus, when restricted to this set, the bound in (ii) follows from the uniform bounds on the cutoff in the $\gtilde$ metric.  It follows from \ref{Dcliffdecomp2} that $E'$ vanishes on $D^{\T}_{\Lpar}(2/m)\setminus D^\T_L(3 \delta)$.  On $D^\T_{\Lpar}(3\delta)$, note that $\Lcaltilde_{\T} \Phat =0$.  Since $\Lcaltilde_{\T} \Ghat = 0$ on $D^\T_L(2\delta)$, when restricted to $D^\T_{\Lpar}(3\delta)$ the required bound in (ii) follows from (i).  Finally, we can replace $E'$ by $E'_\ave$ or $E'_\osc$ in (ii) by taking averages and subtracting. 

To prove (iii) it suffices to prove that the estimate holds when $\Phip= \Phip_\ave + \Phip_\osc$ is replaced by either $\Phip_\ave$ or $\Phip_\osc$.  We first prove the estimate for $\Phip_\ave$.  Note by \ref{Dcliffdecomp2} that on $D^\T_{\Lpar}(2/m)$, 
\begin{align*}
\Phip_\ave = \frac{m}{2\sqrt{2}} \sin ( 2 \dbold^\T_{\Lpar}) - \Ghat_\ave.
\end{align*}
Note that the left hand side is smooth and the discontinuities on the right hand side cancel.  Using that  $\Lcaltilde_{\T} \Phip_\ave = E'_\ave$, on $D^\T_{\Lpar}(3/m)$ we have 
\begin{align}
\label{Eavcliff}
\partial^2_{\widetilde{\xx}} \Phip_\ave + \frac{4}{m^2} \Phip_\ave = E'_\ave,
\end{align}
where $\widetilde{\xx} : = m \xx$.  On a neighborhood of $\partial D^\T_{\Lpar}(2/m)$, we have that $\Ghat_\ave =0$ from Definition \ref{Dcliffdecomp2}.  It follows that $\left|\Phip_\ave\right|<C$ and $\left| \partial_{\, \widetilde{\xx}} \, \Phi'_\ave\right|<C$ on $\partial D^\T_{\Lpar}(2/m)$.   
Using this as initial data for the ODE and bounds of the inhomogeneous term from (ii) yields the $C^2$ bounds on $\Phip_\ave$ in (iii).  Higher derivative estimates follow inductively from differentiating \eqref{Eavcliff} and again using (ii).

This establishes the bound on $D^\T_{\Lpar}(2/m)$, and the proof of the estimate on $D^\T_{\Lpar}(3/m) \setminus D^\T_{\Lpar}(2/m)$ is even easier since $\Ghat_\ave = 0$ there, so we omit the details. 

We now estimate $\Phip_\osc$.  For $n\in \N$ and $l \in \{0, 1, \dots\}$, we define $\phi_{\ell, n} \in C^\infty_{\sym}(\T)$ by 
\begin{align*}
\phi_{\ell, n}(\xx, \yy) : = \cos \bigg( \frac{\ell}{\sqrt{2}} \frac{k}{m} \xxtilde \bigg) \cos \bigg( \frac{n}{\sqrt{2}} \yytilde \bigg).
\end{align*} 
Clearly $\{ \varphi_{\ell, n} : n\in \N, \ell\geq 0\}$ is a complete orthogonal set for the subspace of $(L^2(\T))_\sym$ consisting of functions with zero average; moreover,  
\begin{align*}
\Lcaltilde_{\T} \phi_{\ell, n} =  \lambda_{\ell, n} \phi_{\ell, n}, 
\quad \text{where} \quad
\lambda_{\ell, n}: = - \frac{1}{2} \left( \frac{k^2}{m^2}\ell^2 + n^2 \right) + \frac{4}{m^2}.
\end{align*}
For appropriate coefficients $E'^{, \ell,n}_{\osc}$, we have then
\begin{align*}
E'_\osc = \sum_{\ell\geq 0, n\in \N} E'^{, \ell, n}_{\osc} \phi_{\ell, n}, \quad
\text{and} \quad
\Phip_\osc =  \sum_{\ell, n}  \lambda^{-1}_{\ell, n} E'^{, \ell, n}_\osc \phi_{\ell,n}
\end{align*}
since $\Lcaltilde_{\T} \Phip_\osc = E'_\osc$.
Since $n \geq 1$, $\lambda^{-1}_{\ell,n}$ is bounded by a constant independent of $\ell, n, m$, and $k$, provided $m$ is large enough. The required bound on $\Phip_\osc$ now follows from the bound in (ii).
\end{proof}

\subsection*{Configurations with a single singularity modulo symmetries}
\nopagebreak 

\begin{definition}[Obstruction spaces] 
\label{dkercliff}
Let $\skernelv_{\sym}[L], \val_\sym[L]$ be the subspaces of $\skernelv[L], \val[L]$ consisting of the $\group$-invariant elements, where $\skernelv[L] = \bigoplus_{p\in L} \skernelv[p]$, and $\forall p\in L$, $\skernelv[p]$ is defined as in \ref{R:p}. 
\end{definition}
Since $\group$ is generated by reflections, $\val_\sym[L]$ is one-dimensional and may be identified with $\R$. 

 For some $\cunder>0$ fixed independently of $m$, define $ \domzb : = [ -\cunder, \cunder] \subset \Pcal : = \R$ and LD solutions
\begin{align}
\label{Etaucliff2}
 \varphi : = \varphi \llbracket \zeta \rrbracket = \tau \Phi: =  \frac{1}{m} e^{\zeta} e^{-\frac{m}{2F}} \Phi ,
\quad
\zeta \in \domzb. 
\end{align}

\begin{prop}
\label{Pclifford1}
There is an absolute constant $C$ (independent of $\cunder$) such that for $m$ large enough (depending on $\cunder$), the map $Z_\zeta : \val_\sym\llbracket\zeta\rrbracket \rightarrow \Pcal$ defined by $Z_\zeta(\mu) = \frac{1}{\tau}\mu $ satisfies
$|\zeta - Z_\zeta(\Mcal_L\varphi)| \leq C$.
\end{prop}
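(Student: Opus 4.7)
The plan is to unwrap the definition of $\Mcal_p\varphi$ via the $\varphihat_p$ formalism of \ref{Rmismatch}, substitute the decomposition of $\Phi$ from \ref{Dcliffdecomp2}, and verify that the specific exponent $-m/(2F)$ chosen in \eqref{Etaucliff2} is precisely what is needed to cancel the large term $\Phi_\ave(p) = m/(2F)$ supplied by Lemma \ref{LVBcliff}. Because $\val_\sym\llbracket\zeta\rrbracket$ is one-dimensional, it suffices to evaluate the symmetric mismatch at a single base point $p\in L$, and to identify $Z_\zeta(\Mcal_L\varphi)$ with $\tau^{-1}(\Mcal_p\varphi)(0)$ (the constant part).

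First, I would fix $p\in L\subset \Lpar$ and recall from \ref{Rmismatch} that one may write $\varphi = \varphihat_p + \tau G_p$ on $D^\T_p(2\delta)$, so that by \ref{Dmismatch}
\[
(\Mcal_p\varphi)(0) = \varphihat_p(p) + \tau\log(\tau/2).
\]
Next, from \ref{Dcliffdecomp2}, on a neighborhood of $p$ the cut-offs are identically $1$, so $\Ghat = G_p - \log\delta\,\cos(2\dbold^\T_{\Lpar})$ and $\Phat = \Phi_\ave - \tfrac{m}{2\sqrt2}\sin(2\dbold^\T_{\Lpar})$. Subtracting $\tau G_p$ from $\tau\Phi = \tau(\Ghat+\Phat+\Phip)$ and evaluating at $p$ (where $\dbold^\T_{\Lpar}(p)=0$ since $p\in\Lpar$) gives
\[
\varphihat_p(p) = -\tau\log\delta + \tau\Phi_\ave(p) + \tau\Phip(p).
\]

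Then I would invoke Lemma \ref{LVBcliff} to write $\Phi_\ave(p) = m/(2F)$ and Lemma \ref{LPhipcliff2}(iii) to bound $|\Phip(p)| \le C$ by a constant independent of $m$ and $\zeta$. Substituting $\delta = 1/(100m)$ and $\tau = m^{-1}e^\zeta e^{-m/(2F)}$ from \eqref{Etaucliff2} and collecting,
\[
\frac{1}{\tau}(\Mcal_p\varphi)(0) = \log\frac{\tau}{2\delta} + \frac{m}{2F} + \Phip(p) = \zeta + \log 50 + \Phip(p).
\]
Since $Z_\zeta(\Mcal_L\varphi) = \tau^{-1}(\Mcal_p\varphi)(0)$, the bound $|\zeta - Z_\zeta(\Mcal_L\varphi)| \le \log 50 + C$ follows. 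There is no real obstacle here: the content is the correct choice of $\tau$, together with the uniform $C^0$ estimate on $\Phip$ provided by \ref{LPhipcliff2}(iii), which in turn hinges on the spectral gap for $\Lcaltilde_\T$ on the oscillatory part (obtained from $n\ge 1$ in the Fourier modes $\phi_{\ell,n}$) and the ODE argument for the averaged part. The only care required is to make sure the $\cos(2\dbold^\T_{\Lpar})$ factor in the definition of $\Ghat$ and the $\sin(2\dbold^\T_{\Lpar})$ factor in the definition of $\Phat$ evaluate to the claimed values at the singular set, which is immediate from $p\in \Lpar$.
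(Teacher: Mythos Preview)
Your proof is correct and follows essentially the same approach as the paper: unwind $\Mcal_p\varphi$ via $\varphihat_p$, plug in the decomposition $\Phi=\Ghat+\Phat+\Phip$ from \ref{Dcliffdecomp2} evaluated at $p\in\Lpar$, use $\Phi_\ave(p)=m/(2F)$ from \ref{LVBcliff}, and bound $\Phip(p)$ uniformly via \ref{LPhipcliff2}(iii), arriving at $\tau^{-1}\Mcal_p\varphi = \zeta + \log 50 + \Phip(p)$. The paper's proof is simply a terser version of yours.
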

\begin{proof}
For any $p\in L$, expanding $\frac{1}{\tau} \Mcal_p \varphi$ (recall \ref{Dmismatch}) using \ref{Rmismatch} and \ref{EPhdcliff2}, we find
\begin{align*}
 \frac{1}{\tau} \Mcal_p \varphi= \frac{m}{2F} + \log\left( \frac{\tau}{2\delta}\right) + \Phip(p) = \zeta + \Phip(p)+ \log(50),
\end{align*}
where the second equality uses \eqref{Etaucliff2}.  The conclusion follows from using \ref{LPhipcliff2}(iii) to estimate $\Phip(p)$. 
\end{proof}

\begin{theorem}
\label{Tcmain1}
There exists an absolute constant $\cunder>0$ such that for all $(k, m)\in \N^2$ satisfying \ref{Amkcliff} and $m$ large enough in terms of $\cunder$, there exists a genus $mk+1$, $\group[k, m]$-invariant doubling of $\T$ by applying Theorem \ref{Ttheory}.
\end{theorem}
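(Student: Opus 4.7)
The plan is to apply Theorem~\ref{Ttheory} in the $\group[k,m]$-equivariant setting to the one-parameter family $\varphi\llbracket\zeta\rrbracket=\tau(\zeta)\Phi$ from \eqref{Etaucliff2}, taking $\Pcal:=\R$, $\domzb:=[-\cunder,\cunder]$ with $\cunder$ to be determined, $\Fcal^\Sigma_\zeta:=\mathrm{Id}_\T$ (so $L\llbracket\zeta\rrbracket\equiv L[k,m]$ and $\taubold\llbracket\zeta\rrbracket\equiv\tau(\zeta)$), $\delta_p:=1/(100m)$, obstruction space $\skernelv_{\sym}[L]$ from Definition~\ref{dkercliff}, and $Z_\zeta:\val_\sym[L]\simeq\R\to\R$ defined by $Z_\zeta(\mu):=\mu/\tau(\zeta)$. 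At every $p\in L$ the stabilizer in $\group$ contains both axial reflections, so $\val_\sym[p]$ and $\skernelv_{\sym}[p]$ are one-dimensional (the constant mode), and $\Ecal_L$ restricts to an isomorphism between them, verifying Assumption~\ref{aK} in the symmetric subspace. Because the $\group$-invariant kernel of $\Lcal_\T$ is trivial (Remark~\ref{R:oldT}), the linear theory of Section~\ref{S:linearized} and the fixed-point argument of Theorem~\ref{Ttheory} both descend to $\group$-invariant subspaces.

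The uniformity items \ref{con:one}(i)--(iii) are immediate from the constancy of $\taubold$ on $L$ and the exponential smallness of $\tau(\zeta)$ in $m$. For (iv)--(vi) I would combine the decomposition $\Phi=\Ghat+\Phat+\Phip$ of Definition~\ref{Dcliffdecomp2}, the estimates of Lemma~\ref{LPhipcliff2}, and the explicit formula for $\Phi_{\mathrm{avg}}$ in Lemma~\ref{LVBcliff} to obtain $\|\Phi:C^{3,\beta}_\sym(\T\setminus D^\T_L(\delta),\gtilde)\|\le Cm$ and $\Phi\ge c\,m$ on $\T\setminus D^\T_L(\delta)$; the lower bound uses $k\ge 3$ to keep $\cos(\sqrt{2}\pi/k)$ bounded away from zero. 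Multiplying by $\tau(\zeta)=\tfrac1m e^\zeta e^{-m/(2F)}$ and rescaling to $g$, items (iv)--(vi) reduce to comparing polynomial-in-$m$ factors against $\tau^{8/9}$ and $\tau^{1+\alpha/5}$, both of which are dominated by the exponential once $m$ is large.

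The decisive step is the prescribed unbalancing \ref{Azetabold}\ref{AZ}. Proposition~\ref{Pclifford1} supplies an absolute constant $C$, independent of $\cunder$, with $|\zeta-Z_\zeta(\Mcal_L\varphi\llbracket\zeta\rrbracket)|\le C$ for all $\zeta\in\domzb$, provided $m$ is large in terms of $\cunder$. Choosing $\cunder:=2C$ places $\zeta-Z_\zeta(\Mcal_L\varphi\llbracket\zeta\rrbracket)$ inside $[-C,C]=\tfrac12\domzb$, giving \ref{AZ}. The remaining items of Assumption~\ref{Azetabold} are trivial with these choices, so Theorem~\ref{Ttheory} applies and produces a smooth, embedded, $\group$-invariant minimal doubling of $\T$. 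Its genus is $2g_\T-1+|L[k,m]|=1+km$, because $g_\T=1$ and an orbit count gives $|L[k,m]|=km$ (the rotational subgroup $\Z_k\times\Z_m\subset\group$ acts freely on $p_0$, while $\xbartilde$ and $\ybartilde$ fix $p_0$ and therefore preserve this $km$-point orbit).

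The main obstacle is the cancellation behind Proposition~\ref{Pclifford1}: the logarithmic mismatch $\tau^{-1}\Mcal_p\varphi=\tfrac{m}{2F}+\log(\tau/(2\delta))+\Phip(p)$ must stay $O(1)$ uniformly in $m$. This is engineered by the precise choice $\tau(\zeta)=\tfrac1m e^\zeta e^{-m/(2F)}$, which pairs the linearly growing $m/(2F)$ contribution supplied by Lemma~\ref{LVBcliff} with $\log(\tau/(2\delta))$, and then requires $\Phip$ to be uniformly bounded---ensured by Lemma~\ref{LPhipcliff2}(iii), whose proof rests on a uniform spectral gap for $\Lcal_\T$ on the $\group$-invariant oscillatory modes. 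The absolute-constant nature of $C$ (independent of $\cunder$ and $m$, needing only $k\ge 3$) is what allows the bounded-$m/k$ constraint of \cite{kapouleas:clifford,Wiygul} to be dropped here.
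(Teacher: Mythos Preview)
Your proposal is correct and follows essentially the same approach as the paper: you make the same choices for $\Pcal$, $\domzb$, $\Fcal^\Sigma_\zeta$, $\delta_p$, $\skernelv_\sym[L]$, and $Z_\zeta$, invoke Proposition~\ref{Pclifford1} for the prescribed unbalancing, and use the decomposition of Definition~\ref{Dcliffdecomp2} together with Lemmas~\ref{LVBcliff} and~\ref{LPhipcliff2} to verify Convention~\ref{con:one}, exploiting $k\ge 3$ for the positivity of $\Phi_{\mathrm{avg}}$. Your commentary on why Proposition~\ref{Pclifford1} enables dropping the bounded-$m/k$ constraint is also on target; the only minor slip is that Convention~\ref{con:one}(vi) is required on $\T\setminus D^\T_L(\delta'_p)$ with $\delta'_p=\tau_p^\alpha\ll\delta$, not just on $\T\setminus D^\T_L(\delta)$, but the same estimates extend there since the logarithmic contribution of $\Ghat$ is at most of order $\alpha m$, which is dominated by $\Phat\sim m/(2F)$ for $\alpha$ small.
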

\begin{proof}
After the obvious trivial modifications to Theorem \ref{Ttheory} and its proof to restrict to $\group$-symmetric data, 
we need only to check that 
\ref{Azetabold} holds. 

It was noted above that \ref{cLker} holds in the space of $\group$-symmetric functions.  Define diffeomorphisms  $\Fcal^\Tor_\zeta : \T \rightarrow \T$ as in \ref{Azetabold}\ref{Idiffeo} by $\Fcal^\Tor_\zeta = \text{Id}_\T$.  $L\llbracket \zeta\rrbracket$, $\taubold\llbracket \zeta\rrbracket$, and $\varphi \llbracket \zeta\rrbracket$ as in \ref{Azetabold}(ii)-(iv) were defined in \ref{Egsymcl} and \eqref{Etaucliff2}.  Next, $\delta_p\llbracket \zeta\rrbracket = 1/(100m)$ as in \ref{Azetabold}(v) was defined earlier, and the spaces $\skernel_\sym[L], \widehat{\skernel}_\sym[L]$, and $\val_\sym[L]$ defined in \ref{dkercliff} satisfy \ref{aK}, verifying \ref{Azetabold}(vi).  Finally, isomorphisms $Z_\zeta$ as in \ref{Azetabold}(vii) were defined in \ref{Pclifford1}.

We now check \ref{Azetabold}\ref{Aa}-\ref{AZ}: \ref{Aa}-\ref{Ab} hold trivially.  
For \ref{Azetabold}\ref{Ac} we must verify that \ref{con:one} holds: Convention \ref{con:L} clearly holds for all large enough $m$. 
Because $k\geq 3$, $F = \sqrt{2}\tan\left( \sqrt{2} \frac{\pi}{k}\right)> 0$, 
and consequently $\tau$ in \eqref{Etaucliff2} can be made as small as needed by taking $m$ large.
Then \ref{con:one}(ii)-(iii) follow immediately using that $\forall p\in L$, $\tau_p = \tau$ and $\delta_p =  1/(100 m)$, where $\tau$.
Because $k\geq 3$, we have $\frac{\sqrt{2}\pi}{k}< \frac{\pi}{2}$ and consequently from Lemma \ref{LVBcliff} that $\Phi_\ave>0$.  In particular, it follows from \ref{LVBcliff} that $\Phi_{\ave}>cmk$ for some $c> 0$.  The estimates in \ref{con:one}(iv)-(vi) now follow easily using that $\varphi = \tau \Phi$, the decomposition of $\Phi$ in  \ref{Dcliffdecomp2}, and the estimates on $\Ghat$ and $\Phip$ in \ref{LPhipcliff2}.  This completes the verification of \ref{Azetabold}\ref{Ac}. 

Next, \ref{Azetabold}\ref{Atau} holds trivially since $\tau_p =\tau$  $\forall p \in L$, where $\tau$ is as in \eqref{Etaucliff2}.
\ref{Azetabold}\ref{AZ} holds by \ref{Pclifford1} by taking $\cunder$ large enough.  This completes the proof.
\end{proof}

\begin{remark}[The cases where $k=1$ and $k=2$]
\label{R:k12} 
In the proof of \ref{Tcmain1} we used that $k\geq 3$ (recall \ref{Amkcliff})---which implies that $\Phi_\ave>0$ to verify that \ref{con:one}(vi) holds.  While  \ref{con:one}(vi) is necessary in Theorem \ref{Ttheory} to ensure the embeddedness of the resulting surfaces, a modified version of \ref{Ttheory} holds---without requiring \ref{con:one}(vi)---which produces immersed doublings.  This modified theorem produces immersed doublings when $k=1$: to see this, note that although when $k=1$ we no longer have $\Phi_\ave>0$ and consequently \ref{con:one}(vi) does not hold, the rest of \ref{Azetabold} holds.  In particular, $\tau$ can still be made arbitrarily small by taking $m$ large since $F = \sqrt{2} \tan (\sqrt{2}\pi)> 0$.  On the other hand, the construction fails when $k=2$ because $F  = \sqrt{2} \tan (\sqrt{2}/2 \pi)< 0$ and $\tau$ cannot be made as small as needed. 
\qed 
\end{remark}

\subsection*{Configurations with three singularities modulo symmetries}
\nopagebreak 

In this subsection we construct and estimate $\group$-symmetric LD solutions on $\Tor$   
(recall \eqref{Egsymcl}) which have three singularities on each fundamental domain, 
and apply Theorem \ref{Ttheory} to construct corresponding minimal surfaces.  
To simplify the estimates, we assume in this subsection that $m/k < C_1$ for a fixed constant $C_1>0$.  
To begin, for $p_0$ as defined in \eqref{Egsymcl}, define
\begin{align*}
 \begin{gathered}
p_1:= \Rcap^{\frac{\pi}k}_{C^\perp} p_0, \quad
p_2: = \Rcap^{\frac{\pi}{m}}_{C}p_0, \quad 
L = L[k,m]:   = \bigcup_{i=0}^2 L_i : = \bigcup_{i=0}^2 \group p_i
\end{gathered}
\end{align*}
and define for $i=0, 1, 2$ the $\group$-invariant LD solution $\Phi_i = \Phi_i[k,m]$ satisfying $\tau_p = 1$ $\forall p\in L_i$. 

\begin{figure}[h]
	\centering
	\begin{tikzpicture}
	\draw[-, dashed, thick, black] (-3, 0) to (3, 0);
	\draw[-, dashed, thick, black] (0, -1) to (0, 1);
	\draw[-, dashed, thick, black] (3, -1) to (3, 1);
	\draw[-, dashed, thick, black] (-3, -1) to (-3, 1);
	\draw[-, dashed, thick, black] (-3, 1) to (3, 1);
	\draw[-, dashed, thick, black] (-3, -1) to (3, -1);
	\filldraw[color=black](0, 0) circle (3pt);
	\filldraw[color=black](0, -1) circle (3pt);
	\filldraw[color=black](0, 1) circle (3pt);
	\filldraw[color=black](3, 0) circle (3pt);
	\filldraw[color=black](-3, 0) circle (3pt);
	\draw (0, 0) node[circle, below left]{$p_0$};
	\draw (0, 1) node[circle, below left]{$p_2$};
	\draw (0, -1) node[circle]{};
	\draw (3, 0) node[circle, below left]{$p_1$};
	\draw (-3, 0) node[circle]{};
	\draw (-3, 1.5) -- (-.4, 1.5);
	\draw (.4, 1.5)-- (3, 1.5);
	\node  at (0, 1.5)[align = center] {$\frac{\sqrt{2}\pi}{k}$};
	\draw (3.5, -1) -- (3.5, -.4);
	\draw (3.5, .4)--(3.5, 1);
	\node  at (3.5, 0)[align = center] {$\frac{\sqrt{2}\pi}{m}$};
	\draw (-3, 1.3)--(-3,1.7);
	\draw (3, 1.3)--(3,1.7);
	\draw (3.3, -1)--(3.7, -1);
	\draw (3.3, 1)--(3.7, 1);
	\end{tikzpicture}
	\caption{A fundamental domain (for the group generated by rotations) with three singularities $p_0, p_1$, and $p_2$.  Dotted lines indicate reflectional symmetries.}
\end{figure}

For $\cunder> 0$ to be determined later, we define $\domzb : = [-\cunder,\cunder] \times \left[ \frac{ \cunder}{km} , \frac{\cunder}{km} \right]^2$,  
and for $\forall \zetabold = (\zeta, \sigma_1, \sigma_2) \in \domzb$ an LD solution 
\begin{equation}
\label{Dvarphi3}
\begin{gathered}
\varphi = \varphi\llbracket\zetabold\rrbracket := \sum_{i=0}^2 e^{\sigma_i} \tb  \Phi_i,  \quad \text{where} \quad
\tb = \tb \llbracket \zetabold\rrbracket: = \frac{1}{m}e^{\zeta} e^{-3\frac{km}{4\pi}},
\end{gathered}
\end{equation}
and by convention we define $\sigma_0: = - \sigma_1 - \sigma_2$.

Since each of $p_0, p_1, p_2$ and their $\group$-orbits are fixed by a pair of orthogonal reflections in $\group$,  $\val_{\sym}\llbracket \zetabold\rrbracket: = \val_\sym[L]$ is three-dimensional and may be identified with $\R^3$. 
\begin{prop}
\label{Pclifford2}
There is an absolute constant $C$ (independent of $\cunder$) such that for $k,m$ as in \ref{Amkcliff}, $mk$ large enough (depending on $\cunder$), and $m/k<C_1$ the map $Z_\zetabold : \val_\sym\llbracket\zetabold\rrbracket \rightarrow \Pcal$ defined by
\begin{align}
Z_\zetabold(\muboldtilde) = \frac{1}{3}\left( \sum_{i=0}^2 \mutilde_i, \frac{4\pi}{3km}(\mutilde_0+ \mutilde_2-2\mutilde_1), \frac{4\pi}{3km}(\mutilde_0 + \mutilde_1 -2 \mutilde_2)\right) , 
\end{align}
where here $\muboldtilde = \tb (\mutilde_0, \mutilde_1, \mutilde_2)$
satisfies 
$\zetabold - Z_\zetabold(\Mcal_L\varphi) \in [-C, C] \times \left[ \frac{-C}{km} , \frac{C}{km}\right]^2.$
\end{prop}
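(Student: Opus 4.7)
The plan is to parallel the argument of Proposition \ref{Pclifford1} but with the three-singularity structure: I would first extend the decomposition of Definition \ref{Dcliffdecomp2} and Lemma \ref{LPhipcliff2} to each $\Phi_i$, then explicitly expand $\Mcal_{p_i}\varphi$ at each of the three singular points, and finally verify the bounds on the specific linear combinations that define $Z_\zetabold$. As a preliminary observation, each $p_i$ is fixed by a pair of orthogonal reflections in $\group$, so the differential component of $\Mcal_{p_i}\varphi\in\val[p_i]$ vanishes; hence $\val_\sym\llbracket\zetabold\rrbracket$ is three-dimensional and is canonically identified with $\R^3$ via $\muboldtilde\mapsto(\tb^{-1}\Mcal_{p_i}\varphi|_{p_i})_{i=0}^2$.

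Since each $L_i$ is a translate of $L_0=L$ from the previous subsection, I would note that each $\Phi_i$ admits a decomposition $\Phi_i=\Ghat_i+\Phat_i+\Phip_i$ analogous to \ref{Dcliffdecomp2} with $\Phip_i$ uniformly bounded in $C^j_\sym(\T,\gtilde)$, by the same arguments as in \ref{LPhipcliff2}. Lemma \ref{LVBcliff} then gives
$\Phi_{i,\ave}(q)=\frac{m}{2\sqrt{2}\sin(\sqrt{2}\pi/k)}\cos(\sqrt{2}\pi/k-2\dbold^\T_{\Lpar_i}(q))$.
Using that $\Lpar_0=\Lpar_2$ while $\Lpar_1$ is shifted by the half-period $\sqrt{2}\pi/(2k)$ in the $\xx$-direction, each $\dbold^\T_{\Lpar_j}(p_i)$ is either $0$ or $\sqrt{2}\pi/(2k)$, yielding
\begin{align*}
[\Phi_{j,\ave}(p_i)]_{i,j=0,1,2}=\begin{pmatrix}A&B&A\\B&A&B\\A&B&A\end{pmatrix}, \quad A:=\tfrac{m}{2F},\quad B:=\tfrac{m}{2\sqrt{2}\sin(\sqrt{2}\pi/k)},
\end{align*}
and the assumption $m/k<C_1$ with $k\ge 3$ yields $A=\frac{mk}{4\pi}+O(1)$ and $B-A=O(1)$.

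Next I would expand $\Mcal_{p_i}\varphi|_{p_i}$ via \ref{Rmismatch}: the only singular contribution at $p_i$ comes from $e^{\sigma_i}\tb\Phi_i$, so setting $P_{ji}:=\Phip_j(p_i)=O(1)$ and using the local expansion of $\Phi_i$ analogous to that in the proof of \ref{Pclifford1},
\begin{align*}
\mutilde_i=e^{\sigma_i}[\log(\tb/(2\delta))+\sigma_i+A+P_{ii}]+\sum_{j\ne i}e^{\sigma_j}[\Phi_{j,\ave}(p_i)+P_{ji}].
\end{align*}
Substituting $\log(\tb/(2\delta))=\zeta-\tfrac{3km}{4\pi}+\log 50$ and using that the row sums of the $A,B$ matrix are $2A+B$ and $A+2B$, both equal to $\tfrac{3km}{4\pi}+O(1)$, immediate cancellation yields $\tfrac13(\mutilde_0+\mutilde_1+\mutilde_2)=\zeta+O(1)$, which handles the first component. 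For the second (and, by symmetry, third), a careful expansion using $\sigma_0+\sigma_1+\sigma_2=0$ and $e^{\sigma_i}=1+\sigma_i+O((km)^{-2})$ reorganizes $\mutilde_0+\mutilde_2-2\mutilde_1$ as
\begin{align*}
(\log(\tb/(2\delta))+A)(e^{\sigma_0}+e^{\sigma_2}-2e^{\sigma_1})+(e^{\sigma_0}+e^{\sigma_2})(A-2B)+2e^{\sigma_1}B+O(1),
\end{align*}
and a direct computation using $\log(\tb/(2\delta))+A=-\tfrac{mk}{2\pi}+O(1)$ and $e^{\sigma_0}+e^{\sigma_2}-2e^{\sigma_1}=-3\sigma_1+O((km)^{-2})$ gives $\mutilde_0+\mutilde_2-2\mutilde_1=\tfrac{9km}{4\pi}\sigma_1+O(1)$. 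Multiplying by $\tfrac13\cdot\tfrac{4\pi}{3km}$ produces $\sigma_1+O(1/(km))$, as required.

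The principal obstacle is the bookkeeping required to verify that the error constant $C$ is \emph{absolute}, i.e., independent of $\cunder$. Terms of the apparently dangerous size $|\sigma_i|\cdot A\le \cunder\cdot O(1)$ do occur, but they must (and do) arise only in combinations like $\sigma_0+\sigma_1+\sigma_2=0$ or antisymmetrized differences that cause the $\cunder$-dependent pieces to cancel to leading order; what remains is $O(1)$ contributions from $B-A$, from the $P_{ji}$'s (uniformly bounded by \ref{LPhipcliff2}), and from second-order exponential expansions of size $O((km)^{-2})\cdot O(mk)=O(1/(km))$. After dividing by the $\tfrac{km}{4\pi}$ prefactor in the second and third components, these yield the claimed $O(1/(km))$ bound with absolute $C$.
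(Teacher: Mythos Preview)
Your proof is correct, but it takes a more elaborate route than the paper's. You work with the refined decomposition of Definition \ref{Dcliffdecomp2} and the explicit cosine formula from Lemma \ref{LVBcliff}, building the $3\times 3$ matrix of values $\Phi_{j,\ave}(p_i)\in\{A,B\}$ and then reducing via the expansions $A=\tfrac{km}{4\pi}+O(1)$ and $B-A=O(1)$. The paper instead uses the simpler decomposition from Remark \ref{R:oldT}, namely $\Phi_j=\Ghat_j+\tfrac{km}{4\pi}+\Phip_j$ with $\|\Phip_j:C^j_\sym(\T,\gtilde)\|\le C(j)$, which is available precisely because of the hypothesis $m/k<C_1$. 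With that decomposition one gets directly
\[
\mu_i=\frac{km}{4\pi}\sum_{j=0}^2 e^{\sigma_j}+\bigg(\sum_{j=0}^2 e^{\sigma_j}\Phip_j\bigg)\bigg|_{p_i}+e^{\sigma_i}\log\left(\frac{e^{\sigma_i}\tb}{2\delta}\right),
\]
so all the average values collapse to the single constant $\tfrac{km}{4\pi}$ and the three required estimates follow in a few lines by expanding $e^{\sigma_j}=1+\sigma_j+O(\cunder^2/(km)^2)$ and using $\sum_j\sigma_j=0$. Your approach recovers this after verifying that the entries of your matrix all equal $\tfrac{km}{4\pi}$ modulo $O(1)$; what you gain is that your setup would still make sense without the bound $m/k<C_1$ (although the final estimates would of course fail), while the paper's choice of decomposition bakes that hypothesis in from the start and thereby shortens the algebra considerably.

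One small point: your identification $P_{ji}=\Phip_j(p_i)$ for $j\neq i$ tacitly assumes $\Phat_j(p_i)=\Phi_{j,\ave}(p_i)$, which requires $\dbold^\T_{\Lpar_j}(p_i)\ge 3/m$. For the pairs with $\Phi_{j,\ave}(p_i)=B$ this distance is $\pi/(k\sqrt{2})$, which need not exceed $3/m$ when $m/k$ is near $1$; but since the correction $\Phat_j-\Phi_{j,\ave}$ is in any case $O(1)$ on $\Omega[\sss_j;m_j]$, this is harmless and can be absorbed into your $O(1)$ terms.
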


\begin{proof}
Using \ref{Dcliffdecomp2} and \eqref{Dvarphi3}, for each $i\in \{0, 1, 2\}$, 
$\mu_i : = \frac{1}{\tb} \Mcal_{p_i} \varphi $ satisfies
\begin{equation}
\label{Emuc}
\begin{aligned}
 \mu_i &= \frac{km}{4\pi}\sum_{j=0}^2 e^{\sigma_j} +  \bigg(\sum_{j=0}^2 e^{\sigma_j} \Phip_j \bigg)\bigg|_{p_i} + e^{\sigma_i}  \log \left( \frac{e^{\sigma_i} \tb}{2\delta}\right)\\
 &= 3\frac{km}{4\pi} + O\left( \frac{\cunder^2}{km}\right)+ O(C) + (1+ \sigma_i) \log \frac{\tb}{2\delta},
\end{aligned}
\end{equation}
where we have expanded the exponentials and used that $\sum_{i=0}^2 \sigma_i =0$.  
The\-refore,
\begin{equation}
\label{Evbc1}
\frac{1}{3}\sum_{i=0}^2 \mu_i = 3 \frac{km}{4\pi} + \log \frac{\tb}{2\delta} + O\left( C + \frac{\cunder^2}{km}\right) = \zeta + O\left( C + \frac{\cunder^2}{km}\right).
\end{equation}
Using  \eqref{Emuc}, that $\sum_{i=0}^2 \sigma_i =0$, and \eqref{Dvarphi3}, we calculate
\begin{equation}
\label{Evbc2}
\begin{aligned}
\frac{1}{3}(2\mu_2 - \mu_1 - \mu_0) &= -3 \frac{km}{4\pi} \sigma_2 + O\left( C + \frac{\cunder^2}{km}\right), \\
\frac{1}{3}(2\mu_1 - \mu_2 - \mu_0) &= - 3\frac{km}{4\pi} \sigma_1 + O\left( C + \frac{\cunder^2}{km}\right).
\end{aligned}
\end{equation}
The proof is concluded by combining \eqref{Evbc1} and \eqref{Evbc2}.
\end{proof}

\begin{theorem}
\label{Tcmain2}
Given $C_1> 0$, there exists an absolute constant $\cunder>0$ such that for all $(k, m)\in \N^2$ satisfying \ref{Amkcliff}, $m$ large enough in terms of $\cunder$, and $m/k<C_1$,
there exists a genus $3mk+1$, $\group[k, m]$-invariant doubling of $\T$ by applying Theorem \ref{Ttheory}.
\end{theorem}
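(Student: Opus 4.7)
The plan is to follow the strategy of Theorem \ref{Tcmain1} very closely: verify that the three-parameter family of LD solutions $\varphi\llbracket \zetabold\rrbracket$ defined in \eqref{Dvarphi3} satisfies Assumption \ref{Azetabold} (restricted to $\group$-symmetric data), and then invoke the obvious $\group$-equivariant version of Theorem \ref{Ttheory}. I would set $\Fcal^\Sigma_\zetabold := \mathrm{Id}_\T$, $\delta_p := 1/(100m)$ for each $p\in L = L_0 \sqcup L_1 \sqcup L_2$, take the obstruction space $\skernelv_\sym[L]$ to be the $\group$-symmetric subspace of the space built as in \ref{dkercliff} on $L$, and take $Z_\zetabold$ to be the linear map already written down in Proposition \ref{Pclifford2}. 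A direct calculation using $\sigma_0 + \sigma_1 + \sigma_2 = 0$ shows $Z_\zetabold$ is an isomorphism of the three-dimensional space $\val_\sym\llbracket\zetabold\rrbracket \cong \R^3$ onto $\Pcal = \R^3$; triviality of $(\ker \Lcal_\T)_\sym$ is as in Remark \ref{R:oldT}. The genus count comes from $|L_i| = km$ (each $p_i$ has stabilizer of order $4$ inside the group $\group$ of order $4km$), so $|L| = 3km$ and the resulting doubling has genus $2g_\T - 1 + |L| = 3mk+1$.

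Items \ref{Aa}, \ref{Ab}, \ref{Atau} of \ref{Azetabold} are immediate because $\Fcal^\Sigma_\zetabold$ is the identity and on each orbit $L_i$ the function $\tau_p$ is the constant $\tb e^{\sigma_i}$ with $|\sigma_i|\le 2\cunder$, hence all $\tau_p$ are comparable up to a uniform factor $e^{4\cunder}$. The smallness requirement in \ref{con:one}\ref{con:one:i} is met because $k\ge 3$ and $m/k<C_1$ force $3km/(4\pi)$ large, so $\tb = m^{-1}e^{\zeta}e^{-3km/(4\pi)}$ can be made arbitrarily small by taking $mk$ large. Condition \ref{AZ} is exactly the content of Proposition \ref{Pclifford2}: choosing $\cunder$ larger than twice the absolute constant $C$ there, and taking $km$ large enough so that $C\cunder^2/(km)\le \cunder/4$, we obtain $\zetabold - Z_\zetabold(\Mcal_L\varphi)\in \tfrac12 \domzb$.

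The main obstacle, as in Theorem \ref{Tcmain1}, is the verification of \ref{con:one}, in particular the positivity condition \ref{con:one}(vi). I would apply the decomposition $\Phi_i = \Ghat_i + \Phat_i + \Phip_i$ of \ref{Dcliffdecomp2} to each of $\Phi_0, \Phi_1, \Phi_2$. The bound $m/k<C_1$ is precisely what is needed to keep $\|\Phip_i : C^j_\sym(\T,\gtilde)\|\le C(j)$ in \ref{LPhipcliff2}(iii) uniform, and together with \ref{LPhipcliff2}(i) this gives \ref{con:one}(iv)--(v) for the weighted sum $\varphi = \sum_i e^{\sigma_i}\tb \Phi_i$ via scaling. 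For \ref{con:one}(vi) the point is that the explicit formula \ref{LVBcliff} gives $\Phi_{i,\ave} \ge c_1 m$ on all of $\T$ (using $k\ge 3$ to keep the cosine bounded below by a positive constant depending only on $C_1$), while $\Ghat_i$ is nonnegative and $|\Phip_i|\le C$. Hence each $\Phi_i \ge c_1 m/2$ on $\Sigma \setminus D^\T_{L_i}(\delta'_{L_i})$, and since $e^{\sigma_i}\ge e^{-2\cunder}$ we conclude $\varphi \ge c_2\, \tb\, m$ outside $D^\T_L(\delta'_{\min})$, which dominates $\tau_{\max}^{1+\alpha/5}$ once $\tb$ is small enough in terms of $\cunder$ (note $\tb m \to 0$ only logarithmically slower than $\tb$, so the gap is polynomial). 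With \ref{Azetabold}\ref{Ac} in hand the remaining items of \ref{Azetabold} are routine and Theorem \ref{Ttheory} applies, producing the desired $\group[k,m]$-invariant minimal doubling of $\T$ of genus $3mk+1$.
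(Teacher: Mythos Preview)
Your approach matches the paper's: verify Assumption \ref{Azetabold} for the family \eqref{Dvarphi3} and apply the $\group$-equivariant version of Theorem \ref{Ttheory}. The checks of \ref{Aa}, \ref{Ab}, \ref{Atau}, of \ref{AZ} via Proposition \ref{Pclifford2}, and of the genus count are all correct.

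There is, however, an error in your verification of \ref{con:one}(vi). The claim that ``$\Ghat_i$ is nonnegative'' is false: on the annulus $D^\T_{L_i}(\delta)\setminus D^\T_{L_i}(\delta'_p)$ Definition \ref{Dcliffdecomp2} gives $\Ghat_i \approx \log(\dbold^\T_p/\delta)$, which is \emph{negative} there and of magnitude up to $|\log(\tau^\alpha/\delta)|\sim \alpha\, km$. Once the sign claim is dropped, your lower bound $\Phi_{i,\ave}\ge c_1 m$ is too weak to dominate this term, since $m/k<C_1$ forces $k$ comparable to $m$ and hence $\alpha\, km \gg m$. The fix, implicit in the proof of Theorem \ref{Tcmain1}, is to use the sharper bound $\Phi_{i,\ave}\ge c\,km$ (the prefactor $\frac{m}{2\sqrt{2}\sin(\sqrt{2}\pi/k)}$ in Lemma \ref{LVBcliff} is of order $mk$); then $\Phi_i \ge c\,km - C\alpha\,km - C > 0$ for $\alpha$ small and $km$ large, and the rest of your argument goes through.
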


\begin{proof}
The proof consists of checking the hypotheses of Theorem \ref{Ttheory} and is very similar to the proof of \ref{Tcmain1}, so we only give a sketch pointing out some of the differences.  Although now $\taubold$ takes three distinct values, Assumption \ref{Azetabold}\ref{Atau} still holds because of \ref{Dvarphi3}.  
The map $Z_\zetabold$ defined in \ref{Pclifford2} is clearly a linear isomorphism for each $\zetabold \in \domzb$, and so by \ref{Pclifford2} Assumption \ref{Azetabold}\ref{AZ} holds. 
\end{proof}

%%%%%%%%%%%%%%%%%%%%%%%%%%%%%%%%%%%%%%%%%%%%%%%
% END OF THEORY PAPER
% BEGINNING OF APPLICATIONS 
%%%%%%%%%%%%%%%%%%%%%%%%%%%%%%%%%%%%%%%%%%%%%%%

\section*{Part II: Construction of LD solutions on $O(2)\times \Z_2$ symmetric backgrounds}

%%%%%%%%%%%%%%%%%%%%%%%%%%%%%%%%%%%%%%%%%%%%%%%
% SECTION: RLD SOLUTIONS
%%%%%%%%%%%%%%%%%%%%%%%%%%%%%%%%%%%%%%%%%%%%%%%
\section{RLD Solutions}
\label{S:RLD}

\subsection*{Symmetries} 
\label{sub:sym}
\nopagebreak

\begin{definition}[Symmetries on $\cyl$] 
\label{Drotcyl} 
\label{Dgroup}
We define the group $O(2)\times \Z_2$, where $O(2)$ was defined in \ref{NEuc} and $\Z_2: = \{ \mathrm{Id}, \Sbar\}$. 
By convention, we identify each element of $O(2)$ or $\Z_2$  with its image under the inclusion  $O(2) \hookrightarrow O(2)\times \Z_2$ or $\Z_2 \hookrightarrow O(2) \times \Z_2$. 

Fix an orientation on $\Sph^1$ and define for $c\in \R$ the rotation $\thetasf_c \in O(2)$ of $\Sph^1$ by angle $c$ in accordance with the given orientation and the reflection $\thetabar_c \in O(2)$ determined by requesting that $\thetabar_c$ reverses orientation and has fixed-point set $\{ \pm (\cos c, \sin c)\}\subset \Sph^1$. 

Let $O(2)$ act on $\Sph^1$ by the usual isometric action, $\Z_2$ act on $\R$ by requesting that $\Sbar \, \sss = - \sss$ $\forall \sss\in \R$, and $O(2)\times \Z_2$ act on $\cyl: = \Sph^1\times \R$ (recall \ref{Ecyl}) by the product action with respect to the actions of $O(2)$ on $\Sph^1$ and $\Z_2$ on $\R$ just defined.

Finally, for $c\in \R$ we define the reflection $\Sbar_c \in \Isom ( \cyl, \chi)$ by $\Sbar_c(p, \sss):= (p, 2c - \sss)$.
\end{definition} 

\begin{assumption}[Assumptions on the background]
\label{Aimm}
In Part II 
we assume the following:
\begin{enumerate}[label={(\roman*)}]
\item Convention \ref{background} holds and $\Sigma$ is orientable and closed. 
\item The embedding of $\Sigma$ in $N$ is equivariant with respect to effective, isometric actions of $O(2)\times \Z_2$ on $\Sigma$ and on $N$.  
Moreover, the action of $\Sbar$ on $\Sigma$ is orientation reversing. 
\item $|A|^2+ \Ric(\nu, \nu)> 0$ on $\Sigma$.
\item $\ker \Lcal_\Sigma$ is trivial modulo the $O(2)\times \Z_2$ action on $\Sigma$.
\end{enumerate}
\end{assumption}

\begin{definition}[Parallel circles and equatorial circles] 
\label{Dpar}
We call the nontrivial orbits of the action of $O(2)$ on $\Sigma$ \emph{parallel circles} 
and those fixed by $\Sbar$ \emph{equatorial}.  
\end{definition}

\begin{lemma}
\label{LAconf}
\ref{Aimm} implies that the following hold. 
\begin{enumerate}[label=\emph{(\roman*)}]
\item $\Sigma$ is diffeomorphic to a sphere or to a torus.
\item There is an $O(2)\times \Z_2$-equivariant (with respect to the actions in \ref{Aimm} and in \ref{Dgroup}) map $X_\Sigma : \cyl_I \rightarrow \Sigma$, 
for some $I = (-l, l), 0 < l \leq \infty$ (recall \ref{Ecyl}), which is a conformal diffeomorphism onto its image and satisfies the following.  
\begin{enumerate}[label=\emph{(\alph*)}]
\item If $\Sigma$ is a torus then $\Sigma$ contains exactly two equatorial circles and $l<\infty$. 
Moreover $X_\Sigma$ extends to a covering map $\Xtilde_{\Sigma} : \cyl \rightarrow \Sigma$
satisfying $\Xtilde_\Sigma \circ \Sbar_l = \Xtilde_\Sigma$ and the equatorial circles are 
$\Xtilde_\Sigma(\cyl_0)$ and $\Xtilde_\Sigma(\cyl_l)$. 
Furthermore the image of $X_\Sigma$ is $\Sigma\setminus \Xtilde_\Sigma(\cyl_l)$. 
\item If $\Sigma$ is a sphere then $I = \R$ and the image of $X_\Sigma$ is $\Sigma$ minus two points 
with the ends of $\cyl$ mapped to deleted neighborhoods of the points removed.
Moreover $X_\Sigma(\cyl_0)$ is the unique equatorial circle of $\Sigma$.
\end{enumerate}
\end{enumerate}
\end{lemma}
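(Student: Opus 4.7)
The plan is to deduce (i) from the classification of smooth circle actions on closed orientable $2$-manifolds, and to construct $X_\Sigma$ in (ii) via isothermal coordinates adapted to the $SO(2)$-orbits. Assumptions \ref{Aimm}(iii)--(iv) play no role in this lemma; they are reserved for later constructions.

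For (i), the subgroup $SO(2)\subset O(2)\times\Z_2$ acts effectively, smoothly and isometrically on the closed orientable surface $\Sigma$. By the classical classification, such an action either has no fixed points (forcing all orbits to be principal, $\Sigma/SO(2)\cong\Sph^1$, and hence $\Sigma\cong\T^2$) or has isolated fixed points, which by an Euler characteristic count plus effectiveness forces exactly two fixed ``poles'' and $\Sigma\cong\Sph^2$; any exceptional orbits with nontrivial finite isotropy would produce non-orientable neighborhoods and are hence excluded.

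For (ii), I would fix a principal orbit $\Gamma_0$ and a unit-speed geodesic $\gamma$ meeting $\Gamma_0$ orthogonally; by $SO(2)$-invariance of the metric, $\gamma$ is orthogonal to every $SO(2)$-orbit it meets, and sweeping $\gamma$ by the $SO(2)$-action yields coordinates $(\theta,t)$ in which $g=dt^2+f(t)^2 d\theta^2$ with $f>0$ on principal orbits. The substitution $\sss:=\int_0^t d\tau/f(\tau)$ produces the conformally flat form $g=f^2(d\sss^2+d\theta^2)$ and a conformal, $SO(2)$-equivariant map $X_\Sigma$ onto the union of principal orbits. In the sphere case, $f$ vanishes simply at each pole so the defining integrals diverge and $I=\R$; in the torus case $\gamma$ closes up into a smooth closed geodesic of finite length, giving $I=(-l,l)$ with $l<\infty$. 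Since $\Sbar$ commutes with $SO(2)$ and is orientation-reversing on $\Sigma$, it descends to an orientation-reversing involution of the leaf space $\Sigma/SO(2)$, whose fixed set consists of exactly one point in the sphere case and exactly two points in the torus case, corresponding to the equatorial circles. Centering the $\sss$-coordinate so that an equator lies at $\sss=0$ (and, in the torus case, the other at $\sss=l$ where $2l$ is the $\sss$-period), $\Sbar$ pulls back to $\sss\mapsto -\sss$. In the torus case, $\Xtilde_\Sigma:\cyl\to\Sigma$ is then set up as the natural $\sss$-periodic covering compatible with this symmetry, so that $\cyl_0$ and $\cyl_l$ both map onto equatorial circles and the identity $\Xtilde_\Sigma\circ\Sbar_l=\Xtilde_\Sigma$ can be verified directly from the construction; equivariance with the reflection component of $O(2)$ follows analogously.

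The main technical obstacle is the argument that $\Sbar$ cannot act as the antipodal rotation on any principal orbit: if it did, then $\Sbar$ would coincide there with the rotation by $\pi$ element of $SO(2)$, and real-analyticity of the isometry together with connectedness of $\Sigma$ would propagate this equality to all of $\Sigma$, contradicting effectiveness of the product $O(2)\times\Z_2$. Ruling this out, together with the orientation-reversing nature of $\Sbar$, is what pins down the form $\sss\mapsto -\sss$ of $\Sbar$ in the chosen isothermal coordinates and justifies the announced positioning of the equatorial circles and the covering relation in (ii)(a).
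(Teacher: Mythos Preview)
Your proof follows essentially the same strategy as the paper's—classify smooth effective circle actions on closed orientable surfaces, exclude exceptional orbits, pass to the quotient $\Sigma/SO(2)$, and use that $\Sbar$ is orientation-reversing—but you supply substantially more detail, in particular the explicit isothermal-coordinate construction of $X_\Sigma$ via $\sss=\int dt/f(t)$, which the paper dismisses as ``follows easily.'' Your argument that exceptional orbits would force non-orientable neighborhoods is different from (and arguably cleaner than) the paper's local-constancy-of-orbit-type argument, but both are valid.

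The one genuinely problematic step is your ``main technical obstacle'' paragraph. The real-analyticity hypothesis is unavailable (only smoothness of $g$ is assumed), and more to the point the conclusion you are reaching is false: the involution $\Sbar:(\theta,t)\mapsto(\theta+\pi,-t)$ is an effective, $O(2)$-commuting, orientation-reversing isometry that agrees with $\thetasf_\pi$ on the equatorial orbit $t=0$ yet is \emph{not} equal to $\thetasf_\pi$ globally—indeed their differentials already disagree on the normal direction at every equatorial point, since one preserves and the other reverses orientation of $\Sigma$. So no propagation occurs and effectiveness is not contradicted. What \emph{is} true, and what suffices for counting equatorial circles, is exactly what you (and the paper) assert earlier: since $\Sbar$ commutes with $SO(2)$ it preserves orbit orientation, and since it reverses orientation of $\Sigma$ it must reverse orientation on the quotient. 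The finer question of whether $\Sbar$ restricts to the identity or to the half-turn on an equatorial circle is not determined by \ref{Aimm} alone; the two possibilities differ by the group automorphism $\Sbar\mapsto\Sbar\cdot\thetasf_\pi$ of $O(2)\times\Z_2$, and the lemma (like the paper's explicit applications in Part~III) implicitly adopts the labeling in which $\Sbar$ fixes the equatorial circles pointwise.
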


\begin{proof}
Since $\Sigma$ admits an effective circle action, a result of Kobayashi \cite[Corollary 4]{Kobayashi} implies that 
$\Sigma$ has nonnegative Euler characteristic.  
Item (i) follows from this since $\Sigma$ is orientable. 

Next, we claim that $\Sigma$ has no exceptional orbits under the action of $SO(2)$.  
To see this, consider a $SO(2)$-orbit circle $S\subset \Sigma$ and choose (since $\Sigma$ is orientable) a unit normal field $\nu$ on $S$ in $\Sigma$.  Since $\exp^\Sigma$ commutes with $SO(2)$, $S_{z}: = \{ \exp^\Sigma_p( z \nu(p)) : p\in S\}$ is a $SO(2)$-orbit for all $z\in \R$, and smoothness implies the $S_z$ are of the same orbit type for all $z\in (-\epsilon, \epsilon)$ and $\epsilon>0$ small enough.  Since the principal orbits are dense, $S$ is a principal orbit and (since $SO(2)$ acts effectively) is in particular covered exactly once by $SO(2)$. 
 
The quotient of the principal orbits of $\Sigma$ by $SO(2)$ is diffeomorphic to $\R$ or $\Sph^1$, corresponding to the cases where $\Sigma$ is respectively a sphere or torus.  
Observe that the $\Z_2$ action descends to the quotient and (by \ref{Aimm}(ii)) $\Sbar$ reverses orientation.  
The corresponding fixed point set is then either a single point (when $\Sigma$ is a sphere) or a pair of points (when $\Sigma$ is a torus).
 The existence of a conformal map $X_\Sigma$ as in (ii) and (a) and (b) now follows easily. 
\end{proof}

\begin{remark}
\label{Rcylid}
Occasionally, we will use the diffeomorphism $X_\Sigma : \cyl_I \rightarrow X_\Sigma(\cyl_I)$ in \ref{LAconf}(ii) 
to use the standard coordinates $(\sss, \vartheta)$ on $\cyl$ as a coordinate system on $X_\Sigma( \cyl) \subset \Sigma$.
To simplify notation, later we will also occasionally identify $\cyl_I$ with $X_{\Sigma}(\cyl_I) \subset \Sigma$; 
for example, in Section \ref{S:LDs} we will identify configurations $L[\sbold; \mbold]$ defined in \ref{dL} 
with their images $X_\Sigma(L[\sbold; \mbold]) \subset \Sigma$ in order to define an appropriate class of LD solutions (see \ref{LsymLD} and \ref{Lphiavg}) on $\Sigma$. 
\qed 
\end{remark}

\begin{notation}
\label{Nconf}
We denote $\conf \in C^\infty(\cyl_I)$ the function satisfying  $X^*_\Sigma g = e^{2\conf} \chi$ (recall \ref{LAconf}(ii)).
\qed 
\end{notation}

\begin{remark}
\label{R:cc}
In Sections \ref{S:cat} and \ref{S:ccat} \ref{Aimm}(i) does not apply since the catenoid is not compact and the critical catenoid is a compact annulus with boundary. 
The theory in this section then has to be modified accordingly (see for example Lemma \ref{Lccb}).   
\qed 
\end{remark}

%%%%%%%%%%%%%%%%%%%%%%%%%
% LINEARIZED EQUATION 
%
%%%%%%%%%%%%%%%%%%%%%%%%%

We call a function defined on an $O(2)$-invariant domain of $\Sigma$ or of $\cyl$ which is constant on each $O(2)$ orbit  a \emph{rotationally invariant function}.  The following notation will simplify the presentation. 

\begin{notation}
\label{Nsym}
Consider a function space $X$ consisting of functions defined on a domain
$\Omega$, where $\Omega \subset \cyl$ or $\Omega \subset \Sigma$. 
If $\Omega$ is a union of $O(2)$ orbits, we use a subscript ``$\sss$'' to denote 
the subspace of functions $X_{\sss}$ consisting of 
rotationally invariant functions, which are therefore constant on each $O(2)$ orbit.
If moreover $\Omega$ is invariant under $\grouprotcyl$,
we use a subscript ``$|\sss|$'' to denote 
the subspace of $O(2)\times \Z_2$-invariant functions. 
\qed 
\end{notation}

\begin{notation}
\label{Npartial}
If $\Omega \subset \cyl$ or $\Omega \subset X_\Sigma(\cyl_I)$ is a domain and $u \in C^0_\sss(\Omega)$ has one-sided partial derivatives at $\sss=\sbar$, then we denote these partial derivatives by using the notation
$$
\partial_{+\,}u(\sbar) :=\left. \frac{\partial u  }{\partial \sss}\right|_{\sss=\sbar+},
\qquad\qquad
\partial_{-\,} u(\sbar) :=-\left. \frac{\partial u  }{\partial \sss}\right|_{\sss=\sbar-}.
$$
If $u$ is $C^1$, we use the abbreviation $\partialx u := \frac{\partial u}{\partial \sss}$.  In that case, $\partial u = \partial_{+}u = -\partial_{-}u$.  
\qed 
\end{notation}

\begin{definition}[Symmetry groups] 
\label{dHcyl}
We define $\grot_{\munder} :=  \big\langle  \thetasf_{2\pi/\munder}, \Sbar \big \rangle$ and  
$\gcyl_{\munder} :=  \big\langle  \thetabar_0, \thetabar_{\pi/\munder} \big\rangle \times \big\langle  \Sbar \big\rangle$ 
for $\munder \in \N$ 
(recall \ref{Dgroup}). 
Clearly $\grot_{\munder}$ is an index $2$ subgroup of  $\gcyl_{\munder}< \grouprotcyl$ and 
$\big\langle  \thetabar_0, \thetabar_{\pi/\munder} \big\rangle < O(2)$ is a dihedral group 
of order $2\munder$. 
\end{definition}

\begin{definition}[$(\sbold,\mbold)$-symmetric sets and configurations] 
\label{Dlpar}
\label{dL} 
\label{dLmbold} 
Given $\sbar \in [0, \infty)$ or $\sbold := (\sss_1, \dots, \sss_k)\in \R^k$ 
such that 
$0 \leq\sss_1<\cdots<\sss_k < l$, we define 
\begin{align*}
 \Lpar[\sbar] := \cyl_{\{\pm \sbar\}}, \qquad 
  \Lpar[\sbold] := \bigcup_{i=1}^k \Lpar[\sss_i],
\end{align*}
and we denote the number of connected components (circles) of $\Lpar[\sbold]$ by $\kcir[\sbold]$. 
For $\munder \in\N$ we define 
\begin{equation*}
\begin{gathered}
L[\sbar; \pm \munder]  : =\Lmer[\pm \munder]\cap \Lpar[\sbar], 
\quad \text{where} \\
\Lmer[\munder]:= \gcyl_{\munder} \Lmer[1], 
\qquad 
\Lmer[1] := \left\{  (1,0) \right\} \times \R ,
\\    
\Lmer[-\munder] := \thetasf_{\pi/\munder} \Lmer[\munder].  
\end{gathered}
\end{equation*}
Given then $\mbold := (m_1, \dots, m_k)\in (\Z\setminus \{0\})^k$ 
we call a set $L\subset \Sigma$ or a configuration $\taubold : L \rightarrow \R_+ $  
\emph{$(\sbold,\mbold)$-rotational} 
if $L := \bigcup_{i=1}^k L_i$ with each $L_i\subset \Lpar[\sss_i]$ containing $|m_i|$ points in each component of $\Lpar[\sss_i]$ ($i=1,...,k$);  
we denote then the average value of the restriction $\left. \taubold \right|_{L_i}$ by $\tau_i$. 
We call such an $L$ or $\taubold$ \emph{$(\sbold,\mbold)$-symmetric} 
if we moreover have 
$L_i = L[\sss_i ; m_i ]$ 
and that the restriction $\left. \taubold \right|_{L_i}$ is $\gcyl_{|m_i|}$-invariant---hence $\taubold(L_i)= \{\tau_i\}$---for each $i=1,...,k$.  
An $(\sbold,\mbold)$-symmetric set $L$ is then uniquely determined and will be denoted by $L[\sbold; \mbold]$.   
Finally we denote by $m\in \N$ the greatest common divisor of $|m_1|,\dots, |m_k|$, 
so that the stabilizer in $O(2)\times \Z_2$ of an $(\sbold,\mbold)$-symmetric $L$ or $\taubold$ is $\groupmcyl$.
\end{definition}

\begin{remark}
\label{RLcard}
It is worth noting the following: 
\begin{enumerate}[label={(\roman*)}]
\item $\kcir[\sbold] = 2k$ if $\sss_1 > 0$ and $\kcir[\sbold] = 2k-1$ if $\sss_1 =0$.
\item In the case where $\Sigma$ is a torus, we could allow $\sss_k = l$ in the constructions later in Part II.  In order to simplify the presentation, however, 
we do not discuss this case. 
\item $L[\sbar; \pm \munder]$ 
are the only subsets of $\Lpar[\sbar]$ which are invariant under $\gcyl_{\munder}$  and contain exactly $\munder$ points equidistributed on each circle of $\Lpar[\sbar]$
($2\munder$ in total if $\sbar\ne0$). 
The sign of $\pm \munder$ encodes the choice between these two subsets in $L[\sbold; \mbold]$. 
\item An $(\sbold,\mbold)$-rotational set $L$ as in \ref{dLmbold} has cardinality $|L|= |m_1|+ 2\sum_{i=2}^k|m_i|$ if $\sss_1=0$ 
and $|L|= 2\sum_{i=1}^k |m_i|$ points if $\sss_1 >0$. 
\item 
Note that an $(\sbold,\mbold)$-symmetric configuration $\taubold$ is uniquely determined by $\{\tau_i\}_{i=1}^k$. 
\qed 
\end{enumerate}
\end{remark}

\subsection*{Basic facts and definitions}

We will estimate our LD solutions by comparing them with corresponding rotationally invariant solutions.  We therefore need to define the appropriate class of rotationally invariant solutions of the linearized equation.  We begin with some notation from \cite{kapmcg}.

\begin{definition}
\label{dsigma}
Let $\R^\N := \left\{ (a_i)_{i\in \N}: a_i \in \R\right\}$.  
For any $k\in \N$, we identify $\R^{k}$ with a subspace of~$\R^\N$ by the map
$(a_1, \dots, a_{k}) \mapsto (a_1, \dots, a_{k}, 0, 0, \dots)$.
We consider the normed space $\left(\ell^1(\R^{\N}), |\cdot |_{\ell^1}\right)$, where
\begin{align*}
\ell^1(\R^\N) := \bigg\{ \abold = (a_i)_{i\in \N} \in \R^\N : \sum_{i=1}^{\infty} |a_i|< \infty \bigg \}, \quad
|\abold |_{\ell^1} := \sum_{i=1}^{\infty} |a_i|.
\end{align*}

\end{definition} 
\begin{remark}
\label{rsigbij}
If $\bsigma = (\sigma_i)_{i\in \N}\in \ell^{1}\left( \R^{\N}\right)$, $\xibold = (\xi_i)_{i\in \N} \in \ell^{\infty}\left( \R^{\N} \right)$ and some positive numbers $F_{i\pm}$, $i\in \N$, satisfy
\begin{align*}
e^{\sigma_i} = \frac{F_{i+1+} + F_{i+1-}}{F_{i+} + F_{i-}}, \quad \xi_i = \frac{F_{i+} - F_{i-}}{F_{i+}+F_{i-}}\quad i\in \N,
\end{align*}
then note that
$\left| \xibold \right|_{\ell^\infty} < 1$ and 
 for any $1\leq j \leq i <\infty$ that
\begin{align*}
F_{i+} = \frac{1  + \xi_i }{1   + \xi_j}\big(e^{\sum_{l=j}^{i-1}\sigma_l}\big)F_{j+} 
= \frac{1  + \xi_i }{1   - \xi_j}\big(e^{\sum_{l=j}^{i-1}\sigma_l}\big)F_{j-},
\\ 
F_{i-} = \frac{1  - \xi_i }{1   + \xi_j}\big(e^{\sum_{l=j}^{i-1}\sigma_l}\big)F_{j+}
= \frac{1  - \xi_i }{1   - \xi_j}\big(e^{\sum_{l=j}^{i-1}\sigma_l}\big)F_{j-},
\end{align*}
and therefore
$\sup\{ F_{i\pm} \}_{i\in \N}  \Sim_{\Eunder} \inf\{ F_{i\pm} \}_{i\in \N}$ with $\Eunder : = \frac{1+|\xibold|_{\ell^\infty}}{1- |\xibold|_{\ell^\infty}}\big(e^{|\bsigma|_{\ell^1}}\big)$. 
\qed
\end{remark}

\begin{definition}[Scale invariant flux]
\label{dF}
 If $\phi \in C^0_\sss( \Omega)$, where either $\Omega = \cyl_{(a, b)}$ or $\Omega = X_{\Sigma}( \cyl_{(a, b)})$ (recall \ref{Ecyl} and \ref{Nsym}), $(a, b) \subset I$, and  $\phi$ is piecewise smooth and nonzero on $\Omega$, we define $F^\phi_{\pm}:(a, b)\rightarrow \R$ by
\[ F^\phi_{\pm}(\sss) =  \frac{\partial_\pm \phi(\sss)}{\phi(\sss)} = \partial_\pm \log |\phi|(\sss).\] \end{definition} 

\begin{remark}
\label{rFlux} Note that $F^\phi_\pm = F^{c\phi}_\pm$  $\forall c\in \R \setminus \{0\}$.
\qed 
\end{remark}

\begin{definition}[Subdivisions of cylindrical domains] 
\label{Dlpar2}
Given $\sbold$ as in \ref{Dlpar} and a domain $\Omega$, where $\Omega \subset \cyl$ (or $\Omega \subset \Sigma$), we will denote by $\Omega^{\sbold}$ 
the \emph{subdivision of $\Omega$} by $\Lpar[ \sbold]$ (or of $X_{\Sigma}(\Lpar[\sbold])$): 
$\Omega^{\sbold}$ is the abstract surface which is the disjoint union of the $\Omega \cap A$'s, 
where $A$ is the closure of any connected component (a disk or an annulus) of $\cyl\setminus \Lpar[\sbold]$ (or of $\Sigma \setminus X_{\Sigma}(\Lpar[\sbold])$).  
Clearly functions on $\Omega$ can be thought of as functions on $\Omega^{\sbold}$ as well.
\end{definition}

Note for example that a function defined on $\Omega$ which is in $C^\infty( \Omega^{\sbold})$ is also in $C^0(\Omega)$ but not necessarily in $C^1(\Omega)$; 
it is ``piecewise smooth" on $\Omega$.

\begin{definition}
\label{dLchi}
We define an operator $\Lchi$ on $\cyl_I$ by
\begin{align}
\label{EopL}
\Lchi : = \Delta_\chi + V = e^{2\conf} \Lcal_{\Sigma},
\quad
\text{ where }  
\quad
\Delta_\chi := \frac{\partial^2}{\partial \sss^2} +  \frac{\partial^2}{\partial \theta^2},
\end{align}
$V\in C^\infty_{|\sss|}(\cyl_I)$ is defined by $V = e^{2\conf } X^*_{\Sigma}(|A|^2+ \Ric(\nu, \nu))$, and $\conf$ as in \ref{Nconf}.
 \end{definition}
 When $\phi$ is rotationally invariant, note also that the equation $\Lchi \phi = 0$ amounts to the ODE 
\begin{align}
\label{ELchirot}
\frac{d^2\phi}{d\sss^2} + V(\sss) \, \phi = 0.
\end{align}

\begin{definition}[RLD solutions, cf. {\cite[3.5]{kapmcg}}]
\label{RL}
Given $\Omega = \Sigma$ or $\Omega = X_{\Sigma}(\cyl_I) \subset \Sigma$, we say  $\phi\in C^0_{|\sss|} \big( \Omega \big)$ is a \emph{rotationally invariant (averaged) linearized doubling (RLD) solution} on $\Omega$ if 
\begin{enumerate}[label=\emph{(\roman*)}]
\item  
\label{RL+} 
$\phi >0$. 
\item  There is $k\in \N$ and $\sbold^\phi \in [0, l)^k$ as in \ref{Dlpar}, such that 
$\phi \in C^\infty_{|\sss|} \big( \Omega^{\sbold^\phi}\big)$ and 
$\Lcal_\Sigma \phi =0$ on $\Omega^{\sbold^\phi}$. 
\item  
\label{RLF} 
For $i=1, \dots, k$, $F^\phi_-(\sss^\phi_i)> 0$ and $F^\phi_+(\sss^\phi_i)> 0$. 
\end{enumerate}
We call $\sbold^\phi$ the \emph{singular} or \emph{(derivative) jump latitudes}, and the circles contained in $\Lpar[\sbold^\phi]$ \emph{singular circles}, of $\phi$.
If $\phi(0) = 1$, we say $\phi$ is a \emph{unit RLD solution}. 
We say that an RLD solution on $X_{\Sigma}(\cyl_I)$ is \emph{smooth at the ends} if it can be extended smoothly to $\Sigma$.  
\end{definition}

\begin{remark}
\label{R:RLD}
Note that to allow the construction of immersed doublings we can simply relax \ref{RL}\ref{RL+} to requiring 
$\phi(\sss^\phi_i)\ne 0$ for $i=1, \dots, k$. 
If $\phi>0$ fails then, the constructed doublings will not be embedded, as for $k=1$ in \ref{R:k12}.  
On the other hand \ref{RL}\ref{RLF} is always necessary to ensure positive size for the catenoidal bridges, 
so its violation makes the construction impossible, as for $k=2$ in \ref{R:k12} 
or $\kcir=1$ in \ref{rphibd}.   
Finally note that by Lemma \ref{LAconf}, $X_\Sigma(\cyl_I)$ is either $\Sigma$ with two points (when $\Sigma$ is a sphere) or a circle (when $\Sigma$ is a torus) removed.  
We will first study RLD solutions on $X_\Sigma(\cyl_I)$, instead of on $\Sigma$, 
in order to facilitate the parametrization of the families of RLD solutions. 
\qed 
\end{remark}

\begin{definition}
\label{Dkmin}
Define $\kcirmin = \min \kcir[ \sbold^\phi]$, where the minimum is over all RLD solutions $\phi$. 
\end{definition}

\begin{definition}[Quantities associated to RLD solutions, cf. {\cite[3.6]{kapmcg}}]
\label{RLquant}
Given $\phi$ as in \ref{RL}, define
\begin{align*}
\Fboldunder^\phi =& \big( F^\phi_{i-}, F^\phi_{i+} \big)_{i=1}^{k} \in \R^{2k}_+, 
& \quad
\Fbold^\phi =& ( F^\phi_i)_{i=1}^k  \in \R^{k}_+,
\\
\quad \bsigma^\phi =& (\sigma^\phi_j)_{j=1}^{k-1} \in \R^{k-1}, 
&
\xibold^\phi =& \big( \xi^\phi_i\big)_{i=1}^k \in \R^k,  
\end{align*}
where for $i=1, \dots, k$  and $j=1, \dots, k-1$,
\begin{equation} 
\label{Exi}
\begin{aligned}
F^\phi_{i\pm} :=& F^\phi_\pm (\sss^\phi_i), 
& \qquad 
2F^\phi_i :=& F^\phi_{i+} + F^\phi_{i-} , 
\\ 
e^{\sigma^\phi_j} :=& \frac{F^\phi_{j+1}}{F^\phi_j}, 
& \qquad 
\xi^\phi_i :=& \frac{F^\phi_{i+} - F^\phi_{i-}}{F^\phi_{i+} + F^\phi_{i-}} .
\end{aligned}
\end{equation} 
We define $\bsigmaunder^\phi: = (\bsigma^\phi, \xibold^\phi) \in \R^{k-1}\times \R^k$ and call the entries of $\bsigmaunder^\phi$ the \emph{flux ratios} of $\phi$.   
\end{definition}

\begin{remark}
\label{Rs0}
By the $\Sbar$ symmetry $\xi^\phi_1 = 0$ when $\sss^\phi_1 = 0$. 
\qed 
\end{remark}

\begin{remark}
\label{Rru}
Using \eqref{Exi} (see also Remark \ref{rsigbij}), we recover $\Fboldunder^\phi$ from $F^\phi_{1}$ and $\bsigmaunder^\phi$ 
by 
\begin{align}
\label{EFxi}
F^\phi_{1\pm} = (1\pm \xi^\phi_1)F^\phi_{1} , 
\qquad F^\phi_{i\pm} =  (1\pm \xi^\phi_i) \big( e^{\sum_{l=1}^{i-1} \sigma^\phi_l} \big)  F^\phi_{1}, 
 \quad i>1.
\end{align}
In Proposition \ref{Pexist} we construct RLD solutions $\phi$ by prescribing $F^\phi_{1-}$ and $\bsigmaunder^\phi$.
\qed 
\end{remark}

In our applications later in Part II, we will primarily be interested (see \ref{Lphiavg}(iii) and \ref{Lmatching}) in RLD solutions which are close to being ``balanced" in the sense of the following definition.

\begin{definition}[Balanced RLD solutions]
\label{dLbalanced} 
Given 
$\mbold := (m_1, \dots, m_k)\in (\Z\setminus \{ 0\} )^k$ we define  
$\bsigmaslash = \bsigmaslash[\mbold] = (\sigmaslash_j)_{j=1}^{k-1} \in \R^{k-1}$     
and 
$\bsigmaunderslash = \bsigmaunderslash[\mbold] := (\bsigmaslash, \zerobold) \in \R^{k-1}\times \R^k$ by 
$e^{\sigmaslash_j} :=  \left| {m_{j+1}} / {m_j} \right| $ for $j=1, \dots, k-1$.  
We call an RLD solution $\phi$ \emph{balanced} with respect to $\mbold$ if it satisfies
$\bsigmaunder^\phi = \bsigmaunderslash[\mbold]$ (recall \ref{RLquant}).  
\end{definition}
The corresponding definition of balanced RLD solutions in \cite[Definition 3.5]{kapmcg} asserted instead that $\bsigmaunder^\phi = \zerobold$, 
which occurs in the context of \ref{dLbalanced} when all the $m_i$'s have the same absolute value.  
This difference is explained by the fact that in the constructions of \cite{kapmcg}, 
the intersection of an LD solution's singular set with $\Lpar[\sss_i]$ consisted of some number $m \in \N$ points for each $i \in \{1, \dots, k\}$, 
whereas the singular sets of the LD solutions we study later in Part II more generally have $|m_i| \in \N$ points on each component of $\Lpar[\sss_i]$ 
(see \ref{dLmbold} and \ref{Lphiavg})  for $i \in \{1, \dots, k\}$, and the $|m_i|$'s need not all be equal. 

\subsection*{Existence and uniqueness of RLD solutions} 

 \begin{lemma}[Existence and properties of $\phiend$] 
\label{AV1}
\label{AV2}
The following hold. 
\begin{enumerate}[label=\emph{(\roman*)}]
\item 
$V$ as defined in \ref{dLchi} satisfies $V> 0$.  
There exists $C>0$ and for each $j\in \N$, $C(j)>0$ such that for all $\sss\in \cyl_I$, $V(\sss) \Sim_{C} e^{-2 |\sss|}$ and $\frac{\partial^jV}{\partial \sss^j}(\sss) \leq C(j) e^{-2|\sss|}$.
\item 
There exists a unique $\phiend \in C^\infty_{\sss}(\cyl_{I})$ satisfying
\[ 
\Lchi \phiend = 0, \quad
\lim_{\sss \nearrow l} \phiend(\sss) = 1, \quad
 \text{and} \quad 
 \lim_{\sss \nearrow l} F^{\phiend}_+(\sss) = 0.
 \]
\item 
For all large enough $\sss$, $F_+^{\phiend}(\sss) < C e^{-2\sss}$ for some $C>0$.
\item 
An RLD solution $\phi$ as in \ref{RL} is smooth at the ends if and only if $F^\phi_{k+} = F^{\phiend}_+(\sss^\phi_k)$.
\end{enumerate}
\end{lemma}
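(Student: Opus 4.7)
For part (i), the positivity of $V$ is immediate: by Assumption \ref{Aimm}(iii), $|A|^2+\Ric(\nu,\nu)>0$ on $\Sigma$, and $e^{2\conf}>0$ by definition of $\conf$. The decay estimate is vacuous in the torus case ($l<\infty$ and $V$ is smooth and periodic on the universal cover, by the extension in \ref{LAconf}(ii)(a)), so the content is in the sphere case where $l=\infty$ and $\sss\to\pm\infty$ corresponds to approaching a pole $p_\pm$ of $\Sigma$. The plan is to analyze $\conf$ via the standard change of coordinates $(r,\vartheta)\mapsto(\sss,\vartheta)$ with $r=e^{-\sss}$, which identifies a neighborhood of $\sss=+\infty$ in $(\cyl,\chi)$ with a punctured disk equipped with the metric $e^{2\sss}(dr^2+r^2 d\vartheta^2)=e^{2\sss}(d\xx^2+d\yy^2)$. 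Since $X_\Sigma^*g$ extends smoothly across $p_+$ as a Riemannian metric in the disk coordinates $(\xx,\yy)$, the factor $h:=e^{2\conf}e^{2\sss}=e^{2\conf}/r^2$ extends to a smooth positive function of $(\xx,\yy)$ at the origin. Hence $e^{2\conf}\Sim_C e^{-2\sss}$, and since $\sss$-derivatives correspond to applying the bounded radial vector field $-(\xx\partial_\xx+\yy\partial_\yy)$ to $h$, all $\sss$-derivatives of $h$ remain bounded near $r=0$. Combining this with the smoothness of $X_\Sigma^*(|A|^2+\Ric(\nu,\nu))$ up to $p_+$ yields the decay and derivative bounds for $V$; the $\sss\to-\infty$ case is identical.

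For parts (ii) and (iii), the plan is to construct $\phiend$ as (the pullback to the cylinder of) the unique rotationally invariant solution of $\Lcal_\Sigma u=0$ near $p_+$ that extends smoothly across $p_+$ and satisfies $u(p_+)=1$. In the torus case this is merely the solution to the smooth ODE $\phiend''+V\phiend=0$ (cf.\ \eqref{ELchirot}) with initial data $\phiend(l)=1$, $\phiend'(l)=0$; by continuous dependence and the positivity of $V$, the solution exists, is smooth, and satisfies the stated one-sided limit conditions. In the sphere case one works in disk coordinates $(\xx,\yy)$ near $p_+$, where $\Lcal_\Sigma$ extends to a smooth elliptic operator. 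Restricting to the two-dimensional space of rotationally invariant germs at $p_+$, one obtains a smooth second-order ODE in $r$ with a regular singular point at $r=0$, whose indicial roots are $0$ and $0$; the two-dimensional solution space is spanned by a solution smooth at $r=0$ and one with a logarithmic singularity. Choose $\phiend$ to be the smooth solution normalized by $\phiend(p_+)=1$.

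It remains to extract the asymptotic statements (ii) and (iii) from this $\phiend$. Rotational invariance and smoothness at $p_+$ force $\phiend(\xx,\yy)=1+O(\xx^2+\yy^2)=1+O(r^2)$; in particular $\phiend\to 1$ as $r\to 0$, i.e.\ as $\sss\nearrow\infty$, which is the required limit in (ii). For (iii), since $\partial_\sss=-r\partial_r$ in these coordinates, the smoothness and $O(r^2)$ behavior of $\phiend-1$ give $|\partial_\sss\phiend(\sss)|\le Cr^2=Ce^{-2\sss}$, and therefore $F^{\phiend}_+(\sss)=\partial_\sss\phiend/\phiend\le Ce^{-2\sss}$ for $\sss$ large. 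This also yields $F^{\phiend}_+(\sss)\to 0$ as $\sss\nearrow\infty$, completing (ii).

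The only mildly subtle point is the invocation of smoothness of $\Lcal_\Sigma$ across $p_+$ together with the rotational symmetry to justify the expansion $\phiend=1+O(r^2)$, which requires observing that the $O(2)$-invariance eliminates the first-order Taylor terms in $(\xx,\yy)$; all remaining steps reduce to standard ODE and conformal-coordinate computations.
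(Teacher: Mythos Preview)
Your proposal is correct and follows essentially the same approach as the paper, which merely remarks that the result is trivial in the torus case and that in the sphere case (i)--(iii) follow from $X_\Sigma$ mapping the ends of $\cyl$ to punctured disks on $\Sigma$. You have accurately filled in the details of that sketch, including the key observation that $e^{2\conf}\Sim_C e^{-2\sss}$ via the change of variables $r=e^{-\sss}$ and the construction of $\phiend$ as the smooth rotationally invariant solution at the pole.
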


\begin{proof}
Items (i)-(iii) follow easily from \ref{Aimm}, \ref{LAconf}, and \ref{dLchi} and are trivial in the case $\Sigma$ is a torus.  
In the case $\Sigma$ is a sphere, items (i)-(iii) follow easily from the fact that the conformal map $X_\Sigma$ maps the ends of $\cyl$ to punctured disks on $\Sigma$.  
Finally, (iv) follows from \ref{RL} , item (ii) above,  and uniqueness for ODE solutions.
\end{proof}

\begin{lemma}[Flux monotonicity]  
\label{LFmono}
Suppose $\phi \in C^\infty_{\sss}\big(\cyl_{[a, b]}\big)$, $\phi> 0$, and $\Lchi \phi = 0$.
\begin{enumerate}[label=\emph{(\roman*)}]
	\item For $\sss\in (a, b)$, $\frac{d F^{\phi}_-}{d\sss}(\sss) = V(\sss) + \big(F^\phi_-(\sss)\big)^2>0$.
	\item $F^\phi_-(b) + F^\phi_+(a) =  \int_{a}^b V(\sss) + \big( F^\phi_-(\sss)\big)^2d\sss$.
\end{enumerate}
\end{lemma}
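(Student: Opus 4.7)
The plan is to compute everything directly from the definitions. Since $\phi$ is smooth and positive on $\cyl_{[a,b]}$, both one-sided derivatives in \ref{Npartial} and \ref{dF} collapse to ordinary derivatives, so I can write $F^\phi_+(\sss)=\phi'(\sss)/\phi(\sss)$ and $F^\phi_-(\sss)=-\phi'(\sss)/\phi(\sss)$. In particular $F^\phi_-=-F^\phi_+$ throughout $[a,b]$, a relation I will need at the endpoints in part (ii).

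For part (i), I will differentiate $F^\phi_-$ via the quotient rule:
\begin{equation*}
\frac{dF^\phi_-}{d\sss}=-\frac{\phi''\phi-(\phi')^2}{\phi^2}=-\frac{\phi''}{\phi}+\Bigl(\frac{\phi'}{\phi}\Bigr)^{\!2}.
\end{equation*}
Since $\Lchi\phi=0$ amounts to the ODE $\phi''+V\phi=0$ from \eqref{ELchirot}, I can substitute $\phi''/\phi=-V$ to get $\tfrac{dF^\phi_-}{d\sss}=V+(F^\phi_-)^2$. The strict positivity of the right-hand side follows because $V>0$ by Lemma~\ref{AV1}(i), so both terms are nonnegative with the first one strictly positive.

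For part (ii), I will simply integrate the identity from part (i) over $[a,b]$, obtaining $F^\phi_-(b)-F^\phi_-(a)=\int_a^b V(\sss)+(F^\phi_-(\sss))^2\,d\sss$, and then use the smoothness relation $F^\phi_-(a)=-F^\phi_+(a)$ noted above to rewrite the left-hand side as $F^\phi_-(b)+F^\phi_+(a)$. There is no serious obstacle here; the only substantive input is the ODE satisfied by $\phi$ together with the positivity of $V$, so the whole argument is essentially a Riccati-type computation applied to the logarithmic derivative.
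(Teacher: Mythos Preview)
Your proof is correct and follows exactly the approach the paper takes: a direct Riccati-type computation of the logarithmic derivative using the ODE \eqref{ELchirot} and the definition in \ref{dF}, with positivity coming from \ref{AV1}, and then integration for part (ii).
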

\begin{proof}
The equalities are calculations using \eqref{ELchirot} and \ref{dF}.  The inequality in (i) follows from \ref{AV1}.
\end{proof}

\begin{definition} 
\label{dHflux}
Given $F\in \R$ and $\sbar \in I$, we define $H = H[ F; \sbar] \in C^\infty_\sss (\cyl_I)$
by requesting that it satisfies the equation $\Lchi H = 0$ with initial data $H(\sbar) = 1$ and $F^H_+(\sbar) = F$.  We also define $\phie: = H[0;0]$. 
\end{definition}

\begin{lemma}
\label{LHmono}
\begin{enumerate}[label=\emph{(\roman*)}]
\item $\displaystyle{\frac{ \partial F_+^{H[F; \sbar]}}{\partial \sbar}(\sss) = \frac{V(\sbar)+F^2}{(H[F; \sbar](\sss))^2} >0}$.
\item $\displaystyle{\frac{ \partial F_+^{H[F; \sbar]}}{\partial F}(\sss) = \frac{1}{(H[F; \sbar](\sss))^2} >0}$.
\end{enumerate}
\end{lemma}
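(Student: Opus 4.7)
The plan is to recognize both identities as Wronskian computations for the linear ODE \eqref{ELchirot}, written here as $u'' + V u = 0$ since $H[F;\sbar]$ is rotationally invariant. Write $H = H[F;\sbar]$, so $H$ satisfies $H'' + VH = 0$ with $H(\sbar) = 1$ and $H'(\sbar) = F$ (using $F^H_+(\sbar) = F$). For smooth $H$ we have $F^H_+(\sss) = H'(\sss)/H(\sss)$, so for any smooth variation $H_\lambda := \partial H/\partial\lambda$,
\begin{equation*}
\frac{\partial F^H_+}{\partial\lambda}(\sss) \;=\; \frac{H_\lambda'(\sss) H(\sss) - H'(\sss) H_\lambda(\sss)}{H(\sss)^2} \;=\; \frac{W(H, H_\lambda)(\sss)}{H(\sss)^2},
\end{equation*}
where $W(u,v) := u v' - u' v$. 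Since $V$ is independent of $F$ and $\sbar$, the variation $H_\lambda$ also satisfies $H_\lambda'' + V H_\lambda = 0$; because this ODE has no first-derivative term, $W(H, H_\lambda)$ is constant in $\sss$. Thus it suffices to evaluate $W(H,H_\lambda)$ at $\sss = \sbar$.

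For (ii), setting $\lambda = F$ and differentiating $H(\sbar) = 1$ and $H'(\sbar) = F$ in $F$ yields $H_F(\sbar) = 0$ and $H_F'(\sbar) = 1$. Therefore $W(H, H_F)(\sbar) = 1 \cdot 1 - F \cdot 0 = 1$, and the identity $\partial_F F^H_+(\sss) = 1/H(\sss)^2$ follows immediately. The positivity is obvious.

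For (i), setting $\lambda = \sbar$ and differentiating the identities $H(\sbar; \sbar) = 1$ and $H_\sss(\sbar; \sbar) = F$ in $\sbar$ gives $H_\sbar(\sbar) + H'(\sbar) = 0$ and $H_{\sss\sbar}(\sbar) + H''(\sbar) = 0$. Using $H'(\sbar) = F$ and the ODE $H''(\sbar) = -V(\sbar) H(\sbar) = -V(\sbar)$, we obtain $H_\sbar(\sbar) = -F$ and $H_\sbar'(\sbar) = V(\sbar)$. Hence $W(H, H_\sbar)(\sbar) = 1 \cdot V(\sbar) - F \cdot (-F) = V(\sbar) + F^2$, giving the formula in (i). Positivity follows from $V > 0$ (Lemma \ref{AV1}(i)).

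There is essentially no obstacle: the argument is a direct Wronskian computation, the only small points to watch being (a) that $\sbar \mapsto H[F;\sbar]$ genuinely gives a smooth family of solutions (standard ODE theory) and (b) that the formula is stated at points $\sss$ where $H(\sss) \neq 0$, which is implicit in the definition of $F^H_+(\sss)$.
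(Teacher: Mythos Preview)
Your proof is correct and takes essentially the same approach as the paper: both arguments show that $\bigl(\partial_\lambda F^H_+\bigr)\,H^2$ is constant in $\sss$ and then evaluate at $\sss=\sbar$. Your framing makes this constancy transparent by identifying the quantity as the Wronskian $W(H,H_\lambda)$, whereas the paper verifies it by direct differentiation using Lemma~\ref{LFmono}; the evaluations at $\sbar$ are likewise equivalent (you differentiate the initial data, the paper differentiates the constraint $F^{H[F;\sbar]}_+(\sbar)=F$).
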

\begin{proof}
By direct calculation, switching the order of differentiation, and using \ref{LFmono}, we find
\begin{align*}
\frac{\partial}{\partial \sss} \left( \frac{\partial F^H_+}{\partial u} H^2 \right) = 0, 
\end{align*}
where $H = H[F; \sbar]$ and $u$ is either $\sbar$ or $F$.  It follows that
\begin{align*}
\frac{\partial F^H_+}{\partial u}(\sss) = \frac{\partial F^H_+}{\partial u}(\sbar) \left( \frac{H(\sbar)}{H(\sss)}\right)^2. 
\end{align*}
Differentiating both sides of the equation $F^{H[F; \sbar]}_+(\sbar) = F$ with respect to $\sbar$ yields 
the first equality in 
\[ \frac{\partial F^H_+}{\partial \sbar}(\sbar) = - \frac{\partial F^H_+}{\partial \sss}(\sbar) = V( \sbar)+ F^2,\]
where the second equality follows from \ref{LFmono}.  Observing also that $\frac{\partial F^H_+}{\partial F}(\sbar) = 1$ and combining the above completes the proof.
\end{proof}

\begin{definition}
\label{Fmax}
Define $\Fmax: = \lim_{\sss \nearrow l} F^{\phie}_-(\sss)$ if $\phie>0$ on $\cyl_I$ and $\Fmax: =\infty$ otherwise.
\end{definition}

\begin{lemma}
If $\Sigma$ is a sphere, $\Fmax = \infty$. 
\end{lemma}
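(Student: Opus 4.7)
\smallskip

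\textbf{Proof proposal.} The plan is to show that when $\Sigma$ is a sphere (so $I = \R$ and $l = \infty$), the ``good'' case of the definition of $\Fmax$ forces the limit to diverge. If $\phie$ has a zero on $\cyl_\R$, then $\Fmax = \infty$ by convention and there is nothing to prove, so I would assume $\phie > 0$ throughout $\cyl_\R$ and seek to show $\lim_{\sss \to \infty} F^{\phie}_-(\sss) = \infty$. Since $\phie = H[0;0]$ is smooth with $\phie(0) = 1$ and $\partial_\sss \phie(0) = 0$, the boundary values $F^{\phie}_+(0) = F^{\phie}_-(0) = 0$ can be read off directly from Definition \ref{dF} and Notation \ref{Npartial}.

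Next I would invoke the flux monotonicity formulas in Lemma \ref{LFmono}. Part~(i), together with $V > 0$ (from \ref{AV1}(i), using Assumption \ref{Aimm}(iii)), implies that $F^{\phie}_-$ is strictly increasing on $[0, \infty)$ as long as $\phie > 0$. Hence $F^{\phie}_-(\sss) > 0$ for every $\sss > 0$. Applying Lemma \ref{LFmono}(ii) with $a = 0$, $b = \sss$ and substituting $F^{\phie}_+(0) = 0$ gives the key identity
\begin{equation*}
F^{\phie}_-(\sss) \;=\; \int_{0}^{\sss} \Big( V(t) + \big(F^{\phie}_-(t)\big)^{2}\Big)\, dt.
\end{equation*}

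I would then argue by contradiction: suppose $\Fmax < \infty$. Letting $\sss \to \infty$ in the identity above forces convergence of the right-hand side, so in particular $\int_{0}^{\infty} \big(F^{\phie}_-(t)\big)^{2}\,dt < \infty$. But since $F^{\phie}_-$ is positive and strictly increasing on $(0, \infty)$, for any fixed $\sss_{0} > 0$ we have $F^{\phie}_-(t) \ge F^{\phie}_-(\sss_{0}) =: c > 0$ for all $t \ge \sss_{0}$, which yields $\int_{\sss_{0}}^{\infty} \big(F^{\phie}_-(t)\big)^{2} dt \ge \int_{\sss_{0}}^{\infty} c^{2}\, dt = \infty$, the desired contradiction.

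The argument is essentially self-contained once Lemma \ref{LFmono} is in hand, and I do not expect any real obstacle; the only subtle point to verify is that $F^{\phie}_-(0) = 0$, which follows immediately from the smoothness of $\phie$ at the origin and the definition of $H[0;0]$. Note in particular that the argument does not use Assumption \ref{Aimm}(iv) nor the detailed asymptotics of $V$ in \ref{AV1}; the positivity $V > 0$ alone suffices, via the quadratic term $(F^{\phie}_-)^2$, to drive the flux to infinity.
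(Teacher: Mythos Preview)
Your proof is correct, but it takes a different route from the paper's. The paper argues directly from the second-order ODE: since $\phie'' = -V\phie$ with $V>0$ and $\phie'(0)=0$, $\phie$ is concave while positive and $\phie'$ becomes strictly negative for $\sss>0$; integrating then forces $\phie$ to hit zero in finite time, so the ``otherwise'' clause of Definition~\ref{Fmax} applies and $\Fmax=\infty$. In particular the paper shows that your second branch ($\phie>0$ on all of $\cyl_\R$) is actually vacuous. Your argument instead works inside that branch via the Riccati/flux formulation of Lemma~\ref{LFmono}: assuming $\phie>0$, you show the flux integral must diverge, hence $\Fmax=\infty$ regardless of which branch holds. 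Both approaches are valid; the paper's is a touch shorter and yields the extra information that $\phie$ has a root, while yours stays entirely within the flux machinery already developed and avoids revisiting the underlying ODE.
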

\begin{proof}
If $\Sigma$ is a sphere, then $I=\R$, and then it follows from \eqref{EopL}, that $V>0$, and that $\partial \phie(0)=0$ that $\phie$ has a root $\sss^{\phie}_{\mathrm{root}} \in (0, \infty)$. 
\end{proof}

We are now ready to parametrize families of RLD solutions by their flux ratios and $F^\phi_{1-}$. The notation differs slightly depending on whether the total number of circles $\kcir$ (recall \ref{Dlpar}) is even (Proposition \ref{Pexist}) or odd (Proposition \ref{Pexist2}).

\begin{prop}[Existence and uniqueness of RLD solutions, $\kcir$ even] 
\label{Pexist}
Given $F \in (0,  \Fmax)$ and 
 \begin{align*}
 \bsigmaunder = (\bsigma, \xibold) = \left( \, (\sigma_i)_{i=1}^\infty , (\xi_{j})_{j=1}^\infty\, \right)
  \in \ell^1\left(\R^\N\right)\oplus \ell^{\infty}\left( \R^\N\right)
\end{align*}
satisfying $|\xibold |_{\ell^\infty} < 1$,  there is a unique $k= k[F; \bsigmaunder]\in \N$ and a unique unit RLD solution $\phat = \phat[F; \bsigmaunder]$ on $X_\Sigma(\cyl_I)$ satisfying the following.
\begin{enumerate}[label=\emph{(\alph*)}]
\item $F^{\phat}_{1-} = F$ and $\sss_1^\phat >0$. 
\item $\bsigmaunder^\phat = \left. \bsigmaunder \right|_k$ where $k = k[ F; \bsigmaunder] \in \N$ 
is the number of jump latitudes of $\phat$ (recall \ref{RL}) and $\left. \bsigmaunder \right|_k: =  \left( \, (\sigma_i)_{i=1}^{k-1} , (\xi_{j})_{j=1}^{k}\, \right) \in \R^{k-1}\times \R^k$.
\end{enumerate}
Moreover the following hold. 
\begin{enumerate}[label=\emph{(\roman*)}]
\item  $\sss_1^\phat, \dots, \sss_k^\phat$ are increasing smooth functions of $F$ for fixed $\bsigmaunder$.
\item $k[F; \bsigmaunder]$ is a nonincreasing function of $F$.  Further, there exists $\kminev\in \N$ and a decreasing sequence $\{a_{\kminev-1, \bsigmaunder}, a_{\kminev, \bsigmaunder}, \dots\}$ such that $k[F; \bsigmaunder] = k$ if and only if $F \in [a_{k, \bsigmaunder}, a_{k-1, \bsigmaunder})$.
\item The restriction of $\phat[F; \bsigmaunder]$ on any compact subset of $\cyl_I$ depends continuously on $F$ and $\bsigmaunder$.
\end{enumerate}
\end{prop}
\begin{proof}
Suppose $\phat$ is a unit RLD solution satisfying (a) and (b).  Because $\sss^\phat_1>0$, the symmetries imply that $\phat = \phie$ on a neighborhood of $\cyl_0$.  But then \ref{RL}(i)-(ii), the flux monotonicity (Lemma \ref{LFmono}), and Remark \ref{rsigbij} inductively determine $\sbold^\phat$ and $\phat$ uniquely on $\cyl_I$.  This concludes the uniqueness part. 

We next construct a family of RLD solutions $\phat[F; \bsigmaunder]$ satisfying (a) and (b).  By the hypotheses, \ref{Fmax}, and \ref{LFmono}, there is a unique $\sss_1 \in (0, l)$ such that $\phie$ is positive on $\cyl_{(-\sss_1, \sss_1)}$ and $F = F^{\phie}_-(\sss_1)$.  By Remark \ref{rsigbij}, there is a unique extension $\phat[F; \bsigmaunder]$ of $\left.\phie\right|_{\cyl_{(-\sss_1, \sss_1)}}$ to a maximal domain $\cyl_{(-a, a)}$ and  a unique (a priori possibly infinite) sequence $\sbold = (\sss_1, \sss_2, \dots )$ such that $\phat >0$, $\Lchi \phat = 0$ on $\cyl^{\sbold}_{(-a, a)}$, and
$F^\phat_{\pm}(\sss_i) = F_{i\pm}$, where $F_{i-}:= F$ and all other $F_{i\pm}$ are defined by requesting the identities in \ref{rsigbij} hold.  To show that $\phat$ is an RLD solution, we need only show that $a = l$ and $\sbold$ is a finite sequence. 
By Remark \ref{rsigbij} and Lemma \ref{LFmono}, 
\begin{align*}
 2F & \Sim_{\Eunder} \left( F_{i+1-} + F_{i+}\right)=  \int_{\sss_i}^{\sss_{i+1}} V(\sss) + \big( F^{\phat}_-(\sss)\big)^2d\sss, 
\end{align*}
where $\Eunder$ is as in \ref{rsigbij}.  
This implies a uniform in $i$ lower bound on $\sss_{i+1} - \sss_i$.  
Therefore $a = l$.  
In the case $l = \infty$ we show there are finitely many jump latitudes by estimating an upper bound for $\sss_k$ in terms of $F$ and $\bsigmaunder$: 
specifically, we claim that if $\phat$ has a jump at $\sss_{j+1}$ and $\sss_j$ is large enough that $\phiend>0$ on $(\sss_j, l)$ (recall \ref{AV2}), 
then $F^\phat_+(\sss_j) \leq F^{\phiend}_+(\sss_j)$, 
which using the comparability of all the fluxes to $F$ with the fact from \ref{AV2} that $\lim_{\sss \rightarrow \infty} F^{\phi_{\mathrm{end}}}_+(\sss) = 0$, 
implies that $\sss_j$ cannot be arbitrarily large.  

The claim follows by observing that $F^\phat_+ - F^{\phiend}_+$ cannot change sign on $(\sss_j, \sss_{j+1})$ as $F^\phat_+$ and $F^{\phiend}_+$ 
satisfy the same first order equation \ref{LFmono}(i), while $F^\phat_+$ changes sign and $F^{\phiend}_+$ remains positive on $(\sss_j, \sss_{j+1})$.  
This concludes the proof of the existence and uniqueness of $\phat[F; \bsigma]$ satisfying (a)-(b).

Since as above $F = F^{\phie}_-(\sss^\phat_1)$, \ref{LFmono} implies $\sss^\phat_1$ is increasing as a function of $F$.  By \ref{rsigbij}, $F^\phat_{2-} = \frac{1-\xi_2}{1-\xi_1}e^{\sigma_1} F$.  By combining this with both parts of Lemma \ref{LHmono}, it follows that $\sss^{\phat}_2$ is increasing as a function of $F$.  Using this and arguing inductively shows that $\sss^\phat_j$ is a strictly increasing function of $F$ for $2<j \leq k$.

 That $k[F; \bsigmaunder]$ is nonincreasing in $F$ and the existence the sequence follows easily from the monotonicity of $\sss^\phat_k$ in (ii).  To complete the proof of (ii), we must show that $\kminev$ is well defined, that is independent of $\bsigmaunder$.  If $\Fmax <\infty$, by the flux monotonicity \ref{LFmono} it is easy to see that this is true and moreover that $\kminev = 1$.  Suppose then that $\Fmax = \infty$, and consider RLD solutions $\phat = \phat[F, \bsigmaunder], \phat' = \phat[F', \bsigmaunder']$ for $\bsigmaunder, \bsigmaunder' \in \ell^1\left(\R^\N\right)\oplus \ell^{\infty}\left( \R^\N\right)$ fixed and variable $F, F' \in (0, \Fmax)$.  By choosing $F'$ large enough in terms of $F$ and $\bsigmaunder$, we may ensure that $F^{\phat'}_{\pm i}> F^\phat_{\pm i}$ for all $i$ such that both of the preceding are defined.  By \ref{LFmono} and \ref{LHmono}, this implies that $k[F; \bsigmaunder] \geq k[F'; \bsigmaunder']$.  On the other hand, by choosing $F'$ small enough in terms of $F$ and $\bsigmaunder$, it follows analogously that $k[F; \bsigmaunder] \leq k[F'; \bsigmaunder']$, and together these inequalities prove that $\kminev$ is well defined.  

Finally, (iii) follows from \eqref{EFxi} and smooth dependence of ODE solutions on initial conditions.
\end{proof}

\begin{prop}
[Existence and uniqueness of RLD solutions, $\kcir$ odd]
\label{Pexist2}
Given $F \in (0, \infty)$ and
 \begin{align*}
 \bsigmaunder = (\bsigma, \xibold) = \left( \, (\sigma_i)_{i=1}^\infty , (\xi_{j})_{j=1}^\infty\, \right)
  \in \ell^1\left(\R^\N\right)\oplus \ell^{\infty}\left( \R^\N\right)
\end{align*}
satisfying $|\xibold |_{\ell^\infty} < 1$ and $\xi_1 =0$, there is a unique $k= k[F; \bsigmaunder]\in \N$ and a unique unit RLD solution $\phicheck = \phicheck[F; \bsigmaunder]$ on $X_\Sigma(\cyl_I)$ satisfying the following.
\begin{enumerate}[label=\emph{(\alph*)}]
\item $F^{\phicheck}_{1} = F^{\phicheck}_{1\pm} = F$ and $\sss_1^\phicheck=0$. 
\item $\bsigmaunder^\phicheck = \left. \bsigmaunder \right|_k$ where $k = k[ F; \bsigmaunder] \in \N$ 
is the number of jump latitudes of $\phicheck$ (recall \ref{RL}) and $\left. \bsigmaunder \right|_k: =  \left( \, (\sigma_i)_{i=1}^{k-1} , (\xi_{j})_{j=1}^{k}\, \right) \in \R^{k-1}\times \R^k$.
\end{enumerate}
Moreover the following hold. 
\begin{enumerate}[label=\emph{(\roman*)}]
\item  $\sss_2^\phicheck, \dots, \sss_k^\phicheck$ are increasing smooth functions of $F$ for fixed $\bsigmaunder$.
\item $k[F; \bsigmaunder]$ is a nonincreasing function of $F$. Further, there exists $\kminodd\in \N$ and a decreasing sequence $\{b_{\kminodd-1, \bsigmaunder}, b_{\kminodd, \bsigmaunder}, \dots\}$ such that $k[F; \bsigmaunder] = k$ if and only if $F \in [b_{k, \bsigmaunder}, b_{k-1, \bsigmaunder})$.
\item The restriction of $\phicheck[F; \bsigmaunder]$ on any compact subset of $\cyl_I$ depends continuously on $F$ and $\bsigmaunder$.
\end{enumerate}
\end{prop}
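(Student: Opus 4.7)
The plan is to closely parallel the proof of Proposition \ref{Pexist}, with the only real modification being that the iteration now starts at the equator $\sss_1 = 0$ rather than at a point $\sss^\phat_1 > 0$. The hypothesis $\xi_1 = 0$ is a compatibility condition forced by the $|\sss|$-symmetry together with $\sss^\phicheck_1 = 0$ (cf.\ Remark \ref{Rs0}). For uniqueness, since $\phicheck \in C^0_{|\sss|}$ is a smooth positive solution of \eqref{ELchirot} on $(0, \sss^\phicheck_2)$ with Cauchy data $\phicheck(0) = 1$ and $\partial\phicheck(0+) = F$, ODE uniqueness gives $\phicheck \equiv H[F;0]$ there, and the $\Sbar$-symmetry gives $\phicheck(\sss) = \phicheck(-\sss)$ on $(-\sss^\phicheck_2, 0]$. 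The remaining jump latitudes are then uniquely determined by an induction identical to that in the proof of Proposition \ref{Pexist}: using the flux monotonicity of Lemma \ref{LFmono}(i), $\sss^\phicheck_{i+1}$ is the unique point $>\sss^\phicheck_i$ at which $F^\phicheck_-$ attains the target value $F^\phicheck_{(i+1)-}$ determined from $F$ and $\bsigmaunder|_k$ via Remark \ref{rsigbij}.

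For existence, I run this procedure in reverse: starting from $\phicheck = H[F;0]$ on its maximal positivity interval about $0$, I extend inductively across each $\sss^\phicheck_{i+1}$ using the target fluxes prescribed by $\bsigmaunder$. At each step the monotonically increasing $F^\phicheck_-$ climbs from $-F^\phicheck_{i+} < 0$ to $+\infty$ (the latter as $\phicheck$ approaches a hypothetical zero), so the positive target $F^\phicheck_{(i+1)-}$ is attained at a unique latitude strictly before $\phicheck$ can vanish; hence the iteration produces a positive piecewise-smooth function. Termination of the iteration at a finite number $k = k[F; \bsigmaunder]$ of jump latitudes uses the same argument as in the proof of Proposition \ref{Pexist}: Lemma \ref{LFmono}(ii) and Remark \ref{rsigbij} give a uniform positive lower bound on $\sss^\phicheck_{i+1} - \sss^\phicheck_i$ (settling the torus case $l < \infty$), while in the sphere case $l = \infty$ an ODE-uniqueness comparison of $F^\phicheck_+$ with $F^{\phiend}_+$ on an interval free of jumps, combined with the decay $F^{\phiend}_+(\sss) \to 0$ from Lemma \ref{AV2}(iii) and the uniform lower bound $F^\phicheck_{i+} \geq C F$ from Remark \ref{rsigbij}, rules out jumps beyond some finite latitude.

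The monotonicity of $\sss^\phicheck_j$ in $F$ in (i) follows by induction as in Proposition \ref{Pexist}(i): Lemma \ref{LHmono} shows $\sss^\phicheck_2$ increases with $F$ (since the relevant target flux scales as $(1-\xi_2)e^{\sigma_1}F$), and subsequent $\sss^\phicheck_j$ inherit this property through the iteration. Part (ii), asserting the existence of $\kminodd$ and the decreasing sequence $\{b_{k, \bsigmaunder}\}$, follows exactly as in Proposition \ref{Pexist}(ii), noting that here no analogue of $\Fmax$ enters because the equatorial jump absorbs any positive $F$. Continuous dependence (iii) is immediate from continuous dependence of ODE solutions on initial conditions across each jump. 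The main technical subtlety, as in Proposition \ref{Pexist}, is the termination argument in the sphere case; but since the comparison with $\phiend$ is structurally identical past $\sss^\phicheck_2$, no essentially new ingredient is required beyond those already developed in the cited proof.
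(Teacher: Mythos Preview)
Your proposal is correct and follows precisely the approach the paper indicates: the paper's own proof simply omits the details as ``very similar to the proof of \ref{Pexist}'' and notes only that $\xi_1 = 0$ is forced by the symmetry about $\sss^\phicheck_1 = 0$. Your write-up supplies exactly those omitted details in the natural way, starting the iteration at the equator with $\phicheck = H[F;0]$ on $(0,\sss^\phicheck_2)$ and extending by $|\sss|$-symmetry, then proceeding inductively as before.
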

\begin{proof}
We omit the details of the proof, which are very similar to the proof of \ref{Pexist}.  Note however that the assumption $\xi_1 = 0$ is necessary (recall \ref{Rs0}) due to the symmetry about $\sss^\phicheck_1 = 0$.
\end{proof}

\begin{remark}
It is clear that any RLD solution is a constant multiple of a $\phat[F; \bsigmaunder]$ as in \ref{Pexist} or a $\phicheck[F; \bsigmaunder]$ as in \ref{Pexist2}.
\qed 
\end{remark}

\begin{remark}
\label{Rphicheck}
RLD solutions with $\kcir$ odd were constructed on $\Sph^2$ in \cite[Lemma 7.22]{kapmcg}, where they were called $\phat_{\mathrm{eq}}[F; \bsigmaunder]$.
\qed 
\end{remark}

\subsection*{Estimates on RLD solutions}

We proceed to estimate the families of RLD solutions just constructed.  To avoid unnecessary notational difficulties, we state and prove the next results for the families of RLD solutions $\phat$ with $\kcir$ even and leave the trivial modifications for the families of solutions $\phicheck$ with $\kcir$ odd to the reader. 

\begin{definition}
\label{Sk}
We define for $k\in\N$, $k\geq \kminev$ 
the domain $S_k\subset \R\times\R^{k-1}\times\R^k$ by 
\begin{align*}
S_k:=
\left \{\, \left ( \, F \, ,
\, (\sigma_i )_{i=1}^{k-1} \, , \, (\xi_j)_{j=1}^k \, \right) \, : \, 
F \in (0, a_{k-1, \bsigmaunder})
 \text{ and } |\xibold|_{\ell^\infty} < 1\right\}, 
\end{align*}
where $a_{k-1, \bsigmaunder}$ is as in \ref{Pexist}.
By \ref{dsigma} and \ref{Pexist}, $S_{k+1}\subset  S_k \subset \R\times \R^\N\times\R^\N$. 
\end{definition}

\begin{lemma}[Recursive formulas for the derivatives of $\sss_k$]
\label{Lsderiv}
An RLD solution $\phat = \phat[F; \bsigmaunder]$ as in \ref{Pexist} has $k \geq \kminev$ jumps if and only if $F \in (0, a_{k-1, \bsigmaunder})$
 or equivalently by \ref{Sk}, $(F, \left. \bsigmaunder\right|_k) \in S_k$. 
The $k$th jump latitude $\sss_k$ depends only on $F$ and $\left. \bsigmaunder\right|_k$ and can  be considered as a smooth function defined on $S_k$.  
Alternatively, we can consider each $F$ as a smooth function of $F_1 = F^\phat_1$ and $\left. \bsigmaunder\right|_k$, and then we have for $k=1$
\begin{equation} 
\label{Msderiv0}
\begin{aligned} 
\big(V(\sss_1) + \big( F^\phat_{1-}\big)^2\big)\frac{\partial \sss_1 }{ \partial F_1} =1 - \xi_1, 
\qquad 
\big(V(\sss_1) + \big( F^\phat_{1-}\big)^2\big)\frac{\partial \sss_1 }{ \partial \xi_1} = - F_1, 
\end{aligned} 
\end{equation} 
and for $k>1$ the recursive formulas (note $S_k\subset S_{k-1}$) 
\begin{multline}
\label{Msderiv}
\big( V(\sss_k) + \big(F^\phat_{k-}\big)^2\big)\frac{ \partial \sss_{k}}{ \partial F_1}
= 
\big( V(\sss_{k-1}) + \big(F^\phat_{k-1+}\big)^2\big)\frac{ \partial \sss_{k-1}}{ \partial F_1}\bigg( \frac{\phat(\sss_{k-1})}{\phat(\sss_{k})}\bigg)^2+\\
+ (1+ \xi_{k-1}) \big( e^{\sum_{l=1}^{k-2} \sigma_l} \big)\bigg( \frac{\phat(\sss_{k-1})}{\phat(\sss_{k})}\bigg)^2   + 
(1- \xi_{k}) \big( e^{\sum_{l=1}^{k-1} \sigma_l} \big) , 
\end{multline}
\begin{multline}
\label{Msderiv3}
\big( V(\sss_k) + \big(F^\phat_{k-}\big)^2\big)\frac{ \partial \sss_{k}}{ \partial \sigma_j}
= 
\big( V(\sss_{k-1}) + \big(F^\phat_{k-1+}\big)^2\big)\frac{ \partial \sss_{k-1}}{ \partial \sigma_j}\bigg( \frac{\phat(\sss_{k-1})}{\phat(\sss_{k})}\bigg)^2+\\
+F_1(1+\xi_{k-1})\frac{\partial}{\partial \sigma_j} \big( e^{\sum_{l=1}^{k-2} \sigma_l} \big) \bigg( \frac{\phat(\sss_{k-1})}{\phat(\sss_{k})}\bigg)^2
- F_1(1- \xi_{k}) \frac{\partial}{\partial \sigma_j}\big( e^{\sum_{l=1}^{k-1} \sigma_l} \big) ,
\end{multline}
\begin{multline}
\label{Msderiv2}
\big( V(\sss_k) + \big(F^\phat_{k-}\big)^2\big)\frac{ \partial \sss_{k}}{ \partial \xi_j}
= 
\big( V(\sss_{k-1}) + \big(F^\phat_{k-1+}\big)^2\big)\frac{ \partial \sss_{k-1}}{ \partial \xi_j}\bigg( \frac{\phat(\sss_{k-1})}{\phat(\sss_{k})}\bigg)^2+\\
+\delta_{j(k-1)} \big( e^{\sum_{l=1}^{k-2} \sigma_l} \big)  F_1\bigg( \frac{\phat(\sss_{k-1})}{\phat(\sss_{k})}\bigg)^2- 
\delta_{jk} \big( e^{\sum_{l=1}^{k-1} \sigma_l} \big) F_1.
\end{multline}
\end{lemma}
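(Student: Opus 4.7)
The plan is to proceed by induction on $k$, exploiting the recursive structure in the construction of $\phat[F;\bsigmaunder]$ from the proof of \ref{Pexist}. The first assertion---that $\sss_k$ depends only on $F_1$ and $\left.\bsigmaunder\right|_k$ and that its natural domain of smoothness is $S_k$---follows directly from inspecting that construction: the restriction of $\phat$ to $\cyl_{[0,\sss_k]}$ is built up from $\phie$ by successively applying the matching conditions \eqref{EFxi} at $\sss_1,\dots,\sss_k$, which involve only $\xi_1,\dots,\xi_k$ and $\sigma_1,\dots,\sigma_{k-1}$, while $\phat$ has at least $k$ jumps iff $F<a_{k-1,\bsigmaunder}$ by \ref{Pexist}(ii). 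Together with \ref{Sk} this gives the claimed equivalence and the smoothness of $\sss_k$ as a function on $S_k$.

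For the recursion I would introduce, for each $k$, the smooth solution $H_k := H[F^\phat_{(k-1)+};\sss_{k-1}]$ of \ref{dHflux}, with the convention $H_1 := \phie$, $\sss_0:=0$, $F^\phat_{0+}:=0$. By construction the restriction of $\phat$ to $[\sss_{k-1},\sss_k]$ equals $\phat(\sss_{k-1})\,H_k$, so $H_k(\sss_k) = \phat(\sss_k)/\phat(\sss_{k-1})$ and the jump latitude is characterized by the implicit equation
$$-\,F^{H_k}_+(\sss_k) \;=\; F^\phat_{k-} \;=\; (1-\xi_k)\,\bigl(e^{\sum_{l=1}^{k-1}\sigma_l}\bigr)\,F_1,$$
where the first equality uses $F^{H_k}_-=-F^{H_k}_+$ (valid because $H_k$ is smooth) and the second uses \eqref{EFxi}, together with $F^\phat_{(k-1)+}=(1+\xi_{k-1})\bigl(e^{\sum_{l=1}^{k-2}\sigma_l}\bigr)F_1$. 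Differentiating this equation implicitly with respect to any of $F_1,\sigma_j,\xi_j$ and chasing the chain rule through the three dependencies of the left hand side---on $\sss_k$ directly, on the initial flux $F^\phat_{(k-1)+}$, and on the initial latitude $\sss_{k-1}$---yields \eqref{Msderiv0}--\eqref{Msderiv2}.

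The key inputs for the computation are: (i) $-\partial_\sss F^{H_k}_+(\sss_k) = V(\sss_k)+(F^\phat_{k-})^2$, which follows from \ref{LFmono}(i) applied to $F^{H_k}_-$ together with $(F^{H_k}_+)^2=(F^{H_k}_-)^2$ at $\sss_k$; (ii) $\tfrac{\partial F^{H_k}_+}{\partial F}(\sss_k)=(\phat(\sss_{k-1})/\phat(\sss_k))^2$ and $\tfrac{\partial F^{H_k}_+}{\partial \sbar}(\sss_k)=\bigl(V(\sss_{k-1})+(F^\phat_{(k-1)+})^2\bigr)(\phat(\sss_{k-1})/\phat(\sss_k))^2$, both obtained from \ref{LHmono} combined with $H_k(\sss_k)=\phat(\sss_k)/\phat(\sss_{k-1})$; and (iii) the explicit parameter dependencies of $F^\phat_{(k-1)+}$ and $F^\phat_{k-}$ recorded above. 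The base case $k=1$ degenerates neatly: $H_1=\phie$, the $\sss_{k-1}$ and $F^\phat_{(k-1)+}$ terms are absent, and implicit differentiation of $-F^{\phie}_+(\sss_1)=(1-\xi_1)F_1$ immediately gives \eqref{Msderiv0}.

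The main obstacle, and really the only substantive point, is bookkeeping. For the $\sigma_j$ recursion one must remember that $\sigma_j$ enters $F^\phat_{(k-1)+}$ only when $j\le k-2$ and enters $F^\phat_{k-}$ only when $j\le k-1$, which is precisely what the derivatives of the exponentials $\exp\bigl(\sum_{l=1}^{k-2}\sigma_l\bigr)$ and $\exp\bigl(\sum_{l=1}^{k-1}\sigma_l\bigr)$ encode in \eqref{Msderiv3}. For the $\xi_j$ recursion the explicit dependencies are localized: $\xi_j$ enters $F^\phat_{(k-1)+}$ only through $\xi_{k-1}$ and $F^\phat_{k-}$ only through $\xi_k$, producing the Kronecker deltas $\delta_{j(k-1)}$ and $\delta_{jk}$ in \eqref{Msderiv2}. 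Once these dependencies are tracked, the algebraic rearrangement to the recursive form in the statement is routine, and gathering the inductively inherited terms $\tfrac{\partial \sss_{k-1}}{\partial \cdot}$ on the right completes the proof.
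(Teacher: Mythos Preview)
Your proposal is correct and follows essentially the same approach as the paper: both write $\phat$ on $[\sss_{k-1},\sss_k]$ as $\phat(\sss_{k-1})H[F^\phat_{(k-1)+};\sss_{k-1}]$, set up the implicit equation $F^{H}_-(\sss_k)=(1-\xi_k)\bigl(e^{\sum_{l=1}^{k-1}\sigma_l}\bigr)F_1$ (equivalently your $-F^{H_k}_+(\sss_k)=F^\phat_{k-}$), and differentiate it via the chain rule using \ref{LFmono}(i) and both parts of \ref{LHmono}. Your write-up is in fact somewhat more explicit than the paper's---you spell out the base case $k=1$ and the $\sigma_j$/$\xi_j$ bookkeeping that the paper leaves to the reader---but the underlying argument is the same.
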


\begin{proof}
Below, we compute partial derivatives of $\sss_k$ with respect to $F_1, \sss_{k-1}$, and the entries of $\left.\bsigmaunder\right|_{k}$, 
from which the smoothness claimed follows immediately.  
To this end, we recall from \eqref{EFxi} and \ref{dHflux} that on $\cyl_{[\sss_{k-1}, \sss_{k}]}$ 
\begin{align}
\label{EHbump}
\phat = \phat(\sss_{k-1}) H\left[ (1+ \xi_{k-1})  \big( e^{\sum_{l=1}^{k-2}\sigma_l} \big)  F_1; \sss_{k-1} \right]. 
\end{align} 
For $H$ as in \eqref{EHbump} and using \eqref{EFxi}, we find
\begin{align}
\label{EFconstraint}
F^{H}_-(\sss_{k}) = (1-\xi_{k})  \big( e^{\sum_{l=1}^{k-1}\sigma_l} \big)  F_1.
\end{align}
Items \eqref{Msderiv}-\eqref{Msderiv2} then follow by using the chain rule to differentiate \eqref{EFconstraint} and Lemma \ref{LFmono} and both parts of \ref{LHmono} to calculate the partial derivatives of $F^H_-$.
\end{proof}

\begin{lemma}[Estimates for large $k$] 
\label{Lrldest}
For all $(F, \bsigmaunder ) \in S_{k+1}\setminus S_{k+2} $ with $|\bsigma|_{\ell^1}$ bounded and $|\xibold|_{\ell^\infty}< 1/10$, 
the RLD solution $\phat = \phat[F; \bsigmaunder]$ 
satisfies the following, where $C$ denotes constants depending only on an upper bound of $|\bsigma|_{\ell^1}$. 
\begin{enumerate}[label=\emph{(\roman*)}]
\item $F_1 \Sim_{C} \frac{1}{k}$ and $\sss_k < \frac{1}{2} \log Ck$.
\item 
For $2\leq i \leq k$ 
we have $\frac{1}{C k } < \sss_{i}-\sss_{i-1} <C$ 
and $\left| \log \frac{\phat(\sss_i)}{\phat(\sss_{i-1})}\right| < \frac{C}{k}$.  
\item 
$\big\| 1- \phat(\sss) : C^0\big( \cyl_{[-\sss_k, \sss_k]} \big) \big\| < \frac{C}{k}\log k$.  
\end{enumerate}
\end{lemma}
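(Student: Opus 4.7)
The plan is to exploit the flux monotonicity identity of Lemma \ref{LFmono}(ii) on each subinterval $[\sss_i,\sss_{i+1}]$, together with the comparability of all fluxes $F^\phat_{i,\pm}$ to $F_1$, which is supplied by Remark \ref{rsigbij} under our standing bounds $|\bsigma|_{\ell^1}\leq\mathrm{const}$ and $|\xibold|_{\ell^\infty}<1/10$ (so that $\Eunder\leq C$, and hence $F^\phat_{i,\pm}\Sim_C F_1$ for all $i$). Setting $\sss_0:=0$ and $F_{0,+}:=0$ (which is justified because $\phat$ coincides with $\phie$ on $\cyl_{(-\sss_1,\sss_1)}$, so $\partial\phat(0)=0$ by the symmetry $\xi_1=0$ of the central piece), Lemma \ref{LFmono}(ii) gives, for $i=0,1,\dots,k$, the identity $F_{i+1,-}+F_{i,+}=\int_{\sss_i}^{\sss_{i+1}}V\,d\sss+\int_{\sss_i}^{\sss_{i+1}}(F^\phat_-)^2\,d\sss$. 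On each interval, by Lemma \ref{LFmono}(i) $F^\phat_-$ is monotone with endpoint values $-F_{i,+}$ and $F_{i+1,-}$, hence $|F^\phat_-|\leq CF_1$ pointwise and $\int(F^\phat_-)^2\leq CF_1^2\Delta_i$ with $\Delta_i:=\sss_{i+1}-\sss_i$.

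Introducing $M(\sss):=\int_\sss^\infty V\,d\sigma$ (which by Lemma \ref{AV1}(i) satisfies $M(\sss)\Sim_C e^{-2\sss}$ for $\sss\geq 0$, with $M(0)$ a positive absolute constant), summing the identities telescopes the $V$-integrals and yields
\begin{equation*}
(2k+1)\,F_1 \;\Sim_C\; M(0)-M(\sss_{k+1}) \;+\; O\!\bigl(F_1^2\,\sss_{k+1}\bigr).
\end{equation*}
For part (i), the upper bound $F_1\leq C/k$ is obtained by a bootstrap: a first rough pass (ignoring the quadratic error) gives $F_1\lesssim 1/\sqrt{k}$ and hence $F_1^2\sss_{k+1}$ is lower order, feeding back the sharper bound $F_1\leq CM(0)/k$. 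For the lower bound, the hypothesis $(F,\bsigmaunder)\in S_{k+1}\setminus S_{k+2}$ prohibits a $(k+2)$-nd jump past $\sss_{k+1}$; extending $\phat$ as a smooth solution of $\Lchi\phat=0$ on $(\sss_{k+1},\infty)$, concavity ($\phat''=-V\phat<0$) forces $\phat$ to remain positive and increasing to a finite positive limit (the alternative, $\phat$ hitting zero, would drive $F^\phat_-\to+\infty$ and thereby permit the forbidden further jump). Applying flux monotonicity on $(\sss_{k+1},\infty)$ in this case then gives $M(\sss_{k+1})\leq F_{k+1,+}\leq CF_1$, which combined with the displayed identity yields $kF_1\geq cM(0)$, i.e.\ $F_1\geq c/k$. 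The bound $\sss_k<\tfrac{1}{2}\log Ck$ follows by applying the single-interval estimate on $[\sss_k,\sss_{k+1}]$ in the form $M(\sss_k)-M(\sss_{k+1})\geq F_1/C-CF_1^2\Delta_k$ and using $M\geq 0$ to conclude $M(\sss_k)\geq cF_1\geq c'/k$; the asymptotic $M\Sim_C e^{-2\sss}$ then converts this to the stated upper bound on $\sss_k$.

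For (ii), the lower bound $\Delta_i>1/(Ck)$ comes from the single-interval flux identity combined with the trivial pointwise bound $V\leq C$ from Lemma \ref{AV1}(i): $F_1\leq C\Delta_i+CF_1^2\Delta_i$ forces $\Delta_i\geq cF_1\geq c/k$. The upper bound $\Delta_i<C$ uses $V$'s monotonicity and the already-established upper bound on $\sss_k$: on $[\sss_{i-1},\sss_i]\subset[0,\sss_k]$ we have $V\geq V(\sss_k)\geq c/k$, so $\Delta_i\cdot c/k\leq\int V\leq CF_1\leq C/k$. For the ratio bound, represent $\phat$ on $[\sss_{i-1},\sss_i]$ via \eqref{EHbump} so that $\phat(\sss_i)/\phat(\sss_{i-1})=H(\sss_i)$, and use the Volterra expansion $H(\sss_i)-1=F_{i-1,+}\Delta_i-\int_{\sss_{i-1}}^{\sss_i}(\sss_i-\sigma)V(\sigma)H(\sigma)\,d\sigma$: the first term is bounded by $CF_1\cdot C\leq C/k$, and the second by $\Delta_i\cdot\max|H|\cdot\int V\leq C\cdot C\cdot(C/k)=C/k$, using the $\Delta_i\leq C$ bound crucially. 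Item (iii) is obtained by telescoping: $\log\phat(\sss_i)=\sum_{j\leq i}\log H(\sss_j)$, and refining the Volterra estimate on each piece so that $\log H(\sss_j)=F_{j-1,+}\Delta_j-[M(\sss_{j-1})-M(\sss_j)]+O(1/k^2)+O(\Delta_j^2 V(\sss_{j-1}))$; summing, the $F_{j-1,+}\Delta_j$ terms contribute $O(F_1\sss_k)=O(\log k/k)$, the $M$ differences telescope to $O(1/k)$, and the remainders sum to $O(\log k/k)$ (via $\sum\Delta_j^2 V(\sss_{j-1})\leq\max_j\Delta_j\cdot\int V\leq C\log k/k\cdot C$).

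The main obstacle is the simultaneous establishment of the upper bound $F_1\leq C/k$ and the tail estimate $M(\sss_{k+1})\leq CF_1$: the summed identity alone only pins down $F_1$ modulo $\sss_{k+1}$-dependence, and the tail estimate requires careful case-analysis of the extension of $\phat$ past its last jump using concavity (provided by $V>0$) to rule out the non-existence of a further jump coming from $\phat$ hitting zero rather than from $F^\phat_-$ failing to reach the required level. Closing the bootstrap by matching these two pieces of information is the delicate core of the argument; once this is done, the per-interval estimates needed for (ii) and (iii) follow by standard Volterra/perturbation techniques applied to the ODE $\phat''+V\phat=0$.
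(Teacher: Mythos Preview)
Your overall strategy---summing the flux identities of Lemma \ref{LFmono}(ii) over the intervals $[\sss_i,\sss_{i+1}]$ and exploiting the comparability $F^\phat_{i\pm}\Sim_C F_1$ from Remark \ref{rsigbij}---matches the paper's. However, there is a genuine gap in your argument for the upper bound $F_1\le C/k$ (and consequently for $\sss_k<\tfrac12\log Ck$). Your bootstrap rests on controlling the quadratic term $F_1^2\sss_{k+1}$ (or $F_1^2\Delta_k$), but $\sss_{k+1}$ is \emph{not} bounded on $S_{k+1}\setminus S_{k+2}$: as $F\nearrow a_{k,\bsigmaunder}$ one has $\sss_{k+1}\to\infty$ while $F_1$ stays bounded away from $0$ (cf.\ Lemma \ref{Llastring}). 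Thus neither ``ignoring the quadratic error'' nor the single-interval estimate $M(\sss_k)-M(\sss_{k+1})\ge F_1/C-CF_1^2\Delta_k$ can be made rigorous without additional input. Your tail argument past $\sss_{k+1}$ is fine (and in fact one can show $\lim_{\sss\to\infty}F^\phat_-(\sss)=0$, hence $M(\sss_{k+1})\le F_{k+1,+}\le CF_1$), but this yields only a \emph{lower} bound on $\sss_{k+1}$.

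The paper supplies the missing ingredient by invoking the comparison with $\phiend$ established in the proof of Proposition \ref{Pexist}: since $\phat$ has a jump at $\sss_{k+1}>\sss_k$, one gets $F^\phat_{k+}\le F^{\phiend}_+(\sss_k)\le Ce^{-2\sss_k}$ (equation \eqref{EfVest}). This immediately bounds the quadratic term by $C\sss_k e^{-4\sss_k}=O(1)$, independent of $\sss_k$, so \eqref{Efavg} gives $(2k-1)F_1\le C$ cleanly. The same inequality then gives $\sss_k<\tfrac12\log Ck$ and the lower bound $F_1\ge c/k$ as well. A small side remark: your justification ``$\xi_1=0$'' for $\partial\phat(0)=0$ is misstated (that condition is imposed only in the odd case, Proposition \ref{Pexist2}); the correct reason is that $\phat\equiv\phie$ near $0$, which is even. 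Finally, for (ii) and (iii) your Volterra-expansion route is far more elaborate than necessary: once (i) is in hand, the paper simply uses $|\log\phat(\sss_i)-\log\phat(\sss_{i-1})|\le\int_{\sss_{i-1}}^{\sss_i}|F^\phat_+|\,d\sss\le CF_1\cdot\Delta_i\le C/k$, and similarly $|\log\phat(\sss)|\le CF_1\sss_k\le C(\log k)/k$ for (iii).
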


\begin{proof}  
By Lemma \ref{LFmono} we conclude that the maximum of $|F^{\phat}_-|$ is achieved at the jump latitudes. 
Using also \ref{rsigbij} we conclude 
(where $\Eunder$ in \ref{rsigbij} depends only on an upper bound of $|\bsigma|_{\ell^1}$ by the hypotheses) that 
\begin{align} 
\label{EFcomp} 
\max_{\sss\in[0,l)} |F^{\phat}_-(\sss)| 
 =  
\max_{i=1}^{k+1} {F^\phat_{i\pm}}
 \Sim_{ \Eunder }  
\min_{i=1}^{k+1}{ F^\phat_{i\pm}}.
\end{align}
Since $\phat$ has $k+1$ jumps, we may argue as in the proof of \ref{Pexist}, and by using also 
\eqref{EFcomp} and \ref{AV2} 
\begin{align}
\label{EfVest}
 \frac{1}{\Eunder} F_1 <  F^\phat_{k+} <  F^{\phi_{\mathrm{end}}}_+(\sss_k) < C e^{-2\sss_k  }.
\end{align}
By using Lemma \ref{LFmono} on $\cyl_{[ 0, \sss_1]}, \dots, \cyl_{ [\sss_{k-1}, \sss_k]}$ and summing, we find
\begin{align}
\label{Emostfluxes}
F^\phat_{1-}+ F^\phat_{1+}+ \cdots + F^\phat_{k-} =  \int_{0}^{\sss_k} V(\sss) + \big( F^\phat_-(\sss)\big)^2 \, d\sss.
\end{align}
Next using \ref{RLquant}, \eqref{EFcomp}, and \eqref{EfVest} to estimate \eqref{Emostfluxes}, we find
\begin{align}
\label{Efavg}
\frac{1}{\Eunder} \int_0^{\sss_k} V(\sss) d\sss <  (2k-1)F_1 < \Eunder \big(\| V\|_{L^1(\cyl_I)}+ \Eunder\sss_k (F^{\phi_{\mathrm{end}}}_+(\sss_k))^2\big)
\end{align}
from which we conclude using \ref{AV2} the first part of (i) and then by \eqref{EfVest} and \ref{AV2} the rest of (i).
For (ii),  \ref{LFmono}(ii) and the mean value theorem imply that for some $\sss' \in (\sss_{i-1}, \sss_i)$,
\begin{align}
 \sss_{i}- \sss_{i-1} =  \frac{F^\phat_{i-1+} + F^{\phat}_{i-}}{V(\sss') + (F^\phat(\sss'))^2}.
\end{align}
Estimating a trivial upper bound for $V$, using (i) and \eqref{EFcomp} gives the first inequality in the first part of (ii), and using that $\frac{1}{V(\sss')} \le \frac{C}{V(\sss_k)}$ and using \eqref{EfVest} and \ref{AV2}, we conclude $\sss_i - \sss_{i-1} \le C$.  Using then Definition \ref{dF} and part (i), we have
\begin{align*}
\left| \log \frac{\phat(\sss_i)}{\phat(\sss_{i-1})}\right| \leq \int_{\sss_{i-1}}^{\sss_i} \big|F^\phat_+(\sss)\big| d\sss 
\leq \frac{C}{k}\int_{\sss_{i-1}}^{\sss_i} d\sss \leq \frac{C}{k}, 
\end{align*}
which completes the proof of (ii), and (iii) follows similarly.
\end{proof}

\begin{corollary}[Estimates for the derivatives of $\sss_k$] 
\label{Csderiv}
If
$\phat = \phat[F; \bsigmaunder]$ is an RLD solution as in \ref{Pexist}, 
$\sbold = \sbold^{\phat[F;\bsigmaunder]}$, 
 $(F, \left. \bsigmaunder\right|_{k+1}) \in S_{k+1}$ (recall \ref{Sk}),
 and
$|\xibold|_{\ell^\infty}<\min(\frac{1}{10}, \frac{1}{k})$, 
the following estimates hold, where $C$ depends only on an upper bound of $|\bsigma|_{\ell^1}$. 
\begin{enumerate}[label=\emph{(\roman*)}]
\item 
$  \big( V( \sss_{k}) + \big(F^\phat_{k-}\big)^2\big)\frac{ \partial \sss_{k}}{ \partial F_1}
\Sim_C k$.
\
\item $
	\left| \big( V( \sss_{k} )+ \big(F^\phat_{k-}\big)^2\big)\frac{ \partial \sss_{k}}{ \partial \sigma_i }\right| < C
	$, $i=1, \dots, k-1$.
\item
	$
	\left|\big( V( \sss_{k}) + \big(F^\phat_{k-}\big)^2\big)\frac{ \partial \sss_{k}}{ \partial 
	\xi_j}	\right| < \frac{C}{k}
	$, $j=1, \dots, k$.
\end{enumerate}
\end{corollary}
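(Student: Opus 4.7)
\textbf{Proof proposal for Corollary \ref{Csderiv}.}

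The plan is to iterate the three recursive formulas in Lemma \ref{Lsderiv} and control the accumulation using the sharp structural estimates of Lemma \ref{Lrldest}. The common ingredients we will repeatedly invoke are: the ratios $\rho_k^2 := (\phat(\sss_{k-1})/\phat(\sss_k))^2 = 1 + O(1/k)$ and $(\phat(\sss_j)/\phat(\sss_k))^2 \Sim_C 1$ from Lemma \ref{Lrldest}(ii)(iii); the sizes $F_1 \Sim_C 1/k$ and all $F^\phat_{i\pm}$ comparable to $F_1$ (so $F^\phat_{i+}/F^\phat_{i-} = 1 + O(1/k)$ using the hypothesis $|\xibold|_{\ell^\infty} < 1/k$); and the boundedness $e^{\sum_{l=1}^n \sigma_l} \Sim_C 1$ from $|\bsigma|_{\ell^1} \le C$. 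A uniformly useful consequence is that the transfer coefficient $(V(\sss_{k-1}) + (F^\phat_{k-1+})^2)$ appearing on the right sides of \eqref{Msderiv}--\eqref{Msderiv2} equals $(V(\sss_{k-1}) + (F^\phat_{k-1-})^2)(1 + O(1/(k^3 V(\sss_{k-1}))))$, which after carrying the $1/k$ loss through at most $k$ steps produces only a bounded multiplicative error.

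For (i), set $A_k := (V(\sss_k) + (F^\phat_{k-})^2)\partial_{F_1}\sss_k$. The base case \eqref{Msderiv0} gives $A_1 = 1 - \xi_1 \Sim_C 1$, and \eqref{Msderiv} rewrites as $A_k = A_{k-1}\rho_k^2 \cdot (1 + O(1/k)) + T_k$ where the inhomogeneous term $T_k = (1+\xi_{k-1})e^{\sum_{l=1}^{k-2}\sigma_l}\rho_k^2 + (1-\xi_k)e^{\sum_{l=1}^{k-1}\sigma_l}$ is bounded above and below by positive constants depending only on $|\bsigma|_{\ell^1}$. Since each $\rho_j^2 = 1 + O(1/k)$ and $\prod_{j=2}^k \rho_j^2 \Sim_C 1$, telescoping the recursion yields $A_k \Sim_C \sum_{i=1}^k (\text{bounded})$, whence $A_k \Sim_C k$.

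For (ii) and (iii), set $B^{(j)}_k := (V(\sss_k) + (F^\phat_{k-})^2)\partial_{\sigma_j}\sss_k$ and $D^{(j)}_k := (V(\sss_k) + (F^\phat_{k-})^2)\partial_{\xi_j}\sss_k$. Since $\sss_k$ is independent of $\sigma_j$ and $\xi_j$ for $k \le j$, we have $B^{(j)}_k = D^{(j)}_{k} = 0$ for $k \le j-1$, and $B^{(j)}_j = 0$ while $D^{(j)}_j = -F_1 \, e^{\sum_{l=1}^{j-1}\sigma_l}$, so $|D^{(j)}_j| \le C/k$. For $k \ge j+1$ the recursions \eqref{Msderiv3}, \eqref{Msderiv2} take the form $X_k = X_{k-1}\rho_k^2(1+O(1/k)) + I_k$ where in both cases the inhomogeneous driver $I_k$ is of order $F_1 \Sim 1/k$. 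In (iii) this is immediate from the Kronecker deltas, and in (ii) the two exponentials collapse to the single quantity $F_1 e^{\sum_{l=1}^{k-2}\sigma_l}[(1+\xi_{k-1})\rho_k^2 - (1-\xi_k)e^{\sigma_{k-1}}]$ which is $O(F_1)$ since each bracket factor is bounded. Iterating with the near-unit multiplicative factor over at most $k$ steps gives $|B^{(j)}_k| \le C \cdot k \cdot (1/k) = C$ for (ii), and $|D^{(j)}_k| \le C/k$ for (iii).

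The main technical obstacle is the asymmetric transfer in the recursions: the coefficient $(V(\sss_{k-1}) + (F^\phat_{k-1+})^2)$ on the right does not equal $(V(\sss_{k-1}) + (F^\phat_{k-1-})^2)$ that defines $A_{k-1}, B^{(j)}_{k-1}, D^{(j)}_{k-1}$. The hypothesis $|\xibold|_{\ell^\infty} < 1/k$ is exactly what makes this discrepancy relatively $O(1/k)$, so that the compounded multiplicative error across $O(k)$ iterations remains $O(1)$; this is the only place where the stronger bound on $\xi$ beyond Lemma \ref{Lrldest} is used, and absorbing it carefully through each step is the most delicate bookkeeping in the argument. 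The near-cancellation in (iii) at the single step $k = j+1$ is also subtle: $D^{(j)}_{j+1}$ would vanish if $F^\phat_{j+} = F^\phat_{j-}$, and the residual is exactly of size $|\xi_j| F_1 \le C/k^2 \le C/k$, after which only the decaying iteration survives.
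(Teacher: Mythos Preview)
Your proof is correct and follows essentially the same approach as the paper: define the normalized derivatives (the paper writes $P_k, Q_{k-1}, R_k, T_{k-1}$ for what you call $A_k, \rho_k^2$, the $1+O(1/k)$ transfer factor, and $T_k$), unwind the recursions of Lemma \ref{Lsderiv} into a telescoping sum, and control the accumulated multiplicative factor using $Q_i, R_i \Sim_{1+C/k} 1$ from Lemma \ref{Lrldest} together with the hypothesis $|\xibold|_{\ell^\infty}<1/k$. Your closing remark about the near-cancellation at $k=j+1$ in (iii) is a correct observation but not actually needed, since the inhomogeneous term in \eqref{Msderiv2} vanishes for $k\ge j+2$ and the bounded product of transfer factors already propagates $|D^{(j)}_{j+1}|\le C/k$ forward.
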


\begin{proof}
We first prove (i).  
To simplify notation in this proof, we denote
\begin{equation*}
\begin{gathered}
P_k := \big( V( \sss_k )+ \big( F^\phat_{k-}\big)^2\big) \frac{ \partial \sss_k}{\partial F_1}, 
\quad
Q_{k-1} := \bigg( \frac{\phat(\sss_{k-1})}{\phat(\sss_k)}\bigg)^2, 
\\
R_k := \frac{ V( \sss_k) + \big( F^\phat_{k+}\big)^2}{ V( \sss_k) + \big( F^\phat_{k-}\big)^2}, 
\\
T_{k-1} :=   Q_{k-1} (1+ \xi_{k-1}) \big( e^{\sum_{l=1}^{k-2} \sigma_l} \big)  + 
(1- \xi_{k}) \big( e^{\sum_{l=1}^{k-1} \sigma_l} \big). 
\end{gathered}
\end{equation*}
In this notation, \eqref{Msderiv} from Lemma \ref{Lsderiv} is equivalent to the equation
\begin{align}
\label{Esderivrec}
P_k =  R_{k-1} Q_{k-1}P_{k-1} + T_{k-1}
\end{align}
from which we conclude by applying \eqref{Msderiv} recursively
\begin{align}
\label{Epk}
P_k = P_1 \prod_{i=1}^{k-1} Q_i R_i + \sum_{i=1}^{k-1} \bigg( T_i \prod_{j=i+1}^{k-1} Q_j R_j\bigg).
\end{align}
From \eqref{Msderiv0} it follows that $P_1 = 1 - \xi_1$.  
By \ref{rsigbij},  \ref{Lrldest}, and the assumptions, the following estimates hold:  
\begin{align}
\label{Eqrest}
Q_i \Sim_{1+ C/k} 1, \quad R_i \Sim_{1+C/k} 1, \quad T_i \Sim_C 1, \qquad i=1, \dots, k-1.
\end{align}
Combining \eqref{Eqrest} with \eqref{Epk} completes the proof of (i).  
Proofs of (ii) and (iii) are similar and use respectively \eqref{Msderiv2} and \eqref{Msderiv3} in place of \eqref{Msderiv}, so we omit the details. 
\end{proof}

\subsection*{Smooth at the ends RLD solutions}

We concentrate now on smooth at the ends RLD solutions and 
we introduce a unified notation in terms of their flux ratios and total number $\kcir$ of parallel circles:  

\begin{lemma}[RLD solutions {$\phat[{{{\bsigmaunder: \kcir}}}]$}]
\label{Nphik}
Given $\kcir\in \N$ with $\kcir \geq \kcirmin$ (recall \ref{Dkmin}) and $\bsigmaunder = (\bsigma, \xibold) \in \R^{k-1}\times \R^k$, 
$k := \lceil \kcir/2\rceil$, satisfying $|\xibold|_{\ell^\infty} < 1$ and $\xi_1 = 0$ if $\kcir$ is odd, 
there is a unique, smooth at the ends, unit RLD solution (recall \ref{RL}) 
$\phat = \phat[ {{{\bsigmaunder : \kcir}}}]$ satisfying $\kcir [ \sbold^\phat] = \kcir$ 
and $\bsigmaunder^\phat = \bsigmaunder$. 
\end{lemma}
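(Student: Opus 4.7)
The plan is to reduce the smooth-at-the-ends condition $F^\phi_{k+} = F^{\phiend}_+(\sss^\phi_k)$ to a single scalar equation in the sole remaining scalar parameter $F$, using the parametrization of \ref{Pexist} when $\kcir = 2k$ is even or of \ref{Pexist2} when $\kcir = 2k-1$ is odd. I describe the even case; the odd case is entirely analogous after replacing $\phat$ by $\phicheck$ and exploiting the symmetry constraint $\xi_1 = 0$ that enforces $\sss_1^\phi = 0$.

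Extend the finite flux-ratio data $\bsigmaunder \in \R^{k-1} \times \R^k$ to an element of $\ell^1(\R^\N) \oplus \ell^\infty(\R^\N)$ by padding with zeros, and consider the one-parameter family $\{\phat[F; \bsigmaunder]\}_F$ of \ref{Pexist}. By \ref{Pexist}(ii), $\phat[F; \bsigmaunder]$ has exactly $k$ jump latitudes with flux ratios $\left.\bsigmaunder\right|_k$ iff $F \in [a_{k, \bsigmaunder}, a_{k-1, \bsigmaunder})$, so $\phat[\bsigmaunder : \kcir]$, if it exists, must lie in this subfamily. Define
\[
G(F) := F^\phat_{k+} - F^{\phiend}_+(\sss^\phat_k),
\]
so the smooth-at-the-ends condition becomes $G(F)=0$. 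From \eqref{EFxi}, $F^\phat_{k+}$ is a positive linear function of $F$; by \ref{Pexist}(i), $\sss^\phat_k$ is strictly increasing in $F$; and applying \ref{LFmono}(i) to $\phiend$ gives that $F^{\phiend}_+ = -F^{\phiend}_-$ is strictly decreasing on the set where $\phiend>0$. Hence $G$ is strictly increasing in $F$, which yields uniqueness immediately.

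Existence reduces to the claim $G(a_{k,\bsigmaunder})=0$, which is the heart of the argument. For $F$ slightly less than $a_{k,\bsigmaunder}$, \ref{Pexist}(ii) says $\phat[F; \bsigmaunder]$ has at least $k+1$ jumps, so it has a local minimum $\sss^\phat_{k+1} \in (\sss^\phat_k, l)$ across which $F^\phat_+$ has jumped from negative (just to the left) to positive (just to the right). On the smooth subinterval $(\sss^\phat_k, \sss^\phat_{k+1})$, both $F^\phat_+$ and $F^{\phiend}_+$ satisfy the scalar ODE $\tfrac{dF_+}{d\sss} = -V - F_+^2$ obtained from \ref{LFmono}(i) via $F_+=-F_-$, and uniqueness of ODE solutions prevents them from crossing. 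Since $F^{\phiend}_+>0$ strictly on this interval (by \ref{AV2} together with flux monotonicity), $F^\phat_+$ must lie below $F^{\phiend}_+$ in order to become negative before $\sss^\phat_{k+1}$, giving $G(F) < 0$. Letting $F \to a_{k,\bsigmaunder}^-$ yields $G(a_{k,\bsigmaunder}) \leq 0$ by continuity. The reverse inequality follows by the same no-crossing argument for $F$ slightly greater than $a_{k,\bsigmaunder}$: then $\phat$ has no $(k+1)$-th jump, so $F^\phat_+ > 0$ on all of $(\sss^\phat_k, l)$, and the ODE comparison forces $F^\phat_+ \geq F^{\phiend}_+$ there, whence $G \geq 0$. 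Combining gives $G(a_{k,\bsigmaunder})=0$, and we set $\phat[\bsigmaunder : \kcir] := \phat[a_{k,\bsigmaunder}; \bsigmaunder]$.

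The main obstacle is the ODE comparison step pinning down $G(a_{k,\bsigmaunder})=0$: the endpoint behaviors of $\phiend$ at $\sss=l$ and of $\phat$ at its local minima are qualitatively different, so the identification must be made through the flux curves rather than through $\phat$ itself. Strict positivity of $F^{\phiend}_+$ near $l$ from \ref{AV2} is essential, as is the observation that $a_{k,\bsigmaunder}$ (and, in the odd case, $b_{k,\bsigmaunder}$) depends only on $\left.\bsigmaunder\right|_k$, so the zero-padding extension of $\bsigmaunder$ does not alter the construction.
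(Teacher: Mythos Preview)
Your monotonicity argument for $G(F) := F^{\phat}_{k+} - F^{\phiend}_+(\sss^{\phat}_k)$ is correct, and your proof that $G \le 0$ for $F$ just below $a_{k,\bsigmaunder}$ is fine. The gap is in the ``reverse inequality'' step. You claim that for $F$ slightly greater than $a_{k,\bsigmaunder}$ the absence of a $(k+1)$-th jump forces $F^{\phat}_+>0$ on all of $(\sss^{\phat}_k,l)$. This implication is false when $l<\infty$ (the torus case of \ref{LAconf}). A $(k+1)$-th jump is triggered only when the increasing quantity $F^{\phat}_-$ reaches the prescribed value $F_{(k+1)-}>0$; on a finite interval it can happen that $F^{\phat}_-$ becomes positive---equivalently $F^{\phat}_+$ becomes negative---without reaching that threshold before $\sss=l$, and $\phat$ still remains positive. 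In that scenario your ODE comparison yields $F^{\phat}_+<F^{\phiend}_+$ at $\sss^{\phat}_k$, i.e.\ $G(F)<0$, and by continuity $G(a_{k,\bsigmaunder})<0$. The paper confirms this explicitly in the remark following the lemma: when $I$ is finite one has $\widetilde a_{k,\bsigmaunder}>a_{k,\bsigmaunder}$.

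The repair is straightforward once you drop the claim that the zero sits at the left endpoint. Keep your monotonicity of $G$ and your estimate $G(a_{k,\bsigmaunder})\le 0$. For the upper end, observe from \ref{Pexist}(i)--(ii) that as $F\nearrow a_{k-1,\bsigmaunder}$ the last jump $\sss^{\phat}_k\to l$, so $F^{\phiend}_+(\sss^{\phat}_k)\to 0$ by \ref{AV2}(ii), while $F^{\phat}_{k+}=(1+\xi_k)e^{\sum\sigma_l}F_1$ is bounded below by a positive constant; hence $G>0$ near $a_{k-1,\bsigmaunder}$. The intermediate value theorem then gives a unique $\widetilde a_{k,\bsigmaunder}\in[a_{k,\bsigmaunder},a_{k-1,\bsigmaunder})$ with $G(\widetilde a_{k,\bsigmaunder})=0$, which is exactly the paper's route. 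A secondary point you do not address: the hypothesis $\kcir\ge\kcirmin$ must be shown to imply $k\ge\kminev$ in the even case and $k\ge\kminodd$ in the odd case, which requires relating $\kminev$ and $\kminodd$; the paper handles this at the end of its proof.
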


\begin{proof}
Recall from Proposition \ref{Pexist} that $\phat[F; \bsigmaunder]$ has $k\geq \kminev$ jump latitudes precisely when $F \in [a_{k, \bsigmaunder}, a_{k-1, \bsigmaunder})$ 
and from \ref{Pexist2} that $\phicheck[F; \bsigmaunder]$ has $k \geq \kminodd$ jump latitudes when $F \in [b_{k, \bsigmaunder}, b_{k-1, \bsigmaunder})$.  
By \ref{AV1} and the flux monotonicity in \ref{Pexist}(i) and \ref{Pexist2}(i), 
there exist unique $\widetilde{a}_{k, \bsigmaunder} \in [a_{k, \bsigmaunder}, a_{k-1, \bsigmaunder})$ for all $k\geq \kminev$ such that 
$\phat[ \widetilde{a}_{k, \bsigmaunder} ; \bsigmaunder]$ is smooth at the ends,
and unique $\widetilde{b}_{k, \bsigmaunder}\in [b_{k, \bsigmaunder}, b_{k-1, \bsigmaunder})$ for all $k\geq \kminodd$ such that 
$\phat[\widetilde{b}_{k, \bsigmaunder}; \bsigmaunder]$ is smooth at the ends.  
To complete the proof, we need only show that $\{2\kminev, 2\kminodd-1\}$ is a set of consecutive natural numbers. 
 
For this, first consider $\bsigmaunder = (\zerobold, \zerobold)$ and the smooth-at-the ends RLD solution $\phicheck = \phicheck[\widetilde{b}_{\kminodd, \bsigmaunder}; \bsigmaunder]$.  
For $\xi$ close to $1$, define $\bsigmaunder' = (\zerobold, (\xi, 0, \dots))$ 
and the RLD solution $\phat = \phat[ (1-\xi) \widetilde{b}_{\kminodd, \bsigmaunder}; \bsigmaunder']$, 
which satisfies $F^\phat_1 = \widetilde{b}_{\kminodd, \bsigmaunder} = F^{\phicheck}_1$ and $F^\phat_{i\pm} = F^{\phicheck}_{i\pm}$ for all $i>1$ 
such that both of the preceding are defined.  
On the other hand, by choosing $\xi$ close enough to $1$, we can ensure that $F^{\phat}_{1-} = (1-\xi)\widetilde{b}_{\kminodd, \bsigmaunder}$ is as small as desired.  
By \ref{LFmono} and \ref{LHmono}, it follows that for $\xi$ close enough to $1$, $\kcir[\sbold^\phat] = 2\kminodd$.  
An analogous argument shows that we can find an RLD solution $\phicheck$ with $\kcir[\sbold^{\phicheck}] = 2\kminev+1$, and these two assertions complete the proof.
 \end{proof}

\begin{remark}
In the case that $I = \R$, it may be the case that $\widetilde{a}_{k, \bsigmaunder} = a_{k, \bsigmaunder}$, as was the case for example in \cite[Prop. 3.14]{kapmcg}.  
If $I$ is a finite interval however, compactness guarantees that $\widetilde{a}_{k, \bsigmaunder} > a_{k, \bsigmaunder}$.
\qed 
\end{remark}

\begin{figure}[h]
\centering
\begin{subfigure}[]{.5\textwidth}
\begin{tikzpicture}[scale=.8,
  declare function={
    func(\s)= 
      (\s <-1.377)*(-1.084*tanh(\s))+
   and(\s >=-1.377, \s < -.744)*(.430*(1-\s*tanh(\s)) - 1.187*tanh(\s))+
   and(\s >=-.744, \s < -.399)*( .746*(1-\s*tanh(\s)) - .922*tanh(\s))+
         and(\s >=-.399, \s < -.127)*( .936*(1-\s*tanh(\s)) - .495*tanh(\s))+
      and(\s >=-.127, \s < .127)*( 1-\s*tanh(\s))+
      and(\s >=.127, \s < .399)*( .936*(1-\s*tanh(\s)) + .495*tanh(\s))+
      and(\s >=.399, \s < .744)*( .746*(1-\s*tanh(\s)) + .922*tanh(\s))+
      and(\s >=.744, \s < 1.377)*(.430*(1-\s*tanh(\s)) + 1.187*tanh(\s))+
    (\s >= 1.377)*(1.084*tanh(\s))
 ;
   func2(\s)=    
   	(\s<-1.30746)*(-1.13*tanh(\s))+
      and(\s >=-1.30746, \s < -.661002)*( .49433*(1-\s*tanh(\s))-1.20*tanh(\s))+
      and(\s >=-.661002, \s < -.29855)*( .830575*(1-\s*tanh(\s))-.8322*tanh(\s))+
               and(\s >=-.29855, \s <=0)*( 1-\s*tanh(\s)-.29427*tanh(\s))+
         and(\s >=0, \s <=.29855)*( 1-\s*tanh(\s)+.29427*tanh(\s))+
   and(\s >=.29855, \s < .661002)*( .830575*(1-\s*tanh(\s))+.8322*tanh(\s))+
   and(\s >=.661002, \s < 1.30746)*( .49433*(1-\s*tanh(\s))+1.20*tanh(\s))+
 (\s >= 1.30746)*(1.13*tanh(\s))
    ;
 }
]
\begin{axis}[
legend pos= north east,
  axis x line=middle, axis y line=left,
  ymin=.9, ymax=1.15, ytick={.9, 1, 1.1}, ylabel=$y$,
  xmin=-2, xmax=2, 
  xtick={-2, -1, 0,1, 2},
  xticklabels={$-2$, $-1$, $0$, $1$, $2$},
   xlabel=$\sss$,
   ylabel=,
  samples=630
]

\addplot[black, thick, domain=-2:2, 
]{func(x)};
\addlegendentry{$\phat[\bsigmaunder:8]$}
\addplot[black, thick, dashed, domain=-2:2, 
]{func2(x)};
\addlegendentry{$\phat[\bsigmaunder: 7]$}
\end{axis}
\end{tikzpicture} 
\end{subfigure}
\begin{subfigure}{.25\textwidth}
\end{subfigure}
\begin{subfigure}[]{.3\textwidth}
\begin{tikzpicture}[scale=.4,
  declare function={
    func(\s)= 
      (\s <-1.377)*(-1.084*tanh(\s))+
   and(\s >=-1.377, \s < -.744)*(.430*(1-\s*tanh(\s)) - 1.187*tanh(\s))+
   and(\s >=-.744, \s < -.399)*( .746*(1-\s*tanh(\s)) - .922*tanh(\s))+
         and(\s >=-.399, \s < -.127)*( .936*(1-\s*tanh(\s)) - .495*tanh(\s))+
      and(\s >=-.127, \s < .127)*( 1-\s*tanh(\s))+
      and(\s >=.127, \s < .399)*( .936*(1-\s*tanh(\s)) + .495*tanh(\s))+
      and(\s >=.399, \s < .744)*( .746*(1-\s*tanh(\s)) + .922*tanh(\s))+
      and(\s >=.744, \s < 1.377)*(.430*(1-\s*tanh(\s)) + 1.187*tanh(\s))+
    (\s >= 1.377)*(1.084*tanh(\s))
 ;
   func2(\s)=    
   	(\s<-1.30746)*(-1.13*tanh(\s))+
      and(\s >=-1.30746, \s < -.661002)*( .49433*(1-\s*tanh(\s))-1.20*tanh(\s))+
      and(\s >=-.661002, \s < -.29855)*( .830575*(1-\s*tanh(\s))-.8322*tanh(\s))+
               and(\s >=-.29855, \s <=0)*( 1-\s*tanh(\s)-.29427*tanh(\s))+
         and(\s >=0, \s <=.29855)*( 1-\s*tanh(\s)+.29427*tanh(\s))+
   and(\s >=.29855, \s < .661002)*( .830575*(1-\s*tanh(\s))+.8322*tanh(\s))+
   and(\s >=.661002, \s < 1.30746)*( .49433*(1-\s*tanh(\s))+1.20*tanh(\s))+
 (\s >= 1.30746)*(1.13*tanh(\s))
    ;
 }
]
\begin{axis}[
legend pos=outer north east,
  axis x line=middle, axis y line=left,
  ymin=.9, ymax=1.15, ytick={.9, 1, 1.1}, ylabel=$y$,
  xmin=-4, xmax=4, 
  xtick={-2, -1, 0,1, 2},
  xticklabels={$-2$, $-1$, $0$, $1$, $2$},
   xlabel=$\sss$,
   ylabel=,
  samples=630
]

\addplot[black, thick, domain=-4:4, 
]{func(x)};
\addlegendentry{$\phat[\bsigmaunder:8]$}
\addplot[black, thick, dashed, domain=-4:4, 
]{func2(x)};
\addlegendentry{$\phat[\bsigmaunder: 7]$}
\end{axis}
\end{tikzpicture} 
\end{subfigure}
\caption{Profiles of RLD solutions $\phat[\bsigmaunder:7]$ and $\phat[\bsigmaunder:8]$ with respectively $7$ and $8$ singular circles.  In each case $\bsigmaunder = \zerobold$ and $V = 2\sech^2 \sss$, corresponding to the case where $\Sigma$ is an equatorial $\Sph^2$ in $\Sph^3$.  The right image depicts the same profiles over a wider domain, to emphasize the smooth at the ends behavior.}
\end{figure}

\begin{convention}
\label{conkcir}
Hereafter, we assume $\kcir \in \N$ with $\kcir\geq \kcirmin$ is given, and define $k\in\N$ by $k = \lceil \kcir/2 \rceil$, i.e. $\kcir = 2k$ if $\kcir$ is even and $\kcir = 2k-1$ if $\kcir$ is odd. 
\end{convention}

\begin{lemma}[Characterization of low $\kcirmin$]
\label{Rk1}
$\phantom{ab}$
\begin{enumerate}[label=\emph{(\roman*)}]
\item $\kcirmin = 1$ if and only if $\phiend>0$ on $\cyl_{[0, l)}$.
\item $\kcirmin = 2$ if and only if  for some $\sss^\phiend_{\mathrm{root}}\geq 0$, $\phiend > 0$ on $(\sss^\phiend_{\mathrm{root}}, l)$, $\phiend(\sss^\phiend_{\mathrm{root}}) =0$ and $\phie>0$ on $(0, \sss^\phiend_{\mathrm{root}}]$.
\item $\kcirmin \geq 3$ if and only if for some $0< \sss^{\phie}_{\mathrm{root}} < \sss^\phiend_{\mathrm{root}}$, $\phiend>0$ on $(\sss^\phiend_{\mathrm{root}}, l)$, $\phiend(\sss^\phiend_{\mathrm{root}}) = 0$, $\phie> 0$ on $(0, \sss^\phie_{\mathrm{root}})$, and $\phie( \sss^\phie_{\mathrm{root}}) =0$.  
\end{enumerate}
\end{lemma}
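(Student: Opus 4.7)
My approach rests on the observation that an RLD solution $\phi$ on $\Sigma$ with $\kcir[\sbold^\phi]$ small is almost entirely determined by two rigidity conditions: by the smoothness at $\sss=\pm l$ required for $\phi$ to extend to $\Sigma$ together with Lemma \ref{AV2}, $\phi$ must be a positive multiple of $\phiend$ on $(\sss^\phi_k,l)$; by the $|\sss|$-invariance forcing $\partial\phi(0)=0$ when $\sss^\phi_1>0$, $\phi$ must be a positive multiple of $\phie=H[0;0]$ on $(-\sss^\phi_1,\sss^\phi_1)$. Thus RLD solutions with $\kcir\in\{1,2\}$ are built entirely from these two fundamental solutions, and their existence reduces to explicit sign conditions for $\phie$ and $\phiend$. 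Throughout, I use that $\Lchi u=0$ plus $V>0$ (from \ref{AV1}) gives $u''=-Vu<0$ wherever $u>0$, so any positive solution is concave.

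For item (i), an RLD solution with $\kcir=1$ has $\sbold^\phi=(0)$ and on $(0,l)$ equals $c\phiend$ for some $c>0$. The positivity requirement and the jump condition $F^\phi_\pm(0)>0$ from \ref{RL}(iii) reduce, respectively, to $\phiend>0$ on $[0,l)$ and $\phiend'(0)/\phiend(0)>0$. The second condition is automatic given the first: the concavity noted above shows $\phiend'$ is strictly decreasing wherever $\phiend>0$, while the boundary conditions $\phiend\to 1$ and $F^\phiend_+\to 0$ at $l$ (from \ref{AV2}) force $\phiend'(\sss)\to 0$ as $\sss\nearrow l$; hence $\phiend'>0$ throughout $[0,l)$. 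Conversely, if $\kcirmin=1$ then any realizing RLD solution must be a positive multiple of $\phiend(|\sss|)$, forcing $\phiend>0$ on $[0,l)$ by continuity at $0$.

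For item (ii), an RLD solution with $\kcir=2$ has $\sbold^\phi=(\sss_1)$ with $\sss_1>0$, and by the rigidity it takes the form $\phi=c_0\phie$ on $[-\sss_1,\sss_1]$ and $\phi=c_1\phiend$ on $[\sss_1,l)$. Continuity at $\sss_1$ together with positivity requires $\phie>0$ on $[0,\sss_1]$ and $\phiend>0$ on $[\sss_1,l)$; the flux positivity $F^\phi_\pm(\sss_1)>0$ is then automatic by applying the same concavity argument as in (i) to $\phie$ (using $\phie'(0)=0$) and to $\phiend$. Combining with (i), $\kcirmin=2$ iff $\phiend$ has at least one root in $[0,l)$ (with largest such root denoted $\sss^\phiend_{\mathrm{root}}$), and some admissible $\sss_1\in(\sss^\phiend_{\mathrm{root}},l)$ with $\phie>0$ on $[0,\sss_1]$ exists. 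By continuity of $\phie$ the latter is equivalent to $\phie(\sss^\phiend_{\mathrm{root}})>0$ and $\phie>0$ on $[0,\sss^\phiend_{\mathrm{root}}]$, which (using $\phie(0)=1$) is precisely $\phie>0$ on $(0,\sss^\phiend_{\mathrm{root}}]$, matching the statement.

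Finally, (iii) is the complement of (i) and (ii). If neither holds then $\phiend$ has a (largest) root $\sss^\phiend_{\mathrm{root}}\in[0,l)$ with $\phiend>0$ on $(\sss^\phiend_{\mathrm{root}},l)$, and $\phie$ must have at least one root in $(0,\sss^\phiend_{\mathrm{root}}]$; I take $\sss^\phie_{\mathrm{root}}$ to be the smallest such root. The remaining step is to rule out the boundary case $\sss^\phie_{\mathrm{root}}=\sss^\phiend_{\mathrm{root}}$: this equality would mean $\phie$ and $\phiend$ are proportional (two solutions of a second-order linear ODE vanishing simultaneously differ at most by the ratio of their derivatives at the common zero), which given $\phie'(0)=0$ forces $\phiend'(0)=0$ as well, a codimension-one spectral condition on the background which I expect to be the main technical subtlety to handle cleanly. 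Once excluded (or absorbed via a suitable convention), Sturm separation applied to the two independent solutions $\phie,\phiend$ of $u''+Vu=0$ gives strict $\sss^\phie_{\mathrm{root}}<\sss^\phiend_{\mathrm{root}}$, completing the equivalence.
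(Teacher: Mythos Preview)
Your argument for (i) and (ii) is correct and in fact more explicit than the paper's proof, which simply asserts (i) via Proposition~\ref{Pexist2} and flux monotonicity and then declares (ii)--(iii) ``straightforward from \ref{LFmono}.'' Your direct construction of the RLD solution as a piecewise combination of $\phie$ and $\phiend$, together with the concavity argument showing the flux conditions are automatic, is a clean and self-contained approach.

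The genuine gap is in (iii). You correctly observe that $\sss^{\phie}_{\mathrm{root}}=\sss^{\phiend}_{\mathrm{root}}$ would force $\phie$ and $\phiend$ to be proportional, but then leave this as an unresolved ``codimension-one spectral condition.'' In fact the case is excluded immediately by Assumption~\ref{Aimm}(iv): if $\phiend=c\phie$, then $\phie$ is smooth at the ends (inheriting this from $\phiend$), and since $\phie$ is already $O(2)\times\Z_2$-invariant, it would be a nontrivial element of $(\ker\Lcal_\Sigma)_{|\sss|}$, contradicting the assumed triviality of the invariant kernel. Once you invoke this, the strict inequality $\sss^{\phie}_{\mathrm{root}}<\sss^{\phiend}_{\mathrm{root}}$ follows directly and your appeal to Sturm separation becomes unnecessary (and is in any case not the right tool here, since it governs interlacing of zeros rather than ordering of first roots).
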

\begin{proof}
We first prove (i).  If $\phiend> 0$ on $\cyl_{[0, l)}$, then clearly the function on $C^\infty_{|\sss|}(\cyl^{(0)})$ defined by $\sss \mapsto \phiend(|\sss|)$ is an RLD solution with $\kcir = 1$, so $\kcirmin = 1$.  Conversely, if $\kcirmin = 1$, then in the notation of \ref{Pexist2}(ii), $\phicheck[F; \bsigmaunder]$ has only one jump latitude for $F \in [b_{1, \bsigmaunder}, b_{0, \bsigmaunder})$.  By the flux monotonicity \ref{LFmono} there is a unique $F$ in this interval such that $\phicheck[F; \bsigmaunder](\sss) = \phiend(|\sss|)$.  By \ref{RL}(i), it follows that $\phiend >0$ on $\cyl_{[0, l)}$.  The proofs of (ii)-(iii) are straightforward from \ref{LFmono}, so we omit the details. 
\end{proof}

\begin{remark}
\label{Rtorrld}
If $\Sigma$ is a torus, recall from \ref{LAconf} that $\Sbar$ fixes the two equatorial circles $\widetilde{X}_\Sigma(\cyl_0)$ and $\widetilde{X}_\Sigma(\cyl_l)$.  
Up to redefining $\widetilde{X}_\Sigma$, we could interchange the role of which equatorial circle $\cyl_0$ maps to, which would interchange the roles of $\phie$ and $\phiend$.  
This would lead to slightly different classes of RLD solutions; in particular if either $\phie>0$ or $\phiend> 0$ on $\cyl_{[0,l)}$, 
we would have $\kcirmin = 1$ with respect to at least one of these choices (recall \ref{Rk1}(i)).  
It would also be possible (recall Remark \ref{RLcard}(ii)) to consider an expanded class of RLD solutions which have jumps on both equators, 
although we have not considered this case for the sake of simplicity.
\qed 
\end{remark} 

\begin{prop}
\label{PODEest}
Suppose $\phat = \phat[ \bsigmaunder: \kcir]$ and $\phat' = \phat[ \bsigmaunder': \kcir]$ are as in \ref{Nphik}, 
where $\bsigmaunder = (\bsigma, \xibold)\in \R^{k-1}\times \R^k , \bsigmaunder'= (\bsigma', \xibold') \in \R^{k-1}\times \R^k$ satisfy 
$ |\xibold|_{\ell^\infty}< \min(\frac{1}{10}, \frac{1}{k})$,  $|\xibold'|_{\ell^\infty}< \min(\frac{1}{10}, \frac{1}{k})$.  
There is a constant $C>0$ depending only on an upper bound of $|\bsigma|_{\ell^1}$ and $|\bsigma'|_{\ell^1}$ such that
\[ \left| \Fboldunder^{\phat'} - \Fboldunder^\phat \right|_{\ell^\infty} \le 
\frac{C}{k}\left(  | \bsigma' - \bsigma|_{\ell^1}+ |\xibold' - \xibold|_{\ell^\infty}\right).\] 
\end{prop}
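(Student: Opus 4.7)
\smallskip

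\noindent\textbf{Proof Plan for Proposition \ref{PODEest}.}
The plan is to view $\Fboldunder^{\phat[\bsigmaunder:\kcir]}$ as a smooth function of $\bsigmaunder$ alone, obtained by solving the smooth-at-the-ends constraint (Lemma \ref{Nphik}) for $F_1 := F^{\phat}_1$ via the implicit function theorem, and then to estimate the partial derivatives of this function uniformly by $C/k$. Integrating along the straight-line segment from $\bsigmaunder$ to $\bsigmaunder'$—which remains in the admissible region by convexity of the smallness assumptions on $\xibold$ and $\bsigma$—then yields the asserted Lipschitz bound. I will present the argument for the case $\kcir=2k$ (using Proposition \ref{Pexist}); the odd case is identical after substituting Proposition \ref{Pexist2} and enforcing $\sss_1=0$, $\xi_1=0$.

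First I would write the smooth-at-the-ends constraint as
\[
G(F_1,\bsigmaunder):=(1+\xi_k)\bigl(e^{\sum_{l=1}^{k-1}\sigma_l}\bigr)F_1 - F^{\phiend}_+\!\bigl(\sss_k(F_1,\bsigmaunder)\bigr)=0,
\]
where $\sss_k(F_1,\bsigmaunder)$ is the top jump latitude of $\phat[F_1;\bsigmaunder]$, viewed as a smooth function via Lemma \ref{Lsderiv}. Using the scalar ODE identity $\frac{d}{d\sss}F^{\phiend}_+=-V-(F^{\phiend}_+)^2$ (which follows from \eqref{ELchirot} and \ref{dF}, cf.\ Lemma \ref{LFmono}), together with the formulas in Lemma \ref{Lsderiv} and the estimates of Corollary \ref{Csderiv}, one then obtains
\[
\frac{\partial G}{\partial F_1}=(1+\xi_k)e^{\sum\sigma_l}+\bigl(V(\sss_k)+(F^{\phat}_{k+})^2\bigr)\frac{\partial \sss_k}{\partial F_1},
\]
with analogous formulas for $\partial G/\partial\sigma_j$ and $\partial G/\partial\xi_j$. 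Applying Lemma \ref{Lrldest} and Corollary \ref{Csderiv} (with the lemma's $k$ replaced by $k-1$; the smooth-at-the-ends condition forces $\sss_k$ large enough that $F^{\phat}_{k\pm}$ and hence $V(\sss_k)+(F^\phat_{k\pm})^2$ are of order $1/k^2$), I would derive
\[
\frac{\partial G}{\partial F_1}\Sim_{C} k,\qquad \Bigl|\frac{\partial G}{\partial \sigma_j}\Bigr|\le C,\qquad \Bigl|\frac{\partial G}{\partial \xi_j}\Bigr|\le \frac{C}{k}.
\]
The implicit function theorem then gives $|\partial F_1/\partial\sigma_j|\le C/k^2$ and $|\partial F_1/\partial\xi_j|\le C/k^2$ (the first bound being slightly stronger than needed), and smoothness of $F_1$ as a function of $\bsigmaunder$ along the interpolating path.

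Next I would differentiate $F^{\phat}_{i\pm}=(1\pm\xi_i)\bigl(e^{\sum_{l=1}^{i-1}\sigma_l}\bigr)F_1$ (see \eqref{EFxi}) with respect to the entries of $\bsigmaunder$. Using $F_1\Sim_C 1/k$ from Lemma \ref{Lrldest}(i) together with the IFT bounds just derived, the \emph{direct} contributions satisfy $\bigl|\tfrac{\partial F^{\phat}_{i\pm}}{\partial \sigma_j}\bigr|_{\text{direct}}\le C/k$ for $j<i$ (and $0$ otherwise) and $\bigl|\tfrac{\partial F^{\phat}_{i\pm}}{\partial \xi_j}\bigr|_{\text{direct}}\le C/k$ for $j=i$ (and $0$ otherwise), while the \emph{indirect} contributions through $F_1$ are $O(1/k^2)$ in each variable. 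Summing over indices using $|\bsigma'-\bsigma|_{\ell^1}$ on $\sigma_j$-differences and $|\xibold'-\xibold|_{\ell^\infty}$ on $\xi_j$-differences (noting there is only one index $j=i$ contributing the larger $O(1/k)$ bound in the $\xi$ sum, while the other $k-1$ terms contribute $O(1/k^2)$ each), the fundamental theorem of calculus along the straight-line path in $\bsigmaunder$-space then yields
\[
\bigl|F^{\phat'}_{i\pm}-F^{\phat}_{i\pm}\bigr|\le \frac{C}{k}\bigl(|\bsigma'-\bsigma|_{\ell^1}+|\xibold'-\xibold|_{\ell^\infty}\bigr),
\]
uniformly in $i$, which is the claim.

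The main obstacle will be confirming that the smooth-at-the-ends parametrization persists globally along the interpolating path: one must rule out branches or loss of regularity of the implicit function $F_1=F_1(\bsigmaunder)$. This follows from the strict positivity $\partial G/\partial F_1\Sim_C k>0$ shown above (valid at every point of the path by the uniform hypotheses and convexity), combined with the uniqueness of the smooth-at-the-ends RLD solution for each $\bsigmaunder$ established in Lemma \ref{Nphik} and the continuous dependence asserted in Propositions \ref{Pexist}(iii) and \ref{Pexist2}(iii). A secondary technical point is verifying that the hypothesis $(F,\bsigmaunder)$ lies in the regime of validity of Lemma \ref{Lrldest} (which counts jumps via $S_{k+1}\setminus S_{k+2}$, after the index shift noted above) uniformly along the path; this again reduces to continuity and the flux-monotonicity Lemma \ref{LFmono}.
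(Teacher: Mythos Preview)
Your approach is essentially identical to the paper's proof: both define the smooth-at-the-ends constraint (called $\Fcal$ there, see \eqref{EFcal}), estimate its partial derivatives via Lemma~\ref{Lrldest} and Corollary~\ref{Csderiv} to obtain exactly your three bounds on $\partial G/\partial F_1$, $\partial G/\partial\sigma_j$, $\partial G/\partial\xi_j$ (these are \eqref{EF1deriv}), apply the implicit function theorem as in \eqref{Eift}, and then use~\eqref{EFxi} to pass to all the $F^\phat_{i\pm}$. Two harmless slips to correct: the IFT yields only $|\partial F_1/\partial\sigma_j|\le C/k$ (not $C/k^2$, since $|\partial G/\partial\sigma_j|\le C$ divided by $\partial G/\partial F_1\Sim_C k$), and $V(\sss_k)\Sim_C e^{-2\sss_k}$ is of order $1/k$ rather than $1/k^2$ in general---but neither affects the final estimate, since the $\sigma_j$-derivatives are summed against $|\bsigma'-\bsigma|_{\ell^1}$ and Corollary~\ref{Csderiv} bounds the needed products $(V(\sss_k)+(F^\phat_{k\pm})^2)\,\partial\sss_k/\partial(\cdot)$ directly.
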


\begin{proof}
Observe that the conclusion follows from the estimate
\begin{align}
\label{Efluxdiff1}
\left| F^{\phat'}_1 - F^{\phat}_1 \right| \le \frac{C}{k}\left(  |\bsigma' - \bsigma |_{\ell^1} + |\xibold' - \xibold|_{\ell^\infty}\right), 
\end{align}
since for any $i\in \{1, \dots, k\}$ we have (taking $+$ or $-$ in every instance of $\pm$ below)
\begin{multline*}
\left| F^{\phat'}_{i\pm} - F^\phat_{i\pm}\right| = 
\left| (1\pm \xi'_i ) \big( e^{\sum_{l=1}^{i-1}\sigma'_l} \big)  F^{\phat'}_1 - (1\pm \xi_i)  \big( e^{\sum_{l=1}^{i-1}\sigma_l} \big)  F^\phat_{1}\right| 
\lem \, 
\\
\left|  (1 \pm \xi'_i)  \big( e^{\sum_{l=1}^{i-1}\sigma'_l} \big)  -  (1 \pm \xi_i)  \big( e^{\sum_{l=1}^{i-1}\sigma_l} \big) \right|  \, F^{\phat'}_1+ 
(1 \pm \xi_i)  \big( e^{\sum_{l=1}^{i-1}\sigma_l} \big) \left| F^{\phat'}_1 - F^{\phat}_1\right| .
\end{multline*}

We now prove \eqref{Efluxdiff1}.  Fix $k\in \N$ and 
consider the map 
defined by
\begin{equation}
\label{EFcal}
\begin{aligned}
\Fcal ( F_1, \bsigmaunder ) := & 
 F^{\phat[ F; \bsigmaunder]}_+ \big(\sss_k^{\phat[ F; \bsigmaunder]} \big) - F^{\phiend}_+ \big(\sss_k^{\phat[ F; \bsigmaunder]} \big) 
\\
= & 
(1+ \xi_k) \big( e^{\sum_{l=1}^{k-1} \sigma_l} \big) F_1  
- F^{\phiend}_+ \big(\sss_k^{\phat[ F; \bsigmaunder]} \big) , 
\end{aligned}
\end{equation} 
where $\phat[ F; \bsigmaunder]$ is as in \ref{Pexist}
and the second equality uses \eqref{EFxi}.  
Clearly, $\Fcal ( F_1, \bsigmaunder) = 0$ if and only if $\phat[ F; \bsigmaunder] $ is smooth at the ends.  
Now let $\pointp \in \Fcal^{-1}\left(\{0\}\right)$ be arbitrary.

It follows from Lemma \ref{LFmono} and \ref{LHmono} that $\Fcal$ is $C^1$; below we estimate the partial derivatives of $\Fcal$ at $\pointp$.
Differentiating \eqref{EFcal} with respect to $F_1$ and using \ref{LFmono}, we compute
\begin{align}
\label{EFF1}
\left. \frac{ \partial \Fcal }{ \partial F_1}\right |_{\pointp} &= 
(1+ \xi_k) \big( e^{\sum_{l=1}^{k-1} \sigma_l} \big)  
+ \big( V( \sss_k) + \big(F^\phat_{k+}\big)^2\big) \left.\frac{ \partial \sss_k}{\partial F_1}\right|_{\pointp}, 
\end{align}
and similarly the derivatives with respect to $\sigma_i$ and $\xi_j$. 
By combining with Corollary \ref{Csderiv}, we estimate that for $j\in \{1, \dots, k\}$ and $i\in \{1, \dots, k-1\}$,
\begin{align}
\label{EF1deriv}
\left.\frac{\partial \Fcal}{\partial F_1}\right|_{\pointp} \Sim_C k, \qquad 
\left| \left.\frac{\partial \Fcal}{\partial \sigma_i}\right|_{\pointp} \right| \le C, \qquad
\left|\left. \frac{\partial \Fcal}{\partial \xi_j}\right|_{\pointp} \right| \le \frac{C}{k},
\end{align}
where $C>0$ is a constant independent of $k$.

By the implicit function theorem, $\Fcal^{-1}\left(\{ 0 \} \right)$ is a graph of a function of $\bsigmaunder$ in the vicinity of any given  $(F_1, \bsigmaunder) \in \Fcal^{-1}\left( \{0\}\right)$,  and moreover (abusing notation slightly), for $i\in \{1, \dots, k-1\}$ and $j\in \{1, \dots, k\}$,
\begin{equation} 
\label{Eift}
\begin{aligned}
\left.\frac{\partial F_1}{\partial \sigma_i}\right|_{\bsigmaunder}  =&  
- \bigg(\left. \frac{ \partial \Fcal}{ \partial F_1}\right|_{\pointp}\bigg)^{-1} \left.\frac{ \partial \Fcal }{ \partial \sigma_i}\right|_{\pointp}, 
\\   
\left.\frac{\partial F_1}{\partial \xi_j} \right|_{\bsigmaunder}  =&  
- \bigg( \left.\frac{ \partial \Fcal}{ \partial F_1}\right|_{\pointp}\bigg)^{-1}\left. \frac{ \partial \Fcal }{ \partial \xi_j}\right|_{\pointp}.
\end{aligned}
\end{equation} 
The conclusion follows by combining this with the estimates \eqref{EF1deriv}.  
\end{proof}

\section{LD solutions from RLD solutions}
\label{S:LDs}

\subsection*{Basic facts} 
\nopagebreak

Given $\munder \in \Z\setminus \{ 0\} $, we define a scaled metric $\chitilde = \chitilde[\munder]$ on $\cyl$ and scaled coordinates
  $(\, \stilde[\munder], \thetatilde[\munder]\, )$
defined by
\begin{align}
\label{Echitilde}
\chitilde:= \munder^2 \chi, 
\qquad 
\stilde =| \munder |\sss, 
\qquad 
\thetatilde =| \munder |\theta.
\end{align}
We also define corresponding a scaled linear operator
\begin{equation} 
\begin{aligned}
\Lcal_{\chitilde[\munder]}: = \Delta_{\chitilde} + \munder^{-2} V = \munder^{-2} \Lchi.
\end{aligned}
\end{equation}

\begin{definition}
\label{dchi}
Given $\sbar \in \R_+$ and $\munder \in \Z\setminus \{0\}$, we define a shifted coordinate $\shat = \shat\, [\sssunder, \munder]$ by 
\[\shat := \stilde - |\munder|\,  \sbar = |\munder| \, (\sss-\sbar) .\] 
\end{definition}

\begin{definition}
\label{dOmega}
Given $\munder \in \Z\setminus \{0\}$, we define
\begin{align}
\label{Edelta}
\delta[\munder] := 1/(9|\munder|).
 \end{align}
 Given $\sbar$, $\sbold$, and $\mbold$ as in \ref{dLmbold}, for $i=1, \dots, k$ we define $\delta_i := \delta[m_i]$ and define nested open sets $D^\chi_{L[\sbar;\munder]}(3\delta[\munder]) \subset \Omega'[\sbar;\munder]\subset \Omega[\sbar;\munder]$, where
\begin{align*}
\Omega[\sbar; \munder] : = D^{\chi[\munder]}_{\Lpar[\sbar]}\left(3/|\munder
|\right) = D^{\chitilde[\munder]}_{\Lpar[\sbar]}\left(3\right),\\   
 \Omega'[\sbar; \munder] := D^{\chi[\munder]}_{\Lpar[\sbar]}\left(2/|\munder|\right) = D^{\chitilde[\munder]}_{\Lpar[\sbar]}\left(2\right). 
\end{align*}
We also define $\Omega[\sbold; \mbold] : = \bigcup_{i=1}^k \Omega[\sss_i; m_i]$ and $\Omega'[\sbold; \mbold]: = \bigcup_{i=1}^k \Omega'[\sss_i; m_i]$. 
\end{definition}

\begin{figure}[h]
	\centering
	\begin{tikzpicture}[scale=.65]
\draw[-, dashed, black] (-7, -3) to (7, -3);
\draw[-, dashed, black] (-7, -2) to (7, -2);
\draw[-, dashed, black] (-7, -1/2) to (7, -1/2);
\draw[-, dashed, black] (-7, 0) to (7, 0);
\draw[-, dashed, black] (-7, 1/2) to (7, 1/2);
\draw[-, dashed, black] (-7, 2) to (7, 2);
\draw[-, dashed, black] (-7, 3) to (7, 3);
\draw[-, thick] (-4, -1.25) node[]{$\Lpar[\sbar]$};
\draw[-, thick] (0, 1.25) node[]{$D^\chi_{\Lpar[\sbar]}(3\delta)$};
\draw[-, thick] (8, 0) node[]{$\Omega[\sbar; \munder]$}; 
\draw[-, thick] (4, 1.25) node[]{$\Omega'[\sbar; \munder]$};
\draw[->, thick] (-4, -1)  to  (-4, 0);
\draw[-, thick] (0, .7)  to (0, 0) to (1/2, 0) to (1/2, .4);
\draw[-, thick] (.3, .4) to (.7, .4);
\draw[-, thick] (1/2, 0) to (1/2, -.4) to (.3, -.4) to (.7, -.4);
\draw[-, thick] (8, -3) to (8, -1/2);
\draw[-, thick] (8, .5) to (8, 3); 
\draw[-, thick] (7.8, 3) to (8.2, 3); 
\draw[-, thick] (7.8, -3) to (8.2, -3); 
\draw[-, thick] (4, .8)  to (4, -1.9); 
\draw[-, thick] (4, 1.9) to (4, 1.5);
\draw[-, thick] (3.8, -1.9) to (4.2, -1.9); 
\draw[-, thick] (3.8, 1.9) to (4.2, 1.9); 
\end{tikzpicture}
\caption{A schematic of connected components of the neighborhoods of $\Lpar[\sbar]$ (defined in \ref{dOmega}) near latitude $\sbar$.}
\end{figure}

\begin{definition}[Antisymmetry operators] 
\label{Dantisym} 
Given a domain $\Omega \subset \cyl$ satisfying $\Sbar_{\sbar}(\Omega) = \Omega$ for some $\sbar \in \R$ (recall \ref{Dgroup}), we define operators $\Rcal_{\sbar}$ and $\Acal_{\sbar}$, each acting on real-valued functions defined on $\Omega$, by 
$\Rcal_{\sbar} u = u \circ \Sbar_{\sbar}$ and 
$\Acal_{\sbar} u  = u - \Rcal_{\sbar} u$. 
\end{definition}

\begin{lemma}
\label{Rprod}
Let $\sbar \in (\frac{3}{\munder}, \infty)$ and $\munder \in \Z \setminus \{ 0\}$.  
The following hold:
\begin{enumerate}[label=\emph{(\roman*)}]
\item  For all $u, v \in C^0(\Omega[\sbar; \munder])$,  
$ 
\Acal_{\sbar}\left( u v\right) = u \, \Acal_{\sbar}  v  + \Acal_{\sbar}  u  \, \Rcal_{\sbar}  v . 
$  

\item  For all $u \in C^2(\Omega[\sbar;\munder])$, $[\Acal_{\sbar}, \Lcal_\chitilde ] u =   \munder^{-2} \Acal_{\sbar} V \, \Rcal_{\sbar} u $.

\item 
$\left\| V: C^j\left( \Omega , \chi\right) \right\| \le C(j) \left \| V : C^0\left(\Omega, \chi \right)\right\|$  
holds for any domain $\Omega \subset \cyl_I$.   

\item 
$\left\| \Acal_{\sbar} V: C^j\left( \Omega[\sbar; \munder], \chi \, \right)\right\| \le  
\frac{C(j)}{|\munder|} V(\sbar) $.  
\end{enumerate}
\end{lemma}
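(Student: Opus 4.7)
The plan is to handle the four assertions in order, since (ii) uses (i) and (iii) feeds into (iv).

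For (i), I would argue by direct algebra starting from $\Acal_\sbar u = u - \Rcal_\sbar u$. Since pullback by a map is multiplicative, $\Rcal_\sbar(uv) = (\Rcal_\sbar u)(\Rcal_\sbar v)$, and adding and subtracting $u \cdot \Rcal_\sbar v$ gives
\[
\Acal_\sbar(uv) = uv - (\Rcal_\sbar u)(\Rcal_\sbar v) = u\,(v - \Rcal_\sbar v) + (u - \Rcal_\sbar u)\,\Rcal_\sbar v,
\]
which is exactly the stated identity. For (ii), $\Sbar_\sbar$ is an isometry of $(\cyl, \chi)$ (hence of $\chitilde$, which is conformal to $\chi$ by a constant), so $\Delta_\chitilde$ commutes with $\Rcal_\sbar$ and therefore with $\Acal_\sbar$. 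This reduces the commutator to $[\Acal_\sbar, \Lcal_\chitilde] = \munder^{-2}[\Acal_\sbar, V]$, where $V$ acts by multiplication. Applying (i) with the pair $(V, u)$ yields $\Acal_\sbar(Vu) = V \Acal_\sbar u + (\Acal_\sbar V)(\Rcal_\sbar u)$, and subtracting $V \Acal_\sbar u$ gives the claimed formula.

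For (iii), I would invoke \ref{AV2}(i) directly: it provides $V(\sss) \Sim_C e^{-2|\sss|}$ and $|\partial_\sss^j V(\sss)| \le C(j) e^{-2|\sss|}$. Combining the lower bound on $V$ with the derivative bound yields $|\partial_\sss^j V(\sss)| \le C'(j)\, V(\sss)$ pointwise; rotational invariance of $V$ kills all $\vartheta$-derivatives, so every mixed partial of order $\le j$ is pointwise bounded by a constant (depending on $j$) times $V$. Taking the supremum over $\Omega$ gives the result.

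For (iv), the key feature is that $\Sbar_\sbar$ fixes the circle $\cyl_{\sbar}$, so $\Acal_\sbar V$ vanishes there, and $\Omega[\sbar;\munder]$ has $\chi$-radius $3/|\munder|$ around $\Lpar[\sbar]$. On the component near $\sbar$, writing $\sss = \sbar + t$ with $|t| \le 3/|\munder|$, the function $f(t) := \Acal_\sbar V(\sbar + t) = V(\sbar+t) - V(\sbar-t)$ is an odd function of $t$ vanishing at $t = 0$. The mean value theorem combined with (iii) gives $|f(t)| \le 2|t|\, \sup_{|\xi - \sbar|\le 3/|\munder|} |V'(\xi)| \le (C/|\munder|)\, V(\sbar)$, yielding the $C^0$ estimate. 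Higher derivatives are handled by the same reasoning in the properly rescaled norm: under the natural scaled metric on $\Omega[\sbar;\munder]$ each derivative costs a factor $|\munder|^{-1}$, and the pointwise bound $|f^{(l)}(t)| \le C(l)\, V(\sbar)$ from (iii) (plus the stronger cancellation $f^{(l)}(0)=0$ when $l$ is even) gives the claimed $C(j)/|\munder|$ bound. The component of $\Omega[\sbar;\munder]$ near $-\sbar$ is treated analogously, using the $\Z_2$ symmetry of $V$ (from \ref{Aimm}) to reduce to the previous case. The only mildly subtle point is making the $1/|\munder|$ improvement uniform in the order of differentiation, which comes out of the metric rescaling combined with the vanishing of $\Acal_\sbar V$ on the fixed circle.
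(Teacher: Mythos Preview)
Your argument is correct and matches the paper's approach: direct algebra for (i), using (i) together with the $\Sbar_\sbar$-invariance of $\Delta_\chitilde$ for (ii), invoking \ref{AV1} for (iii), and the mean value theorem plus (iii) for (iv).

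One point worth making explicit in (iv): your switch to the ``properly rescaled norm'' is not just convenient but necessary. Taken literally with the $\chi$-metric, the bound fails for $j\ge 1$, since $\partial_\sss(\Acal_\sbar V)\big|_{\sss=\sbar}=2V'(\sbar)$ is of order $V(\sbar)$ with no $1/|\munder|$ gain. What your rescaling argument actually establishes is the estimate with $\chitilde[\munder]$ in place of $\chi$ (each $\chitilde$-derivative contributing a factor $|\munder|^{-1}$), and this is exactly the version used downstream in \ref{LEest}, \ref{Lsol}, and \ref{LavePhi}. So your reasoning is right; the discrepancy is a minor imprecision in the stated norm rather than a gap in your proof.
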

\begin{proof}
(i) follows from a straightforward computation, and (ii) follows from (i) and a similar computation, using the fact that $\Delta$ commutes with $\Acal_{\sbar}$.  (iii) follows from \ref{AV1}.  (iv) is a discrete version of (iii) which follows from the mean value theorem and (iii). 
\end{proof}

\begin{lemma}[Green's functions {\cite[Lemma 2.28]{kapmcg}}]
\label{Lgreen}
There exists $\epsilon>0$ depending only on $V$ such that for any $p\in \cyl$, there exists a Green's function $G^\chi_p$ for $\Lchi$ on $D^\chi_p(\epsilon)$ satisfying: 
 \begin{align*}
 \left\| G^{\chi}_p - \log r : C^j( D^\chi_p(\epsilon) \setminus \{p\}, r, \chi, r^2|\log r|)\right| \leq C(j), 
 \quad{where} \quad
  r = \dbold^{\chi}_p.
  \end{align*}
\end{lemma}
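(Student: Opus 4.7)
The key simplification is that the metric $\chi = d\sss^2 + d\vartheta^2$ on $\cyl$ is flat, so for $\epsilon>0$ small enough (say $\epsilon < 1$, independent of $p$), the disk $D^\chi_p(\epsilon)$ is isometric to the standard flat disk $D_\epsilon(0)\subset\R^2$. Under this identification, $\Delta_\chi$ is the ordinary Euclidean Laplacian and $\log r$ (where $r = \dbold^\chi_p$) is harmonic on $D_\epsilon\setminus\{p\}$. We therefore write $G^\chi_p = \log r + w$ and reduce the problem to finding a function $w$ on $D_\epsilon$ satisfying
\begin{equation*}
\Lchi w = -V\log r \quad \text{on } D_\epsilon\setminus\{p\},
\end{equation*}
together with the weighted estimate $\|w:C^j(D_\epsilon\setminus\{p\},r,\chi,r^2|\log r|)\|\le C(j)$.

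My plan is to construct $w$ by a parametrix-plus-perturbation argument. First I would peel off the leading singular behavior by introducing the explicit ansatz $w_0 := -\tfrac{V(p)}{4}\,r^2\log r$. A direct computation in polar coordinates gives $\Delta_\chi(r^2\log r) = 4\log r + 4$, so
\begin{equation*}
\Lchi w_0 = -V(p)\log r - V(p) + V(p) w_0/(\text{lower order}),
\end{equation*}
and hence $\Lchi w_0 + V\log r = (V - V(p))\log r - V(p) + V w_0$. Because $V$ is smooth (by \ref{AV1}(i)), the first term is $O(r|\log r|)$ and the remaining terms are bounded. Thus the residual equation for $w_1 := w - w_0$ takes the form $\Lchi w_1 = E$ with $E\in C^0(D_\epsilon)$, in fact $E = O(r|\log r|)$ near $p$.

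Next I would solve the Dirichlet problem $\Lchi w_1 = E$, $w_1|_{\partial D_\epsilon}=0$. For $\epsilon$ small enough in terms of $V$, the operator $\Lchi$ is a small perturbation of $\Delta_\chi$ at unit scale (after rescaling to a disk of size $1$), so standard Fredholm/Schauder theory (cf.\ the argument for \ref{Lgreenlog}(ii)) gives a unique solution $w_1\in C^{2,\beta}(D_\epsilon)$ with $\|w_1:C^{2,\beta}(D_\epsilon,\chi)\|\le C$. Combined with $w_0 = O(r^2|\log r|)$, this already yields the claimed pointwise bound $|w|\le Cr^2|\log r|$.

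Finally, to obtain the full weighted $C^{k,\beta}$ statement, I would use a standard scaling/iteration argument on dyadic annuli. On each annulus $A_\rho := \{\rho/2 < r < 2\rho\}$, rescale by $\rho^{-1}$ to unit size; the rescaled equation becomes $(\Delta_\chi + \rho^2 V)\tilde w_1 = \rho^2 \tilde E$ on a fixed annulus, with $\rho^2\tilde V,\rho^2\tilde E$ uniformly bounded (indeed $|\tilde E|\le C\rho^{-1}|\log\rho|$ after accounting for weights). Interior Schauder estimates then bound all derivatives of $\tilde w_1$ in terms of its sup norm plus the inhomogeneity, and converting back via the definition \ref{dwHolder} of the weighted norm yields
\begin{equation*}
\|w:C^j(D_\epsilon\setminus\{p\},r,\chi,r^2|\log r|)\|\le C(j).
\end{equation*}

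The main obstacle is the weighted estimate near the singularity: bare Schauder theory gives $C^{2,\beta}$ control but the claim asserts the sharp asymptotic rate $r^2|\log r|$ for $G^\chi_p - \log r$. The crucial step is the explicit ansatz $w_0 = -\tfrac{V(p)}{4}r^2\log r$, which captures precisely the obstruction to better decay and reduces the problem to solving an equation with a bounded (in fact $O(r|\log r|)$) right-hand side, after which the dyadic scaling argument gives all higher derivative bounds with no further difficulty.
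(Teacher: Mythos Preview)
Your approach is more explicit than the paper's---the paper simply invokes the general Lemma~\ref{Lgreenlog} (whose proof there is itself just a citation to standard literature) together with the uniform bounds on $V$ from Lemma~\ref{AV1} to get the $p$-independent $\epsilon$ and constants. So methodologically you are doing something genuinely different: you reprove the result from scratch via an explicit parametrix on the flat cylinder.

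However, there is a real gap in your third paragraph. You write that solving the Dirichlet problem gives $\|w_1:C^{2,\beta}(D_\epsilon,\chi)\|\le C$, and that ``combined with $w_0=O(r^2|\log r|)$, this already yields the claimed pointwise bound $|w|\le Cr^2|\log r|$.'' This does not follow: $|w_1|\le C$ and $|w_0|\le Cr^2|\log r|$ only give $|w|\le C$, not $|w|\le Cr^2|\log r|$. The dyadic Schauder argument you sketch afterwards does not repair this, because interior Schauder estimates on $A_\rho$ bound the rescaled derivatives of $\tilde w_1$ by its sup norm plus the inhomogeneity---they do not improve the sup norm itself. So from $|w_1|\le C$ you would only recover $|\nabla^k w_1|\le C r^{-k}$, which in the weighted norm of Definition~\ref{dwHolder} with weight $r^2|\log r|$ is unbounded as $r\to 0$.

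The fix is easy and uses the freedom in the statement (it asserts existence of \emph{some} Green's function with the estimate): after obtaining $w_1\in C^{2,\alpha}$, use \ref{Ldsolve}(ii) to find $\phi\in C^\infty(D_\epsilon)$ with $\Lchi\phi=0$ and $\phi(p)=w_1(p)$, $d_p\phi=d_p w_1$, then replace $w_1$ by $w_1-\phi$. Now $w_1(p)=0$, $d_pw_1=0$, and since your right-hand side $E$ lies in $C^{0,\alpha}$ for every $\alpha<1$ (the worst term $(V-V(p))\log r$ is $O(r|\log r|)$), you get $w_1\in C^{2,\alpha}$ and Taylor's theorem gives $|w_1|\le Cr^{2}$, which is dominated by $r^2|\log r|$ near $p$. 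At that point your dyadic argument goes through.
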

\begin{proof}
This follows from \ref{Lgreenlog} and \ref{AV1}.
\end{proof}

\begin{definition}[{\cite[Definition 2.21]{kapmcg}}]
\label{dauxode}
Given $a,b,c\in\R$ and $\sbar \in \R_+$, we define (recall \ref{Dlpar})
\begin{equation*}
\begin{aligned}
\phiunder=\phiunder [a,b;\sbar]\in \,\,
C^\infty_{|\sss|}\big( \, \cyl^{(0)} \,\big ),
\quad 
\junder=\junder\big[c\, ;\sbar\big]\in \,\,
C^\infty_{|\sss|}\big( \cyl^{(0, \sbar)} \,\big)
\end{aligned}
\end{equation*}
by requesting they satisfy the initial data 
$$
\phiunder(\sbar)=a,
\qquad
 \partial \phiunder (\sbar) =  b ,
\qquad
\junder(\sbar)=0,
\qquad
\partial_{+}\junder(\sbar)=\partial_{-}\junder(\sbar)=c,
$$
and the ODEs $\Lchi \phiunder=0$ on
$\cyl^{(0)}$, 
and $\Lchi \junder=0$ on $\cyl^{(0, \sbar)}$.
\end{definition}

\begin{remark}
\label{Rauxode}
Note that $\phiunder$ depends linearly on the pair $(a, b)\in \R^2$ and $\junder$ depends linearly on $c \in \R$.  
\qed
\end{remark}

\begin{lemma}[{\cite[Lemma 2.23]{kapmcg}}]
\label{Lode}
For all $\munder\in \N$ large enough and $\sbar \in (\frac{3}{\munder}, \infty)$, the following estimates hold (recall \ref{Nsym}).  
\begin{enumerate}[label=\emph{(\roman*)}]
\item
$\left\|  \, \phiunder[1, 0 ;\sbar] -1 \, : \, C_{|\sss|}^{j}( \, \Omega[\sbar; \munder]\,,\chitilde[\munder] \, )\, \right\|\, \le \, C(j)/\munder^2$.

\item
$\left\|  \, \junder[\munder;\sbar] - |\, \shatbar \, |  \, : \, C_{|\sss|}^{j}( \, \Omega[\sbar; \munder] \setminus \Lpar[\sbar] \,,\chitilde[\munder] \, ) \, \right\| \, \le \,  C(j)/\munder^2\, $.

\item
$\left\| \, \Acal_{\sbar} \,   \phiunder[1, 0 ;\sbar] \, : C_{|\sss|}^{j}( \, \Omega[\sbar; \munder]\, , \chitilde[\munder] \, ) \, \right\|\le \, C(j)/\munder^3$.

\item
$\left\|  \, \Acal_{\sbar} \,   \junder[\munder;\sbar] \, : \, C_{|\sss|}^{j}( \, \Omega[\sbar; \munder] \setminus \Lpar[\sbar]  \,,\chitilde[\munder] \, ) \, \right\| \, \le  \,  C(j)/\munder^3 $.
\end{enumerate}
\end{lemma}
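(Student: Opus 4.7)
\textbf{Proof plan for Lemma \ref{Lode}.} The key idea is to pass to the scaled variable $\stilde = |\munder|\sss$, where $\chitilde = \munder^2 \chi$ turns $\Omega[\sbar;\munder]$ into a set of bounded $\chitilde$-diameter and the equation $\Lchi u = 0$ becomes $\Lcal_{\chitilde} u = 0$, i.e.\ (since $\phiunder,\junder$ are rotationally invariant) the ODE
\[
\partial_{\stilde}^{\,2} u + \munder^{-2}V\, u \,=\, 0.
\]
On $\Omega[\sbar;\munder]$ we have $|\stilde-\stildebar|\le 3$ and, by \ref{AV1}, $\|V\|_{C^j(\cyl,\chi)}\le C(j)$, so $\munder^{-2}V$ is a small zeroth-order perturbation of $\partial_{\stilde}^{\,2}$ with coefficients bounded in all $C^j$-norms.

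\emph{Proofs of (i) and (ii).} Write $\phiunder[1,0;\sbar] = 1 + u$ and $\junder[\munder;\sbar] = |\shat| + v$ (with $|\shat|=|\stilde-\stildebar|$); both remainders solve the same ODE with zero Cauchy data at $\sbar$ (for $v$ this uses that $|\shat|$ already carries the prescribed corner, so $v$ is $C^1$ at $\sbar$) and forcings
\[
\partial_{\stilde}^{\,2}u \,=\, -\munder^{-2}V(1+u), \qquad \partial_{\stilde}^{\,2}v \,=\, -\munder^{-2}V(|\shat|+v).
\]
A one-step Picard/integration argument on the bounded interval $|\shat|\le 3$ yields $\|u\|_{C^0}, \|v\|_{C^0}\le C/\munder^2$ once $\munder$ is large. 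Higher $C^j_{|\sss|}$-norms (with respect to $\chitilde$) then follow by bootstrapping the ODE and invoking \ref{Rprod}(iii) to bound derivatives of $V$.

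\emph{Proofs of (iii) and (iv).} Apply $\Acal_{\sbar}$ to the equations. By \ref{Rprod}(ii),
\[
\Lcal_{\chitilde}\bigl(\Acal_{\sbar}\phiunder\bigr) \,=\, -\munder^{-2}\,\Acal_{\sbar}V\cdot\Rcal_{\sbar}\phiunder,\qquad
\Lcal_{\chitilde}\bigl(\Acal_{\sbar}\junder\bigr) \,=\, -\munder^{-2}\,\Acal_{\sbar}V\cdot\Rcal_{\sbar}\junder,
\]
while \ref{Rprod}(iv) gives $\|\Acal_{\sbar}V\|_{C^j(\Omega[\sbar;\munder],\chi)}\le C(j)/|\munder|$ (using also the uniform boundedness of $V$ furnished by \ref{AV1}). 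Since $\Sbar_{\sbar}$ fixes $\sbar$, $\Acal_{\sbar}\phiunder$ has vanishing value and derivative at $\sbar$; and since the leading corner term $|\shat|$ of $\junder$ is $\Sbar_{\sbar}$-symmetric, the corner of $\junder$ at $\sbar$ cancels under $\Acal_{\sbar}$, so $\Acal_{\sbar}\junder$ is $C^1$ across $\sbar$ with zero Cauchy data there. Combining these vanishing Cauchy data with the forcings, which by parts (i)--(ii) and the bound on $\Acal_{\sbar}V$ are of size $O(\munder^{-3})$ in the appropriate weighted norms, the same integration/bootstrap scheme as in (i)--(ii) gives the extra factor of $\munder^{-1}$ and yields (iii)--(iv).

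\emph{Main obstacle.} The only genuinely delicate point is the book-keeping at the corner of $\junder$: one must verify that after subtracting $|\shat|$ (for (ii)) and after applying $\Acal_{\sbar}$ (for (iv)) the resulting function is regular enough at $\sbar$ to apply the standard ODE integration argument on the full interval, rather than piecewise. Once that cancellation is noted, the rest is routine perturbative ODE analysis on a bounded interval with bounded coefficients, and the weighted $C^j_{|\sss|}$-norms with respect to $\chitilde[\munder]$ follow by differentiating the equations and inducting on $j$.
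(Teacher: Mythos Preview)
Your proposal is correct and takes essentially the same approach as the paper, which simply states that the proof is an ODE comparison argument using the properties of $V$ established in Lemma~\ref{Rprod} (in place of the explicit $2\sech^2\sss$ used in \cite{kapmcg}) and omits the details. You have correctly supplied those details: the reduction to a bounded interval in the scaled variable, the Picard/bootstrap estimate for (i)--(ii), and the use of \ref{Rprod}(ii),(iv) together with the vanishing Cauchy data of $\Acal_{\sbar}\phiunder$ and $\Acal_{\sbar}\junder$ at $\sbar$ for (iii)--(iv), including the observation that the corner of $\junder$ cancels under $\Acal_{\sbar}$.
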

\begin{proof}
The proof is an ODE comparison argument which is only superficially different than the proof of \cite[Lemma 2.23]{kapmcg}---here we use properties of $V$ established in \ref{Rprod} 
instead of properties of $2\sech^2 \sss$ as in \cite{kapmcg}---so we omit the details. 
\end{proof}

\subsection*{Maximally symmetric LD solutions from RLD solutions}
\nopagebreak

For convenience and uniformity, we now identify $\cyl_I$ with $X_\Sigma(\cyl_I) \subset \Sigma$ (recall Remark \ref{Rcylid}), 
which allows us to study LD solutions on the cylinder instead of on $\Sigma$.

\begin{definition}
\label{davg}
Given a function $\varphi$ on some domain $\Omega\subset \cyl$ or $\Omega \subset \Sigma$, we define
a rotationally invariant function $\varphi_\ave$ on 
the union $\Omega'$ of the $O(2)$ orbits on which $\varphi$ is integrable
(whether contained in $\Omega$ or not),
by requesting that on each such orbit $C$, 
\[\left. \varphi_\ave \right|_C
:=\avg_C\varphi.\]
We also define $\varphi_\osc$ on $\Omega\cap\Omega'$ by $\varphi_\osc:=\varphi-\varphi_\ave$.
\end{definition}

\begin{lemma}[$\groupcyl$-Symmetric LD solutions, cf. {\cite[Lemma 3.10]{kap}}]
\label{LsymLD}
For $m\in\N$ large enough (depending only on $\Sigma$) the following hold. 
\begin{enumerate}[label=\emph{(\roman*)}]
\item $\ker \Lcal_\Sigma$ is trivial modulo the $\groupcyl$ action on $\Sigma$.
\item 
Given a $\groupcyl$-invariant invariant configuration $\taubold : L\rightarrow \R$ 
there exists a unique $\groupcyl$-invariant LD solution $\varphi = \varphi[\taubold]$ of configuration $\taubold$.  
\end{enumerate}
\end{lemma}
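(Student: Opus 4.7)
\medskip

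\textbf{Proof proposal.}

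My plan is to derive (i) from Assumption \ref{Aimm}(iv) by a compactness/density argument, and then reduce (ii) to a $\gcyl_m$-symmetric analogue of Lemma \ref{Lldexistence}.

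For (i), I will argue by contradiction. Suppose there exist $m_j \in \N$ with $m_j \to \infty$ and nonzero $\gcyl_{m_j}$-invariant functions $u_j \in \ker \Lcal_\Sigma$; normalize so that $\|u_j\|_{L^2(\Sigma,g)} = 1$. By standard elliptic regularity for the second-order operator $\Lcal_\Sigma$ on the closed surface $\Sigma$, the $u_j$ are uniformly bounded in $C^{k}(\Sigma,g)$ for every $k$. By Arzel\`a--Ascoli I can pass to a subsequence (not relabeled) with $u_j \to u_\infty$ in $C^2(\Sigma,g)$, so that $u_\infty \in \ker \Lcal_\Sigma$ and $\|u_\infty\|_{L^2(\Sigma,g)} = 1$. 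The key observation is that $\gcyl_{m_j} = \langle \thetabar_0, \thetabar_{\pi/m_j}, \Sbar\rangle$ contains the composition $\thetabar_{\pi/m_j} \circ \thetabar_0 \in SO(2)$, which is rotation by $2\pi/m_j$. Thus $u_j$ is invariant under the cyclic group generated by this rotation, together with $\thetabar_0$ and $\Sbar$. As $m_j \to \infty$ these subgroups are eventually dense in $O(2) \times \Z_2$, so by continuity $u_\infty$ is $O(2)\times \Z_2$-invariant. But then $u_\infty \in (\ker \Lcal_\Sigma)^{O(2)\times \Z_2} = \{0\}$ by Assumption \ref{Aimm}(iv), contradicting $\|u_\infty\|_{L^2} = 1$.

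For (ii), I will adapt the construction in Lemma \ref{Lldexistence} to the $\gcyl_m$-equivariant setting. Given a $\gcyl_m$-invariant configuration $\taubold : L \to \R$, I first build a $\gcyl_m$-symmetric singular piece $\varphi_1 \in C^\infty_{\gcyl_m}(\Sigma\setminus L')$ as in the proof of \ref{Lldexistence}, using a common small $\delta>0$ and cutting off the $\gcyl_m$-symmetrized Green's functions $G_p$ (for $p \in L'$) at distance $\delta$ from each singular point; the $\gcyl_m$-invariance of $\taubold$ ensures the resulting $\varphi_1$ is $\gcyl_m$-invariant and $\Lcal_\Sigma \varphi_1$ is smooth and $\gcyl_m$-invariant on $\Sigma$. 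By part (i), for $m$ large enough $\Lcal_\Sigma$ is invertible on the $\gcyl_m$-invariant subspace of $C^{k,\beta}(\Sigma)$, so there exists a unique $\gcyl_m$-invariant $\varphi_2 \in C^\infty_{\gcyl_m}(\Sigma)$ with $\Lcal_\Sigma \varphi_2 = -\Lcal_\Sigma \varphi_1$; setting $\varphi := \varphi_1 + \varphi_2$ yields the desired LD solution. Uniqueness in the $\gcyl_m$-invariant class is immediate: if $\varphi, \varphi'$ are two such solutions, their difference lies in $\ker \Lcal_\Sigma$ (the logarithmic singularities cancel since both have the same configuration $\taubold$) and is $\gcyl_m$-invariant, hence vanishes by (i).

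The only step that requires genuine care is the density argument in (i), and in particular identifying the effective lower bound on $m$: the threshold is dictated by how fast the $\gcyl_m$-invariant subspaces exhaust $C^\infty(\Sigma)^{O(2)\times\Z_2}$ in a compactness-robust sense, and a direct quantitative proof (avoiding contradiction) would require knowing the spectral gap of $\Lcal_\Sigma$ on the orthogonal complement of its $O(2)\times\Z_2$-invariant part; however, the contradiction argument above sidesteps this and yields the qualitative bound needed for the remainder of Part II.
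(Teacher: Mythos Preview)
Your proof is correct and follows the same approach as the paper, which simply records that (i) follows from Assumption \ref{Aimm}(iv) ``by taking $m$ large enough'' and that (ii) follows from (i) together with Lemma \ref{Lldexistence}. Your compactness argument for (i) and your explicit equivariant adaptation of the construction in \ref{Lldexistence} for (ii) supply the details the paper leaves implicit.
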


\begin{proof}
Item (i) follows from the triviality of $\ker \Lcal_\Sigma$ modulo $\grouprotcyl$ in \ref{Aimm}, by taking $m$ large enough.  Item (ii) follows from (i) and applying Lemma \ref{Lldexistence}.
\end{proof}

\begin{lemma}[Vertical balancing, {\cite[Lemma 4.5]{kapmcg}}]
\label{Lvbal}
Suppose $\varphi$ is an LD solution whose configuration $\taubold$ and singular set $L$ are $(\sbold, \mbold)$-rotational as in \ref{dL}.  Then the following hold. 
\begin{enumerate}[label=\emph{(\roman*)}]
\item 
$\varphi_\ave \in C^\infty( \cyl_I^{\sbold})$, where $\Lpar = \Lpar[\sbold]$.  
\item 
On $\cyl^{\sbold}_I$, $\varphi_\ave$ satisfies the ODE $\Lchi \varphi_\ave = 0$. 
\item 
\label{Evbal} 
$|m_i|\tau_i = \partial_+ \varphi_\ave(\sss_i) + \partial_- \varphi_\ave (\sss_i),
\quad i=1, \dots, k$. 
\end{enumerate}
\end{lemma}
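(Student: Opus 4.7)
My plan is to prove (i)--(ii) by a commutation argument and (iii) by a divergence-theorem argument around each jump latitude, pulling back to the cylinder via $X_\Sigma$ (\ref{LAconf}) so that we can work with $\Lchi=\Delta_\chi+V$ (see \ref{dLchi}) in place of $e^{2\conf}\Lcal_\Sigma$. Throughout, I abusively identify $\varphi$ with its pullback $\varphi\circ X_\Sigma$ on $\cyl_I\setminus L$.

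For (i) and (ii): The $O(2)$ factor of $\grouprotcyl$ acts on $\cyl$ by isometries fixing each parallel circle $\cyl_{\sbar}$ setwise, and $V\in C^\infty_{|\sss|}(\cyl_I)$ is rotationally invariant; hence $\Lchi$ commutes with averaging over $O(2)$ orbits. Since $L\subset \Lpar[\sbold]$, every $O(2)$ orbit outside $\Lpar[\sbold]$ avoids $L$, so on any open $\Omega\subset \cyl_I\setminus \Lpar[\sbold]$ I can differentiate the averaging integral under the integral sign and use $\Lchi\varphi=0$ to obtain $\partial_\sss^2\varphi_\ave+V\varphi_\ave=0$ (the $\partial_\theta^2\varphi$ contribution drops out by periodicity). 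This is the ODE \eqref{ELchirot}, giving (ii), and standard ODE regularity yields $\varphi_\ave\in C^\infty(\cyl_I^{\sbold})$, which is (i).

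For (iii): Fix $i\in\{1,\dots,k\}$ and small $\epsilon,\delta>0$. I integrate $\Lchi\varphi=0$ over
$$A_{\epsilon,\delta}:=\cyl_{(\sss_i-\epsilon,\sss_i+\epsilon)}\setminus\bigsqcup_{p\in L_i\cap \cyl_{\sss_i}}D^\chi_p(\delta)$$
and apply Green's identity to get
$$0=\int_{\partial A_{\epsilon,\delta}}\partial_\nu\varphi\,d\ell+\int_{A_{\epsilon,\delta}}V\varphi\,d\sss\,d\theta.$$
The horizontal pieces at $\sss=\sss_i\pm\epsilon$ contribute $2\pi\bigl(\partial_\sss\varphi_\ave(\sss_i+\epsilon)-\partial_\sss\varphi_\ave(\sss_i-\epsilon)\bigr)$, which by (i) and \ref{Npartial} tends to $2\pi\bigl(\partial_+\varphi_\ave(\sss_i)+\partial_-\varphi_\ave(\sss_i)\bigr)$ as $\epsilon\to 0$. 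Each small circle $\partial D^\chi_p(\delta)$ contributes $-2\pi\tau_p+o_\delta(1)$, read off from the local expansion $\varphi=\tau_p\log\dbold_p^\chi+O(1)$ (see \ref{Rmismatch}, and note that $\log\dbold_p^\chi$ and $\log\dbold_p^\Sigma$ differ by a bounded function by \ref{LGdiff}). The interior term $\int V\varphi$ is $O(\epsilon)$ since $V$ is bounded and $\varphi$ is locally integrable. Sending $\delta\to 0$ and then $\epsilon\to 0$ yields
$$\partial_+\varphi_\ave(\sss_i)+\partial_-\varphi_\ave(\sss_i)=\sum_{p\in L_i\cap \cyl_{\sss_i}}\tau_p.$$

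The main obstacle will be identifying the right-hand side with $|m_i|\tau_i$ as in the statement. When $\sss_i=0$ the unique component of $\Lpar[\sss_i]$ contains exactly $|m_i|$ points of $L_i$, and the identification is immediate from the definition of $\tau_i$ in \ref{dL}. When $\sss_i>0$ the divergence-theorem identity at $\sss_i$ only picks up the $|m_i|$ points on $\cyl_{\sss_i}$, and equating their $\tau$-sum with $|m_i|\tau_i$ (the average over all $2|m_i|$ points of $L_i$) reflects the $\Z_2$-equidistribution of $\tau$-mass between $\cyl_{\sss_i}$ and $\cyl_{-\sss_i}$ that is built into the $\grouprotcyl$-symmetric framework of \ref{Aimm} and \ref{dHcyl} under which this lemma is invoked. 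Everything else reduces to routine Green's-function singularity bookkeeping and ODE regularity.
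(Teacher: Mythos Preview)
Your approach is essentially the same as the paper's: integrate $\Lchi\varphi=0$ over an annular region around the singular latitude, apply the divergence theorem, and identify the jump from the logarithmic behavior near $L$. The paper uses the domain $D^\chi_{\Lpar[\sss_i]}(\epsilon_2)\setminus D^\chi_L(\epsilon_1)$ and takes $\epsilon_1\searrow 0$ then $\epsilon_2\searrow 0$, while you use the strip $\cyl_{(\sss_i-\epsilon,\sss_i+\epsilon)}$ with small disks removed; the calculations are equivalent. Your final paragraph correctly isolates the one nontrivial point---that identifying $\sum_{p\in L_i\cap\cyl_{\sss_i}}\tau_p$ with $|m_i|\tau_i$ uses the $\Z_2$-equidistribution of the $\tau$-mass---and this is indeed implicit in the paper's symmetric setup and glossed over in its proof as well.
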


\begin{proof}
To prove (i) and (ii), we need to check that $\varphi$ is integrable on each circle contained in $\Lpar$ and that $\varphi_\ave$ is continuous there also.  But these follow easily from the logarithmic behavior of $\varphi$ (recall \ref{dLD}).  We now prove item (iii).
Fix $i\in \{1, \dots, k\}$.  For $0<\epsilon_1 << \epsilon_2$ we consider the domain $\Omega_{\epsilon_1, \epsilon_2}: = D^\chi_{\Lpar[\sss_i]}(\epsilon_2)\setminus D^\chi_{L}(\epsilon_1)$.  By integrating $\Lchi \varphi = 0$ on $\Omega_{\epsilon_1, \epsilon_2}$ and integrating by parts we obtain
\begin{align*}
\int_{\partial \Omega_{\epsilon_1, \epsilon_2}} \frac{\partial}{\partial \eta}  \varphi + \int_{\Omega_{\epsilon_1, \epsilon_2}} V \varphi  = 0,
\end{align*}
where $\eta$ is the unit outward conormal field along $\partial \Omega_{\epsilon_1, \epsilon_2}$.  By taking the limit as $\epsilon_1 \searrow 0$ first and then as $\epsilon_2 \searrow 0$, we obtain \ref{Evbal} by using the logarithmic behavior near $L$.
\end{proof}

\begin{lemma}[Normalized maximally symmetric LD solutions] 
\label{Lphiavg}
Given $\kcir \geq \kcirmin$ and $k$ as in \ref{conkcir}, 
$\mbold \in (\Z \setminus \{ 0 \})^k$ with $m$ (as in \ref{dL}) large enough as in \ref{LsymLD}, 
and  $\bsigmaunder = (\bsigma, \xibold) \in \R^{k-1}\times \R^k$ with $|\xibold|_{\ell^\infty}<1$, 
there is a unique $\groupcyl$-invariant LD solution 
$\Phi = \Phi \llceil  \bsigmaunder: \kcir, \mbold \rrfloor $  
characterized by the following requirements where 
$\bsigmaunderslash=\bsigmaunderslash[\mbold]$ is as in \ref{dLbalanced}. 

 \begin{enumerate}[label=\emph{(\alph*)}]
 \item $\phi = \phi\llceil  \bsigmaunder: \kcir, \mbold \rrfloor :=\Phi_\ave$ is a multiple of $\phat[\bsigmaunderslash+ \bsigmaunder: \kcir]$  (recall \ref{Nphik}).
 \item The singular set of $\Phi$ is $L = L \llceil  \bsigmaunder: \kcir, \mbold \rrfloor := L[\sbold[\bsigmaunderslash+\bsigmaunder:\kcir]; \mbold]$ (recall \ref{dL}).
 \item The configuration $\taubold' := \taubold' \llceil  \bsigmaunder: \kcir, \mbold \rrfloor $  of $\Phi$ 
is a $\big( \sbold[\bsigmaunderslash+\bsigmaunder:\kcir] , \mbold \big)$-symmetric configuration as in \ref{dL} 
satisfying 
$\tau'_1 = 1$ (normalizing condition). 
 \end{enumerate}
 
Moreover,  the following hold.
 \begin{enumerate}[label=\emph{(\roman*)}]
\item  For $i\in \{1, \dots, k\}$  we have 
$\tau_i' = \displaystyle{  \frac{\phi(\sss_i)}{|m_i|}2F^\phi_i}$. 
Moreover $\tau_i'$ is independent of $m$ and satisfies 
$\tau_i' = \tau_i' \llceil  \bsigmaunder: \kcir, \mbold \rrfloor  
:=\displaystyle{\frac{\phat[\bsigmaunderslash+\bsigmaunder: \kcir](\sss_i)}{\phat[\bsigmaunderslash+\bsigmaunder: \kcir ](\sss_1)}  \big( e^{\sum_{l=1}^{i-1}\sigma_l} \big)}$.
\item  $\displaystyle{\phi \llceil  \bsigmaunder: \kcir, \mbold \rrfloor  
\, = \, 
\frac{|m_1|}{\, \phat[\bsigmaunderslash+\bsigmaunder:\kcir](\sss_1) \,\, 2F^{\phat[\bsigmaunderslash+\bsigmaunder:\kcir]}_1\,}\, \phat[\bsigmaunderslash+\bsigmaunder: \kcir ]}$.
\item  
\label{item:ji} 
On $\Omega[\sss_i;m_i]$, 
$\phi = \phiunder_i + \junder_i$, 
where $\phiunder_i : = \tau'_i \phiunder \big[\textstyle{\frac{|m_1|}{2F^\phi_1}}(e^{-\sum_{l=1}^{i-1} \sigma_l}), \textstyle{\frac{|m_i|}{ 2}}\xi_i; \sss_i\big]$ 
and $\junder_i :=  \, \, \junder \left [\textstyle{\frac{|m_i| }{2}}\tau'_i; \sss_i\right]$.
\end{enumerate}
\end{lemma}

\begin{proof}
Let $m$ be as in \ref{LsymLD} and suppose $\Phi$ is a $\groupcyl$-invariant LD solution satisfying (a)-(c).  
Let $c$ be such that $\phi=c\phat$ and  $i\in \{1, \dots, k\}$.
Using Lemma \ref{Lvbal} to solve for $\tau'_i$, we immediately conclude $\tau'_i = \phi(\sss_i) 2F^\phi_i /|m_i|$; 
furthermore using Lemma \ref{Lvbal}, (a)-(c) above, \eqref{Exi}, and the definition of $\bsigmaunderslash$ in \ref{dLbalanced}, we compute
\begin{equation*} 
\begin{aligned}
\tau'_i &= \frac{\tau'_i}{\tau'_1} 
= \frac{|m_1|}{|m_i|} \frac{\phi(\sss_i)}{\phi(\sss_1)} \frac{F^\phi_i}{F^\phi_1}
= \frac{|m_1|}{|m_i|} \frac{\phat(\sss_i)}{\phat(\sss_1)} \left( e^{\sum_{l=1}^{i-1} \sigmaslash_l + \sigma_l}\right) 
=  \frac{\phat(\sss_i)}{\phat(\sss_1)} \left( e^{\sum_{l=1}^{i-1} \sigma_l}\right), \\
1&= \tau'_1 = \frac{\phi(\sss_1)}{|m_1|} 2F^\phi_1 = \frac{c \phat(\sss_1)}{|m_1|}2 F^\phat_1.
\end{aligned}
\end{equation*}
We conclude from these equations that (a)-(c) imply (i) and (ii).  In particular, the second equation in (i) determines $\taubold'$ and hence uniqueness follows from Lemma \ref{LsymLD}.  

To prove existence we define $L$ by (b) and $\taubold'$ by the second equation in (i). 
Using \ref{LsymLD} we then define $\Phi : = \varphi[\taubold']$ and we verify that $\Phi_\ave = c\phat$, where $c$ is defined by $c \phat(\sss_1)  2F^{\phat}_1 = |m_1|$: 
Let $i\in\{1, \dots, k\}$. By Lemma \ref{Lvbal}, it follows that $|m_i| \tau'_i = 2\Phi_\ave(\sss_i) F^{\Phi_\ave}_i$.  
By the definitions of $\tau'_i$ and $c$, we have $|m_i| \tau'_i = c \phat(\sss_i ) 2F^\phat_i$.  
Since $F^\phat_i = F^{c\phat}_i$, by equating the right hand sides of the preceding equations, 
we conclude that the function $f := c\phat - \Phi_{\ave}$ satisfies 
\begin{align}
\partial_+f(\sss_i)+\partial_-f(\sss_i) = 0, \quad i=1, \dots, k.
\end{align}
This amounts to the vanishing of the derivative jumps of $f$ at each $\sss_i$.  
Clearly $f$ is smooth at the ends and satisfies $\Lcal_\chi f = 0$ in between the $\sss_i$.  
Hence we conclude $f\in C^\infty_{|\sss|}(\Sigma)$ and satisfies $\Lcal_\Sigma f = 0$ everywhere.  
By \ref{Aimm}(iv), we conclude $f = 0$.

It remains to check (iii).  
By \ref{Lvbal}\ref{Evbal},
$m_i \tau'_{i} =  \partial_+ \phi(\sss_i) + \partial_-\phi(\sss_i)$,
so from the definition of $\junder$ in \ref{dauxode},
\[ \partial_+\junder_i (\sss_i)  = \partial_-\junder_i (\sss_i)= \frac{\partial_+\phi+\partial_-\phi}{2}(\sss_i). \] 
Therefore, $\phi - \junder_i$ satisfies
\begin{align*}
\partial_+ (\phi - \junder_i)(\sss_i) = \frac{ \partial_+ \phi - \partial_- \phi}{2}(\sss_i) = - \partial_-(\phi - \junder_i)(\sss_i).
\end{align*}
Hence, $\phi - \junder_i\in  C^1_{\sss}\left(\Omega[\sss_i;m_i]\right)$ and $\Lcal_\chi(\phi-\junder_i)  = 0$.  
By uniqueness of ODE solutions,  $\phi - \junder_i = \phiunder_i$. 
Finally, the expressions for $\phiunder_i$ and $\junder_i$ follow from this, (i) above, \eqref{Exi}, \ref{Rauxode}, and \ref{dLbalanced}.
\end{proof}

\subsection*{Decomposition and estimates of LD solutions $\Phi = \Phi\llceil\bsigmaunder : \kcir, \mbold\rrfloor$} 
\nopagebreak

We now decompose and estimate a $\Phi =  \Phi\llceil\bsigmaunder : \kcir, \mbold\rrfloor$ as in \ref{Lphiavg}.  In order to get good estimates, we assume the following. 
\begin{assumption}
\label{Amk}
We assume that $m$ may be taken as large as needed in terms of $k$.  
 \end{assumption}

\begin{notation}
Consider a function space $X$ consisting of functions defined on a domain $\Omega \subset \cyl$.  
If  $\Omega$ is invariant under the action of $\gcyl_{|\munder|}$ for some $\munder \in \Z\setminus \{0\}$ (recall \ref{dHcyl}),
we use a subscript ``$\sym[\munder]$'' to denote the subspace $X_{\sym[\munder]}\subset X$
consisting of those functions  $f\in X$ which are invariant under the action of $\gcyl_{|\munder|}$.
\qed 
\end{notation}

\begin{definition}[Decompositions of $\Phi = \Phi \llceil  \bsigmaunder: \kcir, \mbold \rrfloor $]   
\label{ddecomp}
\label{ELW}
We first define a decomposition $\Phi = \sum_{i=1}^k \Phi_i$ by applying \ref{LsymLD} and requesting that $\Phi_i$ is an LD solution with singular set $L_i=L[\sss_i;m_i]$ 
(recall \ref{dL}). 
We define then the following $\forall i\in \{1, \dots, k\}$.  

$\Pp_i \in C^{\infty}_{\sym[m_i]} \left(\cyl_I \setminus L_i\right)$ by requesting that is it is supported on 
$D^\chi_{L_i}(3\delta_i)$ and 
$\Pp_i := \tau'_i \Psibold[2\delta_i, 3\delta_i; \dbold^\chi_{p}] \big( G^\chi_{p} -   \phiunder[ \log \delta_i,0; \sss_i],  0 \big)$  
on $D^\chi_{p}(3\delta_i)$ ($\forall p \in L_i$).  

$\Phat_i \in C^{\infty}_{|\sss|}(\cyl_I )$ 
by 
$\Phat_i := 
\Phi_{i,\ave} - \Psibold\Big[ \textstyle{\frac{2}{|m_i|}}, \textstyle{\frac{3}{|m_i|}}; \dbold^{\chi}_{\Lpar[\sss_i]}\Big] \big(\junder_i, 0\big)$  
on  $\Omega[\sss_i;m_i]$  
and 
$\Phat_i := \Phi_{i,\ave}$ on $\cyl_I \setminus \Omega[\sss_i;m_i]$   
(recall \ref{davg} and \ref{Lphiavg}\ref{item:ji}).  

$\Phi'_i, E'_i \in C^\infty_{\sym[m_i]}(\cyl_I)$ by requesting 
that $\Phi_i=\Pp_i+\Phat_i+\Phip_i$ on  $\cyl_I \setminus L_i$ 
and 
$E'_i:= \Lcal_{\chitilde[m_i]} \Phi'_i$ on  $\cyl_I$ (clearly supported on $ \Omega[\sss_i ;m_i]$).  

We then define $\Ghat \in C^\infty(\cyl_{I}\setminus L)$,  
$\Phat \in C^{\infty}_{|\sss|}(\cyl_I )$, 
and $\Phi',E' \in C^\infty_{\sym[m]}(\cyl_I)$ 
by 
$\Pp = \sum_{i=1}^k \Pp_i$,  
$\Phat = \sum_{i=1}^k \Phat_i$,  
$\Phi' = \sum_{i=1}^k \Phi'_i$  
and $E' := \sum_{i=1}^k E'_i$.
Clearly then 
$\Phi=\Pp+\Phat+\Phip$ on  $\cyl_I \setminus L$,  
$\Phat := \phi$ on $\cyl_I \setminus \Omega[\sbold; \mbold]$,  
and 
$\Phat = 
\Psibold\left[ \textstyle{\frac{2}{|m_i|}}, \textstyle{\frac{3}{|m_i|}}; \dbold^{\chi}_{\Lpar[\sss_i]}\right] \big( \phiunder_i, \phi\big) 
= \phi - \Psibold\left[ \textstyle{\frac{2}{|m_i|}}, \textstyle{\frac{3}{|m_i|}}; \dbold^{\chi}_{\Lpar[\sss_i]}\right] \big(\junder_i, 0\big)$ 
on  $\Omega[\sss_i;m_i]$  
($\forall i\in \{1, \dots, k\}$), 
with $\phi, \phiunder_i, \junder_i$ as in \ref{Lphiavg}.  
\end{definition}

We estimate the average and oscillatory parts of $\Phi$ separately.  
The decomposition $\Phi = \Pp+ \Phat + \Phip$ is designed so that $\Phip$ is small in comparison to $\Phat$ (cf. Proposition \ref{LPhip} below).  
We have the following characterization of $\Phip_{\ave}$.

\begin{lemma}
\label{LPhipave}
$\Phip_{\ave} = \sum_{i=1}^k \Phip_{i, \ave}$ where $\Phip_{i, \ave}:= (\Phip_i)_\ave$ is supported on $\Omega[\sss_i;m_i]$ and satisfies
$\Lcal_{\chitilde[m_i]} \Phip_{i, \ave}= E'_{i, \ave}: = (E'_i)_{\ave}$ and 
\begin{align}
\label{Ephipavg}
\Phip_{i, \ave} = \begin{cases}
\Psibold\left[ \frac{2}{|m_i|}, \frac{3}{|m_i|}; \dbold^\chi_{\Lpar[\sss_i]}\right] \big( \junder_i, 0 \big) \quad \text{on } \Omega[\sss_i;m_i]\setminus \Omega'[\sss_i;m_i], 
\\
\junder_i -(\Pp_i)_{\ave}, \quad  \qquad \qquad \qquad \qquad \text{on } \Omega'[\sss_i;m_i].
			\end{cases}
\end{align}
\end{lemma}

\begin{proof}
Taking averages of $\Phi = \Pp + \Phat + \Phip$ and rearranging establishes
$ \Phip_\ave = \phi - \Phat - \Pp_\ave$.  Applying $\Lcal_{\chitilde[m_i]}$ to both sides of this decomposition and using the definition of $E'_i$ in \ref{ddecomp} establishes $\Lcal_{\chitilde[m_i]} \Phip_{i, \ave}= E'_{i, \ave}$.  Finally, 
\eqref{Ephipavg} follows from the decomposition $\Phip_\ave = \phi - \Phat - \Pp_\ave$ by substituting the expression for $\Phat$ from \ref{ddecomp} 
and using that $\Pp_i = 0$ on $\Omega[\sss_i;m_i]\setminus \Omega'[\sss_i;m_i]$.
\end{proof}

In order to estimate $\Phip_{\osc}$ we will need the following lemma. 

\begin{lemma}[cf. {\cite[Lemma 5.23]{kap}}]
\label{Lsol}
Given 
$E\in C^{0,\beta}_{\sym[m_i]}(\cyl_I)$ with $E_\ave\equiv 0$ and $E$ supported on $D^\chi_{\Lpar[\sss_i]}(3\delta_i)$ for some $i\in \{1, \dots, k\}$, 
there is a unique $u \in C^{2,\beta}_{\sym[m_i]}(\cyl)$ solving $ \Lcal_{\chitilde[m_i]} u = E $ and satisfying the following.
\begin{enumerate}[label=\emph{(\roman*)}]
\item  $u_\ave=0$.
\item  $ \left \|u: C^{2, \beta}_{\sym[m_i]}\big(\cyl, \chitilde[m_i], e^{-\frac{|m_i|}{2}\left| |\sss| - \sss_i \right|} \big)\right\| $   
%\\ $\phantom1$ \hfill 
$ \lem C \big \|  E: C^{0, \beta}_{\sym[m_i]}\big(D^\chi_{\Lpar[\sss_i]}(3\delta_i), \chitilde[m_i]\,\big)\big\|.$ 
\item $\big\|\Acalsssi      u:C^{2,\beta}_{\sym[m_i]}(D^\chi_{\Lpar[\sss_i]}(3\delta_i)\, ,\chitilde[m_i] \, )\big\| $ 
%\\ $\phantom1$ \hfill 
$\lem {C}\big\| E : C_{\sym[m_i]}^{0,\beta }(D^\chi_{\Lpar[\sss_i]}(3\delta_i), \chitilde[m_i] \, )\big \| \big/ {m_i^2} $ 
\hfill $\phantom1$ 
\\ $\phantom1$ \hfill 
$+ \: C\big\| \Acalsssi E: C_{\sym[m_i]}^{0,\beta }(D^\chi_{\Lpar[\sss_i]}(3\delta)\, ,\chitilde[m_i] \, )\big\|.$
\end{enumerate}
\end{lemma}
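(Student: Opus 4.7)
The plan is to solve the equation mode by mode via Fourier decomposition in the angular variable, working in the scaled coordinates $(\stilde,\thetatilde)$ of \eqref{Echitilde} where $\Lcal_{\chitilde[m_i]} = \partial_{\stilde}^2 + \partial_{\thetatilde}^2 + m_i^{-2} V$. Since $E$ is $\gcyl_{|m_i|}$-invariant and $E_\ave\equiv 0$, when expressed in the scaled angular variable $\thetatilde = |m_i|\theta$, its Fourier series takes the form $E = \sum_{n\geq 1} E_n(\sss)\, f_n(\thetatilde)$, where $\{f_n\}$ are appropriately normalized angular modes of frequency $n$ compatible with the reflections generating $\gcyl_{|m_i|}$ (no $n=0$ mode by hypothesis). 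Each mode of $u$ must satisfy the ODE
\begin{equation*}
\partial_{\stilde}^2 u_n - \bigl(n^2 - m_i^{-2} V\bigr) u_n = E_n,
\end{equation*}
and because $n\geq 1$ while $m_i^{-2}V$ is small (uniformly, by \ref{AV1}) for $m$ large enough by \ref{Amk}, this operator has a bounded inverse producing solutions that decay like $e^{-n|\stilde - |m_i|\sss_i|}$ away from the support of $E_n$. I would construct $u_n$ by convolution with a suitable Green's function $G_n(\stilde,\stilde')$ obtained by a small perturbation of $-\tfrac{1}{2n}e^{-n|\stilde-\stilde'|}$, chosen to decay as $|\stilde|\to\infty$ (or to satisfy the natural matching at the boundary of $\cyl_I$ in the torus case, inherited from the extension of $V$).

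Having constructed the modes, I would set $u := \sum_{n\geq 1} u_n(\sss) f_n(\thetatilde)$ and verify convergence in the weighted norm of (ii). The key analytical input is that the support condition on $E$ together with the exponential decay of $G_n$ gives, for each $n\geq 1$,
\begin{equation*}
|u_n(\sss)| \leq \frac{C}{n}\, e^{-n|\stilde-|m_i|\sss_i|}\,\|E_n\|_{0,\beta},
\end{equation*}
and since $n\geq 1$ every mode decays at least as fast as $e^{-||\sss|-\sss_i|\cdot |m_i|/2}$ (with room to spare), which is the weight appearing in (ii). Summing over $n$ and applying standard interior Schauder estimates in the scaled metric $\chitilde[m_i]$ (where coefficients are uniformly bounded in $C^{0,\beta}$) upgrades the $C^0$ estimate on each mode to a $C^{2,\beta}$ estimate and yields (ii). The condition $u_\ave\equiv 0$ of (i) is automatic from $n\geq 1$, and uniqueness follows from the fact that any homogeneous solution in $C^{2,\beta}_{\sym[m_i]}$ satisfying $u_\ave=0$ Fourier-decomposes into modes each satisfying a homogeneous ODE whose only $C^{2,\beta}$ solution (in the torus case) or decaying solution (in the sphere case) is zero.

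For (iii), the strategy is to apply the commutator identity of Lemma \ref{Rprod}(ii):
\begin{equation*}
\Lcal_{\chitilde[m_i]} (\Acal_{\sss_i} u) = \Acal_{\sss_i} E \,-\, m_i^{-2}\,\Acal_{\sss_i} V \cdot \Rcal_{\sss_i} u.
\end{equation*}
Since $\Acal_{\sss_i} u \in C^{2,\beta}_{\sym[m_i]}$ still has vanishing average, the estimate already established applied to this new equation gives
\begin{equation*}
\bigl\|\Acal_{\sss_i} u\bigr\|_{2,\beta;\,\chitilde[m_i]} \leq C\bigl\|\Acal_{\sss_i} E\bigr\|_{0,\beta;\,\chitilde[m_i]} + C m_i^{-2}\bigl\|\Acal_{\sss_i} V\bigr\|_{0;\,\chitilde[m_i]} \bigl\|u\bigr\|_{0,\beta;\,\chitilde[m_i]}.
\end{equation*}
The second factor in the last term is controlled by Lemma \ref{Rprod}(iv) by $C|m_i|^{-1} V(\sss_i)$, which combined with (ii) absorbs into the first term of the target estimate of (iii) up to the factor $m_i^{-2}$.

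The main obstacle I anticipate is controlling the $C^{2,\beta}$ norm uniformly as one sums infinitely many Fourier modes: the weighted Schauder estimate for each $u_n$ must be performed in $\chitilde[m_i]$-geodesic balls of unit size, and one must show that the $\ell^2$-type (or $\ell^1$-type) summability of Fourier coefficients of $E$ in $C^{0,\beta}$ propagates to summability of the $u_n$ in $C^{2,\beta}$. This is routine when working in the scaled metric and using that $n\geq 1$ is bounded away from zero, but it is the step requiring the most care and is where the hypothesis $E_\ave\equiv 0$ is essential (otherwise $n=0$ would produce a mode with no exponential decay).
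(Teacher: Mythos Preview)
Your approach is correct and leads to the same conclusion, but it differs from the paper's in the treatment of the potential term for part (ii). The paper does \emph{not} Fourier-decompose the full operator $\Lcal_{\chitilde[m_i]}$; instead it treats $m_i^{-2}V$ as a perturbation of the flat Laplacian via a Neumann series: it first solves $\Delta_{\chitilde[m_i]} u_1 = E$ (where separation of variables gives the explicit Green's function $-\tfrac{1}{2n}e^{-n|\stilde-\stilde'|}$ on each mode), then inductively solves $\Delta_{\chitilde[m_i]} u_j = -m_i^{-2}V u_{j-1}$, and sums $u = \sum_j u_j$. Each iterate gains a factor $Cm_i^{-2}$, so the series converges geometrically. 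Your route---solving the full ODE $\partial_{\stilde}^2 u_n - (n^2 - m_i^{-2}V)u_n = E_n$ directly with a perturbed Green's function---is equally valid but requires you to construct and control that perturbed kernel uniformly in $n$, whereas the paper's iteration only ever inverts the explicit flat operator. The paper's method is slightly more economical; yours keeps the modes decoupled throughout.

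For part (iii) you use the same commutator identity as the paper, but you should be explicit about one point the paper flags: the new inhomogeneous term $\Acal_{\sss_i}E - m_i^{-2}\Acal_{\sss_i}V\cdot\Rcal_{\sss_i}u$ is \emph{not} supported on $D^\chi_{\Lpar[\sss_i]}(3\delta_i)$, since $\Rcal_{\sss_i}u$ lives on the whole cylinder. The paper handles this by noting that, thanks to (ii), the second summand has exponential decay away from $\Lpar[\sss_i]$, and the argument for (ii) goes through with ``compactly supported'' replaced by ``exponentially decaying.'' Your Green's-function convolution would absorb this just as well, but as written your sentence ``the estimate already established applied to this new equation'' invokes (ii) on data that do not satisfy its support hypothesis, so you should say why the estimate still applies.
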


\begin{proof}
The existence and uniqueness of $u$ is clear, and (i) follows since $\Lcal_{\chitilde[m_i]} u= E_\ave = 0$.  For (ii), let $u_1$ be the solution on $\cyl$ of $\Delta_{\chitilde[m_i]} u_1 = E$,
subject to the condition that $u_1 \rightarrow 0 $ as $\sss \rightarrow \pm \infty$.  By standard theory and separation of variables, we have
\begin{equation*}
\big\| u_1 : C^{2, \beta}_{\sym{[m_i]}}(\cyl, \chitilde[m_i], e^{-\frac{|m_i|}{2} | |\sss| - \sss_i |}\big)\big \| 
%\\ 
\lem  C \big\| E: C^{0, \beta}_{\sym[m_i]}\big(D^{\chi}_{\Lpar[\sss_i]}(3\delta_i), \chitilde[m_i]\big)\big\|.
\end{equation*}
Note that $(u_1)_\ave$ clearly vanishes. 
Define now inductively a sequence $(u_j)_{j\in \N}$, $u_j\in C^{2, \beta}_{\sym[m_i]}(\cyl)$ by requesting that for each $j\geq 2$,  $\Delta_{\chitilde[m_i]} u_j = - m_i^{-2} V u_{j-1}$ and $u_j\rightarrow 0$ as $\sss \rightarrow \pm \infty$.  Estimating $u_j$ in the same fashion used to estimate $u_1$, we have for $i\geq 2$
\begin{equation*}
\big\| u_j : C^{2, \beta}_{\sym[m_i]}(\cyl, \chitilde[m_i], e^{-\frac{|m_i|}{2} | |\sss| - \sss_i |}\big)\big \|
%\\ 
\lem 
C m_i^{-2} \big\| u_{j-1}: C^{0, \beta}_{\sym[m_i]}\big(D^{\chi}_{\Lpar[\sss_i]}(3\delta_i), \chitilde[m_i]\big)\big\|.
\end{equation*}
Note now that $u = \sum_{j=1}^\infty u_j$, and the estimates above imply (i).

Applying $\Acalsssi$ to both sides of the equation $\Lcal_{\chitilde[m_i]}u = E$ and using Lemma \ref{Rprod}(ii), we obtain
\begin{align*}
\Lcal_{\chitilde[m_i]}  \, \Acalsssi \, u =  \Acalsssi E - m_i^{-2} \Acalsssi V \, \Rcalsssi u .
\end{align*}
Although $\Acalsssi \, E-  m_i^{-2} \Acalsssi \, V \, \Rcalsssi u$ is not supported on $D^\chi_{\Lpar[\sss_i]}(3\delta_i)$, it has average zero, so a straightforward modification of the argument proving (ii) by replacing the assumption that the inhomogeneous term is compactly supported  with the assumption (from (ii) above) that the right hand side has exponential decay away from $D^\chi_{\Lpar[\sss_i]}(3\delta)$, we conclude that
\begin{multline*}
\big\| \Acalsssi\,  u: C^{2, \beta}_{\sym[m_i]} \big(D^\chi_{\Lpar[\sss_i]}(3\delta_i) , \chitilde[m_i]\, \big)\big\|
\\ 
\lem C\big\| \Acalsssi\,  E: C^{0, \beta}_{\sym[m_i]} \big(D^\chi_{\Lpar[\sss_i]}(3\delta_i) , \chitilde[m_i]\, \big)\big \| + 
\\
+ C\big\|m_i^{-2} \Acalsssi \, V \Rcalsssi u: C^{0, \beta}_{\sym[m_i]} \big(D^\chi_{\Lpar[\sss_i]}(3\delta_i) , \chitilde[m_i]\, \big)\big \|.
\end{multline*}
(iii) follows after using \eqref{E:norm:mult}, Lemma \ref{Rprod}(iv), and part (i) above to estimate the last term.
\end{proof}

We are now ready to begin estimating $\Phip$.  
We will estimate $\Phip_{\ave}$ and $\Phip_{\osc}$ separately, by estimating each $\Phip_{i, \ave}$ and $\Phip_{i, \osc}$ 
in the decompositions 
$\Phip_{\ave} = \sum_{i=1}^k \Phip_{i, \ave}$, $\Phip_{\ave} = \sum_{i=1}^k \Phip_{i, \osc}$ (recall  \ref{ELW} and \ref{LPhipave}).  
To estimate $\Phip_{i, \ave}$ and $\Phip_{i, \osc}$ we will use that they satisfy the equations $\Lcal_{\chitilde[m_i]} \Phip_{i, \ave} = E'_{i, \ave}$ 
and $\Lcal_{\chitilde[m_i]} \Phip_{i, \osc} = E'_{i, \osc}$. 
We first establish relevant estimates for $E'_i, E'_{i, \ave}$ and $E'_{i, \osc}$ and then estimate $\Phip_{i, \ave}$ and $\Phip_{i, \osc}$.

\begin{lemma}
\label{LEest}
For each $i=1, \dots, k$, $E'_i$ vanishes on $D^\chi_{L_i}(2\delta_i)$ and $E'_{i, \osc}$ is supported on $D^\chi_{\Lpar[\sss_i]}(3\delta_i)$.  The following hold. 
\begin{enumerate}[label=\emph{(\roman*)}]
\item  $\big \|\Pp_i: C_{\sym[m_i]}^{j}( \, D^{\chi}_{L_i}(3 \delta_i) \setminus D^\chi_{L_i}(\delta_i)\,,\chitilde[m_i] \, )\big\|\le C(j)\,$ 
\item  $\big\| \Acalsssi \Pp_i : C_{\sym[m_i]}^j(   D^\chi_{L_i}(3\delta_i)\setminus D^\chi_{L_i}(2\delta_i) \, , \chitilde[m_i] \, ) \big\| \le C(j)/|m_i|$.
\item  $\left\|E'_i:C_{\sym[m_i]}^{j}( \Omega[\sss_i;m_i] ,\chitilde[m_i] \, )\right\|\le C(j)\,$
\item $\big\|\Acalsssi \, E'_i :C_{\sym[m_i]}^{j}(  D^\chi_{\Lpar[\sss_i]}(3\delta_i) , \chitilde[m_i] \, )\big\|\le C(j)/|m_i|\,$, for $i\in \{1, \dots, k\}$.
\end{enumerate}
In either (iii) or (iv), the same estimate holds if $E'_i$ is replaced with either $E'_{i,\ave}$ or $E'_{i,\osc}$. 
\end{lemma}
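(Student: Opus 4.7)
The plan is to work directly from the decomposition $E'_i = -\Lcal_{\chitilde[m_i]}(\Pp+\Phat)$ in Definition \ref{ddecomp}, reducing each assertion to pointwise analysis on a few annular regions and invoking Lemmas \ref{Lgreen}, \ref{Lode}, and Remark \ref{Rprod}. First I would establish the support statements. On $D^\chi_{L_i}(2\delta_i)$, for each $p\in L_i$ the cutoff $\Psibold[2\delta_i,3\delta_i;\dbold^\chi_p]$ equals $1$, so $\Pp_i = \tau'_i(G^\chi_p-\phiunder[\log\delta_i,0;\sss_i])$, and both summands are killed by $\Lchi$ (by \ref{Lgreen} and \ref{dauxode}), hence by $\Lcal_{\chitilde[m_i]} = m_i^{-2}\Lchi$. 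Since $2\delta_i<2/|m_i|$ we also have $D^\chi_{L_i}(2\delta_i)\subset\Omega'[\sss_i;m_i]$, on which $\Phat=\phiunder_i$, which is also killed by $\Lchi$. Combining gives $E'_i\equiv 0$ there. For the support of $E'_{i,\osc}$, outside $D^\chi_{L_i}(3\delta_i)$ we have $\Pp_i=0$, so $E'_i=-\Lcal_{\chitilde[m_i]}\Phat$, which is rotationally invariant because $\Phat$ depends on position only through $\dbold^\chi_{\Lpar[\sss_i]}$, $\phi$, and $\phiunder_i$; hence $E'_{i,\osc}=0$ there, and since $D^\chi_{L_i}(3\delta_i)\subset D^\chi_{\Lpar[\sss_i]}(3\delta_i)$ the support claim follows.

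For (i) I would write $G^\chi_p-\phiunder[\log\delta_i,0;\sss_i] = (G^\chi_p-\log r) - (\phiunder[\log\delta_i,0;\sss_i]-\log\delta_i) + \log(r/\delta_i)$ with $r:=\dbold^\chi_p$ and work in $\chitilde[m_i]$, in which $r$ takes values in $[1/9,1/3]$ on the given annulus. Lemma \ref{Lgreen} controls the first piece with weight $r^2|\log r|$, Lemma \ref{Lode}(i) together with the linearity of $\phiunder$ controls the second, and $\log(r/\delta_i)$ is manifestly bounded in scaled norms; uniform boundedness of $\tau'_i$ from Lemma \ref{Lphiavg}(i) then closes (i). For (ii), the key observation is that $\dbold^\chi_p$ is $\Sbar_{\sss_i}$-invariant on the component of $\Omega[\sss_i;m_i]$ containing $p$ (because $\Sbar_{\sss_i}$ fixes $\cyl_{\sss_i}$ pointwise, with the component near $\cyl_{-\sss_i}$ handled symmetrically by $\Sbar_{-\sss_i}$), so Remark \ref{Rprod}(i) yields
\begin{equation*}
\Acalsssi\Pp_i = \tau'_i\,\Psibold[2\delta_i,3\delta_i;\dbold^\chi_p]\bigl(\Acalsssi(G^\chi_p-\log r) - \Acalsssi\phiunder[\log\delta_i,0;\sss_i]\bigr);
\end{equation*}
the two antisymmetric pieces have scaled $C^j$ norms of order $(\log m_i)/m_i^2$ and $(\log m_i)/m_i^3$ by \ref{Lgreen} and \ref{Lode}(iii) respectively, both better than $C/|m_i|$.

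For (iii), on $\Omega[\sss_i;m_i]$ the other $\Pp_j$ vanish for $m$ large enough (by separation of the $\sss_j$), so $E'_i = -\Lcal_{\chitilde[m_i]}(\Pp_i+\Phat)$. The vanishing on $D^\chi_{L_i}(2\delta_i)$ is already known; on $\Omega'[\sss_i;m_i]\setminus D^\chi_{L_i}(3\delta_i)$ one has $\Pp_i=0$ and $\Phat=\phiunder_i$, so $E'_i\equiv 0$ there as well; the remaining two annular pieces yield commutator terms
\begin{align*}
E'_i &= -[\Lcal_{\chitilde[m_i]},\Psibold[2\delta_i,3\delta_i;\dbold^\chi_p]]\bigl(\tau'_i(G^\chi_p-\phiunder[\log\delta_i,0;\sss_i])\bigr), \\
\text{or}\quad E'_i &= [\Lcal_{\chitilde[m_i]},\Psibold[\tfrac{2}{|m_i|},\tfrac{3}{|m_i|};\dbold^\chi_{\Lpar[\sss_i]}]]\junder_i,
\end{align*}
estimated respectively by (i) and by Lemma \ref{Lode}(ii) after noting $\junder_i=\junder[\tfrac{|m_i|}{2}\tau'_i;\sss_i]$ has $O(1)$ scaled norm, while the derivatives of $\Psibold$ are $O(1)$ in $\chitilde[m_i]$ because the cutoffs are designed at scale $1/|m_i|$. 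For (iv), I would apply $\Acalsssi$ to these commutator expressions and invoke Remark \ref{Rprod}(i)--(ii) together with Lemmas \ref{Lgreen} and \ref{Lode}(iii)--(iv) to pick up the gain of $1/|m_i|$ from the antisymmetric parts of $G^\chi_p$, $\phiunder$, and $\junder$. Finally, since $\theta$-averaging commutes with both $\Lcal_{\chitilde[m_i]}$ and $\Acalsssi$, the estimates transfer immediately to $E'_{i,\ave}$ and $E'_{i,\osc}$. The main bookkeeping obstacle is tracking the two components of $\Omega[\sss_i;m_i]$ (near $\cyl_{\pm\sss_i}$) correctly under the reflections $\Sbar_{\pm\sss_i}$, but this is routine given the overall $\grouprotcyl$-invariance.
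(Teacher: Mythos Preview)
Your proposal is correct and follows essentially the same approach as the paper: both argue region by region from Definition~\ref{ddecomp}, reduce (i)--(ii) to the estimates in Lemmas~\ref{Lgreen} and~\ref{Lode} (using $\Sbar_{\sss_i}$-invariance of $\dbold^\chi_p$ and the cutoff to pull $\Acalsssi$ inside), and handle (iii)--(iv) by analyzing the commutator of $\Lcal_{\chitilde[m_i]}$ with the cutoffs. The only stylistic difference is that for (iv) the paper commutes $\Acalsssi$ past $\Lcal_{\chitilde[m_i]}$ via \ref{Rprod}(ii) and then invokes (ii), whereas you commute $\Acalsssi$ past the cutoff first; since the cutoff is $\Sbar_{\sss_i}$-invariant these are equivalent, and your route even yields the slightly sharper $(\log|m_i|)/|m_i|^2$ in (ii).
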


\begin{proof}  
(i) follows from Lemma \ref{Lgreen}, Definition \ref{ddecomp}, 
and uniform bounds on the $\tau'_i$'s which follow from \ref{Lphiavg}(i) and \ref{Lrldest}(ii).  
For (ii), it suffices to prove for any $i=1, \dots, k$ and any $p \in L[\sss_i; m_i]$ the estimate
\begin{align*}
\label{Egasymmetry}
\big\| \Acalsssi \Pp_i : C^j( D^\chi_{p}(3\delta_i)\setminus D^\chi_{p}(2\delta_i), \chitilde[m_i] \, )\big\| \le C(j)/|m_i|.
\end{align*}
By Definition \ref{ddecomp}, 
we have $\Acalsssi \Pp_i = (I)-(II)$ 
on $D^\chi_{p}(3\delta_i)\setminus D^\chi_{p}(2\delta_i)$, 
where 
\begin{equation}
\label{EAG}
\begin{aligned}
(I) := & \Psibold[2\delta_i, 3\delta_i; \dbold^\chi_{p}](\tau'_i \Acalsssi \, G^\chi_{p}, 0) ,  
\\
(II) := & \Psibold[2\delta_i, 3\delta_i; \dbold^\chi_{p}]( \tau'_i \Acalsssi\,  \phiunder[ \log \delta_i   , 0; \sss_i], 0).  
 \end{aligned}
 \end{equation}
From Lemma \ref{Lgreen} and the uniform bounds on the cutoff $\Psibold$, we conclude that 
$\| (I):  C^j( D^\chi_{p}(3\delta_i)\setminus D^\chi_{p}(2\delta_i) \, , \chitilde[m_i] \, ) \| \le C(j)/|m_i|$,  
and by Lemma \ref{Lode}(iii), that $\| (II):  C^j( D^\chi_{p}(3\delta_i)\setminus D^\chi_{p}(2\delta_i) \, , \chitilde[m_i] \, ) \| \le C(j) |m_i|^{-3}\log |m_i|$. 
These estimates complete the proof of (ii).

The statements on the support of $E'_i$ and $E'_{i,\osc}$ follow from Definition \ref{ddecomp}, 
from which we also see that 
$E'_i= \Lcal_{\chitilde[m_i]}  \Psibold\big[ 2, 3; \dbold^{\chitilde[m_i]}_{\Lpar[\sss_i]}\big] \big( \junder_i, 0 \big)$ 
on $\Omega[\sss_i; m_i]\setminus \Omega'[\sss_i;m_i]$. 
Thus, when restricted to $\Omega[\sss_i; m_i]\setminus \Omega'[\sss_i;m_i]$, the bound in (iii)
follows from \ref{dauxode}, \ref{Lphiavg}(iii), and the uniform bounds of the cutoff.  It remains to estimate $E'_i$ on $ \Omega'[\sss_i;m_i]$.  
By \ref{ddecomp}, $E'_i$ vanishes on $\Omega'[\sss_i;m_i]\setminus D^{\chi}_{\Lpar[\sss_i]}(3\delta_i)$.  Note that $\Lcal_{\chitilde[m_i]} \Phat=0$ on $D^{\chi}_{\Lpar[\sss_i]}(3\delta_i)$. 
Since $\Lcal_{\chi} \Pp_i = 0$ on  $D^{\chi}_{L}(2\delta_i)$, when restricted to $D^{\chi}_{\Lpar[\sss_i]}(3\delta_i)$, the required bound in (iii) follows from (i).  With the preceding, this completes the proof of (iii). 

For (iv), we have using \ref{ddecomp} that  $\Acalsssi\,  E'_i = -\Acalsssi \, \Lcal_{\chitilde[m_i]}\,  \Pp_i$ on $D^\chi_{\Lpar[\sss_i]}(3\delta_i)$.  
Since $E'_i$ vanishes on 
$D^\chi_{\Lpar[\sss_i]}(2\delta_i)$, 
it is only necessary to prove the estimate on the set difference.  
Using \ref{Rprod}(ii) to switch the order of $\Lcal_{\chitilde[m_i]}$ and $\Acalsssi$, 
we find that
\begin{align*}
\Acalsssi\,  E'_i = - \Lcal_{\chitilde[m_i]}\,  \Acalsssi \, \Pp_i - m_i^{-2} \Acalsssi \, V \, \Rcalsssi \, E'_i \quad \text{on} \quad 
D^\chi_{\Lpar[\sss_i]}(3 \delta_i) \setminus D^\chi_{\Lpar[\sss_i]}(2\delta_i).
\end{align*}
Using (ii) to estimate the first term on the right and Lemma \ref{Rprod}(iv) and (iii) to estimate the second 
term,  we obtain (ii).  
Finally, we can replace $E'_i$ by $E'_{i,\ave}$ or $E'_{i,\osc}$ in either (iii) or (iv) by taking averages and subtracting. 
\end{proof}

\begin{lemma}[Estimates for $\Phip_{i, \ave}$]     
\label{LavePhi}
$\forall i \in \{1, \dots, k\}$ the following hold. 
\begin{enumerate}[label=\emph{(\roman*)}]
\item $\| \Phip_{i,\ave}: C^{j}_{|\sss|}\left( \Omega[\sss_i; m_i], \chitilde[m_i]\,\right) \| \le C(j)$. 
\item  For $i\in \{ 1, \dots, k\}$, $\big\|\Acalsssi \Phip_{i,\ave} : C^{j}_{|\sss|}\big( D^\chi_{\Lpar[\sss_i]}(3\delta_i), \chitilde[m_i]\, \big)\big \| \le C(j)/|m_i|$.
\end{enumerate}
\end{lemma}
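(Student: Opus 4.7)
The plan is to exploit the equation $\Lcal_{\chitilde[m_i]} \Phip_{i,\ave} = E'_{i,\ave}$ established in Lemma~\ref{LPhipave}. Since $\Phip_{i,\ave}$ is rotationally invariant, this PDE reduces on each connected component of $\Omega[\sss_i;m_i]$ to the one-dimensional ODE $\partial^2_{\stilde[m_i]} \Phip_{i,\ave} + m_i^{-2} V\, \Phip_{i,\ave} = E'_{i,\ave}$. By Lemma~\ref{LPhipave}, $\Phip_{i,\ave}$ is supported in $\Omega[\sss_i;m_i]$, each connected component of which corresponds in the $\chitilde[m_i]$ metric to an interval of length at most~$6$ on whose boundary $\Phip_{i,\ave}$ consequently vanishes. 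Under Assumption~\ref{Amk} the zero-order term $m_i^{-2}V$ is a small perturbation of $\partial^2_{\stilde[m_i]}$, so the associated Dirichlet problem is uniformly invertible and satisfies standard Schauder estimates with constants independent of $m_i$.

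For part~(i), I would feed the uniform $C^j_{\sym[m_i]}(\Omega[\sss_i;m_i],\chitilde[m_i])$ bound on $E'_{i,\ave}$ from Lemma~\ref{LEest}(iii) (in the form where $E'_i$ is replaced by $E'_{i,\ave}$) into this uniform Dirichlet estimate; the resulting $C^{j+2}$ bound on $\Phip_{i,\ave}$ immediately implies the $C^j$ bound required by~(i). When $\sss_i>0$ the two connected components of $\Omega[\sss_i;m_i]$ are treated separately, compatibly with the $|\sss|$-symmetry of $\Phip_{i,\ave}$.

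For part~(ii), I would apply $\Acalsssi$ to both sides of the equation and use the commutator formula in Lemma~\ref{Rprod}(ii) to obtain
\[
\Lcal_{\chitilde[m_i]}\bigl(\Acalsssi \Phip_{i,\ave}\bigr)
= \Acalsssi E'_{i,\ave} - m_i^{-2}\bigl(\Acalsssi V\bigr)\,\Rcalsssi \Phip_{i,\ave}.
\]
Lemma~\ref{LEest}(iv) controls the first term on the right by $C(j)/|m_i|$ in the $C^j_{\sym[m_i]}(D^\chi_{\Lpar[\sss_i]}(3\delta_i),\chitilde[m_i])$ norm, while Lemma~\ref{Rprod}(iv) combined with part~(i) and the multiplicative property~\eqref{E:norm:mult} controls the second term by $C(j)/|m_i|^3$. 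Since $\Sbar_{\sss_i}$ interchanges the two endpoints of each scaled component of $\Omega[\sss_i;m_i]$, the vanishing of $\Phip_{i,\ave}$ at the boundary forces $\Acalsssi \Phip_{i,\ave}$ to vanish there as well, and re-applying the uniform Dirichlet ODE estimate then delivers the $C(j)/|m_i|$ bound on $\Acalsssi \Phip_{i,\ave}$ claimed in~(ii).

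The only step requiring genuine care is verifying the uniform invertibility of the Dirichlet problem for $\partial^2_{\stilde[m_i]} + m_i^{-2}V$ on intervals of bounded $\chitilde[m_i]$-length, with constants uniform in $m_i$ (using Assumption~\ref{Amk} together with the pointwise bound on $V$ from Lemma~\ref{AV1}). Once this is in place, both parts of the lemma are straightforward bookkeeping consequences of the already-established estimates in Lemmas~\ref{LEest} and~\ref{Rprod} combined with standard linear ODE theory.
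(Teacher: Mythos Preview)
Your Dirichlet approach is different from the paper's, which proceeds via an initial-value argument: it reads off $\Phip_{i,\ave}$ and its $\shat$-derivative at $\partial\Omega'[\sss_i;m_i]$ directly from the explicit formula~\eqref{Ephipavg} (where $(\Pp_i)_\ave$ vanishes) together with Lemma~\ref{Lode}, and then integrates the ODE~\eqref{Ephipode} inward; for~(ii) it does the same for $\Acalsssi\Phip_{i,\ave}$ with initial data on $\partial D^\chi_{\Lpar[\sss_i]}(3\delta_i)$ estimated via Lemma~\ref{Lode}(iii)--(iv). Your argument for~(i) is valid and arguably cleaner, since it avoids tracking explicit initial data.

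For~(ii), however, there is a domain mismatch. You invoke zero Dirichlet data on $\partial\Omega[\sss_i;m_i]$, so the uniform Dirichlet estimate must be applied on all of $\Omega[\sss_i;m_i]$; but Lemma~\ref{LEest}(iv), which you cite, only bounds $\Acalsssi E'_{i,\ave}$ on the strictly smaller set $D^\chi_{\Lpar[\sss_i]}(3\delta_i)$. The fix is easy---$E'_i$ vanishes on the $\Sbar_{\sss_i}$-invariant annulus $\Omega'[\sss_i;m_i]\setminus D^\chi_{\Lpar[\sss_i]}(3\delta_i)$, hence so does $\Acalsssi E'_{i,\ave}$, and on $\Omega[\sss_i;m_i]\setminus\Omega'[\sss_i;m_i]$ one can estimate $\Acalsssi E'_{i,\ave}$ directly from the $\Sbar_{\sss_i}$-invariance of the cutoff and Lemma~\ref{Lode}(iv)---but you have not addressed it. The paper sidesteps this by working on the smaller domain from the start, paying instead with an initial-data estimate.
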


\begin{proof}
Fix $i\in \{1, \dots, k\}$.  We first establish the estimate on $\Omega'[\sss_i;m_i]$.  
By \eqref{Ephipavg},
\begin{align*}
 \Phip_{i, \ave} = \junder \left [\textstyle{\frac{|m_i|}{2}\tau'_i}; \sss_i\right]  -(\Pp_i)_{\ave}
\quad \text{on} \quad
\Omega'[\sss_i;m_i].
\end{align*} 
Note that the left hand side is smooth and the discontinuities on the right hand side cancel.  
Using that $\Lcal_{\chitilde[m_i]} \Phip_{i,\ave} = E'_{i,\ave}$ from \eqref{ELW}, on  $\Omega[\sss_i;m]$ we have (where $\shat = \shat\, [ \sss_i, m_i]$ is as in \ref{dchi})
\begin{align}
\label{Ephipode} 
\partial^2_{\,\shat} \Phip_{i,\ave} + \frac{1}{|m_i|^2}V\left( \frac{\shat}{|m_i|}+ \sss_i\right)  \Phip_{i,\ave} =  E'_{i,\ave}.
\end{align}
On a neighborhood of $\partial \Omega'[\sss_i;m_i]$, 
we have that $\Pp_\ave = 0$ from Definition \ref{ddecomp}.  
This combined with estimates on $\junder$ from Lemma \ref{Lode} implies that 
$\left|\Phip_{i,\ave}\right|<C$ and $\left| \partial_{\, \shat} \, \Phi'_{i,\ave}\right|<C$ on $\partial \Omega'[\sss_i;m_i]$.  
Using this as initial data for the ODE and bounds of the inhomogeneous term from Lemma \ref{LEest} yields the $C^2$ bounds in (i).  
Higher derivative estimates follow inductively from differentiating \eqref{Ephipode} and again using Lemma \ref{LEest}.  
This establishes (i) on $\Omega'[\sss_i;m_i]$.  
The proof of the estimate (i)  on $\Omega[\sss_i;m_i]\setminus \Omega'[\sss_i;m_i]$ follows in a similar way using \eqref{Ephipavg} 
but is even easier since there $(\Pp_i)_{\ave} = 0$, so we omit the details.

By \eqref{Ephipode} and Lemma \ref{Rprod}(ii), $\Acalsssi \Phip_{i,\ave}$ satisfies
(recall \ref{dchi} for the relation of $\sss$ and $\shat$) 
\begin{align}
\label{Eu}
\Lcal_{\chitilde[m_i]} \, \Acalsssi \Phip_{i,\ave} +  
\frac{1}{m_i^2}\, \Acalsssi \, V \, \Rcalsssi \Phip_{i,\ave} = \Acalsssi  \,  E'_{i,\ave}. 
\end{align}
The $C^2$ bounds in (ii) follow in a similar way by using Lemma \ref{Lode}(iii)-(iv) 
to estimate the initial data on $\partial D^\chi_{\Lpar[\sss_i]}(3\delta_i)$, 
estimates on $\Acalsssi  E'_{i,\ave}$ from Lemma \ref{LEest}(iv), 
and estimates on $\Acalsssi\, V$ and $\Phip_{i,\ave}$ from Lemma \ref{Rprod}(iv) and (i) above.  
Higher derivative bounds follow inductively from  differentiating \eqref{Eu} and using Lemma \ref{Rprod}(iv) and Lemma \ref{LEest}(iv).
\end{proof}

\begin{lemma}[Estimates for $\Phip_{i, \osc}$]    
\label{LPhipest}
$\forall i \in \{1, \dots, k\}$ the following hold. 
\begin{enumerate}[label=\emph{(\roman*)}] 
\item \begin{enumerate}[label=\emph{(\alph*)}]
	\item $\big\| \Phip_{i,\osc} : C^{j}_{\sym[m_i]}\big( \cyl, \chitilde, e^{-m\left| |\sss| - \sss_i\right|}\big)\big\|\le C(j)$.
	\item 
	 $\big\| \Acalsssi \Phip_{i,\osc} : C^{j}_{\sym[m_i]}\big(D^\chi_{\Lpar[\sss_i]}(3\delta_i), \chitilde\, \big)\big\| \le C(j)/|m_i|$.
	 \end{enumerate}
\item  $\left\| \Phip_{\osc} : C^{j}_{\sym[m]}\big( \cyl_I, \chitilde\, \big)\right \| \le C(j).$

\item $\big\| \Acalsssi \Phip_\osc : C^{j}_{\sym[m]}\big(D^\chi_{\Lpar[\sss_i]}(3\delta_i), \chitilde[m_i]\, \big)\big\| \le C(j)/|m_i|$. 
\end{enumerate} 
\end{lemma}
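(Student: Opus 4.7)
The plan is to estimate each $\Phip_{i,\osc}$ by applying Lemma \ref{Lsol} to the ODE/PDE $\Lcal_{\chitilde[m_i]}\Phip_{i,\osc} = E'_{i,\osc}$ (or, in the torus case, to $\Lcal_{\chitilde[m_i]}\widetilde{\Phi}'_{i,j,\osc} = \widetilde{E}'_{i,j,\osc}$) which is exactly the setup Lemma \ref{Lsol} was designed for: $E'_{i,\osc}$ has zero average and is supported in $D^\chi_{\Lpar[\sss_i]}(3\delta_i)$ by \ref{ddecomp} and \ref{LEest}. The bounds on $E'_{i,\osc}$ come from \ref{LEest}(iii), and those on $\Acalsssi E'_{i,\osc}$ from \ref{LEest}(iv), with the usual passage $E'_i\leadsto E'_{i,\osc}$ by averaging.

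First I will treat (i)(a): when $\cyl_I=\cyl$, Lemma \ref{Lsol}(ii) applied to $\Phip_{i,\osc}$ with source $E'_{i,\osc}$ bounded by \ref{LEest}(iii) yields the weighted estimate with exponential weight $e^{-\tfrac{|m_i|}{2}||\sss|-\sss_i|}$; since $m=\hcf(|m_1|,\dots,|m_k|)\le|m_i|$, this is stronger than the stated weight $e^{-m||\sss|-\sss_i|}$. When $I=(-l,l)$ is finite, I will apply the same lemma to each $\widetilde{\Phi}'_{i,j,\osc}$, obtaining a decay factor $e^{-\tfrac{|m_i|}{2}||\sss|-(\sss_i+2lj)|}$ for each $j\in\Z$; summing the geometric series in $j$ (the spacing $2l$ is fixed) gives the claim. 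Part (i)(b) is analogous but uses Lemma \ref{Lsol}(iii), where the two right-hand side contributions are controlled by \ref{LEest}(iii)--(iv), each supplying the factor $|m_i|^{-1}$.

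Next for (ii) and (iii), I will write $\Phip_\osc=\sum_{i=1}^{k}\Phip_{i,\osc}$ (respectively $\widetilde{\Phi}'_{\osc}=\sum_{i,j}\widetilde{\Phi}'_{i,j,\osc}$) and estimate each term at a fixed point $\sss\in\cyl_I$ using (i)(a). Because the $\sss_i$ are separated by at least $1/(Ck)$ by \ref{Lrldest}(ii) and $m$ is large in terms of $k$ by \ref{Amk}, the exponential weight $e^{-m||\sss|-\sss_i|}$ produces a rapidly convergent sum whose dominant contribution comes from the nearest $\sss_i$, and that contribution is $O(1)$ by (i)(a); this gives (ii). For (iii), I split the sum at the index $i$ appearing on the left-hand side: the term $\Acalsssi\Phip_{i,\osc}$ is controlled directly by (i)(b), while for $j\ne i$ the function $\Phip_{j,\osc}$ is smooth across $\Lpar[\sss_i]$ and exponentially small there by (i)(a), so $\Acalsssi\Phip_{j,\osc}$ inherits the same exponential smallness (applying $\Acalsssi$ to a smooth function is a bounded operation and in fact gains a factor $|m_i|^{-1}$ by the mean value theorem across the reflection, as in \ref{Rprod}(iv)).

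I expect the only real subtlety to be bookkeeping in the torus case: one must verify that $\sum_{j\in\Z}\widetilde{\Phi}'_{i,j,\osc}$ is $\Sbar_{2l}$-invariant and hence descends to the sought function on $\cyl_I$, and that the interchange of summation with $\Lcal_{\chitilde[m_i]}$ is justified by the exponential decay supplied by Lemma \ref{Lsol}(ii); both follow from the uniform-in-$j$ nature of the estimate in Lemma \ref{Lsol} and the $\Sbar_{2l}$-equivariance of the construction in Definition \ref{Deosc}. All remaining steps are then standard weighted Schauder manipulations.
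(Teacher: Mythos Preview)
Your proposal is correct and follows essentially the same approach as the paper: apply Lemma \ref{Lsol} to $E'_{i,\osc}$ with the bounds from Lemma \ref{LEest} to get (i), then sum using the separation estimate $|\sss_j-\sss_i|>|j-i|/(Ck)$ from \ref{Lrldest}(ii) together with Assumption \ref{Amk} to get (ii) and (iii), handling the torus case by summing the periodic translates. The only cosmetic difference is that for the $j\ne i$ terms in (iii) the paper simply bounds $\|\Acalsssi\Phip_{j,\osc}\|\le 2\|\Phip_{j,\osc}\|$ and lets the exponential decay $e^{-m/(Ck)}\ll 1/m$ do the work, whereas you additionally invoke a mean-value-theorem gain of $|m_i|^{-1}$; either route suffices.
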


\begin{proof}
We first complete the proof in the case where $\cyl_I = \cyl$ and leave the modifications for the case where $l<\infty$ (when $\Sigma$ is a torus) to the end.
(i) follows directly from applying Lemma \ref{Lsol} to $E'_{i,\osc}$, using Lemma \ref{LEest} and Schauder regularity for the higher derivative estimates.
For small $k$, (ii) follows from (i).  On the other hand, Lemma \ref{Lrldest}(ii) implies that for all $i, j \in \{1, \dots, k\}$, $|\sss_j - \sss_i| > \frac{|j-i|}{Ck}$.  Using this with part (i) above, we estimate
\begin{align*}
\left\| \Phip_{\osc} : C^{j}_\sym\left( \cyl, \chitilde\,\right)\right \| \le  C(j) \sup_{\sss\in \R} \sum_{i=1}^k e^{-m\left| |\sss| - \sss_i\right|} 
\leq C(j) \sum_{l=0}^{k-1} e^{- \frac{m}{Ck} l }   
\leq C(j),
\end{align*}
where we have used Assumption \ref{Amk}.  This completes the proof of (ii).
Now fix some $i\in \{1, \dots, k\}.$
As before, for $i, j \in \{1, \dots, k\}$, $|\sss_j - \sss_i| > \frac{|j-i|}{Ck}$.  Using the definitions and (i) above, 
\begin{multline*}
\big\| \Acalsssi \Phip_\osc : C^{r}_\sym\big(D^\chi_{\Lpar[\sss_i]}(3\delta), \chitilde\,\big)\big\| 
\lem 
\begin{aligned}[t] 
&\big\| { \Acalsssi \Phip_{i,\osc}: C^{r}_\sym \big( D^\chi_{\Lpar[\sss_i]}(3\delta), \chitilde\, \big) } \big\| 
\\
&\!\!\!\!\!\!\!\!\!\!\!\!\!+\: \sum_{ j\neq i}\big \| \Acalsssi \Phip_{j,\osc} : C^{r}_\sym\big(D^\chi_{\Lpar[\sss_i]}(3\delta), \chitilde\,\big)\big\| 
\end{aligned} 
\\
\lem \frac{C(r)}{m} + C(r)\sum_{ j\neq i}\big \|  \Phip_{j,\osc} : C^{r}_\sym\big(D^\chi_{\Lpar[\sss_i]}(3\delta), \chitilde\,\big)\big\| 
\\
\lem \frac{C(r)}{m} + C(r) \sum_{j\neq i} e^{-m\left| \sss_j - \sss_i\right|}
\\
\lem \frac{C(r)}{m}+ C(r) \sum_{l=1}^k e^{-\frac{m}{Ck}l } \qquad \leq \qquad \frac{C(r)}{m},
\end{multline*}
where we have used Assumption \ref{Amk}.

We now address the case when $\Sigma$ is a torus, that is when $\cyl_I$ is a proper subset of $\cyl$.  In this case we must lift all of our functions to functions on $\cyl$ which are invariant under the translation $\SSS_{2l} : \cyl \rightarrow \cyl$ defined by $\sss \mapsto \sss+ 2l$.  More precisely, we define for each $i\in \{1,\dots, k\}$ and $j \in \Z$
$\widetilde{E}'_{i, \osc}, \widetilde{E}'_{i,j,\osc}  \in C^\infty_{\sym[m_i]}(\cyl)$ by $\widetilde{E}'_{i, j, \osc} = E'_{i, \osc} \circ \SSS^j_{2l}$, $ \widetilde{E}_{i, \osc} = \sum_{j\in \Z} \widetilde{E}'_{i, j, \osc}$, $\widetilde{\Phi}'_{i,j, \osc} \in C^\infty_{\sym[m_i]}(\cyl)$ by using \ref{Lsol} and requesting that
$\Lcal_{\chitilde[m_i]} \widetilde{\Phi}'_{i,j, \osc}= \widetilde{E}'_{i, j,\osc}$, and $\widetilde{\Phi}'_{i, \osc} = \sum_{j \in \Z} \widetilde{\Phi}'_{i,j,\osc}$.

Finally, we use \ref{Lsol} to establish exponential decay for each $\widetilde{\Phi}'_{\osc, i, j}$ away from $\Lpar[\sss_i+2lj]$ and complete the proof; 
we omit the details because they are similar to those from the proof when $\cyl_I = \cyl$. 
\end{proof}

\begin{prop}[cf. {\cite[Proposition 4.18]{kapmcg}}]
\label{LPhip}
The following hold. 
\begin{enumerate}[label=\emph{(\roman*)}]
\item $\| \Phip: C^{j}_{\sym[m]}\left( \cyl_I, \chitilde\,\right) \| \le C(j)$. 
\item  
$\big\|\Acalsssi \Phip : C^{j}_{\sym[m]}\big( D^\chi_{\Lpar[\sss_i]}(3\delta), \chitilde\, \big)\big \| \le C(j)/\max_i|m_i|$, 
($i\in \{ 1, \dots, k\}$). 
\end{enumerate}
\end{prop}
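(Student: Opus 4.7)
The plan is to decompose $\Phip = \Phip_\ave + \Phip_\osc$ as in \eqref{ELW} and to estimate each piece using the work already done. For $\Phip_\osc$ the required bounds are exactly what \ref{LPhipest}(ii) and (iii) provide, so the real content is to combine the semi-local estimates on the $\Phip_{i,\ave}$ coming from \ref{LavePhi} into a global estimate on $\cyl_I$.

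First I would observe that by \ref{LPhipave} we have $\Phip_\ave = \sum_{i=1}^{k}\Phip_{i,\ave}$ with $\Phip_{i,\ave}$ supported on $\Omega[\sss_i;m_i]\subset D^{\chi}_{\Lpar[\sss_i]}(3/|m_i|)$. By \ref{Lrldest}(ii) consecutive jump latitudes satisfy $\sss_{i+1}-\sss_i>1/(Ck)$; since $\delta_i=1/(9|m_i|)\le 1/(9m)$, Assumption \ref{Amk} (which allows $m$ to be taken large in terms of $k$) guarantees that the supports of the various $\Phip_{i,\ave}$ are pairwise disjoint. Consequently, for part (i), it suffices to estimate each $\Phip_{i,\ave}$ separately in the $\chitilde$ norm on $\Omega[\sss_i;m_i]$. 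The estimate of \ref{LavePhi}(i) provides this in the (possibly larger) metric $\chitilde[m_i]$; since $m\mid |m_i|$ and the relevant scales are controlled (in particular each $\Phip_{i,\ave}$ is itself $\gcyl_{|m_i|}$-invariant and supported on a neighborhood of width $O(1/|m_i|)$), the standard scaling argument converts this into the desired $C^j_{\sym[m]}(\cyl_I,\chitilde)$ bound with the same constant $C(j)$. Combining with \ref{LPhipest}(ii) proves (i).

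For part (ii), fix $i\in\{1,\ldots,k\}$ and again split $\Phip=\Phip_\ave+\Phip_\osc$. Using the disjointness of the supports of the $\Phip_{j,\ave}$ established above, one has $\Acalsssi \Phip_{j,\ave}\equiv 0$ on $D^{\chi}_{\Lpar[\sss_i]}(3\delta_i)$ for each $j\ne i$, so that on this set
\begin{equation*}
\Acalsssi\Phip_\ave \;=\; \Acalsssi\Phip_{i,\ave}.
\end{equation*}
The bound $\|\Acalsssi\Phip_{i,\ave}\|_{C^j_{|\sss|}(D^\chi_{\Lpar[\sss_i]}(3\delta_i),\chitilde[m_i])}\le C(j)/|m_i|$ from \ref{LavePhi}(ii) is exactly of the form claimed (with loss at most a constant when converted to $\chitilde[m]$, since on $D^\chi_{\Lpar[\sss_i]}(3\delta_i)$ we work at scale $1/|m_i|$ and only the natural intrinsic derivatives appear). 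Combining this with \ref{LPhipest}(iii) and taking the maximum over $i$ yields the $C(j)/\max_i|m_i|$ bound in (ii).

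The only real subtlety—and what I expect to be the main bookkeeping obstacle—is ensuring that the transition between the $\chitilde[m_i]$-scaled estimates from \ref{LavePhi} and the uniform $\chitilde$-scaled estimates in the statement does not introduce hidden factors of $|m_i|/m$. This is handled by noting that the functions being estimated are themselves adapted to the scale $|m_i|$ (their nontrivial variation in the $\sss$-direction takes place on a set of $\chi$-width $O(1/|m_i|)$), so that derivatives beyond first order are automatically controlled at the finer scale and the formal change of metric costs only a bounded factor depending on $j$, absorbable into $C(j)$. With that verification in hand, the proposition follows directly from the three inputs \ref{LPhipave}, \ref{LavePhi}, and \ref{LPhipest}.
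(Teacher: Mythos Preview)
Your proposal is correct and follows the same approach as the paper: decompose $\Phip=\Phip_\ave+\Phip_\osc$, handle the oscillatory part by \ref{LPhipest}(ii)--(iii), and handle the average part via \ref{LPhipave} and \ref{LavePhi}. The paper's proof is terser---it only mentions (i) explicitly and leaves (ii) implicit---but you have supplied exactly the details that are being suppressed, including the disjointness of the supports of the $\Phip_{i,\ave}$ (via \ref{Lrldest}(ii) and \ref{Amk}) and the $\chitilde[m_i]\to\chitilde$ scale conversion, which indeed costs only a factor $(|m_i|/m)^j\le 2^j$ under \ref{Ambold} and is absorbed into $C(j)$.
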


\begin{proof}
Because of the estimates on $\Phip_\osc$ established in Proposition \ref{LPhipest}(ii), 
it is enough to prove the estimate (i) for  $\Phi'_\ave$, 
and this follows from \ref{LPhipave} and \ref{LavePhi}.
\end{proof}

\section{Families of LD solutions on $O(2)\times \Z_2$ symmetric backgrounds} 
\label{S:LDfamilies}

\subsection*{The family of LD solutions}
\nopagebreak

For the applications at hand, it will be convenient to use definitions of $\skernel[L]$ and $\skernelv[L]$ (recall \ref{aK}) which exploit the symmetries of the problem.  When $\mmax>m$, we will also need to consider LD solutions with more general singular sets than the ones studied in Section \ref{S:LDs}. 

\begin{definition}[The space {$\val_{\sym}[\Ltilde]$}]
\label{dVal}
Given $\Ltilde = \cup_{i=1}^k \Ltilde_i$ which is a small $\gcyl_m$-symmetric perturbation of an $L = L[\sbold;\mbold]$ as in \ref{dLmbold},  
define the subspace $\val_{\sym}[\Ltilde]$ of $\val[\Ltilde]$ (recall \ref{DVcal}) consisting of elements equivariant under the obvious action of $\gcyl_m$ on $\val[\Ltilde]$, 
an inner product $\langle \cdot, \cdot\rangle_{\val[\Ltilde]}$ by $\langle \cdot, \cdot\rangle_{\val[\Ltilde]}: = \sum_{p\in \Ltilde}\langle \cdot, \cdot\rangle_{\val[p]}$, 
where $\langle a_0 + a_1 d\sss + a_2 d\theta, a'_0 + a'_1 d\sss + a'_2 d\theta \rangle_{\val[p]} := a_0a'_0 + a_1a'_1 + a_2a'_2 $ and a decomposition
\begin{equation*}
\begin{gathered}
 \val_{\sym}[\Ltilde] = \bigoplus_{i=1}^k \val_{\sym}[\Ltilde_i] = \bigoplus_{i=1}^k \valtop[\Ltilde_i]\oplus \valperp[\Ltilde_i], \quad \text{where} \\
 \valtop[\Ltilde_i] : = \{  (a+ b  \, d\sss)_{p\in \Ltilde_i} \in \val_{\sym}[\Ltilde_i] :  a, b \in \R \}
 \end{gathered}
\end{equation*}
and $\valperp[\Ltilde_i]$ is the orthogonal complement of $\valtop[\Ltilde_i]$ in $\val_{\sym}[\Ltilde_i]$. 
\end{definition}

We will need to convert estimates on the cylinder---particularly those established for $\Phi$ in Section 
\ref{S:LDs}---into estimates on $\Sigma$ with the $g$ metric.  Before doing this we need the following lemma, which compares the geometry induced by the metrics $\chi$ and $g$. 

\begin{lemma}
\label{Lconf}
Suppose $u\in C^j(\Omega)$ for a domain $\Omega \subset \cyl$.  Then 
\begin{align*}
\| u : C^{j}(\Omega, e^{2\conf} \chi ) \| \le C(j) \| u : C^{j}(\Omega, \chi)\| ( 1+ \sup_{x\in \Omega} e^{-\omega})^j (1+ \| \conf : C^k(\Omega, \chi)\|)^j.
\end{align*}
\end{lemma}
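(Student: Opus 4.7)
The plan is to prove the inequality by induction on $j$, using the classical formula for the difference of Levi-Civita connections under a conformal change of metric. Write $\tilde g := e^{2\omega}\chi$.

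For $j=0$ the two $C^0$-norms agree pointwise, so the statement is trivial. For the inductive step I would invoke two ingredients. First, for any covariant $(0,k)$-tensor field $T$ on $\Omega$ we have the pointwise identity
\begin{equation*}
|T|_{\tilde g}= e^{-k\omega}\,|T|_{\chi}\le \bigl(\textstyle\sup_{\Omega}e^{-\omega}\bigr)^{k}\,|T|_{\chi}.
\end{equation*}
Second, the Levi-Civita connections $\tilde\nabla$ of $\tilde g$ and $\nabla$ of $\chi$ are related by
\begin{equation*}
\tilde\nabla_X Y=\nabla_X Y+(X\omega)Y+(Y\omega)X-\chi(X,Y)\,\nabla\omega,
\end{equation*}
so that for any covariant tensor $T$,
\begin{equation*}
\tilde\nabla T=\nabla T+\omega\star T,\qquad |\omega\star T|_{\chi}\le C(\operatorname{rank}T)\,|d\omega|_{\chi}\,|T|_{\chi}.
\end{equation*}

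Next I would iterate this identity to obtain, by a straightforward induction on $j$, an expansion
\begin{equation*}
\tilde\nabla^{\,j}u=\nabla^{j}u+\sum_{\alpha}c_{\alpha}\,P_{\alpha}(\nabla\omega,\nabla^{2}\omega,\dots,\nabla^{j}\omega)\otimes_{c}\nabla^{\,i_{\alpha}}u,
\end{equation*}
where $0\le i_{\alpha}<j$, each $P_{\alpha}$ is a tensor polynomial in the covariant derivatives of $\omega$ with weighted degree $j-i_{\alpha}$, $c_{\alpha}$ are combinatorial constants depending only on $j$, and $\otimes_{c}$ stands for a sequence of tensor products followed by contractions in $\chi$. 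The pointwise bound
\begin{equation*}
\bigl|P_{\alpha}\bigr|_{\chi}\le C(j)\,\bigl(1+\|\omega:C^{j}(\Omega,\chi)\|\bigr)^{j-i_{\alpha}}
\end{equation*}
follows from the product rule. Combining this with the first identity applied to the $(0,j)$-tensor $\tilde\nabla^{\,j}u$ gives
\begin{equation*}
|\tilde\nabla^{\,j}u|_{\tilde g}\le C(j)\,\bigl(\textstyle\sup_{\Omega}e^{-\omega}\bigr)^{j}\sum_{i=0}^{j}|\nabla^{i}u|_{\chi}\bigl(1+\|\omega:C^{j}(\Omega,\chi)\|\bigr)^{j-i},
\end{equation*}
and summing this estimate together with the analogous (easier) estimates for $j'<j$ and taking suprema over $\Omega$ yields exactly the inequality claimed, after absorbing the finite sum into $C(j)$ and using $(1+\|\omega\|)^{j-i}\le(1+\|\omega\|)^{j}$.

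The only real bookkeeping point — and what I would regard as the main (though essentially mechanical) obstacle — is verifying the inductive formula for $\tilde\nabla^{\,j}u$ rigorously: one must check that commuting $\tilde\nabla$ through the product $P_{\alpha}\otimes_{c}\nabla^{i_{\alpha}}u$ at each step produces only terms of the asserted schematic form, with total weighted degree preserved. This is a routine induction using Step 2 applied tensor factor by tensor factor; once set up correctly the constants collapse into $C(j)$ and the two geometric factors $(1+\sup e^{-\omega})^{j}$ and $(1+\|\omega\|_{C^{j}})^{j}$ emerge cleanly from the rank count and the polynomial degree count respectively.
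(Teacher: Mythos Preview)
Your proposal is correct and takes essentially the same approach as the paper: both use the standard formula for the change of Christoffel symbols under the conformal change $\tilde g=e^{2\omega}\chi$, combine it with the pointwise rescaling $|T|_{\tilde g}=e^{-k\omega}|T|_\chi$ for a $(0,k)$-tensor, and then argue by induction on $j$. The paper writes the identity in index notation (expressing $(\widehat{\nabla}^{j+1}u)$ in terms of $\widehat{\nabla}^{j}u$ and derivatives of $\omega$) rather than your schematic $\tilde\nabla T=\nabla T+\omega\star T$, but the content is the same.
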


\begin{proof}
In this proof we denote objects computed with respect to $g = e^{2\conf} \chi$ by a hat, 
so for example the Levi-Civita connection of $g$ is denoted by $\widehat{\nabla}$.  
Taking covariant derivatives of $\nablahat^j u$ with respect to $g$, we find
\begin{align*}
(\nablahat^j u)_{i_1\cdots i_j \hat{;} m} = (\nablahat^j u)_{i_1\cdots i_j , m} - \sum_{s=1}^j (\nablahat^j u )_{i_1 \cdots p\cdots i_j} \widehat{\Gamma}^p_{m i_s}.
\end{align*}
Recall the formula for the Christoffel symbols computed with respect to  the conformal metric $g = e^{2\conf} \chi$:
\begin{align*}
\widehat{\Gamma}_{m i_s}^p = \Gamma_{m i_s}^p + \delta^p_m \conf_{, i_s} + \delta^p_{i_s}\conf_{, m} - g_{m i_s} g^{p l} \conf_{, l}.
\end{align*}
Combining the preceding, we find
\begin{equation*}
(\nablahat^j u)_{i_1\cdots i_j \hat{;} m} = (\nablahat^j u)_{i_1\cdots i_j ; m} - \sum_{i=1}^k (\nablahat^j u)_{i_1\cdots m \cdots i_j} \conf_{, i_s} 
%\\ 
- (\nablahat^j u)_{i_1 \cdots i_j} \conf_{, m} + \sum_{s=1}^j (\nablahat^j u)_{i_1\cdots p \cdots i_j} g_{mi_s} (\nabla \conf)^p .
\end{equation*}
The conclusion now follows by a straightforward inductive argument. 
\end{proof}

\begin{definition}[The constants $\delta_p$, cf. \ref{con:L}] 
\label{ddeltai}
For each $p\in L$ we define a constant $\delta_p>0$ by requesting that the set of $\delta_p$'s is invariant under the action of $\groupcyl$ on $L$ 
and that for $i=1, \dots, k$ we have $\delta_{p} = e^{-2\conf(\sss(p))} \delta = \frac{1}{9|m_i|}e^{-2\conf(\sss(p))} $.
\end{definition}

\begin{definition}[The space of parameters]
\label{dParam}
We define $\Pcal := \Pcal^\top \oplus \Pcal^\perp$, where 
\begin{align*}
\Pcal^\top: = \R^{2k}, \quad 
\Pcal^\perp := \bigoplus_{i=1}^k \Pcal_i^\perp : =\bigoplus_{i=1}^k \R^{\dim \valperp[L_i]}.
\end{align*}
The continuous parameters of the LD solutions are 
\begin{equation*} 
\begin{gathered}
\zetabold = (\zetabold^\top, \zetabold^\perp) \in \Btilde_{\Pcal} := \cunder \Btilde^1_{\Pcal} := \cunder ( \Btilde^1_{\Pcal^\top} \times \Btilde^1_{\Pcal^\perp}), 
\\
\text{where} \quad 
\zetabold^\top : = (\zeta_1, \bsigmaunder) = (\zeta_1, \bsigma, \xibold) \in \cunder \Btilde^1_{\Pcal^\top}, \quad 
\zeta_1\in\R, \quad \bsigma\in\R^{k-1}, \quad \xibold \in \R^k, 
\\
\Btilde^1_{\Pcal^\top}: = [ - 1, 1] \times \left[ -\frac{1}{m}, \frac{1}{m}\right]^{2k-1}, \qquad \qquad 
\Btilde^1_{\Pcal^\perp} : = \Times_{i=1}^k \Btilde^1_{\Pcal^\perp_i}, 
\\ 
\Btilde^1_{\Pcal^\perp_i} :=  \big[ - e^{-{m}/{\const}}, e^{-{m}/{\const}}\big]^{\dim \valperp[L_i]}, 
\end{gathered}
\end{equation*} 
and $\const>0$ is a constant which can be taken as large as needed in terms of $k$ but is independent of $m$. 
\end{definition}

We now define a family $\varphi\llbracket \zetabold^\top\rrbracket$ of LD solutions parametrized by $\zetabold^\top\in \Btilde_{\Pcal^\top}$ 
and choose the overall scale so that we have approximate matching.  
The singular sets of the $\varphi\llbracket \zetabold^\top\rrbracket$ are $(\sbold, \mbold)$-symmetric singular sets (recall \ref{dLmbold}) 
and $(\varphi\llbracket \zetabold^\top\rrbracket)_{\ave}$ is close 
to being balanced in the sense of  \ref{dLbalanced}.  

\begin{definition}[Maximally symmetric LD solutions $\varphi\llbracket \zetabold^\top\rrbracket$]
\label{dtau1}
Given $\zetabold^\top\in \Btilde_{\Pcal^\top}$ as in \ref{dParam}, we define using \ref{Lphiavg} the LD solution 
\begin{equation}
\label{Etau1}
\begin{aligned}
 \varphi = \varphi \llbracket \zetabold^\top\rrbracket= \varphi \llbracket \zetabold^\top; \kcir, \mbold \rrbracket :=& \, \tauo \Phi \llceil  \bsigmaunder: \kcir, \mbold \rrfloor, 
\\
\text{where} 
\quad 
\tauo = \tauo\llbracket \zetabold^\top\rrbracket =  \tauo\llbracket\zetabold^\top; \kcir,  \mbold\rrbracket :=& \, 2\delta_1 e^{\zeta_1} e^{-\phi(\sss_1)} 
\\ 
= & \, {2} e^{\zeta_1} e^{- {|m_1|} / {2F^\phi_1} } / {9|m_1|} ,
\end{aligned}
\end{equation}
and we denote the singular set and configuration of $\varphi$ by 
\begin{equation*}
\begin{aligned}
L =L \llbracket \zetabold^\top\rrbracket= L \llbracket \zetabold^\top;\kcir,  \mbold\rrbracket = & L[\sbold[\bsigmaunderslash+\bsigmaunder:\kcir]; \mbold] 
\subset 
\Lpar [\sbold[\bsigmaunderslash+\bsigmaunder:\kcir] ],   
\\
\taubold =\taubold\llbracket \zetabold^\top\rrbracket= \taubold\llbracket \zetabold^\top;\kcir,  \mbold\rrbracket = & \tauo \taubold' \llceil  \bsigmaunder: \kcir, \mbold \rrfloor .  
\end{aligned} 
\end{equation*} 
Note that $\varphi$, $L$ and $\taubold$ are $\groupcyl$-invariant by \ref{Lphiavg}.  
We call the jump circles of 
$\phi\llceil  \bsigmaunder: \kcir, \mbold \rrfloor$ (whose union is $\Lpar [\sbold[\bsigmaunderslash+\bsigmaunder:\kcir] ]$) 
the \emph{singular circles} of $\varphi$. 
 \end{definition}

\begin{lemma}[Matching equations for $\varphi\llbracket \zetabold^\top \rrbracket$]
\label{Lmatching}
Given $\zetabold^\top \in \Btilde_{\Pcal^\top}$ we have for $\varphi = \varphi \llbracket \zetabold^\top\rrbracket$ as in \ref{dtau1} that 
$\Mcal_L \varphi = \lambdabold+ \lambdabold'$ with 
$\lambdabold = ( \tau_p ( \mu_p+|m_i|\mu'_p d\sss))_{p\in L}\in \valtop[L]$ 
and $\lambdabold' = (\tau_p(\muhat_p + |m_i|\muhat'_p d\sss + m\muhat^\circ_p d\theta))_{p\in L} \in \val_{\sym}[L]$ 
where $\forall i \in \{1, \dots, k\}$ and $\forall p \in L_i$ 
we define 
(with $\Phip_\ave$ and $\Phip_{j, \osc} = (\Phip_j)_\osc$ defined in \ref{davg} and \ref{ELW})  
$\mu_p := \mu_i$, $\mu'_p := \mu'_i$, 
$\Phip_{\ne i, \osc}:= \sum_{j\neq i} \Phip_{j, \osc}$,  
\begin{equation*} 
\begin{gathered} 
\begin{aligned}
\mu_i &:= \frac{|m_1|}{ 2F^\phi_{1}} \left( e^{-\sum_{l=1}^{i-1}\sigma_l} - 1\right)+  \frac{\Phip_\ave(p)}{\tau'_i} + \frac{\Phip_{i, \osc}(p)}{\tau'_i}+\zeta_1 + \log \tau'_i - \conf(p) , 
\\
\mu'_i &:=  \frac{1}{2} \xi_i + \frac{1}{|m_i|\tau'_i}\frac{\partial \Phip_\ave}{\partial \sss}(p) 
+ \frac{1}{|m_i|\tau'_i}\frac{\partial \Phip_{i, \osc}}{\partial \sss}(p) - \frac{1}{2|m_i|} \frac{\partial \conf}{\partial \sss}(p),
\end{aligned}
\\ 
\muhat_p := \frac{1}{\tau'_i} \Phip_{\ne i, \osc}(p), \ \, 
\muhat'_p := \frac{1}{|m_i| \tau'_i} \frac{\partial \Phip_{\ne i, \osc}}{\partial \sss}( p), \ \,      
\muhat^\circ_p := \frac{1}{m \tau'_i} \frac{\partial \Phip_{\ne i, \osc}}{\partial \theta}( p). 
\end{gathered} 
\end{equation*} 
\end{lemma}

\begin{proof}
The decomposition 
$\lambdabold+ \lambdabold'$ is chosen so that $\lambdabold$ is the part which has to be $\gcyl_{|m_i|}$-invariant and $\lambdabold'$ is the part which may not be $\gcyl_{|m_i|}$-invariant.   
Expanding now $\frac{1}{\tau_i}\Mcal_{p} \varphi$ (recall \ref{Dmismatch}) using \ref{Lmm}, \ref{Rmismatch} and \ref{ddecomp} and equating coefficients with 
$\lambdabold+ \lambdabold'$ we reduce the proof to confirming 
\begin{align*}
\mu_p +\muhat_p&= \frac{1}{\tau'_i} \phiunder_i(\sss_i) + \log \left( \frac{\tau'_i \tau_1}{2\delta_i}\right)+  \frac{\Phip(p)}{\tau'_i} -\conf(p)\,\\
|m_i|( \mu'_p+\muhat'_p) 
&= \frac{1}{\tau'_i} \frac{\partial \phiunder_i}{\partial \sss}(\sss_i) + \frac{1}{\tau'_i} \frac{\partial \Phi'}{\partial \sss}(p)  - \frac{1}{2}\frac{\partial\conf}{\partial \sss}(p).
\\
m \muhat^\circ_p 
&= \frac{1}{\tau'_i} \frac{\partial \phiunder_i}{\partial \theta}(p) + \frac{1}{\tau'_i} \frac{\partial \Phi'}{\partial \theta}(p)  - \frac{1}{2}\frac{\partial\conf}{\partial \theta}(p).
\end{align*}
Using \ref{Lphiavg}(iii) to substitute the data for $\phiunder_i$, substituting $\tau_1$ from \eqref{Etau1}, and using the rotational invariance of $\phiunder_i$ and $\conf$,  
we further reduce to 
\begin{equation}
\label{Em1}
	\begin{aligned}
	\mu_p+\muhat_p &= \frac{|m_1|}{ 2F^\phi_{1}} \left( e^{-\sum_{l=1}^{i-1}\sigma_l} - 1\right)+  \frac{\Phip(p)}		{\tau'_i} +\zeta_1 + \log \tau'_i - \conf(p) \,\\
	|m_i|(\mu'_p+\muhat'_p) &=  \frac{|m_i|}{2} \xi_i + \frac{1}{\tau'_i}\frac{\partial \Phip}{\partial \sss}(p) - \frac{1}{2} \frac{\partial \conf}{\partial \sss}(p),
	\\
	m\muhat^\circ_{p} &= \frac{1}{\tau'_i} \frac{\partial \Phi'}{\partial \theta}(p).
	\end{aligned}
\end{equation}
This follows by considering average and oscillatory parts and using that $\Phip_{i, \osc}$ is $\gcyl_{|m_i|}$-symmetric.
\end{proof}

\begin{notation}
Given $\abold = (a_i)_{i=1}^k \in \R^k, k\geq 2$, we define $\dd \abold \in \R^{k-1}$ by requesting that $(\dd \abold)_j = a_{j+1} - a_j$, $j=1, \dots, k-1$.
It is useful to think of $\dd \abold$ as a discrete derivative of $\abold$.     
\qed 
\end{notation}

\begin{corollary}[Matching Estimates for $\varphi\llbracket \zetabold^\top\rrbracket$]
\label{LmatchingE}
Let $\zetabold^\top \in \Btilde_{\Pcal^\top}$ and $\varphi = \varphi \llbracket \zetabold^\top \rrbracket$ as in \ref{dtau1}.  There is an absolute constant $C$ (independent of $\cunder$) such that for $m$ large enough (depending on $\cunder)$, the following hold, where $\mubold := (\mu_i)_{i=1}^k, \mubold': = (\mu'_i)_{i=1}^k$ and $\mu_i, \mu'_i$ are as defined as in \ref{Lmatching}:
\begin{enumerate}[label=\emph{(\roman*)}]
\item $\left|\zeta_1-\mu_1 \right| \le C$. 
\item $ \left|  \bsigma + \frac{2F^\phi_1}{|m_1|} \dd \mubold \right|_{\ell^\infty}\le C/m$.
\item $\left|\xibold-  2\mubold' \right|_{\ell^\infty} \le C/m$.
\item For any $p\in L_i$, and $\muhat_p, \muhat'_p, \muhat^\circ_p$ as in \ref{Lmatching}, 
	$|\muhat_p|+|\muhat'_p|+|\muhat^\circ_p|\leq Ce^{- \frac{m}{Ck}}$.
\end{enumerate}
\end{corollary}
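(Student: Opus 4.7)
The strategy is to take the explicit expressions for $\mu_i$, $\mu'_i$, $\muhat_p$, $\muhat'_p$, $\muhat^\circ_p$ from Lemma \ref{Lmatching} and estimate each term by combining (a) the RLD asymptotics from Lemma \ref{Lrldest} and Lemma \ref{Lphiavg}, (b) the symmetric estimates \ref{LavePhi}(i) and \ref{LPhipest}(i)(a)/(ii), and---this is the main point of the argument---(c) the \emph{antisymmetric} estimates \ref{LavePhi}(ii) and \ref{LPhipest}(i)(b), which will capture the near-symmetry of $\Phip$ across each parallel circle $\Lpar[\sss_i]$. Throughout one uses the a priori bounds $|\bsigma|_{\ell^\infty}\le 1/m$, $|\xibold|_{\ell^\infty}\le 1/m$ built into $\Btilde_{\Pcal^\top}$, together with the size $|m_1|/(2F^\phi_1)\Sim_C km$ from Lemma \ref{Lrldest}(i).

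For (i), set $i=1$ in the formula for $\mu_1$: the first summand $\frac{|m_1|}{2F^\phi_1}(e^0-1)$ vanishes identically, and Lemma \ref{Lphiavg}(i) gives $\tau'_1=1$, so $\mu_1 - \zeta_1 = \Phip_\ave(p) + \Phip_{1,\osc}(p) - \conf(p)$. Each of these is $O(1)$ by Lemma \ref{LavePhi}(i), Lemma \ref{LPhipest}(i)(a), and smoothness of $\conf$.

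For (ii), I would compute the discrete derivative $\mu_{i+1}-\mu_i$. At leading order only the first summand contributes, giving $\frac{|m_1|}{2F^\phi_1}e^{-\sum_{l=1}^{i-1}\sigma_l}(e^{-\sigma_i}-1)$; Taylor expanding in $\sigma_i$ and in the $O(k/m)$ quantity $\sum_{l<i}\sigma_l$ this equals $-\sigma_i\cdot\frac{|m_1|}{2F^\phi_1}+O(k/m)$. All remaining contributions to $\mu_{i+1}-\mu_i$ are bounded by $O(1)$: the $\Phip_\ave$ and $\Phip_{i,\osc}$ differences by the uniform bounds of paragraph 1, the telescoping $\log(\tau'_{i+1}/\tau'_i)=\log(\phat(\sss_{i+1})/\phat(\sss_i))+\sigma_i$ by Lemma \ref{Lrldest}(ii), and the $\conf$ difference by smoothness. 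Multiplying through by $\frac{2F^\phi_1}{|m_1|}\Sim_C 1/(km)$ converts the leading piece into $-\sigma_i$ with remainder $O(1/(km))\le C/m$, which is (ii).

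For (iii)---the crux of the argument---one has by Lemma \ref{Lmatching}
$$\xi_i-2\mu'_i = -\frac{2}{|m_i|\tau'_i}\partial_\sss\Phip_\ave(p) - \frac{2}{|m_i|\tau'_i}\partial_\sss\Phip_{i,\osc}(p) + \frac{1}{|m_i|}\partial_\sss\conf(p),$$
so the last term is automatically $O(1/m)$ but a naive bound on the first two is only $O(1)$. The key observation is that for $m$ large, the supports $\Omega[\sss_j;m_j]$ are disjoint for different $j$, so $\partial_\sss\Phip_\ave(\sss_i)=\partial_\sss\Phip_{i,\ave}(\sss_i)$, and since $\Phip_{i,\ave}$ and $\Phip_{i,\osc}$ are approximately $\Sbar_{\sss_i}$-symmetric, $\partial_\sss\Phip_{i,\ave}(\sss_i)=\tfrac12\partial_\sss(\Acal_{\sss_i}\Phip_{i,\ave})(\sss_i)$ and likewise for $\Phip_{i,\osc}$. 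The antisymmetric estimates in Lemma \ref{LavePhi}(ii) and Lemma \ref{LPhipest}(i)(b), after converting the $\chitilde[m_i]$-norm to the $\chi$-norm (a factor of $|m_i|$ on one derivative), bound these $\sss$-derivatives at $p$ by $C$ in $\chi$. Dividing by $|m_i|\tau'_i\Sim_C m$ yields (iii). The hard part is recognizing that only the antisymmetric (not the symmetric) estimate is sharp enough; without \ref{LavePhi}(ii) and \ref{LPhipest}(i)(b) one loses a factor of $m$.

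Finally for (iv), the off-diagonal terms $\muhat_p,\muhat'_p,\muhat^\circ_p$ are sums over $j\neq i$ of $\Phip_{j,\osc}(p)$ or its derivatives evaluated at $p\in L_i$. By Lemma \ref{LPhipest}(i)(a) these are bounded in $\chitilde$-norm by $Ce^{-m|\sss_i-\sss_j|}$; derivatives gain a factor of $|m_i|$ in the $\chi$-norm which is exactly absorbed by the $1/|m_i|$ or $1/m$ prefactors in the definitions of $\muhat'_p$ and $\muhat^\circ_p$. Using the uniform lower bound $|\sss_j-\sss_i|\ge |j-i|/(Ck)$ from Lemma \ref{Lrldest}(ii), the sum telescopes into a geometric series $\sum_{l\ge 1}e^{-ml/(Ck)}\le C e^{-m/(Ck)}$, proving (iv) with the stated constant.
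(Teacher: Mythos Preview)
Your proof is correct and follows essentially the same route as the paper's: write out the formulas from Lemma \ref{Lmatching}, take a discrete difference for (ii), and bound all terms using \ref{Lrldest}, \ref{Ltauratio}, \ref{LavePhi}, and \ref{LPhipest}; for (iv) use the exponential decay in \ref{LPhipest}(i)(a) together with the gap $|\sss_j-\sss_i|\ge |j-i|/(Ck)$ from \ref{Lrldest}(ii) and sum the geometric series. Your explicit observation for (iii)---that the symmetric bound \ref{LPhip}(i) only gives $|\partial_\sss\Phip|\le Cm$, and that one must instead use $\partial_\sss u(\sss_i)=\tfrac12\partial_\sss(\Acal_{\sss_i}u)(\sss_i)$ together with the antisymmetric estimates \ref{LavePhi}(ii) and \ref{LPhipest}(i)(b)---is exactly the point behind the paper's terse citation of \ref{LPhip}(ii), and making it explicit is a genuine improvement in exposition. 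One minor slip: in (iv), the quantity $\muhat^\circ_p$ involves $\Phi_{j,\osc}$ (the oscillatory part of the LD solution $\Phi_j$ itself), not $\Phip_{j,\osc}$; the required exponential decay still holds by applying \ref{Lsol} directly to $\Phi_{j,\osc}$, which is what the paper does.
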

\begin{proof}
Taking $i=1$ in the first equation of \ref{Lmatching} we obtain for any $p\in L_1$
\begin{align}
\mu_1 - \zeta_1 = \Phip_{\ave}(p) + \Phip_{1, \osc}(p)   - \conf( \sss_1).
\end{align}

Given $i\geq 2$, fix $p_i \in L_i$ and $p_{i-1}\in L_{i-1}$, compute 
$\mu_i - \mu_{i-1}$ using \ref{Lmatching}, multiply through by $\frac{2F^\phi_1}{|m_1|}$ and rearrange to see 
\begin{multline}
\frac{2F^\phi_1}{|m_1|}(\mu_i - \mu_{i-1}) + \sigma_i 
\\ 
=\: \frac{2F^\phi_1}{|m_1|}\left( \frac{\Phip_{\ave}(p_i)}{\tau'_i} - \frac{\Phip_{\ave}(p_{i-1})}{\tau'_{i-1}} + \log \frac{\tau'_i}{\tau'_{i-1}} + \conf(\sss_i) - \conf(\sss_{i-1})\right)\\
  + \: \frac{2F^\phi_1}{|m_1|}\left( \frac{\Phip_{i, \osc}(p_i)}{\tau'_i} - \frac{\Phip_{i-1, \osc}(p_{i-1})}{\tau'_{i-1}}\right)+
  O\left( \frac{\cunder^2}{m^3}\right).
\end{multline}

Next, from \ref{Lmatching} we have for $i=1, \dots, k$ and any $p_i \in L_i$
\begin{align}
2 \mu'_i - \xi_i = \frac{1}{|m_i|}\left(\frac{2}{\tau'_i} \frac{\partial \Phip_{\ave}}{\partial \sss}(p_i) + \frac{2}{\tau'_i}\frac{\partial \Phip_{i, \osc}}{\partial \sss}(p_i) -\frac{\partial \conf}{\partial \sss} (\sss_i)\right).
\end{align}

 Using \ref{LPhip} to estimate the terms involving $\Phi'$, using \ref{Ltauratio} to bound the $\tau'$ terms, and using the boundedness of the $\sss_i$ from Lemma \ref{Lrldest} and uniform bounds on $\conf$ on compact sets,  we deduce the 
 inequalities  $\left|\zeta_1-\mu_1 \right| \le C$,  $ \left|  \bsigma + \frac{2F^\phi_1}{|m_1|} \dd \mubold \right|_{\ell^\infty}\le C/m$, and $\left|\xibold-  2\mubold' \right|_{\ell^\infty} \le C/m$, which together complete the proof of (i)-(iii).  For (iv), we use the definition of $\muhat_p, \muhat'_p, \muhat^\circ_p$ in \ref{Lmatching} and estimate the oscillatory terms using \ref{Lsol} and arguing as in the proof of \ref{LPhipest}. 
 \end{proof}

\begin{lemma}
\label{Ltauratio}
For $\Phi$ as in Definition \ref{Lphiavg} and $1\leq j < i \leq k$, we have
\begin{align*}
\frac{\tau'_i}{\tau'_j} = \frac{\phi(\sss_i)}{\phi(\sss_j)}  \big( e^{\sum_{l=j}^{i-1} \sigma_l} \big)  \Sim_{1+C\frac{\log k}{k}} 1. 
\end{align*}
\end{lemma}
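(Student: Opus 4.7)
The plan is to establish the displayed identity first from the explicit formulas already recorded in Lemma \ref{Lphiavg}, and then to obtain the quantitative bound $\Sim_{1+C\log k/k}1$ by treating the two resulting factors separately.

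First I would combine items (a) and (i) of Lemma \ref{Lphiavg}. Item (i) gives, for every $i\in\{1,\dots,k\}$,
\[
\tau'_i \;=\; \frac{\phat(\sss_i)}{\phat(\sss_1)}\;e^{\sum_{l=1}^{i-1}\sigma_l},
\qquad \phat:=\phat[\bsigmaunderslash+\bsigmaunder:\kcir],
\]
while item (a) says that $\phi=\Phi_\ave$ is a positive constant multiple of the same $\phat$. Taking the ratio for $j<i$ makes both the $\phat(\sss_1)$ denominator and the overall scalar cancel, and the telescoping of the exponentials reduces the sum to $\sum_{l=j}^{i-1}\sigma_l$. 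This yields the displayed equality
\[
\frac{\tau'_i}{\tau'_j}\;=\;\frac{\phat(\sss_i)}{\phat(\sss_j)}\,e^{\sum_{l=j}^{i-1}\sigma_l}\;=\;\frac{\phi(\sss_i)}{\phi(\sss_j)}\,e^{\sum_{l=j}^{i-1}\sigma_l}.
\]

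For the estimate I would then bound the two factors separately. The ratio $\phat(\sss_i)/\phat(\sss_j)$ is controlled by Lemma \ref{Lrldest}(iii), which gives $\|1-\phat\|_{C^0(\cyl_{[-\sss_k,\sss_k]})}\le C\log k /k$; since all jump latitudes lie in $[-\sss_k,\sss_k]$, this yields $\phat(\sss_i)/\phat(\sss_j)\Sim_{1+C\log k/k}1$. For the exponential factor, in the regime in which Lemma \ref{Lphiavg} is being applied (so that the hypotheses of Lemma \ref{Lrldest} hold and, in particular, $\zetabold^\top\in\widetilde B_{\Pcal^\top}$ gives $|\bsigmaunder|_{\ell^\infty}\le 1/m$, cf.\ \ref{dParam}) one has $\bigl|\sum_{l=j}^{i-1}\sigma_l\bigr|\le |\bsigma|_{\ell^1}\le (k-1)/m$, which is $\le C\log k/k$ once $m$ is large enough in terms of $k$ (\ref{Amk}); consequently $e^{\sum_{l=j}^{i-1}\sigma_l}\Sim_{1+C\log k/k}1$ as well. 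Multiplying the two comparabilities and absorbing constants completes the proof.

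There is no real obstacle here: the content of the lemma is essentially the equality together with a bookkeeping of already-proved estimates. The only subtlety worth flagging is the implicit dependence on the parameter ranges of Section \ref{S:LDfamilies} (or equivalently the hypotheses of Lemma \ref{Lrldest}) needed to give the exponential factor the same $\log k/k$ control as the ratio of $\phat$ values; once those are acknowledged, everything proceeds by direct substitution.
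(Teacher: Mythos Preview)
Your proposal is correct and follows essentially the same route as the paper: derive the equality from Lemma \ref{Lphiavg}(i) (using that $\phi$ is a constant multiple of $\phat$ so the ratio is unaffected), then take logs and bound $\log(\phi(\sss_i)/\phi(\sss_j))$ via Lemma \ref{Lrldest}(iii) and $\sum_{l=j}^{i-1}\sigma_l$ via the parameter box in \ref{dParam} together with Assumption \ref{Amk}. The paper's proof is just a two-line version of exactly this.
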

\begin{proof}
The first equality follows from \ref{Lphiavg}(i).  
We have then
\begin{equation}
\label{Etauratio1}
\begin{aligned}
\log \frac{\tau'_i}{\tau'_{j}} &= \log\frac{\phi(\sss_i)}{\phi(\sss_{j})} + \sum_{l=j}^{i-1}\sigma_{l}=  O\left(\frac{\log k }{k}\right) + O\left(\frac{k \cunder}{m}\right),
\end{aligned}
\end{equation}
where the estimates follow from Lemma \ref{Lrldest}, Definition \ref{dtau1} and \ref{dParam}.
\end{proof}

\subsection*{$\zetabold^\perp$ dislocations}
 \nopagebreak
 
When not all $|m_i|$'s are equal we need to expand our families of LD solutions.  
We first determine the spaces $\valperp[L_i]$ in some simple cases 
and define configurations for the corresponding families. 

\begin{lemma}
\label{Rlp} 
Given $L_i = L[\sss_i;m_i]$ as in \ref{dL} and $m_i \in \{\pm m, \pm 2m, \pm 3m\}$,  
$\lambdabold^\perp_i = (\lambda^\perp_p)_{p\in L_i}\in \valperp[L_i]$ as defined in \ref{dVal} has the form determined by the following, 
except that $b=0$ when $\sss_i=0$. 
\begin{enumerate}[label=\emph{(\alph*)}]
\item For $m_i = \pm m$: $\lambda^\perp_p = 0$  $\forall p \in L_i$. 
\item For $m_i = -2m$: $\lambda^\perp_{p_i} =  a \left. d \theta\right|_{p_{i}}$ for some $a \in \R$, where
$p_i: =( e^{i \frac{\pi}{2m}}, \sss_i)$.
\item For $m_i= 2m$: 
$\lambda^\perp_{p_{i\pm}} =  \pm (a+ b  \left. d \sss\right|_{p_{i\pm}})$ for some $a, b \in \R$, where
$p_{i\pm} := ( e^{i(\frac{\pi}{2m}\pm \frac{\pi}{2m})}, \sss_i)$.
\item 
For $m_i = 3m$: $\lambda^\perp_{p_{i+}} = 2( a+ b \left. d\sss\right|_{p_{i+}})$ and 
$\lambda^\perp_{p_{i-}} = - a - b \left. d \sss\right|_{p_{i-}} + c \left. d\theta\right|_{p_{i-}}$ for some $a, b, c \in \R$, where
$p_{i+} := ( (1, 0), \sss_i )$,  $p_{i-} :=(  e^{i\frac{2\pi}{3m}}, \sss_i )$
\item For $m_i =-3m$: $\lambda^\perp_{p_{i+}} = 2( a+ b \left. d\sss\right|_{p_{i+}})$ and 
$ \lambda^\perp_{p_{i-}} = - a - b \left. d \sss\right|_{p_{i-}} + c \left. d\theta\right|_{p_{i-}}$ for some $a, b, c \in \R$, where  $p_{i+} := ( e^{i \frac{\pi}{m}}, \sss_i)$, $
	 p_{i-} :=(  e^{i\frac{\pi}{3m}}, \sss_i)$.
\end{enumerate}
\end{lemma}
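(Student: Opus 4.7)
The proof is a case-by-case orbit-stabilizer analysis of the $\gcyl_m$-action on $L_i=L[\sss_i;m_i]$ and on $\val[L_i]$. First I would unpack the action on tangent data: the rotation subgroup of $\gcyl_m$ is $\langle\thetasf_{2\pi/m}\rangle$ of order $m$, the meridian reflections in $\gcyl_m$ are $\{\thetabar_{\pi j/m}:j\in\Z\}$, and $\Sbar$ swaps the circles at $\pm\sss_i$ when $\sss_i\neq 0$ and fixes the equator pointwise when $\sss_i=0$. A meridian reflection pushes $d\sss$ forward to $d\sss$ and $d\theta$ to $-d\theta$ at a fixed point, while $\Sbar$ negates $d\sss$ and preserves $d\theta$. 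Hence each $(\kappa_p)\in\val_\sym[L_i]$ is determined by stabilizer-invariant values at a chosen set of $\gcyl_m$-orbit representatives.

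Next I would enumerate the $\gcyl_m$-orbits on a single $\sss_i$-circle in each case by writing the angles modulo $2\pi/m$. In case (a), $L[\sss_i;\pm m]$ sits exactly on meridians of $\gcyl_m$, so each point has a reflection stabilizer killing the $d\theta$ component; the single orbit then forces $\val_\sym[L_i]=\valtop[L_i]$ and hence $\valperp[L_i]=0$. In case (b), the $2m$ points of $L[\sss_i;-2m]$ at angles $(2k+1)\pi/(2m)$ lie strictly between consecutive meridians, giving trivial stabilizer and a single free $\gcyl_m$-orbit, so $\dim\val_\sym[L_i]=3$ and $\dim\valperp[L_i]=1$. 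Cases (c)-(e) each split into two $\gcyl_m$-orbits per circle -- of sizes $(m,m)$ in case (c) and $(m,2m)$ in cases (d)-(e) -- where the size-$m$ orbit has a reflection stabilizer (killing $d\theta$) and the size-$2m$ orbit has trivial stabilizer.

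Once the orbit decomposition is set, $\valperp[L_i]$ is cut out of $\val_\sym[L_i]$ by the two linear conditions that the sum over $L_i$ of the scalar coefficients and of the $d\sss$-coefficients both vanish (the pairings with the two generators of $\valtop[L_i]$). Because both coefficients are constant on each $\gcyl_m$-orbit, these become linear relations between per-orbit weights: in case (c) the two orbits of equal size $m$ must get opposite weights $\pm(a+b\,d\sss)$ at $p_{i\pm}$; in cases (d)-(e) the size-$m$ orbit gets weight $2(a+b\,d\sss)$ and the size-$2m$ orbit gets weight $-(a+b\,d\sss)$, which is exactly the factor $2$ appearing in the lemma statement, with an extra $c\,d\theta$ term on the size-$2m$ orbit permitted precisely because its stabilizer is trivial.

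The main bookkeeping hurdle is keeping track of signs when propagating values of $\kappa$ around a $\gcyl_m$-orbit, in particular under $\Sbar$, which links the $\pm\sss_i$ circles and negates $d\sss$; one must also check in each case that the stated representative $p_i$ (or $p_{i\pm}$) does lie in the claimed orbit of the correct size. The only qualitative deviation is the $\sss_i=0$ case: there $\Sbar$ fixes the equator pointwise and therefore joins the stabilizer of every $p\in L_i$, and since $\Sbar$ negates $d\sss$ this forces $b=0$ uniformly across the five cases -- precisely the exception recorded in the statement.
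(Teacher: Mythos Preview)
Your proof is correct and follows the same approach as the paper, which disposes of the lemma in one sentence (``follows immediately using the definitions of $L_i$, $\valperp[L_i]$, and the symmetries''); you have simply written out the orbit--stabilizer bookkeeping that the paper leaves to the reader. One small imprecision worth noting: when $\sss_i\neq 0$, the literal sum over all of $L_i$ of the $d\sss$-coefficients of an equivariant element vanishes automatically by $\Sbar$-antisymmetry, so the nontrivial orthogonality condition is the pairing with the $\Sbar$-equivariant $d\sss$-generator of $\valtop[L_i]$ (equivalently, restrict the sum to the $+\sss_i$ circle). Your per-orbit weight relations and the final forms in each case are unaffected and match the statement exactly.
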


\begin{proof}
This follows immediately using the definitions of $L_i$ in \ref{dL}, $\valperp[L_i]$ in \ref{dVal}, and the symmetries. 
\end{proof}

\begin{definition}[Configurations with $\zetabold^\perp$]
\label{dLtilde}
 Given $\zetabold = \zetabold^\top+ \zetabold^\perp \in \Btilde_{\Pcal}$, 
we define 
a $\gcyl_m$-invariant $\taubold = \taubold\llbracket \zetabold \rrbracket = \taubold\llbracket\zetabold; \kcir, \mbold\rrbracket : \Ltilde \rightarrow \R$, 
which is a perturbation of $\taubold \llbracket \zetabold^\top\rrbracket$ defined in \ref{dtau1}, 
by 
$\Ltilde = L\llbracket \zetabold\rrbracket = \bigcup_{i=1}^k \Ltilde_i $,
where $\Ltilde_i = L_i\llbracket \zetabold\rrbracket    := \gcyl_m p_{i}$        if $m_i =\pm m,-2m$ 
and $\Ltilde_i = L_i\llbracket \zetabold\rrbracket := \Ltilde_i^+ \disjun \Ltilde^-_i$ 
with $\Ltilde^{\pm}_i := \gcyl_m p_{i\pm}$ otherwise, 
and 
where $p_{i}$, $\tau_{p_{i}}$, $p_{i\pm}$, and $\tau_{p_{i\pm}}$ are defined as follows (with $\tau_1$ and $\tau'_i$ as in \ref{dtau1}); 
except we have $\xitilde_i =0$ when $\sss_i=0$.   
\begin{enumerate}[label=\emph{(\alph*)}]
\item  For $m_i = \pm m$: 
$p_i := ( e^{i( \frac{\pi}{2m} \pm \frac{\pi}{2m})}, \sss_i)$ and $\tau_{p_{i}} = \tau_1 \tau'_i$. 
\item  For $m_i= -2m$: 
$p_i: =( e^{ \frac{i}{2m}(\pi + \xitildecir_i)}, \sss_i)$ and $\tau_{p_{i}} = \tau_1 \tau'_i$, where $\xitildecir_i \in \Pcal^\perp_i \simeq \R $.
\item  For $m_i = 2m$: $p_{i\pm} := ( e^{i(\frac{\pi}{2m}\pm \frac{\pi}{2m})}, \sss_i\pm  \xitilde_i)$, $\tau_{p_{i\pm}} =e^{\pm \sigmatilde_i} \tau_1 \tau'_i$, 
where $(\sigmatilde_i, \xitilde_i)\in \Pcal^\perp_i \simeq \R^2$.
\item  For $m_i = 3m$:  $p_{i+} := ( (1, 0), \sss_i +2 \xitilde_i)$,  $p_{i-} :=(  e^{i(\frac{2\pi}{3m} +
\xitildecir_i)}, \sss_i - \xitilde_i)$, and
$\tau_{p_{i\pm}} =e^{\pm \sigmatilde_i} \tau_1 \tau'_i$, 
where $(\sigmatilde_i, \xitilde_i, \xitildecir_i)\in \Pcal^\perp_i \simeq \R^3$.
\item   For $m_i = -3m$:  $p_{i+} := ( e^{i \frac{\pi}{m}}, \sss_i +2 \xitilde_i)$, 
$ p_{i-} :=(  e^{i(\frac{\pi}{3m} + \xitildecir_i)}, \sss_i - \xitilde_i)$, and $\tau_{p_{i\pm}} =e^{\pm \sigmatilde_i} \tau_1 \tau'_i$, 
where $(\sigmatilde_i, \xitilde_i, \xitildecir_i)\in \Pcal^\perp_i \simeq \R^3$.  
\end{enumerate}
\end{definition}

Note that $\taubold\llbracket \zetabold\rrbracket$ in \ref{dLtilde} is equal to $ \taubold\llbracket \zetabold^\top\rrbracket$ as defined in \ref{dtau1} 
when $\zetabold^\perp = \zerobold$.
In order to keep the presentation simple we assume now the following.  

\begin{assumption}
\label{Ambold}
We assume in the rest of the article that $k, \mbold, m$ are as in \ref{dL} satisfying \ref{Amk}, 
and furthermore that 
$m_i \in \{m, -m, -2m\}$ $\forall i\in \{1, \dots, k\}$. 
\end{assumption}

\begin{definition}[The spaces {$\skernelv_{\sym}[\Ltilde]$} and {$\skernel_{\sym}[\Ltilde]$}]
\label{dkernelsym}
Given $\Ltilde$ as in \ref{dLtilde} we define  
\begin{align*}
\skernelv_\sym[\Ltilde] = \bigoplus_{i=1}^k \skernelv_\sym[\Ltilde_i] = \bigoplus_{i=1}^k \skernelv^\top_\sym[\Ltilde_i] \oplus \skernelv^\perp_{\sym}[\Ltilde_i],
\end{align*}
where
$ \skernelv^\top_\sym[\Ltilde_i]  := \vspan\{ V_i  , V'_i\}_{i=1}^k$ 
with $V_i , V'_i \in  C^\infty_{\sym[m]}(\cyl_I)$ defined by requesting that they are supported on $D^\chi_{\Ltilde_i}(2\delta_i)$ and $\forall p\in \Ltilde_i$ they satisfy   
\begin{equation}
\label{EVV}
\begin{aligned}
V_i :=& \Psibold[ \delta_i, 2\delta_i; \dbold^\chi_{p}](  \phiunder[ 1, 0 ; \sss(p)], 0), \quad 
\\
V'_i :=&  \Psibold[ \delta_i, 2\delta_i; \dbold^\chi_{p}]( \phiunder[ 0, 1 ; \sss(p) ], 0),
\end{aligned}  
\quad \text{ on } D^\chi_{p}(2\delta_i); 
\end{equation}
and $\skernel^\perp_\sym[\Ltilde_i] := \vspan\{V^\circ_i : i \in \{1, \dots, k\} \text{ with } m_i = -2m \}$, 
with $V^\circ_i \in C^\infty_{\sym[m]}(\cyl_I)$ 
defined by requesting that 
it is supported on $D^\chi_{\Ltilde_i}(2\delta_i)$ and 
\begin{align*}
V^\circ_i =  \Psibold[ \delta_i, 2\delta_i; \dbold^\chi_{p_i}]( u^\circ_i - \phiunder[u^\circ_i(p_i), \partial_{\sss}u^\circ_i(p_i );  \sss_i], 0) 
\qquad \text{ on } D^\chi_{p_i}( 2\delta_i), 
\end{align*}
where $u^\circ_i$ is the solution of the Dirichlet problem $\Lcal_{\chi} u^\circ_i=0$ on $D^\chi_{p_i}(3 \delta_i )$ 
with boundary data on $\partial D^\chi_{p_i}(3 \delta_i)$ given by $u^\circ_i( e^{i\theta}, \sss) = \sin (\theta - \theta(p_i))$, 
where $\theta(p_i): = \frac{1}{2m}(3\pi + \xitildecir_i)$ (recall \ref{dLtilde}).
\end{definition}

\begin{lemma}
\label{LVker}
The spaces $\skernelv_\sym\llbracket \zetabold \rrbracket$ 
(as in \ref{dkernelsym}) 
satisfy parts (i)-(v) of Assumption \ref{aK}.

\end{lemma}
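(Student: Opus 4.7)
The plan is to verify parts (i)--(v) of Assumption \ref{aK} in turn, exploiting the explicit $\chi$-metric constructions of $V_i, V_i', V_i^\circ$ in \ref{dkernelsym} and translating them to the $g$-metric setting via $X_\Sigma$ and Lemma \ref{Lconf}. Item (i) is essentially a bookkeeping exercise: each generator is supported on $D^\chi_{\Ltilde_i}(2\delta_i)$ by the $\Psibold$ cutoff, and comparing with $D^\Sigma_p(4\delta_p)$ using the bound on $\conf$ on the compact set $X_\Sigma(\cyl_{[-\sss_k, \sss_k]})$ and the definition $\delta_p = e^{-2\conf(\sss(p))}\delta$ in \ref{ddeltai} gives the containment. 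For (ii), I would use that on the inner disk $D^\chi_p(\delta_i)$ each generator is an exact solution of $\Lcal_\chi = 0$: the $\phiunder[a,b;\sss_i]$ satisfy the ODE by \ref{dauxode}, and $u_i^\circ$ is a harmonic extension, while $\phiunder[u_i^\circ(p_i), \partial_\sss u_i^\circ(p_i); \sss_i]$ is an ODE solution subtracted to remove the $\sss$-dependent part. Then $\Lcal_\Sigma = e^{-2\conf}\Lcal_\chi$ annihilates each generator on $D^\chi_p(\delta_i)$, which, combined with the support established in (i), places $\skernel[\Ltilde_i]$ in the required annulus.

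The heart of (iii) is a block-diagonal computation of $\Ecalunder_p$ on the chosen basis. At any $p \in \Ltilde_i$, the initial data in \ref{dauxode} give $V_i(p) = 1$, $\partial_\sss V_i(p) = 0$, $\partial_\theta V_i(p) = 0$ and $V_i'(p) = 0$, $\partial_\sss V_i'(p) = 1$, $\partial_\theta V_i'(p) = 0$. When $m_i = -2m$, the subtraction in the definition of $V_i^\circ$ forces $V_i^\circ(p_i) = 0$ and $\partial_\sss V_i^\circ(p_i) = 0$, while $\partial_\theta V_i^\circ(p_i) = \partial_\theta u_i^\circ(p_i)$ is nonzero and admits a uniform lower bound via the maximum principle and boundary Schauder estimates for $\Lcal_\chi$ on $D^\chi_{p_i}(3\delta_i)$ with data $\sin(\theta - \theta(p_i))$. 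All evaluations at $p \in \Ltilde_j$ for $j \neq i$ vanish by disjoint support, and the images at the symmetry-related points inside a single $\Ltilde_i$ are forced by $\gcyl_m$-equivariance. A dimension count using \ref{Rlp} under Assumption \ref{Ambold} gives $\dim \skernelv_\sym[\Ltilde] = \dim \val_\sym[\Ltilde]$ (with the obvious reduction in the $\sss_1 = 0$ case, where $V_1'$ is omitted by $\Sbar$-antisymmetry), so $\Ecal_L$ is an isomorphism.

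For (iv) I would invert the block-diagonal structure from (iii) and bound $\|\Ecal_L^{-1}\|$ by estimating its action on an arbitrary $\kappaunderbold \in \val[L]$: rescaling the $\chi$ metric to unit scale by $\chitilde[m_i] = m_i^2 \chi$ turns $\Lcal_\chi$ into a small perturbation of the flat Laplacian (by \ref{AV1} and \ref{Lode}), so the ODE solutions $\phiunder$ and the Dirichlet solution $u_i^\circ$ have uniform $C^{2,\beta}$ bounds independent of $m$. Undoing the rescaling and passing to the $g$ metric via Lemma \ref{Lconf}---absorbing smooth bounded factors of $e^{\pm 2\conf}$---produces the declared factor $\delta_{\min}^{-2-\beta}$ coming from the $C^{2,\beta}$ norm scaling. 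For (v), I would use that on $D^\chi_p(\delta_i)$ (which contains $D^\Sigma_p(\delta_p)$ up to the conformal factor), $\Ecal_L^{-1}\kappaunder_p$ is an explicit combination of $\phiunder[a,b;\sss_i]$ and (when relevant) $u_i^\circ$ whose value, $\sss$-derivative and $\theta$-derivative at $p$ are exactly prescribed by $\kappaunder_p$. A Taylor expansion at $p$ of both $\kappaunder_p \circ (\exp_p^\Sigma)^{-1}$ (which is affine in normal coordinates) and of $\Ecal_L^{-1}\kappaunder_p$ (smooth, with prescribed $0$-jet and $1$-jet) shows their difference is $O(\dbold_p^2)$, with higher $C^k$ bounds following from smooth dependence of ODE and elliptic solutions on the data; the operator-norm estimate \ref{aK}(iv) bounds the $|\kappaunder_p|$ prefactor.

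The main obstacle is the conformal bookkeeping between the $\chi$-metric constructions on $\cyl_I$ and the $g$-metric statements on $\Sigma$ required by \ref{aK}. Concretely, this means carefully matching the radii $\delta_i$, $\delta_p$ and the $C^{k,\beta}$ norms using Lemma \ref{Lconf} and the smoothness of $\conf$, $V$ on the relevant compact set; the estimates on $\Ecal_L^{-1}$ and the Taylor remainder in (v) must be uniform in $m$ and in $\zetabold$, which is why we need the uniform bounds on $\sbold^\phat$ from \ref{Lrldest}(i). None of the individual estimates is deep, but their consistent translation across the conformal change and the piecewise definition of the generators is the main place where care is required.
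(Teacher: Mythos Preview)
Your proposal is correct and follows essentially the same approach as the paper: verify (i)--(ii) from the explicit supports of the cutoffs in \ref{dkernelsym} together with the conformal comparison of $\delta_i$ and $\delta_p$ via \ref{ddeltai} and bounds on $\conf$, establish (iii) by computing $\Ecalunder_p$ on the basis $V_i, V_i', V_i^\circ$ to see the block-diagonal isomorphism, obtain (iv) from uniform $\chitilde$-estimates converted to $g$ via \ref{Lconf}, and deduce (v) from Taylor's theorem. Your writeup is in fact more detailed in places (e.g., the argument that $\partial_\theta u_i^\circ(p_i)\neq 0$) than the paper's own proof, which is quite terse.
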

\begin{proof}
Items (i) and (ii) of \ref{aK} follow from Definition \ref{dkernelsym}, the definition of $\delta_i$ in \ref{ddeltai}, and Lemma \ref{LGdiff}, where we note we can bound $\frac{\partial \conf}{\partial \sss}$ on $\Omega[\sss_i;m_i]$ by a constant depending on $k$ using that $\sss_k < C\log k$ from \ref{Lrldest}. 

Next, observe that $\Ecal_L: \skernel_{\sym}[\Ltilde] \rightarrow \val_{\sym}[\Ltilde]$ splits as a direct sum of maps, $\Ecal_L = \bigoplus_{i=1}^k ( \Ecal^\top_{\Ltilde_i} \oplus \Ecal^\perp_{\Ltilde_i})$, where $\Ecal^\top_{\Ltilde_i} : \skernel^\top_{\sym}[\Ltilde_i]\rightarrow \val^\top[\Ltilde_i]$ and $\Ecal^\perp_{\Ltilde_i }: \skernel^\perp_{\sym}[\Ltilde_i] \rightarrow \val^\perp[\Ltilde_i]$.

Next, by the definitions and using that $g = e^{2\conf} \chi$ (recall \ref{Aimm}) we see that $\Ecal^\top_{\Ltilde_i}$ is invertible for each $i=1, \dots, k$ and moreover that 
\begin{align}
(\Ecal^\top_{\Ltilde_i})^{-1}\left((a+b d\sss)_{p\in \Ltilde_i}\right) =  a V_i + b V'_i.
\end{align}
Now fix $i \in \{1, \dots, k\}$ with $m_i = -2m$.  Since $\Ecalunder_{p_i} V^\circ_i = u^\circ_i(p_i) + d_{p_i} u^\circ_i = \frac{\partial u^\circ_i}{\partial \theta}(p_i)d\theta$ (recall the definitions of $V^\circ_i$ and $u^\circ_i$ in \ref{dkernelsym}), it follows from the definition of $u^\circ_i$ that $\Ecal^\perp_{\Ltilde_i}$ is an isomorphism as well. 

  Next, it is easy to see from \eqref{EVV} that the estimates 
\begin{align*}
\| V_i: C^{j}_{\sym[m]}(\cyl, \chitilde\, ) \| \leq C(j),  \qquad
\| V_i' : C^{j}_{\sym[m]}(\cyl, \chitilde\,) \| \leq C(j)
\end{align*}
hold for $i\in \{1, \dots, k\}$.  Now using \ref{Lconf} to convert these estimates to estimates where the norm $\chitilde$ is replaced first with $\chi$ and then with $g = e^{2\conf} \chi$, we conclude that $\|( \Ecal^\top_{\Ltilde_i})^{-1} \| \le C(k) m^{2+ \beta}$, and analogously, $\|(\Ecal^\perp_{\Ltilde_i})^{-1}\| \leq C(k) m^{2+\beta}$, where we have extended the notation for norms in \ref{aK}(iv) in the obvious way.  Then using from \ref{ddeltai} that $\delta_{p_i} = \frac{1}{9m}e^{-2\conf(p_i)}$ and combining the preceding, we conclude \ref{aK}(iv) holds.  Finally, \ref{aK}(v) holds from the preceding and Taylor's theorem. 
\end{proof}

We now define the full family of LD solutions we use. 
Note that when $\zetabold^\perp \ne \zerobold$ the singular sets of the LD solutions are $(\sbold, \mbold)$-rotational (recall \ref{dLmbold}), 
but not $(\sbold, \mbold)$-symmetric, perturbations of the ones with $\zetabold^\perp = \zerobold$.   

\begin{definition}[LD solutions $\varphi\llbracket \zetabold\rrbracket$]
\label{dtau2}
Given $\zetabold = \zetabold^\top+ \zetabold^\perp \in \Btilde_{\Pcal}$, 
we define using \ref{LsymLD} the LD solution $\varphi = \varphi\llbracket \zetabold\rrbracket = \varphi [ \taubold\llbracket \zetabold\rrbracket]$, 
where $\taubold\llbracket \zetabold\rrbracket$ is as in \ref{dLtilde}.
\end{definition}

\begin{definition}
\label{dZcal}
Given $p\in \cyl$, we define $\Zcal_p : \val[p]\rightarrow \R^3$ by $\Zcal_p( \mu + \mu' d\sss + \mu^\circ d\theta) = (\mu, \mu', \mu^\circ) $. 
\end{definition}

\subsection*{Green's functions on $\cyl$}
 \nopagebreak

In order to study the effect of the $\zetabold^\perp$ parameters on the mismatch of an LD solution $\varphi\llbracket \zetabold\rrbracket$ defined in \ref{dtau2}, 
we will first study a $\gcyl_m$-invariant LD solution $\Phi_m$ whose singular set is a single $L[\sssunder; m]$.  
In particular in \ref{Lxitildecir}, we estimate $\Ecalunder_p \Phi_m$ for certain points $p \in \Lpar[\sssunder]\setminus L[\sssunder; m]$.  
Later in \ref{Cphim} and \ref{Lmatchingperp}, this will be used to compare the mismatch of $\varphi\llbracket \zetabold \rrbracket$ to that of $\varphi\llbracket \zetabold^\top\rrbracket$.

Given $m \geq 2$, $\sssunder \in \R$, consider the $\gcyl_m$-invariant LD solution $\Phi_m [ \taubold]$ (recall \ref{LsymLD}), 
where $\forall p \in L[\sssunder; m]$, $\taubold$ takes the value $1$.  
Because of the symmetries, there is a function  $G_{m} : \R^2 \setminus L_m \rightarrow \R$ uniquely determined by the condition 
\begin{align}
\label{Eglift}
\Phi_m \circ \Thetacyl (\theta, \sss) = G_m (m \theta, m(\sss - \sssunder)) = G_m( \thetatilde, \ssshat\, ),
\end{align}
where $L_m := \{ (2\pi k, m(\sssunder \pm \sssunder)) : k \in \Z\} \subset \R^2$, 
$\thetatilde : = m \theta$ and $\shat := m(\sss - \sssunder)$.  
Note that $G_m \in C^\infty(\R^2 \setminus L_m)$ and satisfies $\Lchitilde G_m = 0$, where $\Lchitilde = \Delta_{\chitilde} + m^{-2} V$ and  $\chitilde = d \, \ssshat^2 + d\thetatilde^2$.

\begin{lemma}
\label{LGconv}
There exist $\phi_m \in C^\infty(\R^2)$, depending only on $\shat$ and satisfying $\Lchitilde \phi_m = 0$, 
such that on compact subsets of $\R^2 \setminus L_\infty$, where $L_\infty: = \{(2\pi k, 0):k\in \Z\}$, 
the functions $G_m+\phi_m$ converge smoothly to the singly periodic harmonic function $G_\infty : \R^2 \setminus L_\infty \rightarrow \R$ defined 
\cite{Ammari} by 
\begin{align*}
\Glap (\thetatilde, \ssshat) =  \frac{1}{2} \log \left( \sin^2 \frac{\thetatilde}{2} + \sinh^2 \frac{\ssshat}{2}\right).
 \end{align*}
\end{lemma}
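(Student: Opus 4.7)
My plan is to prove the lemma by Fourier decomposition in the angular variable $\thetatilde$, exploiting that the potential $V$ depends only on $\sss$ by the $O(2)$-invariance in Assumption \ref{Aimm}(ii). First I derive an explicit Fourier expansion for $G_\infty$. Using the factorization $\cosh\shat-\cos\thetatilde=\tfrac{1}{2}e^{|\shat|}\bigl|1-e^{-|\shat|+i\thetatilde}\bigr|^2$ and expanding $\log|1-z|$ as a power series yields
$$G_\infty(\thetatilde,\shat)=\tfrac{|\shat|}{2}-\log 2-\sum_{n=1}^\infty \tfrac{e^{-n|\shat|}}{n}\cos(n\thetatilde),$$
from which I verify directly that $\Delta_\chitilde G_\infty=0$ on $\R^2\setminus L_\infty$, that $G_\infty-\log r$ is bounded near each $(2\pi k,0)\in L_\infty$ (where $r$ is the Euclidean distance to the singular point), and that the nonzero Fourier coefficients $G_{\infty,n}(\shat)=-\tfrac{e^{-|n||\shat|}}{2|n|}$ decay exponentially in both $|n|$ and $|\shat|$.

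Because $V$ depends only on $\sss$, the equation $\Lchitilde G_m=0$ decouples on the Fourier side: writing $G_m(\thetatilde,\shat)=\sum_{n\in\Z} G_{m,n}(\shat)e^{in\thetatilde}$, each coefficient satisfies the ODE $G_{m,n}''-(n^2-m^{-2}V(\sssunder+\shat/m))G_{m,n}=0$ on $\R$ punctured at the lifted singular heights $\shat=0$ and $\shat=-2m\sssunder$, with a unit derivative jump at each by the unit singularity strength of $\Phi_m$ combined with vertical balancing (Lemma \ref{Lvbal}). I define $\phi_m$ to be the unique solution of the $n=0$ homogeneous ODE $\phi_m''+m^{-2}V(\sssunder+\shat/m)\phi_m=0$ on $\R$ with initial data at $\shat=0^+$ prescribed by $\phi_m(0)=-\log 2-G_{m,0}(0)$ and $\phi_m'(0)=\tfrac{1}{2}-\partial_+G_{m,0}(0)$; then $\phi_m\in C^\infty(\R^2)$ depends only on $\shat$ and satisfies $\Lchitilde\phi_m=0$, as required. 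Mode-by-mode convergence $G_{m,n}\to G_{\infty,n}$ for $n\neq 0$, and $G_{m,0}+\phi_m\to G_{\infty,0}$ for $n=0$, on compact subsets of $\R\setminus\{0\}$ then follows by an ODE comparison in the spirit of Lemma \ref{Lode}: for each fixed $n$ the perturbation coefficient $m^{-2}V(\sssunder+\cdot/m)$ becomes arbitrarily small in $C^j$-norm on compact sets as $m\to\infty$ by Lemma \ref{AV1}(i), the jump conditions at $\shat=0$ match by construction, and the second singular height $\shat=-2m\sssunder$ escapes any compact set as $m\to\infty$.

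The final step is to sum the mode-by-mode convergence to obtain smooth convergence on compact subsets of $\R^2\setminus L_\infty$, and this is the main obstacle: it requires decay estimates on $|G_{m,n}(\shat)-G_{\infty,n}(\shat)|$ together with its $\shat$-derivatives, exponential in $|n|$, that are uniform in large $m$. I will obtain such estimates by choosing $n_0$ with $n_0^2\ge 2m^{-2}\sup V$ (possible because $V$ is bounded by Lemma \ref{AV1}(i)), so that for $|n|\ge n_0$ the ODE coefficient $n^2-m^{-2}V$ is comparable to $n^2$. A WKB-type comparison then shows that on any compact set $K\subset\R\setminus\{0\}$ with $d_K:=\inf_{\shat\in K}|\shat|>0$, both $G_{m,n}$ and $G_{\infty,n}$ are bounded by $C|n|^{-1}e^{-|n|d_K/2}$ in $C^j(K)$ uniformly in $m$, and the difference admits the strengthened bound $O(m^{-2}|n|^{-3}e^{-|n|d_K/2})$. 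Combining this with the finite-mode convergence from the previous step and recovering $\thetatilde$-derivatives via Schauder estimates on cylindrical annuli, I sum in $n$ to conclude the smooth convergence $G_m+\phi_m\to G_\infty$ on compact subsets of $\R^2\setminus L_\infty$.
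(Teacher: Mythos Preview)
Your proof is correct, and for the zero Fourier mode (the average part) it coincides with the paper's argument: both define $\phi_m$ as the $\Lchitilde$-harmonic function depending only on $\shat$ whose initial data at $\shat=0$ are chosen so that $G_{m,\ave}+\phi_m$ matches $G_{\infty,\ave}$ there in value and one-sided derivative, and both use vertical balancing to verify that the derivative jumps already agree.

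For the oscillatory part, your route genuinely differs from the paper's. You work directly with the Fourier modes $G_{m,n}$, proving mode-by-mode ODE convergence and then summing via a uniform-in-$n$ WKB bound of the form $C|n|^{-1}e^{-|n|d_K/2}$. The paper instead first localizes the singular behavior: it constructs (by an iteration with shrinking error $O(m^{-2})$ at each step) a local Green's function $\Gtilde_m$ for $\Lchitilde$ on small disks around $L_m$, cuts it off to obtain $\Ghat_m$, and shows $\Ghat_m\to\Ghat_\infty$ directly. The smooth remainders $G'_m=G_m-\Ghat_m$ then satisfy $\Lchitilde G'_{m,\osc}=-\Lchitilde\Ghat_{m,\osc}$, whose right-hand side is smooth and uniformly bounded, so convergence $G'_{m,\osc}\to G'_{\infty,\osc}$ follows from standard separation of variables with exponential decay, essentially your argument applied to a function with no singularity. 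What your approach buys is explicitness and a quantitative $O(m^{-2})$ rate; what the paper's approach buys is robustness, since it sidesteps the need to justify uniform-in-$n$ decay of $G_{m,n}$ itself, which depends on the global boundary behavior of $\Phi_m$ (decay at the poles when $\Sigma$ is a sphere, periodicity in $\shat$ with period $2ml$ when $\Sigma$ is a torus) in a way your sketch does not fully address.
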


\begin{proof}
We first consider the average parts $G_{m, \ave}$ and $G_{\infty, \ave}$.  
It follows from the vertical balancing lemma \ref{Lvbal} that $G_{m, \ave}$ and $G_{\infty, \ave}$ have the same derivative jump at $\ssshat =0$, that is (recall \ref{Npartial})
\begin{align*}
( \partial_+ G_{m, \ave} + \partial_- G_{m, \ave})(0) = (\partial_+ G_{\infty, \ave} + \partial_- G_{\infty, \ave})(0).
\end{align*}
For each $m\in \N$, there is a unique $\phi_m \in C^\infty(\R^2)$ such that $\phi_m$ depends only on $\ssshat$, $\Lchitilde\phi_m = 0$, and 
\begin{align*}
G_{m, \ave}(0) +\phi_m(0) &= G_{\infty, \ave}(0),\\
(\partial_+G_{m, \ave} - \partial_-G_{m, \ave})(0) + \partial \phi_m (0) &= 
(\partial_+G_{\infty, \ave} - \partial_-G_{\infty, \ave})(0).
\end{align*}
It follows that $G_{m, \ave} + \phi_m$ converges smoothly to $G_{\infty, \ave}$ on compact subsets of $\R^2 \setminus \{\shat = 0\}$. 

In the rest of this proof, denote $r: = \dbold^\chitilde_{L_m}$, $D_m: = D^{\chitilde}_{L_m}(1/5)$, and $D_\infty: = D^{\chitilde}_{L_\infty}(1/5)$.
We define $\Gtilde_m : = \sum_{k=0}^\infty \Gtilde^{(k)}_m$, for $\Gtilde^{(k)}_m$ defined as follows:  $\Gtilde^{(0)}_m \in C^\infty( D_m\setminus L_m)$ is defined by requesting that $\Lchitilde \Gtilde^{(0)}_m=0$, $\Gtilde^{(0)}_m$ depends only on $r$, $\Gtilde^{(0)}_m$ vanishes on $\partial D_m$, and $\Gtilde^{(0)}_m =  \log (5 r) +O((\frac{r}{m})^2\log \frac{r}{m})$; $\Gtilde^{(1)}_m \in C^{2, \beta}(D_m)$, on $D_\infty$ is the solution of the Dirichlet problem
\begin{align*}
\Delta_{\chitilde} \Gtilde^{(1)}_m &= - m^{-2} (V- V(0)) \Gtilde^{(0)}_m \quad \text{on} \quad  D_\infty\\
\Gtilde^{(1)}_m &= 0 \quad \text{on} \quad \partial D_\infty
\end{align*}
and on $D_m \setminus D_\infty$ solves the analogous Dirichlet problem with $V(0)$ replaced with $V(-2\sssunder m)$; 
and for $k\in \N$,  $\Gtilde^{(k+1)}_m \in C^{2, \beta}( D_m)$ is the solution of the Dirichlet problem 
\begin{align*}
\Delta_{\chitilde} \Gtilde^{(k+1)}_m &= - m^{-2} V \Gtilde^{(k)}_m \quad \text{on} \quad  D_m\\
\Gtilde^{(k+1)}_m &= 0 \quad \text{on} \quad \partial D_m.
\end{align*}
By the preceding and standard regularity theory, it follows that $\Gtilde_m \in C^{\infty}(D_m\setminus L_m)$, that $\Lchitilde \Gtilde_m =0$ on $D$, and that on each compact subset of $D_\infty\setminus L_\infty$, $\Gtilde_m$ converges smoothly to $\log(5 r)$ as $m\rightarrow \infty$.

We next define decompositions $G_m := \Ghat_m + G'_m$ and $G_\infty: = \Ghat_{\infty} + G'_{\infty}$, where $\Ghat_m, \Ghat_{\infty}$ are defined by requesting that they are supported on $D_m$ and $D_\infty$ respectively and satisfy
\begin{align*}
\Ghat_m = \Psibold[\textstyle{\frac{1}{10}}, \textstyle{\frac{1}{5}}; r]( \Gtilde_m, 0), \quad 
\Ghat_\infty &= \Psibold[\textstyle{\frac{1}{10}}, \textstyle{\frac{1}{5}}; r]( \log(5r), 0).
\end{align*}
Clearly $\Ghat_m$ converges smoothly to $\Ghat_\infty$ on compact subsets of $\R^2 \setminus L_\infty$.  
Since 
\begin{align*}
\Lchitilde G'_{m, \osc} = - \Lchitilde \Ghat_{m, \osc} \quad \text{and} \quad
\Delta_{\chitilde} G'_{\infty, \osc} = - \Delta_{\chitilde} \Ghat_{\infty, \osc},
\end{align*}
by using the smooth convergence of $\Ghat_m$ to $\Ghat_{\infty}$, separation of variables (arguing as in \ref{Lsol}, including using the exponential decay away from $L_m$ of the oscillatory modes), it follows that $\Ghat'_{m, \osc}$ smoothly converges to $\Ghat'_{\infty, \osc}$ on compact subsets of $\R^2$.
Combined with the preceding analysis of the average parts, this completes the proof.
\end{proof}

\begin{remark}
Although we do not need it here, one can compute $G_{\infty, \ave}(\ssshat) = \frac{|\ssshat|}{2} - \log 2$. 
\qed 
\end{remark}

\begin{lemma}
\label{Lxitildecir}
For $\xitildecir \in \R$ with $|\xitildecir |$ small, the following hold as $m\rightarrow \infty$. 
\begin{enumerate}[label=\emph{(\roman*)}]
\item $\Phi_m(  e^{\frac{i}{m}(\pi+\xitildecir)}, \sssunder) = \Phi_m(e^{\frac{i\pi}{m}}, \sssunder) +O(|\xitilde^\circ|^2). $
\item $\frac{\partial \Phi_m}{\partial \stilde}( e^{\frac{i}{m}(\pi+\xitildecir)}, \sssunder) = \frac{\partial \Phi_m}{\partial \stilde}( e^{\frac{i \pi}{m}}, \sssunder) + O(|\xitildecir|).$
\item $\frac{\partial \Phi_m}{\partial \thetatilde}( e^{\frac{i}{m}(\pi+\xitildecir)}, \sssunder) = \xitildecir ( - \textstyle{\frac{1}{4}} + o(1)) + O(|\xitildecir|^2)$.
\end{enumerate}
\end{lemma}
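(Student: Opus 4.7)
I would work in the rescaled coordinates $(\thetatilde,\ssshat)$ on $\R^2$ so that, by \eqref{Eglift}, the three quantities on the left of (i)--(iii) become respectively $G_m(\pi+\xitildecir,0)$, $\partial_{\ssshat}G_m(\pi+\xitildecir,0)$, and $\partial_{\thetatilde}G_m(\pi+\xitildecir,0)$ (here I use $\partial_{\stilde}=\partial_{\ssshat}$, coming from $\stilde=m\sss$ and $\ssshat=m(\sss-\sssunder)$). The strategy is: compute the Taylor expansion of $G_\infty$ at $(\pi,0)$ from the explicit formula in Lemma \ref{LGconv}; invoke the smooth convergence $G_m+\phi_m\to G_\infty$ on a fixed compact neighborhood of $(\pi,0)$; and use the fact that $\phi_m$ depends only on $\ssshat$, so $\partial_{\thetatilde}\phi_m\equiv 0$ and the $\phi_m$-contribution cancels in any difference of values (or of $\ssshat$-derivatives) taken at two points with the same $\ssshat$-coordinate.

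A key exact identity that I will exploit is the symmetry $\partial_{\thetatilde}G_m(\pi,\ssshat)=0$ for all $\ssshat$. This follows because $\Phi_m$ is $\gcyl_m$-invariant and hence invariant under the reflection $\thetabar_{\pi/m}$, which in the $\thetatilde$-coordinate is $\thetatilde\mapsto 2\pi-\thetatilde$ and fixes $\thetatilde=\pi$; differentiating $G_m(2\pi-\thetatilde,\ssshat)=G_m(\thetatilde,\ssshat)$ at $\thetatilde=\pi$ yields the claim. Using the formula in Lemma \ref{LGconv}, direct computation gives $G_\infty(\pi,0)=0$, $\partial_{\thetatilde}G_\infty(\pi,0)=0$, $\partial_{\thetatilde}^2 G_\infty(\pi,0)=-\tfrac14$, and $\partial_{\ssshat}G_\infty(\thetatilde,0)\equiv 0$ in a neighborhood of $\thetatilde=\pi$ (since $\sinh(0)=0$); in particular $\partial_{\thetatilde}\partial_{\ssshat}G_\infty$ is smooth and bounded on a compact neighborhood of $(\pi,0)$.

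With these ingredients I would finish each part via Taylor's theorem with integral remainder on the segment from $(\pi,0)$ to $(\pi+\xitildecir,0)$. For (i), I would write
\begin{equation*}
G_m(\pi+\xitildecir,0)-G_m(\pi,0)=\xitildecir\,\partial_{\thetatilde}G_m(\pi,0)+\int_0^1(1-t)\,\xitildecir^{\,2}\,\partial_{\thetatilde}^2 G_m(\pi+t\xitildecir,0)\,dt,
\end{equation*}
where the first term vanishes by the exact symmetry and the integrand is uniformly bounded for large $m$ by smooth convergence of $G_m+\phi_m$ to $G_\infty$ (again $\partial_{\thetatilde}^2\phi_m=0$). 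For (ii), since the two evaluation points share $\ssshat=0$, the $\phi_m$-contribution to $\partial_{\ssshat}G_m$ cancels in the difference, and
\begin{equation*}
\partial_{\ssshat}G_m(\pi+\xitildecir,0)-\partial_{\ssshat}G_m(\pi,0)=\int_0^1 \xitildecir\,\partial_{\thetatilde}\partial_{\ssshat}(G_m+\phi_m)(\pi+t\xitildecir,0)\,dt,
\end{equation*}
which is $O(|\xitildecir|)$ uniformly in large $m$ by smooth convergence and boundedness of $\partial_{\thetatilde}\partial_{\ssshat}G_\infty$ near $(\pi,0)$. For (iii), I use $\partial_{\thetatilde}G_m=\partial_{\thetatilde}(G_m+\phi_m)$ and the Taylor expansion at $\thetatilde=\pi$:
\begin{equation*}
\partial_{\thetatilde}G_m(\pi+\xitildecir,0)=\partial_{\thetatilde}G_m(\pi,0)+\xitildecir\,\partial_{\thetatilde}^2 G_m(\pi,0)+O(\xitildecir^2),
\end{equation*}
with the first term vanishing by symmetry and $\partial_{\thetatilde}^2 G_m(\pi,0)=\partial_{\thetatilde}^2 G_\infty(\pi,0)+o(1)=-\tfrac14+o(1)$ as $m\to\infty$.

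\textbf{Main obstacle.} The only subtle point is ensuring that the $O(|\xitildecir|^2)$ and $O(|\xitildecir|)$ bounds are uniform in $m$ (for $m$ large) rather than degenerating. Without the exact identity $\partial_{\thetatilde}G_m(\pi,0)=0$ one would only get $o(1)\,\xitildecir+O(\xitildecir^2)$ in (i) rather than a clean $O(\xitildecir^2)$; so the symmetry observation is what upgrades the convergence-based estimate to the stated uniform bound. Once that is in hand, the Taylor/integral remainder arguments are routine.
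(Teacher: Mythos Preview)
Your proposal is correct and follows essentially the same approach as the paper: pass to the lifted function $G_m$ via \eqref{Eglift}, Taylor-expand at $(\pi,0)$, use the exact symmetry $\partial_{\thetatilde}G_m(\pi,0)=0$, and invoke Lemma~\ref{LGconv} together with the explicit computation $\partial_{\thetatilde}^2 G_\infty(\pi,0)=-\tfrac14$. Your treatment is in fact slightly more careful than the paper's in making explicit why the $O$-bounds are uniform in $m$ (via the observation that $\phi_m$ depends only on $\ssshat$ and hence drops out of $\thetatilde$-derivatives and of differences at fixed $\ssshat$), which the paper's proof leaves implicit.
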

\begin{proof}
For item (i), we have via Taylor's theorem
\begin{equation*}
\Phi_m(  e^{\frac{i}{m}(\pi+\xitildecir)}, \sssunder) \: = \: G_m( \pi + \xitildecir, 0) 
%\\ 
= \: G_m(\pi, 0) + \xitildecir \frac{\partial G_m}{\partial \thetatilde}( \pi, 0) + O(|\xitildecir|^2)
\: = \:  \Phi_m(e^{\frac{i\pi}{m}}, \sssunder) + O(|\xitildecir|^2),
\end{equation*}
where we have used that $\frac{\partial G_m}{\partial \thetatilde}(\pi, 0) = 0$ by symmetry.  This proves (i).  Next, we have
\begin{align*}
\frac{\partial \Phi_m}{\partial \stilde}( e^{\frac{i}{m}(\pi+\xitildecir)}, \sssunder) = 
\frac{\partial \Phi_m}{\partial \stilde}( e^{\frac{i\pi}{m}}, \sssunder) + \xitildecir \frac{\partial^2 G_m}{\partial \stilde \partial \thetatilde}(\pi, 0) + O(|\xitildecir|^2),
\end{align*}
and item (ii) follows from this and \ref{LGconv}.  Finally, in similar fashion we have
\begin{align*}
\frac{\partial \Phi_m}{\partial \thetatilde}( e^{\frac{i}{m} (\pi+\xitildecir)}, \sssunder) = \frac{\partial G_m}{\partial \thetatilde}(\pi +\xitildecir, 0) 
= 
\xitildecir \frac{\partial^2 G_m}{\partial \thetatilde^2}( \pi, 0) + O(|\xitildecir|^2),
\end{align*}
where we have used that $\frac{\partial G_m}{\partial \thetatilde}(\pi, 0) = 0$ by symmetry.
Item (iii) then follows by using Lemma \ref{LGconv} and the direct calculation $\frac{\partial^2 \Glap}{\partial \thetatilde^2}(\pi, 0) = -1/4$. 
 \end{proof}
 
 \subsection*{Matching estimates}
 \nopagebreak
 
 In the last part of this section, given $\zetabold = \zetabold^\top + \zetabold^\perp \in \Btilde_{\Pcal}$, 
we will need to compare the LD solutions $\varphi\llbracket \zetabold^\top\rrbracket$ and $\varphi \llbracket \zetabold \rrbracket$ defined in \ref{dtau1} and \ref{dtau2} respectively.  
To avoid confusion, we will denote hereafter $\varphitilde = \varphi\llbracket \zetabold \rrbracket$ and $\varphi = \varphi\llbracket \zetabold^\top \rrbracket$.  
It will be useful to consider the decompositions
\begin{align*}
\varphitilde = \sum_{i=1}^k \varphitilde_i, \qquad \qquad \qquad 
\varphi = \sum_{i=1}^k \varphi_i, 
\end{align*}
where $\varphitilde_i, \varphi_i$ are $\gcyl_m$-invariant and have singular sets $\Ltilde_i$ and $L_i$.

\begin{cor}
\label{Cphim}
Suppose $i\in \{1, \dots, k\}$ and $m_i = -2m$.  Then (recall \ref{dZcal})
\begin{align*}
\Zcal_{\ptilde_i} \Mcal_{\ptilde_i} \varphitilde_i - \Zcal_{p_i} \Mcal_{p_i} \varphi_i = 
\tau_i \left( O(|\xitildecir_i|^2),   mO( |\xitildecir_i|), - \frac{m}{4}\xitildecir_i(1 + o(1)+ O(|\xitildecir_i|))\right).
\end{align*}
\end{cor}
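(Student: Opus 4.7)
The plan is to exploit the $O(2)$-equivariance of $\Lcal_\Sigma$ to decompose both $\Ltilde_i$ and $L_i$ into two $H$-invariant ``halves'' related by $\thetabar_0$ (where $H \subset \gcyl_m$ denotes the index-two subgroup generated by $\thetasf_{2\pi/m}$ and $\Sbar$, i.e.\ rotations and $\sss$-reflections but no $\theta$-reflections), and then reduce to the local estimates on $\Phi_m$ already established in Lemma~\ref{Lxitildecir}. Write $\Ltilde^+_i := H\ptilde_i$, $\Ltilde^-_i := \thetabar_0\Ltilde^+_i$, and similarly $L^\pm_i$ at $\xitildecir_i = 0$; a direct inspection shows $\Ltilde^\pm_i = \thetasf_{\pm\xitildecir_i/(2m)} L^\pm_i$, i.e.\ the two halves shift in opposite angular directions because $\thetabar_0$ reverses the sign of the perturbation. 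For $m$ large enough, the $H$-invariant part of $\ker \Lcal_\Sigma$ will be trivial (by a Fourier argument: $\ker\Lcal_\Sigma$ is finite-dimensional with bounded angular frequency, while $H$-invariance forces angular frequency to be a multiple of $m$, so for $m$ large enough \ref{Aimm}(iv) applies), so an $H$-equivariant adaptation of \ref{Lldexistence} produces unique $H$-invariant LD solutions $\varphi^+$, $\psi^+$ on $\Ltilde^+_i$, $L^+_i$ with unit configurations. Setting $\varphi^- := \thetabar_0^*\varphi^+$ and $\psi^- := \thetabar_0^*\psi^+$, $O(2)$-equivariance together with uniqueness gives $\varphi^\pm = \psi^\pm\circ \thetasf_{\mp\xitildecir_i/(2m)}$, while $\varphi^++\varphi^-$ and $\psi^++\psi^-$ are $\gcyl_m$-invariant with the required singular sets, so by \ref{LsymLD} they must equal $\varphitilde_i/\tau_i$ and $\varphi_i/\tau_i$ respectively.

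Since $\ptilde_i \in \Ltilde^+_i$ and $\ptilde_i \notin \Ltilde^-_i$, the summand $\tau_i\varphi^-$ is smooth near $\ptilde_i$ and therefore
\[
\Mcal_{\ptilde_i}\varphitilde_i = \Mcal_{\ptilde_i}(\tau_i\varphi^+) + \tau_i\Ecalunder_{\ptilde_i}\varphi^-,
\]
and analogously at $p_i$. Because $\varphi^+ = \psi^+\circ\thetasf_{-\xitildecir_i/(2m)}$, $\ptilde_i = \thetasf_{\xitildecir_i/(2m)}p_i$, and rotations preserve the Fermi $(\partial_\sss,\partial_\theta)$-frame, I expect the ``$+$'' pieces to satisfy $\Zcal_{\ptilde_i}\Mcal_{\ptilde_i}(\tau_i\varphi^+) = \Zcal_{p_i}\Mcal_{p_i}(\tau_i\psi^+)$, and therefore to cancel in the difference. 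The parallel calculation for the smooth piece uses $\varphi^- = \psi^-\circ\thetasf_{\xitildecir_i/(2m)}$ to give $\Zcal_{\ptilde_i}\Ecalunder_{\ptilde_i}\varphi^- = \Zcal_{q_i}\Ecalunder_{q_i}\psi^-$ where $q_i := \thetasf_{\xitildecir_i/m}p_i$, so that the whole difference reduces to $\tau_i\bigl(\Zcal_{q_i}\Ecalunder_{q_i}\psi^- - \Zcal_{p_i}\Ecalunder_{p_i}\psi^-\bigr)$.

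Finally I will identify $\psi^-$ with a rotated copy of $\Phi_m$ from Lemma~\ref{Lxitildecir}. Since $L^-_i$ is the $m$-fold rotational orbit of a point at angle $-3\pi/(2m)$ on $\cyl_{\sss_i}$ together with its $\Sbar$-image, $O(2)$-equivariance yields $\psi^- = \Phi_m \circ \thetasf_{3\pi/(2m)}$ with $\sssunder = \sss_i$. Under this identification $p_i$ corresponds to $(e^{i\pi/m}, \sss_i)$---the midpoint between adjacent $\Phi_m$-singularities, modulo $2\pi/m$---while $q_i$ corresponds to $(e^{i(\pi+\xitildecir_i)/m}, \sss_i)$. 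Combining $\partial_\sss = m\partial_{\stilde}$ and $\partial_\theta = m\partial_{\thetatilde}$ with the reflection symmetry of $\Phi_m$ about $\theta = \pi/m$ (which forces $\partial_{\thetatilde}\Phi_m(e^{i\pi/m}, \sss_i) = 0$), parts (i)--(iii) of Lemma~\ref{Lxitildecir} will translate directly into the three components of the claimed estimate, producing the $O(|\xitildecir_i|^2)$, $mO(|\xitildecir_i|)$, and $-(m/4)\xitildecir_i(1+o(1)+O(|\xitildecir_i|))$ contributions respectively. The main obstacle will be the careful bookkeeping of the opposite-sign angular shifts of the two halves $\Ltilde^\pm_i$ and of the factors of $m$ coming from the conversion between the scaled coordinates $(\stilde,\thetatilde)$ and the unscaled ones $(\sss,\theta)$; once this is set up, the estimates are an immediate consequence of Lemma~\ref{Lxitildecir}.
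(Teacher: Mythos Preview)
Your proposal is correct and follows essentially the same approach as the paper: both decompose $\varphitilde_i$ and $\varphi_i$ into two $\grot_m$-invariant pieces (your $H$ is exactly $\grot_m$), observe that the ``$+$'' pieces have identical mismatch at the respective points by rotational equivariance, and reduce the ``$-$'' pieces to the local estimates of Lemma~\ref{Lxitildecir} via the identification with a rotated $\Phi_m$. The paper's proof is more terse but structurally identical; your more explicit bookkeeping of the angular shifts and the $\Phi_m$ identification is a faithful elaboration of the same argument.
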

\begin{proof}
We have $\varphitilde_i = \varphitilde_{i+} + \varphitilde_{i-}$ and $\varphi_i = \varphi_{i+} + \varphi_{i-}$, where $\varphitilde_{i+}, \varphitilde_{i-},  \varphi_{i+},  \varphi_{i-}$ are $\grot_m$-invariant and the singular sets of $\varphitilde_{i+}$ and $\varphi_{i+}$ are respectively $\grot_m \ptilde_i$ and $\grot_m p_i$.  By symmetry, we have
\begin{align*}
\Zcal_{\ptilde_i} \Mcal_{\ptilde_i} \varphitilde_{i+} = \Zcal_{p_i} \Mcal_{p_i} \varphi_{i+}. 
\end{align*}
Next, from Lemma \ref{Lxitildecir}, we have (recall \ref{DVcal})
\begin{align*}
\Zcal_{\ptilde_i} \Ecalunder_{\ptilde_i} \varphitilde_{i-} - \Zcal_{p_i} \Ecalunder_{p_i} \varphi_{i-} 
= \tau_i \big( O(|\xitildecir_i|^2),  m O( |\xitildecir_i|), - \frac{m}{4}\xitildecir_i(1+ o(1)+ O(|\xitildecir_i|))\big).
\end{align*}
The conclusion now follows by combining these equations.
\end{proof}

\begin{lemma}
\label{Lmphicomp}
Given $i, j \in \{1, \dots, k\}$ with $i\neq j$, the following hold.
\begin{enumerate}[label=\emph{(\roman*)}]
\item $\Zcal_{\ptilde_i} \Ecalunder_{\ptilde_i} ( \varphitilde_j - \varphi_j) = \tau_j (O(e^{-\frac{m}{Ck}}) , mO(e^{-\frac{m}{Ck}}) , mO(e^{-\frac{m}{Ck}}) )$.
\item $\Zcal_{\ptilde_i} \Ecalunder_{\ptilde_i} \varphi_j - \Zcal_{p_i} \Ecalunder_{p_i} \varphi_j = \tau_j  (O(e^{-\frac{m}{Ck}}), mO(e^{-\frac{m}{Ck}}), mO(e^{-\frac{m}{Ck}}))$.
\end{enumerate}
\end{lemma}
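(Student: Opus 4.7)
\emph{Proof plan.} When $m_j \in \{m,-m\}$ we have $\Ltilde_j = L_j$ and the two configurations agree on $L_j$, so $\varphitilde_j = \varphi_j$ and (i) is trivial; similarly when $m_i \in \{m,-m\}$ we have $\ptilde_i = p_i$ and (ii) is trivial. It therefore suffices to treat $m_j = -2m$ for (i) and $m_i = -2m$ for (ii).

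The argument rests on two observations: (A) the rotational averages match, $(\varphi_j)_\ave = (\varphitilde_j)_\ave$, and (B) the oscillatory parts decay exponentially away from latitude $\sss_j$. For (A), both averages are rotationally invariant, piecewise-smooth solutions of $\Lchi u = 0$ on $\cyl_I$, smooth off $\Lpar[\sss_j]$ and at the ends of $\cyl_I$ (since the full LD solutions are defined on all of $\Sigma$), and by Lemma \ref{Lvbal}(iii) both have derivative jump $|m_j|\tau_j$ at $\sss_j$; indeed the total singular mass on each component of $\Lpar[\sss_j]$ equals $|m_j|\tau_j$ for both configurations, as a direct check with \ref{dtau1} and \ref{dLtilde} confirms. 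Uniqueness for this boundary value problem forces the two averages to coincide. For (B), each of $(\varphi_j)_\osc$ and $(\varphitilde_j)_\osc$ is $\gcyl_m$-invariant away from $\Lpar[\sss_j]$, so its angular Fourier expansion involves only modes of frequency a positive multiple of $m$. Separation of variables as in Lemma \ref{Lsol} then yields
\[
\big\| (\varphi_j)_\osc : C^{2,\beta}_{\sym[m]}\big(\cyl_I,\,\chitilde,\,e^{-m|\sss-\sss_j|/2}\big)\big\| \le C\tau_j,
\]
and an identical bound for $(\varphitilde_j)_\osc$. Combined with $|\sss_i - \sss_j|\ge 1/(Ck)$ from Lemma \ref{Lrldest}(ii), this controls the oscillatory parts and their $\chitilde$-derivatives at any point of latitude $\sss_i$ by $C\tau_j e^{-m/(Ck)}$, with each $\chi$-derivative costing an additional factor of $m$.

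Part (i) is then immediate: $\varphitilde_j - \varphi_j = (\varphitilde_j)_\osc - (\varphi_j)_\osc$ by (A), and (B) applied to the value and the $\partial_\sss$ and $\partial_\theta$ derivatives yields the three claimed bounds, the derivative components acquiring the factor of $m$. For part (ii), (A) cancels the average contribution because both $\ptilde_i$ and $p_i$ lie at latitude $\sss_i$, and a Taylor expansion in $\theta$ handles the oscillatory remainder: $\ptilde_i$ and $p_i$ differ only angularly, by $|\xitildecir_i|/(2m)\le e^{-m/\const}/(2m)$, so the value difference is at most $(|\xitildecir_i|/(2m))\cdot m\,C\tau_j e^{-m/(Ck)} \le C\tau_j e^{-m/(Ck)}$, while the first-derivative differences require one further $\partial_\theta$ applied to $(\varphi_j)_\osc$ before Taylor expanding, contributing an extra factor of $m$ and yielding the $m\,O(e^{-m/(Ck)})$ bound.

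The only substantive step is the exponential decay estimate (B); thanks to Assumption \ref{Amk} it follows from routine separation of variables on the cylinder, in the spirit of Lemma \ref{Lsol} and Proposition \ref{LPhipest}, so no essentially new technical obstacle arises.
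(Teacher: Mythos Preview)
Your proposal is correct and follows essentially the same approach as the paper: reduce to the oscillatory parts via $(\varphitilde_j)_\ave = (\varphi_j)_\ave$, then invoke the exponential decay of the oscillatory modes (as in Lemma~\ref{Lsol} and the proof of Proposition~\ref{LPhipest}) together with the latitude separation $|\sss_i - \sss_j| \ge 1/(Ck)$ from Lemma~\ref{Lrldest}(ii). Your treatment of (ii) via Taylor expansion in $\theta$ is slightly more elaborate than necessary---since both $\ptilde_i$ and $p_i$ lie at latitude $\sss_i$, one can simply bound each of $\Ecalunder_{\ptilde_i}(\varphi_j)_\osc$ and $\Ecalunder_{p_i}(\varphi_j)_\osc$ separately by the same exponential decay estimate---but your argument is correct and in fact yields a sharper bound.
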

\begin{proof}
Note first that $\varphitilde_j = \varphi_j$ if $|m_j|=m$, so (i) holds trivially in that case.  Now suppose that $m_j = -2m$.  Since $\varphitilde_{j,\ave} = \varphi_{j,\ave}$, we need only establish (i) when $\varphitilde_j - \varphi_j$ is replaced with $\varphitilde_{j,\osc}- \varphi_{j,\osc}$.  The required estimate in (i) now follows from Lemma \ref{Lsol} and arguing as in the proof of \ref{LPhipest}, using \ref{Lrldest}(ii) to see that $|\sss_j - \sss_i|> \frac{|j-i|}{Ck}$.  Item (ii) follows in similar fashion from \ref{Lsol}. 
\end{proof}

 \begin{lemma}[Matching Estimates for $\varphi\llbracket \zetabold\rrbracket$]
 \label{Lmatchingperp}
  Let $\zetabold = \zetabold^\top+\zetabold^\perp \in \Btilde_{\Pcal}$, $\varphitilde = \varphi\llbracket \zetabold\rrbracket$, 
$\varphi = \varphi\llbracket \zetabold^\top \rrbracket$ be as in \ref{dtau2} and \ref{dtau1}.  
For each $i\in \{1, \dots, k\}$ such that $m_i=-2m$, the following holds. 
 \begin{multline*}
\frac{1}{\tau_i} \Zcal_{\ptilde_i} \Mcal_{\ptilde_i} \varphitilde = \frac{1}{\tau_i}\Zcal_{p_i} \Mcal_{p_i} \varphi 
\: + \: \Big( O( |\xitildecir_i|^2 + e^{-\frac{m}{Ck}}) \: , \:  
mO( |\xitildecir_i| + e^{-\frac{m}{Ck}}) \: , \:  
\\ 
- \frac{m}{4}\xitildecir_i(1 + o(1)) + O(|\xitildecir_i|^2+ m e^{-\frac{m}{Ck}})\Big).
 \end{multline*}
 \end{lemma}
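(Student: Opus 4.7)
The strategy is to decompose both $\Mcal_{\ptilde_i}\varphitilde$ and $\Mcal_{p_i}\varphi$ according to the LD-solution splittings $\varphitilde=\sum_{j=1}^k\varphitilde_j$ and $\varphi=\sum_{j=1}^k\varphi_j$, isolate the ``diagonal'' contribution (the summand with $j=i$) — which is where the $\xitildecir_i$-dislocation actually changes the singularity and hence is the only source of nontrivial new terms — and then estimate the ``off-diagonal'' contributions (the $j\ne i$ summands) by the decay estimates of Lemma \ref{Lmphicomp}.

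First I would note that for $j\ne i$, neither $\varphitilde_j$ nor $\varphi_j$ has a singularity at $\ptilde_i$ or $p_i$, so in view of Definition \ref{Dmismatch} and Lemma \ref{Rmismatch} the mismatch reduces to $\Ecalunder$ applied to the (smooth) values of these LD solutions. By linearity of $\Mcal$ in each factor,
\begin{equation*}
\Zcal_{\ptilde_i}\Mcal_{\ptilde_i}\varphitilde-\Zcal_{p_i}\Mcal_{p_i}\varphi
= \bigl(\Zcal_{\ptilde_i}\Mcal_{\ptilde_i}\varphitilde_i-\Zcal_{p_i}\Mcal_{p_i}\varphi_i\bigr)
+ \sum_{j\ne i}\bigl(\Zcal_{\ptilde_i}\Ecalunder_{\ptilde_i}\varphitilde_j-\Zcal_{p_i}\Ecalunder_{p_i}\varphi_j\bigr).
\end{equation*}
The first (diagonal) bracket is exactly the quantity estimated in Corollary \ref{Cphim} and contributes the term
$\tau_i\bigl(O(|\xitildecir_i|^2),\,mO(|\xitildecir_i|),\,-\tfrac{m}{4}\xitildecir_i(1+o(1)+O(|\xitildecir_i|))\bigr)$,
which produces the $\xitildecir_i$-dependent part of the claimed estimate.

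Next I would handle each off-diagonal term by the further splitting
\begin{equation*}
\Zcal_{\ptilde_i}\Ecalunder_{\ptilde_i}\varphitilde_j-\Zcal_{p_i}\Ecalunder_{p_i}\varphi_j
= \Zcal_{\ptilde_i}\Ecalunder_{\ptilde_i}(\varphitilde_j-\varphi_j)
+ \bigl(\Zcal_{\ptilde_i}\Ecalunder_{\ptilde_i}\varphi_j-\Zcal_{p_i}\Ecalunder_{p_i}\varphi_j\bigr),
\end{equation*}
and apply Lemma \ref{Lmphicomp}(i) and (ii) respectively. Each piece is bounded by $\tau_j\bigl(O(e^{-m/Ck}),\,mO(e^{-m/Ck}),\,mO(e^{-m/Ck})\bigr)$. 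Summing over the at most $k-1$ values of $j\ne i$, using Lemma \ref{Ltauratio} to replace $\tau_j/\tau_i$ by an $O(1)$ factor, and using Assumption \ref{Amk} to absorb the combinatorial factor $k$ into a slightly worse exponential constant (that is, $k\,e^{-m/Ck}\le e^{-m/C'k}$ for $m$ large), gives an off-diagonal contribution of $\tau_i\bigl(O(e^{-m/Ck}),\,mO(e^{-m/Ck}),\,mO(e^{-m/Ck})\bigr)$ after relabeling $C$.

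Adding the diagonal and off-diagonal contributions and dividing by $\tau_i$ yields precisely the three components stated in the lemma (note the $mO(e^{-m/Ck})$ from the third component combines with $O(|\xitildecir_i|^2)$ from the diagonal term to give $O(|\xitildecir_i|^2+me^{-m/Ck})$). There is no substantial technical obstacle here — the work has essentially been done in Corollary \ref{Cphim} and Lemma \ref{Lmphicomp} — and the only subtle point is the bookkeeping of the combinatorial factor $k$ in the sum over $j\ne i$, which is harmless because of the flexibility afforded by \ref{Amk}.
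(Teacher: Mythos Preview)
Your proposal is correct and follows essentially the same approach as the paper: the paper's proof decomposes $\Zcal_{\ptilde_i}\Mcal_{\ptilde_i}\varphitilde - \Zcal_{p_i}\Mcal_{p_i}\varphi$ into the same three pieces (I), (II), (III) you identify (diagonal, off-diagonal difference $\varphitilde_j-\varphi_j$, and off-diagonal base-point change for $\varphi_j$), applies Corollary~\ref{Cphim} and Lemma~\ref{Lmphicomp} exactly as you do, and uses Lemma~\ref{Ltauratio} to control $\tau_j/\tau_i$. Your explicit remark about absorbing the factor $k$ via Assumption~\ref{Amk} is a small bookkeeping detail the paper leaves implicit.
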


 \begin{proof}
 We have $ \Zcal_{\ptilde_i} \Mcal_{\ptilde_i}\varphitilde = \Zcal_{p_i} \Mcal_{p_i}\varphi + (I) +(II)+(III)$, where 
 \begin{equation*}
 \begin{gathered}
  (I) := \Zcal_{\ptilde_i} \Mcal_{\ptilde_i} \varphitilde_i - \Zcal_{p_i} \Mcal_{p_i} \varphi_i, \quad
 (II) := \sum_{j\neq i} \Zcal_{\ptilde_i} \Ecalunder_{\ptilde_i} ( \varphitilde_j - \varphi_j), \\
 (III):= \sum_{j\neq i } \Zcal_{\ptilde_i} \Ecalunder_{\ptilde_i} \varphi_j- \Zcal_{p_i} \Ecalunder_{p_i} \varphi_j. 
 \end{gathered}
 \end{equation*}
The conclusion now follows from dividing through by $\tau_i$, combining \ref{Cphim} and \ref{Lmphicomp} to estimate (I), (II), and (III), 
and using from \ref{Ltauratio} that $\tau_j/\tau_i = \tau'_j/\tau'_i = O(1)$.
 \end{proof}

\begin{definition}
\label{dZ}
Let $\zetabold \in \Btilde_{\Pcal}$.  Define a linear map $Z_{\zetabold} : \val_{\sym}[\Ltilde] \rightarrow \Pcal$ by requesting that $Z_\zetabold$ has the direct sum decomposition $Z_\zetabold = Z^\top_\zetabold \oplus (\bigoplus_{i=1}^k Z^{\perp i}_{\zetabold})$, where $Z^\top_{\zetabold} : \valtop[\Ltilde] \rightarrow \Pcal^\top$ and $Z^{\perp i}_{\zetabold} : \valperp[\Ltilde_i] \rightarrow \Pcal^\perp_i$ are defined as follows: given $\lambdabold^\top = ( \tau_p ( \mu_p +|m_i| \mu'_p d\sss))_{p\in \Ltilde}$, define
\begin{align*}
Z^\top_{\zetabold}( \lambdabold^\top) = \bigg( \mu_1, -\frac{2F^\phi_1}{|m_1|} \dd \mubold,  2 \mubold'\bigg),
\end{align*}
where $\mubold = (\mu_i)_{i=1}^k$, $\mubold'= (\mu'_i )_{i=1}^k$ are such that $ \forall p \in \Ltilde_i$, $\mu_p = \mu_i, \mu'_p = \mu'_i$.  
Given $i\in \{1,\dots, k\}$, we define $Z^{\perp i}_{\zetabold}$ to be the trivial map if $|m_i|=m$, and if $m_i = -2m$ we define 
\begin{align*}
Z_{\zetabold}^{\perp i} ( \lambdabold^\perp_i) := - 4\mutilde^\circ_{\ptilde_i}, 
\qquad \text{ where }  \lambdabold^\perp_i = (\tau_i m \mutilde^\circ_{\ptilde} \, d\theta)_{\ptilde \in \Ltilde_i}.
\end{align*}
\end{definition}

\begin{prop}
\label{PZ}
Let $\zetabold = \zetabold^\top+\zetabold^\perp \in \Btilde_{\Pcal}$ and $\varphitilde = \varphi\llbracket\zetabold \rrbracket$ be as in \ref{dtau2}.  There is an absolute constant $C$ (independent of $\cunder$) such that for $m$ large enough (depending on $\cunder$), the map $Z_{\zetabold}$ defined in \ref{dZ} satisfies (recall \ref{dParam})
\begin{align}
\label{EZ}
\zetabold - Z_{\zetabold}( \Mcal_{L\llbracket \zetabold\rrbracket} \varphitilde) \in C \Btilde^1_{\Pcal}.  
\end{align} 
\end{prop}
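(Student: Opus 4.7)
The plan is to exploit the direct sum decomposition $Z_\zetabold = Z^\top_\zetabold \oplus \bigoplus_{i=1}^k Z^{\perp i}_\zetabold$ from \ref{dZ} and the corresponding splitting of the mismatch $\Mcal_{L\llbracket\zetabold\rrbracket}\varphitilde = \lambdabold^\top + \sum_{i=1}^k \lambdabold^\perp_i$ with $\lambdabold^\top \in \valtop[\Ltilde]$ and $\lambdabold^\perp_i \in \valperp[\Ltilde_i]$, and then to verify \eqref{EZ} coordinate by coordinate against the structure of $\Btilde^1_\Pcal$. All of the nontrivial estimates have been packaged into \ref{Lmatching}, \ref{LmatchingE}, \ref{Lmphicomp}, and \ref{Lmatchingperp}, so the argument is essentially an accounting exercise combining these with the choice of $\const$ in \ref{dParam}.

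First I would treat $\zetabold^\top - Z^\top_\zetabold\lambdabold^\top$. For each $i\in\{1,\dots,k\}$ let $\widetilde{\mu}_i,\widetilde{\mu}'_i$ denote the coefficients so that the $\valtop[\Ltilde_i]$-part of $\Mcal_{\ptilde_i}\varphitilde/\tau_i$ equals $\widetilde{\mu}_i + |m_i|\widetilde{\mu}'_i\, d\sss$, and let $\mu_i,\mu'_i$ be the analogous quantities from \ref{Lmatching} attached to the unperturbed $\varphi = \varphi\llbracket\zetabold^\top\rrbracket$ with singular set $L = L\llbracket\zetabold^\top\rrbracket$. When $|m_i|=m$ we have $\ptilde_i=p_i$ and $\varphitilde_i=\varphi_i$, so the only difference $\frac{1}{\tau_i}\Zcal_{\ptilde_i}\Mcal_{\ptilde_i}\varphitilde - \frac{1}{\tau_i}\Zcal_{p_i}\Mcal_{p_i}\varphi$ comes from $\sum_{j\ne i}\Ecalunder_{p_i}(\varphitilde_j-\varphi_j)$ and is controlled by \ref{Lmphicomp}(i). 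When $m_i=-2m$ it is controlled instead by \ref{Lmatchingperp}. In either case $\widetilde{\mu}_i - \mu_i = O(|\xitildecir_i|^2 + e^{-m/Ck})$ and $\widetilde{\mu}'_i - \mu'_i = O(|\xitildecir_i| + e^{-m/Ck})$. Combining with \ref{LmatchingE}(i)--(iii), using $F^\phi_1 \Sim_C 1/k$ from \ref{Lrldest}(i), and using $|\xitildecir_i| \le \cunder e^{-m/\const}$, these corrections are absorbed into bounds of size $C$, $C/m$, $C/m$ in the three blocks of coordinates of $\Pcal^\top$; hence $\zetabold^\top - Z^\top_\zetabold \lambdabold^\top \in \frac{C}{2} \Btilde^1_{\Pcal^\top}$ for $m$ large enough in terms of $\cunder$ and $k$.

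Next I would deal with each $\xitildecir_i - Z^{\perp i}_\zetabold\lambdabold^\perp_i$. The case $|m_i|=m$ is vacuous since $\Pcal^\perp_i=\{0\}$, so fix $i$ with $m_i=-2m$ and let $\widetilde{\mu}^\circ_{\ptilde_i}$ denote the coefficient appearing in $\lambdabold^\perp_i = (\tau_i m\widetilde{\mu}^\circ_{\ptilde}\, d\theta)_{\ptilde\in\Ltilde_i}$. The third coordinate of \ref{Lmatchingperp}, together with the bound $m\muhat^\circ_{p_i} = O(m e^{-m/Ck})$ from \ref{LmatchingE}(iv) applied to the corresponding coordinate of $\Zcal_{p_i}\Mcal_{p_i}\varphi/\tau_i$, yields
\begin{equation*}
m\widetilde{\mu}^\circ_{\ptilde_i} \;=\; -\tfrac{m}{4}\xitildecir_i(1+o(1)) + O\!\left(|\xitildecir_i|^2 + m e^{-m/Ck}\right).
\end{equation*}
Dividing by $m$ and applying $Z^{\perp i}_\zetabold\lambdabold^\perp_i = -4\widetilde{\mu}^\circ_{\ptilde_i}$ gives $Z^{\perp i}_\zetabold\lambdabold^\perp_i = \xitildecir_i(1+o(1)) + O(|\xitildecir_i|^2/m + e^{-m/Ck})$, whence $\xitildecir_i - Z^{\perp i}_\zetabold\lambdabold^\perp_i = o(\xitildecir_i) + O(e^{-m/Ck})$. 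Choosing $\const$ (as permitted by \ref{dParam}) large enough in terms of $k$ so that $e^{-m/Ck} \le \tfrac{1}{2}e^{-m/\const}$, and then taking $m$ large in terms of $\cunder$ and $\const$, places this error in $\frac{C}{2}\Btilde^1_{\Pcal^\perp_i}$.

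The only real subtlety in the proof is the interaction between the constant $\const$ in \ref{dParam} and the implicit constants $C = C(k)$ appearing in \ref{Lmphicomp} and \ref{Lmatchingperp}: since the perpendicular parameters live on a scale $e^{-m/\const}$ which must dominate the exponentially small cross-term errors $e^{-m/Ck}$, one needs $\const \ge C(k)$, which is exactly what the freedom in the definition of $\Btilde^1_{\Pcal^\perp}$ provides. Once this bookkeeping is arranged, the proof is a direct verification that $Z_\zetabold$ is engineered to invert, to leading order, the Jacobian of $\zetabold \mapsto \Mcal_{L\llbracket\zetabold\rrbracket}\varphitilde$, with the small-scale corrections being absorbed into the prescribed bounds on $\Btilde^1_\Pcal$.
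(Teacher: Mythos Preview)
Your proposal is correct and follows essentially the same approach as the paper: both reduce \eqref{EZ} to coordinate-wise inequalities and verify them by combining \ref{LmatchingE} with \ref{Lmatchingperp}, and both note that the choice of $\const$ in \ref{dParam} must be made large enough in terms of $k$ to absorb the $e^{-m/Ck}$ cross-terms. Your write-up is somewhat more explicit than the paper's (you separate out the comparison $\widetilde\mu_i-\mu_i$ before invoking \ref{LmatchingE}, whereas the paper just states the equivalent inequalities \eqref{EZ1} and cites the two lemmas), but the logic and the key ingredients are identical.
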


\begin{proof}
Define $\muboldtilde = (\mutilde_i)_{i=1}^k$, $\muboldtilde'= (\mutilde'_i)_{i=1}^k$, and $\muboldtilde^\circ = (\mutilde_i^\circ)_{i=1}^k$ by requesting that for $i = 1, \dots, k$, 
\begin{align*}
\frac{1}{\tau_i} \Zcal_{\ptilde_i} \Mcal_{\ptilde_i} \varphitilde = ( \mutilde_i, |m_i|\mutilde'_i, m \mutilde_i^\circ).
\end{align*}
 Note that $\mutilde^\circ_i = 0$ when $|m_i|=1$ by symmetry.  By the definitions, \eqref{EZ} is equivalent to the following inequalities, where the final one holds only for those $i$ where $m_i = -2m$: 
 \begin{multline}
 \label{EZ1}
 | \zeta_1 - \mutilde_1| < C, \qquad
 \left|  \bsigma + \frac{2F^\phi_1}{|m_1|} \dd \muboldtilde \right|_{\ell^\infty}\le {C}/{m}, \quad
\\ 
 |\xibold - 2\muboldtilde'| \leq {C}/{m}, \qquad
 |\xitildecir_i + 4\mutilde^\circ_i|/ \leq C e^{-{m}/{\const}}.
 \end{multline}
The conclusion now follows from combining the estimates in \ref{LmatchingE} and \ref{Lmatchingperp} and taking $\const$  large enough in terms of $k$ and the constant $C$ in \ref{Lmatchingperp}. 
\end{proof}

 \subsection*{Main results of Part II} 
 \nopagebreak

\begin{lemma}[Estimates on the LD solutions]
\label{LLD}
Let $\varphi \llbracket \zetabold \rrbracket$ be as in Definition \ref{dtau2}.  Then 
\begin{enumerate}[label=\emph{(\roman*)}]
\item $m k \Sim_{C(\cunder)} | \log \tau_i|$, and $C(\cunder)>1$ depends only on $\cunder$.
\item $\tau_1\llbracket\zetabold; \kcir, \mbold\rrbracket \Sim_{C(\cunder)} \tau_1\llbracket\zerobold; \kcir, \mbold\rrbracket$, and $C(\cunder)>1$ depends only on $\cunder$.
\item On $\cyl \setminus \bigsqcup_{p\in L} D^{\Sigma, \chi}_p(\tau^{2\alpha}_p)$ we have $\varphi> \tau_1 cm k$ for some $c> 0$. 
\item For $i\in \{1,\dots, k\}$, $\| \frac{1}{\tau_i} \varphi: C^{2, \beta}( D^\chi_{p_i}(2 \delta_i)\setminus D^\chi_{p_i}(\delta_i),  \chitilde) \| \le Cm k$.
\item $\| \varphi : C^{3, \beta}( \Sigma \setminus \bigsqcup_{p\in L} D^{\Sigma, \chi}_p (\tau^{2\alpha}_p), \chitilde )\| \le C\tau_1(m k+ (\tau^{2\alpha}_{\min})^{-3-\beta}|\log \tau^{2\alpha}_{\min}|)$. 
\end{enumerate}
\end{lemma}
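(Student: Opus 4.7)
The core formula is $\varphi = \tau_1 \Phi\llceil\bsigmaunder:\kcir,\mbold\rrfloor$ from \ref{dtau1}, combined with the decomposition $\Phi = \Pp + \Phat + \Phip$ of \ref{ddecomp}. By \ref{Lphiavg}(ii) we have $\phi := \Phi_\ave = \frac{|m_1|}{\phat(\sss_1)\,2F^\phat_1}\,\phat$, so $F^\phi_1 = F^\phat_1$ where $\phat = \phat[\bsigmaunderslash+\bsigmaunder:\kcir]$, and $\phi(\sss_1) = |m_1|/(2F^\phi_1)$. Under \ref{Ambold} we have $|\bsigmaunderslash|_{\ell^1}\le k\log 2$, and by \ref{dParam} $|\bsigmaunder|_{\ell^1\oplus\ell^\infty}\le \cunder k/m$ is small, so \ref{Lrldest}(i) yields $F^\phi_1 \Sim_{C(\cunder,k)} 1/k$ and hence $\phi(\sss_1) \Sim_C mk/2$, while \ref{Lrldest}(iii) gives $\phat \Sim_{1+C\log k/k} 1$ and so $\phi \Sim_C mk$ on $\cyl_{[-\sss_k,\sss_k]}$.

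To get (i) and (ii), I will simply unpack $\tau_1$. The formula $\tau_1=\frac{2}{9|m_1|}e^{\zeta_1}e^{-\phi(\sss_1)}$ yields $|\log\tau_1| = \phi(\sss_1) - \zeta_1 + \log(9|m_1|/2)$; the last two terms are bounded by $\cunder + \log(9m)$, which is $o(mk)$ once $m$ is large, so $|\log\tau_1| \Sim_{C(\cunder)} mk$. Combined with $\tau'_i\Sim_{1+C\log k/k} 1$ from \ref{Ltauratio} (and $\tau_i=\tau_1\tau'_i$) this gives (i). For (ii), \ref{PODEest} applied to $\phat[\bsigmaunderslash+\bsigmaunder:\kcir]$ versus $\phat[\bsigmaunderslash:\kcir]$ yields $|F^{\phi}_1 - F^{\phi'}_1|\le (C/k)(|\bsigma|_{\ell^1}+|\xibold|_{\ell^\infty})\le C\cunder/m$, where $\phi'$ denotes the average at $\zetabold=\zerobold$. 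Hence the difference in exponents $|\phi(\sss_1) - \phi'(\sss_1)| = (|m_1|/2)\,|1/F^{\phi}_1 - 1/F^{\phi'}_1|$ is bounded by $(m/2)(C\cunder/m)/(F^\phi_1 F^{\phi'}_1) \le C(\cunder,k)$; combined with $|\zeta_1|\le\cunder$ this bounds the $\tau_1$-ratio as required.

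For (iii), I exploit the piecewise structure of $\Phi = \Pp+\Phat+\Phip$. On $\cyl_I\setminus\Omega[\sbold;\mbold]$ we have $\Pp=0$ and $\Phat=\phi$, so $\Phi = \phi+\Phip \Sim mk$ by the above and \ref{LPhip}(i). On $\Omega[\sss_i;m_i]\setminus D^\chi_{L_i}(3\delta_i)$ we still have $\Pp=0$ while $\Phat$ is a convex combination of $\phi$ and $\phiunder_i$, each of which is $\Sim mk$ near the singular circle via \ref{Lode}(i) and \ref{Lphiavg}(iii). The crucial region is the annulus $D^\chi_p(3\delta_p)\setminus D^\chi_p(\tau_p^{2\alpha})$ for $p\in L_i$: here \ref{Lgreen} and the definition of $\Pp_i$ give $\Pp = \tau'_i(\log \dbold^\chi_p + O(1))$, so using $\dbold^\chi_p\ge\tau_p^{2\alpha}$ and $|\log\tau_p|\Sim mk$ from (i) I obtain $\Pp \ge -2\alpha\tau'_i\, mk\,(1+o(1))$. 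Adding the $\Phat+\Phip \Sim mk$ contribution and using $\tau'_i\Sim 1$, I conclude $\Phi \ge (\tfrac12 - 2\alpha)\,mk\,(1+o(1))$, which is positive and $\Sim mk$ provided $\alpha$ is small in absolute terms (per \ref{con:alpha}). Multiplying by $\tau_1$ yields (iii).

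Finally, (iv) and (v) follow from the same decomposition by estimating each piece in the appropriate norm. On the annulus in (iv), the rescaled metric $\chitilde[m_i]$ makes the annulus of unit $\chitilde[m_i]$-size, so \ref{Lgreen} gives $\|G^\chi_{p_i}:C^{2,\beta}(\,\cdot\,,\chitilde[m_i])\|\le C$ and the remaining pieces $\Phat,\Phip$ contribute $O(mk)$ by \ref{LavePhi}, \ref{LPhip}, and $\phi\Sim mk$; dividing by $\tau_i=\tau_1\tau'_i$ and using $\tau'_i\Sim 1$ gives (iv). For (v), away from the singular balls $D^\chi_p(\tau_p^{2\alpha})$ the leading term of $\Phi$ is either $\phi$ (with $C^{3,\beta}$ norm bounded by $Cmk$ from the estimates on $\Phat$ and $\Phip$) or $\tau'_i G^\chi_p$ near $p\in L_i$; since $\partial^j \log r$ scales as $r^{-j}$, on the complement of $D^\chi_p(\tau_p^{2\alpha})$ the $C^{3,\beta}$ norm of the Green's function term is bounded by $C(\tau_{\min}^{2\alpha})^{-3-\beta}|\log\tau_{\min}^{2\alpha}|$, with the logarithmic factor accounting for the $C^0$ part; multiplication by $\tau_1$ then yields (v). The main obstacle throughout is managing the interplay between the logarithmic singularities, whose coefficients $\tau_p\sim e^{-mk}$ are exponentially small while $|\log\tau_p|\sim mk$ is large, and the RLD contribution $\phi\sim mk$; the near-cancellation between these two in (iii) requires $\alpha$ to be absolutely small enough that $\tfrac12-2\alpha$ stays positive.
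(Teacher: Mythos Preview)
Your approach is essentially identical to the paper's, relying on the same decomposition $\Phi = \Pp + \Phat + \Phip$ from \ref{ddecomp} and the same supporting lemmas (\ref{Lrldest}, \ref{PODEest}, \ref{Ltauratio}, \ref{LPhip}, \ref{Lgreen}, \ref{LEest}). One small gap worth flagging: the lemma concerns $\varphi\llbracket\zetabold\rrbracket$ from \ref{dtau2}, which incorporates the $\zetabold^\perp$-perturbation of the singular set, whereas your opening identification $\varphi = \tau_1\Phi$ is only valid for $\varphi\llbracket\zetabold^\top\rrbracket$ from \ref{dtau1}; the paper handles this by observing that since $|\zetabold^\perp|$ is exponentially small in $m$ (recall \ref{dParam}), it suffices for (iii)--(v) to prove the estimates for $\varphi\llbracket\zetabold^\top\rrbracket$. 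Also, in (iii) the specific constants in ``$\Phi \ge (\tfrac12 - 2\alpha)mk$'' are not quite justified by the $\Sim_C$-estimates you cite (which hide multiplicative constants); the paper writes this more carefully as $\Phat > cmk$ and $|\Pp| < C\alpha mk$ for unspecified positive constants, which still yields positivity once $\alpha$ is small enough in absolute terms.
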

\begin{proof}
 (i) follows from the definitions of $\tau_i$ in  \eqref{Etau1} and \ref{dLtilde}, using \ref{Lrldest} and \ref{Ltauratio}. For (ii), we denote for convenience in this proof $\phi = \phi[\bsigmaunderslash+ \bsigmaunder:\kcir, \mbold]$ and $\phi'=\phi[\bsigmaunderslash:\kcir, \mbold]$.  We have
 \begin{align*}
\left|  \log \frac{\tau_1[\zetabold; \kcir, \mbold]}{\tau_1[\zerobold;\kcir, \mbold]}\right|  = \left| \zeta_1 +\frac{|m_1|}{2} \frac{ F^{\phi'}_1 - F^{\phi}_1}{F^{\phi'}_1 F^\phi_1} \right| \leq Ck \cunder,
 \end{align*}
where the equality uses \eqref{Etau1} and the estimate uses
 \ref{PODEest}, \ref{Lrldest}(i), and \ref{dParam}.  This establishes (ii). 
 
For items (iii)-(v), note that it suffices to prove each estimate when $\varphi\llbracket \zetabold\rrbracket$ is replaced with $\varphi\llbracket \zetabold^\top\rrbracket$ as defined in \ref{dtau1}, since the former is a small perturbation of the latter (recall \ref{dLtilde} and \ref{dtau2}). 

Estimating $\Phip$ using Lemma \ref{LPhipest} and using \ref{Lgreen}, \ref{Ltauratio}, and \eqref{Etau1} to bound $\Ghat$, we have
\begin{align*}
|\Ghat | < \alpha C m  k \quad \text{and} \quad
|\Phip| < C \quad \text{on} \quad 
\cyl \setminus \bigsqcup_{p\in L} D^{\Sigma, \chi}_p( \tau^{2\alpha}_p). 
\end{align*} 
On the other hand, it is easy to see from Definition \ref{ddecomp}, \ref{Lrldest}(iii), and \ref{Lphiavg}(ii) that there is an absolute constant $c> 0$ such that $\Phat > cm k$, so (iii) follows from the decomposition (recall \ref{ddecomp}) $\varphi\llbracket \zetabold^\top\rrbracket = \tau_1\Phi = \tau_1(\Phat + \Phip + \Ghat)$ by taking $m$ large enough and $\alpha$ small enough.

We next prove (iv).  By \ref{Lphiavg}(iii), \ref{ddecomp}, and \ref{dtau1}, on the domain under consideration we have 
\begin{align*}
\frac{1}{\tau_i}\varphi = \phiunder \bigg[{\frac{|m_1|}{2F^\phi_1}}(e^{-\sum_{l=1}^{i-1} \sigma_l}), {\frac{|m_i|}{ 2}}\xi_i; \sss_i\bigg] + \frac{1}{\tau'_i} \Phip + \Ghat_i.
\end{align*}
The estimate in (iv) now follows from this decomposition using \ref{Lrldest} and \ref{Lode} to estimate the $\phiunder$ term, \ref{LPhip}(i) and \ref{Ltauratio} to estimate $\frac{1}{\tau'_i} \Phip$, and \ref{LEest}(i) to estimate $\Ghat_i$.

For (v), by Lemmas \ref{Lgreen} and  \ref{LEest}, we have 
\begin{align}
\label{Eghatcirc}
\left\| \Pp : C^{3, \beta}_\sym\left( \cyl\setminus D^\chi_{L}(\tau^{2\alpha}_{\min}), \chi\right)\right\| \leq C (\tau^{2\alpha}_{\min})^{-3-\beta}| \log \tau^{2\alpha}_{\min} | .
\end{align}
Combined with the preceding estimates, this completes the proof of (v). 
\end{proof}

\begin{lemma}
\label{Ldiffeoshr}
There exists a family of diffeomorphisms $\Fcal^\Sigma_{\zetabold}: \Sigma \rightarrow \Sigma, \zetabold \in \Btilde_{\Pcal}$ satisfying \ref{Adiffeo}\ref{Aa}-\ref{Ab}.
\end{lemma}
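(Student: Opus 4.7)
The plan is to build $\Fcal^\Sigma_\zetabold$ as the pushforward, via the conformal diffeomorphism $X_\Sigma:\cyl_I\to X_\Sigma(\cyl_I)\subset\Sigma$ from Lemma \ref{LAconf}, of an explicit cylinder diffeomorphism, followed by a local correction near each singular point needed to enforce the geodesic--disk condition \ref{Adiffeo}\ref{Ab}. Working on $\cyl_I$ allows us to describe the parameter dependence concretely in terms of the standard coordinates $(\vartheta,\sss)$, and the construction will be supported in a bounded horizontal band, so it extends by the identity across the poles (sphere case) or across the second equatorial circle (torus case), where no singular points lie.

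First I would construct $\widetilde{\Fcal}_\zetabold=\widetilde{\Fcal}^{(2)}_\zetabold\circ\widetilde{\Fcal}^{(1)}_\zetabold$ on $\cyl_I$. Writing $\sbold^{(0)}:=\sbold[\bsigmaunderslash:\kcir]$ and $\sbold:=\sbold[\bsigmaunderslash+\bsigmaunder:\kcir]$, the first piece $\widetilde{\Fcal}^{(1)}_\zetabold(\vartheta,\sss):=(\vartheta,s_\zetabold(\sss))$ is chosen so that $s_\zetabold:\R\to\R$ is an odd, strictly increasing smooth function with $s_\zetabold(\pm\sss^{(0)}_i)=\pm\sss_i$ for $i=1,\dots,k$ and equal to the identity outside a fixed neighborhood of $\{\pm\sss^{(0)}_i\}$; such an $s_\zetabold$ exists because $|\sss^{(0)}_{i+1}-\sss^{(0)}_i|>1/(Ck)$ by Lemma \ref{Lrldest}, its $C^4$ norm is controlled in terms of $k$ and $\cunder$ using Lemma \ref{Lsderiv} and Proposition \ref{PODEest} (which give $|\sss_i-\sss^{(0)}_i|\le C\cunder/m$ and smooth dependence on $\bsigmaunder$), and continuous dependence on $\zetabold$ is immediate. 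After this step the $\groupcyl$--symmetric configuration $X_\Sigma^{-1}(L\llbracket\zerobold\rrbracket)$ is sent bijectively to $X_\Sigma^{-1}(L[\sbold;\mbold])$. The second piece $\widetilde{\Fcal}^{(2)}_\zetabold$ implements the angular shifts encoded by $\zetabold^\perp$: by Assumption \ref{Ambold} the only non-trivial $\zetabold^\perp$-parameter is, for each $i$ with $m_i=-2m$, the angular perturbation $\xitildecir_i/m$ of \ref{dLtilde}. For such $i$ I would use a $\gcyl_m$-equivariant rotation $(\vartheta,\sss)\mapsto(\vartheta+\tfrac{\xitildecir_i}{m}\Psi_i(\vartheta,\sss),\sss)$ supported in a thin horizontal strip about $\cyl_{\sss_i}$, where $\Psi_i$ is a smooth bump equal to $1$ on a $\gcyl_m$--orbit of a neighborhood of the point being moved and to $0$ on the orbits that must stay fixed; because $|\xitildecir_i|\le\cunder/m$ this factor has $C^4$ norm bounded by $C(k)\cunder$.

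Pushing $\widetilde{\Fcal}_\zetabold$ forward by $X_\Sigma$ and extending by the identity outside $X_\Sigma(\cyl_I)$ produces a diffeomorphism $\Fcal^{\Sigma,0}_\zetabold:\Sigma\to\Sigma$ satisfying $\Fcal^{\Sigma,0}_\zetabold(L\llbracket\zerobold\rrbracket)=L\llbracket\zetabold\rrbracket$. To enforce \ref{Adiffeo}\ref{Ab}, I would define the local correction $\Fcal^{(3)}_\zetabold$ as the identity outside $\bigsqcup_{p\in L\llbracket\zerobold\rrbracket}D^\Sigma_p(5\delta_p)$ and, on each $D^\Sigma_p(4\delta_p)$ with $q:=\Fcal^{\Sigma,0}_\zetabold(p)$, as the map $\exp^\Sigma_q\circ\lambda_{p,q}\circ(\exp^\Sigma_p)^{-1}$, where $\lambda_{p,q}:T_p\Sigma\to T_q\Sigma$ is a linear isometry composed with the scalar dilation by the factor $\delta_q/\delta_p=e^{-2(\conf(\sss(q))-\conf(\sss(p)))}$; I would then interpolate smoothly to the identity on $D^\Sigma_p(5\delta_p)\setminus D^\Sigma_p(4\delta_p)$ using $\Psibold[4\delta_p,5\delta_p;\dbold^\Sigma_p]$. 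Since $|\conf(\sss(q))-\conf(\sss(p))|$ is bounded by $C(k)\cunder/m$, the scaling factor is $1+O(1/m)$, so $\Fcal^{(3)}_\zetabold$ is a small, uniformly $C^4$--bounded perturbation of the identity. Setting $\Fcal^\Sigma_\zetabold:=\Fcal^{(3)}_\zetabold\circ\Fcal^{\Sigma,0}_\zetabold$ gives \ref{Adiffeo}\ref{Ab} directly from the construction, continuity in $\zetabold$ from continuity of each factor, and \ref{Adiffeo}\ref{Aa} by collecting the $C^4$ bounds above together with uniform bounds on $\conf$ on the relevant compact subset of $\cyl_I$.

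The main obstacle is the coordinated choice of the three supports so that the composition does not destroy the bijection on $L\llbracket\zerobold\rrbracket\to L\llbracket\zetabold\rrbracket$ and so that the final $C^4$ bound is independent of $m$. This is handled by choosing the horizontal strip widths in $\widetilde{\Fcal}^{(1)}_\zetabold$ and $\widetilde{\Fcal}^{(2)}_\zetabold$ of order $1/(Ck)$---much larger than the $O(1/m)$ diameter of the $3\delta_p$--geodesic disks but smaller than the inter-circle spacing of Lemma \ref{Lrldest}---so that on each $D^\Sigma_p(5\delta_p)$ the composition $\Fcal^{\Sigma,0}_\zetabold$ is already a genuine rigid isometry of $T_p\Sigma$ up to $O(1/m)$ corrections in $C^4$, making the matching with $\Fcal^{(3)}_\zetabold$ clean and explicit.
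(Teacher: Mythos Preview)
The paper's proof is much shorter: it takes $\Fcal^\Sigma_\zetabold(X_\Sigma(p,\sss))=X_\Sigma(p,f_\zetabold(\sss))$ for a single $O(2)\times\Z_2$-covariant $\sss$-reparametrization with $f_\zetabold(\sss)=\sss+(\sss'_i-\sss_i)$ on $(\sss_i-5\delta,\sss_i+5\delta)$, and then simply asserts that \ref{Adiffeo}\ref{Aa}--\ref{Ab} hold ``by choosing $f_\zetabold$ carefully''. Your extra angular-shift step $\widetilde{\Fcal}^{(2)}_\zetabold$ is in fact a needed addition---the paper's rotationally invariant map cannot carry $L\llbracket\zerobold\rrbracket$ to $L\llbracket\zetabold\rrbracket$ once the $\xitildecir_i$-parameters of \ref{dLtilde}(b) are nonzero---so in this respect your treatment is more complete than the paper's.

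There is, however, a genuine error in your composition $\Fcal^\Sigma_\zetabold:=\Fcal^{(3)}_\zetabold\circ\Fcal^{\Sigma,0}_\zetabold$. As you define it, $\Fcal^{(3)}_\zetabold$ is supported on $\bigsqcup_{p\in L\llbracket\zerobold\rrbracket}D^\Sigma_p(5\delta_p)$ and on $D^\Sigma_p(4\delta_p)$ equals $\exp^\Sigma_q\circ\lambda_{p,q}\circ(\exp^\Sigma_p)^{-1}$, so $\Fcal^{(3)}_\zetabold$ \emph{itself} already sends $p$ to $q$. The composition therefore sends $p\mapsto q\mapsto\Fcal^{(3)}_\zetabold(q)$: if $q\in D^\Sigma_p(4\delta_p)$ this is $\exp^\Sigma_q(\lambda_{p,q}((\exp^\Sigma_p)^{-1}(q)))\ne q$, and if $q\notin D^\Sigma_p(5\delta_p)$ then $\Fcal^{(3)}_\zetabold$ is the identity near $q$ and condition \ref{Adiffeo}\ref{Ab} reduces to whether $\Fcal^{\Sigma,0}_\zetabold$ alone maps $g$-disks to $g$-disks---which it need not, since your $s_\zetabold$ is only required to hit the correct value at $\sss^{(0)}_i$, not to be an isometry nearby. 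The clean fix is to make $\widetilde{\Fcal}^{(1)}_\zetabold$ and $\widetilde{\Fcal}^{(2)}_\zetabold$ equal to genuine $\chi$-isometries (translation, then rotation) on a $5\delta$-neighborhood of each singular point---exactly as the paper does with its local translation---so that $\Fcal^{\Sigma,0}_\zetabold$ sends $\chi$-disks to $\chi$-disks on the nose; the local correction (if one is needed at all) should then be supported near $q\in L\llbracket\zetabold\rrbracket$, not near $p$, and should adjust the small discrepancy between $\chi$-disks and $g$-disks of the prescribed radii.
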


\begin{proof}
The proof is essentially the same as the first part of the proof \cite[Lemma 6.7]{kapmcg}, but we give the details for completeness.  Let $\zetabold \in \domzb$.  For ease of notation, denote the positive $\sss$-coordinates of the circles $\Lpar\llbracket\zerobold\rrbracket$ by $\sbold$ and likewise the coordinates of the circles in $\Lpar\llbracket\zetabold\rrbracket$ by $\sbold'$.  We define $\Fcal^\Sigma_{\zetabold}: \Sigma \rightarrow \Sigma$ to be an $O(2)\times \Z_2$ covariant diffeomorphism satisfying $\Fcal^\Sigma_{\zetabold} (X_\Sigma (p, \sss) ) = X_\Sigma( p, f_\zetabold (\sss))$, where $f_\zetabold \in C^\infty(\R)$ is a diffeomorphism satisfying $f_\zetabold(\sss) = \sss'_i - \sss_i + \sss$ on $(\sss_i - 5\delta, \sss_i + 5\delta)$ for each $i=1, \dots, k$.  By choosing $f_\zetabold$ carefully, we can ensure \ref{Adiffeo}\ref{Aa} and \ref{Ab} hold. 
\end{proof}

\begin{theorem}[Theorem \ref{TB}] 
\label{Trldldgen}
Given a background as in \ref{background} satisfying \ref{Aimm},  
$\kcir \geq \kcirmin$ (recall \ref{Dkmin}), 
and ${\mbold}\in \{m,-m,-2m\}^k$ where $k=\lceil  \kcir/2\rceil $, 
there are positive constants $\cunder, \widehat{m}$ depending only on $\kcir$ such that if 
$m>\widehat{m}$ (implying \ref{Ambold}), 
then \ref{Azetabold} holds with 
$\zetabold \in \Btilde_{\Pcal} := \cunder \Btilde^1_{\Pcal}$ as in \ref{dParam},
$\Fcal^\Sigma_\zetabold$ as in \ref{Ldiffeoshr}, 
$L\llbracket \zetabold \rrbracket$ and $\taubold\llbracket \zetabold\rrbracket$ as in \ref{dLtilde}, 
$\varphi\llbracket \zetabold \rrbracket$ as in \ref{dtau2}, 
$\delta_p\llbracket \zetabold \rrbracket$ as in \ref{ddeltai}, 
$\skernelv_\sym\llbracket \zetabold\rrbracket$ as in \ref{dkernelsym}, 
and $Z_\zetabold$ as in \ref{dZ}. 
\end{theorem}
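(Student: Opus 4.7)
The plan is to verify each component of Assumption \ref{Azetabold} by invoking the constructions and estimates established earlier in Part II. All of the data items \ref{Azetabold}(i)--(vii) have already been produced: the diffeomorphisms $\Fcal^\Sigma_\zetabold$ come from \ref{Ldiffeoshr}, the singular sets $L\llbracket \zetabold\rrbracket$ and configurations $\taubold\llbracket \zetabold\rrbracket$ from \ref{dLtilde}, the LD solutions $\varphi\llbracket \zetabold\rrbracket$ from \ref{dtau2}, the constants $\delta_p\llbracket \zetabold\rrbracket$ from \ref{ddeltai}, the obstruction spaces $\skernelv_{\sym}\llbracket \zetabold\rrbracket$ from \ref{dkernelsym}, and the isomorphisms $Z_\zetabold$ from \ref{dZ}. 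The task therefore reduces to checking the five uniformity conditions \ref{Azetabold}(a)--(e), after fixing $\cunder$ and $\widehat{m}$ appropriately in terms of $\kcir$.

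Conditions (a) and (b) are built into the construction of the diffeomorphisms in \ref{Ldiffeoshr}. To verify (c) --- that \ref{con:one} holds --- I will invoke Lemma \ref{LLD}: part (i) combined with \ref{Ltauratio} yields the comparability of the $\tau_p$'s needed in \ref{con:one}(iii), and the smallness of $\tau_{\max}$ in \ref{con:one}(i) after taking $m$ large. The conditions in \ref{con:L} follow from \ref{ddeltai}, \ref{Lgreenlog}, and \ref{AV1}, while the inequality in \ref{con:one}(ii) reduces to \ref{LLD}(i) for $\gammagl$ close to $1$. The boundary and global norm bounds \ref{con:one}(iv)--(v) on $\varphi$ are immediate from \ref{LLD}(iv)--(v), and the positivity in \ref{con:one}(vi) is \ref{LLD}(iii). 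Condition (d) --- $\tau_q^2 \leq \tau_p \leq \tau_q^{1/2}$ --- follows from \ref{Ltauratio} together with \ref{LLD}(ii), since here the $\tau_p$'s associated to $\zerobold$ and $\zetabold$ differ only by a multiplicative factor bounded in terms of $\cunder$ and $\kcir$. The fact that the spaces $\skernelv_{\sym}\llbracket \zetabold \rrbracket$ satisfy \ref{aK}(i)--(v) is exactly the content of \ref{LVker}.

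The substantive step is condition (e), the prescribed unbalancing $\zetabold - Z_\zetabold(\Mcal_L \varphi\llbracket \zetabold\rrbracket) \in \frac{1}{2}\domzb$. This is the content of Proposition \ref{PZ}, which provides an absolute constant $C$ --- \emph{independent of} $\cunder$ --- such that $\zetabold - Z_\zetabold(\Mcal_L \varphi\llbracket\zetabold\rrbracket) \in C \, \Btilde^1_{\Pcal}$ once $m$ is large enough in terms of $\cunder$. I will then fix $\cunder := 2C$, so that $C \, \Btilde^1_{\Pcal} \subset \tfrac{1}{2}\cunder\, \Btilde^1_{\Pcal} = \tfrac{1}{2}\domzb$, which gives (e). Finally, choosing $\widehat{m}$ large enough in terms of $\cunder$ and $\kcir$ ensures that all the preceding estimates are simultaneously valid.

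The main obstacle, already handled in Section \ref{S:LDfamilies}, is coordinating the two different error scales appearing in the proof of \ref{PZ}: the $O(1/m)$ mismatch errors in the $\zetabold^\top$ directions coming from \ref{LmatchingE}, and the exponentially small $O(e^{-m/Ck})$ cross-terms in the $\zetabold^\perp$ directions coming from \ref{Lmatchingperp}. For the latter to be absorbed into the half-ball of radius $\tfrac{1}{2}\cunder e^{-m/\const}$ that defines $\tfrac{1}{2}\domzb \cap \Pcal^\perp$, the constant $\const$ in \ref{dParam} must be chosen strictly larger than the constant $Ck$ appearing in \ref{Lmatchingperp}(iv); this is precisely why $\const$ is permitted to depend on $k$ in \ref{dParam}. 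Once $\cunder$ and $\const$ have been so fixed in terms of $\kcir$, the remaining parts of \ref{Azetabold} hold by the assembly above, completing the verification.
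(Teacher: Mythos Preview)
Your overall strategy matches the paper's proof exactly: assemble the data (i)--(vii) from the cited definitions, then verify (a)--(e) by invoking \ref{Ldiffeoshr}, \ref{LLD}, \ref{Ltauratio}, \ref{LVker}, and finally \ref{PZ}, choosing $\cunder$ large enough in terms of the constant $C$ from \ref{PZ}.

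There is one step you understate. You write that \ref{con:one}(iv)--(v) are ``immediate from \ref{LLD}(iv)--(v)'', but those estimates are stated in the $\chitilde$ metric on cylindrical domains $D^\chi_{p_i}(\cdot)$, whereas \ref{con:one}(iv)--(v) are formulated in the $g$ metric on $g$-metric balls $D^\Sigma_p(\cdot)$. The paper explicitly bridges this: it uses \ref{LGdiff} and \ref{ddeltai} to compare the domains (e.g.\ $\partial D^{\Sigma,g}_{p_i}(\delta_{p_i}) \subset D^{\Sigma,\chi}_{p_i}(2\delta)$), invokes \ref{Lconf} to convert the norms from $\chitilde$ to $\chi$ and then to $g=e^{2\conf}\chi$ at the cost of powers of $m$ and constants depending on $\|\conf\|$, and absorbs these polynomial-in-$m$ losses into $\tau_p^{-\alpha/9}$ via \ref{LLD}(i). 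For \ref{con:one}(v) there is a further subtlety: on $\Sigma\setminus\Omega$ (outside the core region $\cyl_{[-\sss_k-3/m,\sss_k+3/m]}$), the estimate of \ref{LLD}(v) does not directly apply, and the paper argues separately using $\varphi=\tau_1(\Phat+\Phip)$, the exponential decay of $\Phip$ from \ref{LPhipest}, and the fact that $\Lchi\Phat=0$ there. None of this is difficult, but it is not ``immediate''; you should at least flag the metric conversion via \ref{Lconf} and the separate treatment of the ends.
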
 

\begin{proof}
Clearly $\Pcal$ as defined in \ref{dParam} is finite dimensional and $\domzb \subset \Pcal$ is compact and convex.

We now check the properties \ref{Azetabold}\ref{Aa}-\ref{AZ}: \ref{Aa}-\ref{Ab} follow from \ref{Ldiffeoshr}.  Next, we verify that the LD solutions $\varphi\llbracket \zetabold\rrbracket$ satisfy \ref{con:one}:  the smallness of $\tau_{\max}$ in \ref{con:one}(i) follows from \ref{LLD}(i), and \ref{con:L} holds from \eqref{Edelta} and taking $m$ large enough.
Convention \ref{con:one}(ii) follows from \ref{LLD}(i) by taking $m$ large enough, and \ref{con:one}(iii) follows from \ref{Ltauratio} and \ref{LLD}(i) also by taking $m$ large enough. 

We will prove \ref{con:one}(iv)-(v) by suitably modifying the estimates in \ref{LLD}(iv)-(v).  For (iii), first note that by \ref{LGdiff} and \ref{ddeltai}, $\partial D^{\Sigma, g}_{p_i}(\delta_{p_i}) \subset D^{\Sigma, \chi}_{p_i}(2\delta)$.  Then, using \ref{Lconf}, we can switch the metric with which the norm on the left hand side of \ref{LLD}(iv) is computed with respect to from $\chitilde$ to $\chi$ and then from $\chi$ to $g = e^{2\conf} \chi$ at the cost of multiplying the right hand side by powers of $m$ and constants depending on the norms of $\conf$.  \ref{con:one}(iv) then follows because we can ensure that any polynomial in $m$ of bounded degree is bounded by $\tau_{p_i}^{-\alpha/9}$ by taking $m$ large enough and using \ref{LLD}(i). 

\ref{con:one}(v) follows in an analogous way: first, using the smallness of $\tau_{p_i}$ and the boundedness of $\conf$ and its derivatives in the $\chi$ metric in the vicinity of $L$, we have that $D^{\Sigma, \chi}_{p_i}(\tau^{2\alpha}_{p_i}) \subset D^{\Sigma, g}_{p_i}( \tau_{p_i}^{\alpha})$.  Next, note that the estimate in \ref{con:one}(v) holds when $\Sigma$ in the domain is replaced with $\Omega: = \cyl_{[-\sss_k-3/m, \sss_k+3/m]}$ by using Lemma \ref{Lconf} to convert the estimate in \ref{LLD}(v) to one where the metric $\chitilde$ is replaced with $g$ at the cost of powers of $m$ and constants depending on the norms of $\conf$ on $\Omega$.  Finally, on $\Sigma \setminus \Omega$, note that $\varphi = \tau_1(\Phat + \Phip)$, so using the exponential decay of $\Phip$ away from $\Lpar$ from Lemma \ref{LPhipest} and that $\Phip$ satisfies $\Lchi \Phat = 0$ on $\Sigma \setminus \Omega$ we conclude the estimate \ref{con:one}(v) on $\Sigma\setminus \Omega$.  Next, \ref{con:one}(vi) follows from \ref{LLD}(iii) and that $\varphi = \tau_1 \Phi$ using the smallness of $\tau_{\max}$.  This finishes the verification of \ref{con:one} and thus the verification of \ref{Azetabold}\ref{Ac}.

Next, the uniformity condition \ref{Azetabold}\ref{Atau} follows from \ref{LLD}(ii) and \ref{Ltauratio}.  
Finally, the prescribed unbalancing condition \ref{Azetabold}\ref{AZ} follows from Proposition \ref{PZ} by taking $\cunder$ large enough in terms of the constant $C$ in \ref{PZ}.
 \end{proof}

We now construct embedded minimal doublings of $\Sigma$ by combining Theorems \ref{Trldldgen} and \ref{Ttheory}:

\begin{theorem}[Theorem \ref{TC}] 
\label{Tconstruct}
With the same assumptions and notation as in Theorem \ref{Trldldgen}
there are $\zetaboldhatunder = (\zetaboldhat, \breve{\kappaunderbold}) \in \domzb \times \Btilde_{\val\llbracket \zerobold \rrbracket}$ (recall \ref{dParam}) 
and $\upphihat \in C^\infty(M\llbracket \zetaboldhatunder \rrbracket)$ (recall \ref{dkappatilde}) 
satisfying 
$ 
\| \upphihat\|_{2, \beta, \gamma, \gamma'; M\llbracket \zetaboldhatunder \rrbracket } \le 
\tau_{\max}^{1+\alpha/4} 
$ 
(recall \ref{D:norm}), 
such that the normal graph $\Mhat : = (M\llbracket \zetaboldhatunder\rrbracket)_\upphihat$ is a $\groupcyl$-invariant embedded closed minimal 
doubling over $\Sigma$ in $N$ (recall \ref{Ddoubling})
of genus $2g_\Sigma-1+|L|$ where $g_\Sigma$ is the genus of $\Sigma$ and $|L| = |L\llbracket \zetaboldhat\rrbracket|$ is as in \ref{RLcard}. 
For each fixed $\kcir$, the surfaces $\Mhat$ converge in the sense of varifolds as $m\rightarrow \infty$ to $2\Sigma$. 
\end{theorem}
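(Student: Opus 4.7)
The plan is to combine Theorem \ref{Trldldgen}, which furnishes a family of LD solutions satisfying Assumption \ref{Azetabold}, with a $\groupcyl$-equivariant version of Theorem \ref{Ttheory}. First, I would verify that hypothesis \ref{cLker} can be imposed in the $\groupcyl$-invariant category: by \ref{Aimm}(iv) and Lemma \ref{LsymLD}(i), for $m$ sufficiently large the restriction of $\Lcal_\Sigma$ to $\groupcyl$-invariant functions has trivial kernel. All ingredients of the proof of Theorem \ref{Ttheory}---the initial surfaces $M\llbracket\zetaboldunder\rrbracket$, the diffeomorphisms $\Fcal_{\zetaboldunder}$, the obstruction spaces $\skernelv_\sym\llbracket\zetabold\rrbracket$ of \ref{dkernelsym}, the approximate right inverse $\Rcal'_M$ from \ref{Plinear2}, and the fixed-point map $\Jcal$ of \eqref{Tfp}---are $\groupcyl$-equivariant by construction, so the Schauder fixed-point theorem applied to $\groupcyl$-invariant data (exactly as in the proofs of \ref{Tcmain1} and \ref{Tcmain2}) produces the pair $\zetaboldhatunder = (\zetaboldhat, \widehat{\kappaunderbold})$ and $\upphihat$ with the claimed bound, such that $(M\llbracket\zetaboldhatunder\rrbracket)_{\upphihat}$ is a smooth, embedded, $\groupcyl$-invariant closed minimal surface in $N$.

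For the genus, I would argue topologically. Writing $L := L\llbracket\zetaboldhat\rrbracket$, Definition \ref{Dinit} describes $M\llbracket\zetaboldhatunder\rrbracket$ as two copies of $\Sigma\setminus\bigsqcup_{p\in L}D^\Sigma_p(9\tau_p)$ joined by $|L|$ catenoidal annular bridges, one pair of boundary circles per singular point. Each perforated copy has Euler characteristic $2-2g_\Sigma-|L|$, and since gluing cylinders along circles preserves the sum of Euler characteristics, $\chi(M) = 4-4g_\Sigma - 2|L|$, giving genus $2g_\Sigma - 1 + |L|$. The perturbation $(M)_{\upphihat}$ is a normal graph over $M$ and hence diffeomorphic to it, so it has the same genus.

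For the varifold convergence as $m\to\infty$ with $\kcir$ fixed, I would proceed as follows. By \ref{LLD}(i), $\tau_{\max}\to 0$; by \ref{Ltauratio}, all $\tau_p$ are comparable, so every catenoidal bridge $\cat[p,\tau_p,\kappaunder_p]$ sits in a ball of radius $O(\tau_p^\alpha)\to 0$ about $p$ and has area $O(\tau_p^{2\alpha})\to 0$. Consequently the union of bridges contributes zero mass in the varifold limit. On the complement of shrinking neighborhoods of $L$, the functions $\pm\varphigl_\pm$ are uniformly small by \ref{LLD}(v) and \ref{Lgluingreg}(ii) (since $\varphigl_\pm$ is of order $\tau_1$ times a polynomially bounded function of $m$), and the perturbation satisfies $\|\upphihat\|_{2,\beta,\gamma,\gamma';M}\le \tau_{\max}^{1+\alpha/4}$. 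Thus both sheets of $(M)_{\upphihat}$ converge uniformly, and in $C^2$ on compact subsets away from $L$, to $\Sigma$; combined with the vanishing bridge area this yields varifold convergence of $(M\llbracket\zetaboldhatunder\rrbracket)_{\upphihat}$ to $2\Sigma$.

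The main obstacle I anticipate is not any single step but rather the careful verification that the fixed-point argument of Theorem \ref{Ttheory} closes inside the $\groupcyl$-invariant subspace---specifically, that the decomposition \eqref{EEtildedecom} and the solutions $\widetilde{u}_{\mathrm{low}}$, $\widetilde{u}_{\mathrm{high}}$ of \eqref{Etildeueq} respect the $\groupcyl$-symmetry, and that the image $Z_{\zetabold}(\Mcal_L\varphi)$ of the equivariant obstruction space coincides with $\Pcal$ (which is \ref{PZ}). Once these compatibilities are recorded, the remainder of the argument is a direct assembly of earlier results.
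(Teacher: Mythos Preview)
Your proposal is correct and follows the same approach as the paper: invoke Theorem \ref{Trldldgen} to verify Assumption \ref{Azetabold}, then apply (the $\groupcyl$-equivariant form of) Theorem \ref{Ttheory}. The paper's own proof is a two-sentence assembly of these ingredients; you supply considerably more detail---the explicit Euler-characteristic computation for the genus and the varifold-convergence argument---which the paper either handles in one clause (``connects two copies of $\Sigma$ by $|L|$ bridges'') or omits entirely, but there is no substantive divergence.
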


\begin{proof}
Since $\Sigma$ is closed and embedded (recall \ref{cLker}) and Assumption \ref{Azetabold} holds by \ref{Trldldgen}, we may apply Theorem \ref{Ttheory} to conclude the existence of $\Mhat$ as above, for all large enough $m$.  
$\Mhat$ has the claimed genus because the construction connects two copies of $\Sigma$ by $|L|$ bridges.
\end{proof}

\begin{remark}
\label{RSph2}
Theorem \ref{Tconstruct} applies also in the case studied in \cite{kapmcg} 
for the background $(\Sigma, N, g)$ with $\Sigma=\Sph^2 \subset N=\Sph^3$, providing new minimal doublings even for that background,    
because of the ability to prescribe $m_i \in \{m, -m, -2m\}$ $\forall i\in \{1, \dots, k\}$, 
whereas the doublings in \cite{kapmcg} had all $m_i=m$.  
\qed 
\end{remark}

%%%%%%%%%%%%%%%%%%%%%%%%%%%%%%%%%%%%%%%%%%%%%%%
% END OF GENERALITIES 
% BEGINNING OF APPLICATIONS 
%%%%%%%%%%%%%%%%%%%%%%%%%%%%%%%%%%%%%%%%%%%%%%%

\section*{Part III: New Minimal Surfaces and Self-shrinkers via Doubling}

%%%%%%%%%%%%%%%%%%%%%%%%%%%%%%%%%%%%%%%%%%%%%%%
%% 
%% SECTION: Spherical Shrinker 
%%
%%%%%%%%%%%%%%%%%%%%%%%%%%%%%%%%%%%%%%%%%%%%%%%

\section{Doubling the Spherical Shrinker and the Angenent Torus}
\label{S:SphShr}

\begin{definition}
\label{dshrinker}
We call the minimal hypersurfaces in $ \big( \R^{n+1}, e^{- \frac{|x|^2}{2n}} \delta_{ij}\big)$ \emph{self-shrinkers}.  
\end{definition}
The following well-known lemma catalogs several equivalent characterizations of self-shrinkers:

\begin{lemma}[cf. {\cite[Section 1]{cmcompact}}]
\label{Lssminimal}
Let $\Sigma^n \subset \R^{n+1}$ be a smooth oriented hypersurface.  The following are equivalent. 
\begin{enumerate}[label=\emph{(\roman*)}]
\item $H = \frac{\langle x, \nu \rangle}{2}$.
\item The one-parameter family of hypersurfaces $\Sigma_t : \Sigma \times (-\infty, 0] \rightarrow \R^{n+1}$ defined by $\Sigma_t ( p, t) = \sqrt{ - t} p$ flows by mean curvature. 
\item $\Sigma \subset \left( \R^{n+1}, e^{- \frac{|x|^2}{2n}} \delta_{ij}\right)$ is minimal. 
\item $\Sigma \subset \R^{n+1} $ is a critical point for the area (or volume) induced by the Gaussian metric $e^{-\frac{|x|^2}{2n}} \delta_{ij}$. 
\end{enumerate}
\end{lemma}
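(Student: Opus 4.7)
The plan is to prove all four statements equivalent by pivoting around (i), which is the classical self-shrinker equation. I will first establish (i) $\Leftrightarrow$ (ii) directly from the scaling ansatz, then (i) $\Leftrightarrow$ (iii) via the conformal change of mean curvature, and finally (iii) $\Leftrightarrow$ (iv) by observing that $\Fcal(\Sigma)$ is (up to a normalization constant) the area of $\Sigma$ in the conformally modified metric of (iii).

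For (i) $\Leftrightarrow$ (ii), I would compute directly: with $\Sigma_t = \sqrt{-t}\,\Sigma$ parametrized by $X(p,t) = \sqrt{-t}\, p$, we have
\begin{equation*}
\frac{\partial X}{\partial t}(p,t) = -\frac{p}{2\sqrt{-t}} = -\frac{X(p,t)}{2t}.
\end{equation*}
Since scaling commutes with the unit normal and the mean curvature scales by $1/\sqrt{-t}$, the (appropriately signed) MCF equation $\langle \partial_t X, \nu_t\rangle = -H_{\Sigma_t}$ reduces after a short calculation to $H_\Sigma(p) = \tfrac{1}{2}\langle p, \nu_\Sigma(p)\rangle$, which is (i). The converse is the same computation read backwards.

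For (i) $\Leftrightarrow$ (iii), I would apply the standard conformal change formula for mean curvature: if $\tilde g = e^{2\phi} g_{Euc}$ then the mean curvature of an $n$-dimensional hypersurface transforms as $\widetilde H = e^{-\phi}(H + n\, \partial_\nu \phi)$. Setting $\phi = -|x|^2/(4n)$ gives $\partial_\nu \phi = -\langle x,\nu\rangle/(2n)$, so $\widetilde H = 0$ exactly when $H = \langle x,\nu\rangle/2$. For (iii) $\Leftrightarrow$ (iv), I would simply observe that the induced volume form in the metric $e^{-|x|^2/(2n)}\delta_{ij}$ is $e^{-|x|^2/4}dA_{\delta}$, so $(4\pi)^{n/2}\Fcal(\Sigma)$ coincides with the area of $\Sigma$ in that ambient metric; minimality is the Euler--Lagrange condition for area, which is the criticality condition (iv). As an alternative unified route, I could instead compute the first variation of $\Fcal$ directly: for a normal variation $f\nu$ one finds
\begin{equation*}
\left.\frac{d}{dt}\right|_{t=0}\Fcal(\Sigma_t) = (4\pi)^{-n/2}\int_\Sigma f\left(\frac{\langle x, \nu\rangle}{2} - H\right)e^{-|x|^2/4}\,dA,
\end{equation*}
yielding (i) $\Leftrightarrow$ (iv).

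The main obstacle is purely bookkeeping: fixing sign conventions consistently for the unit normal, the mean curvature (sum vs.\ average of principal curvatures), and the MCF equation ($\partial_t X = H\nu$ vs.\ $-H\nu$). Once conventions match those used elsewhere in the paper, each equivalence reduces to a one-line computation, and this is why the authors treat this as a folklore lemma and merely cite \cite{cmcompact}.
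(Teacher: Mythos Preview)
Your proposal is correct, and in fact the paper provides no proof of this lemma at all: it is stated with the attribution ``cf.\ \cite[Section 1]{cmcompact}'' and left unproved as a standard fact. Your sketch supplies exactly the kind of elementary argument one would give if pressed---the conformal mean-curvature formula for (i) $\Leftrightarrow$ (iii), the scaling computation for (i) $\Leftrightarrow$ (ii), and the identification of $\Fcal$ with conformal area for (iii) $\Leftrightarrow$ (iv)---and you correctly flag that the only content is sign bookkeeping. Nothing further is needed; your remark that the authors treat this as folklore is precisely what happens in the text.
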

In this section we consider the ambient Riemannian three-manifold in the background is taken to be $(N, g) = (\R^3, e^{-|x|^2/4} \delta)$.

\subsection*{The spherical shrinker}
\nopagebreak

By \ref{Lssminimal}(i) $\Sphshr:=\Sph^2(2)$ is a self-shrinker, 
and is clearly $O(2)\times \Z_2$-invariant in the sense of \ref{Aimm}.  
The Jacobi operator is \cite[Lemma C.2]{kapshrinker} 
$$ 
\Lcal_{\Sphshr} := e \left( \Delta_{\Sph^2(2)} +1\right) =  \frac{e}{4} \left(\Delta_{\Sph^2(1)} + 4\right).  
$$ 
Note that $\ker \Lcal_{\Sphshr}$ is trivial since $4$ is not an eigenvalue of $\Delta_{\Sph^2(1)}$. 
Next, note that  $X_{\Sphshr} : \cyl \rightarrow \Sigma$ and $\conf$ defined by
\begin{align}
\label{Esphimm} 
X_{\Sphshr} ( p, \sss) = 2(\sech \sss \, p, \tanh \sss), \quad
e^{2\conf(\sss)} = {4}{e}^{-1} \sech^2 \sss
\end{align}
are as in the conclusion of Lemma \ref{LAconf}, and that $V = 4 \sech^2 \sss$ (recall \ref{dLchi}). 

\begin{lemma}
\label{Lphiee}
\label{Lphis}
$\phie$ and $\phiend$ satisfy the following (recall \ref{dHflux} and \ref{AV2}):
\begin{enumerate}[label=\emph{(\roman*)}]
\item $\phie$ is strictly decreasing on $[0, \infty)$, and has a unique root $\sss^{\phie}_{\mathrm{root}} \in (0, \infty)$.
\item  $\phiend(0)<0$, $\phiend$ is strictly increasing on $[0, \infty)$, and has a unique root $\sss^{\phiend}_{\mathrm{root}} \in (0, \sss^{\phie}_{\mathrm{root}} )$. 
\end{enumerate}
\end{lemma}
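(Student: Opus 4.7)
The plan is to analyze the common linear ODE $\partial^2\phi + V\phi = 0$ satisfied on $[0,\infty)$ by both $\phie$ and $\phiend$, with potential $V(\sss) = 4\sech^2\sss$ read off from \eqref{Esphimm} and Definition~\ref{dLchi}, using sign analysis combined with the integral representations $\partial\phi(\sss) = \partial\phi(\sss_0) - \int_{\sss_0}^\sss V\phi$ and constancy of the Wronskian $W := \phie\,\partial\phiend - (\partial\phie)\,\phiend$. A convenient auxiliary observation is that the substitution $u = \tanh\sss$ transforms the ODE into the Legendre equation $(1-u^2)y'' - 2uy' + \ell(\ell+1)y = 0$ with $\ell(\ell+1) = 4$, i.e.\ $\ell := (-1+\sqrt{17})/2 \in (1,2)$; under this change $\phiend(\sss) = P_\ell(\tanh\sss)$ (regularity at $u=1$ encodes the boundary data $\phiend \to 1$, $\partial\phiend \to 0$ at $\sss = \infty$), and the standard formulas $P_\ell(0) = \sqrt{\pi}/[\Gamma(1+\tfrac{\ell}{2})\Gamma(\tfrac{1-\ell}{2})]$ and $P_\ell'(0) = -2\sqrt{\pi}/[\Gamma(-\tfrac{\ell}{2})\Gamma(\tfrac{1+\ell}{2})]$ yield $P_\ell(0) < 0 < P_\ell'(0)$ for $\ell \in (1,2)$, so $\phiend(0) < 0$ and $\partial\phiend(0) > 0$.

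For (i), from $\phie''(0) = -V(0)\phie(0) = -4 < 0$ together with $\partial\phie(0) = 0$, $\phie$ decreases immediately, and $\partial\phie(\sss) = -\int_0^\sss V\phie$ forces $\partial\phie < 0$ throughout any initial interval on which $\phie > 0$. If $\phie \geq c > 0$ held on $[0,\infty)$, then $\partial\phie(\sss) \leq -c\int_0^\sss V \to -4c$ would contradict positivity of $\phie$; hence a first root $\sss^\phie_{\mathrm{root}}$ exists, with $\partial\phie(\sss^\phie_{\mathrm{root}}) < 0$ strictly by ODE uniqueness. To extend strict decrease past the root, where $\phie < 0$ makes $\phie'' > 0$ so a priori $\partial\phie$ could rise back to $0$, I use the constancy of $W$: at $\sss = 0$ one has $W = \partial\phiend(0)$, while as $\sss \to \infty$ the decay $|\partial\phiend(\sss)| = O(e^{-2\sss})$ (from $\partial\phiend = \int_\sss^\infty V\phiend$ and \ref{AV1}(i)) together with the at-most-linear growth of $\phie$ give $W = -\lim_{\sss \to \infty}\partial\phie(\sss)$. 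Combined with $\partial\phiend(0) > 0$ from the preceding paragraph, this forces $\lim_{\sss \to \infty}\partial\phie = -\partial\phiend(0) < 0$, and monotonicity of $\partial\phie$ on $(\sss^\phie_{\mathrm{root}}, \infty)$ then pins $\partial\phie$ strictly negative throughout, giving strict monotonicity of $\phie$ and uniqueness of its root.

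For (ii), I would run the analogous argument backwards from $\sss = \infty$: since $\phiend \to 1$ with $\partial\phiend \to 0$, the identity $\partial\phiend(\sss) = \int_\sss^\infty V\phiend$ gives $\partial\phiend > 0$ near infinity, while $\phiend'' = -V\phiend < 0$ keeps $\phiend$ concave; propagating backward, these signs persist as long as $\phiend > 0$. Since $\phiend(0) = P_\ell(0) < 0$, $\phiend$ must reach a unique zero $\sss^\phiend_{\mathrm{root}} \in (0,\infty)$ with $\partial\phiend(\sss^\phiend_{\mathrm{root}}) > 0$. On $(0, \sss^\phiend_{\mathrm{root}})$ the identity $\partial\phiend(\sss) = \partial\phiend(\sss^\phiend_{\mathrm{root}}) - \int_{\sss^\phiend_{\mathrm{root}}}^\sss V\phiend$ combined with $\phiend < 0$ keeps $\partial\phiend$ strictly positive down to the value $\partial\phiend(0) > 0$ identified via $P_\ell'(0)$, establishing strict monotonicity on all of $[0,\infty)$. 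The inequality $\sss^\phiend_{\mathrm{root}} < \sss^\phie_{\mathrm{root}}$ then follows from Sturm separation: since $\phie$ is even with zeros $\pm\sss^\phie_{\mathrm{root}}$, the linearly independent $\phiend$ has exactly one zero in $(-\sss^\phie_{\mathrm{root}}, \sss^\phie_{\mathrm{root}})$, and $\phiend(0) < 0 < \phiend(\infty)$ places it in $(0, \sss^\phie_{\mathrm{root}})$. The hardest part is the rigorous pinning of the endpoint signs $\phiend(0) < 0$ and $\partial\phiend(0) > 0$, which drive both the extensions of monotonicity past the roots and the final ordering; the cleanest route is the Legendre identification above with standard gamma-function evaluations, though a direct phase-plane analysis on $(\phi, \partial\phi)$ exploiting the exponential decay of $V$ provides a purely ODE-theoretic alternative.
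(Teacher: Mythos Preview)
Your approach is genuinely different from the paper's. The paper dispatches the lemma in one line by invoking that $4$ lies strictly between the first two nonzero eigenvalues $2$ and $6$ of $\Delta_{\Sph^2(1)}$ on rotationally invariant functions; this is a Sturm comparison with the explicit eigenfunctions $P_1(\tanh\sss)=\tanh\sss$ and $P_2(\tanh\sss)=\tfrac12(3\tanh^2\sss-1)$, which immediately bounds the oscillation count of any solution of $\phi''+4\sech^2\sss\,\phi=0$. Your route---the Legendre substitution $u=\tanh\sss$ identifying $\phiend=P_\ell(\tanh\sss)$ with $\ell(\ell+1)=4$, the gamma-function evaluation of $P_\ell(0)<0<P_\ell'(0)$, and the Wronskian identity $\lim_{\sss\to\infty}\partial\phie=-\partial\phiend(0)$---is correct and more explicit, and the sign information you extract is exactly right.

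However, your monotonicity arguments past the roots have a gap. In (i), the phrase ``monotonicity of $\partial\phie$ on $(\sss^{\phie}_{\mathrm{root}},\infty)$'' presupposes that $\phie$ does not change sign there---precisely what you are trying to prove; knowing only $\partial\phie<0$ at the root and $\lim\partial\phie<0$ does not by itself preclude an excursion of $\partial\phie$ through positive values in between. In (ii), your integral identity actually gives $\partial\phiend(\sss)=\partial\phiend(\sss^{\phiend}_{\mathrm{root}})+\int_\sss^{\sss^{\phiend}_{\mathrm{root}}}V\phiend$, and if $\phiend<0$ on that interval this makes $\partial\phiend(\sss)$ \emph{smaller} than $\partial\phiend(\sss^{\phiend}_{\mathrm{root}})$, not larger---so the sentence ``$\phiend<0$ keeps $\partial\phiend$ strictly positive'' is backwards. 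More substantively, nothing in your argument rules out three or more zeros of $\phiend$ on $(0,\infty)$ (the sign data $\phiend(0)<0<\phiend(\infty)$ force only an odd count). Both gaps are closed in one stroke by Sturm comparison with $P_2(\tanh\sss)$: since $6>4$, between any two zeros of $\phie$ or of $\phiend$ there must lie a zero of $P_2(\tanh\sss)$, but the latter has exactly one zero on $(0,\infty)$. This is the content of the paper's ``$4<6$'', and once you invoke it the rest of your argument goes through cleanly.
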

\begin{proof}
A straightforward consequence of the fact that $4$ is between the first two nonzero eigenvalues ($2$ and $6$ respectively) of the Laplacian on $\Sphtwo(1)$.
\end{proof}

\begin{theorem}[Doublings of the spherical shrinker $\Sphshr$]
\label{Tmainsph}
Given any integer $\kcir \geq 2$, any $m\in \N$  large enough depending only on $\kcir$, and any $\mbold \in \{m, -m, -2m\}^{\lceil \kcir/2\rceil}$, there is a $\gcyl_m$-invariant doubling of $\Sphshr$ as a self-shrinker for the mean curvature flow containing one catenoidal bridge close to each singularity of one of a family of $\group_m$-invariant LD solutions as in Theorem \ref{Trldldgen} whose singularities concentrate on $\kcir$ parallel circles, with the number of singularities and their alignment at each circle prescribed by $\mbold$.  Moreover, as $m\rightarrow \infty$ with fixed $\kcir$, the corresponding doublings converge in the appropriate sense to $\Sphshr$ covered twice. 
\end{theorem}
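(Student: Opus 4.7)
The plan is to deduce Theorem \ref{Tmainsph} as a direct application of Theorem \ref{Tconstruct} to the background $(\Sphshr, \R^3, g)$ with $g = e^{-|x|^2/4}\delta$, using the equivalence of self-shrinkers and minimal surfaces in $(N,g)$ provided by Lemma \ref{Lssminimal}(iii). To set this up, I would first verify that Assumption \ref{Aimm} holds for this background. Items (i) and (ii) are immediate: $\Sphshr = \Sph^2(2)$ is a closed, orientable surface whose embedding in $\R^3$ is equivariant under the standard $O(2)\times \Z_2$-action, with $\Sbar$ acting by reflection through the equatorial plane. For (iii), I would use the formula $V = e^{2\conf} X_{\Sigma}^*(|A|^2 + \Ric(\nu,\nu))$ from \ref{dLchi} together with the explicit expressions $V = 4\sech^2 \sss$ and $e^{2\conf} = \frac{4}{e}\sech^2 \sss$ from \eqref{Esphimm} to obtain $|A|^2 + \Ric(\nu,\nu) \equiv e > 0$. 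Item (iv) follows from the stronger statement, already recorded after \eqref{Esphimm}, that $\ker \Lcal_{\Sphshr}$ is trivial outright, since $4$ is not an eigenvalue of the round Laplacian on $\Sph^2(1)$.

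Next I would identify the range of admissible values of $\kcir$ by computing $\kcirmin$. Using the characterization in Lemma \ref{Rk1} together with Lemma \ref{Lphiee}, since $\phiend(0) < 0$ we have $\kcirmin \neq 1$; on the other hand, $\sss^{\phiend}_{\mathrm{root}} < \sss^{\phie}_{\mathrm{root}}$ ensures $\phie > 0$ on $(0, \sss^{\phiend}_{\mathrm{root}}]$ and $\phiend > 0$ on $(\sss^{\phiend}_{\mathrm{root}}, \infty)$, which by Lemma \ref{Rk1}(ii) gives $\kcirmin = 2$. Thus every integer $\kcir \geq 2$ is admissible, matching the hypothesis of Theorem \ref{Tmainsph}.

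With these verifications in hand, I would invoke Theorem \ref{Tconstruct} for each $\kcir \geq 2$ and each $\mbold \in \{m,-m,-2m\}^{\lceil \kcir/2 \rceil}$ with $m$ sufficiently large depending on $\kcir$. This produces a $\gcyl_m$-invariant smooth closed embedded surface minimal with respect to $g$, obtained as a normal graph over an initial surface $M\llbracket \widehat{\zetaboldunder}\rrbracket$ consisting of two copies of $\Sphshr$ joined by catenoidal bridges located near the singularities of an LD solution $\varphi\llbracket \widehat{\zetabold}\rrbracket$ from the family supplied by Theorem \ref{Trldldgen}; by construction, these singularities concentrate on $\kcir$ parallel circles with the alignment and counts prescribed by $\mbold$. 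By Lemma \ref{Lssminimal}(iii), $g$-minimality is equivalent to being a self-shrinker, which yields the desired doubling.

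The convergence to $\Sphshr$ covered twice as $m \to \infty$ for fixed $\kcir$ is a direct consequence of the corresponding statement in Theorem \ref{Tconstruct}, combined with the fact that the $C^{2,\beta}$-size of the perturbation $\widehat{\upphi}$ is controlled by $\tau_{\max}^{1+\alpha/4}$, which tends to zero as $m\to \infty$ by Lemma \ref{LLD}(i). I do not expect any serious obstacle in this proof: the entire force of the argument has been built into the general framework of Parts I and II, and the verification reduces to the elementary checks above. The only mildly delicate point is confirming \ref{Aimm}(iv), but this was already isolated in the preliminary discussion of $\Lcal_{\Sphshr}$.
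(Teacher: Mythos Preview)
Your proposal is correct and follows essentially the same approach as the paper: verify Assumption \ref{Aimm} for the background $(\Sphshr,\R^3,e^{-|x|^2/4}\delta)$, use Lemma \ref{Rk1}(ii) together with Lemma \ref{Lphiee} to conclude $\kcirmin=2$, and then invoke Theorem \ref{Tconstruct}. Your write-up is somewhat more explicit than the paper's (in particular your computation $|A|^2+\Ric(\nu,\nu)\equiv e$ and your spelled-out check of the hypotheses of Lemma \ref{Rk1}(ii)), but the logical content is the same.
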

\begin{proof}
It follows that $\kcirmin = 2$ by combining Lemma \ref{Rk1}(ii) and \ref{Lphis}(ii).
The discussion above shows that \ref{Aimm} holds, so the existence of the doublings follows immediately from Theorem \ref{Tconstruct}.
\end{proof}

\subsection*{The Angenent torus}
\nopagebreak

 In \cite{angenent}, Angenent constructed an embedded and $O(2)\times \Z_2$-invariant (in the sense of \ref{Aimm}(ii)) self-shrinking torus, which we denote in this subsection by $\Tor$.  

\begin{lemma}
\label{Langric}
$\Ric(\nu, \nu)>0$ on $\Tor$. 
\end{lemma}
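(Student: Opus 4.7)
The plan is to reduce the problem to a pointwise bound on $|x^T|$ via an explicit computation of the Ricci tensor of the ambient metric, and then to verify that bound from the axisymmetric structure of the Angenent torus.

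First, I would compute $\operatorname{Ric}_g$ directly. Writing $g = e^{2f}\delta$ with $f := -|x|^2/8$, the conformal-change formula for the Ricci tensor in dimension $n=3$ gives
\[
\operatorname{Ric}_g = -\nabla^2 f + df\otimes df - \bigl(\Delta f + |\nabla f|^2\bigr)\,\delta.
\]
Substituting $df = -x/4$, $\nabla^2 f = -\delta/4$, $\Delta f = -3/4$, and $|\nabla f|^2 = |x|^2/16$, this simplifies to
\[
\operatorname{Ric}_g \;=\; \Bigl(1 - \tfrac{|x|^2}{16}\Bigr)\,\delta \;+\; \tfrac{1}{16}\, x\otimes x.
\]
A $g$-unit normal $\nu$ satisfies $|\nu|_{\delta}^2 = e^{|x|^2/4}$, so writing $\tilde\nu = \nu/|\nu|_\delta$ for the Euclidean unit normal,
\[
\operatorname{Ric}_g(\nu,\nu) \;=\; e^{|x|^2/4}\Bigl(1 - \tfrac{|x|^2 - (x\cdot\tilde\nu)^2}{16}\Bigr) \;=\; e^{|x|^2/4}\Bigl(1 - \tfrac{|x^T|^2}{16}\Bigr),
\]
where $x^T$ is the tangential projection of the position vector to $T\Tor$. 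Hence the lemma is equivalent to the pointwise bound $|x^T| < 4$ on $\Tor$.

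To establish this bound, I would use the self-shrinker equation from \ref{Lssminimal}(i), $H_{\mathrm{Euc}} = \tfrac12\langle x,\tilde\nu\rangle$, which gives the identity
\[
|x^T|^2 \;=\; |x|^2 - \langle x,\tilde\nu\rangle^2 \;=\; |x|^2 - 4H_{\mathrm{Euc}}^2 \;\le\; |x|^2.
\]
Thus it suffices to verify $|x|_{\max} < 4$ on $\Tor$. The Angenent torus is axially symmetric and generated by rotating a closed curve $\gamma(s) = (r(s),z(s))$ in the half-plane $\{r>0\}$ around the $z$-axis; as shown in \cite{angenent}, $\gamma$ is a closed geodesic of the auxiliary metric $r^2 e^{-(r^2+z^2)/4}(dr^2+dz^2)$. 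This geodesic is trapped in an annular region strictly interior to $\{r^2+z^2 < 16\}$ by the shooting-and-barrier argument used in Angenent's construction (and quantitatively refined in later work on shrinking doughnuts), which yields the desired bound.

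The main obstacle is the global geometric bound $|x|<4$ on $\Tor$: while the reduction to this inequality is a short computation, its proof requires invoking the explicit a priori control of the profile curve established in \cite{angenent}. A self-contained alternative would be to argue via the maximum principle applied to $|x|^2$ on $\Tor$, using that at its maximum $x$ is Euclidean-normal and then comparing $\Tor$ to the spherical shrinker of radius $2$; however, since $H_{\mathrm{Euc}}$ changes sign on $\Tor$ and the relevant Jacobi-type estimates are more delicate than for the round sphere, citing the explicit bounds from the original construction is the cleanest route.
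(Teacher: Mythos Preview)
Your proposal is correct and follows essentially the same approach as the paper: both compute $\operatorname{Ric}(\nu,\nu)$ via the conformal-change formula, reduce positivity to the bound $|x^T|<4$ (hence $|x|<4$), and then invoke an external result bounding $|x|$ on the Angenent torus. The paper cites the explicit bound $\max_{x\in\Tor}|x|<3.4$ from M\o ller's work (and Berchenko-Kogan) rather than appealing to Angenent's original shooting argument, which does not by itself give a numerical bound; your reference to ``quantitatively refined in later work'' is the right hedge, but you should cite a specific source for the inequality $|x|<4$.
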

\begin{proof}
We have (see e.g. the proof of \cite[Proposition C.2]{kapshrinker})
\begin{align*}
\Ric(\nu, \nu) = e^{- \frac{|x|^2}{4}} \left( 1 + \frac{(x\cdot \nu_0)^2}{16} - \frac{|x|^2}{16}\right),
\end{align*}
where above $x$ and $\nu_0$ are the position vector field and the Euclidean unit normal to $\Tor$ and the norms and dot product are computed with respect to the Euclidean metric.  From \cite[Proposition 2.1]{mollertorus} (see also \cite{Berchenko}), we have that $\max_{x\in \Tor} |x| < 3.4$ and the conclusion follows. 
\end{proof}

\begin{theorem}[Doublings of the Angenent torus $\Tor$]
\label{Tmaintor}
There exists  $\kcirmin \in \N$ such that if $\kcir \geq \kcirmin$, $m\in \N$ is large enough depending only on $\kcir$, and $\mbold \in \{m, -m, -2m\}^{\lceil \kcir/2\rceil}$, there is a $\gcyl_m$-invariant doubling of $\Tor$ as a self-shrinker for the mean curvature flow containing one catenoidal bridge close to each singularity of one of a family of $\group_m$-invariant LD solutions as in Theorem \ref{Trldldgen} whose singularities concentrate on $\kcir$ parallel circles, with the number of singularities and their alignment at each circle prescribed by $\mbold$.  Moreover, as $m\rightarrow \infty$ with fixed $\kcir$, the corresponding doublings converge in the appropriate sense to $\Tor$ covered twice. 
\end{theorem}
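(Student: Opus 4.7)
The proof will run parallel to that of Theorem \ref{Tmainsph}: verify that the background $(\Tor, \R^3, g)$ with $g = e^{-|x|^2/4}\delta$ satisfies Assumption \ref{Aimm}, and then invoke Theorem \ref{Tconstruct}. The non-explicit $\kcirmin$ in the statement simply reflects that, unlike for the spherical self-shrinker, there is no clean eigenvalue characterization at hand; $\kcirmin$ will be the constant produced by Definition \ref{Dkmin} applied to $\Tor$.

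Items (i) and (ii) of \ref{Aimm} are immediate from Angenent's construction: $\Tor$ is a closed, embedded, orientable torus of revolution, carrying the effective isometric action of $O(2)\times\Z_2$ whose $O(2)$ factor rotates about the axis of symmetry (including reflection through any axial plane) and whose generator $\Sbar$ is the reflection through the equatorial plane. In the surface-of-revolution parametrization of Lemma \ref{LAconf}, $\Sbar$ sends $\sss\mapsto -\sss$, and is therefore orientation reversing on $\Tor$. Item (iii) follows instantly from $|A|^2\geq 0$ combined with Lemma \ref{Langric}.

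The main obstacle is item (iv): triviality of $\ker\Lcal_\Tor$ in the $O(2)\times\Z_2$-invariant subspace. Via the covering map $\widetilde{X}_\Tor:\cyl\to\Tor$ of Lemma \ref{LAconf}, such an invariant Jacobi field descends to a smooth function $\phi=\phi(\sss)$ on $[0,l]$ satisfying the ODE $\phi'' + V\phi = 0$ (with $V$ from \ref{dLchi}) together with Neumann conditions $\phi'(0)=\phi'(l)=0$, the latter forced by invariance under $\Sbar$ and $\Sbar_l$. To exclude a nontrivial $\phi$ the plan is to invoke the variational origin of $\Tor$: its profile curve is a critical point of a weighted geodesic problem in a meridional half-plane, and the second variation of that functional along rotationally symmetric normal perturbations is precisely the above Neumann--Neumann problem. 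The nondegeneracy inherent in Angenent's phase-plane shooting construction (or, equivalently, known rigidity results for $O(2)$-symmetric shrinker tori) then yields (iv). This is the step we expect to carry all the technical weight, since $\Tor$ is not explicit and both $V$ and $l$ are only given implicitly by the shooting ODE.

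Once Assumption \ref{Aimm} is verified, Theorem \ref{Tconstruct} (via Theorem \ref{Trldldgen}) produces the asserted $\kcirmin\in\N$ together with, for every $\kcir\geq\kcirmin$, every $m$ large enough in terms of $\kcir$, and every $\mbold\in\{m,-m,-2m\}^{\lceil\kcir/2\rceil}$, a $\gcyl_m$-invariant embedded closed minimal doubling of $\Tor$ in $(\R^3, g)$, with one catenoidal bridge near each singularity of the corresponding LD solution from Theorem \ref{Trldldgen}. By Lemma \ref{Lssminimal}(iii) each such doubling is a self-shrinker of the mean curvature flow in $\R^3$, and the varifold convergence to $2\Tor$ as $m\to\infty$ with $\kcir$ fixed is exactly the convergence clause of Theorem \ref{Tconstruct}.
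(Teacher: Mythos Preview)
Your overall strategy coincides with the paper's: verify Assumption \ref{Aimm} for the background $(\Tor,\R^3,e^{-|x|^2/4}\delta)$ and then invoke Theorem \ref{Tconstruct}. Items (i)--(iii) are handled identically, via Angenent's construction and Lemma \ref{Langric}.

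The one substantive difference is in item (iv). The paper disposes of it in a single line by citing \cite[Theorem 2.7]{mollertorus}, which establishes precisely that the $O(2)\times\Z_2$-invariant part of $\ker\Lcal_{\Tor}$ is trivial. Your proposal instead sketches an argument: reduce to the Neumann--Neumann ODE problem on $[0,l]$ and then appeal to ``the nondegeneracy inherent in Angenent's phase-plane shooting construction.'' That last step is not obviously available: a shooting argument gives existence but does not by itself yield nondegeneracy of the critical point, so this would need real work to turn into a proof. Your parenthetical alternative, ``known rigidity results for $O(2)$-symmetric shrinker tori,'' is in fact exactly what the paper cites. So your outline is correct, but the load-bearing step is carried by an external reference rather than by anything extractable from Angenent's original construction.
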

\begin{proof}
In order to apply Theorem \ref{Tconstruct}, we need only check that \ref{Aimm} holds.  \ref{Aimm}(i)-(ii) hold by the discussion above.   
It follows from \ref{Langric} that $|A|^2 +\Ric(\nu, \nu)>0$ on $\T$ and therefore that \ref{Aimm}(iii) holds.  
Finally, it was checked in \cite[Theorem 2.7]{mollertorus} that the intersection of $\ker \Lcal_{\Tor}$ with the set of $\grouprotcyl$-invariant functions on $\Tor$ is trivial.  
\end{proof}

\begin{remark}
Although we have not done so here, it would be interesting to determine the minimum number $\kcirmin$ of circles (recall \ref{Dkmin}) associated to the doublings of $\T$ in Theorem \ref{Tmaintor}.
\qed 
\end{remark}

%%%%%%%%%%%%%%%%%%%%%%%%%%%%%%%%%%%%%%%%%%%%%%%%
%
% THE CATENOID
%
%%%%%%%%%%%%%%%%%%%%%%%%%%%%%%%%%%%%%%%%%%%%%%%%
\section{Doubling the Catenoid}
\label{S:Cat}
\nopagebreak

In this section, let $(N, g)$ be Euclidean three-space and $\Sigma$ be the Euclidean catenoid $\mathbb{K}$ parametrized by 
$X_{\mathbb{K}} : \cyl \rightarrow \R^3$, where $X_{\mathbb{K}} (p, \sss) = (\cosh \sss \, p, \sss)$.  
Clearly $(\Sigma, N, g)$ is $O(2)\times \Z_2$-invariant in the sense of \ref{Aimm}(ii), and $X_{\mathbb{K}}$ satisfies \ref{LAconf}(ii) with $I = \R$.  
Moreover, $V$ and $\conf$ as in \ref{Nconf} and \ref{dLchi} satisfy
\begin{align}
V(\sss) = 2\sech^2 \sss, \qquad \qquad e^{\conf(\sss)} = \cosh \sss.
\end{align}

\begin{remark}
\label{Rlinconf} 
The linearized operator $\Lcal_{\Sph^2} = \Delta_{\Sph^2} + 2$ of an equatorial sphere $\Sph^2 \subset \Sph^3$ is conformally related to $\Lcal_\Sigma$ by
\[ 
\Lcal_\Sigma = \Delta_\Sigma + | A|^2 = \frac{|A|^2}{2} \left( \Delta_{\nu^*g_{\Sph^2}} +2\right),
\] 
where $\nu : \Sigma \rightarrow \Sph^2$ is the Gauss map, so RLD and LD studied in \cite{kapmcg} can be pulled back by $\nu$ to LD solutions on $\Sigma$.  
Because of this, we may use results from sections \ref{S:RLD} and \ref{S:LDs} in this section.
\qed
\end{remark}

\begin{definition}[{\cite[2.18]{kapmcg}}]
\label{dphie}
Define $\phie \in C^\infty_{|\sss|}(\cyl)$ and $\phio \in C^\infty_\sss(\cyl)$ by
\begin{align}
\phie(\sss) = 1-\sss \tanh \sss, \qquad \qquad \phio(\sss) = \tanh \sss.
\end{align}
\end{definition}

\begin{lemma}[{\cite[2.19]{kapmcg}}]
\label{Lphie}
$\phie$ and $\phio$ are even and odd in $\sss$ respectively and satisfy $\Lcal_\chi \phie = 0$ and $\Lcal_\chi \phio = 0$.  $\phie$ is strictly decreasing on $[0, \infty)$ and has a unique root $\sroot\in (0, \infty)$.  $\phio$ is strictly increasing in $\R$.  The Wronskian $W[ \phie, \phio]$ satisfies
\[ W[\phie, \phio](\sss): = \phie(\sss)\partial \phio(\sss) - \partial \phie(\sss) \phio(\sss) = 1.\]
\end{lemma}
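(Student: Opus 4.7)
The proof will consist entirely of direct calculation, since all four assertions for $\phie$ and $\phio$ are elementary consequences of the explicit formulas in \ref{dphie} together with the identities $\frac{d}{d\sss}\tanh\sss = \sech^2\sss$ and $\frac{d}{d\sss}\sech^2\sss = -2\sech^2\sss\,\tanh\sss$.

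The plan is as follows. First, parity is immediate: $\tanh(-\sss) = -\tanh\sss$ gives that $\phio$ is odd and $\phie$ is even. Next, since both functions are rotationally invariant, the equation $\Lcal_\chi \phi = 0$ reduces via \eqref{ELchirot} to the ODE $\phi'' + 2\sech^2\sss \, \phi = 0$. For $\phio$, I will compute $\phio'' = -2\sech^2\sss \, \tanh\sss$ and observe the cancellation. For $\phie$, I will compute $\phie' = -\tanh\sss - \sss\sech^2\sss$ and then $\phie'' = -2\sech^2\sss + 2\sss\sech^2\sss\,\tanh\sss$, which when combined with $2\sech^2\sss\,\phie = 2\sech^2\sss - 2\sss\sech^2\sss\,\tanh\sss$ gives the desired vanishing.

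For the monotonicity claims, the formula $\phio'(\sss) = \sech^2\sss > 0$ for all $\sss \in \R$ yields strict increase of $\phio$ on $\R$, while the formula $\phie'(\sss) = -\tanh\sss - \sss\sech^2\sss < 0$ for all $\sss > 0$ yields strict decrease of $\phie$ on $[0,\infty)$. For the existence and uniqueness of the root $\sroot$, I will combine the strict monotonicity on $[0,\infty)$ with the endpoint values $\phie(0) = 1 > 0$ and $\lim_{\sss\to\infty}\phie(\sss) = -\infty$ (since $\sss\tanh\sss \to \infty$) and invoke the intermediate value theorem.

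Finally, for the Wronskian I will compute directly
\begin{equation*}
W[\phie,\phio](\sss) = (1-\sss\tanh\sss)\sech^2\sss - (-\tanh\sss - \sss\sech^2\sss)\tanh\sss = \sech^2\sss + \tanh^2\sss = 1.
\end{equation*}
No step here presents any genuine difficulty; everything is bookkeeping once the explicit derivatives have been written down. The only conceptual point worth highlighting is the reduction of the PDE $\Lcal_\chi \phi = 0$ to the one-dimensional ODE for rotationally invariant $\phi$, which was already recorded in \eqref{ELchirot}.
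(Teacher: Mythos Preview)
Your proposal is correct and takes exactly the same approach as the paper, which simply says ``Straightforward calculation using Definition \ref{dphie} and \eqref{ELchirot}.'' You have merely written out those straightforward calculations in full detail; every step checks out.
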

\begin{proof}
Straightforward calculation using Definition \ref{dphie} and \eqref{ELchirot}.  
\end{proof}

\begin{remark}
\label{rH}
By straightforward computations (recall Lemma \ref{Lphie}),
\begin{align*} 
H(\sss) =  
 \big( F_+^{\phio}(\sbar) - F\big) \phio(\sbar) \, \phie(\sss) 
+ 
\big( - F_+^{\phie}(\sbar) + F\big) \phie(\sbar) \, \phio(\sss). 
\end{align*} 
Note also that when $\sss \geq 0$, $H[F; \sssunder](\sss)  = \phiunder[1, F; \sssunder](\sss)$ (recall \ref{dauxode}).
\qed 
\end{remark}

\begin{notation}
\label{Nab}
Given $\phat[F_1; \bsigmaunder]$ as in \ref{Pexist}, we define for $i \in \{1, \dots, k-1\}$ numbers $A_i, B_i$ by 
$  
\quad \phat[F_1; \bsigmaunder] = A_i \phie + B_i \phio \quad \text{on} \quad \cyl_{[\sss_{i}, \sss_{i+1}]}.
$  
\qed 
\end{notation}

In contrast to the situation for the smooth at the ends $\mathbb{K}$-RLD solution $\phat[ \bsigmaunder : \kcir ]$, 
$\phat[ F; \bsigma]$ as defined in \ref{Pexist} above becomes severely distorted on $\cyl_{(\sss_k, \sss_{k+1})}$ as $F \nearrow a_{k, \bsigmaunder}$.   
The following lemma makes this precise. 

\begin{lemma}
\label{Llastring}
Let $\phat = \phat[ F_1; \bsigmaunder]$ be as in \ref{Pexist}, where $F \in [ a_{k+1, \bsigmaunder}, a_{k, \bsigmaunder})$.  The following hold.
\begin{enumerate}[label=\emph{(\roman*)}]
\item $\lim_{F \nearrow a_{k, \bsigmaunder}} \sss_i = \sss_i[ a_{k, \bsigmaunder}; \bsigmaunder]$ for $i=1, \dots, k$ and $\lim_{F \nearrow a_{k, \bsigmaunder}} \sss_{k+1} = \infty$. 
\item $\lim_{F \nearrow a_{k, \bsigmaunder}} \phat(\sss_{k+1}) = 0$ and $\lim_{F \nearrow a_{k, \bsigmaunder}}\frac{ \phat(\sss_{k+1})}{\phat(\sss_{k})} = 0$.  
\end{enumerate}
\end{lemma}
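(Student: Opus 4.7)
The plan is to analyze the interval $[\sss_k, \sss_{k+1}]$ using the explicit basis $\{\phie, \phio\}$ from Notation \ref{Nab} and to exploit the asymptotics at infinity on the catenoid. First I would dispatch the portion of (i) concerning $\sss_1, \dots, \sss_k$ by an inductive continuity argument: each $\sss_i$ for $i\le k$ is the unique solution of an equation involving strictly monotone quantities (Lemmas \ref{LFmono}, \ref{LHmono}, and \ref{Lphie}), so by Proposition \ref{Pexist}(iii) each $\sss_i$, together with $\phat(\sss_i)$ and $F^\phat_{i\pm}$, depends continuously on $F_1$ and hence has the claimed limit as $F_1\nearrow a_{k,\bsigmaunder}$.

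The main step is the analysis on $[\sss_k, \sss_{k+1}]$. Using the Wronskian identity $W[\phie,\phio]\equiv 1$ from Lemma \ref{Lphie} together with the matching conditions $\phat(\sss_k)=A_k\phie(\sss_k)+B_k\phio(\sss_k)$ and $\partial\phat(\sss_k^+)=F^\phat_{k+}\phat(\sss_k)$, one computes
\begin{equation*}
A_k=\phat(\sss_k)\bigl(\sech^2\sss_k - F^\phat_{k+}\tanh\sss_k\bigr),\qquad
B_k=\phat(\sss_k)\bigl(F^\phat_{k+}\phie(\sss_k)-\partial\phie(\sss_k)\bigr).
\end{equation*}
The key structural observation is that the existence of a $(k+1)$-th jump is equivalent to $A_k>0$. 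Indeed, if $A_k>0$ then the ODE extension of $\phat$ past $\sss_k$ tends to $-\infty$ at $\infty$, so it has a first zero $\sss^\ast<\infty$; by Lemma \ref{LFmono} the flux $F^\phat_-$ is strictly increasing from $-F^\phat_{k+}$ at $\sss_k^+$ to $+\infty$ as $\sss\to\sss^{\ast-}$, hence attains the prescribed value $F_{k+1-}>0$ uniquely at some $\sss_{k+1}\in(\sss_k, \sss^\ast)$. Conversely, if $A_k\le 0$, then a direct check using the large-$\sss$ expansions $\phie(\sss)\sim 1-\sss$ and $\phio(\sss)\sim 1$ shows that $\phat$ remains positive on $[\sss_k,\infty)$ with $F^\phat_-(\sss)\to 0^-$ as $\sss\to\infty$; combined with monotonicity this gives $F^\phat_-<0$ throughout, ruling out a $(k+1)$-th jump. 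Since $\phat[a_{k,\bsigmaunder};\bsigmaunder]$ has only $k$ jumps while $\phat[F_1;\bsigmaunder]$ has $k+1$ for $F_1<a_{k,\bsigmaunder}$, continuity of $A_k$ in $F_1$ forces $A_k\to 0^+$ as $F_1\nearrow a_{k,\bsigmaunder}$. A short computation using $\sech^2+\tanh^2\equiv 1$ then yields $B_k\to B_\ast:=\phat(\sss_k^\ast)/\tanh\sss_k^\ast>0$, where $\sss_k^\ast:=\sss_k[a_{k,\bsigmaunder};\bsigmaunder]$.

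It remains to extract the asymptotic behavior of $\sss_{k+1}$ and $\phat(\sss_{k+1})$. Using $\phie(\sss)=1-\sss+O(\sss e^{-2\sss})$, $\phio(\sss)=1+O(e^{-2\sss})$, $\partial\phie(\sss)=-1+O(\sss e^{-2\sss})$, and $\partial\phio(\sss)=O(e^{-2\sss})$, the defining equation $F^\phat_-(\sss_{k+1})=F_{k+1-}$ reduces, to leading order, to
\begin{equation*}
A_k\bigl(1+F_{k+1-}\sss_{k+1}\bigr)=F_{k+1-}\bigl(A_k+B_k\bigr)+O(e^{-2\sss_{k+1}}),
\end{equation*}
so $\sss_{k+1}=B_k/A_k+1-1/F_{k+1-}+o(1)\to\infty$, which proves the second half of (i). Substituting back gives
\begin{equation*}
\phat(\sss_{k+1})=A_k+B_k-A_k\sss_{k+1}+o(1)=A_k/F_{k+1-}+o(1)\to 0,
\end{equation*}
and since $\phat(\sss_k)$ has a positive limit, $\phat(\sss_{k+1})/\phat(\sss_k)\to 0$ as well. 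The delicate point is justifying that the exponentially small corrections do not spoil the leading-order cancellation $A_k+B_k-A_k\sss_{k+1}=A_k/F_{k+1-}$ as $A_k\to 0^+$; this is routine once one observes that $\sss_{k+1}\sim B_k/A_k$, so the error $e^{-2\sss_{k+1}}$ decays faster than any polynomial in $A_k$.
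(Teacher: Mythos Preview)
Your approach is correct and differs from the paper's. The paper obtains (i) directly from continuity (Proposition \ref{Pexist}(iii)) and proves (ii) by integrating $\partial(\log\phat)=-F^\phat_-$ over $(\sss_k,\sss_{k+1})$, changing variables to $f=F^\phat_-$ via Lemma \ref{LFmono}(i), splitting at $f=0$, and showing the piece over $[0,F^\phat_{k+1-}]$ diverges because $V(\sss')\to 0$ at the point $\sss'$ where $F^\phat_-$ vanishes. That flux argument is robust (it uses only the general properties of $V$ from Lemma \ref{AV1}), whereas your explicit-basis analysis is specific to the catenoid but yields the sharper asymptotics $\sss_{k+1}\sim B_k/A_k$ and $\phat(\sss_{k+1})\sim A_k/F_{k+1-}$, which is exactly the kind of information used downstream in Lemma \ref{Lcatend}.

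Two points to tighten. First, the claim that $A_k\le 0$ forces $\phat>0$ on $[\sss_k,\infty)$ is not settled by large-$\sss$ expansions alone, since those only control the tail; the clean argument is $(\phat/\phio)'=-A_k/\phio^2\ge 0$, so $\phat/\phio$ is non-decreasing from its positive value at $\sss_k$. Second, there is a mild circularity in invoking the asymptotic equation to deduce $\sss_{k+1}\to\infty$, since the $O(e^{-2\sss_{k+1}})$ error is only negligible once $\sss_{k+1}$ is already large; you should first show $\sss_{k+1}\to\infty$ directly (e.g.\ if $\sss_{k+1}\to\sss^*<\infty$ along a subsequence, pass to the limit in $F^\phat_-(\sss_{k+1})=F_{k+1-}$ with $A_k\to 0$, $B_k\to B_*>0$ to reach the contradiction $F_{k+1-}=-\sech^2\sss^*/\tanh\sss^*<0$) and only then use the expansion for the precise rate and for $\phat(\sss_{k+1})\to 0$.
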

\begin{proof}
(i) follows immediately from Proposition \ref{Pexist}(iii).  Since by \ref{dF} $F^{\phat}_+(\sss) = \partial ( \log \phat)$ on any domain on which $\phat$ is smooth, we have by integrating on $(\sss_k, \sss_{k+1})$ that 
\begin{align}
\label{Ephiratioint}
\log \bigg( \frac{\phat(\sss_{k+1})}{\phat(\sss_k)}\bigg) = - \int_{\sss_k}^{\sss_{k+1}} F^\phat_-(\sss) \, d\sss.
\end{align}
Reparametrizing the integral in \eqref{Ephiratioint} by $\big(F^\phat_-|_{[\sss_i, \sss_{i+1}]}\big)^{-1}$ (recall \ref{LFmono}(i)), we have
\begin{multline}
\label{Ephiratio2}
\log\bigg(\frac{\phat(\sss_{i+1})}{\phat(\sss_{i})}\bigg) =  (I)+(II), \qquad\text{where} 
\\ 
(I):= -\int_{-F^\phat_{k+}}^{0} \frac{ f}{V(\sss(f))+f^2}dF, 
\qquad 
(II):= -\int_{0}^{F^\phat_{k+1-}} \frac{ f}{V( \sss(f))+f^2}df. 
\end{multline}
Note that (I) and (II) have opposite signs.  To estimate (I), recall from the proof of \ref{Pexist} that $F^\phat_+(\sss) < F^{\phiend}_+(\sss)$ on  $(\sss_k, \sss')$, where $\sss' \in (\sss_k, \sss_{k+1})$ is defined by requesting that $F^{\phat}_+(\sss') = 0$.  Using \ref{AV2}, we conclude that $F^{\phat}_+(\sss) < CV(\sss)$ on $(\sss_k, \sss')$, and from this we estimate  $|(I)|< C F^\phat_{k+}$.  For (II), we estimate 
\begin{equation}
\begin{aligned}
|(II)| &> \int_0^{F^\phat_{k+1-}} \frac{ f}{V(\sss')+ f^2} d f = \frac{1}{2} \log \bigg( 1+ \frac{F^{\phat}_{k+1-}}{V(\sss')}\bigg).
\end{aligned}
\end{equation} 
Since $\lim_{F \nearrow a_{k, \bsigmaunder}} \sss' = \infty$, it follows that $\lim_{F \nearrow a_{k, \bsigmaunder}}V(\sss') = 0$ and we conclude the proof of (ii).  
\end{proof}

\begin{lemma}
\label{Lcatend}
Let $\bsigmaunder = (\bsigma, \xibold) \in \ell^1 \left( \R^\N \right) \oplus \ell^\infty\left( \R^\N\right)$ satisfy $|\xibold|_{\ell^\infty}<\frac{1}{10}$.  There exist constants $\epsilonunder_1>0, C_1> 0$, depending only on $|\bsigma|_{\ell^1}$, such that for $k\in \N$, we have (recall \ref{Pexist})
$a_{k, \bsigmaunder}+ \epsilonunder_2/k^2< a_{k-1, \bsigmaunder}$ and on $[ a_{k, \bsigmaunder} , a_{k, \bsigmaunder}+ \epsilonunder_2/ k^2]$ we have  $\frac{ \partial A_k [F; \bsigmaunder]}{\partial F} \Sim_{C_1} -k$ (Recall also $A_k[ a_{k, \bsigmaunder}; \bsigmaunder] = 0$).
\end{lemma}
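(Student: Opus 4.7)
The plan is to derive an explicit formula for $A_k$ from the solution representation and then estimate $\partial A_k/\partial F$ using the flux derivative estimates of Corollary~\ref{Csderiv}. On the tail $\cyl_{[\sss_k,\infty)}$, Remark~\ref{rH} gives $\phat = \phat(\sss_k)\,H[F^\phat_{k+};\sss_k]$, and matching coefficients against $\phat = A_k\phie + B_k\phio$ yields
\begin{equation*}
A_k = \phat(\sss_k)\,\phio(\sss_k)\,\bigl(F_+^{\phio}(\sss_k)-F^\phat_{k+}\bigr).
\end{equation*}
The vanishing $A_k[a_{k,\bsigmaunder};\bsigmaunder]=0$ stated in the lemma is then equivalent to $F_+^{\phio}(\sss_k)=F^\phat_{k+}$ at $F=a_k$, which can be recovered by passing to the limit $F\nearrow a_k$ from the $(k+1)$-jump regime via Lemma~\ref{Llastring}.

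Next I differentiate this formula in $F:=F^\phat_{1-}$. Using the identity $dF_+^\phio/d\sss = -(V+(F_+^\phio)^2)$ (a consequence of Lemma~\ref{LFmono}(i) for smooth $\phio$), together with $F^\phat_{k+} = (1+\xi_k)e^{\sum_{l=1}^{k-1}\sigma_l}F$, this gives
\begin{equation*}
\frac{\partial A_k}{\partial F} = \phat(\sss_k)\phio(\sss_k)\Big[{-}\bigl(V(\sss_k)+(F_+^\phio(\sss_k))^2\bigr)\frac{\partial \sss_k}{\partial F}-(1+\xi_k)e^{\sum_{l=1}^{k-1}\sigma_l}\Big] + \bigl(F_+^\phio(\sss_k)-F^\phat_{k+}\bigr)\,\frac{\partial(\phat\phio)(\sss_k)}{\partial F}.
\end{equation*}
At $F=a_k$ the last summand vanishes. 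Corollary~\ref{Csderiv}(i) gives $(V(\sss_k)+(F^\phat_{k-})^2)\partial\sss_k/\partial F \Sim_C k$; the hypothesis $|\xibold|_{\ell^\infty}<1/10$ makes $F^\phat_{k+}$ comparable to $F^\phat_{k-}$, and at $F=a_k$ also to $F_+^\phio(\sss_k^*)$, so the estimate upgrades to $(V(\sss_k^*)+(F_+^\phio(\sss_k^*))^2)\partial\sss_k/\partial F \Sim_C k$. Combined with $\phat(\sss_k^*),\phio(\sss_k^*)\Sim_C 1$ (which follows from Lemma~\ref{Lrldest}(iii) and the bound $\sss_k^* \sim \tfrac12\log k$ forced by $F_+^\phio(\sss_k^*)=F^\phat_{k+}(a_k)\sim 1/k$), this yields $\partial A_k/\partial F|_{F=a_k}\Sim_C -k$.

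To propagate the estimate onto $[a_k,a_k+\epsilonunder_1/k^2]$, I will control the variation of $\sss_k$. Since $V(\sss_k^*)\gtrsim 1/k$ and $(F^\phat_{k-})^2\lesssim 1/k^2$, the same Csderiv estimate yields $\partial\sss_k/\partial F \lesssim k^2$; integrating over an interval of width $\epsilonunder_1/k^2$ gives $|\sss_k-\sss_k^*|\lesssim \epsilonunder_1$, so each of $V(\sss_k), F_+^\phio(\sss_k), \phat(\sss_k), \phio(\sss_k)$ stays within $O(\epsilonunder_1)$ of its value at $a_k$. Meanwhile a bootstrap from the a priori bound $|\partial A_k/\partial F|\lesssim k$ combined with $A_k(a_k)=0$ gives $|A_k(F)|\lesssim \epsilonunder_1/k$ on the interval, so the last summand in the derivative formula is $O(\epsilonunder_1)$. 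Taking $\epsilonunder_1$ sufficiently small (depending only on $|\bsigma|_{\ell^1}$) will make $\partial A_k/\partial F \Sim_{C_1} -k$ hold uniformly on $[a_k,a_k+\epsilonunder_1/k^2]$. Because the estimate certifies that $A_k$ is smooth and nowhere singular on this interval, we must have $[a_k,a_k+\epsilonunder_1/k^2]\subset [a_k,a_{k-1})$, giving the first claim (with $\epsilonunder_2:=\epsilonunder_1$, correcting the evident typo in the statement).

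The main obstacle will be rigorously extending Corollary~\ref{Csderiv} from the $(k+1)$-jump regime $(F,\left.\bsigmaunder\right|_{k+1})\in S_{k+1}$, where it is stated, to the $k$-jump interval $F\in[a_k,a_{k-1})$ and to the weaker hypothesis $|\xibold|_{\ell^\infty}<1/10$. I plan to handle this either by appealing to continuity of $\partial\sss_k/\partial F$ as $F\nearrow a_k$ from the $(k+1)$-jump side (where Csderiv applies directly) and then extending to the full interval via the $O(\epsilonunder_1)$ control on $\sss_k$ already obtained, or else by rerunning the recursive argument of Csderiv starting from identity~\eqref{Msderiv} of Lemma~\ref{Lsderiv}, using limiting versions of the Lemma~\ref{Lrldest} bounds at the boundary $F=a_k$.
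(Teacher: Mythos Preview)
Your approach is correct. The paper omits its own proof, deferring to \cite[Lemma 7.4]{kapmcg} with the remark that the only change is working to the right of $a_{k,\bsigmaunder}$ rather than to the left; your strategy of expressing $A_k$ via Remark~\ref{rH} as $A_k=\phat(\sss_k)\phio(\sss_k)\bigl(F_+^{\phio}(\sss_k)-F^\phat_{k+}\bigr)$, differentiating in $F$, and invoking the recursive derivative estimates of Lemma~\ref{Lsderiv} and Corollary~\ref{Csderiv} is the natural one and almost certainly mirrors that referenced argument.

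On your stated obstacle: the hypothesis $|\xibold|_{\ell^\infty}<\min(1/10,1/k)$ in Corollary~\ref{Csderiv} is less restrictive than you fear---rerunning the recursion \eqref{Epk} with only $|\xibold|_{\ell^\infty}<1/10$, one still gets $|R_i-1|\lesssim|\xi_i|/k$ from $V(\sss_i)\ge V(\sss_k)\gtrsim 1/k$, so $\prod_i Q_iR_i\Sim_C 1$ and $P_k\Sim_C k$ survive with a larger constant. The extension from the $(k{+}1)$-jump regime $F<a_{k,\bsigmaunder}$ to $F\in[a_{k,\bsigmaunder},a_{k,\bsigmaunder}+\epsilonunder_1/k^2]$ is the more genuine point, and your continuity-plus-bootstrap (pass to $F=a_{k,\bsigmaunder}$ by continuity of $\partial\sss_k/\partial F$, use the resulting bound $\partial\sss_k/\partial F\lesssim k^2$ to confine $|\sss_k-\sss_k^*|\lesssim\epsilonunder_1$, which in turn validates the Lemma~\ref{Lrldest}-type inputs on the whole interval) closes the loop correctly.
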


\begin{remark}
Lemma \ref{Lcatend} is similar to \cite[Lemma 7.4]{kapmcg}, except that in the present case, we are interested in the behavior of $A_k[F; \bsigmaunder]$ to the right of $a_{k, \bsigmaunder}$ instead of to the left, as was the case in \cite{kapmcg}. 
\qed 
\end{remark}

\begin{proof}
We omit the proof because it is almost identical to the proof of \cite[Lemma 7.4]{kapmcg}. 
\end{proof}

\begin{definition}
\label{dphattau}  
Let $\epsilonunder_2: = \epsilonunder_1/C_1>0$ with $\epsilonunder_1$ and $C_1>0 $ as in the statement of Lemma \ref{Lcatend}.  
Given $\kcir \in 2\N$ and $k: = \kcir/2$, $\bsigmaunder$ as in \ref{Lcatend}, 
and $\tautilde \in (- \epsilonunder_2/k, 0]$, we define $\phat[ \bsigmaunder, \tautilde : \kcir] := \phat[ F; \bsigmaunder]$, 
where $F \in [a_{k, \bsigmaunder}, a_{k, \bsigmaunder}+\epsilonunder_2/k^2]$ and $A_k[F; \bsigmaunder] = \tautilde$.  If $\tautilde = 0$, we may suppress $\tau$ and write $\phat[ \bsigmaunder, 0 :\kcir] = \phat[\bsigmaunder: \kcir]$.
\end{definition}

By modifying the proof of \ref{Lcatend} and statement of \ref{dphattau}, 
we can analogously define for $\tautilde \in (\epsilonunder_2/k, 0]$ RLD solutions $\phat[\bsigmaunder, \tautilde :\kcir]$ and $\kcir$ odd, $\kcir>1$.  
Moreover, by a straightforward modification of the statement and proof of \ref{Lphiavg}, 
we construct for $\kcir \in \N$ satisfying $\kcir >1$ LD solutions $\Phi\llceil\bsigmaunder, \tautilde: \kcir, \mbold\rrfloor$ 
whose average is a multiple of $\phat[\bsigmaunderslash+ \bsigmaunder, \tautilde : \kcir] $.  
For each $\tautilde \in (- \epsilonunder_2/k, 0]$, 
we define LD solutions $\varphi\llbracket \zetabold^\top\rrbracket$ and $\varphi\llbracket \zetabold\rrbracket$ 
as in \ref{dtau1} and \ref{dtau2} but with the modified definition of $\Phi\llceil\bsigmaunder, \tautilde: \kcir, \mbold\rrfloor$.

Because $\Sigma=\KK$ is noncompact, we must modify the definition of the initial surfaces (recall \ref{Dinit}), and we will need the following. 

\begin{definition}
\label{Dcore} 
We define $\catcore \subset  \Sigma=\KK$  to be the convex hull in the $\chi$ or $g$ metric (recall \ref{Ecyl}) of 
$\Lpar[\sss_k+1]$, and also $\catend: = \Sigma \setminus \catcore$. 
\end{definition}

\begin{definition}
\label{Dinitcat} 
Given $\varphi = \varphi\llbracket \zetabold \rrbracket$ as above and $\kappaunderbold$ as in \ref{dalpha} we define the smooth initial surface 
$$
M = M[\varphi, \kappaunderbold]:= 
\graph^{\R^3}_{\Omega}\big( \varphigl_+\, \big) \bigcup \graph^{\R^3}_{\Omega}\big(-\varphigl_-\, \big) \bigcup 
\bigsqcup_{p\in L} \cat[p, \tau_p, \kappaunder_p ], 
$$ 
where $ \Omega : =  \Sigma \setminus \disjun_{p\in L} D^{\Sigma}_p( 9 \tau_p)$ and the functions 
$\varphigl_{\pm} = \varphigl_{\pm} [ \varphi, \kappaunderbold] : \Omega \rightarrow \R$
are defined as follows:
\begin{enumerate}[label = \emph{(\roman*)} ]
\item On $\catcore$, $\varphigl_{\pm}$ is defined as in (i)-(ii) of \ref{Dinit}. 
\item On $\catend$,  
$\varphigl_{\pm} := \Psibold[1, 2; \dbold^{\Sigma,\chi}_{\Lpar[\sss_k]}](\varphi, \varphiend^\pm)$, 
where $\varphiend^{\pm} \in C^{\infty}(\catend)$ are the unique functions whose graphs 
$\graph^{\R^3}_{\catend}(\pm \varphiend^{\pm})$ over $\catend$ are catenoidal ends with vertical axes and initial values
\[ \varphiend^{\pm}(p_k) = \varphi_{\ave}(p_k), \qquad \quad
\frac{\partial \varphiend^\pm}{\partial \sss}(p_k) = \lim_{\sss \searrow \sss_k} \frac{\partial \varphi_{\ave}}{\partial \sss}.
\] 
\end{enumerate}
We define also $\Mend := \graph^{\R^3}_{\catend}(\varphigl_+)\cup \graph^{\R^3}_{\catend}(\varphigl_-)$ 
\\ $\phantom1$ \hfill 
and
$\quad \Mcore := \graph^{\R^3}_{\catcore}(\varphigl_+)\cup \graph^{\R^3}_{\catcore}(\varphigl_-)$.
\end{definition}

We need now to update the definition of the global norms to deal with the ends. 

\begin{definition}
\label{D:normcat}
For $k\in \N, \betahat \in (0, 1), \gammahat \in \R$, we define 
\begin{align*}
\| u \|_{k, \betahat, \gammahat, \gammahat'; M} : = \| u \|_{k, \betahat, \gammahat, \gammahat'; \Mcore} + \| u : C^{k, \betahat}(\Mend, \frac{1}{2} |A|^2 g )\| ,
\end{align*}
where the first term on the right hand side is as in \ref{D:norm} and $|A|$ above is the length of the second fundamental form on $\Mend$. 
\end{definition}

\begin{lemma}
\label{LglobalHcat}
$\| H - J_M(w^+, w^-)\|_{0, \beta, \gamma-2, \gamma'-2; M} \le \tau_{\max}^{1+\alpha/3}$.
\end{lemma}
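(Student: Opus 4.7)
The plan is to reduce to Lemma \ref{LglobalH}. Using Definition \ref{D:normcat}, the norm splits as a sum of a piece on $\Mcore$ and a piece on $\Mend$, and I will handle them separately. On $\Mcore$, the initial surface $M$ is defined exactly as in Section \ref{S:LD} and the norm on $\Mcore$ coincides with the one used in \ref{LglobalH}; moreover, the obstruction functions $w^\pm \in \skernel[L]$ are supported in annuli around $L \subset \Mcore$ by \ref{aK}(ii), so $w^\pm$ and $J_M(w^+,w^-)$ vanish on $\Mend$. Thus I would first quote \ref{LglobalH} verbatim to bound the $\Mcore$ contribution by $\tau_{\max}^{1+\alpha/3}$, reducing the problem to the estimate $\| H : C^{0,\beta}(\Mend, \tfrac{1}{2}|A|^2 g)\| \le \tau_{\max}^{1+\alpha/3}$.

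Next I would subdivide $\Mend$ further into the interpolation annulus $A_1 := \Mend \cap \PiSig^{-1}(\{1 \le \dbold^\Sigma_{\Lpar[\sss_k]} \le 2\})$ and the catenoidal tail $A_2 := \Mend \setminus A_1$. On $A_2$, Definition \ref{Dinitcat}(ii) gives $\varphigl_\pm \equiv \varphiend^\pm$, so each connected component of $A_2$ is (by construction) a subset of a Euclidean catenoid with vertical axis; since Euclidean catenoids are minimal, $H \equiv 0$ there and there is nothing to estimate. The entire problem is thus reduced to the interpolation annulus $A_1$.

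On $A_1$, I would expand the mean curvature as $H = \Lcal_{\mathrm{cat}}(\varphigl_\pm - \varphiend^\pm) + Q$, where $\Lcal_{\mathrm{cat}}$ is the Jacobi operator of the vertical-axis catenoid associated with $\varphiend^\pm$ and $Q$ gathers the quadratic terms. Writing $\varphigl_\pm - \varphiend^\pm = \varphi_\osc + (\varphi_\ave - \varphiend^\pm)$ plus cutoff commutator contributions (which come with the same bounds as the two main terms by standard cutoff estimates), I would control the two pieces as follows. The oscillatory part $\varphi_\osc = \tau_1 \Phip_\osc$ is bounded with all derivatives by $C\tau_{\max} e^{-cm}$ on $A_1$ by Proposition \ref{LPhipest}, which is much better than required. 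For the average part, the matching conditions in \ref{Dinitcat}(ii) force $\varphi_\ave - \varphiend^\pm$ together with its first $\sss$-derivative to vanish at $\sss_k$; since $\varphi_\ave$ satisfies the linearized equation $\Lcal_\chi \varphi_\ave = 0$ while $\varphiend^\pm$ is an exact (nonlinear) minimal graph with the same Cauchy data, the difference is driven by the quadratic correction and a standard ODE comparison on the $O(1)$-length interval beyond $\sss_k$ gives $|\varphi_\ave - \varphiend^\pm| \le C\tau_{\max}^2$ in $C^3$ (scaled by $\tau_{\max}^{-1}$ on the target). Finally, the conformal rescaling $\tfrac{1}{2}|A|^2 g = \sech^2\sss\, \chi$ on the catenoid substantially weakens the weighted norm away from the waist, so combining the two bounds gives $\| H : C^{0,\beta}(A_1, \tfrac{1}{2}|A|^2 g)\| \le \tau_{\max}^{1+\alpha/3}$ with room to spare.

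The main obstacle is the interpolation estimate on $A_1$: carefully tracking the two distinct equations satisfied by $\varphi_\ave$ (linearized Jacobi on the catenoid) and $\varphiend^\pm$ (exact minimal graph equation for a vertical-axis catenoid in $\R^3$), and verifying that the prescribed matching of value and first derivative at $\sss_k$ is strong enough to yield a quadratic-in-$\tau_{\max}$ gap in $C^3$ on a neighborhood of $\sss_k$ of uniform size. Everything else — the oscillatory decay from \ref{LPhipest}, the minimality of exact catenoidal ends in $A_2$, the commutator terms coming from $\Psibold[1,2; \dbold^\Sigma_{\Lpar[\sss_k]}]$, and the translation of the Jacobi-operator estimate into a weighted bound under the rescaling $\tfrac{1}{2}|A|^2 g$ — is routine and parallels the proof of Lemma \ref{LglobalH}.
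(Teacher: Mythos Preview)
Your proposal is correct and follows essentially the same structure as the paper's proof: split into $\Mcore$ (handled by arguing as in Lemma \ref{LglobalH}), the exact catenoidal tail (where $H\equiv 0$), and the transition annulus (where one estimates $\varphigl_\pm - \varphiend^\pm$ and then expands $H$ in linear plus quadratic terms). The only notable difference is in the transition-region estimate: the paper bounds $\|\varphiend^\pm - \varphi\|$ directly by $\tau_{\max}^{3/2}$ via \ref{con:one}(v) and the matching initial values, whereas you decompose $\varphi = \varphi_\ave + \varphi_\osc$ and run an explicit ODE comparison to get $O(\tau_{\max}^2)$ for the average part and exponential smallness for the oscillatory part---a slightly sharper but equivalent route.
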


\begin{proof}
Arguing as in the proof of \ref{LglobalH} and using that the graphs of $\pm \varphiend^{\pm}$ have zero mean curvature, we need only estimate the mean curvature on the transition region in \ref{Dinitcat}(ii).  We have via \ref{Dinitcat}(ii) that
\[ 
\varphigl_\pm = \varphi +  \Psibold[1, 2; \dbold^{\Sigma,\chi}_{\Lpar[\sss_k]}](0, \varphiend^{\pm}- \varphi) \qquad 
\text{on} \quad 
\catend \cap D^{\Sigma,\chi}_{\Lpar[\sss_k]}(2).
\] 
Using  \ref{con:one}(v) and the initial values in \ref{Dinitcat}(ii), note that  
$\| \varphiend^\pm - \varphi : C^k(\catend \cap D^{\Sigma,\chi}_{\Lpar[\sss_k]}(2) ) \| \le  \tau_{\max}^{3/2}.$ 
It now follows expanding $H'_{\pm}$ in linear and higher order terms as in the proof of \ref{Lgluingreg} that 
\begin{align*}
\| H'_{\pm} : C^{0, \beta}(\catend \cap D^{\Sigma,\chi}_{\Lpar[\sss_k]}(2) , g)\| \le  \tau_{\max}^{3/2},
\end{align*}
where $H'_{\pm}$ denotes the pushforward of the mean curvature of the graph of $\varphigl_\pm$ to $\mathbb{K}$ by $\Pi_{\mathbb{K}}$.  This concludes the proof. 
\end{proof}

\begin{definition}
We define smooth surfaces $\Sigma_\pm := \catcore \cup \graph_{\catend}( \varphi'_{\pm})$, 
where $\varphi'_{\pm} \in C^{\infty}(\catend)$ are defined by requesting that 
$\varphi'_{\pm} := \pm \varphiend$ on $\catend \setminus D^{\Sigma}_{\Lpar[\sss_k]}(2)$ and 
$ 
\varphi'_{\pm} := \Psibold \big[ 1, 2; \dbold^{\Sigma,\chi}_{\Lpar[\sss_k]} \big]  \big(0, \pm \varphiend \big) 
$ 
on $\catend$. 
\end{definition}

\begin{remark}
Note that in the metric $h: = \frac{1}{2} |A|^2 g$ 
the ends of $\Sigma_\pm$ are isometric to spherical caps with the poles removed.
\qed 
\end{remark}

We next modify the definition of $\RMa          $ to deal with the ends.  
For this, let $E\in C^{0, \beta}_{\sym[m]}(M)$, and let $E'_{\pm}$ be as in \ref{Edecom}.  Using \ref{cLker}, \ref{aK}, and that $h$ is very close to the round metric on $\Sph^2$, there are unique $u'_\pm \in C^{2, \beta}_{\sym[m]}(\Sigma_\pm)$ and $w^{\pm}_{E, 1}\in \skernel_{\sym[m]}[L]$ such that
\begin{align}
\label{Eupcat} 
(\Delta_{h}+2)u'_\pm = \frac{1}{2}|A|^2 \big( E'_{\pm} + w^{\pm}_{E, 1}\big) 
\quad \text{on}
\quad  \Sigma_\pm.
\end{align}

\begin{notation}
\label{Npmcat}
If $f^{\pm}$ are functions supported on $\Sigma_{\pm} \setminus \disjun_{p\in L} D^\Sigma_p(b \tau_p)$, we define $J_M(f^+, f^-)$ to be the function on $M$ supported on $M \setminus \bigsqcup_{p\in L} D^{\R^3}_p(9\tau_p)$ defined by $f^+\circ \Pi_{\Sigma_+}$ on the graph of $\varphi^{gl}_{+}$ and by $f^- \circ \Pi_{\Sigma_-}$ on the graph of $- \varphi^{gl}_-$.
\qed 
\end{notation}

Note in particular \ref{Eupcat} implies  
\begin{equation}
\begin{gathered}
\Lcal_\Sigma u'_{\pm} = E'_{\pm} + w^{\pm}_{E, 1} \quad \text{on} \quad \catcore \quad \text{and}\\ \quad
\Lcal_{M} J_M(u'_+, u'_-)  = E'_{\pm} \quad \text{on} \quad  \Mend\setminus D^{\R^3}_{\Lpar[\sss_k]}(3).
\end{gathered}
\end{equation}
We define $\RMa          $ exactly as in \ref{DRMappr}, except using the modified definitions of $u'_{\pm}$ and $J_M$ just discussed.  Further, we define $\Rcal_M$ as in the statement of \ref{Plinear} and $\Rcal'_M$ as in the proof of \ref{Plinear2}.

Define $(u, w^+_H, w^-_H) = - \Rcal'_M(H - J_M(w^+, w^-))$.  Using \ref{LglobalHcat}, the proof of \ref{Plinear}, \ref{D:normcat}, and \eqref{Eupcat}, it is not difficult to see (using separation of variables to estimate $u$ on the ends) that
\begin{align}
\label{Euhcat}
\| w^\pm_H: C^{0, \beta}(\Sigma, g)\| + \| u\|_{2, \beta, \gamma, \gamma'; M} \le \tau_{\max}^{1+ \alpha/4}. 
\end{align}

We next modify the estimates of the quadratic terms.  Given $\phi \in C^{2, \beta}(M)$ with $\| \phi\|_{2, \beta, \gamma, \gamma'; M}\le \tau_{\max}^{1+\alpha/4}$, we have by arguing as in the proof of \ref{Lquad} and using \ref{D:normcat} that
\begin{align}
\label{Equadcat}
\|H_\phi - H- \Lcal_M \phi\|_{0, \beta, \gamma-2, \gamma'-2; M} \le \tau^{3/2}_{\max}. 
\end{align}
Finally, define $(u_Q, w^+_Q, w^-_Q) = - \Rcal'_M(H_\phi - H - \Lcal_M \phi)$.  Arguing as above, we have
\begin{align}
\label{Eqestcat}
\| w^{\pm}_Q : C^{0, \beta}(\Sigma, g)\| + \| u_Q\|_{2, \beta, \gamma, \gamma'; M} \le \tau^{4/3}_{\max}. 
\end{align}

\begin{lemma}
\label{Ldiffeocat}
There exists a family of diffeomorphisms $\Fcal^{\mathbb{K}}_{\zetabold}: \mathbb{K}\rightarrow \mathbb{K}$, $\zetabold \in \Btilde_{\Pcal}$ satisfying \ref{Azetabold}\ref{Aa}-\ref{Ab}.
\end{lemma}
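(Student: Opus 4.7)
The plan is to construct $\Fcal^\cat_{\zetabold}$ in direct analogy with $\Fcal^\Sigma_{\zetabold}$ from Lemma \ref{Ldiffeoshr}, the only new issue being the noncompactness of $\cat$ which we will handle by arranging $\Fcal^\cat_{\zetabold}$ to equal the identity outside a compact region covering the entire gluing zone. Concretely, for $\zetabold \in \Btilde_{\Pcal}$ denote the positive $\sss$-coordinates of the circles of $\Lpar\llbracket\zerobold\rrbracket$ by $\sbold = (\sss_1,\dots,\sss_k)$ and those of $\Lpar\llbracket\zetabold\rrbracket$ by $\sbold' = (\sss_1',\dots,\sss_k')$. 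By the definitions in \ref{dtau1}, \ref{dtau2}, \ref{dphattau}, \ref{Pexist} together with the smoothness statements in \ref{Lsderiv} and \ref{Csderiv}, the assignment $\zetabold \mapsto \sbold'$ is continuous, depends smoothly on $\zetabold^\top$, and satisfies $\sbold' = \sbold$ at $\zetabold = \zerobold$, with $|\sss_i' - \sss_i|$ uniformly bounded (in fact arbitrarily small) over $\zetabold \in \Btilde_{\Pcal}$ by choosing $\cunder$ appropriately.

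Next I will define $\Fcal^\cat_{\zetabold} : \cat \to \cat$ to be the unique $O(2) \times \Z_2$-equivariant diffeomorphism characterized by $\Fcal^\cat_{\zetabold}(X_\cat(p,\sss)) = X_\cat(p, f_\zetabold(\sss))$ for a diffeomorphism $f_\zetabold \in C^\infty(\R)$ which is odd in $\sss$, equals the translation $\sss \mapsto \sss_i' - \sss_i + \sss$ on $(\sss_i - 5\delta, \sss_i + 5\delta)$ for each $i = 1,\dots,k$ (so that \ref{Azetabold}\ref{Ab} holds, after invoking \ref{LGdiff} to compare $g$- and $\chi$-geodesic balls in the relevant scale as in the proof of \ref{LVker}), and equals the identity outside a compact interval containing all the jump latitudes. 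Since $\sss_k < C\log k$ by \ref{Lrldest}(i), such a compact interval can be chosen depending only on $\kcir$; in particular $\Fcal^\cat_{\zetabold}$ is the identity on a neighborhood of each end of $\cat$, so no issue arises from noncompactness. The interpolation between the translations near each $\sss_i$ and the identity on the complement is carried out using the cutoffs $\Psibold$ from \S\ref{sub:not}, applied linearly in the $\sss$-direction.

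The $C^4$ bound \ref{Azetabold}\ref{Aa} for $\Fcal^\cat_{\zetabold}$ reduces, via the smooth parametrization $X_\cat$ and the equivariance, to a $C^4$ bound on $f_\zetabold$ and its inverse on the compact interval containing all jump latitudes. Because $|\sss_i' - \sss_i|$ is uniformly small, the separation $|\sss_{i+1} - \sss_i| > 1/(Ck)$ from \ref{Lrldest}(ii) guarantees that the supports $(\sss_i - 5\delta, \sss_i + 5\delta)$ of the translations remain disjoint (here $\delta = 1/(9m)$ is much smaller than $1/(Ck)$ by \ref{Amk}), and hence standard bounds on the cutoff constructions yield a $C^4$ bound depending only on $\kcir$. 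Continuity in $\zetabold$ is immediate from the smooth dependence of $\sbold'$ on $\zetabold$ and the explicit nature of the cutoff interpolation.

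The only real point requiring care — and the one I expect to be the main technical nuisance rather than a genuine obstacle — is the bookkeeping that the interpolation windows do not overlap and that $f_\zetabold$ remains a diffeomorphism of $\R$. This is automatic once $m$ is large enough in terms of $\kcir$ (so that $\delta$ is much smaller than the minimal gap $1/(Ck)$ between consecutive $\sss_i$'s) and $\cunder$ is chosen so that $|\sss_i' - \sss_i| < \delta$, both of which are compatible with the hypotheses already imposed in Theorem \ref{Trldldgen} and used throughout this section.
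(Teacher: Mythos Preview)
Your proposal is correct and takes essentially the same approach as the paper, which simply says the proof is very similar to that of Lemma \ref{Ldiffeoshr}. You have supplied the natural extra detail needed for the noncompact catenoid (making $f_\zetabold$ the identity outside a compact interval containing all jump latitudes), which is the obvious adaptation.
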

\begin{proof}
We omit the proof, which is very similar to the proof of \ref{Ldiffeoshr}.
\end{proof}

\begin{theorem}
\label{Tmaincat}
Given any integer $\kcir \geq 2$, and $m \in \N$ large enough determining only on $\kcir$, any $\mbold \in \{m, -m, -2m\}^{\lceil \kcir/ 2\rceil}$, and any $\tautilde \in (- \epsilonunder_2/k, 0]$ (recall \ref{dphattau})  there is a $\group_m$-invariant minimal doubling of $\mathbb{K}$ containing one catenoidal bridge close to each singularity of one of a family of $\group_m$-invariant LD solutions as in Theorem \ref{Trldldgen} whose singularities concentrate on $\kcir$ parallel circles, with the number of singularities and their alignment at each circle prescribed by $\mbold$.  Moreover, the doublings are embedded, have four ends, and as $m\rightarrow \infty$ with fixed $\kcir$, converge in the appropriate sense to $\mathbb{K}$ covered twice.
\end{theorem}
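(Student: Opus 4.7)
The plan is to establish an analogue of Theorem \ref{Trldldgen} producing a family of LD solutions on $\cat$ indexed by $\zetabold \in \Btilde_{\Pcal}$, and then to run the Schauder fixed-point argument of Theorem \ref{Ttheory} adapted to the noncompact initial surfaces $M = M[\varphi, \kappaunderbold]$ defined in \ref{Dinitcat}. The parameter $\tautilde$ is held fixed throughout; the flexibility it provides---via Lemma \ref{Lcatend} and Definition \ref{dphattau}, producing RLD solutions $\phat[\bsigmaunder, \tautilde : \kcir]$ whose last jump latitude $\sss_k$ is finite but arbitrarily large as $\tautilde \searrow -\epsilonunder_2/k$---is precisely what allows the catenoidal ends in \ref{Dinitcat}(ii) to have vertical axes and prescribed flux matching the asymptotics of the base catenoid.

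First I would verify the analogue of Assumption \ref{Azetabold} on $\cat$. The solutions $\phat[\bsigmaunderslash + \bsigmaunder, \tautilde : \kcir]$ play the role of $\phat[\bsigmaunderslash + \bsigmaunder : \kcir]$ in the constructions of Section \ref{S:LDs}, yielding maximally symmetric LD solutions $\Phi\llceil \bsigmaunder, \tautilde : \kcir, \mbold \rrfloor$ and then a family $\varphi\llbracket \zetabold \rrbracket$ constructed exactly as in \ref{dtau1}--\ref{dtau2}. All estimates of Sections \ref{S:LDs} and \ref{S:LDfamilies} transfer with only cosmetic modifications, thanks to the conformal identification in Remark \ref{Rlinconf}; in particular, a matching proposition analogous to Proposition \ref{PZ} follows from the same computation and yields the prescribed unbalancing condition \ref{Azetabold}\ref{AZ} for a suitable linear isomorphism $Z_\zetabold$. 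Diffeomorphisms $\Fcal^\cat_\zetabold$ are provided by Lemma \ref{Ldiffeocat}, obstruction spaces by a trivial modification of \ref{dkernelsym}, and the remaining items of \ref{Azetabold} are checked using Lemma \ref{LLD}.

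Next I would run the fixed-point argument mimicking the proof of Theorem \ref{Ttheory}, but using the global norms from \ref{D:normcat}, the linear theory for $\Rcal'_M$ constructed after \ref{Npmcat}, the global mean curvature estimate \ref{LglobalHcat}, and the quadratic estimate \eqref{Equadcat}. The map $\Jcal$ on $B \subset C^{2,\beta}(M\llbracket \zerobold\rrbracket) \times \Btilde_{\Pcal} \times \domku$ is defined verbatim as in \eqref{Tfp}, and the estimates \eqref{Euhcat} and \eqref{Eqestcat} show that $\Jcal(B) \subset B$. Compactness and continuity of $\Jcal$ in the appropriate topology follow as in the proof of Theorem \ref{Ttheory}, and a fixed point $(\vhat, \zetaboldhatunder)$ produced by the Schauder theorem yields, via the prescribed unbalancing, vanishing $\widehat{\mubold}^{\pm}$ and hence a genuine minimal perturbation of $M\llbracket \zetaboldhatunder \rrbracket$. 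Smoothness is automatic from standard regularity; the graphs of $\pm\varphiend^\pm$ produce exactly the two additional catenoidal ends, so the doubling has four ends total; embeddedness follows from \eqref{Equadcat} together with \ref{LMemb}; and the varifold convergence to $2\cat$ as $m \to \infty$ with $\kcir$ fixed is a consequence of $\tau_{\max} \to 0$ via Lemma \ref{LLD}(i).

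The main obstacle is the noncompactness of $\cat$: $\ker \Lcal_\cat$ is infinite dimensional in the usual function spaces and the compactly-supported linear theory of Proposition \ref{Plinear} does not apply directly to $M$. The remedy, already built into \ref{Dinitcat} and \eqref{Eupcat}, is to conformally compactify the ends by replacing $g$ with $h = \tfrac{1}{2}|A|^2 g = \nu^\ast g_{\Sphtwo}$ on $\Mend$: this identifies each side $\Sigma_\pm$ of $M$ outside the core with a slight perturbation of a spherical cap glued to $\catcore$, reducing the linearized equation on each end to $(\Delta_h + 2) u'_\pm = \tfrac{1}{2}|A|^2(E'_\pm + w^\pm_{E,1})$ on a compact closed surface where the relevant kernel is trivial modulo the $\groupcyl$-action. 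The substantive technical point is to verify that this compactification is compatible with the weighted norms in \ref{D:normcat} and that the asymptotic fluxes dictated by $\varphiend^\pm$ match the catenoidal profile to the required order; once this is in place---which is essentially what \ref{LglobalHcat}, \eqref{Euhcat}, and \eqref{Eqestcat} encode---the rest of the argument proceeds as outlined.
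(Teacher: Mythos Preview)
Your proposal is correct and follows essentially the same approach as the paper: verify the catenoid analogue of Assumption \ref{Azetabold} (using the RLD solutions $\phat[\bsigmaunder,\tautilde:\kcir]$, the diffeomorphisms of \ref{Ldiffeocat}, and the isomorphisms $Z_\zetabold$ of \ref{dZ}), then rerun the fixed-point proof of Theorem \ref{Ttheory} with the modified norms \ref{D:normcat}, the conformally compactified linear theory \eqref{Eupcat}, and the estimates \eqref{Euhcat} and \eqref{Equadcat} replacing items (2) and (4) there. Your discussion of the conformal compactification via $h=\tfrac12|A|^2 g$ and of the role of $\tautilde$ is somewhat more detailed than the paper's, but the substance is the same.
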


\begin{proof}
We apply the steps of the proofs of Theorems \ref{Ttheory} and \ref{Trldldgen}---with small modifications because $\mathbb{K}$ is noncompact.  
We first check that Assumption \ref{Aimm} holds, except for the condition in \ref{Aimm}(i) that $\Sigma$ is closed.  
Clearly \ref{Aimm}(ii)-(iii) hold.  
Using the Gauss map, we can conformally identify $\mathbb{K}$ with a twice punctured sphere (recall \ref{Rlinconf}), 
and therefore \ref{Aimm}(iv) holds when considering solutions which extend to the poles of the sphere. 

By a straightforward modification of the arguments in the proof of \ref{Trldldgen}, 
Assumption \ref{Azetabold} holds, where $\Fcal^{\mathbb{K}}_{\zetabold}$ are as in \ref{Ldiffeocat} and the isomorphisms $Z_\zetabold$ are as in \ref{dZ}.  
We may then apply the steps in the proof of Theorem \ref{Ttheory} except that we use the estimates \eqref{Euhcat} and \eqref{Equadcat} 
to replace items (2) and (4) in the proof of \ref{Ttheory}.  
This concludes the proof.
\end{proof}

\begin{remark}
\label{Rcat2}
Note that Theorem \ref{Tmaincat} produces a one-parameter family of doublings, 
with the parameter $\tautilde \in (- \epsilonunder_2/k, 0]$ (as in  \ref{dphattau}) governing the latitudes of the outermost circles where catenoidal bridges are placed.  
\end{remark}

\begin{remark}
\label{Rcatk1}
It is possible to construct doublings of $\mathbb{K}$ with $\kcir=1$.  
However, these would necessarily be immersed but not embedded because the corresponding LD solutions would be negative on the ends of $\mathbb{K}$ 
(recall \ref{Pexist2}, \ref{dphie}, and \ref{Nab}), so we do not study these examples in detail. 
\qed 
\end{remark}

%%%%%%%%%%%%%%%%%%%%%%%%%%%%%%%%%%%%%%%%%%%%%%%
%% 
%% SECTION: The Critical Catenoid 
%%
%%%%%%%%%%%%%%%%%%%%%%%%%%%%%%%%%%%%%%%%%%%%%%%

\section{Doubling the critical catenoid}
\label{S:ccat}
\begin{definition}
\label{dfbms}
Let $(M^n, g)$ be a Riemannian manifold and $\Omega \subset M$ be a domain with smooth boundary.  
A smooth, properly immersed (in the sense that intersections with compact subsets of $\Omega$ are compact) 
submanifold $\Sigma^k \subset \Omega$ is a \emph{free boundary minimal submanifold} if its mean curvature vanishes, 
$\partial \Sigma \subset \partial \Omega$, and $\Sigma$ is orthogonal to $\partial \Omega$ along $\partial \Sigma$.
\end{definition}

Let $\B^3:=\{\, x\in\R^3 \, : \, |x|\le1\,\}$ equipped with the standard Euclidean metric. 
By standard calculations the linearized equation for free boundary minimal surfaces in $\B^3$ 
at a free boundary minimal surface $\Sigma$ in $\Omega:= \B^3$ defined as in \ref{dfbms}, 
with unit normal (smooth) field $\nu$
and unit outward conormal field $\eta$ along $\partial \Sigma$, 
is given (see for example \cite[2.25, (2.31) and (2.41)]{kapli}) by the boundary value problem
\begin{equation} 
\left\{ \,\, 
\label{Ejacfb}
\begin{aligned}
\Delta u +   |A|^2 u \, &= \,\, 0 \quad \text{on} \quad \Sigma, \\
-\frac{\partial u}{\partial \eta} + u \quad &= \,\, 0 \quad \text{on} \quad \partial \Sigma. 
\end{aligned}
\right. 
\end{equation} 

\begin{definition}
\label{Dcc}Define $I: = (-\sroot, \sroot)$ (recall \ref{Lphie}) and an immersion $X_{\ccat} : \cyl_{I} \rightarrow \R^3$ by
\begin{equation}
\label{Eccat}
X_{\ccat} =  \frac{ \sech \sroot}{ \sroot}\left. X_{\cat}\right|_{\cyl_I }.
 \end{equation}
 We call the image of $X_{\ccat}$ the \emph{critical catenoid} and denote it by $\ccat$.
\end{definition}

It is easy to check that $\ccat$ is a free boundary minimal surface in $\B^3$.  Moreover, using \eqref{Ecatmetric}
\begin{align}
\label{Eccatmetric}
e^{\conf(\sss)}= \frac{ \sech \sroot}{ \sroot}  \cosh \sss, \quad V(\sss) = 2\sech^2 \sss,
\end{align}
and it is straightforward to see that assumptions \ref{Aimm}(ii),(iii) hold. 

When $\Sigma = \ccat$, we have by \eqref{Eccat}  that  \eqref{Ejacfb} is equivalent to
\begin{align}
\label{Ejacccat}
\begin{cases}
\Lcalchi u &= 0 \hfill \quad \text{on} \quad \cyl_{I}\\
\sroot \frac{\partial u}{\partial \eta} &=  u \hfill \quad \text{on} \quad \Lpar[ \sroot].
\end{cases}
\end{align}

\begin{lemma}
\label{Lkercc} 
There are no nontrivial $\grouprotcyl$-invariant solutions of \eqref{Ejacfb} on $\ccat$.  
\end{lemma}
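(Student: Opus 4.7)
The plan is to reduce the problem to a one-dimensional ODE via the symmetries, then rule out nontrivial solutions by checking the Robin boundary condition against an explicit eigenfunction. First, I would use $O(2)$-invariance to conclude that $u \in C^\infty_{\sss}(\cyl_I)$, and then $\Z_2$-invariance (which acts by $\sss \mapsto -\sss$, recall \ref{Dgroup}) to conclude $u$ is even in $\sss$. So $u \in C^\infty_{|\sss|}(\cyl_I)$.

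Next, the PDE $\Lchi u = 0$ on $\cyl_I$ reduces to the ODE $u'' + V u = 0$ with $V = 2\sech^2 \sss$ (see \eqref{Eccatmetric}). The space of even solutions of this ODE is one-dimensional, and by \ref{Lphie} is spanned by $\phie$ of \ref{dphie}. Hence $u = c\,\phie$ for some $c \in \R$.

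The final step is to check the boundary condition on $\Lpar[\sroot]$. In the $\chi$ metric, the outward unit conormal to $\cyl_I$ at $\sss=\sroot$ is $\partial_\sss$; the conformal relation $g = e^{2\conf}\chi$ together with \eqref{Eccatmetric} giving $e^{\conf(\sroot)} = 1/\sroot$ shows that the boundary condition $-\partial u/\partial \eta + u = 0$ from \eqref{Ejacfb} becomes precisely the Robin condition $\sroot\, \partial_\sss u(\sroot) = u(\sroot)$ appearing in \eqref{Ejacccat}. Now, by the definition of $\sroot$ we have $\phie(\sroot)=0$, i.e.\ $\sroot \tanh \sroot = 1$. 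Using this in $\phie'(\sss) = -\tanh \sss - \sss\sech^2\sss$ yields, after collecting terms, $\phie'(\sroot) = -\sroot \neq 0$. Substituting $u = c\, \phie$ into the Robin condition then gives $c\sroot(-\sroot) = 0$, which forces $c = 0$.

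I do not expect a serious obstacle here; the only thing to be careful about is translating the Robin boundary condition from the Euclidean induced metric $g$ on $\ccat$ to the conformally flat metric $\chi$ on $\cyl_I$ (this is where the $\sroot$ in \eqref{Ejacccat} comes from). Once that bookkeeping is done, the conclusion follows from the explicit evaluation $\phie'(\sroot) = -\sroot$, which is an exact consequence of the transcendental identity $\sroot \tanh \sroot = 1$ defining $\sroot$.
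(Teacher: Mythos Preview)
Your argument is correct and complete. The paper itself gives no self-contained proof here; it simply cites Lemma 3.18 and Remark 3.20 of \cite{kapli}. By contrast, you supply a direct computation: reduce by the symmetries to the one-dimensional even kernel spanned by $\phie$, translate the Robin condition through the conformal factor $e^{\conf(\sroot)} = 1/\sroot$ to obtain $\sroot\,\phie'(\sroot) = \phie(\sroot)$, and then use the exact identity $\phie'(\sroot) = -\sroot$ (a clean consequence of $\sroot\tanh\sroot = 1$) together with $\phie(\sroot)=0$ to force $c=0$. Your route is more elementary and self-contained than the paper's deferral to an external reference, and it dovetails nicely with the material already present in the paper (Definition \ref{dphie}, Lemma \ref{Lphie}, and the equivalence \eqref{Ejacccat}).
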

\begin{proof}
This was checked in Lemma 3.18 and Remark 3.20 of \cite{kapli}. 
\end{proof}

\begin{definition}
\label{dphibd}
Define a rotationally invariant function $\phicrit \in C^\infty_{\sss}(\cyl)$ by 
\begin{align}
\label{Ephibd}
\phicrit(\sss) = (\sech^2\sroot - \tanh^2\sroot) \phie(\sss) - \partial \phie(\sroot) \phio(\sss).
\end{align}
\end{definition}

\begin{lemma}
\label{Lphibd}
$\phicrit$ is strictly increasing on $[0, \sroot]$, has a unique root $\sss^{\phicrit}_{\mathrm{root}} \in (0, \sroot)$, and satisfies $\sroot F^\phicrit_+(\sroot) = 1$.
\end{lemma}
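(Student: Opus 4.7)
\medskip

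The plan is to evaluate $\phicrit$ and $\partial \phicrit$ at the two endpoints $0$ and $\sroot$, then use the ODE $\Lchi \phicrit = 0$ (equivalently $\partial^2 \phicrit = -V \phicrit$) together with the flux monotonicity of Lemma \ref{LFmono} to bootstrap strict monotonicity on the entire interval. The main simplification throughout is the identity $\phie(\sroot) = 0$, which gives $\tanh \sroot = 1/\sroot$ and consequently
\[
\partial \phie(\sroot) = -\tanh \sroot - \sroot \sech^2 \sroot = -\tfrac{1}{\sroot} - \sroot\bigl(1 - \tfrac{1}{\sroot^2}\bigr) = -\sroot,
\]
so that $c_o := -\partial \phie(\sroot) = \sroot$ and $c_e := \sech^2 \sroot - \tanh^2 \sroot = 1 - 2/\sroot^2 < 0$ (since $\sroot < \sqrt 2$; recall $\tanh \sroot = 1/\sroot < 1$, so $\sroot > 1$, but $\sroot$ is small enough that $2/\sroot^2 > 1$, which can be verified numerically or by checking $\phie(\sqrt 2) = 1 - \sqrt 2 \tanh \sqrt 2 < 0$).

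Plugging in, I find $\phicrit(\sroot) = c_e \cdot 0 + \sroot \cdot \tanh \sroot = 1 > 0$, and using $\partial \phio(\sroot) = \sech^2 \sroot = 1 - 1/\sroot^2$,
\[
\partial \phicrit(\sroot) = c_e (-\sroot) + \sroot(1 - 1/\sroot^2) = -\sroot + 2/\sroot + \sroot - 1/\sroot = 1/\sroot.
\]
This immediately gives $\sroot F^\phicrit_+(\sroot) = \sroot \cdot \partial \phicrit(\sroot)/\phicrit(\sroot) = 1$, which is statement (iii). At the other endpoint, since $\phie$ is even and $\phio$ is odd, $\partial \phie(0) = 0$ and $\phio(0)=0$, so $\phicrit(0) = c_e < 0$ and $\partial \phicrit(0) = c_o \partial \phio(0) = \sroot > 0$.

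For strict monotonicity, I split $[0,\sroot]$ at the first zero. On any subinterval where $\phicrit \le 0$, the ODE gives $\partial^2 \phicrit = -V \phicrit \ge 0$, so $\partial \phicrit$ is nondecreasing; combined with $\partial \phicrit(0) = \sroot > 0$ this forces $\partial \phicrit > 0$ until the first zero $\sss^\phicrit_{\mathrm{root}} \in (0, \sroot)$, which exists and is unique on this subinterval by IVT together with the strict positivity of $\partial \phicrit$ there. On the complementary subinterval $(\sss^\phicrit_{\mathrm{root}}, \sroot]$, $\phicrit > 0$, so Lemma \ref{LFmono}(i) applies and tells us that $F^\phicrit_- = -\partial \phicrit/\phicrit$ is strictly increasing. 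Since $F^\phicrit_-(\sroot) = -1/\sroot$ by the endpoint computation, we conclude $F^\phicrit_-(\sss) < -1/\sroot < 0$ throughout this subinterval, whence $\partial \phicrit > (1/\sroot) \phicrit > 0$ there. Together with the earlier subinterval, $\phicrit$ is strictly increasing on all of $[0, \sroot]$, and the root $\sss^\phicrit_{\mathrm{root}}$ is the unique one in $(0, \sroot)$ by strict monotonicity.

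The main obstacle, if any, is purely algebraic bookkeeping at $\sroot$: one must use $\phie(\sroot)=0$ to simplify the coefficients $c_e, c_o$ correctly; everything else is elementary once the ODE and the flux monotonicity are invoked.
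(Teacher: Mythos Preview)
Your endpoint computations are correct and give the flux identity and the sign pattern $\phicrit(0)<0<\phicrit(\sroot)$. The convexity argument on the region where $\phicrit\le 0$ is also fine. The gap is in the sentence ``On the complementary subinterval $(\sss^{\phicrit}_{\mathrm{root}},\sroot]$, $\phicrit>0$, so Lemma~\ref{LFmono}(i) applies\ldots'': you assert positivity on that whole subinterval before you have monotonicity there, but the LFmono step (comparing $F^{\phicrit}_-$ to its value at $\sroot$) needs positivity all the way out to $\sroot$ in order to be applied. A priori $\phicrit$ could oscillate between its first root and $\sroot$, and nothing you have written rules that out.

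The simplest repair, and presumably what the paper intends by ``straightforward computation'', is to differentiate $\phicrit$ directly:
\[
\partial\phicrit \;=\; c_e\,\partial\phie + c_o\,\partial\phio
\;=\; (-c_e)\tanh\sss \;+\; (c_o - c_e\,\sss)\,\sech^2\sss,
\]
and since you already checked $c_e = 1-2/\sroot^2<0$ and $c_o=\sroot>0$, both summands are nonnegative on $[0,\infty)$ and the second is strictly positive. Thus $\partial\phicrit>0$ on $[0,\sroot]$ outright, and the rest follows immediately. Alternatively you can close your argument with the Wronskian $W[\phicrit,\phio]=c_e<0$: at any putative second zero $d\in(0,\sroot)$ you would get $W=-\partial\phicrit(d)\,\phio(d)$, and the signs force $W>0$, a contradiction; this justifies the positivity assertion and then your LFmono step goes through.
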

\begin{proof}
Straightforward computation from \ref{dphibd}.
\end{proof}

\begin{remark}
\label{rphibd}
 $\phicrit$ appeared also in \cite[equation 3.19]{kapli}, although there it was called $\phi_{Robin}$. 
Note also that \ref{Lphibd} shows that the construction fails for $\kcir=1$ because \ref{RL}\ref{RLF} is violated (see \ref{R:RLD}). 
\qed 
\end{remark}

\begin{lemma}
\label{Lphi:bd} For a function $\phi \in C^1_{|\sss|}(\cyl_I)$, the Robin boundary condition in \eqref{Ejacccat} is equivalent to 
the flux condition $\sroot F^\phi_+(\sroot) = 1$.
\end{lemma}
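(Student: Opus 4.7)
The plan is to translate the Robin boundary condition directly into the one-sided flux language of Definition \ref{dF} via a brief identification of the outward conormal. By the $\Sbar$-invariance of $\phi$, it suffices to verify the equivalence on the boundary component $\Lpar[\sroot]$; the condition on $\Lpar[-\sroot]$ then follows automatically by applying the symmetry $\phi(-\sss)=\phi(\sss)$.

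First I would identify the outward unit conormal $\eta$ to $\cyl_I$ at $\sss = \sroot$. Since the interior equation in \eqref{Ejacccat} is written with respect to $\chi$ (as $\Lcalchi \phi = 0$), the conormal $\eta$ in the boundary condition is also meant with respect to $\chi$, so $\eta = \partial_\sss$ and hence $\frac{\partial \phi}{\partial \eta}(\sroot) = \partial_\sss \phi(\sroot) = \partial_+\phi(\sroot)$ in the notation of \ref{Npartial} (understood as the one-sided limit from inside $I$, which makes sense since $\phi \in C^1_{|\sss|}(\cyl_I)$). As a sanity check on the factor $\sroot$, one can alternatively start from the original free boundary Robin condition $\frac{\partial u}{\partial \eta^g} = u$ on $\ccat \subset \B^3$ in the Euclidean induced metric $g$, and use the conformal factor $e^{\conf(\sroot)} = 1/\sroot$ from \eqref{Eccatmetric} to pull the $g$-unit conormal back to $\sroot\, \partial_\sss$; doing so recovers precisely the coefficient in \eqref{Ejacccat}.

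With this identification, the Robin condition $\sroot\,\frac{\partial \phi}{\partial \eta}(\sroot) = \phi(\sroot)$ reads $\sroot\,\partial_+\phi(\sroot) = \phi(\sroot)$. When $\phi(\sroot) \neq 0$, dividing by $\phi(\sroot)$ and invoking Definition \ref{dF} gives the flux statement $\sroot F^\phi_+(\sroot) = 1$; conversely, the flux identity multiplied through by $\phi(\sroot)$ returns the Robin condition. This completes the argument, and there is no essential obstacle since the lemma is really a bookkeeping step packaging the boundary condition in the notation that will be convenient for the subsequent RLD-with-Robin analysis.
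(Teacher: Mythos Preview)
Your proof is correct and takes essentially the same approach as the paper's one-line proof, which simply says the claim is immediate from the symmetries and the definition of $F^\phi_+$. You have just unpacked the details (identifying the outward conormal, reducing by $\Sbar$-invariance, and dividing through by $\phi(\sroot)$) that the paper leaves implicit.
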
 
\begin{proof} 
This is immediate from the symmetries and the definition of $F^\phi_+$ in \ref{dF}.
\end{proof} 

\begin{definition}
\label{Dccatrld}
We say $\phi \in C^0_{|\sss|}(\cyl_I)$ is a $\ccat$-RLD solution if $\phi$ is an RLD solution in the sense of Definition \ref{RL} which satisfies also 
the condition $\sroot F^\phi_+(\sroot) = 1$.

\end{definition}
By Definition \ref{Dccatrld} and Lemma \ref{Lphi:bd}, 
it follows that any $\ccat$-RLD solution $\phi$ coincides with a constant multiple of $\phicrit$ on $\cyl_{[\sss^\phi_k, \sroot]}$. 

In contrast to the situation for the RLD solutions established in Proposition \ref{Pexist}, the number of possible parallel circles of a $\ccat$-RLD solution with $\bsigmaunder^\phi = \zerobold$ is limited:

\begin{lemma}
\label{Lccb}
Suppose $\phi$ is a $\ccat$-RLD solution satisfying $\bsigmaunder^\phi = \zerobold$.  Then $\kcir [ \sbold^\phi] \leq 3$.
\end{lemma}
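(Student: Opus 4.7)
The plan is to reduce the claim to an ``angular budget'' via Pr\"ufer-type coordinates. On every smooth piece of $(0,\sroot)$ where $\phi$ does not vanish, set $\phi(\sss)=\rho(\sss)\sin\theta(\sss)$ and $\phi'(\sss)=\rho(\sss)\cos\theta(\sss)$ with $\rho>0$. The identity $\phi''+V\phi=0$ then yields the familiar formula
\[
\theta'(\sss)=\cos^{2}\theta(\sss)+V(\sss)\sin^{2}\theta(\sss),
\]
which is a convex combination of $1$ and $V(\sss)$, hence strictly positive and bounded above by $\max(1,V(\sss))$. At each jump latitude $\sss_{i}$ the continuity of $\phi$ together with the prescribed slope jump $\phi'(\sss_{i}^{+})-\phi'(\sss_{i}^{-})=2F\phi(\sss_{i})$ gives $\cot\theta_{+}=\cot\theta_{-}+2F$; combined with $\bsigmaunder^{\phi}=\zerobold$ (so that $F^{\phi}_{\pm}(\sss_{i})=F$) this forces $\cot\theta_{-}=-F$ and $\cot\theta_{+}=F$, i.e.\ $\theta$ drops by exactly $2\arctan F$ at every jump. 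The Robin boundary condition $\sroot F^{\phi}_{+}(\sroot)=1$ translates to $\theta(\sroot^{-})=\arctan\sroot$, while at $\sss=0$ the initial value is $\theta(0^{+})=\pi/2$ when $\sss_{1}>0$ and $\theta(0^{+})=\arccot F$ when $\sss_{1}=0$.

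Next I would add up the smooth increments of $\theta$. A direct computation in the two cases $\sss_{1}>0$ (so $\kcir=2k$) and $\sss_{1}=0$ (so $\kcir=2k-1$) both yield
\[
\int_{0}^{\sroot}\theta'\,d\sss \;=\; \kcir\,\arctan F\;-\;\arctan(1/\sroot),
\]
using $\pi/2-\arctan\sroot=\arctan(1/\sroot)$. Combining with the pointwise bound $\theta'\le\max(1,V)$ and the explicit computation $\int_{0}^{\sroot}\max(1,2\sech^{2}\sss)\,d\sss=\sqrt{2}+\sroot-\log(1+\sqrt{2})$ (which uses that $V=1$ at $\sss^{\ast}=\log(1+\sqrt{2})$ and $\sss^{\ast}<\sroot$) gives
\[
\kcir\,\arctan F \;\le\; \sqrt{2}+\sroot-\log(1+\sqrt{2})+\arctan(1/\sroot).
\]

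To finish I need a lower bound on $F$. On $(\sss_{k},\sroot)$ the function $F^{\phi}_{+}$ is smooth and satisfies the Riccati equation $(F^{\phi}_{+})'=-V-(F^{\phi}_{+})^{2}<0$, so it strictly decreases from $F$ at $\sss_{k}^{+}$ to $1/\sroot$ at $\sroot$; therefore $F>1/\sroot$, and in particular $\arctan F>\arctan(1/\sroot)$. Dividing the previous inequality by $\arctan(1/\sroot)$ produces
\[
\kcir \;<\; 1+\frac{\sqrt{2}+\sroot-\log(1+\sqrt{2})}{\arctan(1/\sroot)}.
\]

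The main (and essentially only) obstacle is verifying that this explicit bound is strictly less than $4$. This is a numerical estimate using $\sroot\tanh\sroot=1$ (giving $\sroot\approx 1.1997$); the numerator is approximately $1.733$ and $\arctan(1/\sroot)\approx 0.695$, so the bound evaluates to roughly $3.49$. A clean analytic way to package this is to rewrite it using $\tanh\sroot=1/\sroot$ and check the inequality $\sqrt{2}+\sroot-\log(1+\sqrt{2})<3\arctan(1/\sroot)$ by monotonicity along the defining equation for $\sroot$. The sharpness of the bound $\theta'\le\max(1,V)$ is essential here: the weaker estimate $\theta'\le 1+V$ would only give $\kcir\le 5$, so one must not lose this convex-combination structure.
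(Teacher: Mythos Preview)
Your argument is correct and takes a genuinely different route from the paper. The paper's proof is a direct numerical check: it fixes $F:=F^{\phicrit}_{+}(\sroot)=1/\sroot$, computes (via ODE integration) that $\sss_{2}^{\phat[F;\zerobold]}\approx 2.414>\sroot$, and then invokes the monotonicity of $\sss_{2}$ in $F$ (Proposition \ref{Pexist}(i)) to conclude that no second jump can fit inside $(0,\sroot)$ when $\sss_{1}>0$; the odd case $\sss_{1}=0$ is reduced to this via Lemma \ref{LHmono}(i). By contrast, you replace the ODE computation with an angular budget from Pr\"ufer coordinates: each balanced jump consumes exactly $2\arctan F$ of angle, the total available angle is at most $\int_{0}^{\sroot}\max(1,V)\,d\sss$, and the Robin condition together with the Riccati decrease of $F^{\phi}_{+}$ on $(\sss_{k},\sroot)$ forces $F>1/\sroot$. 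This yields the clean inequality $\kcir<1+\bigl(\sqrt{2}+\sroot-\log(1+\sqrt{2})\bigr)/\arctan(1/\sroot)\approx 3.49$. What you gain is that the only numerics involved are evaluations of elementary functions at $\sroot$ rather than a shooting computation of $\sss_{2}$; the argument also makes transparent why the bound is robust (the convex-combination estimate $\theta'\le\max(1,V)$, as you note, is exactly what is needed). What the paper's proof gains is brevity and the fact that it plugs directly into the existing framework (\ref{Pexist}, \ref{LHmono}) without introducing the Pr\"ufer apparatus. Your final remark about proving $\sqrt{2}+\sroot-\log(1+\sqrt{2})<3\arctan(1/\sroot)$ analytically is a bit sketchy, but since all quantities are explicit and the margin is comfortable, a rigorous interval-arithmetic verification is routine.
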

\begin{proof}
Suppose first that $\sss_1>0$.
Let $\phat = \phat[F; \zerobold]$ be as in \ref{Pexist}, where in this proof $F: = F^{\phicrit}_+(\sroot)$.  
A numerical calculation establishes that $\sss^{\phat}_2[F; \zerobold] \approx 2.414>\sroot$.  The result then follows from the flux monotonicity and \ref{Pexist}(i). 

Next suppose that $\sss_1 = 0$.
Let $\phi = \phicheck[ F; \zerobold]$ (recall \ref{Pexist2}). It follows from Lemma \ref{LHmono}(i) that $\sss^{\phicheck}_2[F; \zerobold]> \sss^{\phat}_2[ F; \zerobold] \approx 2.414>\sroot$, where $\sss^{\phat}_2[ F; \zerobold]$ is as in the above paragraph.  Using again the flux monotonicity this completes the proof. 
\end{proof}

\begin{prop}[$\ccat$-RLD existence and uniqueness]
\label{Pexistccat}
The following hold.
\begin{enumerate}[label=\emph{(\roman*)}]

\item (Two parallel circles) Given $\bsigmaunder = \xi \in \R$ satisfying $|\xi|<1$, there is a unique unit $\ccat$-RLD solution $\phat = \phat [ \bsigmaunder:2]$ satisfying $\kcir[\sbold^\phat] = 2$ and $\bsigmaunder^\phat = \xi$.  Moreover $\sss^\phat_1 \in (\sss^\phicrit_{\mathrm{root}}, \sroot)$. 
\item (Three parallel circles) There exists $\epsilonunder_1>0$ such that for all $\bsigmaunder = (\sigma, \xi) \in (- \epsilonunder_1, \infty)\times (- \epsilonunder_1, \epsilonunder_1)$ there is a unique unit $\ccat$-RLD solution $\phat =  \phat[ \bsigmaunder:3]$ satisfying $\kcir[\sbold^\phat] = 3$ and $\bsigmaunder^\phat = (\sigma, \xi)$. 
\end{enumerate}
\end{prop}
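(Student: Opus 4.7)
The key observation, common to both parts, is that any $\ccat$-RLD solution $\phi$ must coincide with a scalar multiple of $\phicrit$ on its rightmost interval $\cyl_{[\sss_k^\phi,\sroot]}$. Indeed, the subspace of solutions of $\Lchi u=0$ satisfying $\sroot F^u_+(\sroot)=1$ is one-dimensional (the Robin condition $\sroot u'(\sroot)-u(\sroot)=0$ cuts out a line in the two-dimensional solution space), and $F^u_+$ is scale invariant by Remark \ref{rFlux}, so the entire line consists of multiples of $\phicrit$. This reduces the construction in each part to a finite-dimensional matching problem at the last jump latitude.

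For part (i), the symmetry about $\sss=0$ and the unit normalization force $\phat=\phie$ on $\cyl_{[-\sss_1,\sss_1]}$, and the observation above forces $\phat=(\phie(\sss_1)/\phicrit(\sss_1))\phicrit$ on $\cyl_{[\sss_1,\sroot]}$. The candidate is therefore uniquely determined by $\sss_1$, and its flux ratio reads
\[
\xi(\sss_1)=\frac{F^{\phicrit}_+(\sss_1)-F^{\phie}_-(\sss_1)}{F^{\phicrit}_+(\sss_1)+F^{\phie}_-(\sss_1)}.
\]
By the flux monotonicity of Lemma \ref{LFmono} applied to $\phie$ (with $\phie(\sroot)=0$ driving $F^{\phie}_-(\sss_1)\to+\infty$ as $\sss_1\nearrow\sroot$) and to $\phicrit$ (with $\phicrit(\sss^{\phicrit}_{\mathrm{root}})=0$ driving $F^{\phicrit}_+(\sss_1)\to+\infty$ as $\sss_1\searrow\sss^{\phicrit}_{\mathrm{root}}$, and $F^{\phicrit}_+(\sroot)=1/\sroot$ finite), $F^{\phie}_-$ is strictly increasing and $F^{\phicrit}_+$ strictly decreasing on the relevant intervals. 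Hence $\xi$ is a strictly decreasing continuous function on $(\sss^{\phicrit}_{\mathrm{root}},\sroot)$ with limits $+1$ and $-1$ at the left and right endpoints respectively, and the intermediate value theorem produces the unique $\sss_1\in (\sss^{\phicrit}_{\mathrm{root}},\sroot)$ realizing any given $\xi\in(-1,1)$. The positivity conditions in Definition \ref{RL}(iii) are automatic on this range.

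For part (ii), the analogue with $\sss_1=0$ and $k=2$ parametrizes candidates by $(F,\sss_2)\in\R\times(\sss^{\phicrit}_{\mathrm{root}},\sroot)$: on $\cyl_{[0,\sss_2]}$ the unit normalization gives $\phat=\phie+F\phio$, on $\cyl_{[\sss_2,\sroot]}$ the observation above gives $\phat=\bigl((\phie(\sss_2)+F\phio(\sss_2))/\phicrit(\sss_2)\bigr)\phicrit$, and the flux ratios $(\sigma,\xi)=(\sigma_1,\xi_2)$ are explicit smooth functions of $(F,\sss_2)$ built from $F^{\phie}_-$, $F^{\phio}$, and $F^{\phicrit}_+$ evaluated at $\sss_2$. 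I would show that for $\bsigmaunder=\zerobold$ this map admits a solution (the ``balanced'' candidate, existing by a direct flux-monotonicity argument analogous to the one used in Lemma \ref{Lccb}, which simultaneously guarantees uniqueness at this reference configuration and shows that the Jacobian of $(F,\sss_2)\mapsto(\sigma,\xi)$ is nondegenerate), then apply the implicit function theorem to obtain uniqueness and existence on an open neighborhood of $\zerobold$. To extend to the half-strip $(-\epsilonunder_1,\infty)\times(-\epsilonunder_1,\epsilonunder_1)$, I would run a continuation argument in $\sigma$, using the flux-monotonicity estimates (in the spirit of Corollary \ref{Csderiv}) to rule out escape from the domain of definition except at the explicit limiting behaviors $\sss_2\to \sss^{\phicrit}_{\mathrm{root}}$ (which drives $\sigma\to+\infty$ through $F^{\phicrit}_+(\sss_2)\to+\infty$) and the loss of the RLD positivity conditions at $\sss_2$ (which defines the threshold $-\epsilonunder_1$). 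The main obstacle is this last quantitative accounting: one must verify that the only obstruction to continuing the branch for $\sigma>0$ is that $\sss_2\nearrow\sroot$, which does not occur in finite $\sigma$, while for $\sigma<0$ the obstruction is precisely the loss of RLD positivity---this parallels the role played by Lemma \ref{Lcatend} in Section \ref{S:Cat} but with the Robin condition at $\sroot$ replacing the smooth-at-infinity condition used there.
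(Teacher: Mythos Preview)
Your argument for part (i) is correct and essentially the same as the paper's: the paper packages the monotonicity as the statement that $u=F^{\phicrit}_+/F^{\phie}_-$ is a strictly decreasing diffeomorphism from $(\sss^{\phicrit}_{\mathrm{root}},\sroot)$ onto $(0,\infty)$, then solves $u(\sss_1)=\frac{1+\xi}{1-\xi}$, which is equivalent to your $\xi(\sss_1)$ equation.

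For part (ii) there is a genuine gap at the base step. You assert that the balanced solution (at $\bsigmaunder=\zerobold$) exists ``by a direct flux-monotonicity argument analogous to the one used in Lemma \ref{Lccb}''. But Lemma \ref{Lccb} is not a pure flux-monotonicity statement---its proof rests on the numerical fact $\sss_2^{\phat[F;\zerobold]}\approx 2.414>\sroot$. Likewise, the paper's proof of (ii) at $\bsigmaunder=\zerobold$ is numerical: it computes $\sss_1^{\phicheck[0.9;\zerobold]}\approx 1.109$, $F^{\phicrit}_+(1.109)\approx 1.152$, $\sss_1^{\phicheck[1;\zerobold]}\approx 1.157$, $F^{\phicrit}_+(1.157)\approx 0.902$, so that $F\mapsto F^{\phicrit}_+(\sss_1^{\phicheck[F;\zerobold]})-F$ changes sign on $[0.9,1]$, and then invokes the monotonicity $\partial\sss_1^{\phicheck}/\partial F>0$ together with flux monotonicity of $\phicrit$ for uniqueness. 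Flux monotonicity alone gives you the uniqueness and the nondegeneracy of the Jacobian, but not the sign change that produces existence; some quantitative input about $V=2\sech^2\sss$ on the specific interval $(0,\sroot)$ is unavoidable. Your continuation argument for extending to $\sigma\in(-\epsilonunder_1,\infty)$ is a reasonable alternative to the paper's terser appeal to smooth dependence plus monotonicity, but it cannot get off the ground without first anchoring the branch at $\zerobold$.
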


\begin{proof}
We first prove (i).  By \ref{LFmono}, \ref{Lphie}, and \ref{Lphibd}, the function $u: (\sss^\phicrit_{\mathrm{root}}, \sroot) \rightarrow (0, \infty)$ 
defined by $u = F^{\phicrit}_+/F^\phie_-$ is a strictly decreasing diffeomorphism.  
Therefore, there is a unique $\sss_1 \in (\sss^\phicrit_{\mathrm{root}}, \sroot)$ such that $u(\sss_1) = \frac{1+\xi}{1-\xi}$; equivalently
\begin{align}
\label{Exiccat}
\xi = \frac{F^{\phicrit}_+(\sss_1) - F^\phie_-(\sss_1)}{F^\phicrit_+(\sss_1) + F^\phie_-(\sss_1)}.
\end{align}

 By \ref{rsigbij}, \eqref{Exiccat}, and \ref{Pexist}, $\phat[2: \bsigmaunder ] := \phat[F^{\phie}_-(\sss_1); \bsigmaunder']|_{\cyl_I}$, where $\bsigmaunder' = (\, \zerobold, (\xi, 0, 0, \dots)\, )$ (recall \ref{dsigma}), is an  $\ccat$-RLD solution satisfying the conditions in (i).  The uniqueness is clear.

Proof of (ii): We first consider the case where $(\sigma, \xi) = (0, 0)$.  Note first that $\sroot \approx 1.1997$.  Given $F>0$, denote in this proof $\phi = \phicheck[F; \zerobold]$ (recall the notation of \ref{Rphicheck}), where we recall that $\phi$ satisfies $\phi = \phie + F \phio$ on $\cyl_{[0, \sss_1^\phi]}$ (so that in particular $F^\phi_+(0) = F$) and $F_-^\phi(\sss_1^\phi) = F$.  By numerical computations, we have the following:
\begin{align*}
\sss_1^{\phicheck[.9; \zerobold]} &\approx 1.109, \quad F^\phicrit_+( \sss_1^{\phicheck[.9; \zerobold]}) \approx 1.152,\\
\sss_1^{\phicheck[1; \zerobold]} &\approx 1.157, \quad F^\phicrit_+( \sss_1^{\phicheck[1; \zerobold]}) \approx .902.
\end{align*}
Differentiating the equation $F_-^\phi(\sss_1^\phi) = F$ implicitly with respect to $F$ and using \ref{LFmono}, we conclude that $\frac{\partial \sss^\phi_1}{\partial F}>0$.  In combination with the flux monotonicity \ref{LFmono} applied to $\phicrit$, this and the preceding numerical calculations show that there is a unique $F>0$ such that $F^\phi_+(\sss_1^\phi) = F^{\phicrit}_+(\sss_1^\phi)$.  This concludes the proof of (ii) in the case where $(\sigma, \xi) = (0, 0)$.  

The general case follows from the smooth dependence of $\sss^{\phicheck[F;  \bsigmaunder]}_1$ on $\bsigmaunder = (\sigma, \xi)$, the fact that $\frac{\partial \sss^{\phicheck[F; \bsigmaunder]}_1}{\partial F}>0$ and the flux monotonicity by taking $\epsilonunder_1>0$ small enough, in similar fashion to the case discussed above.
\end{proof}

\begin{figure}[h]
\centering
\begin{tikzpicture}[
  declare function={
    func(\s)= 
      (\s <-.747)*.948*( -.389*(1-\s*tanh(\s))-1.199*tanh(\s))+
   and(\s >=-.747, \s < .747)*( 1-\s*tanh(\s))+
    (\s >= .747)*.948*( -.389*(1-\s*tanh(\s))+1.199*tanh(\s))
 ;
  func2(\s)=      (\s <-1.15684)*.87526*( -.389*(1-\s*tanh(\s))-1.199*tanh(\s))+
         and(\s >=-1.15684, \s <=0)*( 1-\s*tanh(\s)-.966*tanh(\s))+
   and(\s >=0, \s < 1.15684)*( 1-\s*tanh(\s)+.966*tanh(\s))+
    (\s >= 1.15684)*.87526*( -.389*(1-\s*tanh(\s))+1.199*tanh(\s))
    ;
 }
]
\begin{axis}[
legend pos=outer north east,
  axis x line=middle, axis y line=left,
  ymin=.45, ymax=1.25, ytick={.4, .5, .6, .7, .8, .9, 1, 1.1, 1.2}, ylabel=$y$,
  xmin=-1.199, xmax=1.199, 
  xtick={-1.19, -.5, 0,  .5, 1.19},
  xticklabels={$-\sroot$, $-.5$, $0$, $.5$, $\sroot$},
   xlabel=$\sss$,
   ylabel=,
  samples=630
]

\addplot[black, thick, domain=-2:2, 
]{func(x)};
\addlegendentry{$\phat[\zerobold:2]$}
\addplot[black, thick, dashed, domain=-2:2, 
]{func2(x)};
\addlegendentry{$\phat[\zerobold:3]$}
\end{axis}
\end{tikzpicture} 
\caption{Profiles of the $\ccat$-RLD solutions $\phat[\zerobold:2]$ and $\phat[\zerobold:3]$. }
\end{figure}

\subsection*{LD solutions}
\nopagebreak

\begin{assumption}
\label{Amcc}
We assume $\kcir \in \{2, 3\}$, $m\in \N$ is as large as needed in terms of $\kcir$, $\mbold = (\pm m)$ when $\kcir =2$, and $\mbold = (\pm m, \pm m)$ or $\mbold = (\pm m, -2m)$ when $\kcir = 3$. 
\end{assumption}

Now that we are equipped with $\ccat$-RLD solutions, we can apply the analysis of Section \ref{S:LDs}---with only small, mostly notational modifications, to construct and estimate LD solutions corresponding to the RLD solutions just constructed in Proposition \ref{Pexistccat}.  For brevity, we remark only that the obvious modification of Lemma \ref{Lphiavg}---which constructs LD solutions from RLD solutions---holds because by Lemma \ref{Lkercc}, the boundary value problem \eqref{Ejacfb} has trivial kernel on $\ccat$.  The remaining estimates and decompositions of the corresponding LD solutions hold essentially exactly as in Section \ref{S:LDs}.

\subsection*{Initial surfaces}
\nopagebreak

To construct the initial surfaces and later also to perturb the initial surfaces, it will be useful to deform a surface which meets $\partial \B^3$ orthogonally without leaving the ball.  
To do this, we adopt an approach from \cite{kapwiygul} and introduce an auxiliary metric $g_A$ which makes the boundary $\Sph^2 = \partial \B^3$ totally geodesic.  
For numbers $\underline{r}, \overline{r}$ satisfying $0< \underline{r}< \overline{r} < 1$ which we will fix later, we define
\begin{align*}
g_A = \Omega^2 g, \quad
\text{where} \quad 
\Omega := \Psibold\left[ \underline{r}, \overline{r}; \dbold^g_0\right](1, 0) +
\frac{1}{\dbold^g_0} \Psibold\left[\underline{r}, \overline{r}; \dbold^g_0\right](0, 1).
\end{align*}

For the purposes of the following discussion, let $S$ be a properly embedded surface in $\B^3$; later we will take either $S = \ccat$ or $S$ to be an initial surface defined below. 

Note that the unit normal to $\partial S$ with respect to $g_A$ which points in the same direction as $\nu$ is $(\Omega \circ X)^{-1} \nu$.  Now denote $X: S\rightarrow \R^3$ the inclusion map.  Given $\uutilde \in C^2(S)$, we define the perturbation $X_{\uutilde}: \ccat \rightarrow \R^3$ by $\uutilde$ of $\ccat$ by
\begin{align*}
X_{\uutilde}(p) = \exp^{\B^3, g_A}_{X(p)} \left( \frac{\uutilde(p) \nu(p)}{ (\Omega \circ X)(p)}\right).
\end{align*}
For $\uutilde$ sufficiently small, $X_\uutilde$ is an immersion, and then we denote the corresponding Euclidean normal by $\nu_\uutilde$.

On a neighborhood of $\partial S$ in $S$ we define the function $\sigma := \dbold^{g}_{\partial S}$; near $\partial \ccat$, we can take $\sigma$ to be a coordinate on $S$ whose associated coordinate vector field $\partial_\sigma$ is then the inward pointing unit conormal to $S$ along $\partial S$.  We define also the boundary angle function $\Theta[\uutilde] :\partial S \rightarrow \R$ by 
\begin{align}
\label{Ethetacc}
\Theta[\uutilde]: = g( X_{\uutilde}, \nu_{\uutilde}). 
\end{align}

It is shown in \cite{kapwiygul} that the condition $\Theta[\uutilde] = 0$ is equivalent to the condition that $\uutilde$ satisfies the Neumann condition $\uutilde_{, \sigma} =0$.

Next, let $\widetilde{\Lcal}$ denote the linearized operator associated to the Euclidean mean curvature of $X_\uutilde$ computed at $\uutilde =0$.  The following lemma from \cite{kapwiygul} relates $\widetilde{\Lcal}$ to the usual Jacobi operator $\Lcal_{S}$ on $S$ and relates the equation $\partial_\sigma \uutilde|_{\partial S} = 0$ to a Robin boundary condition (recall \ref{Ejacfb}) for an associated function $u$.

\begin{lemma}[{\cite[Lemma 5.19]{kapwiygul}}]
\label{Lconvccat}
Given $\uutilde \in C^2(S)$, if we define $u \in C^2(S)$ by $u : = ( \Omega\circ X)^{-1} \uutilde$, then
\begin{enumerate}[label = \emph{(\roman*)} ]
\item $\widetilde{\Lcal} \uutilde = \Lcal_{S} u$.
\item $\left. \partial_\sigma \uutilde\right|_{\partial S } = ( \partial_\sigma +1) \left. u \right|_{\partial S}.$
\end{enumerate} 
\end{lemma}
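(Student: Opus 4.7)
The plan is to unpack the definition of $X_\utilde$ via the $g_A$-exponential map, use the conformal relationship between $g_A$ and $g$, and then the two assertions follow from essentially one-line computations.

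For (i), I will Taylor expand $X_\utilde$ in $\utilde$ about $\utilde = 0$. The $g_A$-unit normal to $S$ that is parallel to $\nu$ is precisely $\nu/(\Omega\circ X)$, as noted just before the lemma. Hence the $g_A$-geodesic defining $X_\utilde$ has initial velocity $\nu/(\Omega \circ X)$, so its first-order Taylor expansion reads $X_\utilde(p) = X(p) + (\utilde(p)/(\Omega\circ X)(p))\,\nu(p) + O(\utilde^2) = X(p) + u(p)\nu(p) + O(\utilde^2)$. In particular $\partial_\utilde X_\utilde|_{\utilde = 0}$ is purely normal with scalar coefficient $u$, and any tangential contribution only appears at second order in $\utilde$ and so cannot enter the linearization. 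Since the linearization of the Euclidean mean curvature of any smooth immersion under a purely normal variation $u\nu$ is by definition $\Lcal_S u = (\Delta_S + |A|^2)u$, this yields $\widetilde{\Lcal}\utilde = \Lcal_S u$.

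For (ii), I will apply the product rule to $\utilde = (\Omega\circ X)\,u$ and evaluate at $\partial S$. Two ingredients are needed. First, along $\partial S \subset \partial\B^3$ we have $|X| = 1$, so from the definition of $\Omega$ (which equals $1/\dbold_0^g$ in a neighborhood of $\partial\B^3$, by choosing $\overline{r}$ sufficiently close to $1$) it follows that $(\Omega\circ X)|_{\partial S} = 1$. Second, because $S$ meets $\partial\B^3$ orthogonally, the radial direction $\partial_r$ is tangent to $S$ at each point of $\partial S$; comparing orientations, the inward conormal to $\partial S$ in $S$ satisfies $\partial_\sigma = -\partial_r$ at $\partial S$. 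Hence $\partial_\sigma(\Omega\circ X)|_{\partial S} = -\partial_r(1/r)|_{r=1} = 1$. Substituting in the product rule gives $\partial_\sigma \utilde = 1\cdot u + 1\cdot \partial_\sigma u = (\partial_\sigma + 1)u$ on $\partial S$.

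There is no genuine obstacle here: both assertions are elementary once the setup is parsed correctly. The only potential pitfall is a sign error in identifying $\partial_\sigma$ with $\pm\partial_r$ at $\partial S$; getting this right is what produces the crucial $+1$ in (ii) and, together with Lemma \ref{Lphi:bd}, is precisely the reason the choice $\Omega = 1/\dbold_0^g$ was made near the boundary—so that a Neumann condition $\partial_\sigma \utilde|_{\partial S} = 0$ for the $g_A$-perturbation (which keeps $X_\utilde$ in $\partial\B^3$ along $\partial S$) translates into the Robin condition $(\partial_\sigma + 1)u = 0$ appearing in the Jacobi system \eqref{Ejacfb}.
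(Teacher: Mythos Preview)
Your argument is correct. The paper does not supply its own proof of this lemma---it is quoted verbatim from \cite[Lemma 5.19]{kapwiygul}---so there is nothing in the present paper to compare against; your derivation via the first-order Taylor expansion $X_\utilde = X + u\nu + O(\utilde^2)$ for (i) and the product rule with $\Omega\circ X|_{\partial S}=1$, $\partial_\sigma(\Omega\circ X)|_{\partial S}=1$ for (ii) is exactly the intended computation.
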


\begin{definition}[The initial surfaces]
\label{Dinitccat}
Given $\varphi$, $\kappaunderbold$, and $\varphi^{gl}_\pm$ as in \ref{Dinit} we define $\widetilde{\varphi}^{gl}_\pm =(\Omega \circ X) \varphi^{gl}_\pm$ (recall \ref{Lconvccat}).  We then define the smooth initial surface $M = M[ \varphi, \kappaunderbold]$ in the same way as in \ref{Dinit}, except that we replace the graphs by 
\begin{align*}
\graph^{\R^3, g_A}_\Omega\big( \widetilde{\varphi}^{gl}_+\big) \quad \text{and} \quad
\graph^{\R^3, g_A}_\Omega\big( -\widetilde{\varphi}^{gl}_-\big).
\end{align*}
\end{definition}

\begin{convention}
\label{crunder}
We now fix $\underline{r}$ and $\overline{r}$ so that $\underline{r}$ is large enough that $\bigcup_{p\in L} D^{\Sigma, g}_p(4\delta'_p)$ is contained in the set where $g_A$ coincides with the Euclidean metric.  Note this is possible from \ref{Pexistccat}.
\end{convention}

\begin{lemma}
\label{LglobalHccat}
$\| H - J_M(w^+, w^-)\|_{0, \beta, \gamma-2, \gamma'-2; M} \le \tau_{\max}^{1+\alpha/3}$.
\end{lemma}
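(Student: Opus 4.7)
The plan is to mirror the proof of Lemma \ref{LglobalH}, using the auxiliary metric $g_A$ and Lemma \ref{Lconvccat} to reduce the analysis near $\partial \B^3$ to the same sort of estimates used in the closed case. Key to the reduction is Convention \ref{crunder}: all catenoidal bridges $\cat[p,\tau_p,\kappaunder_p]$ and the gluing annuli $D^\Sigma_p(4\delta'_p)\setminus D^\Sigma_p(\delta'_p)$ lie in the region $\{\dbold^g_0 \le \underline{r}\}$ where $\Omega \equiv 1$, so $g_A = g$ there and $\widetilde{\varphi}^{gl}_\pm = \varphi^{gl}_\pm$. Consequently the portion of the initial surface sitting over this inner region is literally of the form constructed in \ref{Dinit}, and Euclidean mean curvature coincides with the one analyzed in Section \ref{S:linearized}.

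I would then decompose the norm over $M$ into three pieces: (i) the bridges $\Shat[L]$, (ii) the gluing annuli, and (iii) the exterior where $M$ is a $g_A$-normal graph of $\pm\widetilde{\varphi}^{gl}_\pm$. On (i) and (ii), which by the remark above are Euclidean, the bounds from \ref{LH2} and \ref{Lgluingreg}(iii) respectively apply verbatim, yielding contributions $\lesssim \tau_{\max}^{1+\alpha/3}$ to each of the three terms in the weighted norm of Definition \ref{D:norm}. On (iii) I would split again at $\dbold^g_0 = \underline{r}$: on the inner piece the computation of \ref{LglobalH} (rescaling to $\gtilde_q:=(\dbold^\Sigma_L(q))^{-2}g$ on each Euclidean ball $B'_q$, expanding $H'_\pm$ in linear plus quadratic parts, and using \ref{con:one}(v) to control the quadratic remainder by $\tau_{\max}^{3/2}$) goes through unchanged, producing the linear piece $\Lcal_\Sigma \varphi^{gl}_\pm = \Lcal_\Sigma \vunder_\pm = w^\pm$ (since $\varphi^{gl}_\pm = \varphi + \vunder_\pm$ off the gluing region and $\Lcal_\Sigma\varphi=0$ on $\Sigma\setminus L$), which is exactly the term absorbed by $J_M(w^+,w^-)$.

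The only genuinely new contribution comes from the outer annulus $\{\underline{r} < \dbold^g_0 < 1\}$. Here $M$ is the Euclidean-normal description of a $g_A$-graph, and I would apply Lemma \ref{Lconvccat}(i): writing $\utilde := \widetilde{\varphi}^{gl}_\pm = (\Omega\circ X)\varphi^{gl}_\pm$ and $u := \varphi^{gl}_\pm$, the Euclidean mean curvature expands as $H_\pm = \widetilde{\Lcal}\utilde + \widetilde{Q}_{\utilde} = \Lcal_\Sigma u + \widetilde{Q}_{\utilde}$. Since $u=\varphi+\vunder_\pm$ on this annulus, the linear term equals $w^\pm$ as before, and it remains to bound $\widetilde{Q}_{\utilde}$ in the appropriate weighted norm. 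The conformal factor $\Omega$, its derivatives, and the comparison between $g$ and $g_A$ are all bounded by absolute constants on this annulus (which is separated from $L$), so scaling on fixed-size balls $B'_q$ exactly as in \ref{LglobalH} gives $|\widetilde{Q}_{\utilde}|\lesssim \|u\|_{C^{2,\beta}}^2 \lesssim \tau_{\max}^{16/9}$ by \ref{con:one}(v), which is far better than needed.

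The main technical obstacle I foresee is bookkeeping the norm equivalences across the interface $\dbold^g_0 = \underline{r}$: one must check that the $\Psibold$-interpolation between $g$-graphs and $g_A$-graphs built into \ref{Dinitccat} introduces only a $C^{0,\beta}$ error in $H$ of size $\tau_{\max}^{1+\alpha/3}$ under the piecewise weight $f_{\gamma-2,\gamma'-2}$ of Definition \ref{D:norm}. Since the transition takes place in a fixed compact annulus where $\varphi^{gl}_\pm$ is smooth with $C^{3,\beta}$ norm $\lesssim \tau_{\min}^{8/9}$ by \ref{con:one}(v), the cut-off commutator terms are of quadratic order in $\varphi^{gl}_\pm$ and hence easily absorbed.
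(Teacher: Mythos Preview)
Your approach matches the paper's: invoke Convention \ref{crunder} so that on the inner region $\{\dbold^g_0 \le \underline{r}\}$ the argument of Lemma \ref{LglobalH} applies verbatim, and on the outer region use Lemma \ref{Lconvccat}(i) to identify the linear part of the Euclidean mean curvature of the $g_A$-graph with $\Lcal_\Sigma \varphi^{gl}_\pm = w^\pm$, bounding the quadratic remainder via \ref{con:one}(v). One small clarification on your last paragraph: there is no explicit $\Psibold$-interpolation between $g$-graphs and $g_A$-graphs in \ref{Dinitccat}---the graphical part of $M$ is globally a single $g_A$-normal graph, and its coincidence with the Euclidean normal graph in the inner region is automatic because $g_A=g$ there (the smooth transition is built into $\Omega$), so the ``technical obstacle'' you anticipate does not arise.
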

\begin{proof}

Because $g_A$ only differs from being Euclidean outside the ball $D^{\R^3, g}_0(\underline{r})$, by convention \ref{crunder}, and repeating the estimates in the proof of \ref{Dinit}, we need only estimate the Euclidean mean curvature portions of the graphs
\begin{align*}
\graph^{\R^3, g_A}_\Omega\big( \widetilde{\varphi}^{gl}_+\big) \quad \text{and} \quad
\graph^{\R^3, g_A}_\Omega\big( -\widetilde{\varphi}^{gl}_-\big)
\end{align*}
outside this ball.

By using Lemma \ref{Lconvccat} and arguing as in the proof of \cite[Lemma 7.8]{kapwiygul} we can bound these terms and the proof is complete.
\end{proof}

We conclude this subsection with a discussion of perturbations of the initial surfaces.  If $\phi \in C^1(M)$ is appropriately small, 
we denote $M_\phi = \graph_M^{\R^3, g_A}( \widetilde{\phi})$, where $\widetilde{\phi} = (\Omega \circ X) \phi$, and here $X: M \rightarrow \R^3$ is the inclusion map.  
We have the following estimate (recall \ref{Lquad} on the nonlinear terms of the mean curvature of $M_\phi$:

\begin{lemma}
\label{Lquadcc}
If $M$ is as in \ref{Dinitccat} and 
$\phi\in C^{2,\beta}(M)$ 
satisfies $\|\phi\|_{2,\beta,\gamma, \gamma';M} \, \le \, \tau_{\max}^{1+\alpha/4} $,  
then $M_\phi$ is well defined as above, 
is embedded, 
and if 
$H_\phi$ is the Euclidean mean curvature of $M_\phi$ pulled back to $M$
and $H$ is the mean curvature of $M$, then we have 
$$
\|\, H_\phi-\, H - \Lcal_M \phi \, \|_{0,\beta,\gamma-2, \gamma'-2;M}
\, \le \, C \, \tau_{\min}^{-\alpha/2}
\|\, \phi\, \|_{2,\beta,\gamma, \gamma';M}^2.
$$
\end{lemma}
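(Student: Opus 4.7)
The plan is to mimic the proof of Lemma \ref{Lquad}, splitting the surface $M$ according to whether the auxiliary metric $g_A$ agrees with the Euclidean metric $g$, and handling the boundary region via Lemma \ref{Lconvccat} and uniform bounds on the conformal factor $\Omega$. By Convention \ref{crunder}, the entire catenoidal region $\bigsqcup_{p\in L} \cat[p,\tau_p,\kappaunder_p]$ together with the gluing annuli $\bigsqcup_{p\in L} D^{\Sigma,g}_p(4\delta'_p)$ lies in the set $\{\Omega\equiv 1\}$ where $g_A = g$. On this part of $M$, we have $\widetilde{\phi} = \phi$ and $M_\phi = \text{Graph}^{\R^3,g}_M(\phi)$, so the rescaling argument of Lemma \ref{Lquad} applies verbatim: at each point $q$ away from the singular set, scale by $\dbold_L(q)^{-1}$ so that $M$ is locally described by an immersion with uniformly bounded $C^{3,\beta}$ data, bound $\|(\dbold_L(q))^{-1}\phi\|_{C^{2,\beta}}$ using Definition \ref{D:normcat} applied to the graphical region, and expand $H_\phi$ in $\phi$ to isolate the quadratic remainder.

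For the boundary region $\Omega < 1$, which is disjoint from all the bridges, $M$ is the union of two pure graphs of $\pm\widetilde{\varphi}^{gl}_\pm$ and $M_\phi$ is the corresponding graph of $\pm(\widetilde{\varphi}^{gl}_\pm + \widetilde{\phi})$ in the $g_A$-exponential parametrization. I would first observe that $\Omega$ and $\Omega^{-1}$ together with their derivatives are uniformly bounded on this region (since it is a fixed compact piece of $\ccat$ away from the bridges), so passing between $\phi$ and $\widetilde{\phi} = (\Omega\circ X)\phi$ changes norms only by fixed multiplicative constants. The smallness assumption $\|\phi\|_{2,\beta,\gamma,\gamma';M}\le \tau_{\max}^{1+\alpha/4}$ then guarantees that $M_\phi$ is a smooth normal graph and in particular embedded on this piece.

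The key step is the nonlinear bound. I would write $H_\phi = H + \widetilde{\Lcal}\widetilde{\phi} + Q(\widetilde{\phi})$, where $Q$ is the Euclidean mean-curvature quadratic remainder for $g_A$-normal graphs, and invoke Lemma \ref{Lconvccat}(i) to rewrite $\widetilde{\Lcal}\widetilde{\phi} = \Lcal_M \phi$ (this is where the choice $\widetilde{\phi} = (\Omega\circ X)\phi$ pays off). The rescaling argument of Lemma \ref{Lquad} then controls $Q(\widetilde{\phi})$ by $\tau_{\min}^{-\alpha/2}\|\phi\|_{2,\beta,\gamma,\gamma';M}^2$ on the boundary region as well, because the smooth change of coordinates induced by $\exp^{\R^3,g_A}$ has uniformly bounded $C^{3,\beta}$ norm on the compact piece of $\ccat$ under consideration; no weighted blow-up is needed there since the weights $\dbold_L$ are bounded below away from the singular set.

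The main obstacle is bookkeeping: one needs to verify that the estimate on $Q(\widetilde{\phi})$ translates into the claimed bound on $H_\phi - H - \Lcal_M\phi$ in the weighted $\|\cdot\|_{0,\beta,\gamma-2,\gamma'-2;M}$ norm without any loss coming from the $\Omega$-rescaling or from the ``extra'' zero-order term generated when the Euclidean Laplace--Beltrami operator is expressed via the $g_A$-normal parametrization. This is handled by keeping track that on the support of $\widetilde{\phi} - \phi$ the weight $\rr$ is bounded below by a constant, so any bounded multiplicative distortion contributes at worst a constant factor absorbed into $C$. Embeddedness of $M_\phi$ follows, as in Lemma \ref{Lquad}, from the smallness of $\|\phi\|_{2,\beta,\gamma,\gamma';M}$ together with the embeddedness of $M$ established in Definition \ref{Dinitccat}.
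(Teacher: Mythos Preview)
Your proposal is correct and follows essentially the same approach as the paper: invoke Lemma \ref{Lconvccat}(i) to identify the linear term $\widetilde{\Lcal}\,\widetilde{\phi} = \Lcal_M\phi$, and then run the rescaling argument of Lemma \ref{Lquad}, noting that on the boundary region the geometry and the conformal factor $\Omega$ are uniformly controlled. Your write-up is in fact more detailed than the paper's own proof, which simply cites \ref{Lconvccat} and defers the rest to \ref{Lquad} and \cite[Lemma 7.8]{kapwiygul}.
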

\begin{proof}
Although $M_\phi$ is defined as the normal graph of $\widetilde{\phi}$ with respect to the auxiliary metric $g_A$, Lemma \ref{Lconvccat} shows that the linear terms are given by $\widetilde{\Lcal} \, \widetilde{\phi} = \Lcal_M \phi$.  The proof then essentially the same as that of \ref{Lquadcc} (see also \cite[Lemma 7.8]{kapwiygul}), so we omit the details. 
\end{proof}

\subsection*{The linearized equation on the initial surfaces}
\nopagebreak

Because the linearized equation on the initial surfaces is a boundary value problem, 
we need to modify the definition of $\RMa          $ in Definition \ref{DRMappr}: 
we will define 
$\RMa          (E, E^\partial) = (u_1, w^+_{E, 1}, w^{-}_{E, 1}, E_1, E^\partial_1)$ 
for given $(E, E^\partial) \in C^{0, \beta}(M) \times C^{0, \beta}(\partial M)$,  
where $u_1$ will be an approximate solution to the linearized equation modulo $\skernel[L]$, 
that is the boundary value problem (recall \ref{NT}\ref{N:A})  
\begin{equation*}
\label{ELcalccat}
\begin{cases}
\Lcal_M u=E+ J_M(w^+_E, w^-_E) \\
(\partial_\sigma+1)|_{\partial M} u = E^\partial 
\end{cases},
\quad \text{where} \quad
   w^\pm_E
 \in \skernel[L],
\end{equation*}
$w^\pm_{E, 1}$ are the $\skernel[L]$ terms,
and $E_1, E^\partial_1$ are the approximation errors defined by
\begin{equation}
\label{EEone2}
\begin{aligned}
E_1&:= \Lcal_M u_1 - E - J_M(w^{+}_{E, 1}, w^-_{E, 1}), \\
E^\partial_1 &: = (\partial_\sigma+1)|_{\partial M} u_1 - E^\partial.
\end{aligned}
\end{equation}

Before proceeding with the definition, we need to modify the definition of $J_M$ from \ref{Npm} and define an analogous operator $J_{\partial M}$ for the boundary. 

\begin{notation}
\label{Npm2}
If $f^+$ and $f^-$ are functions supported on $\Stildep$ (recall \eqref{EStildep}), we define $J_M(f^+, f^-)$ to be the function on $M$ supported on $(\left.\Pi^{g_A}_{\ccat}\right|_M)^{-1}\Stildep$ defined by $f^+\circ \Pi^{g_A}_{\ccat}$ on $\graph^{\R^3, g_A}_\Omega\big( \widetilde{\varphi}^{gl}_+\big)$
 and by $f^- \circ \Pi^{g_A}_{\ccat}$ on the $\graph^{\R^3, g_A}_\Omega\big( -\widetilde{\varphi}^{gl}_-\big)$.
 
If $f^{\partial}_+$ and $f^\partial_-$ are functions defined on $\partial \ccat$, we define $J_{\partial M}(f^\partial_+, f^\partial_-)$ 
to be the function defined on $\partial M$ defined by $f^{\partial}_+\circ \Pi^{g_A}_{\ccat}$ on 
$\graph_{\partial \ccat}^{\R^3, g_A}(\widetilde{\varphi}^{gl}_+)$ and by $f^\partial_-\circ \Pi^{g_A}_{\ccat}$ on $\graph_{\partial \ccat}^{\R^3, g_A}( - \widetilde{\varphi}^{gl}_-)$.
\qed 
\end{notation}

We follow the discussion before Definition \ref{DRMappr} with the following small modifications: just after the definition of $E'_{\pm}$ in \eqref{Edecom}, we define $E^\partial_{\pm} \in C^{0, \beta}(\partial \ccat)$ by requesting that
\begin{align}
\label{Ejpm}
J_{\partial M}(E^\partial_+, E^\partial_-) = E^\partial.
\end{align}

We then replace the equation  \eqref{Eup} defining $u'_{\pm} \in C^{2, \beta}(\Sigma)$ and $w^{\pm}_{E, 1}$ with the equation
\begin{align}
\label{Eupcc}
\begin{cases}
\Lcal_\Sigma u'_{\pm} = E'_{\pm} + w^{\pm}_{E,1}\\
(\partial_\sigma + 1)|_{\partial \ccat} u'_{\pm} = E^{\partial}_{\pm}
\end{cases}
\quad
\text{and} 
\quad
\forall p\in L  \quad \Ecalunder_p u'_{\pm}=0.
\end{align} 

We now define $\RMa          = (u_1, w^+_{E, 1}, w^{-}_{E, 1}, E_1, E^\partial_1)$,
where $u_1, w^+_{E, 1}$, and $w^{-}_{E, 1}$ are defined as in \ref{DRMappr}, and $E_1, E^\partial_1$ are defined as in \eqref{EEone2}.

We are now ready to state and prove an appropriately modified version of \ref{Plinear2} in the present setting.  Note that in the statement below we only need to solve with homogeneous boundary value data because of the way we perturb using the auxiliary metric. 

\begin{prop}
\label{Plinearcc}
Recall that we assume that \ref{con:alpha},  \ref{cLker}, \ref{aK}, \ref{con:one}, and \ref{con:b} hold. 
A linear map 
$
\Rcal_M: C^{0,\beta}(M) \to C^{2,\beta}(M) \times \skernel[L] \times \skernel[L] 
$ 
can be defined then 
by 
$$
\Rcal_M E
:=
(u,w^+_E,  w^-_E) 
:=
\sum_{n=1}^\infty(u_n, w^+_{E,n}, w^-_{E,n})
\in C^{2,\beta}(M) \times \skernel[L]\times\skernel[L]
$$
for 
$E\in C^{0,\beta}(M)$, 
where 
$\{(u_n,w^+_{E,n}, w^-_{E,n},E_n, E^\partial_{n})\}_{n\in \N}$
is defined inductively for $n\in \N$ by 
$$
(u_n,w^+_{E,n}, w^-_{E,n},E_n, E^\partial_n) := - \RMa           (E_{n-1}, E^\partial_{n-1}) 
\qquad\quad
E_0:=-E, \quad E^\partial_0 = 0. 
$$
Moreover the following hold.
\begin{enumerate}[label=\emph{(\roman*)}]
\item $\Lcal_M u = E + J_M(w^+_E, w^-_E)$ and $(\partial_\sigma + 1)|_{\partial M} u = 0$.
\item $ \| u \|_{2, \beta, \gamma, \gamma'; M} \le C(b) \delta^{-4-2\beta}_{\min}|\log \tau_{\min}|  \| E\|_{0, \beta, \gamma-2, \gamma'-2; M}$.
\item $\| w^{\pm}_E : C^{0, \beta}(\Sigma, g)\| \le C \delta^{\gamma-4-2\beta}_{\min}
\| E\|_{0, \beta, \gamma-2, \gamma'-2; M}$.
\end{enumerate}
\end{prop}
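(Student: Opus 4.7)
The plan is to mirror the iteration schemes in the proofs of Propositions \ref{Plinear} and \ref{Plinear2}, modifying the semi-local construction on $\Sigma = \ccat$ to accommodate the Robin boundary condition arising from the free boundary setting. The key conceptual shift is that the semi-local Dirichlet-type problem \eqref{Eup} on $\Sigma$ is replaced by the Robin boundary value problem \eqref{Eupcc}, whose unique solvability modulo $\skernel[L]$ hinges on Lemma \ref{Lkercc} (triviality of the Robin kernel on $\ccat$) together with the obstruction structure in Assumption \ref{aK}. The semi-local construction on the catenoidal bridges via $\widetilde{u}_{\mathrm{low}}, \widetilde{u}_{\mathrm{high}}$ is untouched, since the bridges lie deep in the interior of $\B^3$ and are unaffected by the boundary.

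First I would verify that $\Rcal_{M,\mathrm{appr}}$ is well defined. Given $(E,E^\partial)$, the functions $E'_\pm$ and $E^\partial_\pm$ are produced via \eqref{Edecom} and \eqref{Ejpm}, and then by \ref{Lkercc} and standard Robin-Schauder theory (together with the obstruction argument of Step 1 in the proof of \ref{Plinear}) the system \eqref{Eupcc} admits unique $u'_\pm \in C^{2,\beta}(\ccat)$ and $w^\pm_{E,1}\in \skernel[L]$ with $\Ecalunder_p u'_\pm=0$ and the bound
\[
\|u'_\pm\|_{2,\beta,\gamma;\ccat} + \|w^\pm_{E,1}:C^{0,\beta}(\ccat,g)\| \le C\delta_{\min}^{-4-2\beta}\bigl(\|E\|_{0,\beta,\gamma-2,\gamma'-2;M} + \|E^\partial:C^{1,\beta}(\partial\ccat)\|\bigr).
\]
The decomposition $u'_\pm=u''_\pm+u'''_\pm+v_\pm$ from the proof of \ref{Plinear} carries over verbatim, with the only new ingredient being the standard fact that the Robin operator $(\Lcal_\Sigma,(\partial_\sigma+1)|_{\partial\ccat})$ on a compact surface with trivial kernel is invertible with Schauder estimates of Robin type.

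Next I would run the iteration. The new point, which actually simplifies the argument, is that $E^\partial_n$ collapses after the first step: since the catenoidal bridges lie in the interior we may arrange $\widehat\psi\equiv 0$ and $\psi'\equiv 1$ on a neighborhood of $\partial M$, so that $u_1=J_M(u'_+,u'_-)$ there, and since $u'_\pm$ solves the Robin problem with prescribed boundary data $E^\partial_\pm$ satisfying $J_{\partial M}(E^\partial_+,E^\partial_-)=E^\partial$, we obtain $E^\partial_1=0$. Bootstrapping, $E^\partial_n=0$ for all $n\ge 1$, and the iteration reduces to estimating $E_n$ exactly as in Steps 3--5 of the proof of \ref{Plinear}: the decomposition $E_1=E_{1,I}+E_{1,II}+E_{1,III}$ via \eqref{EEoneI} and the estimates of Lemma \ref{L:appr} yield
\[
\|E_n\|_{0,\beta,\gamma-2,\gamma'-2;M}\le 2^{-n}\|E\|_{0,\beta,\gamma-2,\gamma'-2;M},
\]
provided $b$ is taken large enough and $\tau_{\max}$ small enough. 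The refinement for the sharper $(\gamma,\gamma')$-norm follows as in the proof of \ref{Plinear2} by first solving on $\tildecat_L$ for the contribution of $\widehat E := \psihat E-\Hcal_1(\psihat E)$ via separation of variables, which incurs an extra factor $|\log\tau_{\min}|$.

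The main technical obstacle I anticipate is the careful organization of the cutoffs near $\partial M$, to ensure that both the interior error decay from \ref{Plinear2} and the exact vanishing of the boundary error survive simultaneously. In particular one must verify that $\psihat$ is supported away from $\partial M$ (automatic since $\Shat[L]$ lies far from the boundary) and that the transition functions used in the $\widehat E$ construction respect the region where $\psi'\equiv 1$, so that the catenoidal correction $\widehat u\circ Y$ makes no contribution to $(\partial_\sigma+1)|_{\partial M}u_n$. Once these geometric preparations are in place, summing the telescoping series yields $u=\sum_{n\ge 1}u_n$, $w^\pm_E=\sum_{n\ge 1}w^\pm_{E,n}$ satisfying (i)--(iii), completing the proof.
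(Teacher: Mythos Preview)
Your claim that $E^\partial_n=0$ for all $n\ge 1$ is incorrect, and this is precisely the content of the proposition that goes beyond \ref{Plinear}--\ref{Plinear2}. You argue that since $u_1=J_M(u'_+,u'_-)$ near $\partial M$ and $u'_\pm$ satisfies the Robin condition \eqref{Eupcc} on $\partial\ccat$, the boundary error vanishes. But the Robin operator in \eqref{EEone2} is $(\partial_\sigma+1)|_{\partial M}$, where $\partial_\sigma$ is the inward conormal on $M$; when applied to $J_M(u'_+,u'_-)=u'_\pm\circ\Pi^{g_A}_{\ccat}$ the conormal on $M$ does not pull back through $\Pi^{g_A}_{\ccat}$ to the conormal on $\ccat$, because $M$ is a nontrivial $g_A$-normal graph over $\ccat$ and the two conormals differ along the boundary. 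In the paper's notation the pulled-back operator is $\bigl((\Pi^{g_A}_{\partial\ccat})^*\partial_\sigma+1\bigr)$, and the resulting boundary error is
\[
E^\partial_{1\pm}=\bigl((\Pi^{g_A}_{\partial\ccat})^*\partial_\sigma-\partial_\sigma\bigr)\big|_{\partial\ccat}u'_{1\pm},
\]
which is small (comparable to the size of $\varphi^{gl}_\pm$ near $\partial\ccat$) but not zero.

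The paper's proof addresses exactly this: it shows inductively that $E^\partial_{i\pm}=\bigl((\Pi^{g_A}_{\partial\ccat})^*\partial_\sigma-\partial_\sigma\bigr)u'_{i\pm}$, that the partial sums telescope as $\bigl((\Pi^{g_A}_{\partial\ccat})^*\partial_\sigma+1\bigr)\sum_{i=1}^n u'_{i\pm}=E^\partial_{n\pm}$, and that $\|(\partial_\sigma+1)|_{\partial M}\sum_{i=1}^n u_i\|\le 2^{-n}$, yielding the Robin condition (i) in the limit. Your outline of the interior iteration and of estimates (ii)--(iii) is fine and matches the paper; the boundary analysis, however, is the one new ingredient here, and it cannot be short-circuited.
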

\begin{proof}
We need only check that $(\partial_\sigma + 1)|_{\partial M} u = 0$.  Using \eqref{EEone2} and \eqref{Ejpm} and pulling back to $\partial \ccat$ we have 
\begin{align}
\label{Ee1sm}
 E^\partial_{1\pm} = ( (\Pi^{g_A}_{\partial \ccat})^* \partial_\sigma +1) u'_{1\pm} =  ( (\Pi^{g_A}_{\partial \ccat})^* \partial_\sigma - \partial_\sigma )|_{\partial \ccat} u'_{1\pm},
\end{align} 
where the second equality follows by
combining with \eqref{Eupcc} and using that $E^\partial_0 = 0$.

It follows by a straightforward inductive argument that 
\begin{align}
\label{Eeism}
( ( \Pi^{g_A}_{\partial \ccat})^* \partial_\sigma +1) u'_{i\pm} = E^\partial_{i\pm} - E^\partial_{i-1\pm} 
\quad \text{and} \quad
E^\partial_{i\pm} = (( \Pi^{g_A}_{\partial \ccat})^* \partial_\sigma- \partial_\sigma) u'_{i\pm}.
\end{align}
We have then for any $n\in \N$
\begin{align*}
(( \Pi^{g_A}_{\partial \ccat})^* \partial_\sigma +1) \sum_{i=1}^n u'_{i\pm} = E^\partial_{n\pm}.
\end{align*}
Estimating the smallness of $E^\partial_{1\pm}$ using \eqref{Ee1sm} and inductively estimating $E^\partial_{n\pm}$ using \eqref{Eeism}, we conclude that for any $n\in \N$, 
\[ \bigg\|  (\partial_\sigma +1)|_{\partial M} \sum_{i=1}^n u_i : C^{1, \beta}(\partial M, g)\bigg\| < 2^{-n},\]
and from this we conclude that $(\partial_\sigma + 1)|_{\partial M} u = 0$.
\end{proof}

\subsection*{The main theorem}
\nopagebreak

\begin{theorem}
\label{Tmainccat}
Let $\kcir \in \{2, 3\}$.  For all $m\in \N$ sufficiently large and $\mbold =\pm m$ in the case $\kcir = 2$, and $\mbold = (\pm m, -2m)$ or $\mbold = (\pm m, \pm m)$ in the case $\kcir = 3$, 
there is a $\gcyl_m$-invariant doubling of $\ccat$ as a free boundary minimal surface in $\B^3$ with four boundary components.  It contains one catenoidal bridge close to each singularity of one of a family of $\group_m$-invariant LD solutions as in Theorem \ref{Trldldgen} whose singularities concentrate on $\kcir$ parallel circles, with the number of singularities and their alignment at each circle prescribed by $\mbold$.  Moreover, as $m\rightarrow \infty$ with fixed $\kcir$, the corresponding doublings converge in the appropriate sense to $\ccat$ covered twice. 
\end{theorem}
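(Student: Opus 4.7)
The plan is to follow the general strategy of Theorem \ref{Tconstruct} (and its noncompact variant Theorem \ref{Tmaincat}), with modifications tailored to the free boundary setting along the lines of \cite{kapwiygul}. First I would verify that the analogue of Assumption \ref{Aimm} holds for the background $(\ccat, \B^3, g)$: items (ii) and (iii) are immediate from \ref{Dcc} and \eqref{Eccatmetric} together with the minimality of $\ccat$, and the triviality of $\ker\Lcal_{\ccat}$ modulo the $O(2)\times\Z_2$ action required for \ref{cLker} is exactly Lemma \ref{Lkercc}. The closedness assumption \ref{Aimm}(i) is replaced by the free boundary condition, which is encoded in the Robin boundary value problem \eqref{Ejacccat}.

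Next I would construct the family of LD solutions. For each admissible $\bsigmaunder$, Proposition \ref{Pexistccat} provides a unit $\ccat$-RLD solution $\phat[\bsigmaunder:\kcir]$ with the prescribed number $\kcir\in\{2,3\}$ of jump latitudes (the cap at $\kcir\leq 3$ coming from Lemma \ref{Lccb}). I would then apply the obvious analogues of Lemma \ref{Lphiavg} and Definitions \ref{dtau1}, \ref{dtau2} to construct a family $\varphi\llbracket\zetabold\rrbracket$, $\zetabold\in\Btilde_\Pcal$, of $\gcyl_m$-invariant LD solutions with singular set $L\llbracket\zetabold\rrbracket$ distributed according to $\mbold$ on the $\kcir$ parallel circles. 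The estimates of Section \ref{S:LDfamilies} carry over essentially verbatim because $\ccat$ is conformally a finite cylinder and $V=2\sech^2\sss$ satisfies Lemma \ref{AV1}; in particular the matching estimates \ref{LmatchingE}, \ref{Lmatchingperp}, and the prescribed unbalancing Proposition \ref{PZ} give a map $Z_\zetabold$ satisfying \ref{Azetabold}\ref{AZ}.

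I would then set up the fixed point argument on the space $B\subset C^{2,\beta}(M\llbracket\zerobold\rrbracket)\times\domzb\times\domku$ as in the proof of Theorem \ref{Ttheory}, using initial surfaces from \ref{Dinitccat} and perturbations with respect to the auxiliary metric $g_A$. The key inputs are now: the mean curvature estimate \ref{LglobalHccat}, the linear solver \ref{Plinearcc} (whose output automatically satisfies the homogeneous Robin boundary condition $(\partial_\sigma+1)|_{\partial M}u=0$), and the nonlinear estimate \ref{Lquadcc}. Building the map $\Jcal$ exactly as in \eqref{Tfp} and invoking the Schauder fixed point theorem yields $(\vhat,\zetaboldhatunder)$ such that the perturbation $\phihat$ satisfies $\Lcal_M\phihat+H_{\phihat}=0$ on $M\llbracket\zetaboldhatunder\rrbracket$ with the Robin condition on the boundary. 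By Lemma \ref{Lconvccat}, the normal graph $(M\llbracket\zetaboldhatunder\rrbracket)_{\phihat}$ with respect to $g_A$ is then an embedded $\gcyl_m$-invariant free boundary minimal surface in $\B^3$; since each of the two sheets of $M\llbracket\zetaboldhatunder\rrbracket$ meets $\partial\B^3$ in a single circle (after quotienting by the $\gcyl_m$ action), the resulting surface has exactly four boundary components.

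The main obstacle is ensuring that the boundary condition behaves well throughout the iteration scheme. Specifically, one must verify that the approximation error $E^\partial_n$ in the construction of $\Rcal_{M,appr}$ decays fast enough so that the infinite sum $\sum u_n$ exactly satisfies the homogeneous Robin condition; this is handled by the telescoping observation \eqref{Eeism} in Proposition \ref{Plinearcc}, which shows that each correction cancels the boundary error introduced by the previous one. A secondary subtlety is that Convention \ref{crunder} must be compatible with the location $\sss_k\in(\sss^{\phicrit}_{\mathrm{root}},\sroot)$ of the outermost singular circle as estimated in Proposition \ref{Pexistccat}, so that the cutoff region for $g_A$ does not interfere with the catenoidal bridges; this forces $\underline{r}$ to be chosen in terms of $\kcir$ only. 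Once these boundary issues are handled, the varifold convergence to $2\ccat$ as $m\to\infty$ follows from the smallness of $\phihat$ in the weighted norm and the fact that the bridges shrink to points, exactly as in Theorem \ref{Tconstruct}.
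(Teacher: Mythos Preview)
Your proposal is correct and follows essentially the same route as the paper: verify the free-boundary analogue of \ref{Aimm} via \ref{Lkercc}, build the LD family from the $\ccat$-RLD solutions of \ref{Pexistccat}, and then run the fixed-point argument of \ref{Ttheory} with \ref{LglobalHccat}, \ref{Plinearcc}, and \ref{Lquadcc} replacing their closed-surface counterparts, concluding orthogonality at $\partial\B^3$ from the Robin condition via \ref{Lconvccat}. One small correction: the fixed point yields $H_{\phihat}=0$ (minimality of the perturbed surface), not $\Lcal_M\phihat+H_{\phihat}=0$; and the four boundary components arise simply because $\ccat$ has two boundary circles and the doubling has two sheets, with no quotient needed.
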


\begin{proof}
The structure of the proof is the same as that of \ref{Tmainsph}, except that Theorem \ref{Ttheory} cannot be applied directly because of the boundary and the free boundary condition.  However, we can still carry out steps (1)-(6) in the proof of \ref{Ttheory}, where we use \ref{Plinearcc} instead of \ref{Plinear2} and \ref{Lquadcc} to estimate the quadratic terms instead of \ref{Lquad}.  We then conclude a fixed point of the map $\Jcal$ in \eqref{Tfp}.  
It follows as in \ref{Ttheory} that $(M\llbracket \zetaboldhatunder \rrbracket)_{\upphihat}$ is smooth and minimal; 
moreover  $(M\llbracket \zetaboldhatunder \rrbracket)_{\upphihat}$ intersects $\partial \B^3$ orthogonally 
because $\upphihat$ satisfies the Robin boundary condition $(\partial_\sigma+1)|_{M} \upphihat = 0$ (recall \ref{Plinearcc}(i), 
the discussion just below \eqref{Ethetacc}, and \ref{Lconvccat}).  
\end{proof}

%%%%%%%%%%%%%%%%%%%%%%%%%%%%%%%%%%%%%%%%%%%%%%%%%%%%%%%%%%%%%%%%%%%%%%
%%%%%  
%%%%% Appendices
%%%%%
%%%%%%%%%%%%%%%%%%%%%%%%%%%%%%%%%%%%%%%%%%%%%%%%%%%%%%%%%%%%%%%%%%%%%

\appendices
\section*{Appendices}
\section{Fermi coordinates}
\label{Sfermi}

In this appendix we define a modification of the standard exponential map we call \emph{Fermi exponential map},  
and we collect some facts about the corresponding Fermi coordinates in Lemma \ref{Lgauss}, 
most of which can be found for example in \cite{Gray}.

\begin{definition}[Fermi exponential map]
\label{dexp}
We assume given 
a hypersurface $\Sigma^n$ in a Riemannian manifold $(N^{n+1}, g)$ and a unit normal $\nu_p \in T_p N$ at some $p\in\Sigma$. 
For $\delta>0$ we define 
$$
\Dhat^{\Sigma, N, g}_p (\delta) := \{v+  z\nu_p \, : \, v\in D_0^{T_p\Sigma, \left. g\right|_p} (\delta) \subset T_p\Sigma, \,   z\in (-\delta,\delta) \, \} \subset T_p N. 
$$
For small enough $\delta$, the map $\exp^{\Sigma,N,g}_p : \Dhat^{\Sigma, N, g}_p (\delta) \to N$, defined by 
$$
\exp^{\Sigma,N,g}_p (v+  z\nu_p ) := \exp^{N,g}_q(  z\nu_v) 
\quad \forall \, v+  z\nu_p \in \Dhat^{\Sigma, N, g}_p (\delta) 
\quad \text{with} \quad v\in T_p\Sigma,  
$$  
where 
$q:=\exp^{\Sigma,g}_p(v)$ 
and $\nu_v\in T_qN$ is the unit normal to $\Sigma$ at $q$ 
pointing to the same side of $D_p^{\Sigma,\delta}(\delta)$ (which is two-sided) as $\nu_p$, 
is a diffeomorphism onto its image 
which we will denote by 
$D^{\Sigma, N, g}_p (\delta)\subset N$. 
We define the \emph{injectivity radius $\inj^{\Sigma, N, g}_p$ of $(\Sigma,N,g)$ at $p$} to be the supremum of such $\delta$'s.  
Finally when $\delta< \inj^{\Sigma, N, g}_p$ we define on $D^{\Sigma, N, g}_p (\delta)$ the following.  
\begin{enumerate}[label=\emph{(\alph*)}]
\item 
$\Pi_\Sigma : D^{\Sigma, N, g}_p (\delta)  \to \Sigma\cap D^{\Sigma, N, g}_p (\delta)$   
is the nearest point projection in $(D^{\Sigma, N, g}_p (\delta) \, , g)$.  
Alternatively $\Pi_\Sigma$ corresponds through $\exp^{\Sigma,N,g}_p$ to orthogonal projection to $T_p\Sigma$ in $(T_p N, \left. g \right|_p)$.  
\item 
$\zz: D^{\Sigma, N, g}_p (\delta)  \to (-\delta,\delta)$ is the signed distance from 
$\Sigma\cap D^{\Sigma, N, g}_p (\delta)$   
in $(D^{\Sigma, N, g}_p (\delta) \, , g)$.  
Alternatively $\zz \circ \exp^{\Sigma,N,g}_p (v) \, \nu_p$ is the orthogonal projection of $v$ to $\left< \nu_p \right>$ in $(T_p N, \left. g \right|_p)$ 
$\forall v\in \Dhat^{\Sigma, N, g}_p (\delta)$.  
\item 
A foliation by the level sets $\Sigma_z :=\zz^{-1}(z)\subset D^{\Sigma, N, g}_p (\delta) $ for $z\in(-\delta,\delta)$.  
\item 
Tensor fields 
$g^{\Sigma,\zz}$, $A^{\Sigma,\zz}$ and $B^{\Sigma,\zz}$ 
by requesting that on each level set $\Sigma_z$ 
they are equal to 
the first and second fundamental forms and Weingarten map of $\Sigma_z$ respectively. 
\end{enumerate} 
\end{definition}

\begin{remark}
\label{R:complete}
Note if $\Sigma$ and $N$ are both complete with respect to $g$ in \ref{dexp}  
and $\Sigma$ is two-sided,  
then $\exp^{\Sigma,N,g}_p $ is well defined on $T_pN$ by the same definition, 
even in the case $\inj^{\Sigma, N, g}_p<\infty$.   
\qed 
\end{remark}

\begin{example}[Clifford torus,  cf. {\cite[p. 263-264]{kapouleas:clifford}}]
\label{exClifford}
We identify $\R^4$ with $\C^2$ 
and let $N:=\Sph^3 
\subset \C^2$,  
$\T := \{ (z_1, z_2) \in \C^2: |z_1| = |z_2| = 1/\sqrt{2}\} \subset \Sph^3$ be the Clifford torus, 
and $p = (1/\sqrt{2}, 1/\sqrt{2}) \in \T$. 
There is then a linear isomorphism $\breve{E}:\R^3\to T_p\Sph^3$ such that the map 
$\tildeE := \exp^{\T,\Sph^3,g}_p \circ \breve{E} :\R^3\to \Sph^3$ 
(called $\Phi$ in \cite{kapouleas:clifford}) satisfies 
$$
\begin{gathered}
\tildeE(\xx, \yy, \zz) = 
\left( \sin(\zz + \textstyle{\frac{\pi}{4}}) e^{\sqrt{2}  \xx i } \, , \, \cos(\zz+ \textstyle{\frac{\pi}{4}}) e^{\sqrt{2} \yy i } \right) \in \Sph^3 \subset \C^2
\\ 
\text{and } 
\tildeE^* g = (1+\sin 2\zz) d\xx^2 + (1- \sin 2\zz) d\yy^2 + d\zz^2.
\end{gathered}
$$
\end{example}

\begin{example}[Cylindrical Fermi coordinates about $\Spheq$ in $\Sph^3$]
\label{exSph}
Let $N:=\Sph^3 \subset \R^4$,  
$\Spheq$ be the equatorial two-sphere in the round three-sphere $\Sph^3$,  
and $p = (0, 0, 1, 0)\in \Spheq$.  
There is then a ``spherical coordinates parametrization'' $\breve{E}:\R^3\to T_p\Sph^3$ such that the map 
$\tildeE := \exp^{\Spheq,\Sph^3,g}_p \circ \breve{E} :\R^3\to \Sph^3$ 
(which is equivalent to the map $\Theta$ in  \cite[(2.2)]{kap})  
satisfies 
\begin{align*}
\tildeE( \rr, \theta, \zz) = & (  \sin \rr \cos \theta \cos \zz,  \sin \rr \sin \theta \cos\zz, \cos \rr  \cos \zz, \sin \zz), 
\\ 
\tildeE^* g = & \cos^2 \zz\left( d \rr^2 + \sin^2 \rr d \theta^2\right) + d\zz^2,
 \end{align*}
 and the only nonvanishing Christoffel symbols in the $(\rr, \theta, \zz)$ coordinates are  
\begin{equation*}
\begin{gathered}
\Gamma_{\rr \zz}^{\rr}  = \Gamma_{\zz \rr}^{\rr}= \Gamma_{\theta \zz}^{\theta} = \Gamma_{\zz \theta}^{\theta} =   - \tan \zz, \quad
\Gamma_{\theta \theta}^\rr = - \sin \rr \cos \rr, \\
\Gamma_{\rr\theta}^{\theta} = \Gamma_{\theta \rr}^{\theta} = \cot \rr, \quad
\Gamma_{\rr\rr}^{\zz} = \cos \zz \sin \zz, \quad
\Gamma_{\theta \theta}^{\zz} = \sin^2 \rr \sin \zz \cos \zz .
\end{gathered}
\vspace{-.27in}
\end{equation*} 
\qed
\end{example}

\begin{lemma}[Properties of Fermi coordinates]
\label{Lgauss}
Assuming $\delta< \inj^{\Sigma, N, g}_p$ as in \ref{dexp}, and with the same notation, the following hold on 
$D^{\Sigma, N, g}_p (\delta)\subset N$. 
\begin{enumerate}[label=\emph{(\roman*)}]
\item $g_{\zz \zz} =1$ and $\nabla_{\partial_\zz}  \partial_\zz = 0$.
\item $g = \gpar+ d\zz^2$. 
\item $\Lie_{\partial_\zz} \gpar = - 2 A^{\Sigma,\zz}$.
\item $\Lie_{\partial_\zz} B^{\Sigma,\zz} = B^{\Sigma,\zz}\circ B^{\Sigma,\zz} - \Rend_{\partial_\zz}$ and 
$\Lie_{\partial_\zz} A^{\Sigma,\zz} = - \left( A^{\Sigma,\zz}* A^{\Sigma,\zz} + \Rm_{{\partial_\zz}}\right)$.
\item \label{g-v} 
$\gpar = \Pi^*_\Sigma g^\Sigma - 2 \zz \Pi^*_\Sigma A^\Sigma + \zz^2 \Pi^*_\Sigma\left( A^\Sigma * A^{\Sigma} + \Rm^\Sigma_\nu \right) + \zz^3 h^{\mathrm{err}}$, 
where $h^{\mathrm{err}}$ is a smooth symmetric two-tensor on 
$D^{\Sigma, N, g}_p (\delta)\subset N$. 
\end{enumerate}
\end{lemma}

\begin{proof}
(i) follows immediately from Definition \ref{dexp}.  
Next we compute  
\begin{align}
\label{E2ff}
\left( \Lie_{\partial_\zz} g\right)_{ij} = 
g_{ij, \zz} =  \langle \nabla_{\partial_i} {\partial_\zz}, \partial_j\rangle+ \langle \partial_{i},  \nabla_{\partial_j}{\partial_\zz}\rangle,
\end{align}
where the indices $i,j$ refer to the $\Sigma$ exponential coordinates. 
With (i), this implies $g_{i\zz, \zz} = \frac{1}{2}g_{\zz\zz, i} = 0$ and (ii) follows, since $g_{i \zz} =  \delta_{i\zz}$ on $\Sigma$.  (iii) follows from \eqref{E2ff} and (ii). Next note that any $X$ satisfying $[X, {\partial_\zz}] =0$ satisfies $\nabla_{\partial_\zz} X = \nabla_X {\partial_\zz} = -BX$; then 
\begin{align*}
\left(\nabla_{\partial_\zz} B\right) X = 
-\nabla_{\partial_\zz} \nabla_X {\partial_\zz}  - B \left(\nabla_{\partial_\zz} X\right) = - \Rend({\partial_\zz}, X){\partial_\zz} + B^2 X.
\end{align*}
The first equation of (iv) follows after noting that 
$\Lie_{\partial_\zz} B^{\Sigma,\zz}= \nabla_{\partial_\zz} B - (\nabla {\partial_\zz})\circ B^{\Sigma,\zz} + B^{\Sigma,\zz} \circ \nabla {\partial_\zz}$  
and the second equation 
follows from the first by lowering an index and using (iii). 
(v) follows via the preceding parts and Taylor's theorem.
\end{proof}

\begin{remark}
\label{rFermiH}
Straightforward calculations using \ref{Lgauss} recover the usual formulas for the first variations 
of volume $dV^{\Sigma,\zz}$ and mean curvature $H^{\Sigma,\zz}$ along  
the parallel surfaces $\Sigma_\zz$:
\begin{equation*} 
\begin{aligned}
\Lie_{\partial_\zz} dV^{\Sigma,\zz}  &= \left(\ddiv_{\Sigma_\zz} {\partial_\zz}\right) dV^{\Sigma,\zz}  
= H^{\Sigma,\zz} dV^{\Sigma,\zz},   
\\
\qquad \qquad 
\Lie_{{\partial_\zz}} H^{\Sigma,\zz} &= \Lie_{\partial_\zz} \tr B^{\Sigma,\zz} 
= \tr \left( \Lie_{\partial_\zz} B^{\Sigma,\zz}\right) = | B^{\Sigma,\zz} |^2 + \Ric({\partial_\zz}, {\partial_\zz}). 
\qquad \!\!\! \square
\end{aligned}
\end{equation*} 
\end{remark}

\begin{lemma}
\label{Lda}
Let $\Sigma$ be a two-sided hypersurface in a Riemannian manifold $(N,g)$ and $\Omega \subset \Sigma$ a precompact domain.  For $u\in C^1(\Omega)$ with $\| u : C^1(\Omega, g)\|$ small, the pullback of the area form $d\sigma_u$ on $\text{\emph{Graph}}^{N, g}_\Omega(u)$ by $X^{N, g}_{\Omega, u} : \Omega \rightarrow \text{\emph{Graph}}^{N, g}_\Omega(u)$ 
(recall \ref{NT}\ref{dgraph}) satisfies
\begin{equation*}
\big((X^{N, g}_{\Omega, u})^* d\sigma_u\big) \: = 
\:\Big(\,  1 - u H+  {\frac{1}{2}} |\nabla u|^2 - {\frac{u^2}{2}} \big(|A^\Sigma|^2 + \Ric(\nu, \nu)-H^2\big) 
%\\ 
+O \big( |u|^3 + |u||du|^2_{g} \big) \:\Big)\: d \sigma ,
\end{equation*}
where $d\sigma$ is the Riemannian area form on $\Sigma$. 
\end{lemma}

\begin{proof}
From the definitions and \ref{Lgauss}\ref{g-v}, we have 
\begin{align}
\label{Egpull}
(X^{N,g}_{\Omega, u})^* g = g^\Sigma -2 u A^\Sigma+ du \otimes du + u^2 (A^\Sigma * A^\Sigma+ \Rm_\nu )+ O(|u|^3).
\end{align}
For any square matrix $M$, recall that 
\begin{align*}
\det ( I + M) = 1 + \tr M + \frac{1}{2}( (\tr M)^2 - \tr M^2) + O( |M|^3),
\end{align*}
where $I$ is the identity matrix.  From this and \eqref{Egpull}, it follows that 
\begin{equation*}
\det \big((X^{N,g}_{\Omega, u})^* g\big)
= \det g^\Sigma \: \Big( 1-2u H+  |\nabla u|^2 
%\\ 
- u^2(|A^\Sigma|^2+\Ric(\nu, \nu)-2H^2) + O( |u|^3 + |u||du|^2_g)\Big). 
\end{equation*}
By taking square roots and using that $\sqrt{1+x} = 1+\frac{1}{2}x-\frac{1}{8}x^2 + O(x^3)$ for $x$ near zero, the conclusion now follows. 
\end{proof}

\begin{lemma}
\label{LAomega}
Let $\Sigma, N, g, \Omega$, and $u$ be as in \ref{Lda}.  If moreover $u \in C^2(\overline{\Omega})$ and $\partial \Omega$ is smooth, then
\begin{multline*}
|\text{\emph{Graph}}^{N,g}_{\Omega}(u)| = 
|\Omega| 
- \int_{\Omega} u H d\sigma
- \frac{1}{2}\int_{\Omega} u \Lcal_\Sigma u \, d\sigma 
\\
+ \frac{1}{2} \int_{\partial \Omega} u \frac{\partial u}{\partial \eta}ds 
+ \frac{1}{2} \int_{\Omega} u^2 H^2 d\sigma 
+\int_{\Omega}O(|u|^3+|u||du|^2_g) d\sigma.
\end{multline*}
\end{lemma}

\begin{proof}
This follows from integrating $(X^{N,g}_{\Omega, u})^* d\sigma_u$ over $\Omega$ via \ref{Lda} and integrating $\int_{\Omega}\frac{1}{2}|\nabla u|^2 d\sigma$ by parts. 
\end{proof}

%%%%%%%%%%%%%%%%%%%%%%%%%%%%%%%%%%%%%%%%%%%%%%%%%%
% Appendix on tilted graphs
%%%%%%%%%%%%%%%%%%%%%%%%%%%%%%%%%%%%%%%%%%%%%%%%%%

\section{Perturbations of graphs}
\label{A:tilt}

\begin{definition}[Vector fields and sliding]
\label{Dvslide}
We assume given a Riemannian manifold $(\Sigma, g)$, an open set $\Omegain \subset \Sigma$, 
and a vector field $V$ defined on a domain containing $\Omegain$ satisfying $V_p \in \dom(\exp^{\Sigma, g})$ for each $p \in \Omegain$.
We define then $\DDD_V=\DDD^{\Sigma, g}_{V, \Omega} : \Omegain \rightarrow \Sigma$ by $\DDD_V := \exp^{\Sigma, g} \circ V|_{\Omega} = \pert^{\Sigma, g}_V I^\Sigma_{\Omegain}$, 
where $I^{\Sigma}_{\Omegain} : \Omegain \rightarrow \Sigma$ is the inclusion (recall \ref{NT}\ref{Dpertimm}).  
We also define $\Omegatilde_V : = \Omega \cap \DDD_V(\Omega)$. 
\end{definition}

\begin{lemma}
\label{LVmvt}
If $\Omegain, V$, and $\DDD_V$ are as in \ref{Dvslide} and $f \in C^\infty(\Omegatilde_V)$, then 
\begin{align*}
	\| f \circ \DDD_V - f : C^k(\Omegatilde_V)\|  \leq
	C(k) \| f: C^{k+1}(\Omegain, g)\| \| V : C^k( \Omegain, g)\|.
\end{align*}
If $\DDD_V$ is moreover a diffeomorphism and $\| V : C^k(\Omega, g)\|$ is small enough, then additionally
\begin{align*}
	\| f \circ \DDD^{-1}_V - f : C^k(\Omegatilde_V)\|  \leq
	C(k) \| f: C^{k+1}(\Omegain, g)\| \| V : C^k( \Omegain, g)\|.
\end{align*}
\end{lemma}
\begin{proof}
This is a consequence of the mean value theorem and a straightforward induction argument.
\end{proof}

\begin{assumption}
\label{Aufermi}
We now assume given the following:
\begin{enumerate}[label=(\roman*)]
\item A two-sided hypersurface $\Sigma$ with a choice of a unit normal $\nu$ in a Riemannian manifold $(N, g)$.
\item A domain $\Omega : = D^{\Sigma, g}_{p}(\delta) \subset \Sigma$ for some $p\in \Sigma$ and $\delta>0$ satisfying $2\delta < \inj^{\Sigma, N, g}_p$ (recall \ref{dexp}).
\item A function $u \in C^\infty(\Omega)$ with $\| u : C^k(\Omegain,g)\|$ as small as needed in terms of $\delta$. 
\item A vector field $\Vtilde$ along $\graph^{N,g}_{\Omegain}(u)$ with $\| (X^{N, g}_{\Omegain, u})^*\Vtilde : C^k(\Omegain, g)\|$ as small as needed in terms of $\delta$. 
\end{enumerate}
\end{assumption}

\begin{definition}
\label{dVtildedecomp}
We define a decomposition $\Vtilde = \Vtilde^\top + \Vtilde^\perp$ by requesting that $\Vtilde^\perp = \langle \Vtilde, \partial_z\rangle \partial_z$, where $z$ is the signed distance from $\Sigma \cap D^{\Sigma, N, g}_p(2\delta)$ in $D^{\Sigma, N, g}_{p}(2\delta)$ as in \ref{dexp}. 
\end{definition}

\begin{lemma}
\label{LVestf}
Given $u$ and $\Vtilde$ as in \ref{Aufermi}, there is a vector field $V$ on $\Omega$ uniquely determined by $\DDD_V = \Pi_\Sigma \circ \pert_{\Vtilde} X^{N,g}_{\Omegain, u}$ and a function $w : \DDD_V(\Omegain) \rightarrow \R$ uniquely determined by $\pert^{N,g}_{\Vtilde} X_{\Omegain, u}^{N, g} = X^{N,g}_{\DDD_V(\Omegain), w} \circ \DDD_V$.  In other words, the diagram
 \begin{equation}
\label{ediag3}
\begin{tikzcd}
N \arrow[r, rightarrow, "\exp^{N, g}\circ\Vtilde"] 
               & N \arrow[d, rightarrow, "\Pi_\Sigma"']\\
\Omegain \arrow[r, rightarrow, "{\DDD_V}"] \arrow[u, rightarrow, "X^{N, g}_{\Omegain, u}"] & \DDD_V(\Omegain) \arrow[u, bend right=25, pos=.45,  "X^{N, g}_{\DDD_V(\Omegain), w}"']
\end{tikzcd}
\end{equation} 
commutes. 
Moreover, the following hold.
\begin{enumerate}[label=\emph{(\roman*)}]
\item $ \| V : C^k(\Omegain, g) \| \leq C(k) \| (X^{N,g}_{\Omega, u})^*\Vtilde^\top : C^k(\Omega, g)\|$. 
\item $\| w - (X^{N,g}_{\Omegain, u})^*\langle \Vtilde , \partial_z\rangle - u : C^k(\Omegatilde_V )\|\lem C(k) \big\| (X^{N, g}_{\Omegain, u})^* \Vtilde^\top : C^k(\Omegain, g) \big\| $ 
\\  
$\phantom1$ \hfill 
$ 
\cdot \big( \| u : C^{k+1}(\Omegain, g)\|+ \| (X^{N, g}_{\Omegain, u})^* \Vtilde : C^{k+1}(\Omegain, g)\| \big)$. 
\end{enumerate}
\end{lemma}
\begin{proof}
By the smallness assumptions in \ref{Aufermi}, we may assume that $\pert_{\Vtilde} X^{N, g}_{\Omega, u}(\Omega) \subset D^{\Sigma, N, g}_{p}(2\delta)$.  
Since $\Pi_\Sigma : D^{\Sigma, N, g}_{p}(2\delta) \rightarrow D^{\Sigma,g}_{p}(2\delta)$ is smooth and $\exp^{\Sigma, g}$ is invertible on $D^{\Sigma, g}_p(2\delta)$, 
the stated condition on $V$ is equivalent to 
\begin{align}
\label{EVdef}
V = (\exp^{\Sigma, g})^{-1}\circ \Pi_\Sigma \circ \pert^{N, g}_{\Vtilde}X^{N, g}_{\Omega, u}.
\end{align}
The estimate (i) follows from \eqref{EVdef} and the fact that the differential of the exponential map at $0$ is the identity.  Now combining (i) with the smallness assumption on $\Vtilde$ in \ref{Aufermi}(iv) and the implicit function theorem, it follows that $\DDD_V$ is a diffeomorphism, so in particular $w$ is uniquely determined by the equation $X^{N, g}_{\DDD_V(\Omega), w} = \pert_{\Vtilde}X^{N, g}_{\Omega, u} \circ \DDD^{-1}_V$.  
 
Next, note from \eqref{ediag3} that $w \circ \DDD_V = z \circ \exp^{N, g} \circ \Vtilde \circ X^{N, g}_{\Omegain, u}$, where $z$ is the signed distance function from $\Sigma$ as in \ref{dexp}(b). 
From this and the fact that the differential of the exponential map at zero is the identity, it follows that
\begin{equation} 
\label{Ewestm}
\| w\circ \DDD_V - (X^{N,g}_{\Omegain, u})^*\langle \Vtilde, \partial_z\rangle - u : C^k(\Omegatilde )\| 
%\\ 
\lem 
C(k)  \| (X^{N, g}_{\Omegain, u})^* \Vtilde^\top : C^k(\Omegain, g)\| \| (X^{N, g}_{\Omegain, u})^* \Vtilde : C^k(\Omegain, g)\|.
\end{equation} 
The conclusion follows from this by using Lemma \ref{LVmvt} in conjunction with item (i). 
\end{proof}

\begin{corollary}[Graphs over graphs]
\label{Cwgraph}
Let $\Sigma, N, g, \delta$, and $u$ be as in \ref{Aufermi}.  
Fix a function $v \in C^\infty(\Omega)$, and define a vector field $\Vtilde$ along $\graph^{N,g}_{\Omega}(u)$ by 
$\Vtilde = ( v \circ X^{N,g}_{\Omega, u}) \nu_{u}$, where $\nu_u$ is the unit normal to $\graph^{N,g}_{\Omega}(u)$ 
which has positive inner product with $\partial_z$.  
Then \ref{Aufermi}(iv) holds provided $\| v : C^k(\Omega, g)\|$ is small enough in terms of $\delta$.  
Moreover, the function $w$ in \ref{LVestf} satisfies
$$
\|w - v - u : C^k(\Omegatilde_V) \| \lem C(k) \: \| u : C^{k+1}(\Omega, g) \|^2 \: \| v: C^k(\Omega,g)\|.
$$
\end{corollary}

\begin{proof}
First observe that in Fermi coordinates
\begin{align*} 
\nu_{u} = \frac{ \partial_z - \nabla^{g^{\Sigma, u}}u}{\sqrt{1+|du|^2_{g^{\Sigma, u}}}},
\end{align*}
where $g^{\Sigma, u}$ is as in \ref{dexp}(d). It follows from this and the definitions that
\begin{equation*}
\begin{gathered}
\| (X^{N, g}_{\Omega, u})^*\Vtilde : C^k(\Omega, g) \| \leq C(k) \| v: C^k(\Omega, g)\|  \| u : C^{k+1}(\Omega)\|, \\
\| (X^{N, g}_{\Omega, u})^* \langle \Vtilde, \partial_z\rangle - v : C^k(\Omega, g)\|
\leq C(k) \| v : C^k(\Omega, u)\| \| u : C^{k+1}(\Omega, g)\|^2.
\end{gathered}
\end{equation*}
The conclusion follows from combining these estimates with \ref{LVestf}(ii). 
\end{proof}

\subsection*{Tilted graphs}
\nopagebreak

In this part, we study tilting rotations $\RRR_\kappa$ defined in \ref{dRk}.    Given vector spaces $\Etwo, \Ethree$ as in \ref{dRk}, choose orientations for $\Etwo$ and $\Ethree$ and further identify $\Ethree$ with $\R^3$ by choosing an orthonormal frame.

\begin{lemma}
\label{Lrot} $\RRR_\kappa$ depends smoothly on $\kappa$.  Moreover, the following hold. 
\begin{enumerate}[label=\emph{(\roman*)}]
\item For $\kappa \neq 0$, $\RRR_{\kappa}$ is the right-handed rotation of angle $\theta_{\kappa}$ about $\kvec$, where $\theta_{\kappa}: = \arctan |\kappa|$, $|\kappa | := \sup_{|v| = 1} \kappa(v)$, and $\{ \kvec, \kvec^\perp\}$ is the positively oriented orthonormal frame for $\R^2$ defined by requesting that $\kappa = | \kappa | \langle \kvec^\perp, \cdot \rangle$. 
\item For any $\vec{w}\in \R^3$, $\RRR_\kappa (\vec{w}) = (\cos \theta_\kappa ) \vec{w} +( \sin \theta_\kappa ) \kvec\times \vec{w} + (1-\cos \theta_\kappa) \langle \vec{w}, \kvec\rangle \kvec.$
\end{enumerate}
\begin{proof}
By \ref{dRk} we have $\RRR_\kappa = \exp(\frac{\theta_\kappa}{|\kappa|}K_\kappa)$, where $\exp : \mathfrak{so}(3)\rightarrow SO(3)$ is the exponential map  and $K_\kappa \in \mathfrak{so}(3)$ is defined by requesting that $K_\kappa v = (\kappa(e_2), -\kappa(e_1), 0) \times v$ for $v\in \R^3$, where here $\times$ is the cross product.  Since $K_\kappa$ and $\frac{\theta_\kappa}{|\kappa|}$ depend smoothly on $\kappa$, the smoothness of $\RRR_\kappa$ follows.
By properties of the exponential map, $\RRR_\kappa$ is a right-handed rotation of angle $\theta_\kappa$ about vector $\frac{1}{|\kappa|}( \kappa(e_2), -\kappa(e_1), 0)$, which is $\kvec$ since clearly $\kvec^\perp = \frac{1}{|\kappa|}(\kappa(e_1), \kappa(e_2), 0)$. 
 (ii) is easy to check and is known as Rodrigues' formula. 
\end{proof}
\end{lemma}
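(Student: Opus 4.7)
My approach is to give an explicit closed-form expression for $\RRR_\kappa$ as an exponential of a skew-symmetric endomorphism depending smoothly on $\kappa$, from which smoothness, (i), and (ii) all follow by direct inspection and standard facts about rotations of Euclidean $3$-space. Specifically, I would define $K_\kappa \in \mathfrak{so}(3)$ by the rule $K_\kappa v := \widehat{\kappa}\times v$, where $\widehat{\kappa}:=(\kappa(e_2),-\kappa(e_1),0)\in\R^3$ is the vector dual to $\kappa$ (scaled so that $|\widehat{\kappa}|=|\kappa|$), and then verify that $\RRR_\kappa=\exp\bigl(\tfrac{\theta_\kappa}{|\kappa|}K_\kappa\bigr)$ for $\kappa\ne 0$. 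Smoothness in $\kappa$ follows because $K_\kappa$ depends linearly on $\kappa$ and because the prefactor $\arctan|\kappa|/|\kappa|$ extends analytically across $\kappa=0$ (with value $1$), so the formula continues smoothly to $\kappa=0$ where it yields $\mathrm{Id}_W$, matching \ref{dRk}.

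To prove (i), I would read off the axis and angle from the defining property in \ref{dRk}. Writing $\kappa=|\kappa|\,\langle \kvec^\perp,\cdot\rangle$ as given, one has $\ker\kappa=\operatorname{span}(\kvec)$, so this is the axis of rotation. Choosing the half-plane $P_\kappa=\{s\kvec+t\kvec^\perp:s\in\R,\ t\ge 0\}\subset V$, the definition forces $\RRR_\kappa(\kvec^\perp)=(\kvec^\perp,\kappa(\kvec^\perp))=(\kvec^\perp,|\kappa|)\in W\cong V\oplus V^\perp$, so the angle $\theta_\kappa$ between $\kvec^\perp$ and its image satisfies $\cos\theta_\kappa=1/\sqrt{1+|\kappa|^2}$ and $\sin\theta_\kappa=|\kappa|/\sqrt{1+|\kappa|^2}$, i.e.\ $\theta_\kappa=\arctan|\kappa|$. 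The right-handedness is then the statement that $\RRR_\kappa$ sends $\kvec^\perp$ into the positive-$V^\perp$ half-space, which is immediate from the sign convention in the defining property; this also shows the exponential formula uses $+K_\kappa$ rather than $-K_\kappa$.

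Part (ii) is Rodrigues' formula for a right-handed rotation of angle $\theta_\kappa$ about the unit vector $\kvec$, and I would derive it by decomposing any $w\in\R^3$ as $w=\langle w,\kvec\rangle\kvec+w^\parallel$ where $w^\parallel\perp \kvec$. The rotation fixes $\langle w,\kvec\rangle\kvec$ and rotates $w^\parallel$ by angle $\theta_\kappa$ in the oriented $2$-plane $\kvec^\perp$, giving $w^\parallel\mapsto\cos\theta_\kappa\,w^\parallel+\sin\theta_\kappa\,(\kvec\times w^\parallel)$. Substituting $w^\parallel=w-\langle w,\kvec\rangle\kvec$ and using $\kvec\times\kvec=0$ yields exactly the formula stated in (ii).

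\textbf{Main obstacle.} There is no deep obstacle; the only care required is orientation bookkeeping: verifying that the convention in \ref{dRk} (which half-plane $P_\kappa$ is chosen, and into which half-space its image goes) matches the right-handed convention encoded by the choice $\widehat{\kappa}=(\kappa(e_2),-\kappa(e_1),0)$ in $K_\kappa$, so that no sign is lost. Once this is pinned down, smoothness, (i), and (ii) fall out of the explicit formula.
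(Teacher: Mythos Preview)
Your proposal is correct and takes essentially the same approach as the paper: both express $\RRR_\kappa$ as $\exp\bigl(\tfrac{\theta_\kappa}{|\kappa|}K_\kappa\bigr)$ with $K_\kappa v=(\kappa(e_2),-\kappa(e_1),0)\times v$, deduce smoothness from the analyticity of $\arctan|\kappa|/|\kappa|$ and the linearity of $K_\kappa$ in $\kappa$, and cite Rodrigues' formula for (ii). Your write-up is slightly more explicit (you spell out the angle computation from \ref{dRk} and the derivation of Rodrigues' formula), but the argument is the same.
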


We now specialize the results of \ref{LVestf} to the case where $(N, g)= (\R^3, \delta_{ij})$, $\Sigma = \R^2$, $\delta>0$ is fixed (recall \ref{Aufermi}) 
and $\Vtilde$ is induced by a tilting rotation $\RRR_\kappa$ in the following sense. 

\begin{lemma}
\label{Lvtildeest}
Given $\kappa$ as in \ref{dRk}, there is a vector field $\Vtilde$ along $\graph^{N, g}_{\Omega}(u)$ uniquely defined by 
\begin{equation}
\label{Erotpert}
\pert_{\Vtilde}X^{N, g}_{\Omega, u} = \RRR_\kappa \circ X^{N,g}_{\Omega, u}.
\end{equation}
Moreover, item (i) below holds, and if $|\kappa|<1/10$, then items (ii)-(iv) below hold, where  $R_\Omega$ is the smallest radius such that $\Omega \subset D_0(R_\Omega)$. 
\begin{enumerate}[label=\emph{(\roman*)}]
\item $\Vtilde \circ X^{N, g}_{\Omega, u} = (\cos \theta_\kappa-1) X^{N, g}_{\Omega, u} + ( \sin \theta_\kappa) \kvec \times X^{N, g}_{\Omega, u} + (1-\cos \theta_\kappa) \langle X^{N, g}_{\Omega, u}, \kvec\rangle \kvec. $
\item $\| (X^{N,g}_{\Omega, u})^* \Vtilde : C^k(\Omega, g)\| \leq C(k) R_\Omega |\kappa|$.
\item $\| (X^{N,g}_{\Omega, u})^* \Vtilde^\top : C^k(\Omega, g)\| \leq C(k) (R_\Omega+1)|\kappa| ( |\kappa| +  \| u : C^{k}(\Omega, g)\| )$.
\item $\| (X^{N,g}_{\Omega, u})^* \langle \Vtilde, \partial_z\rangle - \kappa : C^k(\Omega, g)\| \leq C(k)|\kappa|^2 \| u : C^k(\Omega, g)\|$.
\end{enumerate}
\end{lemma}
\begin{proof}
Because $\exp^{N, g}_p \vec{w} = p + \vec{w}$ for any $p \in N= \R^3$ and any $\vec{w} \in T_p N = T_p \R^3$, the condition \eqref{Erotpert} is equivalent to item (i) by   \ref{Lrot}(ii).  Items (ii) and (iii) then follow from the definitions by estimating (i). 

Next, using (i) we compute
\begin{align}
\label{Evtildekvec}
(X^{N,g}_{\Omega, u})^* \langle \Vtilde, \partial_z\rangle = (\cos \theta_\kappa-1)u+ (\sin \theta_\kappa) \langle \kvec \times X^{N,g}_{\Omega, g} , \partial_z\rangle. 
\end{align}
Note that $\langle \kvec \times X^{N,g}_{\Omega, g} , \partial_z\rangle = \langle X^{N,g}_{\Omega, u}, \kvec^\perp\rangle$.  Item (iv) follows by estimating \eqref{Evtildekvec} using this and the fact that $\kappa = |\kappa| \langle \kvec^\perp, \cdot \rangle$.
\end{proof}

\begin{corollary}
\label{Ltiltgap}
If $|\kappa|$ is small enough in terms of $R_\Omega$ and $\delta$, then \ref{Aufermi}(iv) holds.  Moreover, the function $\kappa_u : =w$ in  \ref{LVestf} satisfies 
$$ 
 \| u_{\kappa}- u - \kappa : C^k(\Omegatilde_V)\| \le C(k)(1+R_\Omega) \left( \| u: C^{k+1}(\Omega)\|+ |\kappa| \right)^3.
$$ 
\end{corollary}

\begin{proof}
The smallness assumption \ref{Aufermi}(iv) on $\Vtilde$ follows from \ref{Lvtildeest}(ii) by taking $|\kappa|$ small enough.  
Therefore, the assumptions of \ref{LVestf} apply, so in particular $\DDD_V$ is a diffeomorphism.  
Using that the exponential map in Euclidean space amounts to addition, 
we conclude from \eqref{ediag3} that (recall \ref{NT}\ref{dgraph}) 
$\DDD_V + (u_\kappa\circ \DDD_V ) \partial_z = \Vtilde\circ ( I_\Omega^{\R^3} + u_\kappa\partial_z)+ u \partial_z$,  
which implies 
\begin{equation*}
\begin{aligned} 
u_\kappa - u - \kappa =& \: (I) + (II) + (III), \quad \text{where} 
\\ 
(I):=& \: u \circ \DDD^{-1}_V - u, 
\\ 
(II):=& \: (X^{N, g}_{\Omega, u})^* \langle \Vtilde, \partial_z\rangle \circ \DDD^{-1}_V - \kappa \circ \DDD_V^{-1},
\\     
(III) :=& \: \kappa \circ \DDD_V^{-1} - \kappa. \qquad \qquad 
\end{aligned}
\end{equation*}

The estimate now follows by combining the preceding with \ref{LVmvt} and the estimates in \ref{Lvtildeest}. 
\end{proof}

%%%%%%%%%%%%%%%%%%%%%%%%%%%%%%%%%%%%%%%%%%%%%%%%%%
% Appendix on H perturbation  
%%%%%%%%%%%%%%%%%%%%%%%%%%%%%%%%%%%%%%%%%%%%%%%%%%
\section{Mean curvature with respect to a perturbed metric}
\label{S:A1}
Let $(N^n, g)$ be a Riemannian manifold with Levi-Civita connection $\nabla$.
\begin{definition}
\label{dhgdiff}
We define a Christoffel-inspired operator  
\begin{equation*}
\begin{gathered} 
\Ccal : C^\infty(\emph{\text{Sym}}^2(TN))\rightarrow C^\infty(\emph{\text{Sym}}^2(TN) \otimes T^*N)  
\\ 
\text{by} \quad  
2(\Ccal T)(X,Y, Z) = (\nabla_XT)(Y,Z) +(\nabla_Y T)(X,Z) - (\nabla_Z T)(X,Y).
\end{gathered} 
\end{equation*}
\end{definition}

\begin{remark}
\label{rebin}
The operator $\Ccal$ above was defined in \cite[Section 6.b]{BergerEbin}, although there it was denoted by $\square$. 
\qed 
\end{remark}

Fix another Riemannian metric $\ghat$ on $N$ and define $h: = \ghat - g$.  We denote various quantities when defined with respect to $\ghat$ with a hat. By a calculation \cite[Lemma A.2]{BrendleRicci} using the Koszul formula,
\begin{equation}
\label{Ekoszuldiff}
\begin{aligned}
  \ghat (\widehat{\nabla}_X Y - \nabla_X Y, Z)= 
 (\Ccal h)(X, Y, Z)
 \qquad \text{for all} \quad X,Y,Z \in T_p N.
\end{aligned}
\end{equation}

\begin{lemma}[Mean curvature under a change of metric]  
\label{LApert}
Let $S\subset N$ be a two-sided hypersurface with unit normal field $\nu$.  
\begin{enumerate}[label=\emph{(\roman*)}]
\item $\nuhat =  (\nu- \beta^{\widehat{\sharp}})/| \nu - \beta^{\widehat{\sharp}}|_{\ghat}$ and 
$| \nu - \beta^{\widehat{\sharp}}|^2_{\ghat} = 1+ \sigma - | \beta|_g^2 -\alphahat(\beta^\sharp, \beta^\sharp)$,
\item $\displaystyle{
	| \nu - \beta^{\widehat{\sharp}}|_{\ghat}\widehat{A}^S =  
	 A^S+ \text{\emph{Sym}}\left( A^S *_g \alpha + \nabla^S \beta\right) - \frac{1}{2}\alphatilde- (\Ccal \alpha) \intprod \beta^{\widehat{\sharp}}}$,
 \item   $	| \nu - \beta^{\widehat{\sharp}}|_{\ghat} \widehat{H}^S 
= H^S+ \ddiv_{S,g} \beta - \frac{1}{2} \tr_{S, g} \alphatilde+ 
\big\langle \text{\emph{Sym}}\left(\nabla^S \beta\right) - \frac{1}{2}\alphatilde, \alphahat \big\rangle_g$ 
\\
$\phantom1$ 
\hfill 
$-\tr_{S, g}( (\Ccal^S\alpha) \intprod \beta^{\widehat{\sharp}}) -\langle  (\Ccal^S\alpha) \intprod \beta^{\widehat{\sharp}},  \alphahat\rangle_g$,
\end{enumerate}
where the symmetric two-tensor fields $\alpha$, $\alphatilde$, and $\alphahat$, 
differential one-form $\beta$, vector fields  $\beta^\sharp$ and $\beta^{\widehat{\sharp}}$, and function $\sigma$, 
are defined by requesting that for $p\in S$, $X, Y\in T_pS$,
\begin{align*} 
\begin{gathered}
\alpha(X,Y) = h(X,Y), 
\qquad 
\beta(X) = h(X, \nu_p),
\qquad 
\sigma(p) = h(\nu_p, \nu_p),
\\ 
\alphatilde(X,Y) = (\nabla_{\nu_p} h)(X, Y), 
\qquad
g(\beta^\sharp, X) = \beta(X), 
\qquad 
 \ghat( \beta^{\widehat{\sharp}},  X) = \beta(X)
\\ 
\text{and} \qquad
\alphahat(X, Y) = \ghat(X^\flat, Y^\flat) - g(X,Y),
\end{gathered}
\end{align*}
where here $X^\flat$ and $Y^\flat$ are computed with respect to $g$. 
Moreover we have, 
$\betashat = \beta^\sharp +( \alphahat \intprod \beta^\sharp)^\sharp$, 
and (in any local coordinates) 
$\alphahat_{ij}= \ghat^{kl}g_{ik}g_{jl} - g_{ij} =   \sum_{p=1}^\infty (-1)^p \alpha_{i k_1} g^{k_1 l_1}
 \alpha_{l_1 k_2} g^{k_2 l_2} \cdots  \alpha_{l_{k-2}k_{p-1}} g^{k_{p-1} l_{p-1}}\alpha_{l_{p-1}j}$.
\end{lemma}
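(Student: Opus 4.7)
For (i), I would first verify that $\nu - \beta^{\widehat{\sharp}}$ is $\ghat$-orthogonal to $TS$: for any $X \in T_pS$,
\[
\ghat(\nu - \beta^{\widehat{\sharp}}, X) = g(\nu, X) + h(\nu, X) - \ghat(\beta^{\widehat{\sharp}}, X) = 0 + \beta(X) - \beta(X) = 0,
\]
so $\nuhat$ is just the normalization. Using $\ghat(\nu, \nu) = 1 + \sigma$, $\ghat(\nu, \beta^{\widehat{\sharp}}) = h(\nu, \beta^{\widehat{\sharp}}) = \beta(\beta^{\widehat{\sharp}})$ (since $\beta^{\widehat{\sharp}} \in TS$), and $\ghat(\beta^{\widehat{\sharp}}, \beta^{\widehat{\sharp}}) = \beta(\beta^{\widehat{\sharp}})$, one gets $|\nu - \beta^{\widehat{\sharp}}|_{\ghat}^2 = 1 + \sigma - \beta(\beta^{\widehat{\sharp}})$. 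For the closed form of $\beta^{\widehat{\sharp}}$, note $\ghat|_{TS}(U, X) = g((I + \alpha^\sharp) U, X)$ for $U, X \in TS$, where $\alpha^\sharp$ is the $g$-endomorphism of $TS$ dual to $\alpha$; hence $(I + \alpha^\sharp)\beta^{\widehat{\sharp}} = \beta^\sharp$, and the Neumann expansion of $(I + \alpha^\sharp)^{-1}$ (equivalently the power-series expansion of $\ghat^{ij}$) yields both the identity $\beta^{\widehat{\sharp}} = \beta^\sharp + (\alphahat \intprod \beta^\sharp)^\sharp$ and the local formula for $\alphahat_{ij}$. Substituting produces $\beta(\beta^{\widehat{\sharp}}) = |\beta|_g^2 + \alphahat(\beta^\sharp, \beta^\sharp)$, completing (i).

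\smallskip

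For (ii), I would start from $|\nu - \beta^{\widehat{\sharp}}|_{\ghat}\widehat{A}^S(X, Y) = \ghat(\widehat{\nabla}_X Y, \nu - \beta^{\widehat{\sharp}})$ and apply \eqref{Ekoszuldiff} to split off a $(\Ccal h)$-term. Using the Gauss decomposition $\nabla_X Y = \nabla^S_X Y + A^S(X, Y)\nu$, the tangential contributions from $\ghat(\nabla^S_X Y, \nu) = \beta(\nabla^S_X Y) = \ghat(\nabla^S_X Y, \beta^{\widehat{\sharp}})$ cancel, yielding $\ghat(\nabla_X Y, \nu - \beta^{\widehat{\sharp}}) = A^S(X, Y)\, |\nu - \beta^{\widehat{\sharp}}|_{\ghat}^2$. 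Next I would compute the two $(\Ccal h)$-contributions separately. Differentiating $h(Y, \nu) = \beta(Y)$ via the Leibniz rule, using $\nabla_X \nu = -B^S(X)$, $h(\nu, \nu) = \sigma$, and $\alpha(Y, B^S(X)) = (A^S *_g \alpha)(X, Y)$, gives $(\nabla_X h)(Y, \nu) = (\nabla^S_X \beta)(Y) - \sigma A^S(X, Y) + (A^S *_g \alpha)(X, Y)$; symmetrizing and combining with $(\nabla_\nu h)(X, Y) = \alphatilde(X, Y)$ produces
\[
(\Ccal h)(X, Y, \nu) = \Sym(\nabla^S \beta + A^S *_g \alpha) - \sigma A^S - \tfrac{1}{2}\alphatilde.
\]
For $(\Ccal h)$ with all three arguments in $TS$, I would apply Gauss in each covariant derivative to obtain $(\nabla_X h)(Y, Z) = (\nabla^S_X \alpha)(Y, Z) - A^S(X, Y)\beta(Z) - A^S(X, Z)\beta(Y)$; after the cyclic $\Ccal$-symmetrization two of the three $A^S\beta$ terms cancel, leaving $(\Ccal h)(X, Y, Z) = (\Ccal^S \alpha)(X, Y, Z) - A^S(X, Y)\beta(Z)$. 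Taking $Z = \beta^{\widehat{\sharp}}$ and substituting $\beta(\beta^{\widehat{\sharp}}) = 1 + \sigma - |\nu - \beta^{\widehat{\sharp}}|_{\ghat}^2$ from (i), the $A^S |\nu - \beta^{\widehat{\sharp}}|_{\ghat}^2$, $-\sigma A^S$, and $A^S \beta(\beta^{\widehat{\sharp}})$ contributions collapse to a single unadorned $A^S$, and the remaining terms assemble into the stated formula.

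\smallskip

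For (iii), I would take the $\ghat$-trace of (ii) on $TS$. Raising indices of $\alphahat$ with $g$ yields $\ghat^{ij} = g^{ij} + \alphahat^{ij}$, so $\tr_{S, \ghat} T = \tr_{S, g} T + \langle T, \alphahat\rangle_g$ for any symmetric $(0,2)$-tensor $T$ on $S$. The decisive observation is the exact identity
\[
\tr_{S, \ghat}[A^S + \Sym(A^S *_g \alpha)] = \ghat^{ij}A^S_{ik}g^{kl}(g_{lj} + \alpha_{lj}) = \ghat^{ij}A^S_{ik}g^{kl}\ghat_{lj} = g^{kl}A^S_{kl} = H^S,
\]
so no $\alphahat$-correction survives from this pair. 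Tracing the remaining three summands in (ii) via $\tr_{S, \ghat} = \tr_{S, g} + \langle \cdot, \alphahat\rangle_g$ then produces precisely the $\ddiv_{S, g}\beta$, $-\tfrac{1}{2}\tr_{S, g}\alphatilde$, and $-\tr_{S, g}((\Ccal^S\alpha) \intprod \beta^{\widehat{\sharp}})$ terms together with the corresponding $\langle \cdot, \alphahat\rangle_g$ corrections displayed.

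\smallskip

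The main obstacle will be the bookkeeping in (ii): verifying that the $A^S$-coefficients arising from the Gauss decomposition on one side and from both $(\Ccal h)(X, Y, \nu)$ and $(\Ccal h)(X, Y, \beta^{\widehat{\sharp}})$ on the other collapse, via the formula for $\beta(\beta^{\widehat{\sharp}})$ from (i), to the unadorned $A^S$ claimed. The subsidiary technical step is the tangential identity $(\Ccal h)(X, Y, Z) = (\Ccal^S \alpha)(X, Y, Z) - A^S(X, Y)\beta(Z)$, whose asymmetric survival of one $A^S\beta$ out of three is what distinguishes $\Ccal^S$ from $\Ccal$ in (iii).
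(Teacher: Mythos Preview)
Your proposal is correct and follows essentially the same approach as the paper's proof: establish $\ghat$-orthogonality of $\nu-\beta^{\widehat\sharp}$ to $TS$ for (i), split $\widehat A$ via \eqref{Ekoszuldiff} and compute $(\Ccal h)(X,X,\nu)$ and $(\Ccal h)(X,X,\beta^{\widehat\sharp})$ separately for (ii), then $\ghat$-trace for (iii). The only mild difference is that for the tangential term the paper obtains $(\Ccal h)(X,X,\beta^{\widehat\sharp})=(\Ccal^S\alpha)(X,X,\beta^{\widehat\sharp})-\beta(\beta^{\widehat\sharp})A(X,X)$ by invoking the decomposition $\widehat\nabla-\nabla=\widehat\nabla^S-\nabla^S+\widehat A\,\nuhat-A\,\nu$ together with \eqref{Ekoszuldiff} on $S$, whereas you derive the equivalent (and slightly more general) identity $(\Ccal h)(X,Y,Z)=(\Ccal^S\alpha)(X,Y,Z)-A^S(X,Y)\beta(Z)$ by applying the Gauss formula directly inside each $(\nabla h)$ summand; both routes give the same result, and your trace identity $\tr_{S,\ghat}[A^S+\mathrm{Sym}(A^S*_g\alpha)]=H^S$ is exactly the paper's observation $\tr_{S,\ghat}(A*_g\ghat)=\tr_{S,g}A$ written out in coordinates.
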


\begin{proof}
Given $X\in T_pS$, note that $\ghat( \nu  - \beta^{\widehat{\sharp}}, X) = 0$. Therefore $\hat{\Pi}\nu = \beta^{\widehat{\sharp}}$,  where $\hat{\Pi}$ is the $\ghat$-orthogonal projection onto $T_p S$, and (i)  follows, where the formula for $|\nu - \beta^{\widehat{\sharp}}|^2_{\ghat}$ is a direct calculation.
Next we compute (where in this proof we write $A$ in place of $A^S$ since no confusion will arise)
\begin{equation*}
\begin{aligned}
|\nu - \betashat|_{\ghat}  \widehat{A}(X,X) &= \ghat( \nabla_X X+ \widehat{\nabla}_X X- \nabla_X X , \nu - \betashat)\\
&=\ghat(\nu, \nu - \betashat)A(X,X) +(\Ccal h)(X, X, \nu) - (\Ccal h)(X, X, \betashat)\\
&=  |\nu - \beta^{\widehat{\sharp}}|^2_{\ghat}A(X,X)+(\Ccal h)(X, X, \nu) - (\Ccal h)(X, X, \betashat),
\end{aligned}
\end{equation*}
where the second and third equalities use \eqref{Ekoszuldiff} and that $\ghat(\betashat, \nu-\betashat) = 0$.  Using \ref{dhgdiff}, we calculate
\begin{multline*} 
(\Ccal h)(X, X, \nu) = (\nabla_X h)(X, \nu )-\frac{1}{2} (\nabla_\nu h )(X,X)\\
  = X( h(X, \nu)) - h( \nabla_X X, \nu )- h(X, \nabla_X \nu)  - \frac{1}{2}\alphatilde(X,X)\\
= X(\beta(X)) - \beta\left( \nabla_X^S X\right) - \sigma A(X,X)+ \alpha( X, B(X)) -\frac{1}{2} \alphatilde(X,X)\\
= (\nabla^S_X\beta)(X) - \sigma A(X,X) +\left( A*_g\alpha\right)(X,X)-\frac{1}{2} \alphatilde(X,X).
\end{multline*}
Using \eqref{Ekoszuldiff} and that $\widehat{\nabla} - \nabla = \widehat{\nabla}^S - \nabla^S + \widehat{A} \nuhat - A \nu$, we find
 \begin{align*}
(\Ccal h)(X, X, \betashat)  &= (\Ccal^S \alpha)(X, X, \betashat) - \ghat(\nu, \betashat)A(X,X) \\
 &=  (\Ccal^S \alpha)(X,X, \betashat)- \beta(\betashat)A(X,X).
\end{align*}
Substituting these items above and simplifying using (i) establishes (ii). 
Taking the trace of (ii) with respect to $\ghat_S$ and simplifying (note in particular that $\tr_{S, \ghat} (A*_g\ghat) = \tr_{S, g} A = H$) establishes (iii).

Finally, let $[g]$ and $[\alpha]$ denote matrix representations of $g_S$ and $\alpha$ and note that
\begin{align*}
 ([g]+[\alpha])^{-1} =  ( \text{Id}+[g]^{-1} [\alpha])^{-1}[g]^{-1}
= \sum_{k=0}^\infty (-1)^k \left( [g]^{-1} [\alpha]\right)^k [g]^{-1},
\end{align*}
which implies the coordinate expression for $\alphahat$ and the identity $\betashat = \beta^\sharp + (\alphahat \intprod \beta^\sharp)^\sharp$.
\end{proof}

\begin{remark}
\label{rmars}
The proof of Lemma \ref{LApert} above is self-contained and done independently from \cite{mars}, but we note that (i) and (ii) are consistent with  results therein. 
\qed
\end{remark}

\begin{remark}
\label{rconfh}
When $\ghat = e^{2w} g$ for some $w \in C^\infty(N)$, it follows that 
\begin{align*}
  \beta = \beta^{\widehat{\sharp}} = 0,\quad 
   |\nu - \beta^{\widehat{\sharp}}|_{\ghat} = e^{w}, \quad
A^S *_g \ghat = e^{2w}A^S, \quad
 \alphatilde = 2 e^{2w}\nu(w) g,
 \end{align*}
 and  \ref{LApert}(i) reduces to the  usual transformation rule $\widehat{A} = e^{w} \left( A- ( \partial_\nu w) g\right)$ for the second fundamental form under a conformal change of metric. 
\qed
 \end{remark}

\begin{remark}
\label{rHpert}
When $\ghat$ is the ambient metric in a local system of Fermi coordinates about a hypersurface $\Sigma$ as in \ref{dexp}, we define $g = \left.g\right|_\Sigma+ d\zz^2$ and $S = \Sigma_\zz$, a parallel hypersurface. We have by  \ref{Lgauss}
 \begin{equation*}
 \begin{gathered}
 \ghat = \left. \ghat\right|_{\Sigma_{\zz}} + d\zz^2, \quad
\nuhat =  \nu = \partial_\zz,\quad   \sigma = 0, 
\quad\beta = 0,\\
\alpha =  - 2 \zz A^\Sigma  +  \zz^2 \left( A^\Sigma * A^\Sigma + \Rm_\nu\right) +O(\zz^3),\\
 \alphatilde = -2 A^\Sigma+2\zz(A^\Sigma*A^\Sigma+ \Rm_\nu)  + O(\zz^2),
 \end{gathered}
 \end{equation*}
so that \ref{LApert}(iii) implies the usual formula for the mean curvature of $\widehat{H}^{\Sigma_\zz}$  (note that $H^{\Sigma_\zz} = H^{\Sigma}$):
\begin{multline*}
  \widehat{H}^{\Sigma_\zz} =H^{\Sigma} - \frac{1}{2} \tr_\Sigma \alphatilde + \frac{1}{2} \langle \alpha , \alphatilde\rangle + O\left(\zz^2\right)
\\ 
= H^\Sigma+ \left(|A^\Sigma|^2+\Ric(Z, Z)\right)\zz + O\left(\zz^2\right).
 \qed
\end{multline*}
\end{remark}

\begin{corollary}
\label{Lhasmp}  
$\widehat{H}^S - H^S  - \widetilde{\sigma} = (H^S + \widetilde{\sigma}) \, ((1+\widehat{\sigma})^{-1/2}-1) $ 
\\ $\phantom{kk}$ \hfill 
$ 
+(1+\widehat{\sigma})^{-1/2} \left( \langle \mathrm{Sym}\left(\nabla^S \beta\right), \alphahat\rangle_g  -
\tr_{S,g}( (\Ccal^S \alpha) \intprod \betashat) - \langle (\Ccal^S \alpha) \intprod \betashat,  \alphahat\rangle_g\right),
$ 
\\ 
where here $\widetilde{\sigma}: = \ddiv_{S, g} \beta - \frac{1}{2} \tr_{S, g} \alphatilde - \frac{1}{2} \langle \alphahat, \alphatilde\rangle_{g}$ and $\widehat{\sigma}: = \sigma -\beta(\beta^{\widehat{\sharp}}) $.
\end{corollary}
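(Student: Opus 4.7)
\smallskip

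\textbf{Proof plan for Corollary \ref{Lhasmp}.} The strategy is to combine parts (i) and (iii) of Lemma \ref{LApert} and algebraically rearrange, so no new geometric input is needed. First I would compute $|\nu - \betashat|_{\ghat}^2$ in the form $1+\widehat{\sigma}$. From the identity $\betashat = \beta^\sharp + (\alphahat \intprod \beta^\sharp)^\sharp$ supplied in \ref{LApert}, one has
\[
\beta(\betashat) \;=\; \beta(\beta^\sharp) + \beta\bigl((\alphahat \intprod \beta^\sharp)^\sharp\bigr) \;=\; |\beta|_g^2 + \alphahat(\beta^\sharp,\beta^\sharp),
\]
so \ref{LApert}(i) reads $|\nu - \betashat|_{\ghat}^2 = 1 + \sigma - \beta(\betashat) = 1 + \widehat{\sigma}$, which is exactly the factor appearing in the statement.

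Next I would unpack \ref{LApert}(iii) by expanding the bracket $\langle \mathrm{Sym}(\nabla^S\beta) - \tfrac12 \alphatilde, \alphahat\rangle_g$ and collecting terms. This gives
\[
(1+\widehat{\sigma})^{1/2}\, \widehat{H}^S \;=\; (H^S + \widetilde{\sigma}) \;+\; \mathcal{E},
\]
where $\widetilde{\sigma} = \ddiv_{S,g}\beta - \tfrac12 \tr_{S,g}\alphatilde - \tfrac12 \langle \alphahat, \alphatilde\rangle_g$ is precisely the quantity in the statement (the $-\tfrac12\langle\alphahat,\alphatilde\rangle_g$ piece comes out of the expansion), and $\mathcal{E}$ denotes the remaining error:
\[
\mathcal{E} \;=\; \langle \mathrm{Sym}(\nabla^S\beta), \alphahat\rangle_g \;-\; \tr_{S,g}\bigl((\Ccal^S\alpha)\intprod \betashat\bigr) \;-\; \langle (\Ccal^S\alpha)\intprod\betashat, \alphahat\rangle_g.
\]

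Finally, divide by $(1+\widehat{\sigma})^{1/2}$ and subtract $H^S + \widetilde{\sigma}$ from both sides:
\[
\widehat{H}^S - H^S - \widetilde{\sigma} \;=\; (H^S + \widetilde{\sigma})\bigl((1+\widehat{\sigma})^{-1/2} - 1\bigr) \;+\; (1+\widehat{\sigma})^{-1/2}\mathcal{E},
\]
which is exactly the claimed identity.

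There is no real obstacle here: the corollary is a bookkeeping consequence of \ref{LApert}(i),(iii). The only mildly delicate step is verifying the identity $\beta(\betashat) = |\beta|_g^2 + \alphahat(\beta^\sharp,\beta^\sharp)$ needed to identify $|\nu - \betashat|_{\ghat}^2$ with $1 + \widehat{\sigma}$, which in turn depends on the explicit formula for $\betashat$ recorded at the end of \ref{LApert}. Once this is done, the remainder is a linear rearrangement of the master formula in \ref{LApert}(iii).
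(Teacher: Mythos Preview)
Your proposal is correct and follows exactly the paper's approach: divide \ref{LApert}(iii) through by $|\nu-\betashat|_{\ghat}=(1+\widehat{\sigma})^{1/2}$ and subtract $H^S+\widetilde{\sigma}$. You simply spell out in more detail the identification $|\nu-\betashat|_{\ghat}^2=1+\widehat{\sigma}$ via $\beta(\betashat)=|\beta|_g^2+\alphahat(\beta^\sharp,\beta^\sharp)$, which the paper leaves implicit in its reference to \ref{LApert}(i).
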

\begin{proof}
This follows immediately from dividing through \ref{LApert}(iii) by $| \nu - \beta^{\widehat{\sharp}}|_{\ghat} = (1+\widehat{\sigma})^{1/2}$ (recall \ref{LApert}(i)) and subtracting $H^S + \widetilde{\sigma}$ from both sides. 
\end{proof}

\begin{corollary}
\label{LHpertsmall}
Suppose $\alpha, \beta$, and $\sigma$ all have small enough $C^k(S, g)$ norm in terms of $k$.  
Then 
\begin{multline*} 
\| \widehat{H}^S: C^k(S, g) \| 
\lem C(k) \left( \|H^S : C^k(S, g)\| +   \|  \beta: C^{k+1}(S, g)\| \right. 
\\
\left. \, + \| \tr_{S, g} \alphatilde: C^k(S, g)\| + \big( 1+ \| \alphatilde : C^k(S, g)\| \big) \,         \| \alpha : C^{k+1}(S, g)\|          \right) .
\end{multline*} 
\end{corollary}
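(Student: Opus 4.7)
\medskip

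\noindent\textbf{Proof proposal.} The plan is to start from the identity in Corollary \ref{Lhasmp}, which after rearrangement reads
\[
\widehat{H}^S = (1+\widehat{\sigma})^{-1/2}\!\left[\,H^S+\widetilde{\sigma}+\langle \mathrm{Sym}(\nabla^S\beta),\alphahat\rangle_g-\tr_{S,g}\bigl((\Ccal^S\alpha)\intprod\betashat\bigr)-\langle(\Ccal^S\alpha)\intprod\betashat,\alphahat\rangle_g\right],
\]
and then estimate each factor on the right in $C^k(S,g)$ using the multiplicative property \eqref{E:norm:mult}. The smallness hypothesis on $\|\alpha\|_{C^k}$, $\|\beta\|_{C^k}$, $\|\sigma\|_{C^k}$ will be used throughout to (i) bound auxiliary quantities uniformly, and (ii) absorb small factors on products.

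First I would control the auxiliary tensors $\alphahat$, $\betashat$, and $\widehat{\sigma}$. From the coordinate series $\alphahat_{ij}=\sum_{p\ge 1}(-1)^p\alpha_{ik_1}g^{k_1l_1}\alpha_{l_1k_2}\cdots\alpha_{l_{p-1}j}$ given at the end of Lemma \ref{LApert}, combined with \eqref{E:norm:mult} applied termwise, I get $\|\alphahat\|_{C^k}\le C(k)\|\alpha\|_{C^k}$ once $\|\alpha\|_{C^k}$ is small enough (geometric series convergence). From $\betashat=\beta^\sharp+(\alphahat\intprod\beta^\sharp)^\sharp$ I obtain $\|\betashat\|_{C^k}\le C(k)\|\beta\|_{C^k}(1+\|\alpha\|_{C^k})$, which by smallness is bounded by $C(k)\|\beta\|_{C^k}$. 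Since $\widehat{\sigma}=\sigma-\beta(\betashat)$ has small $C^k$ norm, $(1+\widehat{\sigma})^{-1/2}$ is real-analytic in $\widehat{\sigma}$ around $0$, so $\|(1+\widehat{\sigma})^{-1/2}\|_{C^k}\le C(k)$ and moreover the prefactor in the displayed formula contributes only a multiplicative constant.

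Next I would estimate the bracket. For $\widetilde{\sigma}=\ddiv_{S,g}\beta-\tfrac12\tr_{S,g}\alphatilde-\tfrac12\langle\alphahat,\alphatilde\rangle_g$, the three pieces give respectively $C(k)\|\beta\|_{C^{k+1}}$, $\|\tr_{S,g}\alphatilde\|_{C^k}$, and $C(k)\|\alpha\|_{C^k}\|\alphatilde\|_{C^k}$ by \eqref{E:norm:mult} and the bound on $\alphahat$; the last is dominated by $\|\alphatilde\|_{C^k}\|\alpha\|_{C^{k+1}}$. For the $\mathrm{Sym}(\nabla^S\beta)$ term, \eqref{E:norm:mult} and the smallness of $\|\alpha\|_{C^k}$ give a bound by $C(k)\|\beta\|_{C^{k+1}}\|\alpha\|_{C^k}\le C(k)\|\beta\|_{C^{k+1}}$. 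For the two terms involving $\Ccal^S\alpha$, the covariant derivative in $\Ccal^S$ costs one derivative, yielding $\|\Ccal^S\alpha\|_{C^k}\le C(k)\|\alpha\|_{C^{k+1}}$; combining this with the bounds on $\betashat$ and $\alphahat$ and again using smallness of $\|\beta\|_{C^k}$ (to absorb a factor of $\|\beta\|_{C^k}$) produces a bound by $C(k)\|\alpha\|_{C^{k+1}}$ for each.

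Assembling, the bracket is bounded in $C^k$ by $C(k)\bigl(\|H^S\|_{C^k}+\|\beta\|_{C^{k+1}}+\|\tr_{S,g}\alphatilde\|_{C^k}+(1+\|\alphatilde\|_{C^k})\|\alpha\|_{C^{k+1}}\bigr)$, which after multiplication by the bounded prefactor $(1+\widehat{\sigma})^{-1/2}$ yields the claim. There is no genuine obstacle here: the argument is a careful bookkeeping exercise in the multiplicative H\"older norm calculus, with the only subtlety being the consistent use of the smallness hypothesis to absorb factors such as $\|\alpha\|_{C^k}$ and $\|\beta\|_{C^k}$ into overall constants so that the final estimate depends on the stated weighted combinations and not on products that could be hidden inside them.
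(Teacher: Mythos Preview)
Your proof is correct and follows essentially the same approach as the paper: both start from the expansion in Corollary~\ref{Lhasmp} (equivalently, Lemma~\ref{LApert}(iii) divided through by $(1+\widehat{\sigma})^{1/2}$), bound $\alphahat$, $\betashat$, and $\widehat{\sigma}$ by the series expression and the smallness hypothesis, and then estimate each remaining term using the multiplicative property~\eqref{E:norm:mult}, absorbing small factors such as $\|\alpha\|_{C^k}$ and $\|\beta\|_{C^k}$ into the constants. The only cosmetic difference is that you first solve explicitly for $\widehat{H}^S$ before estimating, whereas the paper estimates the individual pieces directly and then invokes the expansion.
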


\begin{proof}
From the definition of $\beta^{\widehat{\sharp}}$ and the coordinate expression for $\alphahat$, it follows that $\| \beta^{\widehat{\sharp}} : C^k(S, g)\| \le C(k) \| \beta: C^k(S,g)\|(1 +  \| \alpha: C^k(S,g) \|)$.  Using the notation in the proof of \ref{Lhasmp}, we have then
\begin{align*}
\begin{gathered}
\| w: C^{k}(S, g)\| \le C(k) \left( \| \sigma:C^k(S, g) \|+  \| \beta:  C^k(S, g)\|^2 \right),\\
\| \langle \text{{Sym}}\left(\nabla^S \beta\right), \alphahat \rangle_g: C^k(S, g) \| \le C(k) \| \alpha : C^k(S, g) \| \| \beta: C^{k+1}(S, g)\| , \\
\| \langle \alphatilde, \alphahat \rangle_g : C^k(S, g) \| \le C(k) \| \alpha : C^k(S, g) \| \|  \alphatilde: C^k(S, g)\|.
\end{gathered}
\end{align*}
Using the preceding, we also estimate
\begin{multline*}
\| \tr_{S,g}( (\Ccal^S \alpha) \intprod \betashat) : C^k(S, g) \|+ \| \langle ( \Ccal^S \alpha) \intprod \betashat, \alphahat \rangle_g: C^k(S, g) \|  
\\ 
\lem C(k) \|  \alpha : C^{k+1}(S, g) \| \| \beta: C^k(S, g) \| .
\end{multline*}
Combining the estimates with the expansion in \ref{Lhasmp} completes the proof.
\end{proof}

\begin{lemma}  
\label{Llaplace}
 Let $u\in C^2(S)$ and $X$ be a vector field on $S$.
	\begin{enumerate}[label=\emph{(\roman*)}]
	\item $\nablahat u = \nabla u + (\alphahat \intprod \nabla u)^\sharp$.
	\item $\widehat{\ddiv} X = \ddiv X + \tr_{S, g}( (\Ccal^S \alpha) \intprod X) + \langle \alphahat, (\Ccal^S \alpha) \intprod X\rangle_g$.
	\item $\widehat{\Delta} u = \Delta u + (\ddiv_{S, g} \alphahat)(\nabla u) +  (\tr_{S, g} \alphahat ) \Delta u+ 
\tr_{S, g}( ( \Ccal^S \alpha) \intprod ( \nabla u + ( \alphahat\intprod \nabla u)^\sharp))$ \\
$+ \langle \alphahat, ( \Ccal^S \alpha) \intprod ( \nabla u + ( \alphahat\intprod \nabla u)^\sharp)\rangle_g$.
\item As long as $\alpha$ has small enough $C^{k+1}(S, g)$ norm in terms of $k$, then
$\| \widehat{\Delta} u - \Delta u : C^k(S, g) \| \le C(k) \| \alpha : C^{k+1}(S, g)\| \| u : C^{k+2}(S, g)\| $.
\end{enumerate}
\end{lemma}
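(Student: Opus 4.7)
The plan is to extract all four items from two ingredients: the Koszul-difference identity \eqref{Ekoszuldiff} (applied intrinsically on $S$ to the pair $g^S$ and $\ghat^S = g^S + \alpha$), and the coordinate relation $\ghat^{ij} = g^{ij} + \alphahat^{ij}$, which follows by raising both indices in the formula $\alphahat_{ij} = \ghat^{kl}g_{ik}g_{jl} - g_{ij}$ of Lemma \ref{LApert}. Item (i) is immediate from this: the defining relation $du(X) = \ghat(\nablahat u, X) = g(\nabla u, X)$ valid for all tangent $X$ forces $(\nablahat u)^i = \ghat^{ij}\partial_j u = (\nabla u)^i + \alphahat^{ij}\partial_j u$, and the second summand is precisely the $i$-th component of $(\alphahat\intprod\nabla u)^\sharp$.

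For (ii), I would compute in local coordinates $\widehat{\ddiv} X - \ddiv X = (\widehat{\Gamma}^i_{ij} - \Gamma^i_{ij})X^j$. Applying \eqref{Ekoszuldiff} intrinsically on $S$ gives $\widehat{\Gamma}^k_{ij} - \Gamma^k_{ij} = \ghat^{S,kl}(\Ccal^S\alpha)_{ijl}$; decomposing $\ghat^{S,kl} = g^{kl} + \alphahat^{kl}$, tracing $k=i$, and using the symmetry of $\Ccal^S\alpha$ in its first two slots from Definition \ref{dhgdiff}, identifies the two correction pieces as $\tr_{S,g}((\Ccal^S\alpha)\intprod X)$ and $\langle\alphahat,(\Ccal^S\alpha)\intprod X\rangle_g$. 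For (iii) I would then substitute $X = \nablahat u = \nabla u + (\alphahat\intprod\nabla u)^\sharp$ from (i) into (ii); the two $\Ccal^S\alpha$ terms appear immediately, and the ordinary divergence splits as $\ddiv(\nablahat u) = \Delta u + \nabla_i(\alphahat^{ij}\partial_j u)$, the second factor of which the product rule breaks into $(\ddiv_{S,g}\alphahat)(\nabla u)$ plus a Hessian-type contraction of $\alphahat$ against $\nabla^2 u$, matching the form in the statement up to an index identification.

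For (iv), the smallness of $\|\alpha:C^{k+1}(S,g)\|$ makes $g+\alpha$ pointwise invertible; the Neumann series expression for $\alphahat$ displayed in Lemma \ref{LApert} then converges in $C^{k+1}$, yielding $\|\alphahat:C^{k+1}(S,g)\|\leq C(k)\|\alpha:C^{k+1}(S,g)\|$. Together with the trivial bound $\|\Ccal^S\alpha:C^k(S,g)\|\leq C(k)\|\alpha:C^{k+1}(S,g)\|$ and the multiplicative property \eqref{E:norm:mult}, each correction term in (iii) is controlled by $C(k)\|\alpha:C^{k+1}\|\,\|u:C^{k+2}\|$, establishing (iv). I do not expect any genuine obstacle: the argument is systematic index-chasing powered by the Koszul formula, and the only place where a threshold on $\|\alpha\|$ is genuinely used is in invoking the Neumann series to bound $\alphahat$ in the required $C^{k+1}$ norm.
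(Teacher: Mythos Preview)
Your proposal is correct and follows essentially the same route as the paper: both derive (i) from $\ghat^{ij}=g^{ij}+\alphahat^{ij}$, obtain (ii) from the Koszul-difference formula \eqref{Ekoszuldiff} combined with that same splitting of $\ghat^{-1}$, get (iii) by substituting $X=\nablahat u$ into (ii) and applying the product rule to $\ddiv((\alphahat\intprod\nabla u)^\sharp)$, and read off (iv) by estimating each term of (iii). Your treatment of (iv) is slightly more explicit than the paper's in that you spell out the Neumann-series bound $\|\alphahat:C^{k+1}\|\le C(k)\|\alpha:C^{k+1}\|$, which the paper leaves implicit.
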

\begin{proof}
(i) and (ii) follow from the following calculations in coordinates: 
\begin{align*}
(\widehat{\nabla}u)^i &= \ghat^{ij} u_j 
= g^{ij} u_j +g^{ik}g^{jl}  \alphahat_{kl} u_j , \\
\widehat{\ddiv} X 
&= \ghat^{ij} \ghat( \nabla_{\partial_i} X + \nablahat_{\partial_i} X - \nabla_{\partial_i} X , \partial_j)\\
&= \ghat^{ij} ( \ghat( \nabla_{\partial_i} X, \partial_j) + (\Ccal^S \alpha)(X, \partial_i, \partial_j))\\
&= \ddiv X+ (g^{ij} + \alphahat^{ij}) (\Ccal^S\alpha)(X, \partial_i, \partial_j).
\end{align*}
(iii) follows by combining (i) and (ii) and observing that
\[ 
\ddiv( \alphahat \intprod \nabla u) = (\ddiv \alphahat) (\nabla u ) + (\tr_{S, g} \alphahat) \Delta u.
\] 
Finally, (iv) follows immediately from estimating (iii). 
\end{proof}

\begin{lemma}
\label{Ljacdif}
Suppose $\alpha$, $\beta$, and $\sigma$ all have small enough $C^{k+1}(S, g)$ norm in terms of $k$.  Then
\begin{multline*}
\big\| \, |\widehat{A}^S|^2_{\ghat} - |A^S|^2_{g} \, \big\|_{C^k} 
\lem C(k)  \Big( \, \big( \| \sigma \|_{C^k} + \| \beta\|_{C^{k+1}} + \| \alpha \|_{C^k} \big) \,  \| A^S\|_{C^k}^2\\ 
+ \, \big( \| \beta \|_{C^{k+1}} + \| \alphatilde\|_{C^k} \big) \| A^S \|_{C^k} \, + \,   \big( \| \beta\|_{C^{k+1}} + \| \alphatilde\|_{C^k} \big)^2 \, \Big),
\end{multline*}
where here $\| \cdot \|_{C^k}$ is short hand for the $C^k(S, g)$ norm. 
\end{lemma}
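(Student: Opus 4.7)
\medskip

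\noindent\textbf{Proof proposal for Lemma \ref{Ljacdif}.}
The plan is to square the pointwise identity for $\widehat A^S$ supplied by Lemma \ref{LApert}(ii) and to split the resulting difference $|\widehat A^S|^2_{\ghat}-|A^S|^2_g$ into four pieces that I can estimate separately. Writing $R:=\mathrm{Sym}(A^S *_g \alpha + \nabla^S \beta) - \tfrac12\alphatilde - (\Ccal\alpha)\intprod\beta^{\widehat\sharp}$ and $\widehat\sigma := \sigma - \beta(\beta^{\widehat\sharp})$ as in \ref{LApert}, item (ii) of that lemma gives $(1+\widehat\sigma)^{1/2}\widehat A^S=A^S+R$, hence
\[
|\widehat A^S|^2_{\ghat}-|A^S|^2_g
\;=\;
\bigl[(1+\widehat\sigma)^{-1}-1\bigr]\,|A^S|^2_{\ghat}
+\bigl[|A^S|^2_{\ghat}-|A^S|^2_g\bigr]
+(1+\widehat\sigma)^{-1}\bigl(2\langle A^S,R\rangle_{\ghat}+|R|^2_{\ghat}\bigr).
\]
The smallness assumption on $\sigma$, $\beta$, $\alpha$ in $C^{k+1}(S,g)$ makes $\widehat\sigma$ small and keeps $(1+\widehat\sigma)^{-1}$ bounded in $C^k$; I will use this freely below.

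The four pieces are estimated as follows. First, from the coordinate formula $\alphahat_{ij}=\sum_{p\ge1}(-1)^p(\alpha g^{-1})^{p}_{ij}$ in \ref{LApert} and the algebra property of $C^k$, the difference of fiber inner products on symmetric $2$-tensors satisfies $\| |T|^2_{\ghat}-|T|^2_g\|_{C^k}\le C(k)\|\alpha\|_{C^k}\|T\|_{C^k}^2$, applied to $T=A^S$. Second, $\widehat\sigma=\sigma+O(\|\beta\|_{C^k}^2)$ and a geometric-series expansion give $\|(1+\widehat\sigma)^{-1}-1\|_{C^k}\le C(k)(\|\sigma\|_{C^k}+\|\beta\|_{C^k}^2)$, which together with the first step controls the $|A^S|^2_{\ghat}$ prefactor. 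Third, the individual components of $R$ are bounded by
\[
\|A^S *_g \alpha\|_{C^k}\le C\|\alpha\|_{C^k}\|A^S\|_{C^k},\qquad
\|\nabla^S\beta\|_{C^k}\le C\|\beta\|_{C^{k+1}},
\]
\[
\|\alphatilde\|_{C^k}\text{ as given,}\qquad
\|(\Ccal\alpha)\intprod\beta^{\widehat\sharp}\|_{C^k}\le C(k)\|\alpha\|_{C^{k+1}}\|\beta\|_{C^k},
\]
so $\|R\|_{C^k}\le C(k)\bigl(\|\alpha\|_{C^k}\|A^S\|_{C^k}+\|\beta\|_{C^{k+1}}+\|\alphatilde\|_{C^k}\bigr)$ after absorbing the higher-order factor $\|\alpha\|_{C^{k+1}}\|\beta\|_{C^k}$ using the smallness hypothesis. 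A Cauchy-Schwarz-type $C^k$ bound on $\langle A^S,R\rangle_{\ghat}$ (valid because the fiber metric is the $\ghat$-induced one and stays uniformly equivalent to the $g$-one) then yields exactly the three kinds of terms appearing in the claim: $\|\alpha\|_{C^k}\|A^S\|_{C^k}^2$ (into the first group), $(\|\beta\|_{C^{k+1}}+\|\alphatilde\|_{C^k})\|A^S\|_{C^k}$ (the second group), and $|R|^2$ contributes the third group.

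The main obstacle to making the decomposition tight is the cross term $\|\alpha\|_{C^k}\|A^S\|_{C^k}\cdot(\|\beta\|_{C^{k+1}}+\|\alphatilde\|_{C^k})$ that arises both from expanding $|R|^2_{\ghat}$ and from pairing $A^S$ with $(\Ccal\alpha)\intprod\beta^{\widehat\sharp}$: it does not literally belong to any of the three stated groups. Here the smallness assumption is essential: using $\|\alpha\|_{C^k}\le 1$ and AM-GM, this cross term is dominated by a multiple of the middle group $(\|\beta\|_{C^{k+1}}+\|\alphatilde\|_{C^k})\|A^S\|_{C^k}$ plus a multiple of the third group $(\|\beta\|_{C^{k+1}}+\|\alphatilde\|_{C^k})^2$. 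Similarly, the quadratic-in-$\alpha$ contribution $\|\alpha\|_{C^k}^2\|A^S\|_{C^k}^2$ from $|R|^2$ is absorbed into the first group via $\|\alpha\|_{C^k}^2\le\|\alpha\|_{C^k}$. Once this bookkeeping is carried out, the four estimates combine into the stated inequality; the only regularity point to check carefully is that every term involving $\beta$ or $\alpha$ differentiated once (namely $\nabla^S\beta$ and $\Ccal\alpha$) is allowed since we measure $\beta$ in $C^{k+1}$ and only pair $\Ccal\alpha$ with an undifferentiated $\beta$, consistent with the $C^{k+1}$ smallness hypothesis.
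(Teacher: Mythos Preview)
Your argument is correct and follows essentially the same approach as the paper: both start from the identity $(1+\widehat\sigma)^{1/2}\widehat A^S=A^S+R$ of Lemma \ref{LApert}(ii), square, and bound the remainder $R$ piece by piece using the smallness hypothesis to absorb cross terms. The only cosmetic difference is that the paper splits the difference via the intermediate quantity $|\widehat A^S|^2_g$ (writing $|\widehat A^S|^2_{\ghat}-|A^S|^2_g=(|\widehat A^S|^2_{\ghat}-|\widehat A^S|^2_g)+(|\widehat A^S|^2_g-|A^S|^2_g)$) whereas you split via $|A^S|^2_{\ghat}$; your bookkeeping of the cross terms is in fact more explicit than the paper's.
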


\begin{proof}
We first compute using \ref{LApert}(ii) that 
\begin{align*}
|\widehat{A}^S|^2_g - |A^S|^2_g = - ( \sigma - |\beta|^2_g - \alphahat(\beta^\sharp, \beta^\sharp)) |\widehat{A}^S|^2_g + 2\langle A, T\rangle_g + |T|^2_g, 
\end{align*}
where here $T :=  \text{Sym}\left( A^S *_g \alpha + \nabla^S \beta\right) - \frac{1}{2}\alphatilde- (\Ccal^S \alpha) \intprod \beta^{\widehat{\sharp}}$.  
Using this and the assumptions, we estimate
\begin{multline*}
\big\| \, |\widehat{A}^S|^2_g - |A^S|^2_g \big\|_{C^k} \lem C(k)  \: \Big( \: \big(  \| \sigma \|_{C^k} + \| \beta\|_{C^{k+1}} + \| \alpha \|_{C^k} \big) \, \| A^S\|_{C^k}^2 
\\ 
+ \, \big( \| \beta \|_{C^{k+1}} + \| \alphatilde\|_{C^k} \big) \, \| A^S \|_{C^k} \, + \, \big( \| \beta\|_{C^{k+1}} + \| \alphatilde\|_{C^k} \big)^2 \, \Big).
\end{multline*}
Next, we compute 
\begin{align*}
|\widehat{A}^S|^2_{\ghat} - |\widehat{A}^S|^2_{g}  = 2\langle \widehat{A}^S *_g \alphahat, \widehat{A}^S \rangle_g + | \widehat{A}^S *_g \alphahat |^2_g,
\end{align*}
and using this and \ref{LApert} to estimate $\| \widehat{A}^S\|_{C^k}$ we conclude 
\begin{align*}
\| \, |\widehat{A}^S|^2_{\ghat} - |\widehat{A}^S|^2_{g}\|_{C^k} &\le C(k) \| \alpha \|_{C^k} \| \widehat{A}^S\|_{C^k}^2\\
&\le C(k) \| \alpha \|_{C^k} ( \| A^S \|_{C^k} + \| \alphatilde\|_{C^k} + \| \beta\|_{C^{k+1}} )^2.
\end{align*} 
 By the triangle inequality, combining these estimates finishes the proof.
\end{proof}

\begin{lemma}
\label{Lgcomp}
Let $u$ be a $C^k$ tensor field on $N$ and let $\epsilon> 0$.  If $\| h  : C^k(N, g)\|$ is small enough in terms of $k$ and $\epsilon$, then
\begin{align}
\label{Euggh}
\| u : C^k(N, \ghat) \| \Sim_{1+\epsilon} \| u :C^k(N, g)\|  .
\end{align}
\end{lemma}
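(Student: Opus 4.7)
The plan is to compare the two $C^k$ norms via a two-step reduction: first replace the $\ghat$-contractions by $g$-contractions pointwise, then replace the $\widehat\nabla$-derivatives by $\nabla$-derivatives by exploiting the Koszul difference formula \eqref{Ekoszuldiff}. Both steps produce polynomial error expressions in $h$ and its $g$-covariant derivatives, all of which can be made as small as we wish by shrinking $\|h:C^k(N,g)\|$.

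For the first step, observe that in local coordinates the components of $\ghat$ and $\ghat^{-1}$ are polynomial expressions in the components of $h$ and $g,g^{-1}$ that reduce to the identity when $h=0$; indeed $\ghat^{ij}$ is given by the Neumann-series expression appearing at the end of Lemma \ref{LApert}. Hence for any tensor field $T$ of fixed type one has $|T|_{\ghat} \le (1+C_k\|h:C^0(N,g)\|)\,|T|_g$ and vice versa, so that $|T|_{\ghat}\Sim_{1+\epsilon/2} |T|_g$ pointwise once $\|h:C^0(N,g)\|$ is small in terms of $\epsilon$ and the type of $T$. Applying this to $T=\widehat\nabla^j u$ reduces the task to comparing $|\widehat\nabla^j u|_g$ with $|\nabla^j u|_g$ for $j=0,1,\dots,k$.

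For the second step, set $D:=\widehat\nabla-\nabla$, a $(1,2)$-tensor. By \eqref{Ekoszuldiff} its components $\ghat(D(X,Y),Z)=(\Ccal h)(X,Y,Z)$ involve only $\nabla h$, so once $\|h:C^0(N,g)\|$ is small the step-one comparison and the formula for $\Ccal h$ in Definition \ref{dhgdiff} give $|D|_g\le C\|\nabla h:C^0(N,g)\|$ and, more generally, $|\nabla^i D|_g\le C_i\|h:C^{i+1}(N,g)\|$ for $i\le k-1$. Writing $\widehat\nabla^j u=(\nabla+D)^j u$ and expanding via the Leibniz rule for $\nabla$, one obtains by induction on $j$ an identity of the schematic form
\begin{equation*}
\widehat\nabla^j u \;=\; \nabla^j u \;+\; \sum_{\ell=0}^{j-1}\; \sum_{\text{indices}} (\nabla^{i_1} D)*\cdots*(\nabla^{i_r} D)*\nabla^{\ell} u,
\end{equation*}
where in each summand $r\ge 1$ and $i_1+\cdots+i_r+\ell\le j-1$, and the $*$'s denote contractions with $g$ and $g^{-1}$. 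Taking $g$-norms and using the bounds on $\nabla^i D$, each error term is bounded by $C_j\,\|h:C^{j}(N,g)\|\,\max_{\ell<j}|\nabla^\ell u|_g$.

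Combining the two steps and choosing $\|h:C^k(N,g)\|$ small in terms of $k,\epsilon$, one obtains $|\widehat\nabla^j u|_{\ghat} \Sim_{1+\epsilon} \max_{\ell\le j}|\nabla^\ell u|_g$ for every $j\le k$, which by Definition \ref{dwHolder} (with $\rho\equiv1$, $f\equiv1$, $\beta=0$) yields \eqref{Euggh}. The main obstacle is purely bookkeeping: keeping track of the combinatorial expansion $(\nabla+D)^j$ and verifying that every error term picks up at least one factor of $\nabla^i h$ with $i\ge 0$ so that its size is controlled by the chosen smallness of $\|h:C^k(N,g)\|$; once this is set up, the chain of inequalities is immediate.
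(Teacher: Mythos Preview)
Your proposal is correct and follows essentially the same approach as the paper: compare the metric contractions pointwise via the Neumann series for $\ghat^{-1}$, then control the connection difference $D=\widehat\nabla-\nabla$ through $\Ccal h$ and proceed by induction on the order of derivatives. The only cosmetic difference is that the paper inducts one derivative at a time and obtains the reverse inequality by explicitly swapping the roles of $g$ and $\ghat$, whereas you expand $\widehat\nabla^j=(\nabla+D)^j$ schematically and absorb the error terms on both sides; both routes yield the same estimate.
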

\begin{proof}
We first consider the case where $k=0$.  Since $\ghat(u, u) - g(u, u)$ is a sum of $g$-inner products of contractions of $u$ and $h$, we have that
\begin{align*}
\left| \| u : C^0(N, \ghat)\|^2 - \| u :C^0(N, g)\|^2 \right| \le C \| h : C^0(N, g)\| \| u : C^0(N, g)\|^2, 
\end{align*}
which implies \eqref{Euggh} when $k=0$, where we have used that $\| h : C^0(N, g)\|$ is small. 

Using that the Christoffel symbols of $\widehat{\nabla}$ and $\nabla$ satisfy
$\widehat{\Gamma}^{k}_{ij} - \Gamma^k_{ij} = \Gamma^{l}_{ij} h_{lk} + (\Ccal h)_{ijk},$
substituting into the formula for the components of $\widehat{\nabla} u$, estimating, and using \eqref{Euggh} when $k=0$, we find
\begin{align}
\label{Eugh2}
\| \widehat{\nabla} u : C^0(N, \ghat)\| \le (1+C \| h : C^1(N, g)\| ) \| u : C^1(N, g)\| .
\end{align}
Interchanging the roles of $\ghat$ and $g$ in \eqref{Eugh2} and using \eqref{Eugh2} also to estimate $\| h: C^1(N, \ghat)\|$, we have
\begin{align*}
\| \nabla u : C^0(N, g)\| &\le (1+C \| h : C^1(N, \ghat)\| ) \| u : C^1(N, \ghat)\| \\
&\le ( 1+ C\| h: C^1(N, g)\| )\| u: C^1(N, \ghat)\| . 
\end{align*}
With the preceding, this proves \eqref{Euggh} when $k=1$, and the result for general $k$ follows inductively. 
\end{proof}

%%%%%%%%%%%%%%%%%%%%%%%%%%%%%%%%%%%%%%%%%%%%%%%%%%
% Appendix 4
%%%%%%%%%%%%%%%%%%%%%%%%%%%%%%%%%%%%%%%%%%%%%%%%%%
\section{Weighted decay estimates}
\label{S:A4}
We prove a weighted estimate on surfaces for solutions of inhomogeneous linear equations which is analogous to estimates in other gluing constructions, 
e.g. in \cite{kap, kapouleas:wente, breiner:kapouleas:high}.  
The proof relies on analogous estimates in the Euclidean setting established in \cite[Proposition C.1(i)]{breiner:kapouleas:high}.  
\begin{lemma}
\label{Lwe}
Given a closed Riemannian Surface $ (\Sigma^2, g)$, $V\in C^\infty(\Sigma)$, $\beta\in (0, 1)$, and $\gamma \in (1, 2)$, there exists $\epsilonunder>0$ such that for any $\epsilon \in (0,\epsilonunder]$ and any $p \in \Sigma$, there is a linear map 
$\Rcal^\Sigma_p : C^{0, \beta}( \overline{D^\Sigma_p(\epsilon)}) \rightarrow C^{2, \beta}(\overline{D^\Sigma_p(\epsilon)})$ so that if $E\in C^{0, \beta}(\overline{D^\Sigma_p(\epsilon)})$ and $u = \Rcal^\Sigma_p (E)$, then 
\begin{enumerate}[label=\emph{(\roman*)}]  
\item $(\Delta_g +V) u = E$.
\item $u(p) = d_p u = 0$. 
\item $\| u: C^{2, \beta}(\overline{D^\Sigma_p(\epsilon)}, \dbold^\Sigma_p, g, (\dbold^\Sigma_p)^\gamma \| \le C \| E : C^{0, \beta}(\overline{D^\Sigma_p(\epsilon)}, \dbold^\Sigma_p, g, (\dbold^\Sigma_p)^{\gamma-2})\| $. 
\end{enumerate}
\end{lemma}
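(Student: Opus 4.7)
The strategy is to reduce Lemma \ref{Lwe} to the analogous Euclidean inversion result cited from \cite[Proposition C.1(i)]{breiner:kapouleas:high}, by working in geodesic normal coordinates centered at $p$ and treating $(\Delta_g + V) - \Delta_{\R^2}$ as a controlled perturbation in the scale-invariant weighted H\"older spaces of Definition \ref{dwHolder}.

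First I would fix geodesic normal coordinates $x : D^\Sigma_p(\epsilonunder) \to D_0(\epsilonunder) \subset \R^2$, available for any $\epsilonunder < \inj^{\Sigma,g}_p$. In these coordinates $\dbold^\Sigma_p$ coincides with $|x|$ and
\[
g^{ij}(x) = \delta^{ij} + O(|x|^2), \qquad b^k(x) := \partial_i g^{ik} + g^{ik}\partial_i \log\sqrt{\det g} = O(|x|),
\]
with all bounds smooth and independent of $\epsilonunder$. Splitting
\[
\Delta_g + V = \Delta_{\R^2} + L, \qquad L u = (g^{ij}-\delta^{ij})\,\partial_i\partial_j u + b^k\partial_k u + V u,
\]
the multiplicative property \eqref{E:norm:mult} of the weighted norms with $\rho = |x|$ then gives
\[
\|L u : C^{0,\beta}(D_0(\epsilon), |x|, g_{Euc}, |x|^{\gamma-2})\| \;\le\; C\,\epsilonunder^{2}\,\|u : C^{2,\beta}(D_0(\epsilon), |x|, g_{Euc}, |x|^\gamma)\|,
\]
since the three pieces of $L$ produce terms of orders $|x|^2\cdot|x|^{\gamma-2}$, $|x|\cdot|x|^{\gamma-1}$, and $1\cdot|x|^\gamma$, each of which is bounded by $\epsilonunder^2\cdot|x|^{\gamma-2}$ on $D_0(\epsilon)$.

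Next I would apply the Euclidean inversion result: for $\gamma\in(1,2)$ and any $\epsilon\in(0,\epsilonunder]$, \cite[Proposition C.1(i)]{breiner:kapouleas:high} furnishes a linear right inverse $\Rcal_{\R^2} : C^{0,\beta}(D_0(\epsilon)) \to C^{2,\beta}(D_0(\epsilon))$ of $\Delta_{\R^2}$ satisfying $\Rcal_{\R^2}(E)(0)=0$, $d_0 \Rcal_{\R^2}(E)=0$, and
\[
\|\Rcal_{\R^2} E : C^{2,\beta}(D_0(\epsilon), |x|, g_{Euc}, |x|^\gamma)\| \le C(\gamma,\beta) \|E : C^{0,\beta}(D_0(\epsilon), |x|, g_{Euc}, |x|^{\gamma-2})\|,
\]
with constant independent of $\epsilon$ (the admissibility $\gamma\in(1,2)$ is exactly the condition that $\gamma$ avoids the indicial roots of $\Delta_{\R^2}$ in dimension two). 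I would then construct $\Rcal^\Sigma_p$ by iteration: set $u_0 := 0$ and $u_{n+1} := \Rcal_{\R^2}(E - L u_n)$. The two displays above show that the map $u \mapsto \Rcal_{\R^2}(E - L u)$ is, after choosing $\epsilonunder$ small enough that $C\,C(\gamma,\beta)\,\epsilonunder^2 < \tfrac12$, a strict contraction on the weighted $C^{2,\beta}(D_0(\epsilon),|x|,g_{Euc},|x|^\gamma)$ space. The limit $u := \Rcal^\Sigma_p E$ satisfies $\Delta_{\R^2} u = E - L u$, i.e.\ $(\Delta_g + V) u = E$; linearity is automatic, the vanishing conditions (ii) are inherited from $\Rcal_{\R^2}$ at each step, and (iii) follows by passing to the limit in the iterated weighted estimate. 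Finally, the Riemannian and Euclidean versions of the weighted norms on $D^\Sigma_p(\epsilon)$ agree up to bounded factors since $g$ is $C^\infty$-close to $g_{Euc}$ in these coordinates near $p$ (cf.\ Lemma \ref{Lgcomp}), transferring the estimate back to $D^\Sigma_p(\epsilon)$.

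The main obstacle I anticipate is purely bookkeeping: verifying, in geodesic normal coordinates, that the explicit vanishing orders $O(|x|^2)$ and $O(|x|)$ of the coefficients of $L - V$ combine correctly with the scaling built into the weighted norms to yield the $\epsilonunder^2$-smallness of $L$ as an operator between the two indicated spaces. Once this estimate is in place the contraction-mapping step is standard, and the vanishing conditions as well as the weighted bound on the fixed point $u$ follow immediately from the corresponding properties of $\Rcal_{\R^2}$.
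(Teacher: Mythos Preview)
Your proposal is correct and essentially the same as the paper's proof: both pass to geodesic normal coordinates, invoke the Euclidean right inverse from \cite[Proposition C.1(i)]{breiner:kapouleas:high}, treat $(\Delta_g+V)-\Delta_{\R^2}$ as a weighted-small perturbation, and iterate. The only cosmetic difference is that the paper writes the iteration as a Neumann series $u=\sum u_i$ with $\Delta_{g_p}u_i=-E_{i-1}$, $E_i=(\Delta_g-\Delta_{g_p}+V)u_i$, whereas you phrase it as a contraction mapping $u\mapsto\Rcal_{\R^2}(E-Lu)$; these are the same construction.
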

\begin{proof}
In \cite[Proposition C.1(i)]{breiner:kapouleas:high}, it was shown that the conclusion of Lemma \ref{Lwe} holds when $(\Sigma^2, g) = (\R^2, \delta_{ij})$ and $V = 0$. 
By identifying $\overline{D^{\Sigma, g}_p(\epsilon)}$ with $\overline{D^{T_p\Sigma,  g_p}_0(\epsilon)}$ using the exponential map, considering the Euclidean Laplacian $\Delta_{g_p}$ and its corresponding right inverse $\Rcal: C^{0, \beta}(\overline{ D^\Sigma_p(\epsilon)}) \rightarrow C^{2, \beta}(\overline{D^\Sigma_p(\epsilon)})$ from \cite[Proposition C.1(i)]{breiner:kapouleas:high},  taking $\epsilonunder$ small enough, and rescaling $\overline{D^\Sigma_p(\epsilon)}$ to be of unit size, we may assume that $\mathrm{Id} - \Lcal \Rcal = (\Delta_{g_p} - \Lcal) \Rcal$ has operator norm less than $1$.  We then define $\Rcal^\Sigma_p = \Rcal \sum_{n=0}^\infty ( \mathrm{Id} - \Lcal \Rcal)^n = \Rcal \sum_{n=0}^\infty ( (\Delta - \Lcal) \Rcal)^n$.
Item (i) follows by inspection, and (ii) and (iii) follow from the corresponding items established in \cite[Proposition C.1(i)]{breiner:kapouleas:high}.
\end{proof}

\begin{corollary}
\label{Cdecay}
Let $\Sigma, V, \beta, \epsilonunder$, and $p$ be as in \ref{Lwe}.  There exists $C>0$ such that for any $\epsilon \in (0, \epsilonunder]$ and any $v \in C^{2, \beta}(\overline{D^\Sigma_p(\epsilon)})$ satisfying $(\Delta_g + V) v= 0$ on $\overline{D^\Sigma_p(\epsilon)}$ and $\Ecalunder_p v = 0$, the following estimate holds. 
\begin{align*}
\| v : C^{2, \beta}( \overline{D^\Sigma_p(\epsilon)} , \dbold^\Sigma_p, g, (\dbold^\Sigma_p)^\gamma)\| 
\leq 
C \| v : C^{2, \beta}( \overline{\partial D^\Sigma_p(\epsilon)}, \dbold^\Sigma_p, g, (\dbold^\Sigma_p)^\gamma)\|.
\end{align*}
\end{corollary}
\begin{proof}
Suppose $\epsilon$ and $v$ are as above.  Define $\hat{v} \in C^{2, \beta}(\overline{D^\Sigma_p(\epsilon)}\setminus \overline{D^\Sigma_p(\epsilon/2)})$ to be the radial extension of $v|_{\partial \overline{D^{\Sigma}_p(\epsilon)}}$ and define $\varphi \in C^{2, \beta}(\overline{D^\Sigma_p(2\epsilon)})$ by $\varphi = \Psibold[ \epsilon, \epsilon/2 ; \dbold^\Sigma_{p}]( \hat{v}, 0)$.  By the definitions, the following estimate holds. 
\begin{align}
\label{Ephiv}
\| \varphi : C^{2, \beta}( \overline{D^\Sigma_p(\epsilon)}, \dbold^\Sigma_p,  g ) \| 
\leq 
C 
\| v : C^{2, \beta}( \partial \overline{D^\Sigma_p(\epsilon)}, \dbold^\Sigma_p,  g)\|.
\end{align}
By applying Lemma \ref{Lwe} with $E = (\Delta_g + V) \varphi$, there exists $u \in C^{2, \beta}(\overline{D^\Sigma_p(\epsilon)})$ satisfying $(\Delta_g + V)(u- \varphi) = 0$,  $\Ecalunder_p u =0$, and 
\begin{align*}
\| u: C^{2, \beta}(\overline{D^\Sigma_p(\epsilon)}, \dbold^\Sigma_p, g, (\dbold^\Sigma_p)^\gamma \|
& \leq C
 \| \varphi : C^{2, \beta}( \overline{D^\Sigma_p(\epsilon)}, \dbold^\Sigma_p, g, (\dbold^\Sigma_p)^{\gamma})\|\\
 &\leq C \| v : C^{2, \beta}( \partial \overline{D^\Sigma_p(\epsilon)}, \dbold^\Sigma_p, g, (\dbold^\Sigma_p)^\gamma)\|,
\end{align*}
where the last inequality follows from \eqref{Ephiv} and the definitions.

On the other hand, by the definition of $\varphi$ and the proof of Lemma \ref{Lwe} we have that the restriction of $v-\varphi+u$ to $\partial \overline{D^\Sigma_p(\epsilon)}$ 
is a linear combination of constants and first harmonics.  
Since $\Ecalunder_p ( v- \varphi + u) = 0$, it follows from the smallness of $\epsilon$ that $v = \varphi - u$.  
The claimed estimate now follows from this and the preceding. 
\end{proof}

%%%%%%%%%%%%%%%%%%%%%%%%%%%%%%%%%%%%%%%%%%%%%%%%%%
% BIBLIOGRAPHY 
%%%%%%%%%%%%%%%%%%%%%%%%%%%%%%%%%%%%%%%%%%%%%%%%%%
\bibliographystyle{abbrv}
\bibliography{bibliography}

\end{document}